\colorlet{shadecolor}{blue!15}
\newtheorem{theorem}{Theorem}[section]
\newtheorem{conj}{Conjecture}
\newtheorem{corollary}[theorem]{Corollary}
\newtheorem{lemma}[theorem]{Lemma}
\newtheorem{proposition}[theorem]{Proposition}
\newtheorem{definition}[theorem]{Definition}
\newtheorem{exercise}{Exercise}
\newtheorem{remark}[theorem]{Remark}
\newtheorem{conjecture}[conj]{Conjecture}
\newcommand{\be}[1]{\begin{equation}\label{#1}}
\newcommand{\ee}{\end{equation}}
\numberwithin{equation}{section}
\newcommand{\ba}[1]{\begin{align}\label{#1}}
\newcommand{\ea}{\end{align}}
\numberwithin{equation}{section}
\newcommand{\ben}{\begin{equation*}}
\newcommand{\een}{\end{equation*}}
\numberwithin{equation}{section}
\newenvironment{proof}[1][\relax]
  {\paragraph{Proof\ifx#1\relax\else~of #1\fi}}%
  {~\hfill$\square$\par\bigskip}
\newcommand{\calA}{\mathcal{A}}
\newcommand{\calB}{\mathcal{B}}
\newcommand{\calC}{\mathcal{C}}
\newcommand{\calD}{\mathcal{D}}
\newcommand{\calE}{\mathcal{E}}
\newcommand{\calF}{\mathcal{F}}
\newcommand{\calG}{\mathcal{G}}
\newcommand{\calH}{\mathcal{H}}
\newcommand{\calK}{\mathcal{K}}
\newcommand{\calM}{\mathcal{M}}
\newcommand{\calN}{\mathcal{N}}
\newcommand{\calT}{\mathcal{T}}
\newcommand{\calV}{\mathcal{V}}
\newcommand{\bbC}{\mathbb{C}}
\newcommand{\bbE}{\mathbb{E}}
\newcommand{\bbG}{\mathbb{G}}
\newcommand{\bbH}{\mathbb{H}}
\newcommand{\bbL}{\mathbb{L}}
\newcommand{\bbN}{\mathbb{N}}
\newcommand{\bbP}{\mathbb{P}}
\newcommand{\bbR}{\mathbb{R}}
\newcommand{\bbS}{\mathbb{S}}
\newcommand{\bbT}{\mathbb{T}}
\newcommand{\bbU}{\mathbb{U}}
\newcommand{\bbV}{\mathbb{V}}
\newcommand{\bbZ}{\mathbb{Z}}
\newcommand{\U}{{\mathbf U}}
\newcommand{\V}{{\mathbf V}}
\newcommand{\X}{{\mathbf X}}
\newcommand{\Y}{{\mathbf Y}}
\newcommand{\g}{\gamma}
\newcommand{\ep}{\varepsilon}
\newcommand{\n}{{\mathbf n}}
\newcommand{\m}{{\mathbf m}}
\newcommand{\ol}{{\includegraphics[scale=0.2]{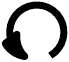}}}
\renewcommand{\Im}{\textrm{Im}}
\renewcommand{\Re}{\textrm{Re}}
\newcommand{\rk}[1]{\bgroup\color{red}%
  \par\medskip\hrule\smallskip%
  \noindent\textbf{#1}%
  \par\smallskip\hrule\medskip\egroup}
\newcommand{\bexo}{\begin{mdframed}[backgroundcolor=lightgray!20]
\scriptsize}
\newcommand{\eexo}{\end{mdframed}}
\title{Lectures on the Ising and Potts models on the hypercubic lattice}
\author{Hugo Duminil-Copin\thanks{
\texttt{duminil@ihes.fr} Institut des Hautes \'Etudes Scientifiques and Universit\'e de Gen\`eve
 \newline
This research was funded by a IDEX Chair from Paris Saclay and by the NCCR SwissMap from the Swiss NSF. These lecture notes describe the content of a class given at the PIMS-CRM probability summer school on the behavior of lattice spin models near their critical point. The author would like to thank warmly the organizers for offering him the opportunity to give this course. Also, special thanks to people who sent me comments, especially Timo Hirscher and Franco Severo. }}
\date{\today}
\begin{document}
\maketitle

\begin{abstract}
 Phase transitions are a central theme of statistical mechanics, and of probability more generally. Lattice spin models represent a general paradigm for phase transitions in finite dimensions, describing ferromagnets and even some fluids (lattice gases).
It has been understood since the 1980s that random geometric representations, such as the random walk and random current representations, are powerful tools to understand spin models. In addition to techniques intrinsic to spin models, such representations provide access to rich ideas from percolation theory. In recent years, for two-dimensional spin models, these ideas have been further combined with ideas from discrete complex analysis. Spectacular results obtained through these connections include the proofs that interfaces of the two-dimensional Ising model have conformally invariant scaling limits given by SLE curves, that the connective constant of the self-avoiding walk on the hexagonal lattice is given by $\sqrt{2+\sqrt 2}$. In higher dimensions, the understanding also progresses with the proof that the phase transition of Potts models is sharp, and that the magnetization of the three-dimensional Ising model vanishes at the critical point. These notes are largely inspired by {\cite{Dum11a,Dum13,Dum15}}.

\end{abstract}
\bigbreak
\begin{center}
\includegraphics[width=0.45\textwidth]{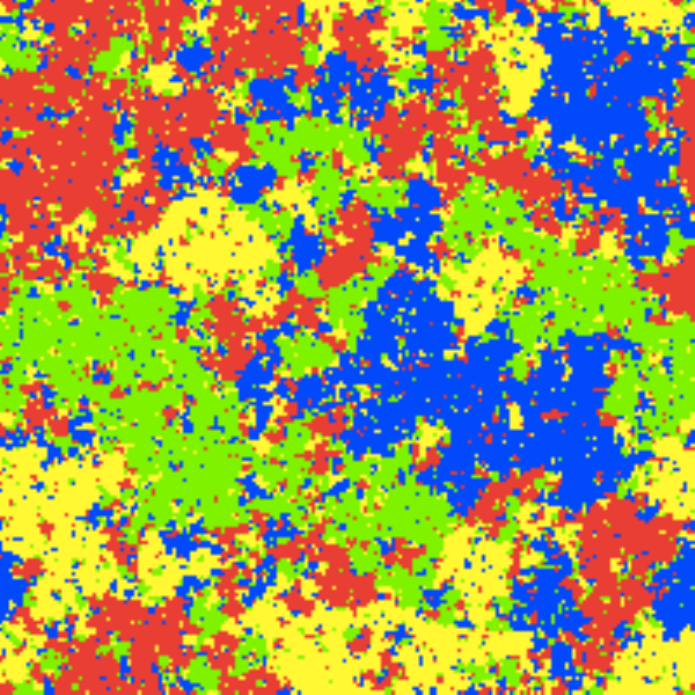}\\
{\footnotesize A simulation of the 4-state Potts model due to V. Beffara.}
\end{center}
\newpage

\tableofcontents

\section{Graphical representation of the Potts model}

\subsection{Lattice spin models}

Lattice models have been introduced as discrete models for real life experiments and were later on found useful to model a large variety of phenomena and systems ranging from ferroelectric materials to lattice gas. They also provide discretizations of Euclidean and Quantum Field Theories and are as such important from the point of view of theoretical physics. While the original motivation came from physics, they appeared as extremely complex and rich mathematical objects, whose study required the developments of important new tools that found applications in many other domains of mathematics. 

The zoo of lattice models is very diverse: it includes models of spin-glasses, quantum chains, random surfaces, spin systems, percolation models. Here, we focus on a smaller class of lattice models called spin systems. These systems are random collections of spin variables assigned to the vertices of a lattice. The archetypical example of such a model is provided by the Ising model, for which spins take value $\pm 1$.

\subsubsection{Definition of ferromagnetic lattice spin models}

In these notes, $\|\cdot\|$ denotes the Euclidean norm on $\bbR^d$. A graph $G=(V,E)$ is  given by a vertex-set $V$ and an edge set $E$ which is a subset of pairs $\{x,y\}\subset V$. We will denote an (unoriented) edge with endpoints $x$ and $y$ by $xy$.
 While lattice models could be defined on very general lattices, we focus on the special case of the lattice given by the vertex-set $\bbV:=\bbZ^d$ and the edge-set $\bbE$ composed of edges $xy$ with endpoints $x$ and $y$ (in $\bbZ^d$) satisfying $\|x-y\|=1$.  Below, we use the notation $\bbZ^d$ to refer both to the lattice and its vertex-set. For a subgraph $G=(V,E)$ of $\bbZ^d$, we introduce the boundary of $G$ defined by
 $$\partial G:=\{x\in V:\exists y\in \bbZ^d\text{ such that }xy\in\bbE\setminus E\}.$$ 

For a finite subgraph $G=(V,E)$ of $\bbZ^d$, attribute a {\em spin} variable $\sigma_x$ belonging to a certain set $\Sigma\subset\bbR^r$ to each vertex $x\in V$. A
 {\em spin configuration} $\sigma=(\sigma_x:x\in V)\in\Sigma^{V}$ is given by the collection of all the spins.
  Introduce the Hamiltonian of $\sigma$ defined by
$$H_G^{\rm f}(\sigma):=-\sum_{xy\in E}\,\sigma_x\cdot\sigma_y,$$
where $a\cdot b$ denotes the scalar product between $a$ and $b$ in $\bbR^d$. The above Hamiltonian corresponds to a ferromagnetic nearest-neighbor interaction. We will restrict ourselves to this case in these lectures, and refer to the corresponding papers for details on the possible generalizations to arbitrary interactions.

The {\em Gibbs measure on $G$ at inverse temperature $\beta\ge0$ with free boundary conditions} is defined by the formula
\begin{equation}\label{eq:Gibbs}\mu_{G,\beta}^{\rm f}[f]:=\frac{\displaystyle\int_{\Sigma^{V}}f(\sigma)\exp\big[-\beta H_{G}^{\rm f}(\sigma)\big]d\sigma}{\displaystyle\int_{\Sigma^{V}}\exp\big[-\beta H_{G}^{\rm f}(\sigma)\big]d\sigma}\end{equation}
for every $f:\Sigma^{V}\rightarrow \bbR$, where $d\sigma=\bigotimes_{x\in V}d\sigma_x$
is a product measure whose marginals $d\sigma_x$ are identical copies of a reference finite measure $d\sigma_0$ on $\Sigma$. Note that if $\beta=0$, then spins are chosen independently according to the probability measure $d\sigma_0/\int_{\Sigma} d\sigma_0$.

Similarly, for ${\rm b}\in\Sigma$, introduce the {\em Gibbs measure $\mu_{G,\beta}^{\rm b}$ on $G$ at inverse temperature $\beta$ with boundary conditions $\rm b$} defined as $\mu_{G,\beta}^{\rm f}[\,\cdot\,|\sigma_x={\rm b},\forall x\in \partial G]$.

A priori, $\Sigma$ and $d\sigma_0$ can be chosen arbitrarily, thus leading to different examples of lattice spin models. The following (far from exhaustive) list of spin models already illustrates the vast variety of possibilities that such a formalism offers.
\paragraph{Ising model.} $\Sigma=\{-1,1\}$ and $d\sigma_0$ is the counting measure on $\Sigma$.  This model was introduced by Lenz in 1920 \cite{Len20} to model the temperature, called Curie's temperature, above which a magnet looses its ferromagnetic properties. 
 It was studied in his PhD thesis by Ising \cite{Isi25}.

\paragraph{Potts model.} $\Sigma=\bbT_q$ ($q\ge2$ is an integer), where $\bbT_q$ is a simplex in $\bbR^{q-1}$ (see Fig.~\ref{fig:tetrahedron}) containing $1:=(1,0,\dots,0)$ such that for any $a,b\in\bbT_q$,
$$a\cdot b=\begin{cases}\ \ 1&\text{ if $a=b$,}\\
\ -\frac1{q-1}&\text{ otherwise.}\end{cases}$$
and $d\sigma_0$ is the counting measure on $\Sigma$. This model was introduced as a generalization of the Ising model to more than two possible spins by Potts in 1952 \cite{Pot52} following a suggestion of his adviser Domb. While the model received little attention early on, it became an object of great interest in the last fourty years. Since then, mathematicians and physicists have been studying it intensively, and a lot is known on its rich behavior.
\begin{figure}
\begin{center}
\includegraphics[width=0.70\textwidth]{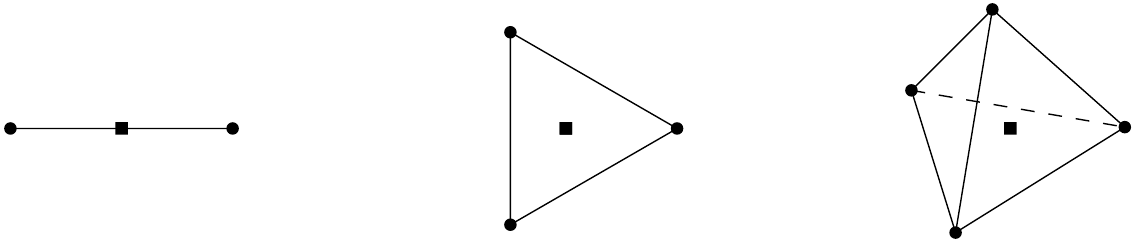}
\caption{From left to right, $\bbT_2$, $\bbT_3$ and $\bbT_4$.}\label{fig:tetrahedron}
\end{center}
\end{figure}

\paragraph{Spin $O(n)$ model.} $\Sigma$ is the unit sphere in dimension $n$ and $d\sigma_0$ is the surface measure. This model was introduced by Stanley in 1968 \cite{Sta68}. This is yet another generalization of the Ising model (the case $n=1$ corresponds to the Ising model) to continuous spins. The $n=2$ and $n=3$ models were introduced slightly before the general case and are called the $XY$ and (classical) Heisenberg models respectively.
\paragraph{Discrete Gaussian Free Field (GFF).} $\Sigma=\bbR$ and $d\sigma_0=\exp(-\sigma_0^2/2)d\lambda(\sigma_0),$ where $d\lambda$ is the Lebesgue measure on $\bbR$. The discrete GFF is a natural model for random surfaces fluctuations. We refer to Biskup's lecture notes for details.

\paragraph{The $\phi^4_d$ lattice model on $\bbZ^d$.} $\Sigma=\bbR$ and 
$d\sigma_0=\exp(-a\sigma_0^2-b\sigma_0^4)d\lambda(\sigma_0),$ where $a\in\bbR$ and $b\ge0$.
This model interpolates between the GFF corresponding to $a=1/2$ and $b=0$, and the Ising model corresponding to the limit as $b=-a/2$ tends to $+\infty$.  

\begin{mdframed}[backgroundcolor=green!00]
{\bf Notation.}
The family of lattice models is so vast that it would be hopeless to discuss them in full generality. For this reason, we chose already (in the definition above) to focus on nearest-neighbor ferromagnetic interactions. Also, we will mostly discuss two generalizations of the Ising model, namely the Potts and $O(n)$ models.\eexo

\subsubsection{Phase transition in Ising, Potts and $O(n)$ models}

We wish to illustrate that the theory of lattice spin models is both very challenging and very rich.  For this, we wish to screen quickly through the possible behaviors of spin models. An important disclaimer: this section is not rigorous and most of the claims will not be justified before much later in the lectures. It is therefore not surprising if some of the claims of this section sound slightly bold at this time.

Assume that the measures introduced above can be extended to infinite volume by taking weak limits of measures $\mu_{G,\beta}^{\rm f}$ and $\mu_{G,\beta}^{\rm b}$ as $G$ tends to $\bbZ^d$ (sometimes called taking the thermodynamical limit), and denote the associated limiting measures by $\mu^{\rm f}_\beta$ and $\mu^{\rm b}_\beta$.

The behavior of the model in infinite volume can differ greatly depending on $\beta$. In order to describe the possible behaviors, introduce the following properties: 
\begin{itemize}\item The model exhibits {\em spontaneous magnetization} at $\beta$ if 
\begin{equation}
\tag{\rm MAG$_\beta$}\mu_{\beta}^{\rm b}[
\sigma_0\cdot{\rm b}]>0.\end{equation}
\item The model exhibits {\em long-range ordering} at $\beta$ if 
\begin{equation}
\tag{\rm LRO$_\beta$}\lim_{\|x\|\rightarrow\infty}\mu_{\beta}^{\rm f}[
\sigma_0\cdot\sigma_x]>0.
\end{equation}
\item The model exhibits {\em exponential decay of correlations} at $\beta$ if 
\begin{equation}
\tag{\rm EXP$_\beta$}\exists c_\beta>0\text{ such that }\mu_{\beta}^{\rm f}[
\sigma_0\cdot\sigma_x]\le e^{-c_\beta \|x\|}\text{ for all }x\in\bbZ^d.
\end{equation}\end{itemize}
(Note that for the symmetries of $\Sigma$ implies that $\mu_{\beta}^{\rm b}[
\sigma_0\cdot{\rm b}]$ does not depend on the choice of ${\rm b}$.) These three properties lead to three critical parameters separating phases in which they occur or not:
\begin{align*}
\beta_c^{\rm mag}&:=\inf\{\beta>0:\text{(MAG$_\beta$)}\},\\
\beta_c^{\rm lro}&:=\inf\{\beta>0:\text{(LRO$_\beta$)}\},\\
\beta_c^{\rm exp}&:=\sup\{\beta>0:\text{(EXP$_\beta$)}\}.
\end{align*}
The first parameter $\beta_c^{\rm mag}$ is usually called the {\em critical inverse temperature} and is simply denoted $\beta_c$. In the cases we will study, $\beta_c^{\rm lro}=\beta_c$ (see Section~\ref{sec:uniqueness}) and we therefore do not discuss when they are distinct in details. 

Models with $\Sigma$ discrete for $d\ge2$, or arbitrary $\Sigma$ for $d\ge3$, are expected to have spontaneous magnetization for $\beta\gg1$ (thus proving that $\beta_c<\infty$). We will also see later that when $\beta_c<\infty$, one can often prove\footnote{One may also have $\beta_c^{\rm exp}<\beta_c<\infty$, as shown in \cite{FroSpe81} for the planar Clock model with $q\gg1$ states, but this situation is less common.} that $\beta_c^{\rm exp}=\beta_c$. In such case, we say that the model undergoes a {\em sharp order/disorder phase transition}. If the model satisfies (MAG$_{\beta_c}$), the phase transition is said to be {\em discontinuous}; otherwise, it is {\em continuous}.

On the contrary, the Mermin-Wagner theorem \cite{IofShlVel02,MerWag66} states that a model on $\bbZ^2$ for which $\Sigma$ is a compact continuous connected Lie group satisfies $\beta_c=+\infty$. Then, two cases are possible:
\medbreak\noindent
$\bullet$ $\beta^{\rm exp}=\infty$: the model does not undergo any phase transition. Polyakov \cite{Pol75} predicted this behavior for planar $O(n)$-models with $n\ge3$. We refer to \cite{DumPelSam14} and references therein for a more precise discussion.
\medbreak\noindent
$\bullet$ $\beta^{\rm exp}<\infty$: the model undergoes a {\em {Berezinsky-Kosterlitz-Thouless}} (BKT) phase transition. This type of phase transition is named after Berezinsky and Kosterlitz-Thouless\footnote{Kosterlitz and Thouless were awarded a Nobel prize in 2016 for their work on topological phase transitions.}, who introduced it (non rigorously) for the planar $XY$-model in two independent papers \cite{Ber72,KosTho73}. Note that in such case, there is no spontaneous magnetization at any $\beta$.
\begin{figure}
\includegraphics[width=0.27\textwidth,angle=90]{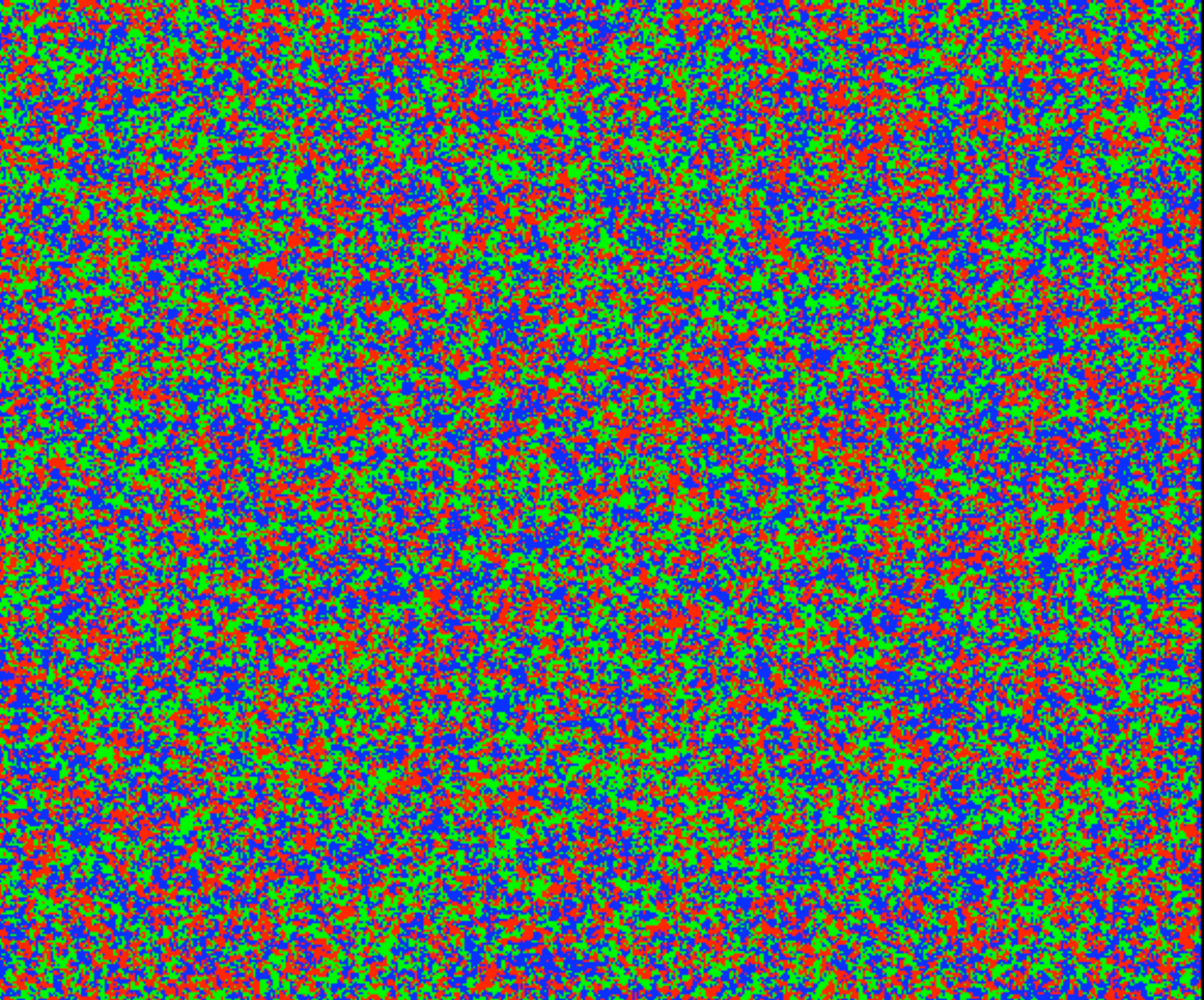}
\includegraphics[width=0.27\textwidth,angle=90]{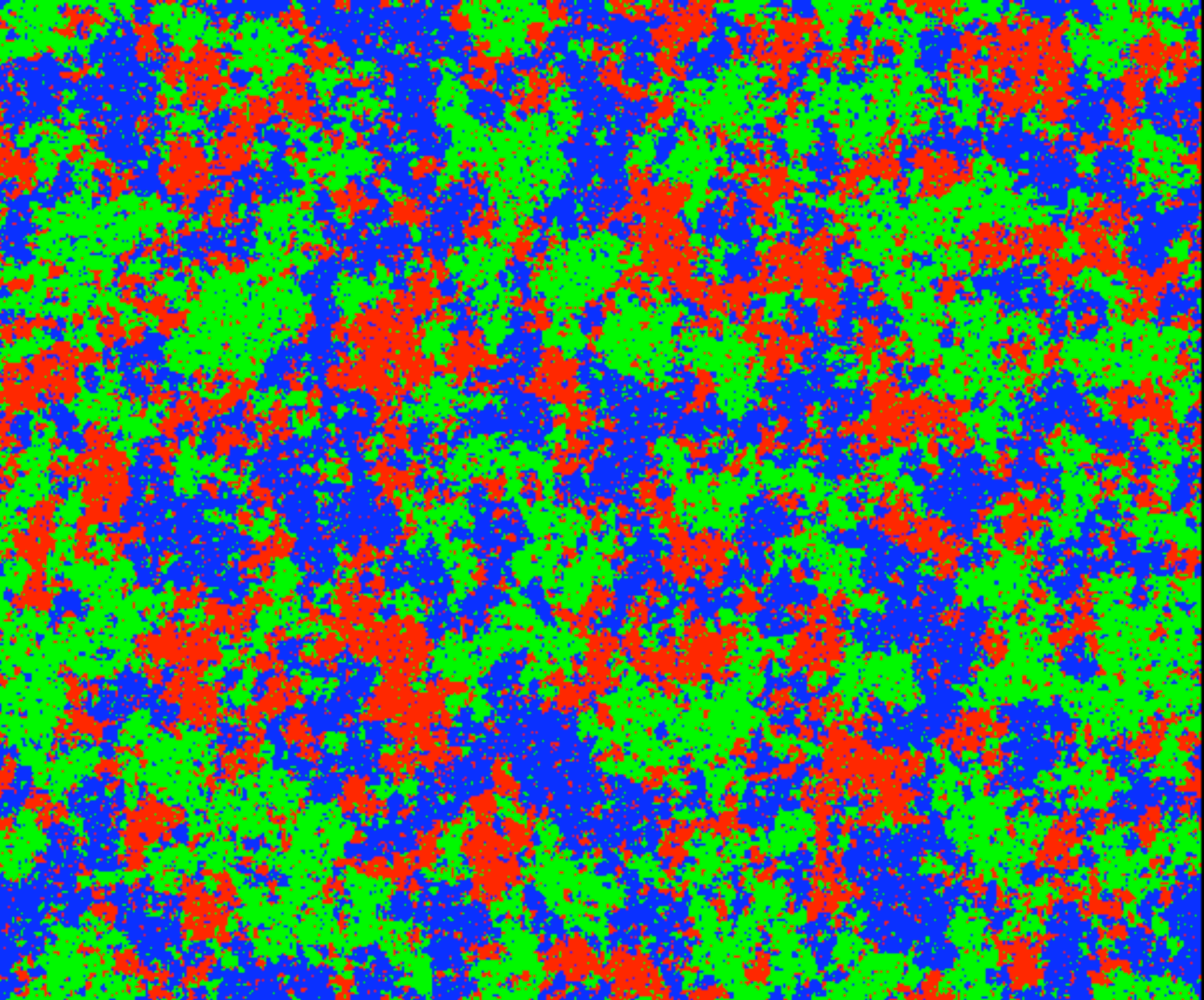}
\includegraphics[width=0.27\textwidth,angle=90]{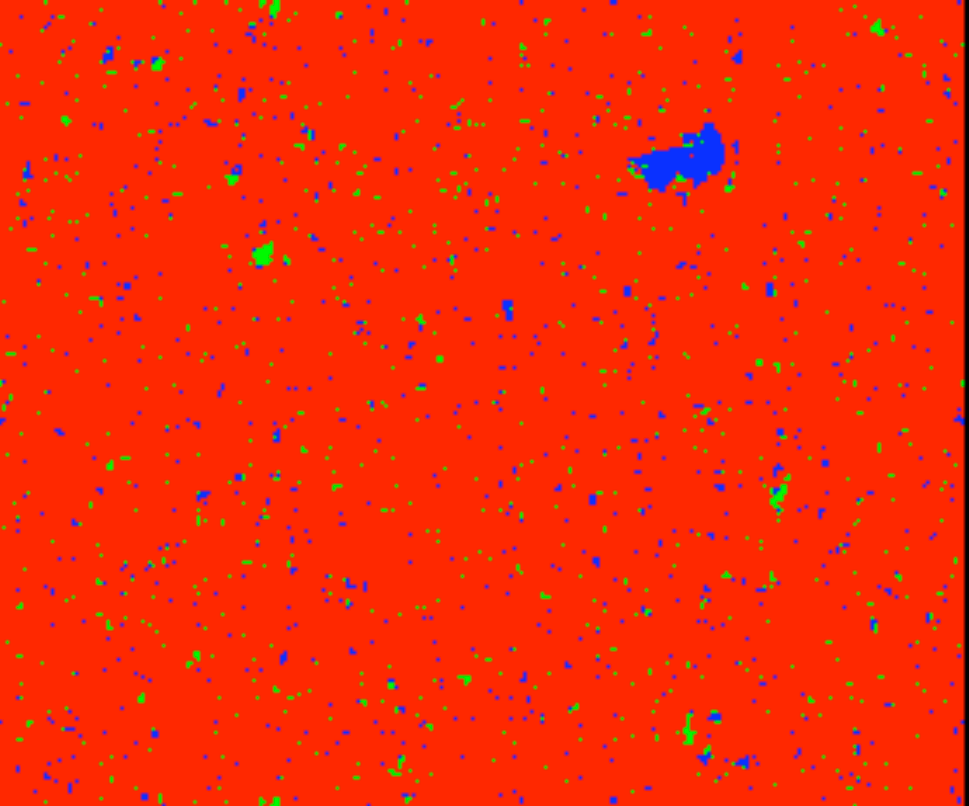}
\caption{Simulations of three-state planar Potts model at subcritical, critical and supercritical temperatures.}
\end{figure}

\bexo
\begin{exercise}
Prove that for the Ising model on $\bbZ$, $\beta_c=\beta_c^{\rm exp}=\beta_c^{\rm lro}=+\infty$. Prove the same result for the Potts model with $q\ge3$. What can be said for the spin $O(n)$ models?
\end{exercise}
\eexo
To conclude this section, let us draw a panorama of questions. The table below gathers the behaviors that are expected for the Ising, Potts and spin $O(n)$ models. \bigbreak
\renewcommand{\arraystretch}{1.5}
\quad\begin{tabularx}{\textwidth}{@{}c@{}c@{}|c@{}|c@{}} 

\cline{3-4}
    & & \multicolumn{1}{c}{$d=2$} & \multicolumn{1}{|c|}{$d\ge 3$}   \\
\cline{1-4}
  \multicolumn{2}{|c|}{\multirow{1}{*}{Ising}}      & \multicolumn{2}{c|}{Continuous sharp order-disorder PT} \\
    \cline{1-2}\cline{4-4}
   \multicolumn{1}{|c}{\multirow{2}{*}{Potts}} &  \multicolumn{1}{|c|}{\multirow{1}{*}{$q\in\{3,4\}$}} &  
  & \multicolumn{1}{c|}{}  \\
    \cline{2-2}\cline{3-3}
 \multicolumn{1}{|c}{}& \multicolumn{1}{|c|}{\multirow{1}{*}{$q\ge 5$}} & \multicolumn{2}{c|}{Discontinuous sharp order-disorder PT}  \\
    
 \cline{1-2}\cline{3-4}
 \multicolumn{1}{|c}{\multirow{2}{*}{$O(n)$}} & \multicolumn{1}{|c|}{\multirow{1}{*}{$n=2$}}   &  \multicolumn{1}{c|}{BKT PT} & \multicolumn{1}{c|}{}  \\    
    \cline{2-2}\cline{3-3}
 \multicolumn{1}{|c}{}& \multicolumn{1}{|c|}{\multirow{1}{*}{$n\ge3$}} & \multicolumn{1}{c|}{{Absence of PT}}  & \multicolumn{1}{l|}{{Continuous sharp} order-disorder PT}  \\
    \cline{1-4}
\end{tabularx}
\medskip
\medbreak
The claims about Ising and Potts models will all be proved, except the discontinuity of the phase transition for $q\ge3$ and $d\ge 3$, which is known  only for $q\ge q_c(d)\gg1$ \cite{KotShl82} or $d\ge d_c(q)\gg1$ \cite{BisCha03}. We will not deal with continuous spins, but we  mention that the understanding is more restricted there. In two dimensions, it is known that models with continuous spin symmetry cannot have an order-disorder phase transition \cite{MerWag66}.  The proof that the $O(1)$ model undergoes a BKT phase transition is due to Fr\"ohlich and Spencer \cite{FroSpe81}, while the existence of a phase transition in dimension $d\ge3$ goes back to Fr\"ohlich, Simon and Spencer \cite{FroSimSpe76}. The fact that the phase transition is continuous and sharp in dimension $d\ge3$ is still open. Proving Polyakov's conjecture, i.e.~that spin $O(n)$ models do not undergo any phase transition in dimension 2, is one of the biggest problem in mathematical physics.

%
\begin{figure}
\includegraphics[width=0.31\textwidth]{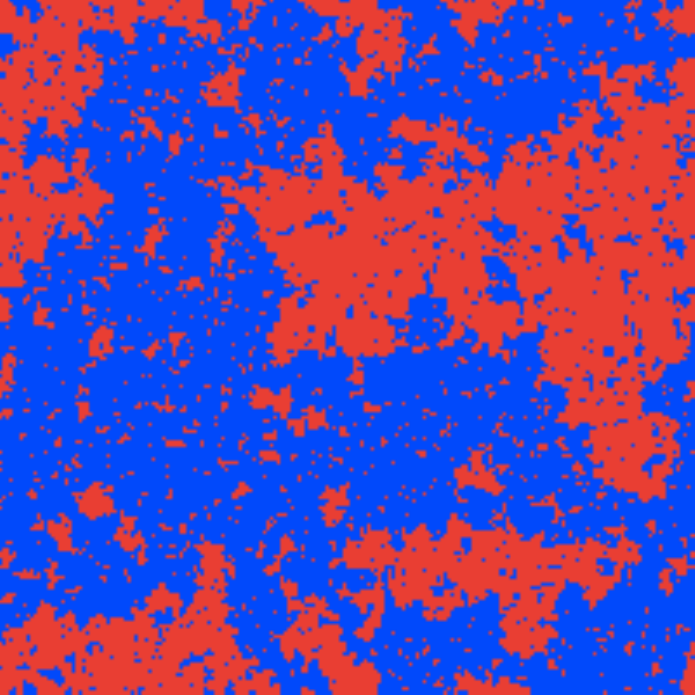}\quad\includegraphics[width=0.31\textwidth]{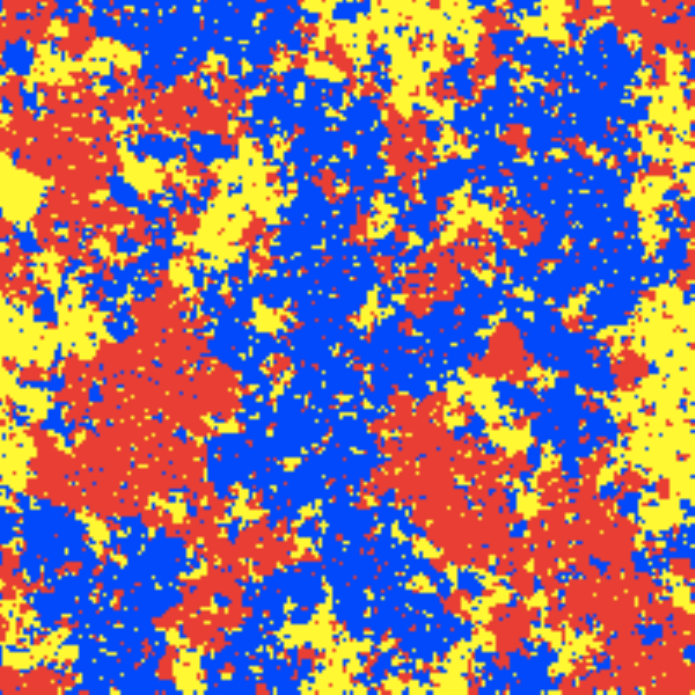}\quad\includegraphics[width=0.31\textwidth]{Potts_4}

\includegraphics[width=0.31\textwidth]{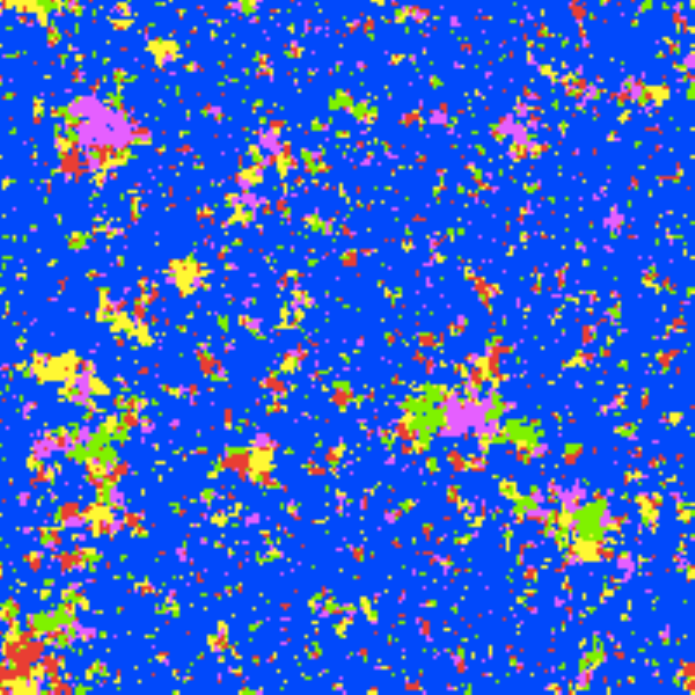}\quad\includegraphics[width=0.31\textwidth]{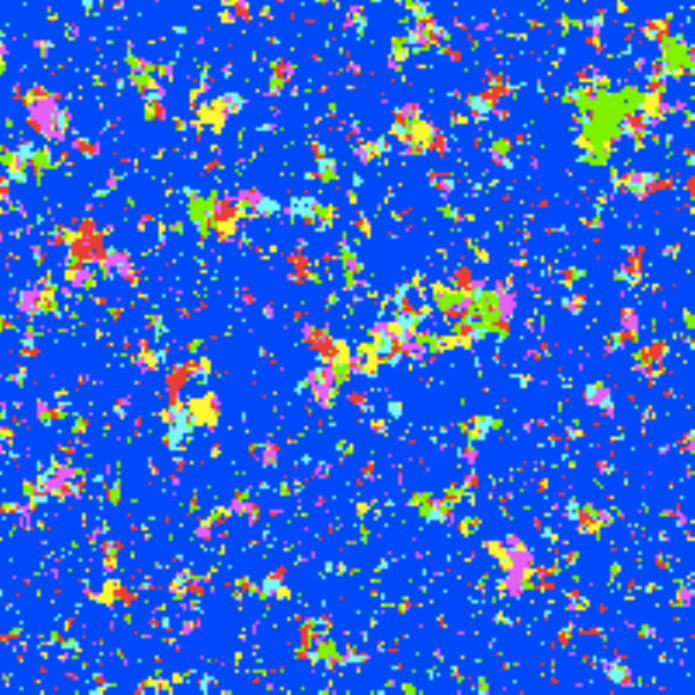}\quad\includegraphics[width=0.31\textwidth]{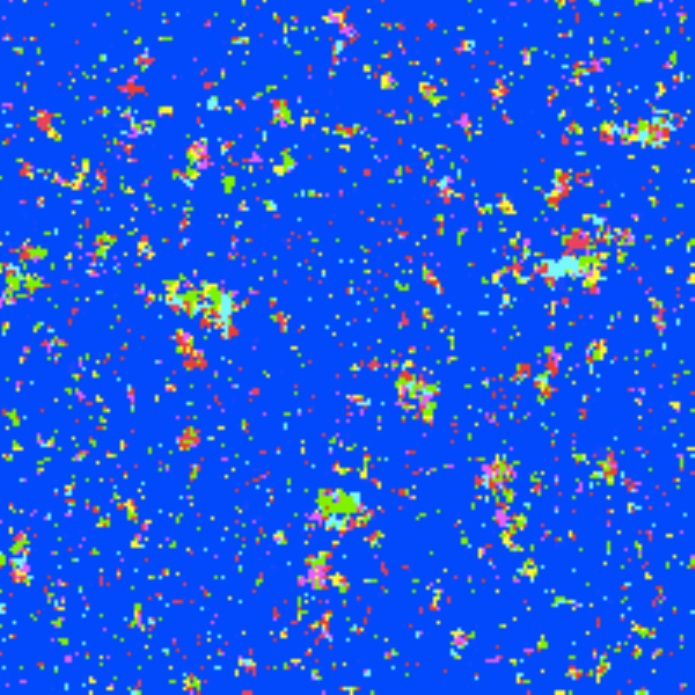}
\caption{Simulations of the critical planar Potts model with $q$ equal to $2$, $3$, $4$, $5$, $6$ and $9$ respectively. The behavior for $q\le 4$ is clearly different from the behavior for $q>4$. In the first three pictures, each color (corresponding to each element of $\bbT_q$) seems to play the same role, while in the last three, one color wins over the other ones.}
\end{figure}
%

\begin{mdframed}[backgroundcolor=green!00]
{\bf Notation.}
The behavior of lattice models with a space of spins which is continuous is quite different from the one with discrete spins. For this reason, we choose to focus on typical examples of the second kind. From now on, we work with the Ising and Potts model only. We denote the measure for the $q$-state Potts model $\mu^\#_{G,\beta,q}$. In order to lighten the notation, the measure for the Ising model is denoted by $\mu^\#_{G,\beta}$ rather than $\mu^\#_{G,\beta,2}$. Also, we will use $+$ and $-$ instead of $+1$ and $-1$.\eexo

\subsection{Graphical representation of Potts models}

We would like to have a more geometric grasp at correlations between spins of lattice models. In order to do so, we introduce another type of models, called percolation models. 

A {\em percolation configuration} $\omega=(\omega_e:e\in E)$ on $G=(V,E)$ is an element of $\{0,1\}^{E}$. If $\omega_e=1$, the edge $e$ is said to be {\em open}, otherwise $e$ is said to be {\em closed}.
A configuration $\omega$ can be seen as a subgraph of $G$ with vertex-set $V$ and  edge-set $\{e\in E:\omega_e=1\}$. A {\em percolation model} is given by a distribution on percolation configurations on $G$. 

In order to study the connectivity properties of the (random) graph $\omega$, we introduce some notation. A {\em cluster} is a maximal connected component of the graph $\omega$ (it may be an isolated vertex).
Two vertices $x$ and
$y$ are {\em connected in $\omega$} if they are in the same cluster. We denote this event
by $x\longleftrightarrow y$. For $A,B\subset\bbZ^d$, set
$A\longleftrightarrow B$ if there exists a vertex of $A$ connected to
a vertex of $B$. We also allow ourselves to consider $B=\infty$, in which case we mean that a vertex in $A$ is in an infinite cluster.

 The simplest example of percolation model is provided by {\em Bernoulli percolation}: each edge is open with probability $p$, and closed with probability $1-p$, independently of the states of other edges. Below, the measure is denoted by $\bbP_p$ (its expectation is denoted by $\bbE_p$). This model was introduced by Broadbent and Hammersley in 1957 \cite{BroHam57} and has been one of the most studied probabilistic model. We refer to \cite{Gri99} for a book on the subject.

Here, we will be interested in a slightly more complicated percolation model, named the {\em random-cluster model}, which is a percolation model in which the states open or closed of edges depend on each others. 
This model was introduced by Fortuin and Kasteleyn in 1972 \cite{ForKas72} and is sometimes referred to as the {\em Fortuin-Kasteleyn percolation}.

\subsubsection{Definition of the random-cluster model}

Let $G$ be a finite subgraph of $\bbZ^d$. Let 
$o(\omega)$ and $c(\omega)$ denote the number of open and
closed edges of $\omega$. Define
\emph{boundary conditions} $\xi$ to be
a partition $P_1\sqcup \dots\sqcup P_k$ of 
$\partial G$. For boundary conditions $\xi$, define the graph $\omega^\xi$ obtained from $\omega$ by contracting, for each $1\le i\le k$, all the vertices of $P_i$ into one vertex. Also, let $k(\omega^\xi)$ be the number of clusters in the graph $\omega^\xi$.

As an example, the {\em free boundary conditions} (denoted $0$) correspond
to the partition composed of singletons only: $\omega^0=\omega$ and we prefer the lighter notation $k(\omega)$ to $k(\omega^0)$. The {\em wired boundary conditions} (denoted $1$) correspond to the partition $\{\partial G\}$: $k(\omega^1)$ is the number of clusters obtained if all clusters touching the boundary are counted as 1. In general, a subgraph $\xi$ of $\bbZ^d$ induces boundary conditions as follows: two vertices of $\partial G$ are in the same $P_i$ if they are in the same cluster of $\xi$. In this case, boundary conditions will often be identified with the graph $\xi$.
 
 \bexo
\begin{exercise}
Construct the random-cluster on the torus as the random-cluster model on a finite box with a proper choice of boundary conditions.\end{exercise}
\eexo

\begin{definition}The probability measure 
$\phi^{\xi}_{G,p,q}$ of the random-cluster model on $G$ with {\em edge-weight} 
$p\in[0,1]$, {\em cluster-weight} $q>0$ and boundary conditions $\xi$ is defined by
\begin{equation}
  \label{probconf}
  \phi_{G,p,q}^{\xi} [\omega] :=
  \frac {p^{o(\omega)}(1-p)^{c(\omega)}q^{k(\omega^\xi)}}
  {Z_{G,p,q}^{\xi}}
\end{equation}
for every configuration $\omega\in\{0,1\}^{E}$. The constant $Z_{G,p,q}^{\xi}$ is a 
normalizing constant, referred to as the \emph{partition function}, defined in such a way that the sum over all configurations equals 1. 
\end{definition}

Fortuin and Kasteleyn introduced the random-cluster model as a unification of different models of statistical physics satisfying series/parallel laws when modifying the underlying graph:
\begin{itemize}
\item For $q=1$, the random-cluster model corresponds to Bernoulli percolation. In this case, and to distinguish with the case $q\ne 1$, we prefer the notation $\bbP_p$ instead of the random-cluster notation. 
\item For integers $q\ge2$, the model is related to Potts models; see Section~\ref{sec:ES}.
\item For $p\rightarrow 0$ and $q/p\rightarrow0$, the model is connected to electrical networks via Uniform Spanning Trees; see Exercise~\ref{exo:UST}.\end{itemize} 
\bexo
\begin{exercise}\label{exo:UST}
Consider a finite graph $G=(V,E)$. Prove that the limit of $\phi_{G,p,q}^0$ with $p\rightarrow 0$ and $q/p\rightarrow 0$ is the Uniform Spanning Tree on $G$, i.e.~the uniform measure on connected subgraphs of the form $H=(V,F)$, with $F$ not containing any cycle. \end{exercise}
\eexo

Let us mention two important properties of random-cluster models.
For boundary conditions $\xi=P_1\sqcup\cdots\sqcup P_k$ and $\psi\in\{0,1\}^{E\setminus\{e\}}$, where $e=xy$, one may easily check that
\begin{equation}\label{eq:arg}\phi_{G,p,q}^\xi[\omega_e=1|\omega_{|E\setminus\{e\}}=\psi]=\phi^{\psi^\xi}_{\{e\},p,q}[\omega_e=1]=\begin{cases}\ \ \ \ \ \ \  p&\text{ if $x\longleftrightarrow y$ in $\psi^\xi$,}\\\displaystyle\frac{p}{p+q(1-p)}&\text{ otherwise.}\end{cases}\end{equation}
Note that in particular the model satisfies the {\em finite energy property}, meaning that there exists $c_{\rm FE}>0$ such that for any $e$ and $\psi$
\begin{equation}\phi_{G,p,q}^\xi[\omega_e=1|\omega_{|E\setminus\{e\}}=\psi]\in [c_{\rm FE},1-c_{\rm FE}].\label{eq:finite energy}\tag{FE}\end{equation}
Also, \eqref{eq:arg} can be extended by induction to any subgraph $G'=(V',E')$ of $G$, in the sense that for any boundary conditions $\xi$ and any $\psi\in\{0,1\}^{E\setminus E'}$ and $\psi'\in\{0,1\}^{E'}$,
\begin{equation}\phi_{G,p,q}^\xi[\omega_{|E'}=\psi'|\omega_{|E\setminus E'}=\psi]=\phi^{\psi^\xi}_{G',p,q}(\psi').\label{eq:domain Markov}\tag{DMP}\end{equation}
(Recall the definition of the graph $\psi^\xi$ from above.) This last property is called the {\em domain Markov property}.

\bexo
\begin{exercise}
Prove carefully the finite energy property \eqref{eq:finite energy} and the domain Markov property \eqref{eq:domain Markov}.
\end{exercise}
\eexo

\subsubsection{The coupling between the random-cluster and Potts models}\label{sec:ES}

The random-cluster model enables us to rephrase correlations in Potts models in terms of random subgraphs of $\bbZ^d$. This is the object of this section.

Consider an integer $q\ge2$ and let $G$ be a finite graph. Assume that a configuration $\omega\in\{0,1\}^{E}$ is given. One can deduce a spin configuration $\sigma\in\bbT_q^{V}$ by assigning uniformly and independently to each cluster a spin. More precisely, consider a iid family of uniform random variables $\sigma_\calC$ on $\bbT_q$ indexed by clusters $\calC$ in $\omega$. We then define $\sigma_x$ to be equal to $\sigma_\calC$ for every $x\in\calC$. Note that all the vertices in the same cluster automatically receive the same spin.
\begin{proposition}[Coupling for free boundary conditions]\label{prop:coupling wired}
Fix an integer $q\ge2$, $p\in(0,1)$ and $G$ finite. If $\omega$ is distributed according to $\phi_{G,p,q}^0$ then $\sigma$ constructed above is distributed according to the $q$-state Potts measure $\mu_{G,\beta,q}^{\rm f}$, where \begin{equation}\label{eq:beta p}\beta:=-\tfrac{q-1}{q}\ln (1-p).\end{equation}
\end{proposition}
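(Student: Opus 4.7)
The plan is to introduce the standard Edwards--Sokal joint measure on $\{0,1\}^E\times \bbT_q^V$, namely
\begin{equation*}
\bbP[(\omega,\sigma)]\;\propto\;p^{o(\omega)}(1-p)^{c(\omega)}\prod_{e=xy\,:\,\omega_e=1}\mathbbm{1}_{\sigma_x=\sigma_y},
\end{equation*}
and then to identify both marginals and the conditional distribution of $\sigma$ given $\omega$. The proposition will follow by checking that the $\omega$-marginal is $\phi^0_{G,p,q}$, that the conditional distribution of $\sigma$ given $\omega$ matches the construction described before the statement, and that the $\sigma$-marginal is $\mu^{\rm f}_{G,\beta,q}$ with the claimed value of $\beta$.

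First I would compute the marginal on $\omega$. Fixing $\omega$ and summing over $\sigma\in \bbT_q^V$, the product of indicators forces $\sigma$ to be constant on each cluster of $\omega$, while allowing any of the $q$ spins on each cluster independently; this contributes a factor $q^{k(\omega)}$. Hence the $\omega$-marginal is proportional to $p^{o(\omega)}(1-p)^{c(\omega)}q^{k(\omega)}$, which is exactly $\phi^0_{G,p,q}$. The same computation shows that, conditional on $\omega$, the law of $\sigma$ is uniform on spin configurations that are constant on clusters, i.e.\ precisely the distribution obtained by assigning i.i.d.\ uniform colors to the clusters. This is the construction in the statement, so once the $\omega$-marginal is identified, the coupling claim becomes immediate.

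Next I would compute the marginal on $\sigma$. Fix $\sigma$ and sum over $\omega\in\{0,1\}^E$: since the joint weight factorizes over edges, each edge $e=xy$ contributes $(1-p)+p\cdot \mathbbm{1}_{\sigma_x=\sigma_y}$, equal to $1$ when $\sigma_x=\sigma_y$ and to $1-p$ otherwise. Therefore
\begin{equation*}
\bbP[\sigma]\;\propto\;\prod_{e=xy\in E}(1-p)^{\mathbbm{1}_{\sigma_x\neq\sigma_y}}.
\end{equation*}
To recast this in Boltzmann form I use the identity $\mathbbm{1}_{\sigma_x\neq\sigma_y}=\tfrac{q-1}{q}(1-\sigma_x\cdot\sigma_y)$, which one verifies by plugging in the two possible values $\sigma_x\cdot\sigma_y\in\{1,-\tfrac{1}{q-1}\}$ from the definition of $\bbT_q$. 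Taking logarithms then gives, up to a constant depending only on $|E|$,
\begin{equation*}
\bbP[\sigma]\;\propto\;\exp\Bigl[\tfrac{q-1}{q}\ln(1-p)\sum_{xy\in E}(1-\sigma_x\cdot\sigma_y)\Bigr]\;\propto\;\exp[-\beta H^{\rm f}_G(\sigma)],
\end{equation*}
with $\beta=-\tfrac{q-1}{q}\ln(1-p)$, which is exactly $\mu^{\rm f}_{G,\beta,q}$.

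There is no real obstacle here: once the joint measure is written down, both marginals factor so cleanly that each can be read off by inspection. The only step that is not pure bookkeeping is the algebraic identity expressing $\mathbbm{1}_{\sigma_x\neq\sigma_y}$ in terms of $\sigma_x\cdot\sigma_y$, which is the bridge between the geometric "cluster-coloring" formulation and the Hamiltonian formulation of the Potts model and explains the specific relation \eqref{eq:beta p} between $p$ and $\beta$.
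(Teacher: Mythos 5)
Your proof is correct and follows essentially the same route as the paper: both introduce the Edwards--Sokal joint measure, observe that compatibility forces the weight $p^{o(\omega)}(1-p)^{c(\omega)}$ on pairs, and sum over $\omega$ edge by edge to get $(1-p)^{|E_\sigma|}$ before converting to Boltzmann form via the scalar-product identity on $\bbT_q$. The only cosmetic difference is that the paper defines the joint law constructively (so the $\omega$-marginal and the cluster-coloring conditional are automatic), whereas you posit the joint weight abstractly and verify all three pieces; the computations are identical.
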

\begin{proof}  Consider the law
  ${\bf P}$ of the pair $(\omega,\sigma)$, where $\omega$ is a percolation
  configuration with free boundary conditions and $\sigma$ is the
  corresponding spin configuration constructed as explained
  above. By definition, the first marginal of the distribution is sampled according to $\phi^0_{G,p,q}$. 
We wish to compute the law of the second marginal.  

Say that the configurations $\sigma\in\bbT_q^{V}$ and $\omega\in  \{0,1\}^{E}$ are {\em compatible}  if $$\forall xy\in E~:~\omega_{xy}=1\Longrightarrow\sigma_x=\sigma_y.$$Then, if $\omega$ and $\sigma$ are not compatible, ${\bf P}[(\omega,\sigma)]=0$, and if they are,
  \begin{align*}
    {\bf P}[(\omega,\sigma)]~&=~\tfrac{1}{Z^0_{G,p,q}}\ p^{o(\omega)}
    (1-p)^{c(\omega)}q^{k(\omega)}\cdot
    q^{-k(\omega)}=~\tfrac{1}{Z^0_{G,p,q}}\ 
    p^{o(\omega)}(1-p)^{c(\omega)}.
  \end{align*}
For $\sigma\in\bbT_q^{V}$, introduce $E_\sigma:=\{xy\in E:\sigma_x\ne \sigma_y\}$ and note that $\omega$ compatible with $\sigma$ must satisfy $\omega_{xy}=0$ for edges $xy\in E_\sigma$, and that there is no restriction on $\omega_{xy}$ for edges $xy\notin E_\sigma$. Summing ${\bf P}[(\omega,\sigma)]$ over configurations $\omega$ compatible with $\sigma$, we find
    \begin{align*}
    {\bf P}[\sigma]
    ~&=~\frac{1}{Z^0_{G,p,q}}\ (1-p)^{|E_\sigma|}
    \underbrace{\sum_{\omega'\in\{0,1\}^{E\setminus E_\sigma}}
    p^{o(\omega')}(1-p)^{c(\omega')}}_{=1}=~\underbrace{\frac{{\rm e}^{-\beta |E|}}{Z^0_{G,p,q}}}_{C} \exp[-\beta H^{\rm f}_G(\sigma)].\end{align*}
In the second equality, we used that $1-p=\exp(-\tfrac q{q-1}\beta)$ and 
$$H^{\rm f}_G[\sigma]=\tfrac1{q-1}|E_\sigma|-|E\setminus E_\sigma|=\tfrac{q}{q-1}|E_\sigma|-|E|.$$
The proof follows readily since $C$ does not depend on $\sigma$, hence is equal to $1/Z^{\rm f}_{G,\beta,q}$.\end{proof}

\bexo

\begin{exercise}[reverse procedure] \label{eq:reverse ES} In the coupling above, what is the procedure to obtain the configuration $\omega$ from a configuration $\sigma$?
\end{exercise}
\eexo

The same coloring procedure as above, except for the clusters $\calC$ intersecting the boundary $\partial G$ for which $\sigma_\calC$ is automatically set to be equal to ${\rm b}$, provides us with another coupling.

\begin{proposition}[Coupling for monochromatic boundary conditions]\label{prop:coupling wired}
Fix an integer $q\ge2$, $p\in(0,1)$ and $G$ finite. If $\omega$ is distributed according to $\phi_{G,p,q}^1$, then $\sigma$ constructed above is distributed according to the $q$-state Potts measure $\mu_{G,\beta,q}^{\rm b}$, where $\beta=-\tfrac{q-1}{q}\ln (1-p)$.
\end{proposition}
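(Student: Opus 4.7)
The plan is to mimic the proof of the free boundary case, the only differences being (i) the weight of $\omega$ is now $p^{o(\omega)}(1-p)^{c(\omega)}q^{k(\omega^1)}/Z^1_{G,p,q}$, and (ii) the coloring is no longer uniform on every cluster, since clusters intersecting $\partial G$ are forced to take the value $\mathrm{b}$. I will build the joint law $\mathbf{P}$ of $(\omega,\sigma)$, compute $\mathbf{P}[(\omega,\sigma)]$ for compatible pairs, and then sum over $\omega$ to recover the marginal of $\sigma$.

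Let $k^{\mathrm{in}}(\omega)$ denote the number of clusters of $\omega$ that do not touch $\partial G$. Under the wired identification of $\partial G$ into a single vertex, all boundary-touching clusters are merged into one, so $k(\omega^1)=k^{\mathrm{in}}(\omega)+1$. In the modified coloring, only the $k^{\mathrm{in}}(\omega)$ interior clusters receive an independent uniform spin on $\bbT_q$, while the boundary clusters are assigned the value $\mathrm{b}$ deterministically. Therefore, if $(\omega,\sigma)$ is compatible in the sense of the previous proof and in addition $\sigma_x=\mathrm{b}$ for every $x\in\partial G$, then
\ben
\mathbf{P}[(\omega,\sigma)]~=~\tfrac{1}{Z^1_{G,p,q}}\,p^{o(\omega)}(1-p)^{c(\omega)}q^{k(\omega^1)}\cdot q^{-k^{\mathrm{in}}(\omega)}~=~\tfrac{q}{Z^1_{G,p,q}}\,p^{o(\omega)}(1-p)^{c(\omega)},
\een
and $\mathbf{P}[(\omega,\sigma)]=0$ otherwise.

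Fix now $\sigma\in\bbT_q^V$ with $\sigma_x=\mathrm{b}$ for every $x\in\partial G$, and let $E_\sigma:=\{xy\in E:\sigma_x\ne\sigma_y\}$. As in the previous proposition, the $\omega$ compatible with $\sigma$ are exactly those with $\omega_{xy}=0$ for $xy\in E_\sigma$ and arbitrary on $E\setminus E_\sigma$. Summing,
\ben
\mathbf{P}[\sigma]~=~\tfrac{q}{Z^1_{G,p,q}}(1-p)^{|E_\sigma|}\sum_{\omega'\in\{0,1\}^{E\setminus E_\sigma}}p^{o(\omega')}(1-p)^{c(\omega')}~=~\tfrac{q}{Z^1_{G,p,q}}(1-p)^{|E_\sigma|}.
\een
Using exactly the identity $1-p=\exp(-\tfrac{q}{q-1}\beta)$ and the relation $H^{\mathrm{f}}_G(\sigma)=\tfrac{q}{q-1}|E_\sigma|-|E|$ from the free case, we conclude that $\mathbf{P}[\sigma]$ is proportional to $\exp[-\beta H^{\mathrm{f}}_G(\sigma)]\mathbf{1}_{\sigma\equiv \mathrm{b}\text{ on }\partial G}$. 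This is precisely the Potts measure $\mu^{\mathrm{f}}_{G,\beta,q}[\,\cdot\,|\sigma_x=\mathrm{b},\forall x\in\partial G]=\mu^{\mathrm{b}}_{G,\beta,q}$, which proves the claim. The only mildly delicate point is the bookkeeping $k(\omega^1)=k^{\mathrm{in}}(\omega)+1$, which accounts for the spurious factor $q$ in the weight (absorbed by the partition function) and ensures consistency of the two marginals.
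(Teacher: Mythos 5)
Your proof is correct: the bookkeeping $k(\omega^1)=k^{\mathrm{in}}(\omega)+1$ (valid since $\partial G\neq\emptyset$ forces at least one boundary-touching cluster) correctly turns the wired weight into $q/Z^1_{G,p,q}$ times the Bernoulli weight, and the rest is the same telescoping sum as in the free-boundary case, landing exactly on the conditional measure defining $\mu^{\mathrm{b}}_{G,\beta,q}$. This is precisely the adaptation of the free-boundary argument that the paper intends (it leaves this proof as an exercise), so nothing further is needed.
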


\bexo
\begin{exercise}
Write carefully the proof of Proposition~\ref{prop:coupling wired}.
\end{exercise}
\eexo

This coupling provides us with a dictionary between the properties of the random-cluster model and the Potts model. In order to illustrate this fact, let us mention two consequences.

\begin{corollary}\label{cor:aa}Fix $d,q\ge2$. Let $G$ be a finite subgraph of $\bbZ^d$. Let $\beta>0$ and $p\in[0,1]$ be connected by \eqref{eq:beta p}. For any $x\in V$,
\begin{align}\mu_{G,\beta,q}^{\rm f}[\sigma_x\cdot \sigma_y]&=\phi_{G,p,q}^0[x\longleftrightarrow y],\label{eq:coupling 2 point}\\
\mu_{G,\beta,q}^{\rm b}[\sigma_x\cdot{\rm b}]&=\phi_{G,p,q}^1[x\longleftrightarrow\partial G].\label{eq:coupling magnetization}\end{align}
\end{corollary}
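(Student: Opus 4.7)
The plan is to deduce both identities directly from the Edwards--Sokal coupling supplied by the two preceding propositions by conditioning on $\omega$ and averaging over the random colouring of clusters. The only non-trivial algebraic input is the observation that a uniformly chosen spin on $\bbT_q$ has mean zero: using the inner product structure defining $\bbT_q$,
\ben
\sum_{a\in\bbT_q}a\cdot b \;=\; 1+(q-1)\cdot\bigl(-\tfrac1{q-1}\bigr)\;=\;0\qquad\text{for every fixed }b\in\bbT_q,
\een
so if $\tau$ is uniform on $\bbT_q$ and independent of some random vector $\xi\in\bbT_q$, then ${\bf E}[\tau\cdot\xi]=0$.

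For \eqref{eq:coupling 2 point}, I would let $({\bf P},\omega,\sigma)$ be the coupling of Proposition~\ref{prop:coupling wired} (free boundary version), so that the $\omega$-marginal is $\phi^0_{G,p,q}$ and the $\sigma$-marginal is $\mu^{\rm f}_{G,\beta,q}$. Condition on $\omega$ and split according to whether $x$ and $y$ lie in the same cluster. On the event $\{x\longleftrightarrow y\}$ the colouring rule forces $\sigma_x=\sigma_y$, hence $\sigma_x\cdot\sigma_y=1$. On the complementary event $\{x\not\longleftrightarrow y\}$, the spins $\sigma_x$ and $\sigma_y$ are assigned by two independent uniform picks from $\bbT_q$, and the zero-mean identity above gives ${\bf E}[\sigma_x\cdot\sigma_y\mid\omega]=0$. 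Taking expectations yields
\ben
\mu^{\rm f}_{G,\beta,q}[\sigma_x\cdot\sigma_y]\;=\;{\bf E}[\sigma_x\cdot\sigma_y]\;=\;{\bf P}[x\longleftrightarrow y]\;=\;\phi^0_{G,p,q}[x\longleftrightarrow y].
\een

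For \eqref{eq:coupling magnetization}, I would run the same argument with the coupling of the monochromatic-boundary version (Proposition~\ref{prop:coupling wired}), where clusters touching $\partial G$ are forced to receive the spin ${\rm b}$ and all other clusters receive independent uniform spins on $\bbT_q$. Condition on $\omega$: if $x\longleftrightarrow\partial G$ then $\sigma_x={\rm b}$ and $\sigma_x\cdot{\rm b}=1$; otherwise $\sigma_x$ is uniform on $\bbT_q$ independently of ${\rm b}$ and ${\bf E}[\sigma_x\cdot{\rm b}\mid\omega]=0$ by the same zero-mean computation. Averaging over $\omega$ gives the claimed equality.

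There is no substantive obstacle; the whole argument is a one-line conditional expectation once the coupling is in place. The only point worth writing out carefully is the zero-mean computation on $\bbT_q$, since it is where the specific geometry of the simplex (rather than a generic finite alphabet) enters, and it is exactly this cancellation that makes the scalar product $\sigma_x\cdot\sigma_y$ the natural Potts observable corresponding to cluster connectivity.
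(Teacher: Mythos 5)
Your proof is correct and follows essentially the same route as the paper's: condition on $\omega$ in the Edwards--Sokal coupling, split according to whether the relevant vertices are connected, and use that $\sigma_x\cdot\sigma_y=1$ on the connected event while the scalar product has zero conditional mean on the complementary event. The only difference is that you spell out the zero-mean computation $\sum_{a\in\bbT_q}a\cdot b=0$ explicitly, which the paper leaves implicit.
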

\begin{proof}We do the proof for $\mu_{G,\beta,q}^{\rm f}[\sigma_x\cdot \sigma_y]$. Consider the coupling ${\bf P}$ between $\omega$ and $\sigma$ and denote its expectation by ${\bf E}$. If $x\leftrightarrow y$ denotes the event that $x$ and $y$ are connected in $\omega$, we find that 
\begin{align*}\mu_{G,\beta,q}^{\rm f}[\sigma_x\cdot\sigma_y]&={\bf E}[\sigma_x\cdot\sigma_y\mathbbm 1_{x\longleftrightarrow y}]+{\bf E}[\sigma_x\cdot\sigma_y\mathbbm 1_{x{\not\longleftrightarrow}y}]=\phi_{G,p,q}^0[x\longleftrightarrow y],\end{align*}
where we used that $\sigma_x=\sigma_y$ if $x$ is connected to $y$, and $\sigma_x$ and $\sigma_y$ are independent otherwise. The same reasoning holds for $\mu_{G,\beta,q}^{\rm b}[\sigma_x\cdot{\rm b}]$. \end{proof}


As a side remark, note that we just proved that $\mu_{G,\beta,q}^{\rm f}[\sigma_x\cdot\sigma_y]$ and $\mu_{G,\beta,q}^{\rm b}[\sigma_x\cdot{\rm b}]$ are non-negative. In the case of the Ising model, one can extend the previous relation to the following: for any $A\subset V$, 
\begin{equation}\label{eq:gri}\mu_{G,\beta}^{\rm f}[\sigma_A]=\phi_{G,p,2}^0[\mathcal F_A],\end{equation}
where $\sigma_A:=\prod_{x\in A}\sigma_x$ and $\mathcal F_A$ is the event that every cluster of $\omega$ intersects $A$ an even number of times. In particular, we deduce the first Griffiths inequality
 $\mu_{G,\beta}^{\rm f}[\sigma_A]\ge 0$.

\bexo

\begin{exercise}\label{exo:Griffiths 1}
Prove \eqref{eq:gri}.\end{exercise}
\eexo

\subsection{The percolation phase transition for the random-cluster model}

\subsubsection{Positive association and monotonicity}

Up to now, we considered as granted the fact that spin-spin correlations of the Potts model were increasing in $\beta$, but this is not clear at all. One of the advantages of percolation configurations compared to spin configurations is that $\{0,1\}^{E}$ is naturally ordered (simply say that $\omega\le \omega'$ if $\omega_e\le \omega'_e$ for any $e\in E$) so that we may define the notion  of {\em increasing} event:
\begin{equation}\text{ $\calA$ is increasing}\qquad\Longleftrightarrow\qquad \text{[($\omega\in \calA$ and $\omega\le\omega'$)}\Longrightarrow \omega'\in \calA].\end{equation} The random-cluster model with cluster-weight $q\ge1$ enjoys some monotonicity properties regarding increasing events, and this special feature makes it more convenient  to work with than Potts models.\medbreak
\begin{mdframed}[backgroundcolor=green!00]
From now on, we always assume that the cluster-weight is larger or equal to 1, so that we will have the proper monotonicity properties (listed below).
\eexo
\medbreak
 We say that $\mu$ is {\em stochastically dominated} by $\nu$ if for any increasing event $\calA$,
$\mu[\calA]\le \nu[\calA]$. 
Note that there is a natural way of checking that $\mu$ is stochastically dominated by $\nu$. Assume that there exists a probability measure ${\bf P}$ on pairs $(\omega,\tilde\omega)\in\{0,1\}^E\times\{0,1\}^E$ such that 
\begin{itemize}[noitemsep,nolistsep]
\item the law of $\omega$ is $\mu$,
\item the law of $\tilde\omega$ is $\nu$,
\item ${\bf P}[\omega\le \tilde\omega]=1$.
\end{itemize}
Then, $\mu$ is automatically stochastically dominated by $\nu$, since for any increasing event $\calA$,
$$\mu[\calA]=\mathbf P[\omega\in \calA]=\mathbf P[\omega\in \calA\text{ and }\omega\le\tilde\omega]\le \mathbf P[\tilde\omega\in \calA]=\nu[\calA].$$
When $\mu$ and $\nu$ are equal to two Bernoulli percolation measures $\bbP_p$ and $\bbP_{p'}$ with $p\le p'$, it is quite simple to construct ${\bf P}$.
 Indeed, consider a collections of independent uniform $[0,1]$ random variables ${\bf U}_e$ indexed by edges in $E$. Then, define $\omega$ and $\tilde\omega$ as follows
$$\omega_e=\begin{cases} 1 &\text{if }\mathbf U_{e}\ge 1-p,\\
0 &\text{otherwise}\end{cases}\qquad\text{and}\qquad\tilde\omega_e=\begin{cases} 1 &\text{if }\mathbf U_{e}\ge 1-p',\\
0 &\text{otherwise}.\end{cases}$$
By construction, $\omega$ and $\tilde\omega$ are respectively sampled according to $\bbP_p$ and $\bbP_{p'}$ (the states of different edges are independent, and the probability that an edge is open is respectively $p$ and $p'$) and $\omega\le\tilde\omega$.

In general, it is more complicated to construct ${\bf P}$.  The next lemma provides us with a convenient criteria to prove the existence of such a coupling. We say that a measure $\mu$ on $\{0,1\}^E$ is {\em strictly positive} if $\mu(\omega)>0$ for any $\omega\in\{0,1\}^E$. 
\begin{lemma}\label{lem:dynamic}
Consider two strictly positive measures $\mu$ and $\nu$ on $\{0,1\}^E$ such that for any $e\in E$ and $\psi,\psi'\in\{0,1\}^{E\setminus\{e\}}$ satisfying $\psi\le \psi'$, one has
\begin{equation}\label{eq:condition}
\mu[\omega_e=1|\omega_{|E\setminus\{e\}}=\psi]\le \nu[\omega_e=1|\omega_{|E\setminus\{e\}}=\psi'].
\end{equation}
Then, there exists a measure ${\bf P}$ on pairs $(\omega,\tilde\omega)$ with ${\bf P}[\omega\le \tilde\omega]=1$ such that $\omega$ and $\tilde\omega$ have laws $\mu$ and $\nu$. In particular, $\mu$ is stochastically dominated by $\nu$.\end{lemma}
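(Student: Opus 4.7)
The plan is to realise $\mathbf{P}$ as the invariant measure of a monotone coupling of two heat-bath (Glauber) dynamics. Consider the Markov chain on $\{0,1\}^E\times\{0,1\}^E$ that, at each step, picks an edge $e\in E$ uniformly at random, picks a uniform $U\in[0,1]$ independent of everything, and then replaces $\omega_e$ and $\tilde\omega_e$ by
$$\omega_e':=\mathbbm 1\bigl\{U\le \mu[\omega_e=1\mid \omega_{|E\setminus\{e\}}]\bigr\},\qquad \tilde\omega_e':=\mathbbm 1\bigl\{U\le \nu[\tilde\omega_e=1\mid \tilde\omega_{|E\setminus\{e\}}]\bigr\},$$
leaving every other coordinate unchanged. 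Viewed marginally, the $\omega$-coordinate evolves according to the standard single-site heat-bath dynamics for $\mu$, and similarly for $\tilde\omega$ and $\nu$. Strict positivity of $\mu$ and $\nu$ makes each marginal chain irreducible and aperiodic on the finite state space $\{0,1\}^E$, so $\mu$ (resp. $\nu$) is its unique stationary distribution.

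The crucial point is that the joint chain preserves the coordinatewise partial order. Indeed, assume $\omega\le\tilde\omega$ before an update on edge $e$. Coordinates $e'\ne e$ are untouched, so the inequality persists there. On edge $e$, writing $\psi=\omega_{|E\setminus\{e\}}$ and $\psi'=\tilde\omega_{|E\setminus\{e\}}$ we have $\psi\le\psi'$, so assumption~\eqref{eq:condition} gives
$$\mu[\omega_e=1\mid \omega_{|E\setminus\{e\}}=\psi]\ \le\ \nu[\tilde\omega_e=1\mid \tilde\omega_{|E\setminus\{e\}}=\psi'],$$
and using the \emph{same} uniform $U$ in both updates yields $\omega_e'\le \tilde\omega_e'$. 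Hence the set $\mathcal O:=\{(\omega,\tilde\omega):\omega\le\tilde\omega\}$ is stable under the dynamics.

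Start the coupled chain from $(\mathbf 0,\mathbf 1)\in\mathcal O$. Since the state space $\mathcal O$ is finite, Cesàro averages of the laws along the chain admit a subsequential limit $\mathbf P$, which is invariant for the joint chain and supported on $\mathcal O$. Its first and second marginals are invariant for the respective marginal chains, hence equal to $\mu$ and $\nu$ by the uniqueness noted above. Thus $\mathbf P$ is a coupling of $\mu$ and $\nu$ concentrated on $\{\omega\le\tilde\omega\}$, and stochastic domination of $\mu$ by $\nu$ follows from the one-line argument recalled just before the lemma. The only step that genuinely uses the hypothesis is the monotonicity of the joint update, and this is also the only place that could break: one must be careful that \eqref{eq:condition} is the right-sided inequality ($\mu$ at the smaller configuration bounded by $\nu$ at the larger), which is exactly what is assumed; everything else is routine Markov chain theory.
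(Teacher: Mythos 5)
Your proof is correct and follows essentially the same strategy as the paper's: a monotone coupling of single-site heat-bath dynamics for $\mu$ and $\nu$, using the same uniform variable for both updates so that \eqref{eq:condition} preserves the order, started from $(\mathbf 0,\mathbf 1)$ and run to stationarity. The only differences (discrete rather than continuous time, and a Cesàro subsequential limit rather than direct convergence of the laws) are cosmetic.
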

\begin{proof}
In order to construct {\bf P}, we use a continuous time Markov chain $(\omega^t,\tilde\omega^t)$ constructed as follows. Associate independently to each edge $e\in E$ an exponential clock and  a collection of independent uniform $[0,1]$ random variables $\mathbf U_{e,k}$.

At each time an exponential clock rings -- say we are at time $t$ and it is the $k$-th time the edge $e$ rings -- set (below $\omega^{t^-}$ and $\tilde\omega^{t^-}$ denote the configurations just before time $t$)
\begin{align*}\omega^t_e&=\begin{cases} 1 &\text{ if }\mathbf U_{e,k}\ge \mu[\omega_e=0|\omega_{|E\setminus \{e\}}=\omega^{t^-}_{|E\setminus\{e\}}]\\
0&\text{ otherwise,}
\end{cases}\\
\tilde\omega^t_e&=\begin{cases} 1 &\text{ if }\mathbf U_{e,k}\ge \nu[\omega_e=0|\omega_{|E\setminus \{e\}}=\tilde \omega^{t^-}_{|E\setminus\{e\}}]\\
0&\text{ otherwise.}\end{cases}\end{align*}
By definition, $(\omega^t)$ is an irreducible (because of strict positivity, one can go from any state to the state with all edges open, and back to any other configuration) continuous time Markov chain. The jump probabilities are such\footnote{The probability that $\omega^t_e=1$ is exactly the probability that $\omega_e=1$ knowing the state of all the other edges.} that $\mu$ is its (unique) stationary measure. As a consequence, the law of $\omega^t$ converges to $\mu$. 
Similarly, the law of $\tilde\omega^t$ converges to $\nu$. 

Finally, if the starting configurations $\omega^0$ and $\tilde\omega^0$ are respectively the configurations with all edges closed, and all edges open, then $\omega^0\le \tilde\omega^0$ and the condition \eqref{eq:condition} implies that for all $t\ge0$, $\omega^t\le \tilde\omega^t$. Letting $t$ tend to infinity provides us with a coupling of $\mu$ and $\nu$ with ${\bf P}[\omega\le\tilde\omega]=1$.
\end{proof}

\begin{theorem}[Positive association]Fix $q\ge1$, $p\in[0,1]$, $\xi$ some boundary conditions and $G$ finite. Then
\begin{itemize}
\item {\em (Comparison between boundary conditions)} For any increasing event $\calA$ and $\xi'\ge\xi$ (meaning that the partition $\xi'$ is coarser than the partition $\xi$),
\begin{equation}\label{eq:comparison}\tag{CBC}\phi_{G,p,q}^{\xi'}[\calA]~\ge~\phi_{G,p,q}^{\xi}[\calA] .
\end{equation}
\item {\em (Monotonicity)} For any increasing event $A$ and any $p'\ge p$,
\begin{equation}\label{eq:monotonicity}\tag{MON}\phi_{G,p',q}^{\xi}[\calA]~\ge~\phi_{G,p,q}^{\xi}[\calA] .
\end{equation}
\item {\em (Fortuin-Kasteleyn-Ginibre inequality)} For any increasing events $\calA$ and $\calB$,
\begin{equation}\label{eq:FKG}\tag{FKG}
\phi_{G,p,q}^{\xi}[\calA\cap \calB]~\ge~\phi_{G,p,q}^{\xi}[\calA]\phi_{G,p,q}^{\xi}[\calB].
\end{equation}
\end{itemize}
\end{theorem}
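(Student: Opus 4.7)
The strategy is to apply Lemma~\ref{lem:dynamic} (for CBC and MON) and its Holley-type extension (for FKG), in each case using the explicit one-edge conditional formula \eqref{eq:arg}, which reads, for $e=xy$,
$$
\phi_{G,p,q}^{\xi}[\omega_e=1\mid \omega_{|E\setminus\{e\}}=\psi]
=
\begin{cases}
p, & \text{if } x \longleftrightarrow y \text{ in } \psi^\xi,\\
\dfrac{p}{p+q(1-p)}, & \text{otherwise.}
\end{cases}
$$
The two branches are ordered $p\ge p/(p+q(1-p))$ precisely because $q\ge1$, and each branch is nondecreasing in $p$. Also, connectivity is monotone in its two inputs: if $\psi\le\psi'$ and $\xi\le\xi'$ (coarser), then any path witnessing $x\longleftrightarrow y$ in $\psi^\xi$ persists in $(\psi')^{\xi'}$, because the contributing open edges are still open and the contributing boundary identifications are still present.

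For CBC I take $\mu=\phi_{G,p,q}^{\xi}$, $\nu=\phi_{G,p,q}^{\xi'}$ with $\xi'\ge\xi$. Given $\psi\le\psi'$, the connectivity monotonicity above ensures that whenever $\mu$'s conditional equals $p$ (connected case), so does $\nu$'s; and whenever $\mu$'s conditional equals $p/(p+q(1-p))$, the corresponding $\nu$-value is either $p$ or $p/(p+q(1-p))$. In both cases condition~\eqref{eq:condition} is met, and Lemma~\ref{lem:dynamic} yields stochastic domination. For MON I apply the same scheme with $\mu=\phi_{G,p,q}^{\xi}$ and $\nu=\phi_{G,p',q}^{\xi}$, $p\le p'$; now \eqref{eq:condition} follows by combining connectivity monotonicity with the fact that both branches of the formula are increasing in the parameter.

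For FKG, the naive approach of applying Lemma~\ref{lem:dynamic} to $\mu=\phi_{G,p,q}^{\xi}$ and $\nu=\phi_{G,p,q}^{\xi}[\,\cdot\,|\,\calA]$ is blocked by the failure of strict positivity of the conditional measure. I will instead establish the FKG lattice condition
$$
\phi_{G,p,q}^{\xi}(\omega\vee\omega')\,\phi_{G,p,q}^{\xi}(\omega\wedge\omega')
\ge
\phi_{G,p,q}^{\xi}(\omega)\,\phi_{G,p,q}^{\xi}(\omega'),\qquad \forall\,\omega,\omega'\in\{0,1\}^E.
$$
Since $o$ and $c$ are additive under $\vee,\wedge$, the factors $p^{o}(1-p)^{c}$ cancel, and for $q\ge1$ the inequality reduces to the cluster-count supermodularity
$$
k((\omega\vee\omega')^\xi)+k((\omega\wedge\omega')^\xi)\ge k(\omega^\xi)+k(\omega'^\xi).
$$
This follows because adding an edge $e$ to a configuration changes the number of clusters by $0$ or $-1$, and the drop is nonincreasing in the configuration: if $e$'s endpoints are already connected in some $\omega$, they are still connected in every $\omega'\ge\omega$, so a failure to drop at $\omega$ forces failure at $\omega'$. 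Adding the edges of $\omega\setminus\omega'$ one by one, first to $\omega\wedge\omega'$ to reach $\omega$ and in parallel to $\omega'$ to reach $\omega\vee\omega'$, and summing the stepwise decreases gives the desired inequality. Given the lattice condition, \eqref{eq:FKG} follows from Holley's inequality, whose proof is a direct adaptation of the coupled continuous-time Glauber dynamics used in Lemma~\ref{lem:dynamic}: the supermodularity of the densities is precisely what ensures that the two chains, started from the minimal and maximal configurations, remain ordered under their joint updates.

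The main obstacle is this FKG step; CBC and MON are essentially bookkeeping once \eqref{eq:arg} is in hand. The delicate point is the extension of Lemma~\ref{lem:dynamic} needed for FKG, which requires either invoking Holley's theorem as a black box or rerunning the Markov-chain coupling for two measures that are not mutually stochastically ordered but whose densities satisfy the lattice condition.
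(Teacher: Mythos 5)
Your proofs of \eqref{eq:comparison} and \eqref{eq:monotonicity} are exactly the paper's: apply Lemma~\ref{lem:dynamic} using the single-edge formula \eqref{eq:arg}, the ordering $p\ge p/(p+q(1-p))$ for $q\ge1$, and the monotonicity of connectivity in $\psi^\xi$. For \eqref{eq:FKG}, however, you take a genuinely different route. The paper sticks with Lemma~\ref{lem:dynamic} applied to $\mu=\phi^\xi_{G,p,q}$ and $\nu=\mu[\,\cdot\,|\calB]$, and disposes of the strict-positivity objection directly: positivity was only used to identify the stationary measure of the chain, and since the chain $(\tilde\omega^t)$ starts from the all-open configuration, which lies in the increasing event $\calB$, it never leaves $\calB$ and its stationary measure is indeed $\nu$. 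You instead prove the FKG lattice condition by verifying the supermodularity $k((\omega\vee\omega')^\xi)+k((\omega\wedge\omega')^\xi)\ge k(\omega^\xi)+k((\omega')^\xi)$ via the edge-by-edge telescoping argument (which is correct, and is exactly the computation the paper hides behind ``one may easily check''), and then invoke Holley's inequality --- which the paper relegates to the exercise on Holley and FKG lattice conditions. Both arguments are sound. Yours is the classical Fortuin--Kasteleyn--Ginibre/Holley derivation and makes the combinatorial heart of the matter (supermodularity of the cluster count) explicit; the paper's is more economical because it recycles the coupling of Lemma~\ref{lem:dynamic} verbatim. Note, though, that you have not fully escaped the positivity issue you flagged: deducing positive association from the lattice condition via Holley still requires comparing $\mu$ with the non-strictly-positive measure $\mu[\,\cdot\,|\calB]$ (or a smoothed version $g+\ep$), so some version of the paper's ``the chain started at the maximal configuration stays in $\calB$'' observation, or a limiting argument, is needed in either approach; you acknowledge this, so it is a matter of emphasis rather than a gap.
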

The assumption $q\ge1$ is not simply technical: the different properties above fail when $q<1$. For instance, a short computation on a small graph shows that the random-cluster model with $q<1$ does not satisfy the FKG inequality. Also recall that as $p\rightarrow0$ and $q/p\rightarrow0$, one may obtain the Uniform Spanning Tree, which is known to be edge negatively correlated. It is natural to expect some form of negative correlation for random-cluster models with $q<1$, but no general result is known as for today. 

One important feature of the comparison between boundary conditions is that the free and wired boundary conditions are extremal in the following sense: for any increasing event $\calA$ and any boundary conditions $\xi$,
\begin{equation}\label{extremality}
\phi^0_{G,p,q}[\calA]~\le~ \phi^{\xi}_{G,p,q}[\calA]~\le~\phi^1_{G,p,q}[\calA].
\end{equation}
For more applications of (CBC), we refer to Exercises~\ref{exo:0} and \ref{exo:unique Gibbs}.


\begin{proof}
We wish to apply the previous lemma. Consider an edge $e=xy\in E$ and $\psi\le \psi'$ two configurations in $\{0,1\}^{E\setminus\{e\}}$. Recall that \eqref{eq:arg} is stating that
$$\phi_{G,p,q}^\xi[\omega_e=1|\omega_{|E\setminus\{e\}}=\psi]=\begin{cases} \ \ \ \ \ \ \ p&\text{ if $x$ and $y$ are connected in $\psi^\xi$,}\\
\displaystyle\frac{p}{p+q(1-p)}&\text{ otherwise.}\end{cases}$$
Observe that if $x$ and $y$ are connected in $\psi^\xi$, they also are in $(\psi')^\xi$ (and a fortiori in $(\psi')^{\xi'}$), and that 
$p\ge\frac{p}{p+q(1-p)}$ (since $q\ge1$). 
With the previous observations, \eqref{eq:monotonicity} and \eqref{eq:comparison} follow readily from the previous lemma. 

For \eqref{eq:FKG}, we need to be slightly more careful. Without loss of generality, we may assume that $\calB$ has positive probability. Define the measures $\mu=\phi_{G,p,q}^{\xi}$ and $\nu=\mu[\cdot|\calB]$. One may easily check that \eqref{eq:condition} is satisfied. The measure $\nu$ is not strictly positive, but this played a role only in proving that the Markov chains had unique invariant measures. The fact that $\tilde\omega^0$ is in $\calB$ (since $\calB$ is non empty and increasing, and all the edges are open in $\tilde\omega^0$) implies that the stationary measure of $(\tilde\omega^t)$ is $\nu$, so that the conclusions of the previous lemma are still valid and $\nu$ stochastically dominates $\mu$. As a consequence, 
$$\phi_{G,p,q}^{\xi}[\calA]=\mu[\calA]\le \nu[\calA]=\frac{\phi_{G,p,q}^{\xi}[\calA\cap \calB]}{\phi_{G,p,q}^{\xi}[\calB]},$$
which proves \eqref{eq:FKG}.\end{proof}

The coupling between random-cluster and Potts models implies the  following nice consequence of monotonicity.\begin{corollary} Fix $G$ finite and $q\ge2$ an integer. The functions  $\beta\mapsto\mu_{G,\beta,q}^{\rm f}[\sigma_x\cdot \sigma_y]$ and $
\beta\mapsto\mu_{G,\beta,q}^{\rm b}[\sigma_x\cdot{\rm b}]$ are non-decreasing. \end{corollary}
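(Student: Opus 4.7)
The strategy is to reduce the monotonicity question for Potts correlations to the monotonicity of the random-cluster model through the Edwards--Sokal coupling, which has already been established in Corollary~\ref{cor:aa}.

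First, I would rewrite the two quantities using the coupling identities
\begin{equation*}
\mu_{G,\beta,q}^{\rm f}[\sigma_x\cdot\sigma_y] = \phi_{G,p,q}^{0}[x\longleftrightarrow y],
\qquad
\mu_{G,\beta,q}^{\rm b}[\sigma_x\cdot{\rm b}] = \phi_{G,p,q}^{1}[x\longleftrightarrow\partial G],
\end{equation*}
where $p$ and $\beta$ are linked by $\beta = -\tfrac{q-1}{q}\ln(1-p)$. The map $p\mapsto\beta$ defined by this relation is a strictly increasing bijection from $[0,1)$ onto $[0,\infty)$, so proving monotonicity in $\beta$ is equivalent to proving monotonicity in $p$.

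Next, I would observe that the events $\{x\longleftrightarrow y\}$ and $\{x\longleftrightarrow\partial G\}$ are both increasing events in $\{0,1\}^E$: adding open edges to a percolation configuration can only create connections, not destroy them. Therefore, by the monotonicity statement \eqref{eq:monotonicity} of the random-cluster model (valid since $q\ge 2\ge1$), the maps
\begin{equation*}
p\mapsto \phi_{G,p,q}^{0}[x\longleftrightarrow y]
\qquad\text{and}\qquad
p\mapsto \phi_{G,p,q}^{1}[x\longleftrightarrow\partial G]
\end{equation*}
are non-decreasing on $[0,1)$, and the boundary conditions $0$ and $1$ do not depend on $p$, so this monotonicity is genuinely in the single parameter $p$. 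Composing with the increasing change of variables $p\mapsto\beta$ yields the desired conclusion.

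There is essentially no obstacle here: the coupling converts the question into one about an increasing event under the random-cluster measure, and the required monotonicity is precisely \eqref{eq:monotonicity}. The only subtle point worth highlighting is that one must be careful that the boundary conditions (free on the left, wired on the right) are fixed as $p$ varies, so that the monotonicity lemma applies directly; this is indeed the case because the partition of $\partial G$ defining the boundary conditions is independent of $p$.
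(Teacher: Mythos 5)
Your proof is correct and follows exactly the route the paper intends: the corollary is stated as an immediate consequence of the coupling identities \eqref{eq:coupling 2 point}--\eqref{eq:coupling magnetization} and the monotonicity \eqref{eq:monotonicity}, combined with the fact that $p\mapsto\beta$ in \eqref{eq:beta p} is increasing. Nothing is missing.
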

\bexo

\begin{exercise}[Second Griffiths inequality]\label{exo:Griffiths 2}
Using the coupling with the random-cluster model, prove the {\em second Griffiths inequality} for the Ising model: for any set of vertices $A$ and $B$,
\begin{equation}\mu_{G,\beta}^{\rm f}[\sigma_A\sigma_B]\ge \mu_{G,\beta}^{\rm f}[\sigma_A]\mu_{G,\beta}^{\rm f}[\sigma_B].\tag{2nd Griffiths}\end{equation}
\end{exercise}

\begin{exercise}[Comparison with boundary conditions 1]\label{exo:0}
Fix $p\in[0,1]$, $q\ge1$, a finite graph $G=(V,E)$ and $\xi$ some boundary conditions. Let $F$ be a subset of $E$ and $H$ be the graph with edge-set $F$ and vertex-set given by the endpoints of edges in $F$. Then, for any increasing events $\calA$ and $\calB$ depending only on edges in $F$ and $E\setminus F$ respectively, show that
$$\phi^0_{H,p,q}[\calA]\le\phi^\xi_{G,p,q}[\calA|\calB]\le \phi^1_{H,p,q}[\calA].$$

\end{exercise}
\begin{exercise}[Comparison with boundary conditions 2]\label{exo:unique Gibbs}
Consider a graph $G=(V,E)$ and $F\subset E$. Let $G'=(W,F)$ be the graph with edge-set $F$ and vertex-set given by the endpoints of the edges in $F$. Let $\calA$ be an increasing event depending on edges in $F$ only. Let $\xi$ be some boundary conditions on $\partial G$.
\medbreak\noindent
1. Define the set $\mathsf S=\mathsf S(\omega)$ of vertices in $V$ not connected in $\omega$ to a vertex in $\partial G$. Show that for any $S\subset V$, the event $\{\mathsf S=S\}$ is measurable in terms of edges with at least one endpoint outside $\mathsf S$.
\medbreak\noindent
2. Fix $S\subset V$. Consider the graph $H$ with vertex-set $S$ and edge-set composed of edges in $E$ with both endpoints in $S$. Use the previous observation to prove that 
\begin{equation*}\phi_{G,p,q}^\xi[\,\calA,\mathsf S=S\,|\,\partial G'\not\longleftrightarrow\partial G]\le\phi_{H,p,q}^0[\calA]\phi_{G,p,q}^\xi[\mathsf S=S\,|\,\partial G'\not\longleftrightarrow\partial G].\end{equation*}
\noindent
3. Prove that
$\phi_{G,p,q}^\xi[\,\calA\,|\,\partial G'\not\longleftrightarrow\partial G]\le\phi_{G,p,q}^0[\calA].$
\medbreak\noindent
4. We now restrict ourself to two dimensions. A circuit is a path starting and ending at the same vertex. Let $\calB$ be the event that there exists an open circuit in $E\setminus F$ disconnecting $W$ from $\partial G$. Prove that 
$\phi_{G,p,q}^\xi[\,\calA\,|\,\calB]\ge\phi_{G,p,q}^1[\calA].$
\end{exercise}

\begin{exercise}[Holley and FKG lattice conditions]
1. Show that for strictly positive measures, \eqref{eq:condition} is equivalent to the Holley criterion: for any $\omega$ and $\omega'$,
\begin{equation}
\nu[\omega\vee \omega']\mu[\omega\wedge\omega']\ge \nu[\omega]\mu[\omega'],\tag{Holley}
\end{equation}
where $\vee$ and $\wedge$ are the min and max of two configurations.\medbreak\noindent
2. Show that for a strictly positive measure $\mu$, $\mathrm{(FKG)}$ holds if the FKG lattice condition holds: for any $\omega$ and any edges $e$ and $f$,
\begin{equation}
\mu[\omega^{ef}]\mu[\omega_{ef}]\ge \mu[\omega^e_f]\mu[\omega^f_e],\tag{FKG lattice condition}
\end{equation}
where $\omega^{ef}$, $\omega_{ef}$, $\omega^e_f$ and $\omega^f_e$ denote the configurations $\omega'$ coinciding with $\omega$ except at $e$ and $f$, where $(\omega'_e,\omega'_f)$ are equal respectively to $(1,1)$, $(0,0)$, $(1,0)$ and $(0,1)$.
\end{exercise}

\begin{exercise}
Is there a monotonicity in $q$ at fixed $p$?
\end{exercise}
\eexo

\subsubsection{Phase transition in the random-cluster and Potts models}

When discussing phase transitions, we implicitly considered infinite-volume Potts measures to define $\beta_c$. Their definition is not a priori clear since the Hamiltonian would then be an infinite sum of terms equal to 1 or $-1/(q-1)$. One can always consider sub-sequential limits of measures $\mu_{G,\beta,q}^{\rm f}$, but one can in fact do much better using the random-cluster model: monotonicity properties of the previous section enable us to prove convergence of certain sequences of measure.

Below and in the rest of this document, set for every $n\ge0$,
$\Lambda_n:=[-n,n]^d\cap\bbZ^d.$ Also, $E_n$ will denote the set of edges between two vertices of $\Lambda_n$.
\begin{proposition}\label{wired infinite}
 Fix $q\ge1$. There exist two (possibly equal) measures $\phi_{p,q}^0$ and
  $\phi_{p,q}^1$ on $\{0,1\}^{\bbE}$, called the infinite-volume random-cluster 
  measures with free and wired boundary conditions respectively, such
  that for any event $\calA$ depending on a finite number of edges,
  \begin{align*}
    &\lim_{n\rightarrow\infty}\phi_{\Lambda_n,p,q}^1[\calA]=
    \phi_{p,q}^1[\calA]\quad\text{~and~}\quad\lim_{n\rightarrow \infty}\phi_{\Lambda_n,p,q}^0[\calA]=
    \phi_{p,q}^0[\calA].
  \end{align*}
\end{proposition}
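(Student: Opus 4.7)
The plan is to build $\phi^0_{p,q}$ and $\phi^1_{p,q}$ as limits of monotone sequences indexed by the box size, extract the limit on increasing cylinder events first, and then pass to all cylinder events by inclusion--exclusion before invoking Kolmogorov's extension theorem to get a bona-fide probability measure on $\{0,1\}^{\bbE}$.

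First I would establish the key monotonicity: for any increasing event $\calA$ depending only on edges in $E_n$,
\[
\phi^1_{\Lambda_{n+1},p,q}[\calA]\le \phi^1_{\Lambda_n,p,q}[\calA]\qquad\text{and}\qquad\phi^0_{\Lambda_{n+1},p,q}[\calA]\ge \phi^0_{\Lambda_n,p,q}[\calA].
\]
The argument is the same in both cases: condition $\phi^\#_{\Lambda_{n+1},p,q}$ on $\psi:=\omega_{|E_{n+1}\setminus E_n}$. By the domain Markov property \eqref{eq:domain Markov}, the conditional law of the remaining edges is $\phi^{\psi^\#}_{\Lambda_n,p,q}$ where $\psi^\#$ denotes the boundary conditions on $\partial\Lambda_n$ induced jointly by $\psi$ and the outer boundary conditions $\#\in\{0,1\}$. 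For $\#=1$, the induced partition $\psi^1$ is always finer than (or equal to) the fully wired partition on $\partial\Lambda_n$, so by comparison between boundary conditions \eqref{eq:comparison} it is dominated by $\phi^1_{\Lambda_n,p,q}$ on $\calA$; taking expectation over $\psi$ gives the first inequality. For $\#=0$, the induced partition $\psi^0$ is always coarser than the free partition (singletons), so \eqref{eq:comparison} yields the reverse inequality.

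From here the limits are immediate on increasing cylinder events: for every increasing $\calA$ depending on finitely many edges, $(\phi^1_{\Lambda_n,p,q}[\calA])_{n\ge n_0}$ is a non-increasing sequence in $[0,1]$ and $(\phi^0_{\Lambda_n,p,q}[\calA])_{n\ge n_0}$ is non-decreasing in $[0,1]$, hence both converge; denote the limits by $\phi^1_{p,q}[\calA]$ and $\phi^0_{p,q}[\calA]$ respectively. To extend this to \emph{every} cylinder event, I would use that any cylinder event $\calA$ depending on a finite set $F$ of edges is a finite disjoint union of events of the form $\{\omega_{|F}=\eta\}$, which in turn can be written via inclusion--exclusion as a signed combination of increasing events of the form $\{\omega_e=1\text{ for all }e\in S\}$ (with $S\subset F$). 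Convergence of these increasing events therefore forces convergence of $\phi^\#_{\Lambda_n,p,q}[\calA]$ for every cylinder $\calA$.

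Finally, the map $\calA\mapsto \phi^\#_{p,q}[\calA]$ defined on the algebra of cylinder events is clearly finitely additive and normalized, so by Kolmogorov's extension theorem it extends uniquely to a probability measure on $\{0,1\}^{\bbE}$ equipped with the product $\sigma$-algebra, which is the required $\phi^\#_{p,q}$. The main obstacle here is the monotonicity step: one must correctly identify the induced boundary conditions $\psi^\#$ on $\partial\Lambda_n$ coming from both the configuration on the annulus and the outer boundary condition, and verify that wired/free really are extremal in the partition-refinement order so that \eqref{eq:comparison} applies in the right direction. Once that is done, the remaining steps are soft measure-theoretic manipulations.
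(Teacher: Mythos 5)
Your proposal is correct and follows essentially the same route as the paper: monotonicity in $n$ via the domain Markov property combined with the comparison between boundary conditions (with the directions of \eqref{eq:comparison} applied exactly as in the text), then passage from increasing cylinder events to all cylinder events by inclusion--exclusion, and finally extension to a probability measure on $\{0,1\}^{\bbE}$. The only cosmetic difference is that you name Kolmogorov's extension theorem where the paper leaves the extension step implicit; the content is identical.
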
 

One warning: while boundary conditions cannot be defined as a partition of the boundary in infinite volume, one still needs to keep track of the dependency on boundary conditions for finite-volume measures when constructing the measure. Therefore, the measures $\phi_{p,q}^1$ and $\phi_{p,q}^0$ have no reason to be the same and we will see examples of values of $p$ and $q$  for which they are in fact different. In addition to this, one may imagine other infinite-volume measures obtained via limits of measures on finite graphs with arbitrary (and possibly random) boundary conditions. 

\begin{proof}
We deal with the case of free boundary conditions. Wired boundary conditions are treated similarly. Fix an increasing event $\calA$ depending on edges in $\Lambda_N$ only. We find that for any $n\ge N$,
$$\phi_{\Lambda_{n+1},p,q}^0[\calA]\stackrel{\eqref{eq:domain Markov}}=\phi_{\Lambda_{n+1},p,q}^0[\phi_{\Lambda_{n},p,q}^\xi[\calA]]\stackrel{\eqref{eq:comparison}}\ge \phi_{\Lambda_n,p,q}^0[\calA],$$
where $\xi$ is the random boundary conditions induced by the configuration $\omega_{|E_{n+1}\setminus E_n}$.
We deduce that $(\phi_{\Lambda_n,p,q}^0[\calA])_{n\ge 0}$ is increasing, and therefore converges to a certain value $P[\calA]$ as $n$ tends to infinity. 

Since the probability of an event $\calB$ depending on finitely many edges can be written by inclusion-exclusion (see Exercise~\ref{exo:increasing}) as a combination of the probability of increasing events, taking the same combination defines a natural value $P(\calB)$ for which
$\phi_{\Lambda_n,p,q}^0[\calB]$ converges to $P(\calB)$.

The fact that $(\phi_{\Lambda_n,p,q}^0)_{n\ge0}$ are probability measures implies that the function $P$ (which is a priori defined on the set of events depending on finitely many edges) can be extended into a probability measure on $\mathcal F_{\mathbb E}$. We denote this measure by $\phi_{p,q}^0$.
\end{proof}
\bexo
\begin{exercise}\label{exo:increasing}
For $\psi\in\{0,1\}^E$, write $\{\omega\in\{0,1\}^\bbE:\omega_e=\psi_e,\forall e\in E\}$ as $A\setminus B$ with $B\subset A$ two increasing events. Deduce that any event depending on finitely many edges can be written by inclusion-exclusion using increasing events.\end{exercise}
\eexo
The properties of finite-volume measures (FKG inequality, monotonicity, ordering between boundary conditions) extend to infinite volume in a straightforward fashion. 
In particular, one may define a {\em critical parameter} $p_c\in[0,1]$ such that 
\begin{align*}p_c=p_c(q,d)&:=\inf\{p>0:\phi_{p,q}^1[0\leftrightarrow\infty]>0\}=\sup\{p>0:\phi_{p,q}^1[0\leftrightarrow\infty]=0\}.
\end{align*}
Let us conclude by explaining what this implies for Potts models. One can extend the coupling between random-cluster models and Potts models in order to construct $q+1$ measures $\mu^{\rm f}_{\beta,q}$ and $\mu^{\rm b}_{\beta,q}$ with ${\rm b}\in\bbT_q$ on $\bbZ^d$ by doing the same couplings as in finite-volume, except that clusters intersecting the boundary are replaced by infinite clusters. 

\begin{corollary}
The measures $\mu^{\rm f}_{\beta,q}$ and $\mu^{\rm b}_{\beta,q}$ with ${\rm b}\in\bbT_q$ are the limits of the measures $\mu^{\rm f}_{\Lambda_n,\beta,q}$ and $\mu^{\rm b}_{\Lambda_n,\beta,q}$. Furthermore, if $\beta$ and $p$ satisfy \eqref{eq:beta p}, then
$$m^*(\beta,q):=\mu^{\rm b}_{\beta,q}[\sigma_0\cdot{\rm b}]=\phi_{p,q}^1[0\longleftrightarrow\infty].$$
\end{corollary}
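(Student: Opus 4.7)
The plan is to extend the Edwards--Sokal coupling of Proposition~\ref{prop:coupling wired} to infinite volume and deduce both claims from the already established convergence of the random-cluster measures (Proposition~\ref{wired infinite}). Concretely, I would define $\mu^{\rm f}_{\beta,q}$ by sampling $\omega \sim \phi^0_{p,q}$ and assigning to each cluster (finite or infinite) an independent spin uniform on $\bbT_q$, and define $\mu^{\rm b}_{\beta,q}$ by sampling $\omega \sim \phi^1_{p,q}$, giving every infinite cluster the spin ${\rm b}$, and colouring every finite cluster independently and uniformly on $\bbT_q$. These prescriptions are the natural extensions of the finite-volume colourings, since finite-volume clusters meeting $\partial G$ play the role of infinite clusters in the limit.

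To prove the convergence of the finite-volume Potts measures, fix a cylinder Potts event $A$ depending only on spins in $\Lambda_N$. Conditionally on $\omega$, the law of $(\sigma_x)_{x \in \Lambda_N}$ depends only on the partition $\pi_N(\omega)$ of $\Lambda_N$ induced by the clusters of $\omega$, enriched (in the wired case) by the subset of blocks connected to $\partial \Lambda_n$, respectively to $\infty$. It therefore suffices to show that the law of this enriched partition under $\phi^\#_{\Lambda_n,p,q}$ converges to its law under $\phi^\#_{p,q}$. The main technical point is that connection events are not cylinder events; I would handle this via the monotone approximation
\[
\{x \longleftrightarrow y\} \;=\; \bigcup_{M \ge N}\bigl\{x \longleftrightarrow y \text{ using only edges in } E_M\bigr\},
\]
applying Proposition~\ref{wired infinite} at fixed $M$ (each approximating event being cylinder in $E_M$) and then letting $M \to \infty$ by dominated convergence; this works because $\Lambda_N$ contains only finitely many pairs. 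In the wired case one additionally uses $\{x \longleftrightarrow \partial \Lambda_n\} \nearrow \{x \longleftrightarrow \infty\}$ via the same monotone argument.

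For the magnetization formula, I would apply the infinite-volume wired coupling and decompose on the event $\{0 \longleftrightarrow \infty\}$:
\[
\mu^{\rm b}_{\beta,q}[\sigma_0 \cdot {\rm b}] \;=\; \phi^1_{p,q}[0 \longleftrightarrow \infty]\cdot 1 \;+\; \phi^1_{p,q}[0 \not\longleftrightarrow \infty]\cdot \tfrac{1}{q}\sum_{a \in \bbT_q} a \cdot {\rm b}.
\]
On $\{0 \longleftrightarrow \infty\}$ one has $\sigma_0 = {\rm b}$, contributing $1$; on the complementary event $\sigma_0$ is uniform on $\bbT_q$ and independent of everything else, and $\sum_{a \in \bbT_q} a = 0$ (the centroid of the simplex vanishes) makes the second term disappear. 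This yields $m^*(\beta,q) = \phi^1_{p,q}[0 \longleftrightarrow \infty]$. The only real obstacle in the whole argument is the exchange of limits in $n$ and $M$ in the convergence step, but this is routine given the monotonicity of connection events.
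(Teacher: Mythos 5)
Your proof is correct and follows exactly the route the paper intends (the paper itself only remarks that the corollary is ``immediate from the convergence of the random-cluster measures and the coupling''): you extend the Edwards--Sokal colouring to infinite volume, deduce convergence of the spin marginals from Proposition~\ref{wired infinite} via monotone cylinder approximations of connection events, and get the magnetization identity by conditioning on $\{0\leftrightarrow\infty\}$ and using $\sum_{a\in\bbT_q}a=0$. The only slip is cosmetic: the events $\{x\leftrightarrow\partial\Lambda_n\}$ decrease (not increase) to $\{x\leftrightarrow\infty\}$, so the arrow should be $\searrow$ rather than $\nearrow$.
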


The proof of the corollary is immediate from the convergence of the random-cluster measures and the coupling. Note that this enables us to define rigorously\begin{align*}\beta_c=\beta_c(q,d)&:=\inf\{\beta>0:m^*(\beta,q)>0\}=\sup\{\beta>0:m^*(\beta,q)=0\},\end{align*}
which is related to $p_c(q,d)$ by the formula 
\begin{equation}\beta_c(q,d):=-\tfrac{q-1}{q}\log\big[1-p_c(q,d)\big].\end{equation}

\bexo
\begin{exercise}
Is there some ordering between the measures $\phi^1_{p,q}$ in $q\ge1$ at fixed $p$? Deduce from the study of Bernoulli percolation that $p_c(q,d)>0$.
\end{exercise}

\begin{exercise}\label{exo:12}
1. Prove that for $p<p_c$, $\phi^1_{p,q}=\phi^0_{p,q}$.
\medbreak\noindent
2. (To do after reading Section~\ref{sec:2.3}) Prove that on $\bbZ^2$, $\phi^1_{p,q}=\phi^0_{p,q}$ for $p>p_c$.
\end{exercise}

\begin{exercise} \label{exo:12}A probability measure $\phi$ on $\{0,1\}^\bbE$ is called an {\em infinite-volume random-cluster} measure with parameters $p$ and $q$ if for every finite graph $G=(V,E)$, 
\begin{equation*}\phi[\omega_{|E}=\eta\,|\calF_E]=\phi^\xi_{G,p,q}[\eta]\quad,\quad\forall \eta\in\{0,1\}^{E},\end{equation*}
where $\xi$ are the boundary conditions induced by the configuration outside $G$ and $\calF_E$ is the $\sigma$-algebra induced by $(\omega_e:e\notin E)$.
Prove that $\phi^0_{p,q}\le \phi\le \phi^1_{p,q}$ for any infinite-volume measure with parameters $p$ and $q\ge 1$. Deduce that there exists a unique infinite-volume measure if and only if $\phi^0_{p,q}=\phi^1_{p,q}$.
\end{exercise}

\begin{exercise}
Using \eqref{eq:finite energy}, prove that $p_c(q,d)>0$.
\end{exercise}
\eexo

\subsubsection{Long-range ordering and spontaneous magnetization}\label{sec:uniqueness}

Let us now focus on the following question:
is (LRO$_\beta$) equivalent to (MAG$_\beta$)?
In terms of random-cluster model, this gets rephrased as follows: is $\phi^1_{p,q}[0\leftrightarrow\infty]=0$ equivalent to $\phi^0_{p,q}[0\leftrightarrow x]$ tends to 0 as $\|x\|$ tends to infinity?
Two things could prevent this from happening. First, $\phi^1_{p,q}$ and $\phi^0_{p,q}$ could be different. Second, it may be that, when an infinite cluster exists, then automatically infinitely many of them do, so that the probability that two vertices are connected tends to zero.
\bigbreak
Let us first turn to the second problem and prove that the infinite cluster, when it exists, is unique.

\begin{theorem}\label{thm:uniqueness}
Fix $p\in[0,1]$ and $q\ge1$. For $\#$ equal to 0 or 1, either $\phi^\#_{p,q}[0\leftrightarrow \infty]=0$ or
 $\phi^\#_{p,q}[\exists\text{ a unique infinite cluster}]=1$.
\end{theorem}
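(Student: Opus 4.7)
The plan is to carry out the classical Burton--Keane argument, which requires only two ingredients: translation invariance of the measure and the finite energy property (FE), both of which we have for $\phi^\#_{p,q}$ with $\#\in\{0,1\}$. By construction as a monotone limit of box measures with symmetric (free or wired) boundary conditions, $\phi^\#_{p,q}$ is invariant under translations of $\bbZ^d$, and by (FE) it satisfies $\phi^\#_{p,q}[\omega_e=1\mid\calF_{e^c}]\in[c_{\rm FE},1-c_{\rm FE}]$ for every edge $e$.

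First I would reduce to showing that the (random) number $N$ of infinite clusters is almost surely constant. The event $\{N=k\}$ is translation-invariant, so by ergodicity of $\phi^\#_{p,q}$ with respect to lattice translations (which follows from the fact that $\phi^\#_{p,q}$ is a weak limit of i.i.d.-like measures on tori of increasing size, or from standard arguments using (FKG) and translation invariance) one has $N=n_0$ a.s.\ for some $n_0\in\{0,1,2,\dots,\infty\}$. If $n_0=0$ then $\phi^\#_{p,q}[0\leftrightarrow\infty]=0$ and we are done, so assume $n_0\ge 1$.

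Next I would rule out $2\le n_0<\infty$ using (FE). If $n_0=k$ with $2\le k<\infty$, then for $n$ large enough with positive probability the box $\Lambda_n$ intersects at least two distinct infinite clusters. Conditionally on the configuration outside $\Lambda_n$, applying (FE) edge by edge to every edge of $E_n$, we can open all of them with conditional probability at least $c_{\rm FE}^{|E_n|}>0$. In the resulting configuration all the infinite clusters meeting $\Lambda_n$ have been merged into one, producing a configuration with strictly fewer than $k$ infinite clusters with positive probability. This contradicts $N=k$ a.s.

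Finally I would rule out $n_0=\infty$ by the trifurcation argument, which will be the main obstacle and requires the most care. Call $x$ a \emph{trifurcation} if $x$ belongs to an infinite cluster, exactly three edges incident to $x$ are open, and removing $x$ splits its infinite cluster into three disjoint infinite subclusters. If $N=\infty$ a.s., then with positive probability one can find three disjoint paths from $\partial\Lambda_n$ to infinity in the complement of $\Lambda_n$ landing on three distinct vertices $y_1,y_2,y_3$ adjacent to a fixed vertex $x\in\Lambda_n$; by (FE), applied to the finitely many edges inside $\Lambda_n$, one can then modify the configuration inside $\Lambda_n$ (closing all edges not on some fixed tripod joining $x$ to $y_1,y_2,y_3$) at a uniform positive cost to turn $x$ into a trifurcation. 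By translation invariance this gives $\phi^\#_{p,q}[x\text{ is a trifurcation}]\ge \alpha>0$ for every $x$, so the expected number $T_n$ of trifurcations in $\Lambda_n$ satisfies $\bbE[T_n]\ge \alpha|\Lambda_n|\asymp n^d$. On the other hand, a deterministic combinatorial lemma shows that the trifurcations of any configuration inside $\Lambda_n$ inject into $\partial\Lambda_n$: one associates to each trifurcation $x$ a partition of $\partial\Lambda_n$ into three non-empty pieces (the endpoints on $\partial\Lambda_n$ of the three infinite subclusters after removing $x$, intersected with $\Lambda_n$), and one checks inductively that these partitions are compatible in the sense of leaves of a binary forest, so that $T_n\le|\partial\Lambda_n|\asymp n^{d-1}$. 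This yields $n^d\lesssim n^{d-1}$, a contradiction, so $n_0=\infty$ is impossible and $n_0=1$.
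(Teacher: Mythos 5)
Your proposal is correct and follows essentially the same route as the paper: ergodicity to make the number of infinite clusters a.s.\ constant, the finite energy property \eqref{eq:finite energy} to merge finitely many infinite clusters inside a box and rule out $2\le N<\infty$, and the Burton--Keane trifurcation count (expected number $\asymp|\Lambda_n|$ versus the deterministic bound $|\partial\Lambda_n|$) to rule out $N=\infty$. The only differences are cosmetic: the paper proves the ergodicity you invoke as a separate preliminary lemma via \eqref{eq:FKG} and \eqref{eq:comparison}, is slightly more careful in choosing three well-separated boundary vertices so the tripod of vertex-disjoint paths can actually be drawn, and proves the deterministic counting step by an explicit peeling of the open edges into a forest rather than by citing the standard binary-forest lemma.
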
 
This result was first proved in \cite{AizKesNew87} for Bernoulli percolation. It was later obtained via different types of arguments. The beautiful argument presented here is due to {Burton and Keane} \cite{BurKea89}. 

We begin by studying ergodic properties of $\phi^1_{p,q}$ and $\phi^0_{p,q}$. Let $\tau_x$ be a translation of the lattice by $x\in\bbZ^d$. This translation induces a shift on the space of configurations $\{0,1\}^\bbE$.
Define
$\tau_x\calA:=\{\omega\in\{0,1\}^{\mathbb E}:\tau_x^{-1}\omega\in \calA\}$. An event $\calA$ is {\em invariant under translations} if for any $x\in \mathbb Z^d$, $\tau_x\calA=\calA$. A measure $\mu$ is {\em invariant under translations} if $\mu[\tau_x\calA]=\mu[\calA]$ for any event $\calA$ and any $x\in\bbZ^d$. The measure is said to be {\em ergodic} if any event invariant under translation has probability 0 or 1.
\begin{lemma}\label{ergodicity}
The measures $\phi^1_{p,q}$ and $\phi^0_{p,q}$ are invariant under translations and ergodic.
\end{lemma}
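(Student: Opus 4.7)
The plan splits in two: translation invariance by a sandwich argument using monotonicity of the finite-volume measures in the underlying box, and ergodicity by first establishing a mixing property $\phi[\calA\cap\tau_x\calB]\to\phi[\calA]\phi[\calB]$ and then running an $L^1$-approximation. For translation invariance, one reads from the proof of Proposition~\ref{wired infinite} the monotonicity in the domain: for finite $G\subset G'$ and an increasing event $\calA$ depending on edges of $G$, $\phi^0_{G,p,q}[\calA]\le\phi^0_{G',p,q}[\calA]$ and $\phi^1_{G,p,q}[\calA]\ge\phi^1_{G',p,q}[\calA]$, both obtained by separating inside/outside of $G$ via \eqref{eq:domain Markov} and applying \eqref{eq:comparison} between the induced boundary condition and the trivial (free or wired) one. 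For $x\in\bbZ^d$, the change of variables $\phi^\#_{\Lambda_n,p,q}[\tau_x\calA]=\phi^\#_{\tau_{-x}\Lambda_n,p,q}[\calA]$ combined with the inclusion $\Lambda_{n-\|x\|_\infty}\subset\tau_{-x}\Lambda_n\subset\Lambda_{n+\|x\|_\infty}$ sandwiches the middle term between two quantities that both converge to $\phi^\#_{p,q}[\calA]$ as $n\to\infty$. Inclusion-exclusion (Exercise~\ref{exo:increasing}) and a monotone class argument then extend $\phi^\#_{p,q}[\tau_x\calA]=\phi^\#_{p,q}[\calA]$ from increasing cylinders to all measurable events.

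Next I would establish the mixing property: for $\phi\in\{\phi^0_{p,q},\phi^1_{p,q}\}$ and any two increasing cylinder events $\calA,\calB$,
\begin{equation*}\lim_{\|x\|\to\infty}\phi[\calA\cap\tau_x\calB]=\phi[\calA]\phi[\calB].\end{equation*}
The lower bound is immediate from \eqref{eq:FKG} and translation invariance. For the upper bound, pick a box $\Lambda_R$ containing $\mathrm{supp}(\calA)$ and disjoint from $\tau_x\mathrm{supp}(\calB)$; the infinite-volume domain Markov property (Exercise~\ref{exo:12}) yields $\phi[\calA\cap\tau_x\calB]=\mathbf E_\phi[\mathbbm 1_{\tau_x\calB}\,\phi^{\xi}_{\Lambda_R,p,q}[\calA]]$, where $\xi$ is the random boundary condition induced by the configuration outside $\Lambda_R$. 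For $\phi=\phi^1_{p,q}$, \eqref{eq:comparison} gives the deterministic bound $\phi^\xi_{\Lambda_R,p,q}[\calA]\le\phi^1_{\Lambda_R,p,q}[\calA]$ whose right-hand side decreases to $\phi^1_{p,q}[\calA]$ as $R=R(x)\to\infty$, closing the estimate. For $\phi=\phi^0_{p,q}$ the same CBC bound only gives $\phi^\xi\le\phi^1_{\Lambda_R}$, which need not converge to $\phi^0_{p,q}[\calA]$; I would instead invoke extremality. Exercise~\ref{exo:12} identifies $\phi^0_{p,q}$ as the stochastically smallest DLR random-cluster measure, and any convex decomposition $\phi^0_{p,q}=\lambda\mu_1+(1-\lambda)\mu_2$ with $\mu_i$ DLR forces $\mu_1=\mu_2=\phi^0_{p,q}$ (each $\mu_i$ dominates $\phi^0_{p,q}$ on increasing events while the convex combination equals it). This extremality in the convex set of DLR measures yields tail-triviality (conditioning on a non-trivial tail event would produce two distinct DLR measures), and reverse martingale convergence then gives $\phi^\xi_{\Lambda_R,p,q}[\calA]\to\phi^0_{p,q}[\calA]$ in $L^1$ as $R\uparrow\bbE$.

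Once mixing holds, ergodicity is immediate: for a translation-invariant event $\calE$, one has $\phi[\calE]=\phi[\calE\cap\tau_x\calE]$ for every $x$; approximating $\calE$ in $L^1(\phi)$ by a cylinder $\calE'$ and letting $\|x\|\to\infty$ in the mixing estimate for $\calE'$ forces $\phi[\calE]=\phi[\calE]^2\in\{0,1\}$. The hardest step is the upper bound in the mixing estimate for $\phi^0_{p,q}$ when $\phi^0_{p,q}\ne\phi^1_{p,q}$: the direct CBC sandwich leaves a residual gap $(\phi^1_{p,q}[\calA]-\phi^0_{p,q}[\calA])\phi^0_{p,q}[\calB]$ that need not vanish, and circumventing it through the chain \emph{extremality} $\Rightarrow$ \emph{tail-triviality} $\Rightarrow$ \emph{reverse martingale convergence} is the genuinely non-trivial content of the lemma.
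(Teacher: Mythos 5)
Your argument for $\phi^1_{p,q}$ (the sandwich $\Lambda_{n-k}\subset\tau_{-x}\Lambda_n\subset\Lambda_{n+k}$ for translation invariance, \eqref{eq:FKG} for the lower mixing bound, and conditioning outside a box plus \eqref{eq:comparison} for the upper bound) is essentially the paper's proof, and it is correct. Where you diverge is on $\phi^0_{p,q}$, which the paper does not actually prove -- it is relegated to Exercise~\ref{exo:ergodicity}. Your diagnosis of the obstruction is right: the naive CBC bound only gives $\limsup_x\phi^0[\calA\cap\tau_x\calB]\le\phi^0[\calB]\,\phi^1[\calA]$, which does not close when the two measures differ. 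Your fix via extremality of $\phi^0_{p,q}$ among DLR measures (stochastic minimality forces any convex decomposition to be trivial), hence tail-triviality, hence backward-martingale convergence of $\phi^\xi_{\Lambda_R,p,q}[\calA]$ to the constant $\phi^0_{p,q}[\calA]$, is a valid and standard route (it is the one taken in Grimmett's book). Two remarks. First, it is heavier than necessary: the elementary argument for $\phi^1$ transposes verbatim to $\phi^0$ once you run it on \emph{decreasing} cylinder events. For decreasing $\calC$, extremality of the free boundary conditions gives $\phi^\xi_{\Lambda_n,p,q}[\calC]\le\phi^0_{\Lambda_n,p,q}[\calC]$, and $\phi^0_{\Lambda_n,p,q}[\calC]$ \emph{decreases} to $\phi^0_{p,q}[\calC]$, so the same conditioning argument yields mixing for pairs of decreasing events; since ${\rm Cov}(\mathbbm 1_\calA,\mathbbm 1_\calB)={\rm Cov}(\mathbbm 1_{\calA^c},\mathbbm 1_{\calB^c})$ and decreasing events generate the $\sigma$-algebra by inclusion-exclusion, this is the full mixing statement. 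This is almost certainly the intended solution to Exercise~\ref{exo:ergodicity}. Second, a caveat on your route: it leans on the fact that $\phi^0_{p,q}$ itself satisfies the infinite-volume DLR equations of Exercise~\ref{exo:12} (needed both to condition on $\calF_{E_R^c}$ and to say that conditioning on a tail event produces another DLR measure). This is true, but passing the finite-volume \eqref{eq:domain Markov} to the limit is not automatic, because the induced boundary condition $\xi$ is not a local function of the outside configuration; it requires its own monotonicity argument, which neither the paper nor your proposal supplies. The elementary decreasing-events route avoids this issue entirely by working only with finite-volume measures.
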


\begin{proof}
Let us treat the case of $\phi^1_{p,q}$, the case of $\phi^0_{p,q}$ is left to the reader (Exercise~\ref{exo:ergodicity}). Let $\calA$ be an increasing event depending on finitely many edges, and $x\in\bbZ^d$. Choose $k$ such that $x\in\Lambda_k$. Since
$\Lambda_{n-k}\subset\tau_x\Lambda_n\subset \Lambda_{n+k}$, the comparison between boundary conditions \eqref{eq:comparison} gives
$$\phi^1_{\Lambda_{n+k},p,q}[\tau_x\calA]\le \phi^1_{\tau_x\Lambda_n,p,q}[\tau_x\calA]\le \phi^1_{\Lambda_{n-k},p,q}[\tau_x\calA].$$ 
We deduce that
$$\phi^1_{p,q}[\calA]=\lim_{n\rightarrow\infty}\phi^1_{\Lambda_n,p,q}[\calA]=\lim_{n\rightarrow\infty}\phi^1_{\tau_x\Lambda_n,p,q}[\tau_x\calA]=\phi^1_{p,q}[\tau_x\calA].$$
Since the increasing events depending on finitely many edges span the $\sigma$-algebra of measurable events, we obtain that $\phi^1_{p,q}$ is invariant under translations.

Any event can be approximated by events depending on finitely many edges, hence the ergodicity follows from mixing (see Exercise~\ref{exo:mixing w}), i.e.~from the property 
that for any events $\calA$ and $\calB$ depending on finitely many edges,
\begin{equation}\lim_{\|x\|\rightarrow \infty}\phi^1_{p,q}[\calA\cap\tau_x \calB]=\phi^1_{p,q}[\calA]\phi^1_{p,q}[\calB].\label{eq:mixing}\tag{Mixing}\end{equation}
Observe that by inclusion-exclusion, it is sufficient to prove the equivalent result for $\calA$ and $\calB$ increasing and depending on finitely many edges. 
Let us give ourselves these two increasing events $\calA$ and $\calB$ depending on edges in $\Lambda_k$ only, and $x\in \bbZ^d$. The FKG inequality and the invariance under translations of $\phi^1_{p,q}$ imply that
$$\phi^1_{p,q}[\calA\cap\tau_{x} \calB]\ge \phi^1_{p,q}[\calA]\phi^1_{p,q}[\tau_{x}\calB]=\phi^1_{p,q}[\calA]\phi^1_{p,q}[\calB].$$
In the other direction, for any $n\ge2k$, if $x$ is far enough from the origin, then $\Lambda_n$ and $\tau_x\Lambda_n$ do not intersect. Thus, the comparison between boundary conditions (more precisely Exercise~\ref{exo:0} for $H=\Lambda_N$ with $N\ge n+k$, and then a limit as $N$ tends to infinity) gives
\begin{align*}\phi^1_{p,q}[\calA\cap\tau_{x} \calB]\le \phi^1_{\Lambda_n,p,q}[\calA]\phi^1_{\tau_{x}\Lambda_n,p,q}[\tau_x\calB]=\phi^1_{\Lambda_n,p,q}[\calA]\phi^1_{\Lambda_n,p,q}[\calB].\end{align*}
The result follows by taking $x$ to infinity. 
\end{proof}

\bexo
\begin{exercise}\label{exo:mixing w}
Prove that the mixing property \eqref{eq:mixing} implies ergodicity. {\em Hint.} Consider an event $\calA$ which is invariant by translation and approximate it by an event $\calB$ depending on finitely many edges. Then, use that the probability that $\calB\cap\tau_x \calB$ tends to the square of the probability of $\calB$ together with the fact that $\calA=\calA\cap\tau_x \calA$. 
\end{exercise}
\begin{exercise}\label{exo:ergodicity}
Prove that $\phi_{p,q}^0$ is invariant under translations and ergodic.
\end{exercise}
\eexo

\begin{figure}
\begin{center}
\includegraphics[width=0.80\textwidth]{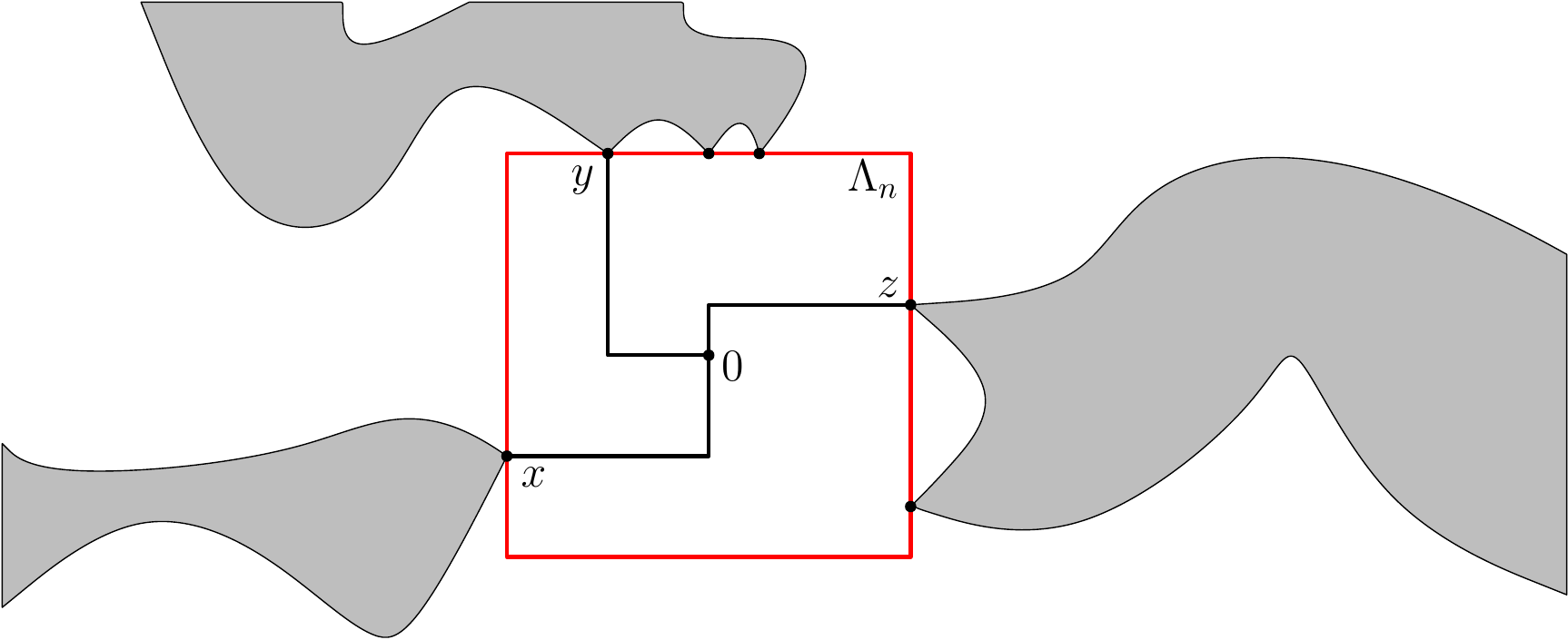}
\caption{Construction of a trifurcation at the origin starting from three disjoint infinite clusters (in gray) intersecting $\Lambda_n$. The three paths inside $\Lambda_n$ are vertex-disjoint, except at the origin.}\label{fig:trifurcation}\end{center}
\end{figure}

\begin{proof}[Theorem~\ref{thm:uniqueness}]We present the proof in the case of wired boundary conditions and for $p\in(0,1)$ (the result is obvious for $p$ equal to 0 or 1). Let $\calE_{\le 1}$, $\calE_{<\infty}$ and $\calE_\infty$ be the events that there is no more than one, finitely many and infinitely many infinite clusters respectively. Since having no infinite cluster is an event which is invariant under translations, it has probability 0 or 1 by ergodicity, and it is therefore sufficient to prove that $\phi^1_{p,q}[\calE_{\le 1}]=1$.
\medbreak
Let us start by showing that $\phi^1_{p,q}[\calE_{<\infty}\setminus\calE_{\le 1}]=0$. By ergodicity, $\calE_{<\infty}$ and $\calE_{\le 1}$ both have probability equal to 0 or 1. Since $\calE_{\le 1}\subset \calE_{<\infty}$, we only need to prove that $\phi^1_{p,q}[\calE_{<\infty}]>0$ implies $\phi^1_{p,q}[\calE_{\le 1}]>0$.
Let $\calF$ be the event that all (there may be none) the infinite clusters intersect $\Lambda_n$. 
Since $\calF$ is independent of $E_n$, \eqref{eq:domain Markov} together with \eqref{eq:finite energy} imply that
$$\phi^1_{p,q}[\calF\cap\{\omega_e=1,\forall e\in E_n\}]\ge \phi^1_{p,q}[\calF]\,c_{\rm FE}^{\ |E_n|}.$$
Now, assume that $\phi^1_{p,q}[\calE_{<\infty}]>0$. Since any configuration in the event on the left contains zero or one infinite cluster (all the vertices in $\Lambda_n$ are connected), choosing $n$ large enough that $\phi^1_{p,q}[\calF]\!\ge\!\tfrac12\phi^1_{p,q}[\calE_{<\infty}]>0$ implies that
 $\phi^1_{p,q}[\calE_{\le 1}]>0$.
\bigbreak We now exclude the possibility of an infinite number of infinite clusters. Consider $n>0$ large enough that 
\begin{equation}\label{eq:trif}\phi^1_{p,q}[K\text{ infinite clusters intersect the box $\Lambda_n$}]\ge\tfrac12\phi^1_{p,q}[\calE_\infty],\end{equation}
where $K=K(d)$ is large enough that three vertices $x,y,z$ of $\partial\Lambda_n$ at distance at least three of each others that are connected to infinity in $\omega_{|\bbE\setminus E_n}$. Using these three vertices, one may
modify\footnote{Note that one may wish to pick $K=3$ in \eqref{eq:trif} instead of a (a priori) larger $K$, but that this choice would make the construction of the trifurcations described below more difficult due to the fact that the three clusters may arrive very close to each others on the corner of $\Lambda_n$, and therefore prevent us from ``rewiring them'' to construct a trifurcation at the origin.
} the configuration in $E_n$ as follows: 
\begin{enumerate}[noitemsep,nolistsep]\item Choose three paths in $\Lambda_n$ intersecting each others only at the origin, and intersecting $\partial\Lambda_n$ only at one point, which is respectively $x$, $y$ and $z$. 
\item Open all the edges on these paths, and close all the other edges in $E_n$. 
\end{enumerate}We deduce from this construction that  
\begin{equation}\label{eq:pp}\phi^1_{p,q}[\calT_0]\ge c_{\rm FE}^{\ |E_n|}\cdot\tfrac12 \phi^1_{p,q}[\calE_\infty],\end{equation} where $\calT_0$ is the following event: $\bbZ^d\setminus\{0\}$ contains three distinct infinite clusters which are connected to 0 by an open edge. 
A vertex $x\in \bbZ^d$ is called a {\em trifurcation} if $\tau_x\calT_0=:\calT_x$ occurs. 

Fix $n\!\ge\!1$ and denote the number of trifurcations in $\Lambda_n$ by $\mathsf T$. By invariance under translation, $\phi^1_{p,q}[\calT_x]=\phi^1_{p,q}[\calT_0]$ and therefore
\begin{equation}\phi^1_{p,q}[\mathsf T]=\phi^1_{p,q}[\calT_0]\times |\Lambda_n|.\label{eq:aair}\end{equation}
 Let us now bound deterministically $\mathsf T$. In order to do this, first perform the following two ``peelings'' of the set $F_0:=\{e_1,\dots,e_r\}$ of edges in $E_n$ that are open in $\omega$.
 \begin{itemize}
 \item 
 For each $1\le i\le r$,  if $e_i$ is on a cycle formed by edges in $F_{i-1}$, set $F_i=F_{i-1}\setminus\{e_i\}$, otherwise, set $F_i=F_{i-1}$. At the end, the set $\tilde F_0:=F_r=\{f_1,\dots,f_s\}$ is a forest.
 \item  For each $1\le j\le s$, if $\tilde F_{j-1}\setminus \{f_j\}$ contains a cluster not intersecting $\partial\Lambda_n$, then set $\tilde F_j$ to be $\tilde F_{j-1}\setminus\{f_j\}$ and the cluster in question. Otherwise, set $\tilde F_j=\tilde F_{j-1}$. At the end, $\tilde F_s$ is a forest whose leafs belong to $\partial\Lambda_n$.
 \end{itemize} 
 Since the trifurcations are vertices of degree at least three in this forest, we deduce that $\mathsf T$ is smaller than the number of leafs in the forest, {i.e.}~$\mathsf T\le|\partial\Lambda_n|$.
This gives
$$\phi^1_{p,q}[\calT_0]\stackrel{\eqref{eq:aair}}= \frac{\phi^1_{p,q}[\mathsf T] }{|\Lambda_n|}\le \frac{|\partial\Lambda_n|}{|\Lambda_n|}\longrightarrow 0\quad\text{as $n\rightarrow \infty$. }$$
Combined with \eqref{eq:pp}, this implies that $\phi^1_{p,q}[\calE_\infty]=0$. The claim follows.\end{proof}

\bexo
\begin{exercise}\label{exo:uniqueness}
We say that an (countable) infinite locally finite transitive graph $\bbG$ is amenable if 
$$\inf_{G\subset \bbG} \frac{|\partial G|}{|G|}=0.$$
Show that Theorem~\ref{thm:uniqueness} still holds in this context. What about graphs which are not amenable, do we always have uniqueness of the infinite cluster?
\end{exercise}
\eexo

We now turn to the first problem and prove the following.

\begin{theorem}\label{uniqueness Dq}For $q\geq 1$, the set of edge-weights $p$ for which $\phi^1_{p,q}\ne \phi^0_{p,q}$ is at most countable.
\end{theorem}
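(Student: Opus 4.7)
The plan is to extract the equality $\phi^{0}_{p,q} = \phi^{1}_{p,q}$ from differentiability of a convex free energy, since a convex function has at most countably many points of non-differentiability. Change variables to $h := \log(p/(1-p))$ and, for $\xi \in \{0,1\}$, set
\begin{equation*}
F_n^{\xi}(h) := \frac{1}{|E_n|}\log \sum_{\omega \in \{0,1\}^{E_n}} e^{h\cdot o(\omega)}\, q^{k(\omega^{\xi})}.
\end{equation*}
Because $q \ge 1$, this is the logarithm of a polynomial in $e^{h}$ with non-negative coefficients, so each $F_n^{\xi}$ is convex in $h$ (its second derivative is a variance). Contracting $\partial \Lambda_n$ changes the cluster count by at most $|\partial \Lambda_n|-1$, so $|F_n^{1}(h) - F_n^{0}(h)| \le |\partial \Lambda_n|\log q / |E_n| \to 0$.

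Next, I would invoke a standard subadditivity/tiling argument (using \eqref{eq:domain Markov} and \eqref{eq:comparison} to bound the partition function on $\Lambda_n$ by products of partition functions on disjoint sub-boxes) to show existence of the thermodynamic limit $F(h) := \lim_n F_n^{\xi}(h)$, which is independent of $\xi$ by the above estimate. As a pointwise limit of convex functions, $F$ is convex on $\bbR$, and hence differentiable outside an at most countable set $\calN \subset \bbR$. Direct differentiation gives
\begin{equation*}
(F_n^{\xi})'(h) = \frac{1}{|E_n|}\sum_{e \in E_n}\phi_{\Lambda_n,p,q}^{\xi}[\omega_e = 1].
\end{equation*}

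At every $h \notin \calN$, the classical theorem on convergence of derivatives of pointwise-convergent convex functions yields $(F_n^{\xi})'(h) \to F'(h)$ for both $\xi \in \{0,1\}$. On the other hand, translation invariance of $\phi_{p,q}^{\xi}$ together with the observation that the fraction of edges of $E_n$ within a fixed distance of $\partial \Lambda_n$ is $O(n^{-1})$ implies that the right-hand side above converges to $\phi_{p,q}^{\xi}[\omega_e = 1]$ for any fixed $e \in \bbE$. Therefore
\begin{equation*}
\phi_{p,q}^{0}[\omega_e = 1] \;=\; F'(h) \;=\; \phi_{p,q}^{1}[\omega_e = 1] \qquad \text{for every } e \in \bbE \text{ whenever } h \notin \calN.
\end{equation*}

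Finally, \eqref{eq:comparison} gives that $\phi_{p,q}^{0}$ is stochastically dominated by $\phi_{p,q}^{1}$; by extending Lemma~\ref{lem:dynamic} to infinite volume via compactness of $\{0,1\}^{\bbE}$, one obtains a coupling $\mathbf{P}$ of the two measures supported on $\{\omega \le \tilde\omega\}$. The edge-marginal equality above forces $\mathbf{E}[\tilde\omega_e - \omega_e] = 0$ with $\tilde\omega_e - \omega_e \ge 0$, hence $\omega = \tilde\omega$ $\mathbf{P}$-almost surely, so $\phi_{p,q}^{0} = \phi_{p,q}^{1}$. Since $p \mapsto h(p)$ is a homeomorphism $(0,1) \to \bbR$, the exceptional set of $p$'s is itself at most countable, proving the claim. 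The main obstacle is linking the differentiability of the macroscopic object $F$ to the microscopic equality $\phi_{p,q}^{0}[\omega_e=1] = \phi_{p,q}^{1}[\omega_e=1]$: this simultaneously requires the convex-analytic convergence of derivatives at points of differentiability of the limit, and a careful argument that boundary edges of $\Lambda_n$ do not pollute the thermodynamic limit of the averaged edge density.
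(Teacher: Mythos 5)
Your proof is correct, but it takes the classical Lebowitz--Martin-L\"of route via convexity of the free energy, which is precisely the argument the paper acknowledges and then deliberately repackages: the text presents ``a slightly rephrased version of this argument, which relies on the fact that the probability for an edge to be open is increasing'', and relegates your approach to Exercise~\ref{exo:free energy}. In the paper's proof, the at-most-countable exceptional set is the set of discontinuity points of the increasing function $p\mapsto\phi^1_{p,q}[\omega_e]$, and the key inequality $\phi^1_{p,q}[\omega_e]\le\phi^0_{p,q}[\omega_e]$ at continuity points is obtained by comparing the distributions of $o(\omega)$ under $\phi^0_{\Lambda_n,p,q}$ and $\phi^1_{\Lambda_n,p',q}$ with $p'<p$, using the exponential tilting by $\lambda^{o(\omega)}$ with $\lambda=\tfrac{p'(1-p)}{(1-p')p}<1$ together with the boundary bound $k(\omega^1)\le k(\omega)\le k(\omega^1)+|\partial\Lambda_n|$; no thermodynamic limit and no convex analysis are needed. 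Your version instead extracts countability from the non-differentiability set of the convex limit $F$, at the price of establishing the existence of that limit and invoking the convergence of derivatives of pointwise-convergent convex functions. The two are the same idea in different clothing ($F'$ is the edge density, and convexity of $F$ is monotonicity of that density in $h$), and both reduce the theorem to equality of edge marginals, which is upgraded to equality of measures by the monotone coupling exactly as in the paper's opening paragraph.

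One step of yours deserves more care than the justification you offer. The identification $\lim_n(F_n^\xi)'(h)=\phi^\xi_{p,q}[\omega_e]$ does not follow from translation invariance plus the smallness of the boundary alone: an edge at distance $K$ from $\partial\Lambda_n$ is a priori only pinned between $\phi^0_{\Lambda_K,p,q}[\omega_{e_0}]$ and $\phi^1_{\Lambda_K,p,q}[\omega_{e_0}]$, an interval whose endpoints shrink, as $K\to\infty$, precisely to the two quantities you are trying to prove equal, so the naive squeeze is circular. The argument closes only because of the one-sided monotonicity in the volume: by \eqref{eq:domain Markov} and \eqref{eq:comparison} one has $\phi^0_{\Lambda_n,p,q}[\omega_e]\le\phi^0_{p,q}[\omega_e]$ for \emph{every} $e\in E_n$, while edges at distance at least $K$ from the boundary satisfy $\phi^0_{\Lambda_n,p,q}[\omega_e]\ge\phi^0_{\Lambda_K,p,q}[\omega_{e_0}]$ and the remaining edges form a fraction $O(K/n)$ of $E_n$; this gives $\lim_n(F_n^0)'(h)=\phi^0_{p,q}[\omega_e]$ for every $h$, and the mirror-image bounds handle the wired case. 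With that supplement your argument is complete.
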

The theorem implies that for any $p>p_c$, there exists $p'\in (p_c,p)$ such that $\phi^1_{p',q}=\phi^0_{p',q}$. As a consequence, 
\begin{equation}\label{eq:oiu}\phi^0_{p,q}[0\longleftrightarrow \infty]\ge \phi^0_{p',q}[0\longleftrightarrow \infty]=\phi^1_{p',q}[0\longleftrightarrow \infty]>0.\end{equation}
In other words, $\phi^0_{p,q}[0\leftrightarrow \infty]>0$ for any $p>p_c$ and we could have defined the critical point using the free boundary conditions instead of the wired ones. We will use this fact quite often.

The proof of Theorem~\ref{uniqueness Dq} goes back to Lebowitz and Martin-L\"of \cite{LebMar72} in the case of the Ising model. The very elegant argument harvests the convexity of the free energy (see Exercise~\ref{exo:free energy}). Here, we present a slightly rephrased version of this argument, which relies on the fact that the probability for an edge to be open is increasing.

\begin{proof}
Before diving into the proof, let us remark that 
$$\phi^0_{p,q}=\phi^1_{p,q}\qquad\Longleftrightarrow\qquad\phi_{p,q}^0[\omega_e]=\phi_{p,q}^1[\omega_e],\ \forall e\in \bbE.$$ The direct implication being obvious, we assume the assertion on the right  and try to prove the one on the left. Consider an increasing event $\calA$ depending on a finite set $E$ of edges, then if ${\bf P}_n$ denotes the increasing coupling between $\omega\sim\phi_{\Lambda_n,p,q}^0$ and $\tilde\omega\in \phi^1_{\Lambda_n,p,q}$ constructed in the proof of Lemma~\ref{lem:dynamic}, we find that 
\begin{align*}0\le \phi_{\Lambda_n,p,q}^1[\calA]-\phi_{\Lambda_n,p,q}^0[\calA]&={\bf P}_n[\tilde\omega\in \calA,\omega\notin\calA]\\
&\le \sum_{e\in E}{\bf P}_n[\tilde\omega_e=1,\omega_e=0]=\sum_{e\in E}\phi_{\Lambda_n,p,q}^1[\omega_e]-\phi_{\Lambda_n,p,q}^0[\omega_e].\end{align*}
Letting $n$ go to infinity implies that $\phi^1_{p,q}[\calA]=\phi^0_{p,q}[\calA]$. Since increasing events depending on finitely many edges generate the $\sigma$-algebra, this gives that $\phi^1_{p,q}=\phi^0_{p,q}$.
\medbreak
Our goal is to prove that $ \phi_{p,q}^1[\omega_0]=\phi_{p,q}^0[\omega_e]$ at any point of continuity of $p\mapsto \phi_{p,q}^1[\omega_e]$. Since this function is increasing, it  has at most countably many points of discontinuity and the theorem will follow. 
Below, we fix such a point of continuity $p$. 
We also consider $p'<p$ and set $a:=\phi_{p,q}^0[\omega_e]$ and $b:=\phi_{p',q}^1[\omega_e]$. 

Consider $\ep=\min\{1-a,b\}>0$ and $n\ge1$. The comparison between boundary conditions gives that
\begin{align}\label{eq:lp}\phi^0_{\Lambda_n,p,q}[o(\omega)]\le a|E_n|\quad&\text{so that}\quad\phi^0_{\Lambda_n,p,q}[o(\omega)\le (a+\ep)|E_n|]\ge\ep,\\
\label{eq:lpp}\phi^1_{\Lambda_n,p',q}[o(\omega)]\ge b|E_n|\quad&\text{so that}\quad\phi^1_{\Lambda_n,p',q}[o(\omega)\ge (b-\ep)|E_n|]\ge\ep.\end{align}
(For the inequalities on the right, we also used that $0\le o(\omega)\le |E_n|$.) Now, using that $k(\omega^1)\le k(\omega)\le k(\omega^1)+|\partial\Lambda_n|$ and setting $\lambda:=\frac{p'(1-p)}{(1-p')p}<1$, we find\footnote{ We use that for a random variable $X$, that $$\phi^\xi_{G,p',q}[X]=\frac{\phi^\xi_{G,p,q}[X\lambda^{o(\omega)}]}{\phi^\xi_{G,p,q}[\lambda^{o(\omega)}]}$$ since
\begin{align*}\sum_{\omega\in\{0,1\}^E}X(\omega)p'^{\,o(\omega)}(1-p')^{c(\omega)}q^{k(\omega)}&= (1-p')^{|E|} \sum_{\omega\in\{0,1\}^E}X(\omega)\big(\tfrac {p'}{1-p'}\big)^{o(\omega)}q^{k_\xi(\omega)}\\
&= (1-p')^{|E|} \sum_{\omega\in\{0,1\}^E}\lambda^{o(\omega)}X(\omega)\big(\tfrac {p}{1-p}\big)^{o(\omega)}q^{k(\omega)}\\
&= (\tfrac{1-p'}{1-p})^{|E|} \sum_{\omega\in\{0,1\}^E}\lambda^{o(\omega)}X(\omega)p^{o(\omega)}(1-p)^{c(\omega)}q^{k(\omega)}.
\end{align*}} that
\begin{align*}
\ep\stackrel{\eqref{eq:lpp}}\le \phi^1_{\Lambda_n,p',q}[o(\omega)>(b-\ep)|E_n|]&\stackrel{\phantom{\eqref{eq:lp}}}\le q^{|\partial\Lambda_n|}\,\phi^0_{\Lambda_n,p',q}[o(\omega)>(b-\ep)|E_n|]\\
&\stackrel{\phantom{\eqref{eq:lp}}}\le q^{|\partial\Lambda_n|}\,\frac{\phi^0_{\Lambda_n,p,q}[\lambda^{o(\omega)}\mathbbm{1}_{o(\omega)>(b-\ep)|E_n|}]}{\phi^0_{\Lambda_n,p,q}[\lambda^{o(\omega)}\mathbbm{1}_{o(\omega)\le(a+\ep)|E_n|}]}\\
&\stackrel{\eqref{eq:lp}}\le \frac{q^{|\partial\Lambda_n|}\lambda^{(b-a-2\ep) |E_n|}}{\ep}.
\end{align*}
The fact that $|E_n|/|\partial\Lambda_n|$ tends to infinity as $n$ tends to infinity implies that $b\le a+2\ep$. Since this is true for any $\ep>0$, we deduce $b\le a$. Letting $p'$ tend to $p$ and using the continuity of $p'\mapsto \phi_{p',q}^1[\omega_e]$ at $p$ gives that $\phi_{p,q}^1[\omega_e]\le\phi_{p,q}^0[\omega_e]$. Since we already have $\phi_{p,q}^1[\omega_0]\ge\phi_{p,q}^0[\omega_e]$, this concludes the proof.\end{proof}

\bexo
\begin{exercise}\label{exo:free energy}
1. Show that $\displaystyle Z^1_{\Lambda_{2n},p,q}\ge\big(Z^1_{\Lambda_{n},p,q}\big)^{2^d}$. 
\medbreak\noindent
2. Deduce that $\displaystyle f_n^1(p,q):=\tfrac1{|E_{2^n}|}\log (Z^1_{\Lambda_{2^n},p,q})$ converges to a quantity $f(p,q)$ (called the {\em free energy}).
 \medbreak\noindent
3. Show that $\displaystyle f_n^0(p,q):=\tfrac1{|E_{2^n}|}\log (Z^0_{\Lambda_{2^n},p,q})$ converges to $f(p,q)$ as well.
\medbreak\noindent
4. Show that the right and left derivatives of 
$$t\mapsto f\Big(\frac{e^t}{1+e^{t}},q\Big)+\log(1+e^{t})$$ are respectively $\phi^1_{p,q}[\omega_e]$ and $\phi^0_{p,q}[\omega_e]$.
\medbreak\noindent
5. Show that $p\mapsto f(p,q)$ is convex and therefore not differentiable in at most countably many points. Conclude.\end{exercise}
%

\eexo

Let us conclude this section by stating the following corollary for the Potts model.
\begin{corollary}
Consider the Potts model on $\bbZ^d$. For any $\beta>\beta_c$, {\em (LRO$_\beta$)} holds true, while for any $\beta<\beta_c$, {\em (LRO$_\beta$)} does not hold.
\end{corollary}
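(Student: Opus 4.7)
The plan is to translate the long-range ordering statement into the language of the random-cluster model via Corollary~\ref{cor:aa}, which gives
$$\mu^{\rm f}_{\beta,q}[\sigma_0\cdot\sigma_x]=\phi^0_{p,q}[0\longleftrightarrow x]$$
where $\beta$ and $p$ are related by \eqref{eq:beta p}. Since this relation is a strictly increasing bijection between $[0,\infty)$ and $[0,1)$, and since by definition $\beta_c$ corresponds to $p_c$, the two statements we need become: $\phi^0_{p,q}[0\leftrightarrow x]\to 0$ for $p<p_c$, and $\liminf_{\|x\|\to\infty}\phi^0_{p,q}[0\leftrightarrow x]>0$ for $p>p_c$.

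For the subcritical case $p<p_c$, the extremality \eqref{extremality} gives $\phi^0_{p,q}[0\leftrightarrow\infty]\le\phi^1_{p,q}[0\leftrightarrow\infty]=0$ by the very definition of $p_c$. Since the events $\{0\leftrightarrow\partial\Lambda_n\}$ are decreasing in $n$ with intersection $\{0\leftrightarrow\infty\}$, continuity of the measure gives $\phi^0_{p,q}[0\leftrightarrow\partial\Lambda_n]\to 0$. For any $x\notin\Lambda_n$ we have $\{0\leftrightarrow x\}\subset\{0\leftrightarrow\partial\Lambda_n\}$, so $\phi^0_{p,q}[0\leftrightarrow x]\to 0$ as $\|x\|\to\infty$, as required.

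For the supercritical case $p>p_c$, the key observation \eqref{eq:oiu}, obtained as a consequence of Theorem~\ref{uniqueness Dq} (by picking $p'\in(p_c,p)$ where $\phi^0_{p',q}=\phi^1_{p',q}$ and using monotonicity), yields $\theta:=\phi^0_{p,q}[0\leftrightarrow\infty]>0$. Here is where the main ingredient enters: by Theorem~\ref{thm:uniqueness} applied to $\phi^0_{p,q}$, almost surely there is a unique infinite cluster, so $\{0\leftrightarrow\infty\}\cap\{x\leftrightarrow\infty\}\subset\{0\leftrightarrow x\}$ up to a null set. The events $\{0\leftrightarrow\infty\}$ and $\{x\leftrightarrow\infty\}$ are increasing, so \eqref{eq:FKG} together with the translation invariance of $\phi^0_{p,q}$ (Lemma~\ref{ergodicity}, or rather Exercise~\ref{exo:ergodicity}) gives
$$\phi^0_{p,q}[0\leftrightarrow x]\ \ge\ \phi^0_{p,q}[0\leftrightarrow\infty,\ x\leftrightarrow\infty]\ \ge\ \phi^0_{p,q}[0\leftrightarrow\infty]\,\phi^0_{p,q}[x\leftrightarrow\infty]\ =\ \theta^2\ >\ 0$$
uniformly in $x$, which gives the desired positive limit.

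The only delicate point is the supercritical direction: the inequality needs both uniqueness of the infinite cluster (to collapse the two ``connected to infinity'' events into one connection event between the vertices) and the passage from $\phi^1_{p,q}$ to $\phi^0_{p,q}$ at supercriticality, which is exactly the content of Theorems~\ref{thm:uniqueness} and~\ref{uniqueness Dq}. Everything else is direct.
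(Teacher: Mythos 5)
Your proof is correct and follows essentially the same route as the paper: translate (LRO$_\beta$) into $\phi^0_{p,q}[0\leftrightarrow x]$ via the coupling, dispose of the subcritical case directly, and in the supercritical case combine \eqref{eq:oiu} with the uniqueness of the infinite cluster, the FKG inequality and translation invariance to get the lower bound $\phi^0_{p,q}[0\leftrightarrow\infty]^2>0$. The extra detail you supply on the subcritical side (extremality plus continuity of the measure) is exactly what the paper dismisses as obvious.
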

Note that we do not claim that the property is equivalent to (MAG$_\beta$) since at $\beta_c$, one may have (MAG$_{\beta_c}$) but not (LRO$_{\beta_c}$).
\begin{proof}
By the coupling with the random-cluster model, we need to prove that $\phi^0_{p,q}[0\leftrightarrow x]$ tends to 0 when $p<p_c$, which is obvious, and that $\phi^0_{p,q}[0\leftrightarrow x]$ does not tend to 0 when $p>p_c$, which follows from 
$$\phi^0_{p,q}[0\longleftrightarrow x]\ge \phi^0_{p,q}[0\longleftrightarrow\infty,x\longleftrightarrow \infty]\ge \phi^0_{p,q}[0\longleftrightarrow\infty]^2\stackrel{\eqref{eq:oiu}}>0,$$
where the first inequality is due to the uniqueness of the infinite cluster, and the second to the FKG inequality and the invariance under translations.
\end{proof}

\section{Computation of critical points and sharp phase transitions}

We would now like to discuss how  the critical point of a planar percolation model can sometimes be computed, and how fast correlations decay when $p<p_c$. We start by studying Bernoulli percolation, and then focus on the random-cluster model. 

\subsection{Kesten's theorem}\label{sec:2.1}

In this section, we focus on the case $d=2$. We begin by discussing the duality relation for Bernoulli percolation. Consider the dual lattice $(\bbZ^2)^*:=(\tfrac12,\tfrac12)+\bbZ^2$ of the lattice $\bbZ^2$ defined by putting a vertex in the middle of each face, and edges between nearest neighbors. Each edge $e\in\bbE$ is in direct correspondence with an edge $e^*$ of the dual lattice crossing it in its middle. For a finite graph $G=(V,E)$, let $G^*$ be the graph with edge-set $E^*=\{e^*,e\in E\}$ and vertex-set given by the endpoints of the edges in $E^*$. 

A configuration $\omega$ is naturally associated to a dual configuration $\omega^*$: every edge $e$ which is closed (resp.~open) in $\omega$ corresponds to a open (resp.~closed) edge $e^*$ in $\omega^*$. More formally, $$\omega^*_{e^*}:=1-\omega_e\qquad\forall e\in E.$$
Note that if $\omega$ is sampled according to $\bbP_p$, then $\omega^*$ is sampled according to $\bbP_{1-p}$.
This duality relation suggests that the critical point of Bernoulli percolation on $\bbZ^2$ is equal to 1/2. We discuss different levels of heuristic leading to this prediction.  

\paragraph{Heuristic level 0} The simplest non-rigorous justification of the fact that $p_c=1/2$ invokes the uniqueness of the phase transition, i.e.~the observation that the model should undergo a single change of macroscopic behavior as $p$ varies. This implies that $p_c$ must be equal to $1-p_c$, since otherwise the model will change at $p_c$ (with the appearance of an infinite cluster in $\omega$), and at $1-p_c$ (with the disappearance of an infinite cluster in $\omega^*$). Of course, it seems difficult to justify why there should be a unique phase transition. This encourages us to try to improve our heuristic argument.

\paragraph{Heuristic level 1} One may invoke a slightly more subtle argument. On the one hand, assume for a moment that $p_c<1/2$. In such case, for any $p\in(p_c,1-p_c)$, there (almost surely) exist infinite clusters in both $\omega$ and $\omega^*$. Since the infinite cluster is unique almost surely, this seems to be difficult to have coexistence of an infinite cluster in $\omega$ and an infinite cluster in $\omega^*$, and it therefore leads us to believe that $p_c\ge 1/2$. On the other hand, assume that $p_c>1/2$. In such case, for any $p\in(p_c,1-p_c)$, there (almost surely) exist no infinite cluster in both $\omega$ and $\omega^*$. This seems to contradict the intuition that if clusters are all finite in $\omega$, then $\omega^*$ should contain an infinite cluster. This reasoning is wrong in general (there may be no infinite cluster in both $\omega$ and $\omega^*$), but it seems still believable that this should not occur for a whole range of values of $p$. Again, the argument is fairly weak here and we should improve it.

\begin{figure}
\begin{center}
\includegraphics[width=0.50\textwidth]{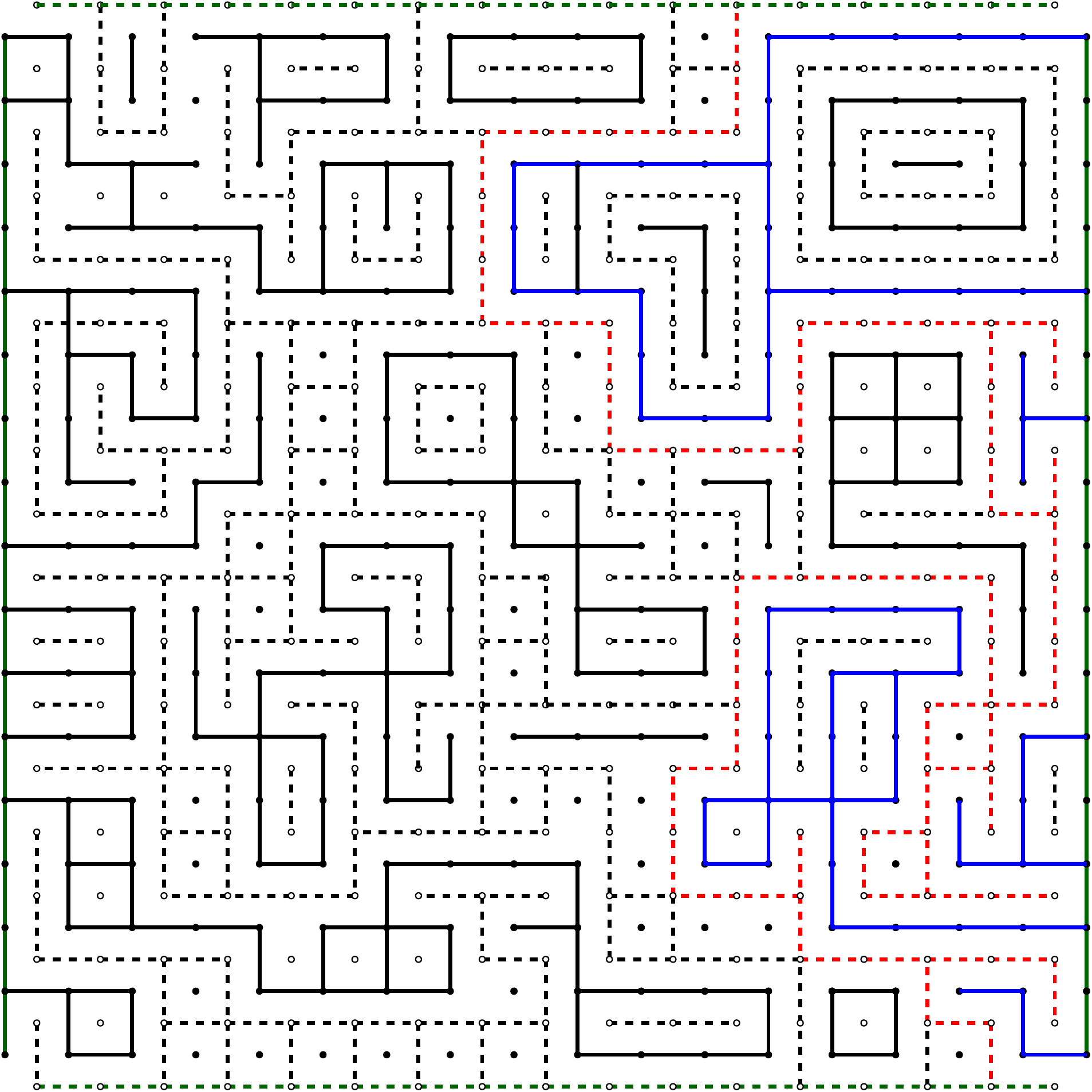}
\caption{The rectangle $R_n$ together with its dual $R_n^*$ (the green edges on the boundary are irrelevant for the crossing, so that we may consider only the black edges, for which the dual graph is isomorphic to the graph itself (by rotating it). The dual edges (in red) of the edge-boundary of the cluster of the right boundary in $\omega$ (in blue) is a cluster in $\omega^*$ crossing from top to bottom in $R_n^*$.}\label{fig:carre}\end{center}
\end{figure}

\paragraph{Heuristic level 2} Consider the event, called $\calH_n$, corresponding to the existence of a path of open edges of $\omega$ in $R_n:=[0,n]\times[0,n-1]$ going from the left to the right side of $R_n$. Observe that the complement of the event $\calH_n$ is the event that there exists a path of open edges in $\omega^*$ going from top to bottom in the graph $R_n^*$; see Fig.~\ref{fig:carre}. Using the rotation by $\pi/2$, one sees that at $p=1/2$, these two events have the same probability,  so that
\begin{equation}\label{eq:1/2}\bbP_{1/2}[\calH_n]=\tfrac12\quad\forall n\ge1.\end{equation}
Now, one may believe that for $p<p_c$, the clusters are so small that the probability that one of them contains a path crossing $R_n$ from left to right  tends to 0, which would imply that the probability of $\calH_n$ would tend to 0, and therefore that $p_c\le 1/2$. On the other hand, one may believe that for $p>p_c$, the infinite cluster is so omnipresent that it contains with very high probability a path crossing $R_n$ from left to right, thus implying that the probability of $\calH_n$ would tend to 1. This would give $p_c\ge1/2$. Unfortunately, the first of these two claims is difficult to justify. Nevertheless, the second one can be proved as follows.
\begin{proposition}\label{prop:lower}
Assume that $\bbP_p[0\leftrightarrow \infty]>0$, then $\displaystyle\lim_{n\rightarrow \infty}\bbP_p[\calH_n]=1.$ 
\end{proposition}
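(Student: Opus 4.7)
The plan combines ergodicity, uniqueness of the infinite cluster (Theorem~\ref{thm:uniqueness} applied with $q=1$), and the FKG inequality. First, Lemma~\ref{ergodicity} specialised to $q=1$ gives ergodicity of $\bbP_p$, so the translation-invariant event ``an infinite cluster exists'' has probability $0$ or $1$; the hypothesis forces it to be $1$, and Theorem~\ref{thm:uniqueness} then provides a unique infinite cluster $\calC_\infty$ almost surely.

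Second, I would show that with probability tending to one each of the four sides of the symmetric box $\Lambda_n=[-n,n]^2$ meets $\calC_\infty$. Let $E_n^\alpha$ for $\alpha\in\{L,R,T,B\}$ denote the event that the $\alpha$-side of $\Lambda_n$ contains a vertex of $\calC_\infty$. The union $\bigcup_\alpha E_n^\alpha$ coincides with $\{\partial\Lambda_n\cap \calC_\infty\neq\emptyset\}$, which is a.s.\ realised for $n$ large enough (as $\calC_\infty$ is infinite), so its probability tends to $1$. By the $\pi/2$-rotational symmetry of $\Lambda_n$ all four probabilities $\bbP_p[E_n^\alpha]$ coincide, and applying FKG to the decreasing complements yields $(1-\bbP_p[E_n^L])^4\le \prod_\alpha \bbP_p[(E_n^\alpha)^c]\le 1-\bbP_p[\bigcup_\alpha E_n^\alpha]\to 0$, i.e.\ $\bbP_p[E_n^\alpha]\to 1$ for each side. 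A second application of FKG to the four increasing events then gives $\bbP_p[\bigcap_\alpha E_n^\alpha]\to 1$; on this event, uniqueness of $\calC_\infty$ forces any two sides of $\Lambda_n$ to lie in a common open cluster of $\omega$.

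Third, to convert this into a horizontal crossing inside $R_n$, I would apply the same argument in the vertical strip $[0,n]\times\bbZ$: both lines $\{0\}\times\bbZ$ and $\{n\}\times\bbZ$ meet $\calC_\infty$ a.s., hence are connected by an open path in $\omega$, and the standard ``last-exit-before-first-entry'' truncation (take the last visit of the path to $\{0\}\times\bbZ$ before its first visit to $\{n\}\times\bbZ$) keeps the connecting sub-path inside the strip. This already produces, for every fixed $n$, an open horizontal crossing of some rectangle $[0,n]\times[-N,N]$ with $N$ random but finite a.s., so $\bbP_p[\text{horizontal crossing of }[0,n]\times[-N,N]]\to 1$ as $N\to\infty$.

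\textbf{Main obstacle.} The true content of the proposition is that one may take $N$ of order $n$, so that the crossing actually fits inside $R_n$. A priori the connecting path could be forced to wander vertically well beyond $[0,n-1]$; ruling this out is the genuinely nontrivial step. I would close the gap by a scale-matching / bootstrap argument that, using FKG and translation invariance together with the positive density of $\calC_\infty$, trades off short-and-wide versus tall-and-narrow crossings at comparable scales to force the vertical extent of the crossing to be of the same order as the width~$n$, thereby yielding $\bbP_p[\calH_n]\to 1$.
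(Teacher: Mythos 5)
You have correctly assembled the soft ingredients (ergodicity, uniqueness of $\calC_\infty$, FKG/square-root trick), but the proof is not complete: the step you flag as the ``main obstacle'' is precisely the content of the proposition, and your proposed resolution (``a scale-matching / bootstrap argument \dots trades off short-and-wide versus tall-and-narrow crossings'') is not an argument — making crossing probabilities at different aspect ratios comparable is an RSW-type statement, considerably harder than the proposition itself, and it is not needed. The source of the difficulty is that you apply the square-root trick to the events ``the $\alpha$-side of $\Lambda_n$ meets $\calC_\infty$'': these events say nothing about \emph{where} the connecting paths live, so you are left trying to confine an a priori unbounded path into $R_n$ after the fact.

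The missing idea is to localize \emph{before} applying the square-root trick. Fix $k$ and decompose the event $\{\Lambda_k\leftrightarrow\partial\Lambda_n\}$ (which contains $\{\Lambda_k\leftrightarrow\infty\}$) according to which of the four sides of $\Lambda_n$ the cluster of $\Lambda_k$ inside $\Lambda_n$ reaches. Each of the four events ``$\Lambda_k$ is connected \emph{in} $\Lambda_n$ to the $\alpha$-side of $\Lambda_n$'' is increasing and, by symmetry, they have equal probability; the square-root trick then gives
\[
\bbP_p[\Lambda_k\text{ connected in }\Lambda_n\text{ to the left side}]\ \ge\ 1-\bbP_p[\Lambda_k\not\leftrightarrow\infty]^{1/4},
\]
and by construction the witnessing path stays inside the box. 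Placing two translates of $\Lambda_k$ at the center of $R_n$, one connected in $R_n$ to the left side and one to the right side, both occur simultaneously with probability at least $1-2\,\bbP_p[\Lambda_k\not\leftrightarrow\infty]^{1/4}$. On this event minus $\calH_n$ there would be two distinct clusters joining a fixed box to $\partial\Lambda_n$; intersecting over $n$ yields two distinct infinite clusters, an event of probability zero by uniqueness. Hence $\liminf_n\bbP_p[\calH_n]\ge 1-2\,\bbP_p[\Lambda_k\not\leftrightarrow\infty]^{1/4}$, and letting $k\to\infty$ (using $\bbP_p[0\leftrightarrow\infty]>0$ together with ergodicity) finishes the proof. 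In short: your steps one and two are fine but aimed at the wrong events, and step three cannot be salvaged without replacing the ``side meets $\calC_\infty$'' events by their finite-volume, in-box counterparts.
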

\begin{proof}
Fix $n\ge k\ge 1$. Since a path from $\Lambda_k$ to $\Lambda_n$ ends up either on the top, bottom, left or right side of $\Lambda_n$, the square root trick using the FKG inequality (See Exercise~\ref{exo:square root trick}) implies that 
$$\bbP_p[\Lambda_k\text{ is connected in $\Lambda_n$ to the left of }\Lambda_n]\ge 1-\bbP_p[\Lambda_k\not\longleftrightarrow \infty]^{1/4}.$$
Set $n'=\lfloor (n-1)/2\rfloor$. Consider the event $\calA_{n}$ that $(n',n')+\Lambda_k$ is connected in $R_n$ to the left of $R_{n}$, and $(n'+2,n')+\Lambda_k$ is connected in $R_{n}$ to the right of $R_{n}$. We deduce that $$\bbP_p[\calA_{n}]\ge 1-2\bbP_p[\Lambda_k\not\longleftrightarrow \infty]^{1/4}.$$
The uniqueness of the infinite cluster implies\footnote{The event $\calA_n\setminus\calH_n$ is included in the event that there are two distinct clusters in $\Lambda_n$ going from $\Lambda_k$ to $\partial\Lambda_n$. The intersection of the latter events for $n\ge1$ is included in the event that there are two distinct infinite clusters, which has zero probability. Thus, the probability of $\calA_n\setminus \calH_n$ goes to 0 as $n$ tends to infinity.} that 
$$\liminf_{n\rightarrow\infty}\bbP_p[\calH_{n}]=\liminf_{n\rightarrow\infty}\bbP_p[\calA_{n}]\ge 1-2\bbP_p[\Lambda_k\not\longleftrightarrow \infty]^{1/4}.$$
Letting $k$ tend to infinity and using that the infinite cluster exists almost surely, we deduce that $\bbP_p[\calH_{n}]$ tends to 1.  \end{proof}

\begin{figure}
\begin{center}
\includegraphics[width=0.50\textwidth]{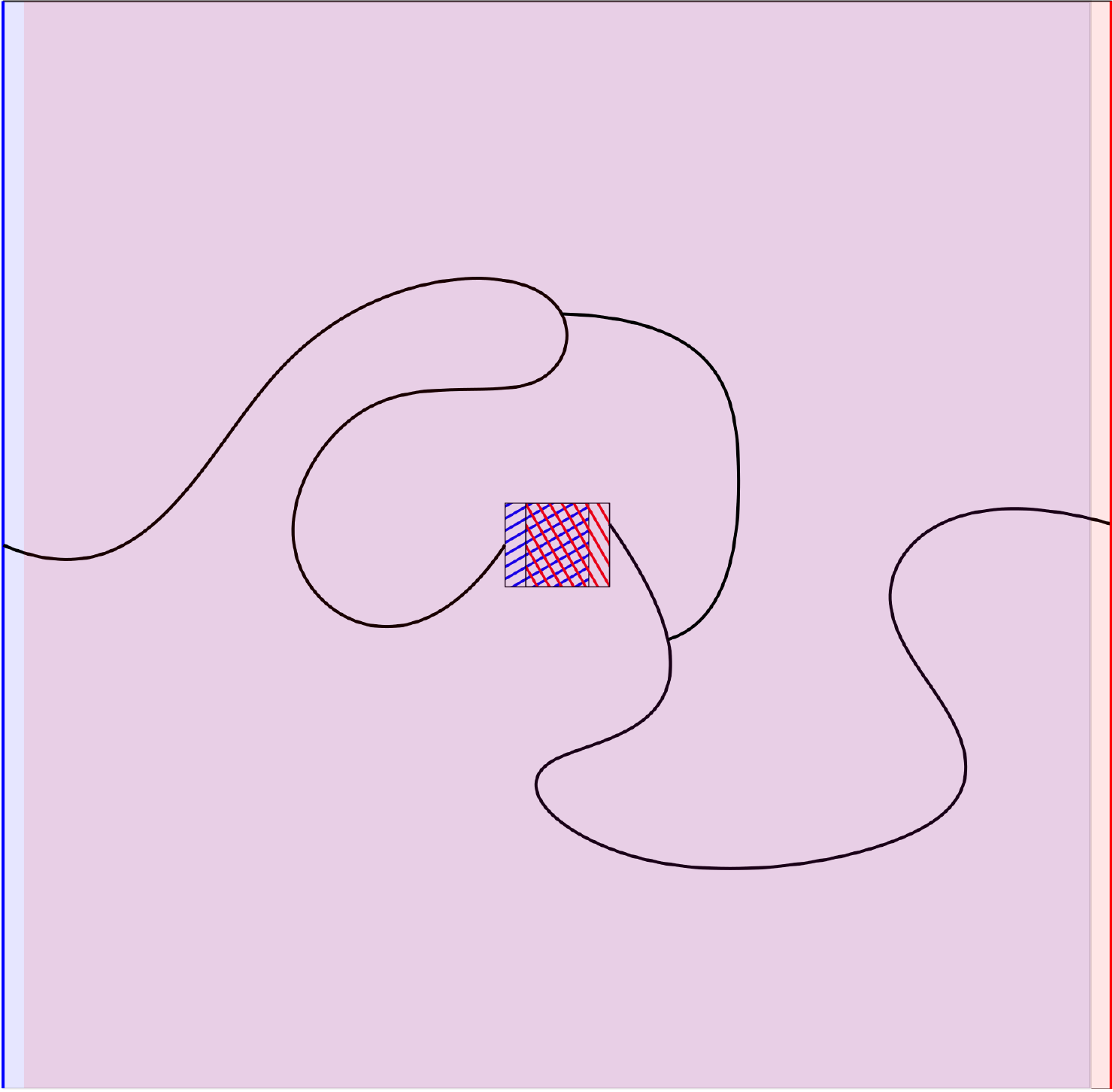}
\caption{Construction in the proof of Proposition~\ref{prop:lower}. One path connects the left side of $R_n$ (in blue) to the blue hatched area. The other one from the right side of $R_n$ (in red) to the red hatched area. The two paths must be in the same cluster (of $R_n$) by uniqueness, which therefore must contain a path from left to right.}\label{fig:carre}\end{center}
\end{figure}

\bexo
\begin{exercise}[Square root trick]\label{exo:square root trick}
Prove, using {\rm (FKG)}, that for any increasing events $\calA_1,\dots ,\calA_r$,
$$\max\{\bbP_p[\calA_i]:1\le i\le r\}\ge 1-\Big(1-\bbP_p\big[\bigcup_{i=1}^r\calA_i\big]\Big)^{1/r}.$$
\end{exercise}

\begin{exercise}[Zhang argument]
1. Show that 
$$\displaystyle\bbP_{1/2}[\text{top of }\Lambda_n\text{ is connected to infinity outside }\Lambda_n]\ge 1-\bbP_{1/2}[\Lambda_n\not\longleftrightarrow \infty]^{1/4}.$$
\medbreak\noindent
2. Deduce that the probability of the event $\calB_n$ that there exist infinite paths in $\omega$ from the top and bottom of $\Lambda_n$ to infinity in $\bbZ^2\setminus \Lambda_n$, and infinite paths in $\omega^*$ from the left and right sides to infinity satisfies
$$\bbP_{1/2}[\calB_n]\ge 1-4\bbP_{1/2}[\Lambda_n\not\longleftrightarrow \infty]^{1/4}.$$
3. Using \eqref{eq:finite energy} and the uniqueness of the infinite cluster, prove that $\bbP_{1/2}[\Lambda_n\not\leftrightarrow \infty]$ cannot tend to 0. 
\end{exercise}

\eexo

This proposition together with \eqref{eq:1/2} implies the following corollary
\begin{corollary}\label{cor:lower bound}
There is no infinite cluster at $p=1/2$. In particular, $p_c\ge1/2$.
\end{corollary}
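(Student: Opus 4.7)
The proof is short and combines the two inputs just stated. The plan is to argue by contradiction: assume there is an infinite cluster at $p=1/2$ with positive probability, and derive a contradiction between $\bbP_{1/2}[\calH_n]=1/2$ and the consequence $\bbP_{1/2}[\calH_n]\to 1$ coming from Proposition~\ref{prop:lower}.

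First, I would reduce ``existence of an infinite cluster'' to ``$0$ is in an infinite cluster''. The event $\{\exists\text{ an infinite cluster}\}$ is invariant under translations, so by ergodicity of $\bbP_{1/2}$ (exactly as in Lemma~\ref{ergodicity}, using the independence of Bernoulli percolation — or just by Kolmogorov's $0$-$1$ law applied to the tail event that some cluster is infinite) its probability is $0$ or $1$. If this probability were $1$, then by writing
\[
\{\exists\text{ an infinite cluster}\}\ =\ \bigcup_{x\in\bbZ^2}\{x\longleftrightarrow\infty\}
\]
and using translation invariance of $\bbP_{1/2}$, we would get $\bbP_{1/2}[0\leftrightarrow\infty]>0$.

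Next, I would plug this into Proposition~\ref{prop:lower} at $p=1/2$, which yields $\bbP_{1/2}[\calH_n]\to 1$ as $n\to\infty$. But the self-duality identity~\eqref{eq:1/2} asserts $\bbP_{1/2}[\calH_n]=1/2$ for every $n$, so we reach a contradiction. Hence $\bbP_{1/2}[\exists\text{ an infinite cluster}]=0$, which in particular gives $\bbP_{1/2}[0\leftrightarrow\infty]=0$, and therefore $p_c\ge 1/2$ directly from the definition of $p_c$.

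The only subtlety is the first reduction — making sure that \emph{no infinite cluster anywhere} follows from $\bbP_{1/2}[0\leftrightarrow\infty]=0$ — but this is immediate from translation invariance and the countable union above, so there is no real obstacle: the whole proof rests on combining \eqref{eq:1/2} with Proposition~\ref{prop:lower}.
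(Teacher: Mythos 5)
Your proof is correct and is exactly the argument the paper intends (it states the corollary as an immediate consequence of Proposition~\ref{prop:lower} together with \eqref{eq:1/2}): assume positive probability of an infinite cluster, upgrade to $\bbP_{1/2}[0\leftrightarrow\infty]>0$ by translation invariance and a countable union, and contradict $\bbP_{1/2}[\calH_n]=\tfrac12$ via $\bbP_{1/2}[\calH_n]\to1$. The reduction step you flag as the "only subtlety" is handled correctly and is standard.
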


As mentioned above, the last thing to justify rigorously is the fact that for $p<p_c$, $\bbP_p[\calH_n]$ tends to 0. There are alternative ways of getting the result, in particular by proving that the function $p\mapsto \bbP_p[\calH_n]$ undergoes a sharp threshold\footnote{A sequence $(f_n)$ of continuous heomomorphisms from $[0,1]$ onto itself satisfies a sharp threshold if for any $\ep>0$, $\Delta_n(\ep):=f_n^{-1}(1-\ep)-f_n^{-1}(\ep)$ tends to 0.} near $1/2$. This sharp threshold could be proved by hand (as done in \cite{Kes80}), or using abstract theorems coming from the theory of Boolean functions (as done in \cite{BolRio06c}). Overall, one obtains the following result, which goes back to the early eighties.
\begin{theorem}[Kesten \cite{Kes80}]
For Bernoulli percolation on $\bbZ^2$, $p_c$ is equal to 1/2. Furthermore, there is no infinite cluster at $p_c$.
\end{theorem}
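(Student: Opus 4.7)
The plan is to establish the reverse inequality $p_c\le 1/2$, since Corollary~\ref{cor:lower bound} already gives $p_c\ge 1/2$ together with absence of percolation at $p=1/2$. By monotonicity in $p$ it will suffice to show that for every $p>1/2$ there is almost surely an infinite open cluster. The strategy has two main ingredients: a \emph{sharp threshold} for crossing probabilities, which will yield $\bbP_p[\calH_n]\to 1$ for every $p>1/2$, and a \emph{renormalization} argument that turns such near-certain crossings into an infinite cluster.

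The starting point is the symmetry identity $\bbP_{1/2}[\calH_n]=1/2$ from~\eqref{eq:1/2}. First I would bootstrap this into an RSW-type estimate at $p=1/2$: crossing probabilities of rectangles $[0,\alpha n]\times[0,n]$ remain bounded away from $0$ and $1$ uniformly in $n$ for every fixed aspect ratio $\alpha>0$. This classical step gives polynomial upper bounds on four-arm events at $p=1/2$, which control the probability that any given edge is pivotal for $\calH_n$. Next, to promote the information at $p=1/2$ into $\bbP_p[\calH_n]\to 1$ for $p>1/2$, I would apply Russo's formula
\[
\frac{d}{dp}\bbP_p[\calH_n]=\sum_{e\in E_n}\bbP_p[e\text{ is pivotal for }\calH_n]
\]
combined with the sharp-threshold estimate $\tfrac{d}{dp}\bbP_p[\calH_n]\ge c(\varepsilon)\log n$ whenever $\bbP_p[\calH_n]\in[\varepsilon,1-\varepsilon]$. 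Integrating this differential inequality and using $f_n(1/2)=1/2$ drives $\bbP_p[\calH_n]$ to $1$ for any $p>1/2$ (and to $0$ for $p<1/2$).

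Finally, from $\bbP_p[\calH_n]\to 1$ at $p>1/2$ I would run a block renormalization: partition $\bbZ^2$ into boxes of side $n$ and declare a box ``good'' if suitable horizontal and vertical crossings of the enlarged rectangle around it are present. By the previous step these crossings occur with probability arbitrarily close to $1$ for $n$ large, so good boxes dominate supercritical Bernoulli site percolation on $\bbZ^2$; by the FKG inequality, crossings in neighbouring good boxes may be glued into an infinite open path, hence $p>p_c$. Letting $p\downarrow 1/2$ gives $p_c\le 1/2$, and Corollary~\ref{cor:lower bound} then supplies the absence of percolation at $p_c=1/2$.

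The main obstacle will be the sharp-threshold estimate $\tfrac{d}{dp}\bbP_p[\calH_n]\gtrsim\log n$. Two routes are available: Kesten's original hands-on argument, which counts pivotals via the RSW estimates together with a separation-of-arms mechanism; or the Friedgut--Kalai/Bollob\'as--Riordan route, which applies an abstract inequality for symmetric monotone Boolean functions to deduce a logarithmic derivative bound directly from the symmetry of $\calH_n$ and the RSW input. Either way, this is where all the real work of the theorem lies; the RSW bootstrap and the block renormalization are robust classical steps.
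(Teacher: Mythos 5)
Your proposal is correct in outline, but it takes a genuinely different road from the one these notes follow. The paper deduces Kesten's theorem from the sharpness statement (Theorem~\ref{thm:perco}): exponential decay of $\bbP_p[0\leftrightarrow\partial\Lambda_n]$ for every $p<p_c$ forces $\bbP_p[\calH_n]\le n\exp(-c_pn)\to0$, which is incompatible with $\bbP_{1/2}[\calH_n]=\tfrac12$ unless $p_c\le\tfrac12$; combined with Corollary~\ref{cor:lower bound} this finishes the proof, and the real work is then the proof of Theorem~\ref{thm:perco} via the $\varphi_p(S)$ quantity or the OSSS inequality. You instead follow the historical Kesten/Bollob\'as--Riordan route: RSW at $p=\tfrac12$, Russo's formula plus a sharp-threshold bound $\tfrac{d}{dp}\bbP_p[\calH_n]\gtrsim\log n$ in the critical window, and a block renormalization to produce an infinite cluster for every $p>\tfrac12$. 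The notes explicitly mention this alternative just before the theorem and deliberately avoid it. What each approach buys: yours stays entirely two-dimensional and planar, directly exhibits supercritical behaviour for $p>\tfrac12$, and yields quantitative information about the width of the critical window; the paper's approach requires no RSW or Boolean-function input, works in every dimension, and delivers exponential decay throughout the subcritical phase together with the mean-field lower bound as by-products. Two technical points you should be aware of if you carry your plan out: the event $\calH_n$ is not invariant under a transitive group of symmetries, so the Friedgut--Kalai route requires either passing to crossing/circuit events on a torus or annulus or using the Talagrand/BKKKL inequality with a polynomial bound on the maximal influence (which is where the RSW-derived arm-event estimates enter); and the renormalization step needs either a quantitative rate for $1-\bbP_p[\calH_n]$ summable along dyadic scales, or a comparison of the $k$-dependent good-box field to supercritical Bernoulli percolation (Liggett--Schonmann--Stacey), rather than the bare statement $\bbP_p[\calH_n]\to1$.
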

In these lectures, we choose a different road to prove that $\bbP_p[\calH_n]$ tends to 0. Assume for a moment that for any $p<p_c$, there exists $c_p>0$ such that for all $n\ge1$,
$$\bbP_p[0\longleftrightarrow \partial\Lambda_n]\le \exp(-c_pn).$$
Then, $\bbP_p[\calH_n]$ tends to 0 as $n$ tends to infinity since
\begin{align*}\bbP_p[\calH_n]&\le \sum_{k=0}^{n-1}\bbP_p[(k,0)\text{ is connected to the right of $R_n$}]\\
&\le n\,\bbP_p[0\longleftrightarrow \partial\Lambda_n]\le n\exp(-c_pn).\end{align*}
Overall, Kesten's theorem thus follows from the following result.
\begin{theorem}\label{thm:perco} Consider Bernoulli percolation on $\bbZ^d$,
\begin{enumerate}
\item\label{item:1}For $p<p_c$,  there exists $c_p>0$ such that for all $n\ge1$,
$\bbP_p[0\leftrightarrow \partial\Lambda_n]\le \exp(-c_pn)$.
\item\label{item:2} There exists $c>0$ such that for $p>p_c$, $\bbP_p[0\leftrightarrow \infty]\ge
 c (p-p_c)$.
\end{enumerate}
\end{theorem}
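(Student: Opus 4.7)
For every finite $S\subset\bbZ^d$ containing the origin, introduce
$$\phi_p(S)\ :=\ p\sum_{\substack{(x,y)\in\bbE\\ x\in S,\,y\notin S}}\bbP_p\bigl[0\longleftrightarrow x\text{ using only edges of }S\bigr],$$
and set $\tilde p_c:=\sup\{p\in[0,1]:\exists\text{ finite }S\ni 0\text{ with }\phi_p(S)<1\}$. The plan is to establish two lemmas, from which both items (together with the identification $\tilde p_c=p_c$) follow.

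\textbf{Lemma A (exponential decay for $p<\tilde p_c$).} Fix $p<\tilde p_c$, choose a witness $S\ni 0$ of radius $\le L$ with $\phi_p(S)<1$, and explore the cluster $\calS'$ of $0$ in the restriction of $\omega$ to edges of $S$. The states of edges $(x,y)$ with $x\in\calS'$, $y\notin S$ remain unrevealed, hence independent of $\calS'$. On $\{0\leftrightarrow\partial\Lambda_{kL}\}$ some such edge must be open; decomposing on the first such edge and using translation invariance on the residual event $\{y\leftrightarrow\partial\Lambda_{kL}\}$ gives
$$\bbP_p[0\longleftrightarrow\partial\Lambda_{kL}]\ \le\ \phi_p(S)\cdot\bbP_p[0\longleftrightarrow\partial\Lambda_{(k-1)L}],$$
which iterates to $\phi_p(S)^k$.

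\textbf{Lemma B (linear lower bound for $p>\tilde p_c$).} Write $\theta_n(p):=\bbP_p[0\leftrightarrow\partial\Lambda_n]$, let $\calC$ be the cluster of $\partial\Lambda_n$ in $\Lambda_n$ and set $\calS:=\Lambda_n\setminus\calC$. Using Russo's formula together with an exploration of $\calC$ that reveals exactly the closed edges on its boundary, I would derive
$$\theta_n'(p)\ \ge\ \frac{1}{p(1-p)}\,\bbE_p\bigl[\phi_p(\calS)\,\mathbf 1_{0\in\calS}\bigr]\ \ge\ \frac{\inf_{S\ni 0}\phi_p(S)}{p(1-p)}\,\bigl(1-\theta_n(p)\bigr).$$
For $p>\tilde p_c$ the infimum is at least $1$, so integrating $\theta_n'/(1-\theta_n)\ge 1/(p(1-p))$ from $\tilde p_c$ to $p$ and letting $n\to\infty$ yields $\theta(p)\ge (p-\tilde p_c)/(p(1-\tilde p_c))$. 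Lemma A forces $\tilde p_c\le p_c$ and Lemma B forces $\tilde p_c\ge p_c$, so $\tilde p_c=p_c$; item (1) is exactly Lemma A, and item (2) follows from Lemma B with $c:=1/(1-p_c)$.

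\textbf{Main obstacle.} Lemma A is essentially a clean exploration-and-iteration argument. The technical heart of the proof is the differential inequality in Lemma B: the challenge is to recognise Russo's sum $\sum_e\bbP_p[e\text{ pivotal}]$ as $\bbE_p[\phi_p(\calS)\mathbf 1_{0\in\calS}]/(p(1-p))$. This requires revealing $\calC$ edge by edge so that the unrevealed edges with at least one endpoint in $\calS$ form, conditionally on the revealing, an independent product, and then carefully bookkeeping the factors of $p$ and $1-p$ coming from the ``open'' and ``closed'' halves of pivotality. Getting the revealing order right so that the residual randomness inside $\calS$ reproduces the quantity $\bbP_p[0\longleftrightarrow x\text{ using only edges of }\calS]$ appearing in $\phi_p$ is where the proof is genuinely subtle.
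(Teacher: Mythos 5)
Your proposal is correct and is essentially the paper's first proof (the $\varphi_p(S)$/$\tilde p_c$ argument of Duminil-Copin--Tassion): same quantity $\varphi_p(S)$, same definition of $\tilde p_c$, the same exploration-and-iteration argument for exponential decay, and the same differential inequality $\theta_n'\ge\tfrac1{p(1-p)}\inf_{S\ni0}\varphi_p(S)(1-\theta_n)$ obtained by partitioning on the set $\mathsf S$ of vertices not connected to $\partial\Lambda_n$. The step you flag as the main obstacle is handled in the paper exactly as you describe, by noting that $\{\mathsf S=S\}$ is measurable with respect to edges having an endpoint outside $S$ while $\{0\stackrel{S}{\leftrightarrow}x\}$ depends only on edges inside $S$.
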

Note that the second item, called the {\em mean-field lower bound} is not relevant for the proof of Kesten's Theorem. Also note that Theorem~\ref{thm:perco} is a priori way too strong compared to what is needed since it holds in arbitrary dimension. 
\bexo
\begin{exercise}[$p_c(\bbG)+p_c(\bbG^*)=1$] In this exercise, we use the notation $A\stackrel{B}\longleftrightarrow C$ the event that $A$ and $C$ are connected by a path using vertices in $B$ only. Consider Bernoulli percolation on a planar lattice $\bbG$ embedded in such a way that $\bbZ^2$ acts transitively on $\bbG$. We do not assume any symmetry of the lattice. We call the left, right, top and bottom parts of a rectangle $\mathsf{Left}$, $\mathsf{Right}$, $\mathsf{Top}$ and $\mathsf{Bottom}$. Also, $\calH(n,k)$ and $\calV(n,k)$ are the events that $[0,n]\times[0,k]$ is crossed horizontally and vertically by paths of open edges.  
\medbreak\noindent
1. Use the Borel-Cantelli lemma and Theorem~\ref{thm:perco} (one may admit the fact that the theorem extends to this context) to prove that for $p<p_c(\bbG)$, there exists finitely many open circuits surrounding a given vertex of $\bbG^*$. Deduce that
$p_c(\bbG)+p_c(\bbG^*)\le 1.$
\medbreak\noindent
We want to prove the converse inequality by contradiction. From now on, we assume that both $p>p_c(\bbG)$ and $p^*>p_c(\bbG^*)$. 
\medbreak\noindent
2. For $s>0$ and $x\in\bbZ^2$, define $S_x=x+[0,s]^2$. Prove that for any rectangle $R$, there exists $x=x(R)\in R\cap\bbZ^2$ such that there exists $x'$ and $x''$ neighbors of $x$ in $\bbZ^2$ satisfying
\begin{eqnarray}
&\bbP_p[S_x\stackrel{R}\longleftrightarrow \mathsf{Bottom}]\ge \bbP_p[S_x\stackrel{R}\longleftrightarrow \mathsf{Top}]& \bbP_p[S_x\stackrel{R}\longleftrightarrow \mathsf{Left}]\ge \bbP_p[S_x\stackrel{R}\longleftrightarrow \mathsf{Right}],\\
&\ \,\bbP_p[S_{x'}\stackrel{R}\longleftrightarrow \mathsf{Top}]\ge\bbP_p[S_{x'}\stackrel{R}\longleftrightarrow \mathsf{Bottom}]&\bbP_p[S_{x''}\stackrel{R}\longleftrightarrow \mathsf{Right}]\ge \bbP_p[S_{x''}\stackrel{R}\longleftrightarrow \mathsf{Left}].
\end{eqnarray}
3.  Set $\bbH:=\bbR_+\times\bbR$, $\ell_+:=\{0\}\times\bbR_+$, $\ell_-:=\{0\}\times\bbR_-$ and $\ell=\ell_-\cup\ell_+$.
Prove that there exists $x=x(m)$ with first coordinate equal to $m$ satisfying
\begin{align*}\bbP_p[S_x\stackrel{\bbH}\longleftrightarrow \ell_-]\ge\bbP_p[S_x\stackrel{\bbH}\longleftrightarrow \ell_+]\quad\text{ and }\quad\bbP_p[S_{x+(0,1)}\stackrel{\bbH}\longleftrightarrow \ell_-]\le\bbP_p[S_{x+(0,1)}\stackrel{\bbH}\longleftrightarrow \ell_+].
\end{align*}
\medbreak\noindent
4. Using the square root trick, deduce that 
$$\bbP_p[S_x\stackrel{\bbH}{\longleftrightarrow} \ell_-]\ge1-\sqrt{\bbP_p[S_x{\not\longleftrightarrow}\, \ell]}\quad\text{ and }\quad\bbP_p[S_{x+(0,1)}\stackrel{\bbH}{\longleftrightarrow} \ell_+]\ge1-\sqrt{\bbP_p[S_{x+(0,1)}{\not\longleftrightarrow} \,\ell]}.$$
\medbreak\noindent
5. Using the fact that there exists a unique infinite cluster in $\omega$ almost surely, prove that the probability that $\{0\}\times[0,1]$ is connected in $\omega^*\cap \bbH$ to infinity is tending to 0.  
\medbreak\noindent
6. Prove that the distance between $x(R)$ and the boundary of $R$ is necessarily tending to infinity as $\min\{n,k\}$ tends to infinity. 
\medbreak\noindent
7. Using $x(R)$, prove that $\max\{\bbP_p[\calV(n,k)],\bbP_p[\calH(n,k+1)]\}$ tends to 1 and $\min\{\bbP_p[\calV(n,k)],\bbP_p[\calH(n,k)]\}$ tends to 0 as $\min\{k,n\}$ tends to infinity. {\em Hint.} Use the square root trick and the uniqueness criterion like in the previous questions.
\medbreak\noindent
8. By considering the largest integer $k$ such that $\bbP_p[\calV(n,k)]\ge\bbP_p[\calH(n,k)]$, reach a contradiction. Deduce that $p_c(\bbG)+p_c(\bbG^*)\ge 1$.
\medbreak\noindent
9. (to do after Section~\ref{sec:2.3}) How does this argument extend to random-cluster models with $q\ge1$?
\end{exercise}
\eexo

\subsection{Two proofs of sharpness for Bernoulli percolation}\label{sec:2.2}

 Theorem~\ref{thm:perco} was first  proved by Aizenman,
  Barsky~\cite{AizBar87} and Menshikov~\cite{Men86} (these two proofs are presented in \cite{Gri99a}). Here, we choose to present two new arguments from \cite{DumTas15,DumTas15a} and \cite{DumRaoTas17}. 
    
  Before diving into the proofs, note that for any function $X:\{0,1\}^E\longrightarrow \bbR$ where $E$ is finite,
  \begin{equation}\label{eq:derivative formula}\tag{DF}\frac{{\rm d }\bbP_p[X]}{{\rm d}p}=\tfrac{1}{p(1-p)}\sum_{e\in E}{\rm Cov}_p[X,\omega_e],\end{equation}
(where ${\rm Cov}_p$ is the covariance for $\bbP_p$) which is obtained readily by differentiating the quantity $\bbE_p[X]=\displaystyle\sum_{\omega\in \{0,1\}^E}X(\omega)p^{\sum_{e}\omega_e}(1-p)^{\sum_e1-\omega_e}$. We insist on the fact that we are considering functions $X$ depending on finitely many edges only (in particular it is clear that $\bbE_p[X]$ is analytic).

\subsubsection{Proof using the $\varphi_p(S)$ quantity}

  Let $S$
be a finite set of vertices containing the origin. We say that $0\stackrel{S}{\longleftrightarrow} x$ if $0$ is connected to $x$ using only edges between vertices of $S$.
We denote the edge-boundary of $S$ by 
$$\Delta S=\big\{xy\subset \bbE:x\in S,y\notin S\big\}.$$
For $p\in[0,1]$ and $0\in
S\subset \bbZ^d$, define
\begin{equation}
  \label{eq:16}
  \varphi_p(S):=p\sum_{xy\in \Delta S} \bbP_p[0\stackrel{S}{\longleftrightarrow} x].
\end{equation}
Set 
\begin{equation}\label{eq:22.5}\tilde p_c:=\sup\big\{p\in[0,1]:\exists S\ni0\text{ finite with }\varphi_p(S)<1\big\}.\end{equation}

\paragraph{Step 1: for $p<\tilde p_c$, (EXP$_p$) holds true.} By definition, one can fix a finite set $S$
containing the origin, such that $
  \varphi_p(S)<1$.
Choose $L>0$ such that $S\subset \Lambda_{L-1}$. 
Consider $k\ge 1$ and assume
that the event $0\leftrightarrow\partial\Lambda_{kL}$ holds. Introduce the random variable $\mathsf C:=\{x\in S:x\stackrel{S}{\longleftrightarrow}0\}$ corresponding to the cluster of $0$ in $S$. Since $S\cap\partial
\Lambda_{kL}=\emptyset$, one can find an open edge $xy\in\Delta S$ such
that  $0\stackrel{S}{\longleftrightarrow}x$ and $y\stackrel{\mathsf C^c}{\longleftrightarrow}\partial \Lambda_{kL}$.
Using the union bound, and then a decomposition on the possible realizations of $\mathsf C$, we find
\begin{align*}
  \bbP_p[0\longleftrightarrow\partial\Lambda_{kL}]&\le \sum_{xy\in
    \Delta S}\sum_{C\subset S}\bbP_p\big[\{0\stackrel{S}{\longleftrightarrow}x\} \cap \{\mathsf C=C\}\cap \{\omega_{xy}=1\}\cap\{y\stackrel{C^c}{\longleftrightarrow}\partial
  \Lambda_{kL}\}\big]\\
&\le \sum_{xy\in
    \Delta S}\sum_{C\subset S}\bbP_p\big[\{0\stackrel{S}{\longleftrightarrow}x\}\cap\{\mathsf C=C\}\big]\cdot p\cdot \bbP_p\big[y\stackrel{C^c}{\longleftrightarrow}\partial
  \Lambda_{kL}\big]\\
  &\le p\Big(\sum_{xy\in
    \Delta S}\sum_{C\subset S}\bbP_p\big[\{0\stackrel{S}{\longleftrightarrow}x\}\cap\{\mathsf C=C\}\big]\Big)\bbP_p\big[0\longleftrightarrow\partial
  \Lambda_{(k-1)L}\big]\\
  &\le p\Big(\sum_{xy\in
    \Delta S}\bbP_p\big[0\stackrel{S}{\longleftrightarrow}x\big]\Big)\bbP_p\big[0\longleftrightarrow\partial
  \Lambda_{(k-1)L}\big]\\
  &=\varphi_p(S)\bbP_p\big[0\longleftrightarrow\partial
  \Lambda_{(k-1)L}\big].
\end{align*}
In the second line, we used that $\{y\stackrel{C^c}{\longleftrightarrow}\partial\Lambda_{kL}\}$, $\{\omega_{xy}=1\}$ and $\{0\stackrel{S}{\longleftrightarrow}x\}\cap \{\mathsf C=C\}$ are independent. Indeed, these events depend on disjoint sets of edges: the first one on edges with both endpoints outside of $C$, the second one on $xy$ only, and the third one on edges between vertices of $S$ with at least one endpoint in $C$. In the third line, we used $y\in
\Lambda_L$ implies
$$\bbP_p[y\stackrel{C^c}{\longleftrightarrow}\partial
\Lambda_{kL}]\le\bbP_p[0\longleftrightarrow\partial  \Lambda_{(k-1)L}].$$
In the fourth line, we used that the events $\{0\stackrel{S}{\longleftrightarrow}x\}\cap\{\mathsf C=C\}$ partition the event $0\stackrel{S}\longleftrightarrow x$. An induction on $k$ gives
$
  \bbP_p[0\leftrightarrow\partial\Lambda_{kL}]\le\varphi_p(S)^{k},
$
thus proving the claim.

\paragraph{Step 2: For $p>\tilde p_c$, $\bbP_p[0\leftrightarrow \infty]\ge  \frac{p-\tilde p_c}{p(1-\tilde p_c)}$.}
Let us start by the following lemma providing a differential
inequality valid for every $p$. Define $\theta_n(p):=\bbP_p[0\leftrightarrow \partial\Lambda_n]$.

 \begin{lemma}\label{lem:meanField}
Let $p\in(0,1)$ and $n\ge 1$,
  \begin{equation}
    \label{eq:4a}
    \theta_n'(p)\ge \tfrac1{p(1-p)}\cdot
    \inf_{\substack{S\subset\Lambda_n\\0\in S}}\varphi_p(S) \cdot\big(1-\theta_n(p)\big).
  \end{equation}
  \end{lemma}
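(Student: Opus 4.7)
The plan is to apply the derivative formula \eqref{eq:derivative formula} to $X=\mathbbm{1}_{A_n}$ with $A_n:=\{0\longleftrightarrow\partial\Lambda_n\}$, and then to open up each covariance via a decomposition based on the cluster of $\partial\Lambda_n$. Note first that $A_n$ is increasing and depends only on $E_n$ (any path from $0$ to $\partial\Lambda_n$ may be truncated to stay in $\Lambda_n$). A short computation, conditioning on $\omega_{|E_n\setminus\{e\}}$, gives
\[
{\rm Cov}_p[\mathbbm{1}_{A_n},\omega_e]=p(1-p)\,\bbP_p[e\text{ pivotal for }A_n],
\]
so that \eqref{eq:derivative formula} rewrites Russo-style as $\theta_n'(p)=\sum_{e\in E_n}\bbP_p[e\text{ pivotal for }A_n]$.

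Next, I would introduce the random set $\mathsf{S}(\omega):=\{z\in\Lambda_n:z\not\longleftrightarrow\partial\Lambda_n\text{ in }\omega\}$. This set contains $0$ exactly on $A_n^c$, so $1-\theta_n(p)=\sum_T \bbP_p[\mathsf{S}=T]$, where $T$ runs over the finite subsets of $\Lambda_n\setminus\partial\Lambda_n$ containing $0$. The pivotal event can then be rephrased in terms of $\mathsf{S}$ evaluated at the configuration $\omega_e$ in which $e$ is forced closed: for $e=xy\in E_n$, pivotality for $A_n$ is equivalent to the existence of $T$ with $0\in T$, $e\in\Delta T$, $\mathsf{S}(\omega_e)=T$, and $0\stackrel{T}{\longleftrightarrow}x$ in $\omega_e$ (with $x$ the endpoint of $e$ lying in $T$).

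The crucial factorization is the independence of $\{\mathsf{S}(\omega_e)=T\}$ and $\{0\stackrel{T}{\longleftrightarrow}x\}$: the former depends only on edges in $E_n\setminus(E_n[T]\cup\{e\})$, since it forces every edge of $\Delta T\setminus\{e\}$ to be closed and prescribes how $\Lambda_n\setminus T$ connects to $\partial\Lambda_n$, whereas the latter depends only on $E_n[T]$. Combined with the identity $\bbP_p[\mathsf{S}=T]=(1-p)\,\bbP_p[\mathsf{S}(\omega_e)=T]$ (because $\{\mathsf{S}=T\}$ already constrains $e\in\Delta T$ to be closed), this produces the prefactor $\tfrac{1}{1-p}$ in the formula
\[
\bbP_p[e\text{ pivotal}]=\tfrac{1}{1-p}\sum_{T\ni 0,\ e\in\Delta T}\bbP_p[\mathsf{S}=T]\,\bbP_p[0\stackrel{T}{\longleftrightarrow}x].
\]

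Summing over $e\in E_n$, exchanging the order of summation, and recognizing that $\sum_{xy\in\Delta T}\bbP_p[0\stackrel{T}{\longleftrightarrow}x]=\varphi_p(T)/p$ yields
\[
\theta_n'(p)=\tfrac{1}{p(1-p)}\sum_T \bbP_p[\mathsf{S}=T]\,\varphi_p(T)\ \ge\ \tfrac{1}{p(1-p)}\inf_{\substack{S\subset\Lambda_n\\0\in S}}\varphi_p(S)\cdot(1-\theta_n(p)),
\]
which is the claimed inequality. The most delicate step is the bookkeeping for the $\tfrac{1}{1-p}$ factor, arising from the mismatch between the pivotal event (a function of $\omega_{|E_n\setminus\{e\}}$) and the event $\{\mathsf{S}=T\}$ (which fixes $\omega_e$ to $0$); everything else is a routine assembly of these ingredients.
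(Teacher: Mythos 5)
Your proof is correct and follows essentially the same route as the paper: the derivative formula reduced to pivotal edges, the decomposition over the random set $\mathsf S$ of vertices not connected to $\partial\Lambda_n$, and the factorization of $\{\mathsf S=T\}$ (measurable w.r.t.\ edges not interior to $T$) from $\{0\stackrel{T}{\longleftrightarrow}x\}$ (measurable w.r.t.\ edges inside $T$). The only difference is cosmetic: you invoke Russo's formula and then carry an explicit $\tfrac{1}{1-p}$ conversion between the pivotal event and $\{\mathsf S=T\}$, whereas the paper computes $\bbE_p[\mathbbm 1_{0\not\leftrightarrow\partial\Lambda_n}(p-\omega_e)]$ directly on the full configuration, where the edge is automatically closed and that factor never appears; both yield the same identity $\theta_n'(p)=\tfrac1{p(1-p)}\sum_{S\ni 0}\varphi_p(S)\,\bbP_p[\mathsf S=S]$.
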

  
  Let us first see how the second step follows from
  Lemma~\ref{lem:meanField}. Above $\tilde p_c$,  \eqref{eq:4a} becomes $\theta_n'\ge \frac1{p(1-p)}(1-\theta_n)$ which can be rewritten as 
  $$\big[\log\big(\tfrac{1}{1-\theta_n}\big)\big]'\ge \big[\log\big(\tfrac p{1-p}\big)\big]'.$$
Integrating between $\tilde p_c$ and $p$  implies that for every $n\ge1$,
  $$\theta_n(p)\ge \frac{p-\tilde p_c}{p(1-\tilde p_c)}.$$ By letting $n$
  tend to infinity, we obtain the desired lower bound on $\bbP_p[0\leftrightarrow\infty]$.

\begin{proof}[Lemma~\ref{lem:meanField}]Apply \eqref{eq:derivative formula} to $X:=-\mathbbm 1_{0\not\longleftrightarrow \partial\Lambda_n}$ to get
  \begin{align}\label{eq:pol}
        \theta_n'(p)&=\tfrac1{p(1-p)}\sum_{e\in E_n}
        \bbE_p\big[\mathbbm 1_{0\not\longleftrightarrow\partial\Lambda_n}(p-\omega_e)\big].
  \end{align}
  Fix an edge $e$ and consider the event $A$ that $\omega_{|E_n\setminus\{e\}}$ satisfies the following three properties
\begin{itemize}[noitemsep,nolistsep] 
\item[P1] one of the endpoints of $e$ is connected to $0$, 
\item[P2] the other one is connected to $\partial\Lambda_n$, 
\item[P3] 0 is not connected to $\partial\Lambda_n$. 
\end{itemize}
(This event corresponds in the standard terminology to the fact that the edge $e$ is {\em pivotal} for $0\not\longleftrightarrow\partial\Lambda_n$ but this is irrelevant here.) By definition, $\omega_e$ is independent of $\{0\not\longleftrightarrow \partial\Lambda_n\}\cap A^c$. Since $\omega_e$ is a Bernoulli random variable of parameter $p$, we deduce that 
$$\bbE_p[\mathbbm1_{A^c}\mathbbm 1_{0\not\longleftrightarrow\partial\Lambda_n}(p-\omega_e)]=0.$$ 
Also, for $\omega\in A$, $0$ is not connected to $\partial\Lambda_n$ if and only if the edge $e$ is closed, and in this case $\omega$ itself (not only its restriction to $E_n\setminus\{e\}$) satisfies P1, P2 and P3. Therefore, we can write
  $$\bbE_p[\mathbbm1_A\mathbbm 1_{0\not\longleftrightarrow\partial\Lambda_n}(p-\omega_e)]=p\,\bbP_p[\omega\text{ satisfies P1, P2 and P3}].$$
%
%
Overall, the previous discussion implies that \eqref{eq:pol} can be rewritten as
\begin{align}
        \theta_n'(p)&=\tfrac1{p(1-p)}\sum_{\substack{x,y\in \Lambda_n\\ xy\in E_n}}
        p\,\bbP_p\big[0\longleftrightarrow x,y\longleftrightarrow \partial\Lambda_n,0\not\longleftrightarrow \partial\Lambda_n\big].\label{eq:op}
  \end{align}
Introduce $\mathsf S:=\{z\in\Lambda_n:z\not\longleftrightarrow
\partial\Lambda_n\}$ and a fixed set $S$. The intersection of $\{\mathsf S=S\}$ with the event on the right-hand side of \eqref{eq:op} can be rewritten nicely. The fact that $0\not\longleftrightarrow \partial\Lambda_n$ becomes the condition that $S$ contains $0$. Furthermore, the conditions $0\longleftrightarrow x$ and $y\longleftrightarrow \partial\Lambda_n$ get rephrased as $xy\in\Delta S$ and $0$ is connected to $x$ in $S$. Thus, partitioning the event on the right of \eqref{eq:op} into the possible values of $\mathsf S$ gives\begin{align*}
        \theta_n'(p)&        =\tfrac1{p(1-p)}\sum_{0\in S\subset \Lambda_n}\sum_{xy\in \Delta S}
        p\,\bbP_p\big[0\stackrel{S}\longleftrightarrow x,\mathcal S=S\big]\\
        &=\tfrac1{p(1-p)}\sum_{0\in S\subset \Lambda_n}\sum_{xy\in \Delta S}
        p\,\bbP_p\big[0\stackrel{S}\longleftrightarrow x]\bbP_p[\mathcal S=S\big]\\
        &\ge \tfrac1{p(1-p)}\cdot\inf_{0\in S\subset\Lambda_n}\varphi_p(S)\cdot(1-\theta_n(p)),
  \end{align*}
where in the second line we used that $0\stackrel{S}\longleftrightarrow x$ is measurable in terms of edges with both endpoints in $S$, and $\mathsf S=S$ is measurable in terms of the other edges. In the last line, we used that the family of events $\{\mathsf S=S\}$ with $S\ni 0$ partition the event that 0 is not connected to $\partial\Lambda_n$.\end{proof}

Steps 1 and 2 conclude the proof since $\tilde p_c$ must be equal to $p_c$, and therefore the proof of the theorem. 
\bexo
\begin{exercise}[Percolation with long-range interactions]\label{exo:long range}
Consider a family $(J_{x,y})_{x,y\in \bbZ^d}$ of non negative coupling constants which is invariant under translations, meaning that $J_{x,y}=J(x-y)$ for some function $J$. Let ${\bf P}_\beta$ be the bond percolation measure on $\bbZ^d$ defined as follows: for
$x,y\in \bbZ^d$, $\{x,y\}$ is {\em open} with probability $1-\exp(-\beta J_{x,y})$, and
{\em closed} with probability $\exp(-\beta J_{x,y})$. 
\medbreak\noindent
1. Define the analogues $\tilde\beta_c$ and $\varphi_\beta(S)$ of $\tilde p_c$ and $\varphi_p(S)$ in this context.
\medbreak\noindent
2. Show that there exists $c>0$ such that for any $\beta\ge\tilde\beta_c$, ${\bf P}_\beta[0\longleftrightarrow \infty]\ge c(\beta-\tilde\beta_c)$.
\medbreak\noindent
3. Show that if the interaction is finite range (i.e.~that there exists $R>0$ such that $J(x)=0$ for $\|x\|\ge R$), then for any $\beta<\tilde\beta_c$, there exists $c_\beta>0$ such that ${\bf P}_\beta[0\longleftrightarrow \partial\Lambda_n]\le \exp(-c_\beta n)$ for all $n$.
\medbreak\noindent
4. In the general case, show that for any $\beta<\tilde\beta_c$, 
$\displaystyle\sum_{x\in\bbZ^d}{\bf P}_\beta[0\longleftrightarrow x]<\infty.$\\
{\em Hint.} Consider $S$ such that $\varphi_\beta(S)<1$ and show that for $n\ge1$ and $x\in\Lambda_n$,
$\displaystyle\sum_{y\in\Lambda_n}{\bf P}_\beta[x\stackrel{\Lambda_n}\longleftrightarrow y]\le \frac{|S|}{1-\varphi_\beta(S)}.$
\end{exercise}
\eexo
\begin{remark} Since $\varphi_p(\{0\})=2dp$, we find $p_c(d)\ge 1/{2d}$. Also, $p_c(d)\le p_c(2)=\tfrac12$.
  \end{remark}
  \begin{remark}
The set of parameters $p$
  such that there exists a finite set $0\in S \subset \bbZ^d$ with
  $\varphi_p(S)<1$ is an open subset of $[0,1]$. Since this set is coinciding with $[0,p_c)$, we deduce that $\varphi_{p_c}(\Lambda_n)\ge1$ for any $n\ge1$.  As a consequence, the expected size of the
  cluster of the origin satisfies at $p_c$, $$\sum_{x\in\bbZ^d}\bbP_{p_c}[0\longleftrightarrow x]\ge\tfrac1{dp_c}\sum_{n\ge 0} \varphi_{p_c}(\Lambda_n) =+\infty.$$
In particular, $\bbP_{p_c}[0\leftrightarrow x]$ cannot decay faster than algebraically (see Exercise~\ref{eq:definition correlation length} for more details).
\end{remark}

\bexo

\begin{exercise}[Definition of the correlation length] \label{eq:definition correlation length}Fix $d\ge2$ and set $e_1=(1,0,\dots,0)$.
\medbreak\noindent
1. Prove that, for any $p \in [0,1]$ and $n,m\ge0$,
$\mathbb{P}_p[x_0 \longleftrightarrow (m+n)e_1] \geq \mathbb{P}_p[x_0 \longleftrightarrow me_1] \cdot \mathbb{P}_p[x_0 \longleftrightarrow ne_1]$
\medbreak\noindent
2. Deduce that 
$\xi(p) = \left(\displaystyle \lim_{n\rightarrow \infty} -\tfrac{1}{n} \log \mathbb{P}_p[0 \longleftrightarrow ne_1]\right)^{-1} $
and that
$\mathbb{P}_p[0 \longleftrightarrow ne_1] \leq \exp(-n/\xi(p)).$
\medbreak\noindent
3. Prove that $\xi(p)$ tends to infinity as $p$ tends to $p_c$.
\medbreak\noindent
4. Prove that for any $x\ni\partial\Lambda_n$,
$$\bbP_{p_c}[0\longleftrightarrow 2n e_1]\ge \bbP_{p_c}[0\longleftrightarrow x]^2.$$
5. Using that $\varphi_{p_c}(\Lambda_n)\ge1$ for every $n$, prove that there exists $c>0$ such that for any $x\in\bbZ^d$, $\bbP_{p_c}[0\leftrightarrow x]\ge \frac c{\|x\|^{2d(d-1)}}$.
\end{exercise}
\eexo

\subsubsection{Proof using randomized algorithms}\label{sec:OSSS}

The second proof  uses the notion of random decision tree (or equivalently randomized algorithm).
In theoretical science, determining the computational complexity of tasks is a difficult problem (think of $P$ against $NP$). To simplified the problem, computer scientists came up with computational problems involving so-called {\em decision trees}. Informally speaking, a decision tree associated to a Boolean function $f$ takes $\omega\in\{0,1\}^n$ as an input, and reveals algorithmically the value of $\omega$ at different coordinates one by one. At each step, which coordinate will be revealed next depends on the values of $\omega$ revealed so far. The algorithm stops as soon as the value of $f$ is the same no matter the values of $\omega$ on the remaining coordinates. The question is then to determine how many bits of information must be revealed before the algorithm stops.

Formally, a decision tree is defined as follows. 
Consider a finite set $E$ of cardinality $n$. For a $n$-tuple $x=(x_1,\dots,x_n)$ and $t\le n$, write $x_{[t]}=(x_1,\dots,x_t)$ and $\omega_{x_{[t]}}=(\omega_{x_1},\dots,\omega_{x_t})$. A {\em decision tree} $T=(e_1,\psi_t,t<n)$ takes $\omega\in\{0,1\}^E$ as an input and gives back an ordered sequence $e=(e_1,\dots,e_n)$ constructed inductively as follows: for any $2\le t\le n$,
$$e_t=\psi_t(e_{[t-1]},\omega_{e_{[t-1]}})\in E\setminus \{e_1,\dots,e_{t-1}\},$$
where $\psi_t$ is a function interpreted as the decision rule at time $t$ ($\psi_t$ takes the location and the value of the bits for the first $t-1$ steps of the induction, and decides of the next bit to query).
For $f:\{0,1\}^E\rightarrow \bbR$, define 
\begin{equation*}\label{eq:ddd}\tau(\omega)=\tau_{f,T}(\omega):=\min\big\{t\ge1:\forall \omega'\in\{0,1\}^E,\quad\omega'_{e_{[t]}}=\omega_{e_{[t]}}\Longrightarrow f(\omega)=f(\omega')\big\}.\end{equation*}
\begin{remark}In computer science, a decision tree is usually associated directly to a boolean function $f$ and defined as a rooted directed tree in which each internal nodes are labeled by elements of $E$, leaves by possible outputs, and edges are in correspondence with the possible values of the bits at vertices (see \cite{OSSS} for a formal definition). In particular, the decision trees are usually defined up to $\tau$, and not later on. \end{remark}

The OSSS inequality, originally introduced in \cite{OSSS} as a step toward a conjecture of Yao \cite{yao1977probabilistic}, relates the variance of a Boolean function to the influence of the variables and the computational complexity of a random decision tree for this function. 
\begin{theorem}[OSSS for Bernoulli percolation]
\label{thm:OSSS}
Consider $p\in[0,1]$ and a finite set of edges $E$. Fix an increasing function $f:\{0,1\}^E\longrightarrow [0,1]$ and an algorithm $T$. We have
\begin{equation}
    \label{eq:OSSS}
 \mathrm{Var}_p(f)~\le~   2 \sum_{e\in E}  \delta_e(f,T) \, \mathrm{Cov}_p [f,\omega_e] ,
  \end{equation}
  where 
$
\delta_e(f,T):=\bbP_p\big[\exists t\le \tau(\omega)\::\:e_t=e\big]
$
is the revealment (of $f$) for the decision tree $T$.
\end{theorem}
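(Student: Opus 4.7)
The plan is to adapt the classical OSSS coupling argument to the product measure $\bbP_p$. Introduce two independent samples $\omega,\omega'$ of $\bbP_p$ on $\{0,1\}^E$, and start from the identity
\begin{equation*}
\text{Var}_p(f) \;=\; \bbE_p\bigl[f(\omega)\bigl(f(\omega)-f(\omega')\bigr)\bigr].
\end{equation*}
Extend the algorithm $T$ past time $\tau$ by querying the remaining edges in some arbitrary order, so that on input $\omega$ the full trajectory is an enumeration $e_1(\omega),\dots,e_n(\omega)$ of $E$. For $0\le k\le n$, define the hybrid configuration $\omega^{(k)}$ that agrees with $\omega$ on $\{e_1(\omega),\dots,e_k(\omega)\}$ and with $\omega'$ on the complement; thus $\omega^{(0)}=\omega'$, $\omega^{(n)}=\omega$, and by the very definition of $\tau$ one has $f(\omega^{(k)})=f(\omega)$ for every $k\ge\tau(\omega)$.

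Next I would telescope
\begin{equation*}
f(\omega)-f(\omega') \;=\; \sum_{k=1}^{\tau(\omega)}\bigl[f(\omega^{(k)})-f(\omega^{(k-1)})\bigr],
\end{equation*}
the sum truncating at $\tau(\omega)$ because the contributions beyond $\tau$ cancel. Since $\omega^{(k)}$ and $\omega^{(k-1)}$ differ only at the single edge $e_k(\omega)$, each bracket factors as $(\omega_{e_k}-\omega'_{e_k})\,D_{e_k}f(\omega^{(k-1)})$, where $D_e f$ is the discrete derivative of $f$ along $e$ and $D_e f\ge 0$ by monotonicity. Re-indexing by the edge queried at step $k$, the contribution of $e\in E$ is supported on the event that $e$ is queried before $\tau$, which has probability exactly $\delta_e(f,T)$.

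Combining these ingredients yields
\begin{equation*}
\text{Var}_p(f) \;=\; \sum_{e\in E}\sum_{k\ge 1}\bbE_p\bigl[f(\omega)(\omega_e-\omega'_e)\,D_e f(\omega^{(k-1)})\,\mathbbm{1}_{e_k(\omega)=e,\,k\le\tau(\omega)}\bigr].
\end{equation*}
The proof is then finished by a crucial conditional-independence observation: the event $\{e_k(\omega)=e,\,k\le\tau(\omega)\}$ is measurable with respect to the values of $\omega$ on the edges queried \emph{strictly before} step $k$, while the derivative $D_e f(\omega^{(k-1)})$ depends on $\omega^{(k-1)}$ only off $e$; hence neither of these quantities involves $\omega_e$. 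Conditionally on the history up to step $k-1$ and on $\omega'$, the bit $\omega_e$ is therefore still an independent Bernoulli$(p)$ variable, and averaging over $\omega_e$ (together with the identity $\text{Cov}_p[f,\omega_e]=p(1-p)\,\bbE_p[D_e f]$ and $f\in[0,1]$) produces the factor $\text{Cov}_p[f,\omega_e]$. Summing the step-by-step bounds in $k$ yields the revealment $\delta_e(f,T)$ and summing over $e$ closes the argument.

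The main obstacle is the final edge-by-edge estimate. The hybrid $\omega^{(k-1)}$ is neither $\omega$ nor $\omega'$ but a mixture dictated by the algorithm's trajectory, so that $D_e f(\omega^{(k-1)})$ depends on both copies through a rather intricate pattern; reducing the resulting expectation to the clean quantity $\text{Cov}_p[f,\omega_e]$ requires careful bookkeeping of the conditioning. Moreover, the sharp constant $2$ (as opposed to some larger constant) comes from exploiting the symmetry between $\omega$ and $\omega'$---manifest for instance in the identity $\bbE_p[(\omega_e-\omega'_e)^2]=2\,\text{Var}_p(\omega_e)$---and must be tracked through the argument.
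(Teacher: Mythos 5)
Your overall architecture (two independent copies, a hybrid interpolation along the query order, one-edge-at-a-time differences, conditional independence of the unqueried bits) is the right one, but your hybrid is built in the wrong direction, and this breaks precisely the step where the revealment is supposed to appear. You define $\omega^{(k)}$ to agree with $\omega$ on the first $k$ \emph{queried} edges and with $\omega'$ elsewhere. But the event $\{e_k=e,\ k\le\tau\}$ is measurable with respect to $\omega_{e_1},\dots,\omega_{e_{k-1}}$, i.e.\ with respect to exactly those coordinates on which $\omega^{(k-1)}$ copies $\omega$. Hence, conditionally on the query history, $\omega^{(k-1)}$ is \emph{not} a fresh $\bbP_p$-sample: its law is that of $\omega$ conditioned on its revealed values. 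Carrying your computation through (integrating out $\omega_{e}$ and $\omega'_{e}$), the step-$k$ term becomes $p(1-p)\,\bbE\big[D_ef(\omega)\,D_ef(\omega^{(k-1)})\,\mathbbm 1_{e_k=e,\,k\le\tau}\big]$, and after summing over $k$ the best you can extract is $p(1-p)\,\bbE\big[D_ef(\omega)\,\mathbbm 1_{e\text{ queried before }\tau}\big]$. This is \emph{not} bounded by $\delta_e(f,T)\,\mathrm{Cov}_p[f,\omega_e]$: pivotality of $e$ and the event that $e$ is queried are positively correlated for any sensible algorithm (for $f=\omega_{e_1}\omega_{e_2}$ with the tree querying $e_1$ and then $e_2$ only if $\omega_{e_1}=1$, one has $\bbE[D_{e_2}f\,\mathbbm 1_{e_2\text{ queried}}]=p$ while $\delta_{e_2}\,\bbE[D_{e_2}f]=p^2$). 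What your route actually yields is the Efron--Stein bound $\mathrm{Var}_p(f)\le\sum_e\mathrm{Cov}_p[f,\omega_e]$, with no revealment factor, which is useless for the intended application.

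The fix is the one used in the paper: the interpolation must \emph{resample} the already-queried coordinates. Set $\omega^t$ equal to $\omega'$ on $e_1,\dots,e_t$, equal to $\omega$ on the not-yet-queried coordinates up to $\tau$, and equal to $\omega'$ beyond $\tau$ (so that the final configuration is $\omega'$). Then $f(\omega^0)=f(\omega)$, $\omega^n=\omega'$, and---crucially---conditionally on the history $\omega_{e_{[t-1]}}$, the pair $(\omega^{t-1},\omega^t)$ consists of two genuine $\bbP_p$-samples differing at most at $e_t$, because every coordinate entering them is either a fresh $\omega'$-bit or an unrevealed $\omega$-bit, both independent of the conditioning. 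This makes $\bbE\big[\,|f(\omega^t)-f(\omega^{t-1})|\ \big|\ \omega_{e_{[t-1]}}\big]=2p(1-p)\,\bbE_p[D_{e_t}f]=2\,\mathrm{Cov}_p[f,\omega_{e_t}]$ a constant on $\{e_t=e,\ t\le\tau\}$, and summing over $t$ then produces exactly $2\,\delta_e(f,T)\,\mathrm{Cov}_p[f,\omega_e]$. One should also replace your identity $\mathrm{Var}_p(f)=\bbE[f(\omega)(f(\omega)-f(\omega'))]$ by $\mathrm{Var}_p(f)\le\bbE[|f(\omega^0)-f(\omega^n)|]$ (valid since $f$ takes values in $[0,1]$), which removes the dangling factor $f(\omega)$ that would otherwise have to be tracked through the conditioning.
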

The general inequality does not require $f$ to be increasing, but we will only use it in this context.
\newcommand{\e}{\mathbf e}
\begin{proof}
Our goal is to apply a Linderberg-type argument. Consider two independent sequences $\omega$ and $\tilde\omega$ of iid Bernoulli random variables of parameter $p$. Write $\mathbb P$ for the coupling between these variables (and $\mathbb E$ for its expectation). 
Construct $\e$ by setting $\e_1=e_1$ and for $t\ge1$,  
$\e_{t+1}:=\psi_t(\e_{[t]},\omega_{\e_{[t]}})$. Similarly, define $$\tau:=\min\big\{t\ge1:\forall x\in\{0,1\}^E,x_{\e_{[t]}}=\omega_{\e_{[t]}}\Rightarrow f(x)=f(\omega)\big\}.$$ Finally, for $0\le t\le n$, define $$\omega^t:=(\tilde\omega_{\e_1},\dots,\tilde\omega_{\e_t},\omega_{\e_{t+1}},\dots,\omega_{\e_{\tau-1}},\tilde\omega_{\e_\tau},\tilde\omega_{\e_{\tau+1}},\dots,\tilde\omega_{\e_n}),$$where it is understood that the $n$-tuple under parentheses is equal to $\tilde\omega$ if $t\ge \tau$. (We used a slight abuse of notation, the order here is shuffled to match the order in which the edges are revealed by the algorithm.)

Since $\omega^0$ and $\omega$ coincide on $\e_t$ for any $t\le \tau$, we deduce that $f(\omega^{0})=f(\omega)$. Also, $f(\omega^{n})=f(\tilde\omega)$ since $\omega^n=\tilde\omega$. As a consequence,  
conditioning on $\omega$ gives
\begin{equation*}\label{eq:a}{\rm Var}_p(f)\le \bbE_p\big[|f-\bbE_p[f]|\big]=\bbE\big[\big|\,\bbE[f(\omega^{0})|\omega]-\bbE[f(\omega^{n})|\omega]\,\big|\big]\le\mathbb E\big[|f(\omega^{0})-f(\omega^{n})|\big].\end{equation*}
The triangular inequality and the observation that $\omega^t=\omega^{t-1}$ for any $t>\tau$ gives that
\begin{align*}{\rm Var}_p(f)\le \sum_{t=1}^{n}\mathbb E\big[|f(\omega^t)-f(\omega^{t-1})|]=\sum_{t=1}^{n}\mathbb E\big[|f(\omega^t)-f(\omega^{t-1})|\mathbbm1_{t\le\tau}\big].\end{align*}
Let us now decomposed into the possible values for $\e_t$. Note that $\e_t$ is measurable in terms of $\omega_{[t-1]}$, and that $\tau$ is a stopping time, so that $\{t\le\tau\}=\{\tau\le t-1\}^c$ is also measurable in terms of $\omega_{[t-1]}$. Overall, we get that 
\begin{align*}{\rm Var}_p(f)&\le\sum_{e\in E} \sum_{t=1}^{n}\mathbb E\big[\mathbb E\big[|f(\omega^t)-f(\omega^{t-1})| ~\big|~\omega_{[t-1]}\big]\,\mathbbm1_{t\le \tau,\e_t=e}\big].
\end{align*}
Let $f^1(\omega)$ and $f^0(\omega)$ denote the function $f$ applied to the configuration equal to $\omega$ except at $e$ where it is equal to 1 or to 0 respectively. Note that since $f$ is increasing, we find that $f^1\ge f^0$.  Now, conditionally on $\omega_{[t-1]}$ and $\{t\le \tau,\e_t=e\}$, both $\omega^t$ and $\omega^{t-1}$ are sequences of iid Bernoulli random variables of parameter $p$, differing (potentially) exactly at $e$ (since $\omega^t_e=\tilde\omega_e$ and $\omega^{t-1}_e=\omega_e$). We deduce that 
\begin{equation*}\mathbb E\big[\,|f(\omega^t)-f(\omega^{t-1})| ~\big|~\omega_{[t-1]}\big]~=2p(1-p)\mathbb E_p[f^1(\omega)-f^0(\omega)]=2{\rm Cov}_p[f,\omega_e].\end{equation*}
Recalling that $\sum_{t=1}^{n}\mathbb P[t\le \tau,\e_t=e]=\delta_e(f,T)$ concludes the proof.
\end{proof}

\begin{figure}
\begin{center}\includegraphics[width=0.65\textwidth]{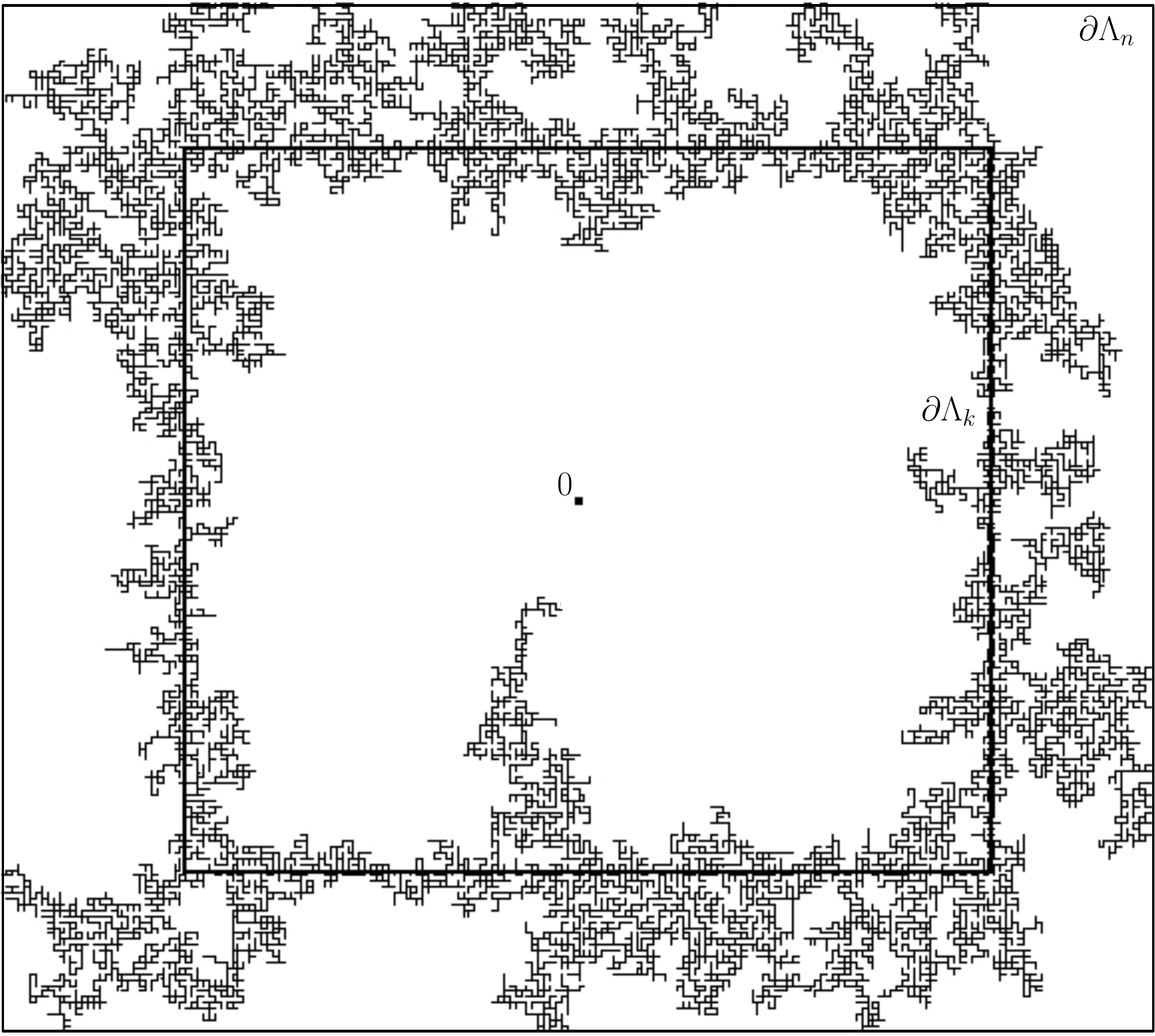}
\caption{\label{fig:algo} A realization of the clusters intersecting $\partial\Lambda_k$. Every edge having one endpoint in this set has been revealed by the decision tree. Furthermore in this specific case, we know that $0$ is not connected to the boundary of $\Lambda_n$. }\end{center}
\end{figure}

Let us start the proof by a general lemma.   
  \begin{lemma}\label{lem:technical}
Consider a converging sequence of increasing differentiable functions $f_n:[0,x_0]\longrightarrow [0,M]$ satisfying
 \begin{equation}\label{eq:mlem}f_n'\ge \frac{n}{\Sigma_{n}}f_n\end{equation}
 for all $n\ge1$, where $\Sigma_n=\sum_{k=0}^{n-1}f_k$. Then, there exists $x_1\in[0,x_0]$ such that  
 \begin{itemize}[noitemsep]
 \item[{\bf P1}] For any $x<x_1$, there exists $c_x>0$ such that for any $n$ large enough,
 $f_n(x)\le \exp(-c_x n).$
  \item[{\bf P2}] For any $x>x_1$, $\displaystyle f=\lim_{n\rightarrow \infty}f_n$ satisfies $f(x)\ge x-x_1.$
 \end{itemize}
  \end{lemma}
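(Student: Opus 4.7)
The natural candidate is
\[
x_1 \ :=\ \sup\Bigl\{x\in[0,x_0]:\ \Sigma(x):=\sum_{n\ge 0} f_n(x)<+\infty\Bigr\}
\]
(with $\sup\emptyset=0$). Since each $f_n$ is increasing in $x$, the set on the right is a down-set, so $\Sigma(x)<\infty$ for every $x<x_1$ and $\Sigma(x)=\infty$ for every $x>x_1$. Property P1 is then almost immediate: fix $x<x_1$ and pick $z\in(x,x_1)$ with $\Sigma(z)<\infty$; for every $s\in[x,z]$ monotonicity of each $f_k$ gives $\Sigma_n(s)\le\Sigma_n(z)\le\Sigma(z)=:L$, so the hypothesis $(\log f_n)'(s)\ge n/\Sigma_n(s)\ge n/L$ integrated on $[x,z]$ yields $f_n(x)\le M\exp(-c_x n)$ with $c_x:=(z-x)/L>0$.

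For P2 my plan is to sum the differential inequality in $n$ before integrating in $x$. Using the elementary bound $\log(1+t)\le t$ with $t=f_k/\Sigma_k$, so that $f_k/\Sigma_k\ge \log(\Sigma_{k+1}/\Sigma_k)$, the hypothesis $f_k'\ge k f_k/\Sigma_k$ upgrades to $f_k'(x)\ge k\log(\Sigma_{k+1}(x)/\Sigma_k(x))$. Summing $k=1,\ldots,n$ and applying Abel summation gives the clean identity
\[
\Sigma_{n+1}'(x)\ \ge\ \sum_{k=1}^{n} k\log\!\bigl(\Sigma_{k+1}(x)/\Sigma_k(x)\bigr)\ =\ \sum_{k=1}^{n} \log\!\bigl(\Sigma_{n+1}(x)/\Sigma_k(x)\bigr),
\]
whose integrand is non-negative. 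Integrating over $[x_1,x]$, dividing by $n$, and invoking Cesàro on the left ($\Sigma_{n+1}(y)/n\to f(y)$) together with Fatou's lemma on the right leads to
\[
f(x)-f(x_1)\ \ge\ \int_{x_1}^{x}\liminf_{n\to\infty}\frac{1}{n}\sum_{k=1}^{n}\log\!\bigl(\Sigma_{n+1}(s)/\Sigma_k(s)\bigr)\,ds.
\]
At any $s$ where $f(s)>0$, one has $\Sigma_k(s)/k\to f(s)>0$, and a short Stirling-type computation (writing $\log\Sigma_k=\log k+\log(\Sigma_k/k)$ and using $\tfrac{1}{n}\log(n!)=\log n-1+o(1)$) shows that the integrand's liminf equals~$1$. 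Combined with $f(x_1)\ge 0$, this would give $f(x)\ge x-x_1$ as claimed.

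The main obstacle is therefore to know that $f(s)>0$ on $(x_1,x]$ so that the pointwise Stirling computation applies. I would take care of this by a minimality/contradiction argument: assume $f(x_*)<x_*-x_1$ for some $x_*>x_1$, and set $y_*:=\inf\{y\in(x_1,x_*]:f(y)<y-x_1\}$. If $y_*>x_1$, monotonicity forces $f(y_*)=y_*-x_1>0$, so $f>0$ on $[y_*,x_*]$ and the displayed inequality applied on $[y_*,x_*]$ yields $f(x_*)\ge f(y_*)+(x_*-y_*)=x_*-x_1$, contradicting the standing assumption. If instead $y_*=x_1$, then $f(x_1^+)=0$; to conclude I would show that one cannot have $f\equiv 0$ on any full right-neighbourhood $(x_1,x_1+\delta]$ (otherwise $\Sigma_n(s)=o(n)$ uniformly on that interval forces, through $(\log f_n)'(s)\ge n/\Sigma_n(s)\to\infty$, that $f_n$ decays super-exponentially at points just below $x_1+\delta$, so $\Sigma$ is finite there, contradicting the definition of $x_1$). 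Hence one can pick $y_0\downarrow x_1$ with $f(y_0)>0$, apply the summed inequality on $[y_0,x_*]$ to get $f(x_*)\ge f(y_0)+(x_*-y_0)$, and let $y_0\downarrow x_1$ to again obtain $f(x_*)\ge x_*-x_1$. I expect this last verification (ruling out a $0$-plateau of $f$ just above $x_1$ using only the rate $\Sigma_n(s)=o(n)$) to be the technically delicate point.
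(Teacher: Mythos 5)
Your choice $x_1=\sup\{x:\sum_n f_n(x)<\infty\}$ makes \textbf{P1} essentially immediate (and simpler than the paper's argument), and the Abel-summation identity $\Sigma_{n+1}'\ge\sum_{k\le n}\log(\Sigma_{n+1}/\Sigma_k)$ followed by the Stirling/Ces\`aro computation is a genuinely different route to \textbf{P2} from the paper's weighted sum $T_n=\frac{1}{\log n}\sum_i f_i/i$. However, all of \textbf{P2} then hinges on knowing $f>0$ immediately to the right of $x_1$, and the step you yourself flag as delicate fails as sketched. From $\Sigma_n(s)=o(n)$ uniformly on $(x_1,x_1+\delta]$, integrating $(\log f_n)'\ge n/\Sigma_n$ over an interval of length $\ell$ gives only $f_n(x'')\le M\exp\bigl(-\ell\, n/\Sigma_n(x_1+\delta)\bigr)$. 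The exponent $n/\Sigma_n(x_1+\delta)$ tends to infinity, but possibly arbitrarily slowly: if $\Sigma_n(x_1+\delta)\sim n/\log\log n$ (compatible with $f(x_1+\delta)=0$ and with $\Sigma(x_1+\delta)=\infty$), the bound reads $f_n(x'')\le M(\log n)^{-\ell}$, which is not summable. This is sub-exponential, not super-exponential, decay, and it does not contradict $\Sigma(x'')=\infty$. Iterating does not obviously help either: feeding the bound back in improves $\Sigma_n(x'')/n$ only through a Ces\`aro-type average of the previous errors, which need not become summable after finitely many steps.

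This is exactly the difficulty the paper's threshold $\tilde x_1:=\inf\{x:\limsup_n\log\Sigma_n(x)/\log n\ge1\}$ is built to avoid. Below $\tilde x_1$ one has a genuine polynomial margin $\Sigma_n\le n^{1-\alpha}$, so one integration yields stretched-exponential decay $M\exp(-\delta n^{\alpha})$, which \emph{is} summable; hence $\Sigma_n$ is bounded there and a second integration gives true exponential decay (this two-step bootstrap is the paper's \textbf{P1}). Above $\tilde x_1$, the bound $\log\Sigma_n(x')\ge(1-o(1))\log n$ along a subsequence is precisely the input the $T_n$-argument needs, and no positivity of $f$ is required anywhere. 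One can check that $x_1=\tilde x_1$, but proving the inequality $x_1\ge\tilde x_1$ already requires the bootstrap. So to repair your proof you should either adopt the paper's definition of $x_1$ (accepting the bootstrap in \textbf{P1} and replacing the Stirling step by the $\log n$-normalization in \textbf{P2}), or keep your definition and prove $x_1=\tilde x_1$ first; as written, the exclusion of a $0$-plateau of $f$ just above $x_1$ is a genuine gap, not a technicality.
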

  
  \begin{proof}
Define 
$$ x_1 := \inf \Big\{ x \, : \, \limsup_{n \rightarrow \infty} \frac{\log \Sigma_n(x)}{\log n} \geq 1 \Big\}. $$

\paragraph{Assume $x<x_1$.} Fix $\delta>0$ and set $x'=x-\delta$ and $x''=x-2\delta$. We will prove that there is exponential decay at $x''$ in two steps. 

First, there exists an integer $N$ and $\alpha>0$  such that $\Sigma_n(x) \leq n ^ {1-\alpha}$ for all $n \geq N$. For such an integer $n$, integrating $f_n' \geq n^{\alpha} f_n$  between $x'$ and $x$ -- this differential inequality follows from \eqref{eq:mlem}, the monotonicity of the functions $f_n$ (and therefore $\Sigma_n$) and the previous bound on $\Sigma_n(x)$ -- implies that
$$ f_n(x') \leq M\exp(-\delta \,n^\alpha),\quad\forall n\ge N.$$

Second, this implies that there exists $\Sigma < \infty$ such that $\Sigma_n(x') \leq \Sigma$ for all $n$. Integrating $f_n' \geq \tfrac{n}{\Sigma} f_n$ for all $n$ between $x''$ and $x'$ -- this differential inequality is again due to \eqref{eq:mlem}, the monotonicity of $\Sigma_n$, and the bound on $\Sigma_n(x')$ -- leads to
\begin{equation*}f_n (x'') \leq M\exp(-\frac{\delta}{\Sigma} \,n),\quad\forall n\ge0.\label{eq:bb}\end{equation*}

\paragraph{Assume $x>x_1$.} For $n\ge 1$, define the function $T_n := \frac{1}{\log n} \sum_{i=1}^{n} \frac{f_i}{i}$. Differentiating $T_n$ and using \eqref{eq:mlem}, we obtain
$$T_n'~=~ \frac{1}{\log n}\, \sum_{i=1}^{n} \frac{f_i'}{i}  ~\stackrel{\eqref{eq:mlem}}{\geq}~ \frac{1}{\log n}\, \sum_{i=1}^{n} \frac{f_i}{\Sigma_{i}} ~\geq~ \frac{\log \Sigma_{n+1}-\log \Sigma_1}{\log n},$$
where in the last inequality we used that for every $i\ge1$,
$$\frac{f_i}{\Sigma_{i}} \geq \int_{\Sigma_{i}}^{\Sigma_{i+1}} \frac{dt}{t}=\log \Sigma_{i+1}-\log \Sigma_{i}.$$ 
For $x'\in(x_1,x)$, using that $\Sigma_{n+1}\ge\Sigma_n$ is increasing and integrating the previous differential inequality between $x'$ and $x$ gives
$$T_n (x) - T_n(x') \geq (x - x')\,  \frac{\log \Sigma_{n} (x')-\log M}{\log n}.$$
Hence, the fact that $T_n(x)$ converges to $f(x)$ as $n$ tends to infinity implies 
\begin{equation*} f (x) -f(x') 
~\geq~  (x - x')  \, \Big[\limsup_{n\rightarrow \infty} \frac{\log \Sigma_n (x')}{\log n}\Big]~\geq~ x- x'.
\end{equation*}
Letting $x'$ tend to $x_1$ from above, we obtain
$
f(x)\ge x-x_1.
$
\end{proof}

We now present the proof of Theorem~\ref{thm:perco}. We keep the notation introduced in the previous section
$$\theta_n(p)=\bbP_p[0\longleftrightarrow \partial\Lambda_n]\quad\text{and}\quad S_n:=\sum_{k=0}^{n-1}\theta_k.$$
\begin{lemma}\label{cor:OSSS}
For any $n\ge1$, one has
$$\sum_{xy\in E_n} {\rm Cov}_p[\mathbbm 1_{0\leftrightarrow\partial\Lambda_n},\omega_e]\ge \frac{n}{\displaystyle 8S_n}\cdot \theta_n(1-\theta_n).$$
  \end{lemma}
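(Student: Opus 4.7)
The plan is to apply the OSSS inequality (Theorem~\ref{thm:OSSS}) to the indicator $f:=\mathbbm{1}_{0 \leftrightarrow \partial\Lambda_n}$, viewed as a function on $\{0,1\}^{E_n}$, with a well-chosen family of decision trees indexed by $k\in\{0,\ldots,n-1\}$, and then average the resulting inequalities over $k$. Since $\mathrm{Var}_p(f)=\theta_n(1-\theta_n)$, the stated lemma is exactly what comes out of OSSS once the average revealment has been controlled by a quantity of order $S_n/n$.

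For each $k$, I would take $T_k$ to be the algorithm that performs a BFS-style exploration, within $E_n$, of all clusters of $\omega$ intersecting $\partial\Lambda_k$ (this is the setup suggested by Figure~\ref{fig:algo}). The key point is that, since any open path from $0$ to $\partial\Lambda_n$ must cross $\partial\Lambda_k$ for $k<n$, the value of $f$ is determined as soon as these clusters are entirely revealed: $f=1$ iff one of them contains both $0$ and a vertex of $\partial\Lambda_n$. Since an edge $e=uv$ is queried by $T_k$ only if at least one of $u,v$ ends up in a cluster touching $\partial\Lambda_k$, one obtains the basic revealment bound
$$\delta_e(f,T_k)\le \bbP_p[u\leftrightarrow\partial\Lambda_k]+\bbP_p[v\leftrightarrow\partial\Lambda_k].$$

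The main technical point, and the only place where the geometry of $\Lambda_n$ enters, is to turn this into a useful estimate after averaging over $k$. By translation invariance, for any $u\in\Lambda_n$ a connection from $u$ to $\partial\Lambda_k$ forces $u$ to reach $L^\infty$-distance at least $|k-\|u\|_\infty|$ from itself, so $\bbP_p[u\leftrightarrow\partial\Lambda_k]\le\theta_{|k-\|u\|_\infty|}$. Splitting the sum $\sum_{k=0}^{n-1}\theta_{|k-\|u\|_\infty|}$ around $k=\|u\|_\infty$ shows that each side contributes at most $S_n$, so that $\sum_{k=0}^{n-1}\bbP_p[u\leftrightarrow\partial\Lambda_k]\le 2S_n$ uniformly in $u\in\Lambda_n$. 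This is the heart of the argument: averaging over $k$ washes out the dependence on the position of $u$, even though any single $T_k$ has very large revealment for edges near $\partial\Lambda_k$.

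Combining these ingredients, the OSSS inequality averaged over $k$ reads
$$\theta_n(1-\theta_n)\le \frac{2}{n}\sum_{k=0}^{n-1}\sum_{e\in E_n}\delta_e(f,T_k)\,\mathrm{Cov}_p[f,\omega_e]\le \frac{8\,S_n}{n}\sum_{e\in E_n}\mathrm{Cov}_p[f,\omega_e],$$
and dividing by $8S_n/n$ yields the claim. The whole argument is straightforward once the algorithms $T_k$ are identified; no part of the computation is delicate beyond the choice of the family and the averaging trick.
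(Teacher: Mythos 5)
Your proposal is correct and follows essentially the same route as the paper: apply OSSS to $\mathbbm 1_{0\leftrightarrow\partial\Lambda_n}$ with the family of decision trees exploring the clusters of $\partial\Lambda_k$, bound the revealment of $e=uv$ by $\bbP_p[u\leftrightarrow\partial\Lambda_k]+\bbP_p[v\leftrightarrow\partial\Lambda_k]$, and use $\sum_k\bbP_p[u\leftrightarrow\partial\Lambda_k]\le\sum_k\theta_{|k-\|u\|_\infty|}\le 2S_n$ before averaging over $k$. The only differences from the paper's proof are cosmetic (summing versus averaging over $k$, and the indexing of $k$), and the constants match.
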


The proof is based on  Theorem~\ref{thm:OSSS} applied to a well chosen decision tree determining $\mathbbm 1_{0\leftrightarrow\partial\Lambda_n}$. One may simply choose the trivial decision tree checking every edge of the box $\Lambda_n$. Unfortunately, the revealment of the decision tree being 1 for every edge, the OSSS inequality will not bring us much information. A slightly better decision tree would be provided by the decision tree discovering the cluster of the origin ``from inside''. Edges far from the origin would then be revealed by the decision tree if (and only if) one of their endpoints is connected to the origin. This provides a good bound for the revealment of edges far from the origin, but edges close to the origin are still revealed with large probability. In order to avoid this last fact, we will rather choose a family of decision trees discovering the clusters of $\partial\Lambda_k$ for $1\le k\le n$ and observe that the average of their revealment for a fixed edge will always be small.

\begin{proof} 
For any $k\in \llbracket 1,n\rrbracket$, we wish to construct a decision tree $T$ determining $\mathbbm 1 _{0\leftrightarrow\partial\Lambda_n}$ such that for each $e=uv$,
\begin{equation}\delta_e(T)\le \bbP_p[u\longleftrightarrow \partial\Lambda_k]+\bbP_p[v\longleftrightarrow \partial\Lambda_k].\label{eq:zzz}\end{equation}
Note that this would conclude the proof since we obtain the target inequality by applying Theorem~\ref{thm:OSSS} for each $k$ and then summing on $k$. As a key, we use that for $u\in \Lambda_n$,
\begin{align*}\sum_{k=1}^n\bbP_p[u\longleftrightarrow \partial\Lambda_k]
&~\le~ \sum_{k=1}^n  \bbP_p[u\longleftrightarrow \partial\Lambda_{|k-d(u,0)|}(u)]~\le~ 2S_n.
\end{align*}

We describe the decision tree $T$, which corresponds first to an exploration of the clusters in $\Lambda_n$ intersecting $\partial\Lambda_k$ that does not reveal any edge with both endpoints outside these clusters, and then to a simple exploration of the remaining edges.
 
More formally, 
we define $\e$ (instead of the collection of decision rules $\phi_t$) using two  growing sequences  $\partial\Lambda_k=V_0\subset V_1\subset \cdots\subset V$ and $\emptyset=F_0\subset F_1\subset\cdots\subset F$ (where $F$ is the set of edges between two vertices within distance $n$ of the origin) that should be understood as follows: at step $t$, $V_t$ represents the set of vertices that the decision tree found to be connected to $\partial \Lambda_k$, and $F_t$ is the set of explored edges discovered by the decision tree until time $t$.

Fix an ordering of the edges in $F$. Set $V_0=\partial\Lambda_k$ and $F_0=\emptyset$. Now, assume that $V_t\subset V$ and $F_t\subset F$ have been constructed and distinguish between two cases:\begin{itemize}[noitemsep,nolistsep]
\item If there exists an edge $e=xy\in F\setminus F_t$ with $x\in V_t$ and $y\notin V_t$ (if more than one exists, pick the smallest one for the ordering), then set $\e_{t+1}=e$, $F_{t+1}=F_t\cup\{e\}$ and set $$V_{t+1}:=\begin{cases}V_t\cup\{x\}&\text{ if }\omega_e=1\\ V_t&\text{ otherwise}.\end{cases}$$
\item If $e$ does not exist, set $\e_{t+1}$ to be the smallest $e\in F\setminus F_t$ (for the ordering) and set $V_{t+1}=V_t$ and $F_{t+1}=F_t\cup\{e\}$.\end{itemize}
As long as we are in the first case, we are still discovering the clusters of $\partial\Lambda_k$. Also, as soon as we are in the second case, we remain in it. The fact that $\tau$ is not greater than the last time we are in the first case gives us \eqref{eq:zzz}.

Note that $\tau$ may a priori be strictly smaller than the last time we are in first case (since the decision tree may discover a path of open edges from 0 to $\partial \Lambda_n$ or a family of closed edges disconnecting the origin from $\partial\Lambda_n$ before discovering the whole clusters of $\partial\Lambda_k$).
\end{proof}

We are now in a position to provide our alternative proof of exponential decay. Fix $n\ge1$. Lemma~\ref{cor:OSSS} together with the different formula gives
\begin{equation*}\theta_n'=\tfrac1{p(1-p)}\sum_{e\in E_n}{\rm Cov}(\mathbbm 1_{0\leftrightarrow \partial\Lambda_n},\omega_e) ~\ge~\frac{n}{2S_n}\cdot \theta_n(1-\theta_n).\end{equation*}
To conclude, fix $p_0\in(p_c,1)$ and observe that for $p\le p_0$, $1-\theta_n(p)\ge1-\theta_1(p_0)>0$. Then, apply Lemma~\ref{lem:technical} to $f_n=\tfrac2{(1-\theta_1(p_0))}\theta_n$.

Other models can be treated using the OSSS inequality (to mention only two, Voronoi percolation \cite{DumRaoTas17a} and Boolean percolation \cite{DumRaoTas17b}) but the study of the random-cluster model requires a generalization of the OSSS inequality, which we present below.
\bigbreak
Let us make a small detour, analyze what we did in the previous proof, and discuss the study of averages of boolean functions. We proved an inequality of the form \begin{equation}\label{eq:pou}\theta_n'\ge C_n\theta_n\end{equation} for a constant $C_n$ that was large as soon as $\theta_n$ was small. In particular, when $\theta_n$ was decaying polynomially fast, $C_n$ was polynomially large, a statement which allowed us to prove that $\theta_n$ was decaying stretched exponentially fast and then exponentially fast for smaller values of $p$ (see the proof of {\bf P1} of Lemma~\ref{lem:technical}).

Historically, differential inequalities like \eqref{eq:pou} were obtained using abstract sharp threshold theorems. The general theory of sharp thresholds for discrete product spaces was initiated by Kahn, Kalai and Linial in \cite{KahKalLin88} in the case of the uniform measure on $\{0,1\}^n$, i.e.~in the case of $\bbP_p$ with  $p=1/2$. There, Kahn, Kalai and Linial used the Bonami-Beckner inequality \cite{Bek75,Bon70} to deduce inequalities between the variance of a boolean function and so-called influences of this function. Bourgain, Kahn, Kalai,
Katznelson and Linial \cite{BouKahKal92} extended these inequalities to product spaces $[0,1]^n$ and to $\bbP_p$ with arbitrary $p\in[0,1]$. 
For completeness, let us state a version of this result due to Talagrand \cite{Tal94}:
there exists a constant $c>0$ such that for any $p\in[0,1]$ and any increasing event $A$,
$$\bbP_p[A](1-\bbP_p[A])\le c\log\tfrac1{p(1-p)}\sum_{e\in E} \frac{{\rm Cov}_p[\mathbbm 1_A,\omega_e]}{\log (1/{\rm Cov}_p[\mathbbm 1_A,\omega_e])}.$$
Notice that as soon as all covariances are small, the sum of covariances is large. This result can seem counter-intuitive at first but it is definitely very efficient to prove differential inequalities like \eqref{eq:pou}. In particular, ${\rm Cov}_p[\mathbbm 1_A,\omega_e]\le \bbP_p[A]$ so that applying the previous displayed equation to $A=\{0\leftrightarrow\partial\Lambda_n\}$ gives
$$\theta_n(1-\theta_n)\le \frac {c_p}{\log(1/\theta_n)}\theta_n'.$$
In order to compare this inequality to what we got with the OSSS inequality, let us look at the case where $\theta_n$ is decaying polynomially fast. In this case, the value of $C_n$ is of order  $\log n$. This is not a priori sufficient to prove that $\theta_n$ decays exponentially fast for smaller values of $p$ since it only improves the decay of $\theta_n$ by small polynomials. From this point of view, the logarithm in the expression $\log(1/\theta_n)$ is catastrophic.

Mathematicians succeeded to go around this difficulty by considering crossing events (see Section~\ref{sec:5} for more details). A beautiful example of the application of sharp threshold results to percolation theory is the result of Bollob\'as and Riordan about critical points of planar percolation models \cite{BolRio06c,BolRio06}. 

Recently, Graham and Grimmett \cite{GraGri06} succeeded to extend the BKKKL/Talagrand result to random-cluster models. Combined with ideas from \cite{BolRio06}, this led to a computation of the critical point of the random-cluster model (see below). Nonetheless, these proofs involving crossing probabilities are pretty specific to planar models and, to the best of our knowledge, fail to apply in higher dimension. In particular, it seems necessary to use a generalization of the OSSS inequality rather than a generalization of the BKKKL/Talagrand result, which is what we propose to do in the next section.
\bexo
\begin{exercise}[A $k$-dependent percolation model] Consider a family of iid Bernoulli random variables $(\eta_x)_{x\in \bbZ^d}$ of parameter $1-p$ and say that an edge $e\in\bbZ^d$ is open if both endpoints are at distance less than or equal to $R$ from any $x\in\bbZ^d$ with $\eta_x=1$ (it corresponds to taking the vacant set of balls of radius $R$ centered around the vertices $x\in\bbZ^d$ with $\eta_x=1$). Adapt the previous proof to show that the model undergoes a sharp phase transition, and that {\rm (EXP$_p$)} holds for any $p<p_c$.
\end{exercise}

\eexo

\subsection{Sharpness for random-cluster models}\label{sec:2.4}

We now turn to the proof of the following generalization of Theorem~\ref{thm:perco}.
\begin{theorem}[DC, Raoufi, Tassion \cite{DumRaoTas17}]\label{thm:RCM2} Consider the random-cluster model on $\bbZ^d$ with $q\ge1$. 
\begin{enumerate}[noitemsep,nolistsep]
\item\label{item:2} There exists $c>0$ such that for $p>p_c$, $\phi^1_{p,q}[0\leftrightarrow \infty]\ge
 c (p-p_c)$.
\item\label{item:1}For $p<p_c$,  there exists $c_p>0$ such that for all $n\ge1$,
$$\phi^1_{\Lambda_n,p,q}[0\longleftrightarrow \partial\Lambda_n]\le \exp(-c_pn).$$

\end{enumerate}
\end{theorem}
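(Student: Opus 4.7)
The plan is to adapt the randomized-algorithm proof of Theorem~\ref{thm:perco} from Section~\ref{sec:OSSS} to the random-cluster setting. Set $\theta_n(p):=\phi^1_{\Lambda_n,p,q}[0\longleftrightarrow\partial\Lambda_n]$ and $S_n(p):=\sum_{k=0}^{n-1}\theta_k(p)$. The strategy is to derive a differential inequality of the form
$$\theta_n'(p)~\ge~\frac{c\,n}{S_n(p)}\,\theta_n(p)\bigl(1-\theta_n(p)\bigr)$$
on $[0,p_0]$ for a fixed $p_0\in(p_c,1)$, and then feed the normalized sequence $f_n:=\tfrac{c}{1-\theta_1(p_0)}\theta_n$ into Lemma~\ref{lem:technical}. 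The lemma produces a threshold $\widetilde p_c$ such that $\theta_n(p)$ decays exponentially for $p<\widetilde p_c$ (giving item~2) and $\lim_n\theta_n(p)=\phi^1_{p,q}[0\longleftrightarrow\infty]\ge p-\widetilde p_c$ for $p>\widetilde p_c$ (giving item~1); the identification $\widetilde p_c=p_c$ is then immediate from the definition of the critical point.

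The first ingredient is the analogue of \eqref{eq:derivative formula}: since the factor $q^{k(\omega^1)}$ does not depend on $p$, a direct differentiation of $\log Z^1_{\Lambda_n,p,q}$ yields, for every random variable $X$,
$$\frac{d}{dp}\,\phi^1_{\Lambda_n,p,q}[X]~=~\tfrac{1}{p(1-p)}\sum_{e\in E_n}\mathrm{Cov}^1_{\Lambda_n,p,q}[X,\omega_e].$$
The problem therefore reduces to a lower bound on $\sum_e\mathrm{Cov}^1_{\Lambda_n,p,q}[\mathbbm 1_{0\leftrightarrow\partial\Lambda_n},\omega_e]$.

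The main obstacle is to produce such a bound via an OSSS-type inequality for the non-product measure $\phi^1_{\Lambda_n,p,q}$. The Lindeberg telescoping used in Theorem~\ref{thm:OSSS} relied crucially on independence of coordinates under $\bbP_p$, which fails here. My plan is to lift the problem to the product space underlying the Markov chain of Lemma~\ref{lem:dynamic}: the stationary configuration $\omega$ is a deterministic functional of an i.i.d.\ family of uniform variables $(\mathbf U_{e,k})_{e,k}$, and a decision tree on the $\omega$-coordinates lifts to one on the $\mathbf U$'s (to reveal $\omega_e$ one successively reveals $\mathbf U_{e,k}$ at the clock rings of $e$ until the conditional law of $\omega_e$ is pinned down). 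Applying the classical OSSS inequality in the product space and then using the FKG inequality together with the monotonicity of the coupling (valid for $q\ge1$) to translate influences on the $\mathbf U$'s back into covariances on $\omega$ should produce, for every increasing $f:\{0,1\}^{E_n}\to[0,1]$ and every decision tree $T$, an inequality of the form
$$\mathrm{Var}^1_{\Lambda_n,p,q}(f)~\le~C\sum_{e\in E_n}\delta_e(f,T)\,\mathrm{Cov}^1_{\Lambda_n,p,q}(f,\omega_e).$$

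Given this OSSS, the end of the argument mirrors Lemma~\ref{cor:OSSS}. For each $k\in\{1,\dots,n\}$, let $T_k$ be the decision tree that explores the clusters of $\partial\Lambda_k$ inside $\Lambda_n$ by successively revealing edges with at least one endpoint in the currently discovered set. Then for $e=uv\in E_n$,
$$\delta_e(T_k)~\le~\phi^1_{\Lambda_n,p,q}[u\leftrightarrow\partial\Lambda_k]+\phi^1_{\Lambda_n,p,q}[v\leftrightarrow\partial\Lambda_k]~\le~2\,\theta_{d(u,\partial\Lambda_k)}(p),$$
where the second inequality uses that $\{u\leftrightarrow\partial\Lambda_k\}$ forces $\{u\leftrightarrow\partial\Lambda_{d(u,\partial\Lambda_k)}(u)\}$ (an event depending only on edges inside the ball $\Lambda_{d(u,\partial\Lambda_k)}(u)\subset\Lambda_k$), combined with comparison between boundary conditions (Exercise~\ref{exo:0}) and translation invariance. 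Summing over $k$ gives $\sum_{k=1}^n\delta_e(T_k)\le 4\,S_n(p)$, and applying the random-cluster OSSS to $f=\mathbbm 1_{0\leftrightarrow\partial\Lambda_n}$ with each $T_k$ and averaging over $k$ yields
$$\sum_{e\in E_n}\mathrm{Cov}^1_{\Lambda_n,p,q}[f,\omega_e]~\ge~\frac{c\,n}{S_n(p)}\,\theta_n(p)\bigl(1-\theta_n(p)\bigr),$$
which combined with the derivative formula furnishes the target differential inequality and closes the proof via Lemma~\ref{lem:technical}.
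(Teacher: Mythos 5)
Your overall architecture is the one the paper uses: the derivative formula, an OSSS-type inequality for $\phi^1_{\Lambda,p,q}$ applied to the family of decision trees exploring the clusters of $\partial\Lambda_k$, the resulting differential inequality $\theta_n'\ge c\,\tfrac{n}{S_n}\theta_n$, and Lemma~\ref{lem:technical}. The endgame (revealment bounds via (CBC), summation over $k$, identification of $\lim_n\theta_n$ with $\phi^1_{p,q}[0\leftrightarrow\infty]$, and $\tilde p_c=p_c$) matches the paper up to inessential bookkeeping (the paper measures $\{0\leftrightarrow\partial\Lambda_n\}$ under $\phi^1_{\Lambda_{2n},p,q}$ rather than $\phi^1_{\Lambda_n,p,q}$, but your version can be made to work).

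The genuine gap is in the one step you flag as the ``main obstacle'' and then wave through: the OSSS inequality for the non-product measure. Your proposed lift to the product space of the Glauber-dynamics uniforms $(\mathbf U_{e,k})_{e,k}$ from Lemma~\ref{lem:dynamic} does not work as sketched, for two reasons. First, the stationary configuration is a functional of \emph{infinitely} many coordinates (one uniform per clock ring per edge, plus the clocks themselves), so the finite-coordinate OSSS inequality does not apply directly; and even after truncating at a large finite time, pinning down the value of $\omega_e$ requires revealing the uniforms attached to the \emph{last} updates of $e$ and of all edges its conditional law depends on — the set of coordinates queried is unbounded and is in no way controlled by the revealment $\delta_e(f,T)$ of the original decision tree on $\{0,1\}^{E}$. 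Second, the output of OSSS in the lifted space is a sum over pairs $(e,k)$ of influences of the individual uniforms, and no argument is given (and none is easy) for why $\sum_k$ of these influences is bounded by $\mathrm{Cov}^1_{\Lambda,p,q}(f,\omega_e)$; FKG and monotonicity of the coupling give you stochastic comparisons, not this telescoping. The paper avoids all of this by proving Theorem~\ref{thm:OSSSr} directly: it encodes $\phi^1_{G,p,q}$ with exactly \emph{one} uniform per edge, revealed in the order dictated by the decision tree itself, via the sequential map $F^\mu_{\e}(\U)$ of Lemma~\ref{lem:01} (so that $\X_{\e_t}$ is a threshold function of $\U_t$ and the conditional law given $\X_{\e_{[t-1]}}$), and then redoes the Lindeberg/Efron--Stein telescoping in that coupling; the single-step difference $\mathbb E\bigl[|f(\Y^t)-f(\Y^{t-1})|\,\big|\,\U_{[t-1]}\bigr]$ is controlled by sandwiching both configurations between $F^{\phi^1[\cdot|\omega_e=0]}_{\e}$ and $F^{\phi^1[\cdot|\omega_e=1]}_{\e}$, using monotonicity of $F^\mu_e(u)$ in both $u$ and $\mu$, which yields exactly $\mathrm{Cov}_{G,p,q}(f,\omega_e)/\mathrm{Var}_{G,p,q}(\omega_e)$. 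This is the ingredient your proposal is missing, and it is the actual content of the theorem's proof.
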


The result extends to any infinite locally-finite quasi-transitive graph $\bbG$. 
The proof will be based on the following improvement of the OSSS inequality \eqref{eq:OSSS}. Below, $\mathrm{Var}_{G,p,q}$ and $\mathrm{Cov}_{G,p,q}$ are respectively the variance and the covariance for $\phi_{G,p,q}^1$.

\begin{theorem}
\label{thm:OSSSr}
Consider $q\ge1$, $p\in[0,1]$, and a finite graph $G$. Fix an increasing function $f:\{0,1\}^E\longrightarrow [0,1]$ and an algorithm $T$. We have
\begin{equation}
    \label{eq:OSSS}
 \mathrm{Var}_{G,p,q}(f)~\le~C_{G,p,q}\sum_{e\in E}  \delta_e(f,T) \, \mathrm{Cov}_{G,p,q} [f,\omega_e] ,
  \end{equation}
  where 
$
\delta_e(f,T):=\bbP_p\big[\exists t\le \tau(\omega)\::\:e_t=e\big]
$
is the revealment (of $f$) for the decision tree $T$, and $c_{G,p,q}$ is defined by
$$C_{G,p,q}:=\frac1{\inf_{e\in E}\mathrm{Var}_{G,p,q}(\omega_e)}.$$
\end{theorem}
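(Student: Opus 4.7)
The plan is to follow the proof of Theorem~\ref{thm:OSSS} as closely as possible, with the same coupling of two independent copies $\omega,\tilde\omega\sim\phi_{G,p,q}^1$ and the same family of interpolating configurations $(\omega^t)_{0\le t\le n}$ obtained by substituting the $\tilde\omega$-values at the revealed edges into $\omega$. Exactly as in the Bernoulli case, one has $f(\omega^0)=f(\omega)$, $\omega^n=\tilde\omega$, and $\omega^t$ differs from $\omega^{t-1}$ at most at the coordinate $\mathbf e_t$ and only when $t\le\tau$; using $f\in[0,1]$, the bound $\mathrm{Var}_{G,p,q}(f)\le \mathbb E[|f-\mathbb E[f]|]$, together with the triangle inequality and the stopping-time structure, yields
\begin{equation*}
\mathrm{Var}_{G,p,q}(f)\le \mathbb E\big[|f(\omega^0)-f(\omega^n)|\big]\le \sum_{e\in E}\sum_{t=1}^n \mathbb E\!\left[|f(\omega^t)-f(\omega^{t-1})|\,\mathbbm 1_{t\le\tau,\mathbf e_t=e}\right].
\end{equation*}

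The real work is then to bound each summand by $C_{G,p,q}\,\mathrm{Cov}_{G,p,q}[f,\omega_e]\,\mathbb P[t\le\tau,\mathbf e_t=e]$. In the Bernoulli proof, independence makes the conditional expectation $\mathbb E[|f(\omega^t)-f(\omega^{t-1})|\,|\,\omega_{[t-1]}]$ collapse to the deterministic constant $2\,\mathrm{Cov}_p[f,\omega_e]$. In the random-cluster setting, conditioning on the first $t-1$ revealed values changes the marginal law of the remaining edges via \eqref{eq:arg}, and the interpolant $\omega^t$ is no longer a sample from $\phi_{G,p,q}^1$. The quantity that naturally survives in the conditional expectation is
\begin{equation*}
\mathbb E\!\left[\mathrm{Var}(\omega_e\,|\,\omega_{E\setminus\{e\}})(f^1-f^0)\right],
\end{equation*}
where $f^1$ and $f^0$ denote $f$ evaluated at $\omega_e=1$ and $\omega_e=0$ with the other coordinates fixed; this has to be compared to $\mathrm{Cov}_{G,p,q}[f,\omega_e]$ at the price of the factor $C_{G,p,q}$.

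The comparison is afforded by the conditional-covariance identity
\begin{equation*}
\mathrm{Cov}_{G,p,q}[f,\omega_e]=\mathbb E\!\left[\mathrm{Var}(\omega_e\,|\,\omega_{E\setminus\{e\}})(f^1-f^0)\right]+\mathrm{Cov}\!\left(\mathbb E[f\,|\,\omega_{E\setminus\{e\}}],\,\phi_{G,p,q}^1[\omega_e=1\,|\,\omega_{E\setminus\{e\}}]\right).
\end{equation*}
For $q\ge 1$, both arguments of the second covariance are increasing functions of $\omega_{E\setminus\{e\}}$: the first by monotonicity of $\phi_{G,p,q}^1$ combined with $f^1\ge f^0$, the second directly from \eqref{eq:arg}. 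FKG then makes that term non-negative, so the first piece is dominated by $\mathrm{Cov}_{G,p,q}[f,\omega_e]$; dividing by $\mathrm{Var}_{G,p,q}(\omega_e)\ge 1/C_{G,p,q}$ produces the per-step estimate, and summing over $t$ using $\sum_t\mathbb P[t\le\tau,\mathbf e_t=e]=\delta_e(f,T)$ closes the argument.

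The main obstacle will be choosing the coupling of $(\omega,\tilde\omega,(\omega^t))$---most naturally via a Glauber-type dynamics in the spirit of Lemma~\ref{lem:dynamic}, where at each step one resamples the newly revealed edge from its conditional distribution given the current state of the other edges---so that the identification of the per-step conditional expectation with $2\,\mathbb E[\mathrm{Var}(\omega_e\,|\,\omega_{E\setminus\{e\}})(f^1-f^0)]/\mathrm{Var}_{G,p,q}(\omega_e)$ really goes through. In the Bernoulli case this reduction is automatic because every interpolant $\omega^t$ remains itself a sequence of iid Bernoulli$(p)$ variables; in the random-cluster setting $\omega^t$ is a hybrid configuration and a careful verification is required to ensure that, after taking the conditional expectation, only the conditional-variance contribution survives.
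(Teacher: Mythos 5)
Your overall skeleton (telescoping from $f(\omega)$ to $f(\tilde\omega)$, bounding each increment, and summing the revealments) matches the paper, and you have correctly located the only place where the dependence of $\phi^1_{G,p,q}$ matters: the per-step estimate. But that estimate is exactly what your proposal does not supply, and the mechanism you suggest for it does not work as stated. If you build the interpolants by literally substituting coordinates of an independent copy $\tilde\omega$ into $\omega$, the hybrid configuration $(\tilde\omega_{e_1},\dots,\tilde\omega_{e_t},\omega_{e_{t+1}},\dots)$ is not a sample of $\phi^1_{G,p,q}$, and the conditional law of the pair $(\omega^{t-1},\omega^t)$ given the past is not ``resample the edge $e$ from its conditional distribution''; in particular the increment $|f(\omega^t)-f(\omega^{t-1})|$ need not be supported on $\{\omega^t_e\ne\omega^{t-1}_e\}$, so it cannot be identified with $2\,\mathbb E[\mathrm{Var}(\omega_e\,|\,\omega_{E\setminus\{e\}})(f^1-f^0)]$. (The extra factor $1/\mathrm{Var}_{G,p,q}(\omega_e)$ you insert into that identification has no source in any dynamics --- it is reverse-engineered from the target, and even then your chain would produce $2C_{G,p,q}$ rather than $C_{G,p,q}$.) A genuine Glauber step ``given the current state of the other edges'' destroys the telescoping instead: the terminal configuration is then no longer $\tilde\omega$, i.e.\ no longer a $\phi^1_{G,p,q}$-sample independent of the exploration.

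The paper resolves this differently. Both copies are generated from iid uniforms through the sequential conditional-sampling map $F_e^{\mu}$ of Lemma~\ref{lem:01}, which conditions only on the \emph{previously revealed} edges; this is what guarantees that $\X$ and $\Y^{n}$ have law $\phi^1_{G,p,q}$ and that $\Y^{n}$ is independent of $\U$. Because changing the $t$-th uniform can alter all subsequently sampled coordinates, one does not try to isolate the change at $e$; instead one sandwiches both $\Y^{t-1}$ and $\Y^{t}$ between the configurations obtained by applying $F^{\mu}$ with $\mu=\phi^1_{G,p,q}[\,\cdot\,|\,\omega_e=0]$ and $\mu=\phi^1_{G,p,q}[\,\cdot\,|\,\omega_e=1]$ to the same uniforms, using monotonicity of $F$ in the measure. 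Monotonicity of $f$ then gives the per-step bound $\phi^1_{G,p,q}[f\,|\,\omega_e=1]-\phi^1_{G,p,q}[f\,|\,\omega_e=0]=\mathrm{Cov}_{G,p,q}(f,\omega_e)/\mathrm{Var}_{G,p,q}(\omega_e)$ exactly, which is where $C_{G,p,q}$ enters; no conditional-covariance decomposition and no FKG are needed at this step. Your observation that $\mathbb E[\mathrm{Cov}(f,\omega_e\,|\,\omega_{E\setminus\{e\}})]\le \mathrm{Cov}_{G,p,q}(f,\omega_e)$ by FKG is correct in itself, but it is never connected to the telescoping; to repair the proof you should replace the coordinate-substitution coupling by the encoding-and-sandwich argument above.
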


Before proving this statement, let us remark that it implies the theorem in the same way as in Bernoulli percolation. 

\begin{proof}[Theorem~\ref{thm:RCM2}]Set $\theta_n(p)=\phi^1_{\Lambda_{2n},p,q}[0\leftrightarrow\partial\Lambda_n]$ and $S_n=\sum_{k=0}^{n-1}\theta_k$. Following the same reasoning as in Lemma~\ref{cor:OSSS}, we find
$$\sum_{e\in E_{2n}}{\rm Cov}_{\Lambda_{2n},p,q}(\mathbbm 1_{0\leftrightarrow \partial\Lambda_n},\omega_e) ~\ge~\mathrm{Var}_{\Lambda_{2n},p,q}(\omega_e)\frac{n\,\theta_n(1-\theta_n)}{\displaystyle4\max_{x\in \Lambda_n}\sum_{k=0}^{n-1}\phi^1_{\Lambda_{2n},p,q}[x\leftrightarrow \partial\Lambda_k(x)]},$$
where $\Lambda_k(x)$ is the box of size $k$ around $x$. Since $\Lambda_{2k}(x)\subset\Lambda_{2n}$ for any $x\in\Lambda_n$ and $2k\le n$, we deduce 
$$\sum_{k=0}^{n-1}\phi^1_{\Lambda_{2n},p,q}[x\leftrightarrow \partial\Lambda_k(x)]\le 2\sum_{k=0}^{(n-1)/2}\phi^1_{\Lambda_{2n},p,q}[x\leftrightarrow \partial\Lambda_k(x)]\stackrel{\eqref{eq:comparison}}\le 2\sum_{k=0}^{(n-1)/2}\theta_k(p)\le 2S_n(p).$$Overall, we find
$$\sum_{e\in E_{2n}}{\rm Cov}_{\Lambda_{2n},p,q}(\mathbbm 1_{0\leftrightarrow \partial\Lambda_n},\omega_e) ~\ge~\mathrm{Var}_{\Lambda_{2n},p,q}(\omega_e)\frac{n}{8S_n}\cdot \theta_n(1-\theta_n).$$
Now, \eqref{eq:derivative formula} trivially extends to random-cluster models with $q>0$ so that 
$$\frac{\rm d}{{\rm d}p}\phi_{\Lambda_{2n},p,q}^1[0\leftrightarrow\partial\Lambda_n]=\tfrac1{p(1-p)}\sum_{e\in E_{2n}}{\rm Cov}_{\Lambda_{2n},p,q}(\mathbbm 1_{0\leftrightarrow \partial\Lambda_n},\omega_e).$$
We deduce that for $p\in[p_0,p_1]$, 
$$\theta_n'\ge c\frac{n}{S_n}\theta_n,$$
where 
$$c:=\tfrac{1}{2}\phi_{\Lambda_{2n},p_0,q}^1(\omega_e)(1-\phi_{\Lambda_{2n},p_1,p}^1[\omega_e])(1-\theta_1(p_1))>0.$$

To conclude, observe that measurability and the comparison between boundary conditions imply that 
$$\liminf\theta_n\ge \liminf \phi^1_{p,q}[0\longleftrightarrow\partial\Lambda_n]=\phi^1_{p,q}[0\longleftrightarrow\infty]$$
and that for any $k\ge1$,
$$\limsup\theta_n\le \limsup \phi^1_{\Lambda_{2n},p,q}[0\longleftrightarrow\partial\Lambda_k]=\phi^1_{p,q}[0\longleftrightarrow\partial\Lambda_k].$$
Letting $k$ tend to infinity implies that $\theta_n$ tends to $\phi^1_{p,q}[0\leftrightarrow\infty]$. We are therefore in position to apply Lemma~\ref{lem:technical}, which implies the first item of Theorem~\ref{thm:RCM2} and the fact that for $p<p_c$, there exists $c_p>0$ such that for any $n\ge0$,
$\theta_n(p)\le \exp(-c_pn).$ It remains to observe that 
$$\phi_{\Lambda_{2n},p,q}^1[0\longleftrightarrow\partial\Lambda_{2n}]\le \phi_{\Lambda_{2n},p,q}^1[0\longleftrightarrow\partial\Lambda_{n}]=\theta_n(p)$$
to obtain the second item of the theorem\footnote{Formally, we only obtained the result for $n$ even, but the result for $n$ odd can be obtained similarly.}.
\end{proof}

We now turn to the proof of Theorem~\ref{thm:OSSSr}. The strategy is a combination of the original proof of the OSSS inequality for product measures (which is a Efron-Stein type reasoning), together with an encoding of random-cluster measures in terms of iid random variables.

  We start by a useful lemma explaining how to construct $\omega$ with a certain law $\mu$ on $\{0,1\}^E$ from iid uniform random variables. Recall the notation $\vec E$ and $e_{[t]}$.
For $u\in[0,1]^n$ and $e\in \vec E$, define $F_e^\mu(u)=x$ inductively for $1\le t\le n$ by
   \begin{equation}\label{eq:ccc}x_{e_t}:=\begin{cases} 1&\text{ if }u_{t}\ge\mu[\omega_{e_{t}}=0\,|\,\omega_{e_{[t-1]}}=x_{e_{[t-1]}}],\\
   0&\text{ otherwise}.\end{cases}\end{equation}

\begin{lemma}\label{lem:01}
Let $\U$ be a iid sequence of uniform $[0,1]$ random variables, and $\e$ a random variable taking values in $\vec E$. Assume that for every $1\le t\le n$, $\U_t$ is independent of $(\e_1,\ldots, \e_t)$, then $\X=F_{\e}^\mu(\U)$ has law $\mu$.
\end{lemma}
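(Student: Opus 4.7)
The plan is an induction on $t$ proving the stronger statement $(\star_t)$: for every $t\in\{0,1,\dots,n\}$, every realization $e_{[t]}$ of $\e_{[t]}$ of positive probability, and every $y\in\{0,1\}^{e_{[t]}}$,
$$\mathbb{P}\big[\X_{e_{[t]}}=y\,\big|\,\e_{[t]}=e_{[t]}\big]=\mu\big[\omega_{e_{[t]}}=y\big].$$
Taking $t=n$ and integrating over the law of $\e$ yields the lemma, since $\e$ takes values in $\vec E$ and so $\e_{[n]}$ exhausts $E$. The base case $t=0$ is trivial.

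For the inductive step, fix $e_{[t]}$ and $y$, and split the conditional probability via the chain rule as the product of $\mathbb{P}[\X_{e_t}=y_{e_t}\mid\X_{e_{[t-1]}}=y_{e_{[t-1]}},\,\e_{[t]}=e_{[t]}]$ and $\mathbb{P}[\X_{e_{[t-1]}}=y_{e_{[t-1]}}\mid\e_{[t]}=e_{[t]}]$. For the first factor, observe that on the event $\{\e_{[t]}=e_{[t]}\}$, the defining formula \eqref{eq:ccc} makes $\X_{e_t}$ an explicit function of $\U_t$ and of $\X_{e_{[t-1]}}$, and $\X_{e_{[t-1]}}$ is itself a measurable function of $(\U_{[t-1]},\e_{[t-1]})$. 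The iid-ness of $\U$ together with the hypothesis $\U_t\perp\e_{[t]}$ ensures that $\U_t$ remains uniform on $[0,1]$ conditionally on $(\e_{[t]},\X_{e_{[t-1]}})$; \eqref{eq:ccc} then identifies the first factor with $\mu[\omega_{e_t}=y_{e_t}\mid\omega_{e_{[t-1]}}=y_{e_{[t-1]}}]$. For the second factor, the same measurability of $\X_{e_{[t-1]}}$ combined with the independence of $(\U_{[t-1]},\e_{[t-1]})$ from $\e_t$ (given $\e_{[t-1]}$) allows one to drop $\e_t$ from the conditioning, and the inductive hypothesis $(\star_{t-1})$ identifies the resulting probability with $\mu[\omega_{e_{[t-1]}}=y_{e_{[t-1]}}]$. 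Multiplying the two factors and applying the chain rule to $\mu$ closes the induction.

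The main subtlety I expect to encounter is the upgrade from the hypothesis ``$\U_t$ independent of $\e_{[t]}$'' to the joint independence $\U_t\perp(\U_{[t-1]},\e_{[t]})$ that the argument above silently uses. This is the natural reading in every intended application: $\e_t$ is always chosen as a measurable function of the past $(\e_{[t-1]},\X_{e_{[t-1]}})$, hence of $(\U_{[t-1]},\e_{[t-1]})$, so that $\sigma(\e_{[t]})\subseteq\sigma(\U_{[t-1]},\e_1)$, and the iid-ness of $\U$ by itself then delivers the joint statement. I would make this adaptedness of $\e$ to the natural filtration generated by past $\U$'s explicit at the beginning of the proof, after which the inductive step above is essentially mechanical.
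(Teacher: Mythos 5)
There is a genuine gap: the inductive statement $(\star_t)$ you propose is false as soon as the decision tree is adaptive, which is precisely the case the lemma is designed for. Take $E=\{a,b,c\}$, $\mu$ the uniform product measure on $\{0,1\}^E$, and the tree $\e_1=a$, $\e_2=b$ if $\X_a=1$ and $\e_2=c$ if $\X_a=0$, $\e_3$ the remaining edge. The hypothesis of the lemma holds ($\e_{[t]}$ is a function of $(\e_1,\U_{[t-1]})$, so $\U_t\perp\e_{[t]}$ by iid-ness), yet $\{\e_{[2]}=(a,b)\}=\{\X_a=1\}$, so
$$\mathbb P\big[\X_a=0\,\big|\,\e_{[2]}=(a,b)\big]=0\neq\tfrac12=\mu[\omega_a=0].$$
The step that fails is your treatment of the second factor: dropping $\e_t$ from the conditioning requires $\e_t$ to be conditionally independent of $\U_{[t-1]}$ given $\e_{[t-1]}$, which is not implied by the hypothesis (that only controls the \emph{future} uniform $\U_t$ against the past queries) and is in fact the opposite of the truth for an adaptive tree, where $\e_t$ is a deterministic function of $(\e_{[t-1]},\X_{\e_{[t-1]}})$. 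Your own closing paragraph, which correctly observes that $\sigma(\e_{[t]})\subseteq\sigma(\e_1,\U_{[t-1]})$, contradicts the conditional independence you invoke: conditioning on $\{\e_{[t]}=e_{[t]}\}$ is conditioning on an event in $\sigma(\e_1,\U_{[t-1]})$ and generically biases the law of $\X_{e_{[t-1]}}$.

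Your computation of the first factor is correct and is exactly the key identity in the paper's argument; what needs to change is the bookkeeping around it. The paper never conditions on $\e_{[t]}$ alone: it applies the chain rule to the \emph{joint} probability $\mathbb P[\X=x,\e=e]$ in the interleaved order $\e_1,\X_{\e_1},\e_2,\X_{\e_2},\dots$, obtaining a product of $\X$-factors (each equal to $\mu[\omega_{e_t}=x_{e_t}\,|\,\omega_{e_{[t-1]}}=x_{e_{[t-1]}}]$ by your first-factor argument, so that they multiply to $\mu[\omega=x]$ independently of $e$) times a product of $\e$-factors. Only at the very end does one sum over $e\in\vec E$ at fixed $x$; the $\e$-factors then telescope to $1$, giving $\mathbb P[\X=x]=\mu[\omega=x]$. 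In short: the marginal of $\X$ is $\mu$, but its conditional law given the (adapted) query sequence is not, and the proof must be organized so as never to assert the latter.
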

  
\begin{proof}
Let  $x\in \{0,1\}^E$ and $e\in \vec E$ such that $\mathbb P[\X=x,\e=e]>0$. The probability $\mathbb P[\X=x,\e=e]$ can be written as
$$\prod_{t=1}^n\mathbb P[\X_{e_t}=x_{e_t}\,|\,\e_{[t]}=e_{[t]},\X_{e_{[t-1]}}=x_{e_{[t-1]}}]\times \prod_{t=1}^n\mathbb P[\e_t=e_t\,|\,\e_{[t-1]}=e_{[t-1]},\X_{e_{[t-1]}}=x_{e_{[t-1]}}].$$
(All the conditionings are well defined, since we assumed $\mathbb P[\X=x,\e=e]>0$.)
Since $\U_t$ is independent of $\e_{[t]}$ and $\U_{[t-1]}$ (and thus $\X_{e_{[t-1]}}$), the definition \eqref{eq:ccc} gives 
$$\mathbb P[\X_{e_t}=x_{e_t}\,|\,\e_{[t]}=e_{[t]},\X_{e_{[t-1]}}=x_{e_{[t-1]}}]=\mu[\omega_{e_t}=x_{e_t}\,|\,\omega_{e_{[t-1]}}=x_{e_{[t-1]}}]$$
so that the first product is equal to $\mu[\omega=x]$ independently of $e$. Fixing $x\in \{0,1\}^E$, and summing on $e\in \vec E$ satisfying  $\mathbb P[\X=x,\e=e]>0$ gives
\begin{align*}
\mathbb P[\X=x]&=\sum_{e}\mathbb P[\X=x,\e=e]\\
&=\mu[\omega=x]\sum_{e}\prod_{t=1}^n\mathbb P[\e_t=e_t|\e_{[t-1]}=e_{[t-1]},\X_{e_{[t-1]}}=x_{e_{[t-1]}}]=\mu[\omega=x].
\end{align*}

\end{proof}

\begin{proof}[Theorem~\ref{thm:OSSS}]
Consider two independent sequences of iid uniform $[0,1]$ random variables $\U$ and $\V$. Write $\mathbb P$ for the coupling between these variables (and $\mathbb E$ for its expectation). 
Construct $(\e,\X,\tau)$ inductively as follows: set $\e_1=e_1$, and for $t\ge1$,
  
\begin{align*}
 \X_{\e_t}&=\begin{cases} 1&\text{ if }\U_{t}\ge\phi_{G,p,q}^1[\omega_{\e_{t}}=0\,|\,\omega_{\e_{[t-1]}}=\X_{\e_{[t-1]}}]\\
 0&\text{ otherwise}\end{cases}\quad\text{ and }\quad \e_{t+1}:=
   \psi_{t+1}(\e_{[t]},\X_{\e_{[t]}}),  \end{align*}
and $\tau:=\min\big\{t\ge1:\forall x\in\{0,1\}^E,x_{\e_{[t]}}=\X_{\e_{[t]}}\Rightarrow f(x)=f(\X)\big\}$. Finally, for $0\le t\le n$, define $\Y^t:=F_\e(\mathbf W^t),$ where
$$\mathbf W^t:= \mathbf W^t (\U,\V) =(\V_1,\dots,\V_t,\U_{t+1},\dots,\U_\tau,\V_{\tau+1},\dots,\V_n)$$
(in particular $\mathbf{W} ^t$ is equal to $\V$ if $t\ge \tau$). 

Lemma~\ref{lem:01} applied to $(\U,\e)$ gives that $\X$ has law $\mu$ and is $\U$-measurable. Lemma~\ref{lem:01} applied to $(\V,\e)$  implies that $\Y^{n}$ has law $\mu$ and is independent of $\U$. Therefore,
\begin{equation*}\label{eq:a}
\phi^1_{G,p,q}[|f-\phi^1_{G,p,q}(f)|]\le \bbE\big[\big|\,\bbE[f(\X)|\U]-\bbE[f(\Y^{n})|\U]\,\big|\big]\le\mathbb E\big[|f(\X)-f(\Y^{n})|\big].
\end{equation*}
%
%
Exactly as for iid random variables, $f(\X)=f(\Y^0)$. Following the same lines as in the iid case, we obtain (recall that $f$ takes values in $[0,1]$)
\begin{align*}  {\rm Var}_{G,p,q}(f)&\le\sum_{e\in E} \sum_{t=1}^{n}\mathbb E\Big[\mathbb E\big[|f(\Y^t)-f(\Y^{t-1})| ~\big|~\U_{[t-1]}\big]\,\mathbbm1_{t\le \tau,\e_t=e}\Big]
\end{align*}
so that
the proof of the theorem follows from the fact that on $\{t\le \tau,\e_t=e\}$,
\begin{equation}\mathbb E\big[\,|f(\Y^t)-f(\Y^{t-1})| ~\big|~\U_{[t-1]}\big]~\le~ \tfrac1{\mathrm{Var}_{G,p,q} (\omega_e)}\,\mathrm{Cov}_{G,p,q} (f, \omega_{e} ).\label{eq:g}\end{equation}
Note that $F_e^\mu(u)$ is both increasing in $u$ and in $\mu$ (for stochastic domination).
We deduce that both $\Y^{t-1}$ and $\Y^t$ are sandwiched between 
$$\mathbf Z:=F_{\e}^{\phi^1_{G,p,q}[\cdot|\omega_e=0]}(\mathbf W^{t-1})=F_{\e}^{\phi^1_{G,p,q}[\cdot|\omega_e=0]}(\mathbf W^{t})$$ and 
$$\mathbf Z':=F_{\e}^{\phi^1_{G,p,q}[\cdot|\omega_e=1]}(\mathbf W^{t-1})=F_{\e}^{\phi^1_{G,p,q}[\cdot|\omega_e=1]}(\mathbf W^{t}).$$
Since $\mathbf W^t$ is independent of $\U_{[t-1]}$, Lemma~\ref{lem:01} and the fact that $f$ is increasing give us 
\begin{align*}
\mathbb E\big[\,|f(\Y^t)-f(\Y^{t-1})| ~\big|~\U_{[t-1]}\big]&\le \mathbb E[f(\mathbf Z')]-\mathbb E[f(\mathbf Z)]\\
&=\phi_{G,p,q}^1[f(\omega)|\omega_e=1]-\phi_{G,p,q}^1[f(\omega)|\omega_e=0]\\
&= \frac{\mathrm{Cov}_{G,p,q} (f, \omega_{e} )}{\phi_{G,p,q}^1[\omega_e](1-\phi_{G,p,q}^1[\omega_e])} .
\end{align*}

\end{proof}

\subsection{Computation of the critical point for random-cluster models on $\bbZ^2$}\label{sec:2.3}

The goal of this section is to explain how one can compute the critical point of the random-cluster model on $\bbZ^2$ using Theorem~\ref{thm:RCM2}. As mentioned in the end of Section~\ref{sec:2.2}, the following theorem was first proved using sharp threshold theorem, and we refer to \cite{BefDum12,DumMan16,DumRaoTas16} for alternative proofs.

\begin{theorem}[Beffara, DC \cite{BefDum12}]\label{thm:p_c FK}
For the random-cluster model on $\bbZ^2$ with cluster-weight $q\!\ge\!1$,
$$p_c=\frac{\sqrt q}{1+\sqrt q}.$$ Also, for $p< p_c$, there exists $c_p>0$ such that
$\phi_{p,q}^1[0\leftrightarrow \partial\Lambda_n]\le \exp(-c_p n)$ for all $n\ge0$.
\end{theorem}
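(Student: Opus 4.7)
The plan is to establish the two-sided equality $p_c = p_{sd}$, where $p_{sd} := \sqrt{q}/(1+\sqrt q)$, and then to deduce the exponential decay statement from Theorem~\ref{thm:RCM2} by comparison between boundary conditions. The main tool is planar duality: for a finite simply-connected subgraph $G$ of $\bbZ^2$, the dual configuration $\omega^*$ on $G^*$ defined by $\omega^*_{e^*} = 1-\omega_e$ satisfies, when $\omega \sim \phi^1_{G,p,q}$, the distribution $\phi^0_{G^*, p^*, q}$, where the dual edge-weight is $p^*/(1-p^*) = q(1-p)/p$. A direct computation shows $p = p^*$ if and only if $p = p_{sd}$.

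First I would prove $p_c \ge p_{sd}$ by contradiction. Assume $p_c < p_{sd}$. Then by \eqref{eq:oiu}, $\phi^0_{p_{sd}, q}[0 \leftrightarrow \infty] > 0$, and by monotonicity also $\phi^1_{p_{sd}, q}[0 \leftrightarrow \infty] > 0$. Taking the $n \to \infty$ limit of the finite-volume duality statement on $\Lambda_n$, one gets that the dual $\omega^*$ of a sample from $\phi^1_{p_{sd},q}$ has the law of $\phi^0_{p_{sd},q}$ on the dual lattice, which also percolates. Thus both $\omega$ and $\omega^*$ admit infinite clusters almost surely. A Zhang-type argument now applies: by FKG combined with the four-fold symmetry of $\bbZ^2$, for $n$ large each of the four sides of $\Lambda_n$ is connected to infinity outside $\Lambda_n$ in $\omega$ (resp.\ $\omega^*$) with probability arbitrarily close to $1$. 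By the uniqueness of the infinite cluster (Theorem~\ref{thm:uniqueness}), the primal infinite arms from the left and right sides of $\Lambda_n$ lie in a single cluster of $\bbZ^2 \setminus \Lambda_n$; planarity then forces the two dual arms from the top and bottom to disconnect these primal arms, and a dual application of Theorem~\ref{thm:uniqueness} to $\omega^*$ forbids this, providing the contradiction.

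Next I would prove $p_c \le p_{sd}$ using sharpness. Suppose for contradiction $p_c > p_{sd}$. Then by Theorem~\ref{thm:RCM2} there exists $c>0$ with $\phi^1_{\Lambda_{2n}, p_{sd}, q}[0 \leftrightarrow \partial\Lambda_n] \le e^{-cn}$ for all $n$. On the other hand, duality at the self-dual point provides a uniform lower bound on crossing probabilities of symmetric rectangles. Consider a $(2n+1) \times 2n$ rectangle $R$ inside $\Lambda_{2n}$, so that $R^*$ is naturally isomorphic to $R$ rotated by $\pi/2$; choose boundary conditions on $R$ which are wired on the left and right sides and free on the top and bottom. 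Under this measure at $p = p_{sd}$, the horizontal primal crossing event $\calH$ is the exact complement of the vertical dual crossing event, and self-duality forces $\phi[\calH] = 1/2$. Comparison between boundary conditions (in the form of Exercise~\ref{exo:0}, applied to the subgraph induced by $R$ inside $\Lambda_{2n}$) then yields $\phi^1_{\Lambda_{2n}, p_{sd}, q}[\calH] \ge c' > 0$ uniformly in $n$. But a union bound over the $\le 2n$ possible starting points of an open horizontal crossing gives $\phi^1_{\Lambda_{2n}, p_{sd}, q}[\calH] \le 2n\,e^{-cn}$, a contradiction for $n$ large.

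The remaining exponential-decay claim $\phi^1_{p,q}[0 \leftrightarrow \partial\Lambda_n] \le e^{-c_p n}$ for $p<p_c$ follows from Theorem~\ref{thm:RCM2} combined with the fact that $\phi^1_{\Lambda_N, p, q}[\calA]$ is non-increasing in $N$ for increasing $\calA$, so $\phi^1_{p,q}[0 \leftrightarrow \partial\Lambda_n] \le \phi^1_{\Lambda_n, p, q}[0 \leftrightarrow \partial\Lambda_n] \le e^{-c_p n}$. The main obstacle I foresee is the upper bound: making the identity $\phi[\calH]=1/2$ precise requires a delicate choice of mixed boundary conditions so that the dual measure on $R^*$ is exactly the rotation of the primal measure on $R$, and then transferring the resulting crossing estimate back to $\phi^1_{\Lambda_{2n}, p_{sd}, q}$ via monotonicity. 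The Zhang argument for the lower bound is more standard but also requires care in simultaneously exploiting the primal and dual uniqueness of the infinite cluster.
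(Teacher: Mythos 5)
Your overall architecture (duality at $p_{\rm sd}$ plus the sharpness theorem) matches the paper's, and your lower bound $p_c\ge p_{\rm sd}$ via the Zhang argument is a legitimate alternative to what the paper does: the paper instead combines the bound $\phi^0_{p_{\rm sd},q}[\calH_n]\le \tfrac12$ (from infinite-volume duality and $\phi^0\le\phi^1$) with the fact that percolation of $\phi^0_{p,q}$ forces $\phi^0_{p,q}[\calH_n]\to 1$ (the proof of Proposition~\ref{prop:lower} carries over verbatim using FKG, ergodicity and Burton--Keane). Both routes rest on the same ingredients, so this half is fine. The concluding deduction of exponential decay for $\phi^1_{p,q}$ from Theorem~\ref{thm:RCM2} via monotonicity in the volume is also correct.

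The genuine gap is in your proof that $p_c\le p_{\rm sd}$, precisely at the step you flag as delicate. First, with left/right wired (as two separate arcs) and top/bottom free, the dual boundary conditions are \emph{not} the rotation of the primal ones: under duality the two free arcs become a single wired component (they are connected through the exterior), so the self-duality identity only yields $\phi^{\rm mix}_R[\calH]\ge \tfrac1{1+q}$ after controlling a Radon--Nikodym factor $q$ (exactly as in the proof of Proposition~\ref{prop:crossing}), not $\phi[\calH]=\tfrac12$. More seriously, the transfer to $\phi^1_{\Lambda_{2n},p_{\rm sd},q}$ fails: Exercise~\ref{exo:0} sandwiches the restriction of $\phi^1_{\Lambda_{2n}}$ to a strict subgraph $R$ between $\phi^0_R$ and $\phi^1_R$, so it only gives $\phi^1_{\Lambda_{2n}}[\calH]\ge \phi^0_R[\calH]$ --- it does \emph{not} let you lower-bound by a partially wired measure on $R$. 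And $\phi^0_R[\calH]$ at $p_{\rm sd}$ satisfies the reverse inequality $\phi^0_R[\calH]\le\tfrac12$ by duality (its dual is a \emph{wired} measure) and in fact tends to $0$ for $q>4$, so no uniform lower bound can come from this side. The repair is to bypass finite volume entirely, as the paper does: the dual of the infinite-volume measure $\phi^1_{p_{\rm sd},q}$ is $\phi^0_{p_{\rm sd},q}$, whence $\phi^1_{p_{\rm sd},q}[\calH_n]+\phi^0_{p_{\rm sd},q}[\calH_n]=1$ and, by $\phi^0\le\phi^1$ and lattice symmetry, $\phi^1_{p_{\rm sd},q}[\calH_n]\ge\tfrac12$; the union bound then gives $\phi^1_{p_{\rm sd},q}[0\leftrightarrow\partial\Lambda_n]\ge\tfrac1{2n}$, which together with CBC contradicts the exponential decay supplied by Theorem~\ref{thm:RCM2} if one assumes $p_{\rm sd}<p_c$.
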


This theorem has the following corollary. 
\begin{corollary}[Beffara, DC \cite{BefDum12}]\label{thm:p_c FK}
The critical inverse-temperature of the Potts model on $\bbZ^2$ satisfies
$$\beta_c(q):=\tfrac{q-1}q\log(1+\sqrt q).$$
\end{corollary}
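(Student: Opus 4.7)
The plan is to observe that this corollary is an immediate algebraic consequence of Theorem~\ref{thm:p_c FK} combined with the dictionary between Potts and random-cluster critical parameters that was set up earlier. Specifically, the coupling developed in Section~\ref{sec:ES} gives, for integer $q\ge 2$, the identity
\[
\beta_c(q,d) = -\tfrac{q-1}{q}\log\bigl[1-p_c(q,d)\bigr],
\]
which was recorded right after the definition of $m^*(\beta,q)$. So all the real work has already been carried out in Theorem~\ref{thm:p_c FK}; what remains is a single substitution.

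The first step I would take is to recall why this relation actually captures the critical inverse-temperature of the Potts model: by the coupling between $\phi_{p,q}^1$ and $\mu_{\beta,q}^{\rm b}$, spontaneous magnetization $m^*(\beta,q)>0$ is equivalent to $\phi_{p,q}^1[0\leftrightarrow\infty]>0$, with $\beta$ and $p$ related by \eqref{eq:beta p}. Monotonicity of both quantities in their respective parameters, together with the fact that $\beta\mapsto p(\beta)=1-e^{-q\beta/(q-1)}$ is a continuous increasing bijection from $[0,\infty)$ onto $[0,1)$, ensures that the thresholds $\beta_c(q)$ and $p_c(q,2)$ correspond to each other exactly.

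Next, I would insert the value $p_c = \sqrt{q}/(1+\sqrt{q})$ supplied by Theorem~\ref{thm:p_c FK}. A one-line calculation gives
\[
1 - p_c \;=\; 1 - \frac{\sqrt q}{1+\sqrt q} \;=\; \frac{1}{1+\sqrt q},
\]
and hence
\[
\beta_c(q) \;=\; -\tfrac{q-1}{q}\log(1-p_c) \;=\; \tfrac{q-1}{q}\log(1+\sqrt q),
\]
which is the claimed formula.

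There is essentially no obstacle here, since the hard analytic input, namely the identification of $p_c$ on $\bbZ^2$, has already been established. The only minor subtlety worth flagging explicitly is that the corollary implicitly assumes the coupling-based identification of $\beta_c$ through $p_c$, and so one should double-check that the definition of $\beta_c(q)$ used in the statement coincides with the one arising from $m^*(\beta,q)$, which is precisely the content of the last displayed formula preceding Exercise~\ref{exo:12}.
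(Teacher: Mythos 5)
Your proposal is correct and follows exactly the route the paper intends: the corollary is the substitution of $p_c=\sqrt q/(1+\sqrt q)$ from Theorem~\ref{thm:p_c FK} into the relation $\beta_c(q,d)=-\tfrac{q-1}{q}\log[1-p_c(q,d)]$ established via the coupling, and your algebra $1-p_c=1/(1+\sqrt q)$ is right. Nothing is missing.
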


We start by discussing duality for random-cluster models. The boundary conditions on a finite subgraph $G=(V,E)$ of $\bbZ^2$ are called {\em planar} if they are induced by some configuration $\xi\in\{0,1\}^{\bbE\setminus E}$. For any planar boundary conditions $\xi$, one can associate a dual boundary conditions $\xi^*$ on $G^*$ induced by the configuration $\xi^*_{e^*}=1-\xi_e$ for any $e\notin E$.

As an example, the free boundary conditions correspond to $\xi_e=0$ for all $e\in\bbE\setminus E$. Similarly, when $G$ is connected and has connected complement, the wired boundary conditions correspond to $\xi_e=1$ for all $e\in \bbE\setminus E$. (This explains the notation 0 and 1 for the free and wired boundary conditions.) In this case, the dual of wired boundary conditions is the free ones, and vice-versa. 

A typical example of non-planar boundary conditions is given by ``periodic'' boundary conditions on $\Lambda_n$, for which $(k,n)$ and $(k,-n)$ (resp.~$(n,k)$ and $(-n,k)$) are paired together for every $k\in\llbracket-n,n\rrbracket$. Another (slightly less interesting) example is given by the wired boundary conditions when $G$ has non-connected complement in $\bbZ^2$.

\begin{proposition}[Duality]
Consider a finite graph $G$ and planar boundary conditions $\xi$. If $\omega$ has law $\phi^\xi_{G,p,q}$, then $\omega^*$ has law $\phi^{\xi^*}_{G^*,p^*,q}$, where $p^*$ is the solution of 
$$\frac{pp^*}{(1-p)(1-p^*)}=q.$$
\end{proposition}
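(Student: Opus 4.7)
The strategy is the standard duality computation: rewrite the density of $\phi^\xi_{G,p,q}[\omega]$ in terms of the dual configuration $\omega^*$, use planar duality together with Euler's formula to trade the count $k(\omega^\xi)$ for $k((\omega^*)^{\xi^*})$, and read off the dual edge-weight $p^*$ by matching coefficients. The relation $pp^*/[(1-p)(1-p^*)]=q$ will pop out of this algebraic matching.

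First I would record the trivial bookkeeping identities $o(\omega^*)=c(\omega)$ and $c(\omega^*)=o(\omega)$, so that $o(\omega)+o(\omega^*)=|E|$. Plugging these into
\ben
\phi^\xi_{G,p,q}[\omega]=\tfrac{1}{Z^\xi_{G,p,q}}\,p^{o(\omega)}(1-p)^{c(\omega)}q^{k(\omega^\xi)}
\een
turns $p^{o(\omega)}(1-p)^{c(\omega)}$ into $p^{|E|}\cdot\bigl(\tfrac{1-p}{p}\bigr)^{o(\omega^*)}$, leaving only $q^{k(\omega^\xi)}$ to be related to $q^{k((\omega^*)^{\xi^*})}$.

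The main step (and the principal obstacle, as it is the only really geometric ingredient) is to establish an Euler-type identity of the form
\be{eq:plan-euler}
k(\omega^\xi)\;=\;k\bigl((\omega^*)^{\xi^*}\bigr)+o(\omega^*)+C_{G,\xi},
\ee
where $C_{G,\xi}$ depends only on $G$ and on the boundary conditions $\xi$, not on $\omega$. To obtain \eqref{eq:plan-euler} I would exploit that $\xi$ is planar: the partition $\xi$ is induced by an edge-configuration outside $G$, so $G^\xi$ (the graph obtained by contracting each class of $\xi$ to a single vertex) can be realized as a planar multigraph embedded in the sphere, whose planar dual is exactly $(G^*)^{\xi^*}$. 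Applying Euler's formula $V-E+F=1+k$ on the sphere to the spanning subgraph of $G^\xi$ given by the open edges of $\omega$, together with the standard fact that the faces of this subgraph are in bijection with the clusters of its planar dual (which is the open subgraph of $(\omega^*)^{\xi^*}$), yields
\ben
k(\omega^\xi)-k\bigl((\omega^*)^{\xi^*}\bigr)=o(\omega^*)+\bigl(|V(G^\xi)|-|E|-1\bigr),
\een
which is \eqref{eq:plan-euler} with an explicit constant. Verifying this face/cluster bijection is where one has to be careful: isolated dual vertices inside a face must be counted correctly, and the two ``special'' face-vertices corresponding to the partition classes of $\xi$ and $\xi^*$ must be handled consistently, which is precisely what planarity of $\xi$ ensures.

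Combining \eqref{eq:plan-euler} with the first step gives
\ben
\phi^\xi_{G,p,q}[\omega]\;=\;\frac{p^{|E|}q^{C_{G,\xi}}}{Z^\xi_{G,p,q}}\cdot\Bigl(\tfrac{(1-p)q}{p}\Bigr)^{o(\omega^*)}q^{k((\omega^*)^{\xi^*})}.
\een
To match the template $(p^*)^{o(\omega^*)}(1-p^*)^{c(\omega^*)}q^{k((\omega^*)^{\xi^*})}$ up to a constant, I factor $(1-p^*)^{|E|}$ out of the dual Boltzmann weight and compare the $\omega^*$-dependent exponential factors $\bigl(\tfrac{p^*}{1-p^*}\bigr)^{o(\omega^*)}$ and $\bigl(\tfrac{(1-p)q}{p}\bigr)^{o(\omega^*)}$. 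Requiring these to coincide forces $\tfrac{p^*}{1-p^*}=\tfrac{(1-p)q}{p}$, i.e.\ exactly $pp^*=q(1-p)(1-p^*)$. With this choice the two densities are proportional as functions of $\omega$, and since both are probability distributions on the same finite set (through the bijection $\omega\leftrightarrow\omega^*$) the proportionality constant is $1$, so $\omega^*\sim\phi^{\xi^*}_{G^*,p^*,q}$, as required.
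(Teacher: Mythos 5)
Your proposal is correct in outline and the algebraic matching (trading $p^{o(\omega)}(1-p)^{c(\omega)}$ for a function of $o(\omega^*)$, reading off $p^*/(1-p^*)=q(1-p)/p$, and fixing the constant by normalization) is exactly right, but you take a genuinely different route on the one step that actually carries the difficulty. You propose to prove the Euler-type identity $k(\omega^\xi)=k((\omega^*)^{\xi^*})+o(\omega^*)+C_{G,\xi}$ directly for an arbitrary planar $\xi$, by embedding the contracted multigraph $G^\xi$ in the sphere and asserting that its planar dual is $(G^*)^{\xi^*}$. That assertion is the crux, and you flag it ("this is where one has to be careful") without establishing it: one must check that contracting each $\xi$-class (realized by the clusters of the inducing configuration in $\bbE\setminus E$) produces an embedding whose faces are in bijection with the clusters of $(\omega^*)^{\xi^*}$, including the way outside paths of $\xi$ split the outer face into the wired classes of $\xi^*$. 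The paper avoids this entirely: it proves the identity only in the clean case of free boundary conditions on a $G$ with connected complement (where the face/cluster bijection is the textbook one and the dual boundary condition is simply wired), and then obtains the general planar case by embedding $G$ in a large box $\Lambda_n$, choosing a configuration $\psi$ on $E_n\setminus E$ inducing $\xi$, applying the free/wired duality on $\Lambda_n$, and using the domain Markov property on both the primal and dual sides to condition back down to $G$. Your direct approach is viable and arguably more self-contained if you carry out the face/cluster verification in full, but as written the key geometric lemma is only asserted; if you want to sidestep it, the domain Markov reduction is precisely the tool designed for that purpose.
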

There is a specific value of $p$ for which $p=p^*$. This value will be denoted $p_{\rm sd}$, and satisfies
$$p_{\rm sd}(q)=\frac{\sqrt q}{1+\sqrt q}.$$
\begin{proof} Let us start with $G$ connected with connected complement, and free boundary conditions. 
Let $v$, $e$, $f$ and $c$ be the number of vertices, edges, faces and clusters of the graph $(\omega^*)^1$ embedded in the plane\footnote{Recall that $(\omega^*)^1$ is the graph $\omega^*$ where all vertices of $\partial G^*$ are identified together. This graph can clearly be embedded in the plane by ``moving'' the vertices of $\partial G^*$ to a single point chosen in the exterior face of $\omega$, and drawing the edges incident to $\partial G^*$ by ``extending'' the corresponding edges of $\omega^*$ by continuous curves not intersecting each others or edges of $\omega$, and going to this chosen point. }.
We wish to interpret Euler's formula in terms of $k(\omega)$, $o(\omega^*)$ and $k((\omega^*)^1)$. First, $v$ is a constant not depending on $\omega$ and $e$ is equal to $o(\omega^*)$. Also, the bounded faces of the graph are in direct correspondence with the clusters of $\omega$, and therefore $f=k(\omega)+1$ (note that there is exactly one unbounded face). Overall, Euler's formula ($f=c+e-v+1$) gives
$$k(\omega)=k((\omega^*)^1)+o(\omega^*)-v.$$
Set $Z:=Z^0_{G,p,q}$ and recall that $\frac{q(1-p)}p=\frac{p^*}{1-p^*}$. Since $c(\omega)=o(\omega^*)$, we get
\begin{align*}\label{eq:0999}
\phi^0_{G,p,q}[\omega]&=\tfrac{p^{|E|}}Z\big(\tfrac {1-p}p\big)^{c(\omega)}q^{k(\omega)}\\
&=\tfrac{p^{|E|}q^{-v}}Z\big(\tfrac {1-p}p\big)^{o(\omega^*)}q^{k((\omega^*)^1)+o(\omega^*)}\\
&=\tfrac{p^{|E|}q^{-v}}Z\big(\tfrac {p^*}{1-p^*}\big)^{o(\omega^*)}q^{k((\omega^*)^1)}\\
&=\phi^1_{G^*,p^*,q}[\omega^*].
\end{align*}
(Note that we also proved that $Z^1_{G^*,p^*,q}=Z^0_{G,p,q}q^vp^{-|E|}(1-p^*)^{|E|}$.) 

For arbitrary planar boundary conditions, the proof follows from the domain Markov property. Indeed, pick $n$ large enough so that there exists $\psi\in \{0,1\}^{E_n\setminus E}$ inducing the boundary conditions $\xi$ (such an $n$ always exists), and introduce
$$\omega^\psi_e=\begin{cases} \omega_e&\text{ if }e\in E,\\ \psi_e&\text{ if }e\in E_n\setminus E,\end{cases}$$
Since the boundary conditions induced by $\psi^*$ on $E_n^*\setminus E^*$ with the boundary of $\Lambda_n$ wired are exactly $\xi^*$, we deduce that
$$\phi^\xi_{G,p,q}[\omega]\stackrel{\eqref{eq:domain Markov}}=c\,\phi_{\Lambda_n,p,q}^0[\omega^\psi]
=c\,\phi_{\Lambda_n^*,p^*,q}^1[(\omega^\psi)^*]
\stackrel{\eqref{eq:domain Markov}}=\phi^{\xi^*}_{G^*,p^*,q}[\omega^*],$$
where $c$ is a constant not depending on $\omega$. This concludes the proof.
\end{proof}

\bexo

\begin{exercise}[Duality for the random-cluster model on the torus]\label{exo:torus}
Let $\mathbb{T}_n=[0,n]^2$ and consider the boundary conditions where $(0,k)$ and $(n,k)$ are identified for any $0\le k\le n$, and $(j,0)$ and $(j,n)$ are identified for any $0\le j\le n$. We write the measure $\phi_{\bbT_n,p,q}$.
\medbreak\noindent
1. A configuration $\omega$ is said to have a {\em net} if $\omega^*$ does not contain any non-retractible loop. 
Let $s(\omega) $ be the number of nets in $\omega$ (it is equal to 0 or 1). Prove that     \[
    |V| + f(\omega) + 2s(\omega) = k(\omega) + o(\omega) + 1 \, , 
    \]
   where $f(\omega)$ is the number of faces in the configuration.
   \medbreak\noindent
2. Show that $\widetilde{\phi}_{\mathbb{T}_n,p,q}(\omega) = \widetilde{\phi}_{\mathbb{T}_n^{*},p^{*},q}(\omega^{*})$, where 
\[
\widetilde{\phi}_{\mathbb{T}_n,p,q}(\omega) = \sqrt{q}^{2s(\omega)} \cdot \frac{p^{o(\omega)}(1-p)^{c(\omega)} q^{k(\omega)}}{\tilde{Z}^{per}_{\mathbb{T}_n,p,q}} \, .
\]
3. Deduce that the probability of $\calH_n$ is exactly $1/2$ for the measure $\widetilde{\phi}_{\mathbb T_n,p_{\rm sd},q}$.\end{exercise}
\eexo
We are now in a position to prove Theorem~\ref{thm:p_c FK}.
\begin{proof}[Theorem~\ref{thm:p_c FK}]The previous duality relation enables us to generalize the duality argument for crossing events. Indeed, considering the limit (as $G\nearrow \bbZ^2$) of the duality relation between wired and free boundary conditions, we get that the dual measure of $\phi^1_{p,q}$ is $\phi^0_{p^*,q}$.
Recall that $\calH_n$ is the event that the rectangle of size $n+1$ times $n$ is crossed horizontally. For $q\ge1$, the self-duality at $p_{\rm sd}$ implies that 
\begin{equation*}\phi_{p_{\rm sd},q}^1[\calH_n]+\phi_{p_{\rm sd},q}^0[\calH_n]=1.\end{equation*}
The comparison between boundary conditions thus implies
\begin{equation}\label{eq:RSW square q}\phi_{p_{\rm sd},q}^1[\calH_n]\ge \tfrac12\ge \phi_{p_{\rm sd},q}^0[\calH_n].\end{equation}
Note that the $\phi_{p_{\rm sd},q}^1[\calH_n]$ is no longer equal to $1/2$. Indeed, the complement event is still a rotated version of $\calH_n$, but the law of $\omega^*$ is not the same as the one of $\omega$, since the boundary conditions are free instead of wired. 

We are ready to conclude. The fact that $\phi_{p_{\rm sd},q}^1[\calH_n]\ge 1/2$ implies that 
$\phi_{p_{\rm sd},q}^1[0\leftrightarrow \partial\Lambda_n]\ge 1/(2n)$ (exactly as for Bernoulli percolation). Since this quantity is not decaying exponentially fast, Theorem~\ref{thm:RCM2} gives that $p_c\le p_{\rm sd}$.

Also, if $\phi^0_{p,q}[0\leftrightarrow \infty]>0$, then $\displaystyle\lim\phi^0_{p,q}[\calH_n]=1.$ 
Indeed,
the measure is ergodic (Lemma~\ref{ergodicity}) and satisfies the almost sure uniqueness of the infinite cluster (Theorem~\ref{thm:uniqueness}). Since it also satisfies the FKG inequality, the proof of Proposition~\ref{prop:lower} works the same for random-cluster models with $q\ge1$. Together with \eqref{eq:RSW square q}, this implies that $\phi^0_{p_{sd},q}[0\leftrightarrow\infty]=0$ and therefore that $p_{\rm sd}\le p_c$.\end{proof}
\begin{remark}\label{cor:lower bound q}
Note that we just proved that $\phi^0_{p_c,q}[0\leftrightarrow\infty]=0$.
\end{remark}
\bexo
\begin{exercise}[Critical points of the triangular and hexagonal lattices] Define $p$ such that $p^3+1=3p$ and set $p_c$ for the critical parameter of the triangular lattice.
\medbreak\noindent
1. Consider a graph $G$ and add a vertex $x$ inside the triangle $u,v,w$. Modify the graph $F$ by removing edges  $uv$, $vw$ and $wu$, and adding $xu$, $xv$ and $xw$. The new graph is denoted $G'$. Show that the Bernoulli percolation of parameter $p$ on $G$ can be coupled to the Bernoulli percolation of parameter $p$ on $G'$ in such a way that connections between different vertices of $G$ are the same. 
\bigbreak
\begin{center}
\includegraphics[width=0.40\textwidth]{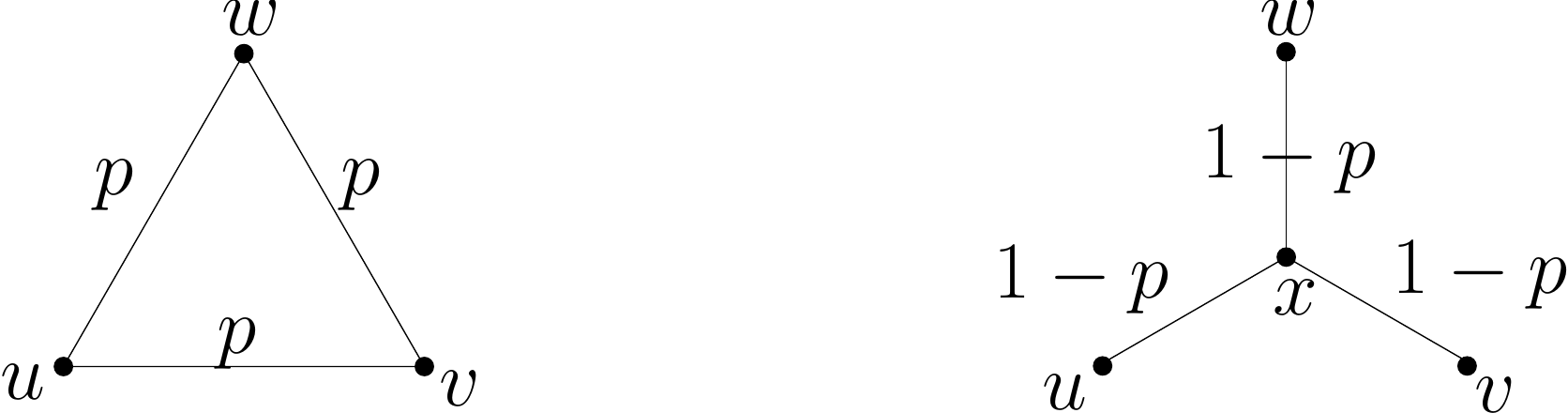}
\end{center}
\medbreak\noindent
2.  Using exponential decay in subcritical for the triangular lattice, show that if $p<p_c$, the percolation of parameter $1-p$ on the hexagonal lattice contains an infinite cluster almost surely. Using the transformation above, reach a contradiction.
\medbreak\noindent
3. Prove similarly that $p\le p_c(\mathbb T)$.
\medbreak\noindent
4. Find a degree three polynomial equation for the critical parameter of the hexagonal lattice.
\medbreak\noindent
5. What happens for the random-cluster model?
\end{exercise}
\eexo

\section{Where are we standing? and a nice conjecture...}

Up to now, we proved that the critical inverse-temperature of the Potts model exists, and that it corresponds to the point where long-range ordering emerges. We also proved monotonicity of correlations. In the specific case of $\bbZ^2$, we computed the critical point exactly. Last but not least, we proved that correlations decay exponentially fast when $\beta<\beta_c$. Overall, we gathered a pretty good understanding of the off-critical phase, but we have little information on the critical one. In particular, we would like to determine whether the phase transition of Potts models is continuous or not. In terms of random-cluster model, it corresponds to deciding whether $\phi^1_{p_c,q}[0\leftrightarrow\infty]$ is equal to 0 or not.

We proved in the previous section that for critical Bernoulli percolation on $\bbZ^2$, there was no infinite cluster almost surely. For $q>1$, we only managed to prove this result for the free boundary conditions. This is therefore not sufficient to discriminate between a continuous and a discontinuous phase transition for planar Potts models.

Before focusing on this question in the next sections, let us briefly mention that even for Bernoulli percolation, knowing whether there exists an infinite cluster at criticality is a very difficult question in general. For $\bbZ^d$ with $d\ge3$,  the absence of infinite cluster at criticality was proved using lace expansion for $d\ge 19$ \cite{HarSla90} (it was recently improved to $d\ge11$ \cite{FitHof15}). The technique involved in the proof is expected to work until $d\ge6$. For $d\in\{3,4,5\}$, the strategy will not work and the following conjecture remains one of the major open questions in our field.
\begin{conjecture}
For any $d\ge2$, $\bbP_{p_c}[0\longleftrightarrow \infty]=0$.
\end{conjecture}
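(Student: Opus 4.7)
\emph{Full disclosure: this is one of the central open problems in percolation theory for $3 \le d \le 10$.} The strategy one would follow is to upgrade the lower bound $\theta(p) \ge c(p-p_c)$ from Theorem~\ref{thm:perco} to a matching linear upper bound, and then exploit monotonicity. Writing $\theta(p) := \bbP_p[0\leftrightarrow\infty]$ and noting that $\theta$ is increasing in $p$, we have $\theta(p_c) \le \inf_{p>p_c}\theta(p) = \lim_{p\downarrow p_c}\theta(p)$, so it suffices to show $\theta(p)\to 0$ as $p\downarrow p_c$.

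The classical route to such an upper bound is the \emph{triangle condition} of Aizenman--Newman:
$$\nabla(p_c) := \sum_{x,y\in\bbZ^d} \bbP_{p_c}[0\leftrightarrow x]\,\bbP_{p_c}[x\leftrightarrow y]\,\bbP_{p_c}[y\leftrightarrow 0] < \infty.$$
If this holds, a differential inequality in the spirit of Lemma~\ref{lem:meanField}, combined with the BK inequality to express $\theta'$ as a sum over pivotal edges and then bound that sum by $\theta\cdot\nabla(p_c)$, yields $\theta(p) \le C(p-p_c)$ as $p\downarrow p_c$, and therefore $\theta(p_c)=0$.

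The main obstacle is verifying $\nabla(p_c)<\infty$. For $d=2$, the statement is already proved in Section~\ref{sec:2.1} by the completely different (duality-based) route via the self-duality $p_c(2)=1/2$ and Zhang's uniqueness argument, which has no higher-dimensional counterpart. For $d$ large, the Hara--Slade \emph{lace expansion}---a perturbative expansion controlled by the Green's function of simple random walk on $\bbZ^d$, viable when the walk is sufficiently transient---establishes the triangle condition, currently down to $d \ge 11$, and conjecturally down to the upper critical dimension $d_c = 6$. For $3\le d\le 5$, below $d_c$, the triangle condition is \emph{not} expected to hold: critical exponents are predicted to differ from their mean-field values, so the Aizenman--Newman route is in principle doomed and some genuinely new idea is required.

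A speculative alternative for intermediate dimensions would be to push the OSSS framework of Section~\ref{sec:OSSS} quantitatively, leveraging the relation $\varphi_{p_c}(\Lambda_n) \ge 1$ from the remark after Lemma~\ref{lem:meanField} to obtain direct control on the right-limit $\theta(p_c^+)$ without passing through any specific critical exponent. Converting this heuristic into a rigorous proof of continuity of $\theta$ at $p_c$ is precisely where all known techniques stall, and this is what keeps the conjecture open in the intermediate-dimensional regime.
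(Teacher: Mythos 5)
You have correctly identified that this statement is a \emph{conjecture}: the paper offers no proof of it, and explicitly lists it as one of the major open questions of the field. There is therefore nothing to compare your argument against; what you have written is an accurate survey of the state of the art rather than a proof, and it is consistent with the paper's own commentary surrounding the conjecture (the $d=2$ case settled by the duality/Zhang route of Section~\ref{sec:2.1}, the lace-expansion results of Hara--Slade and Fitzner--van der Hofstad giving the triangle condition and hence mean-field behaviour for $d\ge 11$, the expectation that this perturbative approach can reach $d\ge 6$ but not below, and the complete absence of a viable strategy for $d\in\{3,4,5\}$). Your outline of the Aizenman--Newman mechanism (triangle condition $\Rightarrow$ $\theta(p)\le C(p-p_c)$ $\Rightarrow$ continuity at $p_c$ via right-continuity of $\theta$) is the standard and correct account of why the high-dimensional case is closed and the intermediate-dimensional case is not.
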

Some partial results were obtained in $\bbZ^3$ in the past decades. For instance, it is known that the probability, at $p_c$ of an infinite cluster in $\bbN\times\bbZ^2$ is zero \cite{BarGriNew91}. Let us also mention that $\bbP_{p_c(\bbZ^2\times G)}[0\longleftrightarrow\infty]$ was proved to be equal to 0 on graphs of the form $\bbZ^2\times G$, where $G$ is finite; see \cite{DumSidTas14}, and on graphs with exponential growth in \cite{BenLyoPer99} and \cite{Hut16} (see also the following exercise).

\bexo
This exercise presents the beautiful proof due to Tom Hutchcroft of absence of percolation at criticality for amenable locally-finite transitive graphs with exponential growth. 
We say that $\bbG$ has exponential growth if there exists $c_{\rm vg}>0$ such that $|\Lambda_n|\ge \exp(c_{\rm vg} n)$.

\begin{exercise}[{$\mathbb P_{p_c}[0\leftrightarrow\infty]=0$} for amenable Cayley graphs with exponential growth] Let $\bbG$ be an amenable infinite locally-finite transitive graphs with exponential growth. 
\medbreak\noindent
1. Use amenability to prove that $\mathbb P_{p_c}[0\leftrightarrow\infty]>0\Longrightarrow\inf\{\bbP_{p_c}[x\leftrightarrow y],x,y\in \bbG\}>0$. {\em Hint:} use Exercise~\ref{exo:uniqueness}.
\medbreak\noindent
2. Use the FKG inequality to prove that $u_n(p)=\inf\{\bbP_{p_c}[x\leftrightarrow 0],x\in\partial\Lambda_n\}$ satisfies that for every $n$ and $m$,
$$u_{n+m}(p)\ge u_n(p)u_m(p).$$
3. Adapt Step 1 of the proof of Theorem~\ref{thm:perco} (see also Question 4 of Exercise~\ref{exo:long range}) to get that for any $p<p_c$,
$\displaystyle\sum_{x\in\bbG}\bbP_p[0\longleftrightarrow x]<\infty.$
\medbreak\noindent
4. Use the two previous questions to deduce that for any $p<p_c$,
$u_n(p)\le \exp(-c_{\rm vg}n)$ for every $n\ge1$.
\medbreak\noindent
5. Conclude.
\end{exercise}
\eexo

\section{Continuity of the phase transition for the Ising model}
Many aspects of the Ising model are simpler to treat than in other models (including Bernoulli percolation). We therefore focus on this model first. We will prove that the phase transition of the model is always continuous for the nearest neighbor model on $\bbZ^d$ with $d\ge2$. Before proceeding further, let us mention that the Ising model does not always undergo a continuous phase transition: the long-range model on $\bbZ$ with coupling constants $J_{x,y}=1/|x-y|^2$ undergoes a discontinuous phase transition (we refer to \cite{AizChaCha88} for details). 

The section is organized as follows. We start by providing a simple argument proving than in two dimensions, the phase transition is continuous. We then introduce a new object, called the random current representation, and study its basic properties. Finally, we use the properties of this model to prove that the phase transition is continuous in dimension $d\ge3$.

\subsection{An elementary argument in dimension $d=2$}

We present a very elegant argument, due to Wendelin Werner, of the following.
\begin{proposition}\label{uniqueness critical}
On $\bbZ^2$, $\mu_{\beta_c}^+[\sigma_0]=0$.\end{proposition}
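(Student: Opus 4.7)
Plan:

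Step 1 (Reduction to FK-Ising). By the coupling between the Ising model with $+$-boundary conditions and the FK-Ising random-cluster model with wired boundary conditions (Corollary~1.5 and its infinite-volume version at the end of Section~1.2.2),
$$\mu^+_{\beta_c}[\sigma_0] \;=\; m^*(\beta_c,2) \;=\; \phi^1_{p_c,2}\big[0 \longleftrightarrow \infty\big],$$
so it suffices to prove that the right-hand side is zero. I will argue by contradiction and assume $\phi^1_{p_c,2}[0 \leftrightarrow \infty] > 0$.

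Step 2 (Self-duality kills the dual). By Theorem~2.12, $p_c = p_{\rm sd} = \tfrac{\sqrt 2}{1+\sqrt 2}$, the self-dual point for $q=2$. Passing the duality of Proposition~2.13 to the infinite-volume limit, if $\omega$ has law $\phi^1_{p_c,2}$ then $\omega^*$ has law $\phi^0_{p_c,2}$. From Remark~2.14, which was recorded just after the proof of Theorem~2.12, one has $\phi^0_{p_c,2}[0 \leftrightarrow \infty] = 0$. Hence almost surely $\omega^*$ has only finite clusters.

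Step 3 (Zhang's argument for $\phi^1_{p_c,2}$). The measure $\phi^1_{p_c,2}$ is ergodic (Lemma~2.4), satisfies FKG, and has almost surely a unique infinite cluster (Theorem~2.5). These are precisely the ingredients used in the proof of Proposition~2.10, whose argument carries over verbatim from Bernoulli percolation to random-cluster models with $q \ge 1$. Together with the assumed positivity of $\phi^1_{p_c,2}[0 \leftrightarrow \infty]$, this yields
$$\phi^1_{p_c,2}[\calH_n] \;\xrightarrow[n\to\infty]{}\; 1.$$
Now invoke the self-dual identity $\phi^1_{p_c,2}[\calH_n] + \phi^0_{p_c,2}[\calH_n] = 1$ displayed just before Equation~(2.20): it forces $\phi^0_{p_c,2}[\calH_n] \to 0$.

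Step 4 (The contradiction). From Step~3 and FKG-gluing, rectangular crossings of arbitrary aspect ratio are close to $1$ under $\phi^1_{p_c,2}$ for large $n$. Using the comparison between boundary conditions (Exercise~2.8) inside a slightly enlarged box, one propagates this into a uniform-in-$n$ lower bound of the form $\phi^0_{p_c,2}[\calH_n] \ge c > 0$ (an RSW-type argument for the free measure). This contradicts $\phi^0_{p_c,2}[\calH_n] \to 0$, proving $\phi^1_{p_c,2}[0 \leftrightarrow \infty] = 0$ and therefore the proposition.

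The delicate step is the final transfer of crossing estimates from wired to free boundary conditions at the exact critical point, since these two measures are known \emph{a priori} to possibly differ on a countable set of values of~$p$ including conceivably $p_c$ itself. This is the elegant point in Werner's argument: the quantitative gap between wired and free crossings is controlled by FKG together with the uniqueness of the infinite cluster (Theorem~2.5) applied at the scale of a box slightly larger than the rectangle defining $\calH_n$.
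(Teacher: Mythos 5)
Steps 1--3 of your plan are fine: the reduction to $\phi^1_{p_c,2}[0\leftrightarrow\infty]=0$, the identity $p_c=p_{\rm sd}$, and the transfer of Proposition~\ref{prop:lower} to $\phi^1_{p_c,2}$ (ergodicity, FKG, uniqueness of the infinite cluster) are all legitimate, and they correctly yield $\phi^0_{p_c,2}[\calH_n]\to 0$ under your contradiction hypothesis. The genuine gap is Step~4. The conclusion of Step~3 is not, by itself, contradictory: for the random-cluster model with $q>4$ at the same self-dual point one has exactly this situation --- $\phi^1_{p_c,q}[\calH_n]\to 1$ \emph{and} $\phi^0_{p_c,q}[\calH_n]\to 0$ simultaneously (this is the discontinuous alternative of Theorem~\ref{thm:main}). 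So no soft argument combining FKG, comparison between boundary conditions and uniqueness of the infinite cluster can ``propagate'' the wired lower bound to a free lower bound: \eqref{eq:comparison} goes in the wrong direction (free boundary conditions minimize probabilities of increasing events), and the circuit-in-an-annulus trick of Exercise~\ref{exo:unique Gibbs} requires an open circuit under the \emph{free} measure, whose probability is precisely the quantity you are trying to bound from below. Closing this gap for $q=2$ is the content of either the dichotomy {\bf P4b}$\Rightarrow${\bf P5} combined with the parafermionic-observable input of Theorem~\ref{thm:decide}, or of an argument genuinely specific to the Ising model --- it cannot be free.

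The paper's proof (Werner's argument) sidesteps the wired/free asymmetry entirely by working at the level of spins under $\mu^{\rm f}_{\beta_c}$: the $+/-$ spin-flip symmetry of the free measure gives $\mu^{\rm f}_{\beta_c}[\widetilde\calH_n]\le\tfrac12$ for crossings by minuses (the complement contains a top-to-bottom crossing by pluses), so Proposition~\ref{prop:lower} applied to the minus-clusters shows there is no infinite cluster of minuses. One then conditions on the set $\mathsf C$ of vertices of $\Lambda_n$ not joined to $\partial\Lambda_n$ by a minus-path; the Gibbs property gives $\mu^+_{C,\beta_c}$ inside $\mathsf C$, and the decomposition $0=\mu^{\rm f}_{\beta_c}[\sigma_0]\ge -\mu^{\rm f}_{\beta_c}[0\notin\mathsf C]+\mu^+_{\beta_c}[\sigma_0]\,\mu^{\rm f}_{\beta_c}[0\in\mathsf C]$ yields $\mu^+_{\beta_c}[\sigma_0]\le 0$ in the limit. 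That second conditioning step is the mechanism your plan is missing: it converts ``no infinite minus-cluster under the free measure'' into a statement about the $+$ measure without ever needing a lower bound on free crossing probabilities.
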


\begin{proof}The crucial observation is the following: the measure $\mu^{\rm f}_{\beta_c}$ is mixing, and therefore ergodic. Indeed, recall that $\sigma\sim\mu^{\rm f}_{\beta_c}$ can be obtained from a percolation configuration $\omega\sim\phi^0_{p_c,2}$ by coloring independently the different clusters. The absence of infinite cluster for $\phi^0_{p_c,2}$ (Remark~\ref{cor:lower bound q}) enables us to deduce the mixing property of $\mu^{\rm f}_{\beta_c}$ from the one of $\phi^0_{p_c,2}$ (see Exercise~\ref{exo:mixing Ising}).

The Burton-Keane argument implies that when existing, the infinite cluster of minuses is unique. Consider the event $\widetilde \calH_n$ that there exists a path of minuses in $[0,n]^2$ crossing from left to right. The complement of this event contains the event that there exists a path of pluses in $[0,n]^2$ crossing from top to bottom. We deduce that $\mu^{\rm f}_{\beta_c}[\widetilde \calH_n]\le \tfrac12$ for every $n\ge1$. 
The proof of Proposition~\ref{prop:lower} works the same here and we deduce that the probability that there is an infinite cluster of minuses is zero, since otherwise the probability of $\widetilde\calH_n$ would tend to 1. 

We now prove that $\mu^+_{\beta_c}[\sigma_0]$ is smaller than or equal to 0, which immediately implies that it is equal to zero since we already know that it is larger than or equal to 0. Consider the set $\mathsf C$ of $x\in \Lambda_n$ which are not connected to $\partial\Lambda_n$ by a path of minuses. Conditionally on $\{\mathsf C=C\}$, the law of the configuration in $C$ is equal to $\mu_{C,\beta_c}^+$  since $\{\mathsf C=C\}$ is measurable in terms of spins outside $C$ or on $\partial C$, and that spins on $\partial C$ are all pluses (we use the Gibbs property for lattice models, which is obtained similarly to the domain Markov property for random-cluster models). Also note that
$$\mu_{C,\beta_c}^+[\sigma_0]=\phi^1_{C,p_c,2}[0\longleftrightarrow \partial C]\ge \phi^1_{p_c,2}[0\longleftrightarrow\infty]=\mu_{\beta_c}^+[\sigma_0].$$
Note that if $0\notin\mathsf C$, then $\sigma_0=-1$. We deduce that
\begin{align*}
0=\mu_{\beta_c}^{\rm f}[\sigma_0]&=\mu_{\beta_c}^{\rm f}[\sigma_0\mathbbm1_{0\notin \mathsf C}]+\sum_{0\in C\subset \Lambda_n}\mu_{C,\beta_c}^+[\sigma_0]\mu_{\beta_c}^{\rm f}[\mathsf C=C]\\
&\ge-\mu_{\beta_c}^{\rm f}[0\notin \mathsf C]+\mu_{\beta_c}^+[\sigma_0]\sum_{0\in C\subset \Lambda_n}\mu_{\beta_c}^{\rm f}[\mathsf C=C]\\
&=-\mu_{\beta_c}^{\rm f}[0\notin \mathsf C]+\mu_{\beta_c}^+[\sigma_0]\mu_{\beta_c}^{\rm f}[0\in \mathsf C].
\end{align*}
Letting $n$ tend to infinity and using that $\mu_{\beta_c}^{\rm f}[0\notin \mathsf C_n]$ tends to zero (since there is no infinite cluster of minuses) gives the result.
\end{proof}
\bexo
\begin{exercise}\label{exo:mixing Ising}
Prove the mixing property of $\mu_{\beta_c}^{\rm f}$.
\end{exercise}

\begin{exercise}\label{exo:FKG Ising}
Prove that the Ising model satisfies \eqref{eq:comparison} and \eqref{eq:FKG} for the natural order on  $\{\pm1\}^V$. \end{exercise}
\eexo
\subsection{High-temperature expansion, random current representation and percolation interpretation of truncated correlations}


For many reasons, the Ising model is special among Potts models. One of these reasons is the $+/-$ gauge symmetry: flipping all the spins leaves the measure invariant (for free boundary conditions). We will harvest this special feature in the following.


The \emph{high temperature expansion} of the Ising model is a graphical representation introduced by van der Waerden~\cite{Wae41}. It
relies on the following identity based on the fact that $\sigma_x\sigma_y\in\{-1,+1\}$:
\begin{eqnarray}\label{eq:crucial high temperature}
e^{\beta\sigma_x\sigma_y}~=~\cosh (\beta)+\sigma_x\sigma_y \sinh (\beta) =\cosh (\beta)\left[1+\tanh (\beta) \sigma_x\sigma_y\right].
\end{eqnarray}

For a finite graph $G$, the notation $\eta$ will always refer to a percolation configuration in $\{0,1\}^E$ (we will still use the notation $o(\eta)$ for the number of edges in $\eta$). We prefer the notation $\eta$ instead of $\omega$ to highlight the fact that $\eta$ will have {\em source constraints}, i.e.~that the parity of its degree at every vertex will be fixed. More precisely,  write $\partial\eta$ for the set of vertices of $\eta$ with odd degree. Note that $\partial\eta=\emptyset$ is equivalent to saying that $\eta$ is an even subgraph of $G$, i.e.~that the degree at each vertex is even. 

For $A\subset V$, set $$\sigma_A:=\prod_{x\in A}\sigma_x.$$
\begin{proposition}\label{high temperature}
Let $G$ be a finite graph, $\beta>0$, and $A\subset V$. We find 
\begin{align}\label{eq:1106}
\sum_{\sigma\in\{\pm1\}^V}\sigma_A\exp[-\beta H_G^{\rm f}(\sigma)]&=2^{|V|} \cosh (\beta)^{|E|}\sum_{\partial\eta=A}\tanh(\beta)^{o(\eta)}.\end{align}
\end{proposition}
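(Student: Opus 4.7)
The plan is to expand the product form of the Boltzmann weight using the identity \eqref{eq:crucial high temperature}, then swap the order of summation (over spins and over edge-subsets) and use the parity of $\sum_{\sigma_x=\pm1}\sigma_x^k$.

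First I would write
\[
\exp[-\beta H_G^{\rm f}(\sigma)]=\prod_{xy\in E}e^{\beta\sigma_x\sigma_y}=\cosh(\beta)^{|E|}\prod_{xy\in E}\bigl[1+\tanh(\beta)\,\sigma_x\sigma_y\bigr],
\]
using \eqref{eq:crucial high temperature}. Expanding the product over $E$ indexes the terms by subsets of edges, or equivalently by percolation configurations $\eta\in\{0,1\}^E$: for each edge $xy$ we either choose the $1$ (if $\eta_{xy}=0$) or the $\tanh(\beta)\sigma_x\sigma_y$ (if $\eta_{xy}=1$). This yields
\[
\exp[-\beta H_G^{\rm f}(\sigma)]=\cosh(\beta)^{|E|}\sum_{\eta\in\{0,1\}^E}\tanh(\beta)^{o(\eta)}\prod_{xy:\eta_{xy}=1}\sigma_x\sigma_y.
\]

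Next, multiply by $\sigma_A$ and interchange the sums over $\sigma$ and $\eta$. Letting $d_\eta(x)$ denote the degree of $x$ in the graph $\eta$, the spin product reorganizes vertex by vertex:
\[
\sigma_A\prod_{xy:\eta_{xy}=1}\sigma_x\sigma_y=\prod_{x\in V}\sigma_x^{\,d_\eta(x)+\mathbbm{1}_{x\in A}}.
\]
Hence
\[
\sum_{\sigma\in\{\pm1\}^V}\sigma_A\exp[-\beta H_G^{\rm f}(\sigma)]=\cosh(\beta)^{|E|}\sum_{\eta\in\{0,1\}^E}\tanh(\beta)^{o(\eta)}\prod_{x\in V}\Bigl(\sum_{\sigma_x=\pm1}\sigma_x^{\,d_\eta(x)+\mathbbm{1}_{x\in A}}\Bigr).
\]

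Finally, since $\sum_{\sigma_x=\pm1}\sigma_x^k$ equals $2$ if $k$ is even and $0$ otherwise, the vertex product is $2^{|V|}$ when every exponent $d_\eta(x)+\mathbbm{1}_{x\in A}$ is even (i.e.\ the set of vertices of odd degree in $\eta$ is exactly $A$, which is the definition $\partial\eta=A$), and $0$ otherwise. This yields exactly \eqref{eq:1106}. The argument is essentially bookkeeping; the only step requiring a moment of thought is identifying the parity condition at each vertex with the source constraint $\partial\eta=A$, but this is immediate from the definitions.
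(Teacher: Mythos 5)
Your proof is correct and follows essentially the same route as the paper: expand via \eqref{eq:crucial high temperature}, interchange the sums over $\sigma$ and $\eta$, and observe that the spin sum vanishes unless every vertex has even exponent, which is exactly the source constraint $\partial\eta=A$. The only cosmetic difference is that you factorize the spin sum vertex by vertex, whereas the paper phrases the same parity argument via the spin-flip involution at a single vertex.
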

\begin{proof}
Using \eqref{eq:crucial high temperature} for every $xy\in E$ gives
\begin{eqnarray*}
\sum_{\sigma\in\{\pm1\}^V}\sigma_A\exp[-\beta H_G^{\rm f}(\sigma)]&=&\sum_{\sigma\in\{\pm1\}^V}\sigma_A\prod_{xy\in E}e^{\beta\sigma_x\sigma_y}\\
&=&\cosh(\beta)^{|E|}\sum_{\sigma\in\{\pm1\}^V}\sigma_A\prod_{xy\in E}\left[1+\tanh (\beta) \sigma_x\sigma_y\right]\\
&=&\cosh(\beta)^{|E|}\sum_{\sigma\in\{\pm1\}^V}\sum_{\eta\in\{0,1\}^E} \tanh (\beta)^{o(\eta)}\sigma_A\prod_{\substack{xy\in E\\\eta_{xy}=1}}\sigma_x\sigma_y\\
&=&\cosh(\beta)^{|E|}\sum_{\eta\in\{0,1\}^E} \tanh (\beta)^{o(\eta)}\sum_{\sigma\in\{\pm1\}^V}\sigma_A\prod_{\substack{xy\in E\\\eta_{xy}=1}}\sigma_x\sigma_y.\end{eqnarray*}
Using the involution on $\{0,1\}^V$ sending $\sigma$ to the configuration coinciding with $\sigma$ except at $x$ where the spin is flipped, one sees that if any of the terms $\sigma_x$ appears with an odd power in the previous sum over $\sigma$, then the sum equals 0. Since the power corresponds to the degree of $x$ in $\eta$ if $x\notin A$, and is equal to the degree minus 1 if $x\in A$, we deduce that
$$\sum_{\sigma\in\{\pm1\}^V}\sigma_A\prod_{\substack{xy\in E\\\eta_{xy}=1}}\sigma_x\sigma_y=\begin{cases}2^{|V|}&\text{ if }\partial \eta=A,\\ 0&\text{ otherwise}\end{cases}$$ and the formula therefore follows.
\end{proof}
\bexo
\begin{exercise}[Kramers-Wannier duality]\label{exo:KW}
1. Show that there exists a correspondence between even subgraphs of $G$ and spin configurations for the Ising model on $G^*$, with $+$ boundary condition on the exterior face.
\medbreak\noindent
2. Express the partition function of the Ising model at inverse-temperature $\beta^*$ on $G^*$ with $+$ boundary conditions in terms of even subgraphs of $G$.
\medbreak\noindent
3. For which value of $\beta^*$ do we obtain the same expression (up to a multiplicative constant) as  \eqref{eq:1106}.
\end{exercise}
\eexo
The previous expansion of the partition function is called the {\em high-temperature expansion}. We deduce \begin{equation}\label{eq:org}
\mu_{G,\beta}^{\rm f}[\sigma_A]=\frac{\displaystyle\sum_{\partial\eta=A}\tanh(\beta)^{o(\eta)}}{\displaystyle\sum_{\partial\eta=\emptyset}\tanh(\beta)^{o(\eta)}}\ge0.
\end{equation}
(The inequality is called Griffiths' first inequality).
Notice two things about the high-temperature expansion of spin-spin correlations:
\begin{itemize}
\item the sums in the numerator and denominator of \eqref{eq:org} are running on different types of graphs (the source constraints are not the same), which illustrates a failure of this representation: we cannot a priori rewrite this quantity as a probability.
\item when squaring this expression, we end up considering, in the numerator and denominator, two sums over pairs of configurations $\eta_1$ and $\eta_2$ with $\partial \eta_1=\partial\eta_2$. This means that $\eta_1+\eta_2$ has even degree at each vertex, both in the numerator and denominator. \end{itemize}
In order to harvest this second observation, we introduce a system of currents. This introduction is only a small detour, since we will quickly get back to percolation configurations.

A {\em current} $\n$ on $G$ is a function from $E$ to $\bbN:=\{0,1,2,...\}$ (the notation $\n$ will be reserved to currents).    A {\em source} of $\n=(\n_{xy}:xy\in E)$ is a vertex $x$ for which $\sum_{y\sim x}{\n}_{xy}$ is odd. The set of sources of $\n$ is denoted by $\partial\n$. Also set
$$w_\beta(\n)=\prod_{xy\in E}\frac{\displaystyle\beta^{\n_{xy}}}{\n_{xy}!}.$$
One may follow the proof of Proposition~\ref{high temperature} with the Taylor expansion
$$\exp(\beta\sigma_x\sigma_y)=\sum_{\n_{xy}=0}^\infty \frac{(\beta \sigma_x\sigma_y)^{\n_{xy}}}{\n_{xy}!}$$ replacing \eqref{eq:crucial high temperature} to get
\begin{equation}\label{eq:8}
\sum_{\sigma\in\{\pm1\}^V}\sigma_A\exp[-\beta H_G^{\rm f}(\sigma)]=2^{|V|}\sum_{\partial\n=A}w_\beta(\n),
\end{equation}
(this expression is called the {\em random current expansion} of the partition function) from which we deduce an expression for correlations which is very close to \eqref{eq:org} 
\begin{equation}\label{eq:erg}
\mu_{G,\beta}^{\rm f}[\sigma_A]=\frac{\displaystyle\sum_{\partial\n=A}w_\beta(\n)}{\displaystyle\sum_{\partial\n=\emptyset}w_\beta(\n)}.
\end{equation}

The random current perspective on the Ising model's phase transition is driven by the hope that the onset of long range order coincides with a percolation transition in a system of duplicated currents (this point of  view
was used first in \cite{Aiz82,GriHurShe70}, see also \cite{Dum16} and references therein for a recent account). While we managed to rewrite the spin-spin correlations of the Ising model in terms of the random-cluster model or the high-temperature expansion, the representations fail to apply to truncated correlations\footnote{Truncated correlations is a vague term referring to differences of correlations functions (for instance
$\mu^+_\beta[\sigma_x\sigma_y]-\mu^+_\beta[\sigma_x]\mu^+_\beta[\sigma_y]$ or $\mu^+_\beta[\sigma_x\sigma_y]-\mu^{\rm f}_\beta[\sigma_x\sigma_y]$ or $U_4(x_1,x_2,x_3,x_4)$ defined later in this section).}. 
From this point of view, the expression \eqref{eq:erg} is slightly better than \eqref{eq:org} when considering the product of two spin-spin correlations since weighted sums over two ``independent'' currents $\n_1$ and $\n_2$ can be rewritten in terms of the sum over a single current $\m$ (see below). This seemingly tiny difference enables to switch the sources from one current to another one and to recover a probabilistic interpretation in terms of a percolation model.

More precisely, recall that $\calF_A$ is the event that every cluster of the percolation configuration is intersecting $A$ an even number of times\footnote{When $A=\{x,y\}$, the event $\calF_A$ is simply the event that $x$ and $y$ are connected to each others.}. We will prove below that for any $A\subset V$,
\begin{align}\label{eq:aag}\mu_{G,\beta}^{\rm f}[\sigma_A]^2&={\bf P}^{\emptyset}_{G,\beta}[\calF_A],
\end{align}
 where ${\bf P}^{\emptyset}_{G,\beta}$ is a percolation model defined as follows (we define a slightly more general percolation model which will be used later). For $B\subset V$, 
\begin{equation}\label{eq:probability}{\bf P}^B_{G,\beta}[\omega]=\frac{\displaystyle\sum_{\substack{\partial\n_1=B\\\partial\n_2=\emptyset}}w_{\beta}(\n_1)w_\beta(\n_2)\mathbbm1_{\widehat {\n_1+\n_2}=\omega}}{\displaystyle\sum_{\substack{\partial\n_1=B\\ \partial\n_2=\emptyset}}w_\beta(\n_1)w_\beta(\n_2)}\end{equation}
for any $\omega\in\{0,1\}^E$, where to each current $\n$, we associate a percolation configuration $\widehat\n$ on $E$ by setting $\widehat\n_{xy}=1$ if $\n_{xy}>0$, and 0 otherwise. 

This yields an alternative graphical representation for spin-spin correlations, which can be compared to the expression $\mu_{G,\beta}^{\rm f}[\sigma_A]=\phi_{G,p,q}^0[\calF_A]$ obtained using the random-cluster model. It involves the same increasing event $\calF_A$ (see Exercise~\ref{exo:Griffiths 1}), but for a different percolation model, and for the square of spin-spin correlations this time.

Let us now prove the following lemma, which leads immediately to \eqref{eq:aag}.
\begin{lemma}[Switching lemma \cite{GriHurShe70,Aiz82}]\label{lem:switching} For any $A,B\subset V$ and any $F:\bbN^E\rightarrow\bbR$, 
\begin{equation}\label{eq:switching}\sum_{\substack{
\partial\n_1=A\\
\partial\n_2=B}}F(\n_1+\n_2)w_\beta(\n_1)w_\beta(\n_2)=\sum_{\substack{
\partial\n_1=A\Delta B\\
\partial\n_2=\emptyset}}F(\n_1+\n_2)w_\beta(\n_1)w_\beta(\n_2)\mathbbm1_{\widehat {\n_1+\n_2}\in \calF_B},\tag{switch}\end{equation}
where $A\Delta B:=(A\setminus B)\cup(B\setminus A)$ is the symmetric difference between the sets $A$ and $B$.
\end{lemma}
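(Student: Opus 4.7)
The plan is to reduce \eqref{eq:switching} to a purely combinatorial identity about a single current $\m:=\n_1+\n_2$, and then to prove that identity by interpreting sums over submulticurrents as counts of edge-subsets in an auxiliary multigraph.

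First I would perform the change of variables $\m=\n_1+\n_2$, $\n=\n_1$. A direct computation shows that
\[ w_\beta(\n_1)w_\beta(\n_2)\;=\;w_\beta(\m)\prod_{xy\in E}\binom{\m_{xy}}{\n_{xy}}\;=:\;w_\beta(\m)\binom{\m}{\n}. \]
Since $\partial(\n_1+\n_2)=\partial\n_1\,\Delta\,\partial\n_2$, the constraint $\{\partial\n_1=A,\partial\n_2=B\}$ is equivalent to $\{\partial\m=A\Delta B,\ \n\le\m,\ \partial\n=A\}$, and likewise for the RHS with $A$ replaced by $A\Delta B$. Setting
\[ \calN(\m;S):=\sum_{\substack{\n\le\m\\ \partial\n=S}}\binom{\m}{\n}, \]
both sides of \eqref{eq:switching} are reorganised as sums over $\m$ with $\partial\m=A\Delta B$, and it suffices to prove that for every such $\m$,
\[ \calN(\m;A)\;=\;\mathbbm1_{\widehat{\m}\in\calF_B}\,\calN(\m;A\Delta B). \]

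The second step is to reinterpret $\calN(\m;S)$. Introduce the multigraph $\tilde G$ obtained from $G$ by replacing each edge $e$ by $\m_e$ parallel copies. Choosing $\n\le\m$ and multiplying by $\binom{\m}{\n}$ amounts to selecting, for each $e$, an $\n_e$-subset among the $\m_e$ parallel copies. Hence $\calN(\m;S)$ equals the number of edge-subsets $F\subseteq E(\tilde G)$ whose set of odd-degree vertices in $\tilde G$ is exactly $S$. Viewing edge-subsets as elements of $\bbF_2^{E(\tilde G)}$, the map $F\mapsto\partial_{\tilde G}F$ is $\bbF_2$-linear with kernel the cycle space of $\tilde G$. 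Consequently, either no $F$ has boundary $S$, or the set of such $F$ is a coset of the cycle space, so $\calN(\m;S)\in\{0,2^{c(\tilde G)}\}$, where $c(\tilde G)$ is the cyclomatic number.

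The final step is to determine when $\calN(\m;S)\neq 0$ and to match the two sides. The image of $\partial_{\tilde G}$ consists of exactly those $S\subseteq V$ such that $|S\cap C|$ is even for every connected component $C$ of $\tilde G$; and the components of $\tilde G$ are precisely the clusters of $\widehat\m$ (isolated vertices contributing trivially). By the handshake lemma applied to $\m$ restricted to each cluster $C$, $|\partial\m\cap C|$ is even, i.e.\ $|(A\Delta B)\cap C|$ is even, so $|A\cap C|$ and $|B\cap C|$ have the same parity for every cluster~$C$. Therefore $A$ lies in the image of $\partial_{\tilde G}$ iff $|A\cap C|$ is even for all $C$ iff $|B\cap C|$ is even for all $C$ iff $\widehat\m\in\calF_B$; and the same equivalence holds for $A\Delta B$ in place of $A$. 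Thus both $\calN(\m;A)$ and $\calN(\m;A\Delta B)$ vanish when $\widehat\m\notin\calF_B$, and both equal $2^{c(\tilde G)}$ when $\widehat\m\in\calF_B$, which proves the required identity. The main obstacle is really this last combinatorial step: once one thinks of $\n\le\m$ as a subset of $E(\tilde G)$ and uses $\bbF_2$-linearity of $\partial_{\tilde G}$, the identity falls out; the constraint $\widehat\m\in\calF_B$ is exactly what decides whether the relevant coset of the cycle space is non-empty.
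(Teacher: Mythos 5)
Your proof is correct and follows essentially the same route as the paper's: the change of variables $\m=\n_1+\n_2$, the identity $w_\beta(\n)w_\beta(\m-\n)=w_\beta(\m)\binom{\m}{\n}$, and the reinterpretation of the inner sum as a count of subgraphs of the multigraph with prescribed odd-degree set. The only difference is in the final counting step: the paper explicitly constructs a representative $\calK$ with $\partial\calK=B$ (via a minimal-length pairing of $B$ by paths, shown to be disjoint and self-avoiding) and uses the involution $\calN\mapsto\calN\Delta\calK$, whereas you invoke the standard $\bbF_2$ structure of the boundary map (fibers are cosets of the cycle space, image characterized by even intersection with components); these are the same idea, and the nontrivial inclusion in your characterization of the image is proved by exactly the pairing construction the paper writes out. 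One small slip: $\calN(\m;A\Delta B)$ does \emph{not} vanish when $\widehat\m\notin\calF_B$ — since $A\Delta B=\partial\m$, it always lies in the image of $\partial_{\tilde G}$ (take one copy of each edge with $\m_e$ odd), so $\calN(\m;A\Delta B)=2^{c(\tilde G)}$ unconditionally. This is harmless for the identity, because in that case the indicator kills the right-hand side while your (correct) argument shows $\calN(\m;A)=0$ on the left, but the sentence as written is false and should be fixed.
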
  
Before proving this lemma, let us mention a few implications. First,  \eqref{eq:aag} follows directly from this lemma since
\begin{align*}\mu_{G,\beta}^{\rm f}[\sigma_A]^2&=\frac{\displaystyle\sum_{\substack{
\partial\n_1=A\\
\partial\n_2=A}}w_\beta(\n_1)w_\beta(\n_2)}{\displaystyle\sum_{
\substack{
\partial\n_1=\emptyset\\
\partial\n_2=\emptyset}}w_\beta(\n_1)w_\beta(\n_2)}\stackrel{\eqref{eq:switching}}={\bf P}^{\emptyset}_{G,\beta}[\calF_A],
\end{align*}
We can go further and try to rewrite more complicated expressions. For instance,
\begin{align*}\label{eq:orga}\mu_{G,\beta}^{\rm f}[\sigma_A]\mu_{G,\beta}^{\rm f}[\sigma_B]&=\frac{\displaystyle\sum_{\substack{
\partial\n_1=A\\
\partial\n_2=B}}w_\beta(\n_1)w_\beta(\n_2)}{\displaystyle\sum_{
\substack{
\partial\n_1=\emptyset\\
\partial\n_2=\emptyset}}w_\beta(\n_1)w_\beta(\n_2)}\stackrel{\eqref{eq:switching}}=\mu_{G,\beta}^{\rm f}[\sigma_A\sigma_B]\cdot{\bf P}^{A\Delta B}_{G,\beta}[\calF_B].\end{align*}
In particular, the fact that the probability on the right is smaller or equal to 1 gives the second Griffiths inequality
\begin{equation}\label{eq:gri2}\mu_{G,\beta}^{\rm f}[\sigma_A\sigma_B]\ge \mu_{G,\beta}^{\rm f}[\sigma_A]\mu_{G,\beta}^{\rm f}[\sigma_B].\tag{G2}\end{equation} 
Note that here we have an explicit formula for the difference between the average of $\sigma_A\sigma_B$ and the product of the averages, which was not the case in the proof presented in Exercise~\ref{exo:Griffiths 2} (which was using the FKG inequality and the random-cluster model). This will be the main advantage of the previous representation: it will enable us to rewrite truncated correlations in terms of connectivity properties of this new percolation model.

Let us conclude this section by mentioning that we saw two representations of the Ising model partition function in this section: the high-temperature expansion in terms of even subgraphs, and the random current expansion. The coupling between the random-cluster model wth $q=2$ and the Ising model provides us with a third expansion (we leave it to the reader to write it properly). There exist several other representations: the low-temperature expansion, the representation in terms of dimers, the Kac-Ward expansion (see e.g.~\cite{CheCimKas15,CimDum13,Lis13,Lis16}).
 
\bexo

\begin{exercise}[Lebowitz's inequality]\label{exo:U4}
Set $\mu:=\mu_{G,\beta}^{\rm f}$ and $\sigma_i$ for the spin at a vertex $x_i$. Define$$U_4(x_1,x_2,x_3,x_4)=\mu[\sigma_1\sigma_2\sigma_3\sigma_4]-\mu[\sigma_1\sigma_2]\mu[\sigma_3\sigma_4]-\mu[\sigma_1\sigma_3]\mu[\sigma_2\sigma_4]-\mu[\sigma_1\sigma_4]\mu[\sigma_2\sigma_3]$$
for $x_1,x_2,x_3,x_4\in G$.
Using the switching lemma, show that
$$U_4(x_1,x_2,x_3,x_4)=-2\mu[\sigma_1\sigma_2\sigma_3\sigma_4]{\bf P}_{G,\beta}^{\{x_1,x_2,x_3,x_4\}}[x_1,x_2,x_3,x_4\mathrm{\ all\ connected}].$$
Note that in particular $U_4(x_1,x_2,x_3,x_4)\le 0$, which is known as Lebowitz's inequality. 
\end{exercise}
\eexo

\begin{proof}[the switching lemma]
We make the change of variables $\m=\n_1+\n_2$ and $\n=\n_1$. Since
$$w_\beta(\n)w_\beta(\m-\n)=\prod_{xy\in E} \frac{\beta^{(\m-\n)_{xy}}}{(\m-\n)_{xy}!}\frac{\beta^{\n_{xy}}}{\n_{xy}!}=w_\beta(\m) \binom{\m}{\n},$$
where $\binom{\m}{\n}:=\prod_{xy\in E}\binom{\m_{xy}}{\n_{xy}}$, we deduce that 
\begin{equation}\label{eq:swi}\sum_{\substack{
\partial\n_1=A\\
\partial\n_2=B}}F(\n_1+\n_2)w_\beta(\n_1)w_\beta(\n_2)=\sum_{\partial\m=A\Delta B}F(\m)w_{\beta}(\m)\sum_{\n\le\m,\partial \n=B}\binom{\m}{\n}.\end{equation}
Now, consider the multigraph $\calM$ obtained from $\m$ as follows: the vertex set is $V$ and $x$ and $y$ in $V$ are connected by $\m_{xy}$ edges. Then, $\binom{\m}{\n}$ can be interpreted as the number of subgraphs of $\calM$ with exactly $\n_{xy}$ edges between $x$ and $y$. As a consequence, 
$$\sum_{\n\le\m,\partial \n=B}\binom{\m}{\n}=|\{\calN\subset\calM:\partial\calN=B\}|,$$ where  $\partial\calN=B$ means that $\calN$ has odd degree on vertices of $B$, and even degree everywhere else. 

Note that this number is 0 is $\widehat\m\notin\calF_B$. Indeed, any subgraph $\calN$ with $\partial\calN=B$ contains disjoint paths pairing the vertices of $B$ together. In particular, any cluster of $\calM$ intersecting an element $x$ of $B$ must also intersect the element of $B$ paired to $x$ by $\calN$. 

On the other hand, if $\widehat\m\in\calF_B$, then any cluster of $\calM$ intersects an even number of vertices in $B$. We claim that in this case there exists $\calK\subset\calM$ with $\partial\calK=B$. The fact that $\widehat\m\in\calF_B$ clearly implies the existence of a collection of paths in $\calM$ pairing the vertices of $B$ together\footnote{Meaning that these paths start and end in $B$, and each element in $B$ appears exactly once in the set of beginning and ends of these paths.}. A priori, these paths  may self-intersect or intersect each others. We now prove that this is not the case if the collection has minimal total length among all the possible choices for such collections of paths. Assume for instance that there exist an edge $e=xy$ and two paths $\gamma=\gamma_1\circ xy\circ \gamma_2$ and $\gamma'=\gamma'_1\circ yx\circ \gamma'_2$, where we use the intuitive notation that $\gamma$ is the concatenation of a path $\gamma_1$ going to $x$, then using the edge $e$, and a path $\gamma_2$ from $y$ to the end, and similarly for $\gamma'$ (note that we may reverse $\gamma'$, so that we can assume it first goes through $y$ and then through $x$). But in this case the paths $\gamma_1\circ\gamma'_2$ and $\gamma'_1\circ\gamma_2$ also pair the same vertices, and have shorter length. The same argument shows that the paths must be self-avoiding. To conclude, simply set $\calK$ to be the graph with edge set composed of edges in the paths constructed above.

The map $\calN\mapsto\calN\Delta\calK$ is a bijection (in fact an involution) mapping subgraphs of $\calM$ with $\partial \calN=B$ to subgraphs of $\calM$ with $\partial\calN=\emptyset$. As a consequence, in this case
$$|\{\calN\subset\calM:\partial\calN=B\}|=|\{\calN\subset\calM:\partial\calN=\emptyset\}|.$$
Overall,
$$\sum_{\n\le\m,\partial \n=B}\binom{\m}{\n}=\mathbbm 1_{\widehat\m\in\calF_B}\sum_{\n\le\m,\partial \n=\emptyset}\binom{\m}{\n}.$$
Inserting this in \eqref{eq:swi} and making back the change of variables $\n_1=\n$ and $\n_2=\m-\n_1$ gives the result.\end{proof}
\bexo
\begin{exercise}\label{exo:coupling constants}How should currents be defined in order to rewrite correlations for the Ising model with Hamiltonian
$H_G^{\rm f}(\sigma)=-\sum_{x,y\in V} J_{x,y}\sigma_x\sigma_y,$
where $J_{x,y}$ are coupling constants? What is $w_\beta(\n)$ in this context? Is the switching lemma still true?\end{exercise}
\eexo

To conclude this section, note that the currents enter in the definition of ${\bf P}_{G,\beta}^B$ only through their sources and their traces (i.e.~whether they are positive or 0), so that we could have replaced currents taking values in $\bbN$ by objects taking values in $\{0,1,2\}$ with 0 if the current is 0, 1 if it is odd and 2 if it is positive and even. But also note that they do not rely only on the degree of the percolation configuration at every vertex (or equivalently on sources), so that the high-temperature expansion would not have been enough to define ${\bf P}_{G,\beta}^A$. The random current representation is crucial to express truncated correlations functions. This is the end of the detour and we will now try to use the percolation representation coming from random currents to prove continuity of correlations.

\bexo
\begin{exercise}In this exercise, we consider three measures on $G$:
\begin{itemize}
\item The first one, denoted ${\rm P}_{G,\beta}^\emptyset$, is attributing a weight to configurations $\eta\in\{0,1\}^E$ proportional to $\tanh(\beta)^{o(\eta)}\mathbbm 1_{\partial\eta=\emptyset}$ (this measure is sometimes known as the loop $O(1)$ model).
\item The second one, denoted by $\mathsf{P}_{G,\beta}^\emptyset$, is attributing a weight to configuration $n\in\{0,1\}^E$ proportional to $\sum_{\n\in\bbN^E}w_\beta(\n)\mathbbm 1_{\widehat\n=n}$.
\item The last one is given by $\phi_{G,\beta}^0$, where $\beta=-\tfrac12\log(1-p)$. 
\end{itemize}
1. Prove that $n\sim \mathsf{P}_{G,\beta}^\emptyset$ is obtained from $\eta\sim{\rm P}_{G,\beta}^\emptyset$ by opening independently additional edges with parameter $p_1=1-\tfrac1{\cosh\beta}$.
\medbreak\noindent
2. Consider a graph $\omega$. How many even subgraphs does it contain? Deduce from this formula that if one picks uniformly at random an even subgraph $\eta$ from $\omega\sim \phi_{G,\beta}^{\rm f}$, one obtains a random even subgraph of law ${\rm P}_{G,\beta}^\emptyset$.
\medbreak\noindent
3. Prove that by opening independently additional edges from $\eta\sim{\rm P}_{G,\beta}^\emptyset$ with probability 
$p_2=\tanh(\beta)$, one recovers $\omega\sim\phi_{G,\beta}^0$. 
\medbreak\noindent
4. What is the procedure to go from $n\sim \mathsf{P}_{G,\beta}^\emptyset$ to $\omega\sim\phi_{G,\beta}^0$?
\medbreak\noindent
5. Use Kramers-Wannier duality (Exercise~\ref{exo:KW}) to prove that ${\rm P}_{G,\beta}^\emptyset$ is the law of the interfaces between pluses and minuses in an Ising model with $+$ boundary conditions on $G^*$ with inverse-temperature $\beta^*$.
\end{exercise}
\eexo


\subsection{Continuity of the phase transition for Ising models on $\bbZ^d$ for $d\ge3$}\label{sec:continuity Ising}

In this section, we prove that the phase transition of the Ising model is continuous for any $d\ge3$. 
Let us start by saying that, like in the case of $\bbZ^2$, the critical Ising model with free boundary conditions on $\bbZ^d$ does not have long-range ordering. This can easily be seen from a classical result, called the {\em infrared bound}: for any $\beta<\beta_c$,
\begin{equation}\mu_\beta^{\rm f}[\sigma_x\sigma_y]\le \tfrac{C}{\beta} G(x,y),\tag{IR}\label{eq:infrared bound}\end{equation}
where $G(x,y)$ is the Green function of simple random walk, or equivalently the spin-spin correlations for the discrete GFF. The proof of this inequality is based on the so-called reflection-positivity (RP) technique introduced by Fr\"ohlich, Simon and Spencer \cite{FroSimSpe76}; see e.g.~\cite{Bis09} for a review. This technique has many applications in different fields of mathematical physics. We added the constant $C>0$ factor compared to the standard statement (where $C=1/2$) since the infrared bound is proved in Fourier space, and involves an averaging over $y$. One may then use the Messager-Miracle inequality (see Exercise~\ref{exo:messager-miracle} or the original reference \cite{MesMir77}) to get a bound for any fixed $x$ and $y$.  

By letting $\beta\nearrow\beta_c$, \eqref{eq:infrared bound} implies that \begin{equation}\mu_{\beta_c}^{\rm f}[\sigma_x\sigma_y]\le \tfrac C{\beta_c} G(x,y).\label{eq:infrared bound beta_c}\end{equation}
Here, it is important to understand what we did: we took the limit as $\beta\nearrow\beta_c$ of $\mu_{\beta_c}^{\rm f}[\sigma_x\sigma_y]$. This left-continuity is not true for $\mu_{\beta_c}^+[\sigma_x\sigma_y]$ since we use here the following exchange of two supremums:
\begin{align*}\mu_{\beta_c}^{\rm f}[\sigma_x\sigma_y]&=\sup_n\mu_{\Lambda_n,\beta_c}^{\rm f}[\sigma_x\sigma_y]\\
&=\sup_n\sup_{\beta<\beta_c}\mu_{\Lambda_n,\beta}^{\rm f}[\sigma_x\sigma_y]\\
&=\sup_{\beta<\beta_c}\sup_n\mu_{\Lambda_n,\beta}^{\rm f}[\sigma_x\sigma_y]=\sup_{\beta<\beta_c}\mu_{\beta}^{\rm f}[\sigma_x\sigma_y].
\end{align*}
For $\mu_\beta^+[\sigma_x\sigma_y]$,  one of the supremums would be an infimum and the previous argument would not hold. In fact, $\beta\mapsto \mu_{\beta}^+[\sigma_x\sigma_y]$ is right-continuous (since then it involves only infimums). Similarly, $p\mapsto\phi^0_{p,q}[A]$ and $p\mapsto\phi^1_{p,q}[A]$ are respectively left and right continuous for increasing events depending on finitely many edges (and therefore for any event depending on finitely many edges). Also, by taking an increasing sequence of increasing events $A_n$ with limit $A$, then $p\mapsto\phi^0_{p,q}[A]$ is left-continuous. Note that this is not true for any measurable event, an archetypical example being $A:=\{0\leftrightarrow\infty\}$, which is the limit of a decreasing sequence of increasing events.

Let us go back to the consequence of \eqref{eq:infrared bound beta_c}. Since the simple random walk is transient on $\bbZ^d$ for $d\ge3$, the right hand side tends to 0 as $\|x-y\|$ tends to infinity. This claim implies that $({\rm LRO}_{\beta_c})$ does not hold.
\bexo
\begin{exercise}\label{exo:messager-miracle}
We wish to prove that 
\begin{equation}
\mu_\beta^{\rm f}[\sigma_0\sigma_x]\le \mu_\beta^{\rm f}[\sigma_0\sigma_y]\tag{Mes-Mir}\label{eq:messager-miracle}
\end{equation}
if $x=(x_1,\dots,x_d)$ and $y=(y_1,\dots,y_d)$ satisfy either of the following two conditions
\begin{itemize}[nolistsep,noitemsep]
\item[{\bf C1}] $0\le x_1\le y_1$ and $x_i=y_i$ for every $i\ge 2$;
\item[{\bf C2}] $x_1+x_2=y_1+y_2$ and $0\le x_1-x_2\le y_1-y_2$ and $x_i=y_i$ for every $i\ge 3$.
\end{itemize}\medbreak\noindent
1. Define the graph $\bbL$ obtained from $\bbZ^d$ by adding another edge between $u$ and $u+(1,0,\dots,0)$, for each $u$ with first coordinate $u_1$ equal to $x_1$ (let $E$ be the set of new edges). How should we set coupling constants $J_{x,y}$ (in the sense of Exercise~\ref{exo:coupling constants}) on edges of $\bbE\cup E$ to have a model which is equivalent to the original model on $\bbZ^d$?
\medbreak\noindent
2. Define
$
E_1:=\{uv\subset \bbZ^d:u_1,v_1\le x_1\}\cup E$ and $E_2:=\bbE\setminus E_1$. Show that 
\begin{align*}\sum_{\partial\n=\{0,y\}}w(\n)&=\sum_{\substack{\n_1\in\bbN^{E_1},\n_2\in\bbN^{E_2}\\\partial(\n_1+\n_2)=\{0,y\}}}w(\n_1)w(\n_2)\quad\text{and}\quad\sum_{\partial\n=\{0,x\}}w(\n)&=\sum_{\substack{\n_1\in\bbN^{E_1},\n_2\in\bbN^{E_2}\\\partial(\n_1+\n_2)=\{0,x\}}}w(\n_1)w(\n_2).\end{align*}
2. Consider $y=x+(1,0,\dots,0)$. Using the current $\n'_2\in \bbN^{E_1}$ obtained from $\n_2$ by taking the reflection with respect to $\{z\in\bbR^d:z_1=x_1+1/2\}$ (with the convention that an edge of $E$ is sent to the corresponding edge of $\bbZ^d$ with the same endpoints), show that 
$$\sum_{\partial\n=\{0,y\}}w(\n)\le \sum_{\partial\n=\{0,x\}}w(\n).$$
{\em Hint.} Use the multi-graph $\calM$ obtained from $\n_1+\n_2'$ and observe that for terms involved in the left-hand side, $\calM$ necessarily contains a path from $x$ to $\{z\in\bbR^d:z_1=y_1\}$, and that in such case one may use a ``switching lemma''.
\medbreak\noindent 3. Prove \eqref{eq:messager-miracle} under condition {\em {\bf C1}}. Adapt the proof to show \eqref{eq:messager-miracle}  under condition {\em {\bf C2}}.\medbreak\noindent
4. Show that \eqref{eq:messager-miracle} implies that if $e_1=(1,0,\dots,0)$, then for every $x\in\partial\Lambda_n$,
\begin{equation}\label{eq:MM}\mu_\beta^{\rm f}[\sigma_0\sigma_{ne_1}]\ge \mu_\beta^{\rm f}[\sigma_0\sigma_x]\ge\mu_\beta^{\rm f}[\sigma_0\sigma_{dne_1}].\end{equation}

\end{exercise}
\eexo

It is unclear whether the absence of long-range ordering for $\mu_{\beta_c}^{\rm f}$ implies that $\mu_{\beta_c}^+[\sigma_0]=0$ since $\mu_{\beta_c}^{\rm f}$ and $\mu_{\beta_c}^+$ are a priori different. We will see that $\mu_{\beta_c}^{\rm b}$ is different from $\mu_{\beta_c}^{\rm f}$ for $q$-state Potts model with $q\ge4$ in two dimensions. Nonetheless, the following result tells us that this is never the case for the Ising model on $\bbZ^d$.

\begin{theorem}[Aizenman, DC, Sidoravicius \cite{AizDumSid15}]\label{thm:continuous}
For the Ising model at inverse temperature $\beta$ on $\bbZ^d$, $({\rm MAG}_\beta)$ implies $({\rm LRO}_\beta)$. As a consequence, the spontaneous magnetization is equal to $0$ at criticality.\end{theorem}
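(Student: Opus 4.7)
I would prove the contrapositive: assuming $\lim_{\|x\|\to\infty}\mu_\beta^{\rm f}[\sigma_0\sigma_x]=0$, derive $\mu_\beta^+[\sigma_0]=0$. Introduce a ghost vertex $g$ connected to every $x\in\Lambda_n$ by an edge of coupling constant $h>0$ (in the generalized setup of Exercise~\ref{exo:coupling constants}), and write $\mu_{\Lambda_n,\beta,h}$ for the resulting free-boundary Ising measure on $\Lambda_n\cup\{g\}$. The standard identification of the $+$ state as a limit of external-field magnetizations gives $\mu_\beta^+[\sigma_0]=\lim_{h\downarrow 0}\lim_{n\to\infty}\mu_{\Lambda_n,\beta,h}[\sigma_0\sigma_g]$. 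Applying \eqref{eq:aag} with $A=\{0,g\}$ on the augmented graph yields the identity
\begin{equation}\label{eq:keyid}
\mu_{\Lambda_n,\beta,h}[\sigma_0\sigma_g]^2={\bf P}^\emptyset_{\Lambda_n\cup\{g\},\beta,h}[0\longleftrightarrow g],
\end{equation}
reducing the problem to a percolation question on the double-current trace.

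The first main input is uniqueness of the infinite cluster in the infinite-volume sourceless double-current measure ${\bf P}_\beta^\emptyset$ on $\bbZ^d$, established via a Burton-Keane trifurcation argument. A finite-energy-type property is supplied by the Poisson weights $w_\beta(\n)$: conditionally on currents outside a finite box $\Lambda_m$, any configuration inside $\Lambda_m$ (after a suitable truncation of current magnitudes) carries uniformly positive probability, supplying the rewirings needed to build trifurcations at positive density. Combined with $\mu_\beta^{\rm f}[\sigma_0\sigma_x]^2={\bf P}_\beta^\emptyset[0\longleftrightarrow x]$ from \eqref{eq:aag}, the decay hypothesis then forces ${\bf P}_\beta^\emptyset$ to have no infinite cluster almost surely.

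The concluding step is a backbone-conditioning argument: rewrite the event $\{0\longleftrightarrow g\}$ by conditioning on the shortest self-avoiding path $\gamma$ from $0$ to $g$ in $\widehat{\n_1+\n_2}$, and use the switching lemma to decouple the currents along $\gamma$ from those on the complementary edges. The complementary currents behave as an independent sourceless double-current measure, which by the previous step has no infinite cluster in the infinite-volume limit; summing over admissible $\gamma$, each contributing a factor $O(h)$ from its terminal edge $xg$, yields a bound on \eqref{eq:keyid} that vanishes as $h\downarrow 0$. At $\beta=\beta_c$, the decay hypothesis holds by the infrared bound \eqref{eq:infrared bound beta_c} in dimensions $d\ge 3$ and by the argument of Proposition~\ref{uniqueness critical} for $d=2$, yielding $\mu_{\beta_c}^+[\sigma_0]=0$.

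The main obstacle lies in this last step. A naive union bound gives ${\bf P}^\emptyset[0\longleftrightarrow g]\lesssim \beta h\sum_{x\in\Lambda_n}\mu_\beta^{\rm f}[\sigma_0\sigma_x]^2$, whose right-hand side diverges at $\beta_c$; the backbone decomposition must therefore leverage uniqueness to control the length of $\gamma$ before summing. Making this rigorous requires an intermediate-current construction that splits $\n_1+\n_2$ into a backbone plus an independent remainder, tracks the decoupling through the switching lemma, and uses uniqueness of the infinite cluster in ${\bf P}_\beta^\emptyset$ to preclude divergent contributions from long backbones surviving in the $h\downarrow 0$ limit.
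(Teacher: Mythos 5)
Your setup is sound up to the last step: the ghost-vertex identity $\mu_{\Lambda_n,\beta,h}[\sigma_0\sigma_g]^2={\bf P}^\emptyset_{\Lambda_n\cup\{g\},\beta,h}[0\leftrightarrow g]$ is a correct instance of the switching lemma with general coupling constants, the identification of $\mu_\beta^+[\sigma_0]$ as the $h\downarrow0$ limit is standard, and the Burton--Keane argument for the duplicated sourceless current (with insertion tolerance in place of finite energy) does go through and forces absence of an infinite cluster under the decay hypothesis. But the step on which the whole theorem rests --- deducing that ${\bf P}^\emptyset_{\Lambda_n\cup\{g\},\beta,h}[0\leftrightarrow g]$ vanishes as $n\to\infty$, $h\downarrow0$ --- is not actually supplied. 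You correctly observe that the naive backbone sum gives $\beta h\sum_x\mu_\beta^{\rm f}[\sigma_0\sigma_x]^2$, which diverges at $\beta_c$ for $d\le4$, and then assert that a refined decomposition ``leveraging uniqueness to control the length of $\gamma$'' will fix this. No such mechanism is exhibited, and it is not clear one exists in this form: uniqueness (indeed absence) of the infinite cluster in the ghost-free measure ${\bf P}_\beta^\emptyset$ gives no quantitative control on the number of backbones from $0$ to $g$ in the ghost-augmented measure, and the event $\{0\leftrightarrow g\}$ is exactly the quantity $\mu_\beta^+[\sigma_0]^2$ you are trying to bound, so the argument is circular unless a genuinely new input enters here. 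This is the hard part of the theorem and it is missing.

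The paper avoids the backbone sum entirely by never trying to bound the magnetization directly. It works with the measure ${\bf P}^+_{G,\beta}$ in which one current is allowed sources anywhere on $\partial G$, proves $\mu^+_{G,\beta}[\sigma_x\sigma_y]\mu^{\rm f}_{G,\beta}[\sigma_x\sigma_y]={\bf P}^+_{G,\beta}[x\leftrightarrow y]$ and the one-edge bound \eqref{eq:o2}, namely $\mu^+_{G,\beta}[\sigma_x\sigma_y]-\mu^{\rm f}_{G,\beta}[\sigma_x\sigma_y]\le\tfrac1{\sinh\beta}{\bf P}^+_{G,\beta}[\calA(xy)]$, where $\calA(xy)$ requires $x$ and $y$ to be connected to $\partial G$ but not to each other. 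The point is that in the infinite-volume limit $\calA(xy)$ is contained in the event that an infinite cluster exists, which has probability zero once the decay hypothesis and the first identity kill the infinite cluster of ${\bf P}^+_\beta$. No summation over $x$ or over paths occurs: a single edge event suffices, and one concludes $\mu^+_\beta=\mu^{\rm f}_\beta$ on edge correlations, hence $\phi^0_{p,2}=\phi^1_{p,2}$, hence zero magnetization. If you want to salvage your route, you would need to replace your backbone sum by an analogous ``connected to $g$ but not to $y$'' event whose infinite-volume limit forces an infinite cluster; as written, the proposal has a genuine gap precisely where the theorem is nontrivial.
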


The proof is based on the percolation representation obtained using random currents. We will prove that the difference between spin-spin correlations with plus and free boundary conditions (which can be understood as truncated correlations) can be expressed in terms of a percolation model. We will then study the ergodic properties of this model to derive that $({\rm MAG}_\beta)$ implies $({\rm LRO}_\beta)$. The strategy is somewhat similar to Section~\ref{sec:uniqueness}.
\renewcommand{\g}{{\mathfrak g}}
\paragraph{Step 1: expressing truncated correlations using a percolation model based on random currents.} Let us start by expressing spin-spin correlations with $+$ boundary conditions in terms of currents. Let ${\rm int}(V):= V\setminus\partial G$ be the set of interior vertices. In order to do so, perform the same expansion (with Taylor series) as for the free boundary conditions to obtain
\begin{equation}\label{eq:88}
\sum_{\sigma\in\{\pm1\}^V}\sigma_A\exp[-\beta H_G^{\rm f}(\sigma)]\mathbbm1_{\sigma_{|\partial G}=+}=2^{|{\rm int}(V)|}\sum_{\partial\n\cap {\rm int}(V)= A}w_\beta(\n),
\end{equation}
where we use that 
$$\sum_{\substack{\sigma\in\{\pm1\}^V\\ \sigma_{|\partial G}=+}}\sigma_A\prod_{xy\in E}(\sigma_x\sigma_y)^{{\bf n}_{xy}}=\begin{cases}2^{|{\rm int}(V)|}&\text{ if }\partial\n\cap{\rm int}(V)=A\\ 0&\text{ otherwise}.\end{cases}$$

Introduce, as in the case of free boundary conditions, the measure 
$${\bf P}^{+}_{G,\beta}[\omega]=\frac{\displaystyle\sum_{\substack{\partial\n_1\cap {\rm int}(V)=\emptyset\\
\partial\n_2=\emptyset}}w_\beta(\n_1)w_\beta(\n_2)\mathbbm{1}_{\widehat{\n_1+\n_2}=\omega}}{\displaystyle\sum_{\substack{\partial\n_1\cap{\rm int}(V)=\emptyset\\
\partial\n_2=\emptyset}}w_\beta(\n_1)w_\beta(\n_2)}.$$
The key observation in the proof below is the following lemma.
\begin{lemma}For $G$ finite and $\beta>0$, we have that 
\begin{align}
\mu_{G,\beta}^+[\sigma_x\sigma_y]\mu_{G,\beta}^{\rm f}[\sigma_x\sigma_y]&={\bf P}^{+}_{G,\beta}[x\longleftrightarrow y]\qquad\qquad\quad&\forall x,y\in V\setminus\partial G,\label{eq:o1}\\
\mu_{G,\beta}^+[\sigma_x\sigma_y]-\mu_{G,\beta}^{\rm f}[\sigma_x\sigma_y]&\le\tfrac1{\sinh(\beta)}{\bf P}^{+}_{G,\beta}[\calA(xy)] \qquad\,&\forall xy\in E,\label{eq:o2}
\end{align}
where $\calA(xy)$ is the event that $\omega_{xy}=1$ and that in $\omega\setminus\{xy\}$, $x$ and $y$ are connected to $\partial G$ but not to each others (see Fig.~\ref{fig:eventA}).\end{lemma}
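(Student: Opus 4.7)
I treat the case $x,y\in{\rm int}(V)$; the boundary cases require minor modifications.

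For \eqref{eq:o1}, starting from \eqref{eq:88} and \eqref{eq:erg}, I would write
\begin{equation*}
\mu_{G,\beta}^+[\sigma_x\sigma_y]\,\mu_{G,\beta}^{\rm f}[\sigma_x\sigma_y]=\frac{1}{Z^+ Z^{\rm f}}\sum_{\substack{\partial\n_1\cap{\rm int}(V)=\{x,y\}\\\partial\n_2=\{x,y\}}}w_\beta(\n_1)w_\beta(\n_2),
\end{equation*}
where $Z^+,Z^{\rm f}$ denote the current partition functions. Since $x,y\in{\rm int}(V)$, every admissible $\n_1$ has $\partial\n_1=J\cup\{x,y\}$ for some $J\subset\partial G$. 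Applying the switching lemma for each such $J$ (with $A=J\cup\{x,y\}$ and $B=\{x,y\}$) moves the sources of $\n_2$ onto $\n_1$, producing $\partial\n_1=J$, $\partial\n_2=\emptyset$ and the indicator $\mathbbm{1}_{\widehat{\n_1+\n_2}\in\calF_{\{x,y\}}}$. Summing over $J\subset\partial G$ restores the constraint $\partial\n_1\cap{\rm int}(V)=\emptyset$, and since $\calF_{\{x,y\}}=\{x\longleftrightarrow y\}$ the resulting expression is precisely ${\bf P}_{G,\beta}^+[x\longleftrightarrow y]$.

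For \eqref{eq:o2}, cross-multiplying $\mu^+-\mu^{\rm f}$ and applying the same $J$-decomposition and switching lemma to the cross term $N^{\rm f}Z^+$ gives
\begin{equation*}
\mu_{G,\beta}^+[\sigma_x\sigma_y]-\mu_{G,\beta}^{\rm f}[\sigma_x\sigma_y]=\frac{1}{Z^+Z^{\rm f}}\sum_{\substack{\partial\n_1\cap{\rm int}(V)=\{x,y\}\\\partial\n_2=\emptyset}}w_\beta(\n_1)w_\beta(\n_2)\mathbbm{1}_{x\not\longleftrightarrow y\text{ in }\widehat{\n_1+\n_2}}.
\end{equation*}
In every admissible summand, $x\not\longleftrightarrow y$ forces $\widehat{\n_1+\n_2}$ to be closed on the edge $xy$, so both $\n_1$ and $\n_2$ vanish on $xy$. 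I then introduce the injection $\Phi\colon(\n_1,\n_2)\mapsto(\tilde\n_1,\n_2)$ where $\tilde\n_1$ agrees with $\n_1$ except that its value on $xy$ is raised from $0$ to $1$. This flips the parity of $\n_1$ at $x$ and $y$, so $\partial\tilde\n_1\cap{\rm int}(V)=\emptyset$; the weight transforms as $w_\beta(\tilde\n_1)=\beta\,w_\beta(\n_1)$; and $\widehat{\tilde\n_1+\n_2}$ agrees with $\widehat{\n_1+\n_2}$ off $xy$ while being open on $xy$. The odd-degree vertices of $\n_1+\n_2$ lie in $\{x,y\}\cup\partial G$, and the standard parity-per-cluster argument combined with $x\not\longleftrightarrow y$ forces the clusters of $x$ and $y$ in $\widehat{\n_1+\n_2}$ both to reach $\partial G$; hence $\widehat{\tilde\n_1+\n_2}\in\calA(xy)$.

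The final step is to compare the image of $\Phi$ with the full sum defining ${\bf P}_{G,\beta}^+[\calA(xy)]\cdot Z^+Z^{\rm f}$. Parameterising any admissible pair by its restrictions $(\tilde\n_1',\n_2')\in(\bbN^{E\setminus\{xy\}})^2$ together with $(k_1,k_2)=(\tilde\n_1(xy),\n_2(xy))$, the constraints $\partial\tilde\n_1\cap{\rm int}(V)=\partial\n_2=\emptyset$ fix the parities of $k_1$ and $k_2$ from $(\tilde\n_1',\n_2')$, while $\omega_{xy}=1$ imposes $k_1+k_2\ge1$. Summing $\beta^{k_1+k_2}/(k_1!k_2!)$ over the four parity classes for $(k_1,k_2)$ produces the multiplicative constants $\cosh^2\beta-1$, $\sinh\beta\cosh\beta$, $\sinh\beta\cosh\beta$, $\sinh^2\beta$. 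The image of $\Phi$ is precisely the parity class ``$k_1$ odd, $k_2$ even'' restricted to $(k_1,k_2)=(1,0)$, and the extra $\beta$ coming from $\Phi$ cancels the $\beta^1/1!$ weight at $(1,0)$. Keeping only the ``$k_1$ odd, $k_2$ even'' class contribution in the $\calA(xy)$-sum yields
\begin{equation*}
\mu_{G,\beta}^+[\sigma_x\sigma_y]-\mu_{G,\beta}^{\rm f}[\sigma_x\sigma_y]\le\frac{1}{\sinh\beta\cosh\beta}{\bf P}_{G,\beta}^+[\calA(xy)]\le\frac{1}{\sinh\beta}{\bf P}_{G,\beta}^+[\calA(xy)],
\end{equation*}
using $\cosh\beta\ge1$. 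The hardest step is this last piece of bookkeeping — matching the image of $\Phi$ with the correct parity slice of the $\calA(xy)$-sum — while everything else is a direct application of the switching lemma.
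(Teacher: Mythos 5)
Your proof is correct and follows essentially the same route as the paper: for \eqref{eq:o1} the decomposition over boundary source sets followed by the switching lemma, and for \eqref{eq:o2} the rewriting of the difference as a weighted sum over pairs $(\n_1,\n_2)$ with $x\not\longleftrightarrow y$ in $\widehat{\n_1+\n_2}$, followed by raising the value of $\n_1$ on $xy$ to land in $\calA(xy)$. The only (cosmetic) divergence is the final counting: the paper replaces $\n_1$ by the whole family of currents with an odd value at $xy$, whose total weight is $\sinh(\beta)\,w_\beta(\n_1)$, which yields $1/\sinh(\beta)$ directly, whereas your parity-slice bookkeeping at the single edge $xy$ gives the marginally sharper constant $1/(\sinh\beta\cosh\beta)$ before relaxing to the stated bound.
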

\begin{proof}
Equation~\ref{eq:o1} follows easily from \eqref{eq:88} and the switching lemma:
\begin{align*}\mu_{G,\beta}^+[\sigma_x\sigma_y]\mu_{G,\beta}^{\rm f}[\sigma_x\sigma_y]&=\frac{\displaystyle\sum_{A\subset \partial G}\ \sum_{\substack{\partial\n_1\cap{\rm int}(V)=A\Delta\{x,y\}\\
\partial\n_2=\{x,y\}}}w_\beta(\n_1)w_\beta(\n_2)}{\displaystyle\sum_{\substack{\partial\n_1\cap{\rm int}(V)=\emptyset\\
\partial\n_2=\emptyset}}w_\beta(\n_1)w_\beta(\n_2)}\\
&=\frac{\displaystyle\sum_{A\subset \partial G}\ \sum_{\substack{\partial\n_1\cap{\rm int}(V)=A\\
\partial\n_2=\emptyset}}w_\beta(\n_1)w_\beta(\n_2)\mathbbm{1}_{x\stackrel{\widehat{\n_1+\n_2}}{\longleftrightarrow}y}}{\displaystyle\sum_{\substack{\partial\n_1\cap{\rm int}(V)=\emptyset\\
\partial\n_2=\emptyset}}w_\beta(\n_1)w_\beta(\n_2)}={\bf P}_{G,\beta}^+[x\longrightarrow y].
\end{align*}
\begin{figure}\begin{center}
\includegraphics[width=0.50\textwidth]{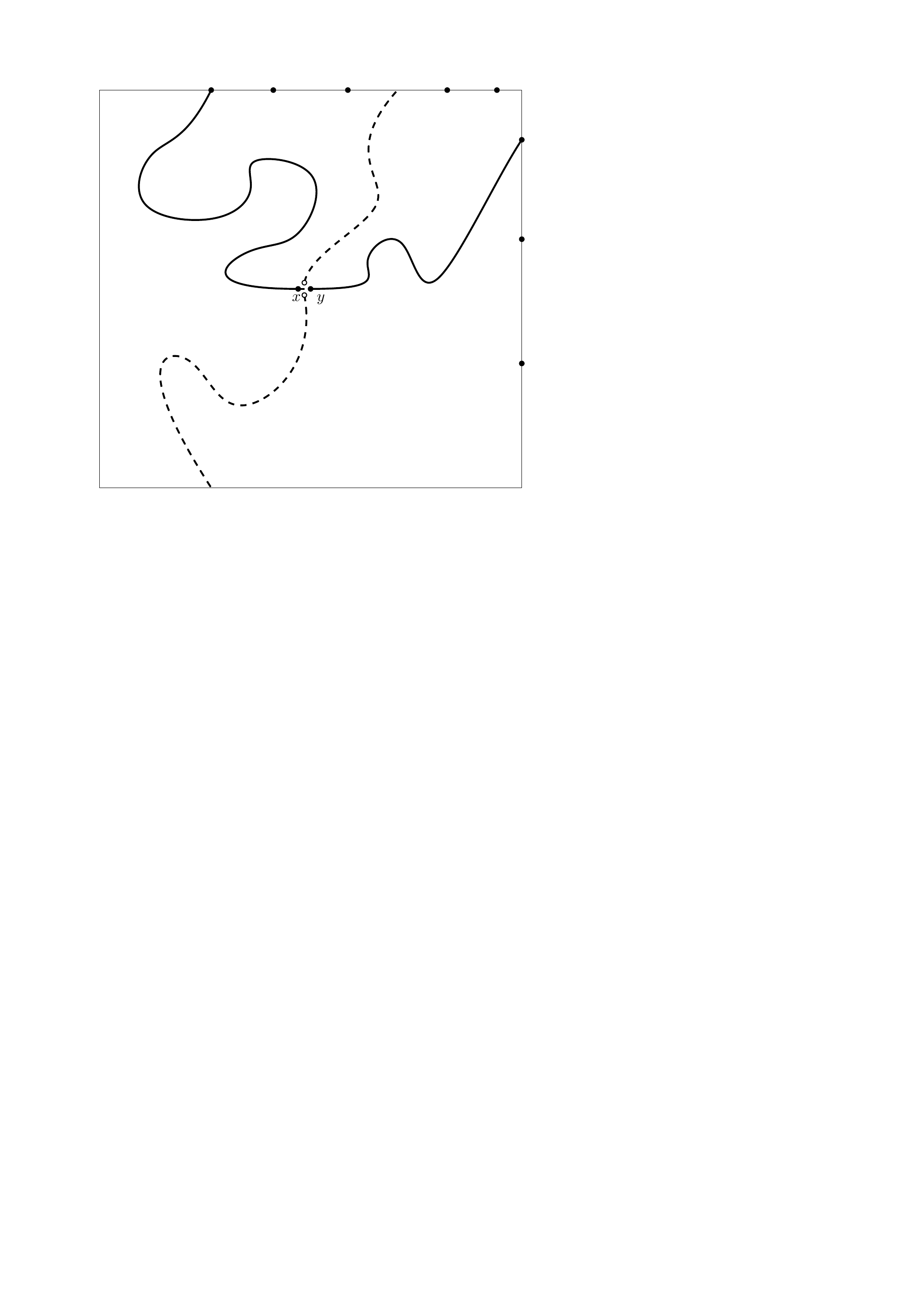}
\caption{\label{fig:eventA} Event $\calA(xy)$. The bullets in the boundary are representing the sources of $\n_1$. }\end{center}
\end{figure}
 Let us now turn to \eqref{eq:o2}, which is slightly more subtle. We may use the switching lemma (in a similar fashion to what we did above) to get
\begin{align}\mu_{G,\beta}^+[\sigma_x\sigma_y]-\mu_{G,\beta}^{\rm f}[\sigma_x\sigma_y]
&=\frac{\displaystyle\sum_{\partial\n_1\cap{\rm int}(V)=\{x,y\}}w_\beta(\n_1)}{\displaystyle\sum_{\partial\n_1\cap{\rm int}(V)=\emptyset}w_\beta(\n_1)}-\frac{\displaystyle\sum_{\partial \n_2=\{x,y\}}w_\beta(\n_2)}{\displaystyle\sum_{\partial \n_2=\emptyset}w_\beta(\n_2)} \notag \\
&=\frac{\displaystyle\sum_{\substack{\partial\n_1\cap{\rm int}(V)=\{x,y\}\\ \partial \n_2=\emptyset}}w_\beta(\n_1)w_\beta(\n_2)\mathbbm{1}_{(\n_1,\n_2)\in \calB(xy)}}{\displaystyle\sum_{\substack{\partial\n_1\cap{\rm int}(V)=\emptyset\\ \partial \n_2=\emptyset}}w_\beta(\n_1)w_\beta(\n_2)}\,,
\end{align}
where $\calB(xy)$ is the event that $x$ and $y$ are not connected in $\widehat{\n_1+\n_2}$. Note that on $\calB(xy)$, the source constraints in $\n_1$ are forcing $x$ and $y$ to be connected to sources of $\n_1$, i.e. to $\partial G$. Also note that $\n_1$ is equal to 0 on $xy$.

Consider $(\n_1,\n_2)\in \calB(xy)$ and define the set of currents $C(\n_1)$ coinciding with $\n_1$ except at $xy$, where they have an odd value. Note that currents in $C(\n_1)$ have no sources in ${\rm int}(V)$ (since $\n_1$ was equal to 0 at $xy$, and that now the current is odd) and that the trace of $\tilde\n_1+\n_2$ is in $\calA(xy)$ for any $\tilde\n_1\in C(\n_1)$. 
Finally, note that the sets $C(\n_1)$ are disjoint for different $\n_1$ and that the sum of the weights of currents in $C(\n_1)$ is equal to $\sinh(\beta)w_\beta(\n_1)$. 
Overall, changing $\n_1$ to the sum over $\tilde\n_1$ in $C(\n_1)$ in the previous sum gives the result.
 \end{proof}

%
\paragraph{Step 2: ergodic properties of the infinite volume version of ${\bf P}_{G,\beta}^+$.} We now prove that one may define an infinite-volume version of ${\bf P}_{G,\beta}^+$, denoted ${\bf P}_\beta^+$, which is invariant under translations and ergodic. We further prove that ${\bf P}_\beta^+$ contains at most one infinite cluster almost surely.

\begin{lemma}\label{def:current infinite}
The sequence of measures ${\bf P}_{\Lambda_n,\beta}^+$ converges to a measure ${\bf P}_{\beta}^+$ on $\{0,1\}^\bbE$ which is invariant under translations and ergodic.\end{lemma}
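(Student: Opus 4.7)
The plan is to identify, through the random-current expansion~\eqref{eq:8}, the finite-dimensional marginals of ${\bf P}^+_{\Lambda_n,\beta}$ with expectations of explicit local functions of the Ising measures $\mu^+_{\Lambda_n,\beta}$ and $\mu^{\rm f}_{\Lambda_n,\beta}$. All three conclusions (convergence, translation invariance, ergodicity) then follow from the already-established properties of these Ising measures. The main technical step is the identification itself.

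First I would reduce to cylinder events of the form $\calC_F:=\{\omega_e=0,\forall e\in F\}$ for finite $F\subset\bbE$. By inclusion-exclusion on the complement of $\{\omega_e=0\}$, every cylinder event is a finite linear combination of such $\calC_F$, so convergence of ${\bf P}^+_{\Lambda_n,\beta}[\calC_F]$ suffices. Using $\omega=\widehat{\n_1+\n_2}$ together with the independence of $\n_1$ and $\n_2$, and the observation that $\widehat{\n_1+\n_2}_e=0$ iff $\n_{1,e}=\n_{2,e}=0$, the probability ${\bf P}^+_{G,\beta}[\calC_F]$ factors into a product of two ratios $Z^\bullet_{\rm c}(G\setminus F)/Z^\bullet_{\rm c}(G)$ of random-current partition functions (on the graph with the edges in $F$ removed). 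Combining this with the random-current/Ising correspondences $Z^+_{\rm c}(H)=2^{-|{\rm int}(V)|}Z^+_{\rm Ising}(H,\beta)$ (from \eqref{eq:88}) and $Z^{\rm f}_{\rm c}(H)=2^{-|V|}Z^{\rm f}_{\rm Ising}(H,\beta)$ (from \eqref{eq:8}), I would derive the key identity
\begin{equation}\label{eq:planform}
{\bf P}^+_{G,\beta}[\calC_F]~=~\mu^+_{G,\beta}\Big[\prod_{xy\in F}e^{-\beta\sigma_x\sigma_y}\Big]\cdot\mu^{\rm f}_{G,\beta}\Big[\prod_{xy\in F}e^{-\beta\sigma_x\sigma_y}\Big].
\end{equation}

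Second I would extract the three conclusions from \eqref{eq:planform}. For \emph{convergence}, each factor is the expectation of a bounded local function under an Ising measure; $\mu^+_{\Lambda_n,\beta}\to\mu^+_\beta$ by FKG monotonicity in boundary conditions, and $\mu^{\rm f}_{\Lambda_n,\beta}\to\mu^{\rm f}_\beta$ via the Edwards--Sokal coupling with $\phi^0_{\Lambda_n,p,2}$ together with Proposition~\ref{wired infinite}. Thus ${\bf P}^+_{\Lambda_n,\beta}[\calC_F]$ converges, and Kolmogorov's extension theorem produces a probability measure ${\bf P}^+_\beta$ on $\{0,1\}^\bbE$ with the prescribed finite-dimensional marginals. \emph{Translation invariance} is immediate, since \eqref{eq:planform} only involves translation-invariant Ising measures evaluated at shifted local functions.

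For \emph{ergodicity}, I would prove the mixing property \eqref{eq:mixing}. Applying \eqref{eq:planform} to $\calC_F\cap\tau_x\calC_{F'}=\calC_{F\sqcup\tau_x F'}$ reduces the question to showing that, for the specific local functions $f=\prod e^{-\beta\sigma_u\sigma_v}$ and $g=\prod e^{-\beta\sigma_u\sigma_v}$ involved, both $\mu^+_\beta[f\cdot g\circ\tau_x]\to\mu^+_\beta[f]\mu^+_\beta[g]$ and $\mu^{\rm f}_\beta[f\cdot g\circ\tau_x]\to\mu^{\rm f}_\beta[f]\mu^{\rm f}_\beta[g]$. The first is the mixing of $\mu^+_\beta$, which follows from its tail-triviality as an extremal Gibbs state. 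The crucial observation for the second is that $f$ and $g$ are invariant under the global spin flip $\sigma\mapsto-\sigma$; decomposing $\mu^{\rm f}_\beta$ into translation-invariant extremal Gibbs states (at low temperature this is simply $\tfrac12(\mu^+_\beta+\mu^-_\beta)$), flip-invariance of $f,g$ makes $\mu^\alpha[f]\mu^\alpha[g]$ constant across the ergodic components, so the mixing of each component transfers to $\mu^{\rm f}_\beta$ acting on these test functions. Mixing implies ergodicity.

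The only serious obstacle is the careful bookkeeping behind \eqref{eq:planform}: matching the source constraint $\partial\n_1\cap{\rm int}(V)=\emptyset$ with the $+$-boundary Ising partition function (exactly as in Step~1 of Section~\ref{sec:continuity Ising}), and checking that deleting a finite set of edges translates cleanly to the insertion of the factor $\prod e^{-\beta\sigma_u\sigma_v}$ inside the Ising expectation. Once \eqref{eq:planform} is established, the lemma reduces to known properties of infinite-volume Ising measures.
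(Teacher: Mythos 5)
Your identification of the finite-dimensional marginals, and the way you deduce convergence and translation invariance from the corresponding properties of the infinite-volume Ising measures, is correct and is essentially the paper's route: the paper establishes the same identity for a single current (phrased for its parity $\eta$, which produces an extra $\cosh(\beta)^{|F|}$ that your direct computation with the trace avoids) and then reads off convergence from the convergence of $\mu^{\rm f}_{\Lambda_n,\beta}$ and $\mu^+_{\Lambda_n,\beta}$.

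The gap is in your mixing argument for the free-boundary factor, i.e.\ for $\mu^{\rm f}_\beta\big[f\cdot g\circ\tau_x\big]\to\mu^{\rm f}_\beta[f]\,\mu^{\rm f}_\beta[g]$ with $f=\prod_{uv\in F}e^{-\beta\sigma_u\sigma_v}$. Decomposing $\mu^{\rm f}_\beta$ into mixing components only yields $\int\mu^\alpha[f]\,\mu^\alpha[g]\,d\rho(\alpha)$ in the limit, and flip-invariance of $f$ and $g$ by itself says nothing about the constancy of $\alpha\mapsto\mu^\alpha[f]$ over an arbitrary decomposition; that step is justified only if you already know that the decomposition is exactly $\tfrac12(\mu^+_\beta+\mu^-_\beta)$ (or a point mass in the uniqueness regime). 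That classification of translation-invariant Gibbs states is the Aizenman--Higuchi/Bodineau theorem, which is far deeper than the lemma you are proving and is not available in these notes; your hedge ``at low temperature'' does not cover all $\beta$, and asserting constancy of $\mu^\alpha[f]$ across components is essentially equivalent to the triviality of the even invariant $\sigma$-algebra that you are trying to establish. The paper sidesteps all of this: it proves mixing of $\mu^{\rm f}_\beta$ on \emph{even} spin observables directly from the random-cluster coupling, using $\mu^{\rm f}_\beta[\sigma_A\sigma_{x+B}]=\phi^0_{p,2}[\calF_{A\cup(x+B)}]$ together with the mixing of $\phi^0_{p,2}$ (Lemma~\ref{ergodicity}) and the uniqueness of the infinite cluster (Theorem~\ref{thm:uniqueness}), all already established; expanding $\prod_{uv\in F}e^{-\beta\sigma_u\sigma_v}$ in the basis $\{\sigma_A:|A|\text{ even}\}$ then gives exactly what you need. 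You should replace your decomposition argument by this. (Your appeal to extremality and tail-triviality of $\mu^+_\beta$ for the plus factor is a standard fact and not circular, but it too imports machinery the notes do not develop; the same random-cluster route keeps the proof self-contained.)
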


\begin{proof} Note that $\omega$ with law ${\bf P}_{G,\beta}^+$ is obtained as the union of two independent configurations obtained by taking the trace of only one current, either sourceless or with sources located on $\partial G$. Furthermore, conditioned on the parity of the currents (which we recall is simply the high-temperature expansion $\eta$), whether a current is positive or not is decided independently for each edge (if the current is odd, it must be positive, otherwise it is positive with probability $1-\cosh(\beta)^{-1}$). Therefore, convergence, invariance under translation and ergodicity of the limit of the laws of the parity of each current is implying the claim. Since the proof is the same in both cases, we focus on the current defined on $G$.

Let ${\rm P}^{\rm f}_{G,\beta}$ be the measure defined for  $\eta\in\{0,1\}^E$ by
$${\rm P}^{\rm f}_{G,\beta}[\eta]=\frac{\displaystyle\tanh(\beta)^{o(\eta)}}{\displaystyle\sum_{\partial\eta=\emptyset}\tanh(\beta)^{o(\eta)}}.$$
For a set of edges $F$, define the event $\calC_F$ that $\eta_e=0$ for each $e\in F$. We deduce from Lemma~\ref{high temperature} that
\begin{equation}  \label{eq:ProbE}
{\rm P}^{\rm f}_{G,\beta}[\calC_F] \ 
= \      \mu_{G,\beta}^{\rm f}\big[e^{  -\beta K_F   }\big] \cosh (\beta)^{|F|}\end{equation} 
with 
$ 
K_F(\sigma) := \sum_{xy\in F}    \sigma_x \sigma_y \, . $
The convergence of the above expression follows now directly from the convergence of Ising measures as $G\nearrow\bbZ^d$.  Since the events $\calC_F$ with $F$ finite generate the $\sigma$-algebra, we obtain the convergence of measures. The invariance under translations of $\mu_\beta^{\rm f}$ implies immediately the invariance under translations of the limiting measure ${\rm P}^{\rm f}_{\beta}$.

To prove ergodicity, we prove that the measure is mixing, which we only need to prove for events of the form $\calC_F$.
Fix two finite sets $F$ and $F'$ of edges. 
Using the  expression \eqref{eq:ProbE} for $x$ large enough so that  $F\cap (x+F') =\emptyset$, we find
\begin{equation}   \label{eq:ProbE2}
\frac {  {\rm P}^{\rm f}_{\beta}[\calC_{F\cup(x+F')}]  } 
       {{\rm P}^{\rm f}_\beta[\calC_F] {\rm P}^{\rm f}_\beta[\calC_F'] }   
=     
  \frac {\mu_\beta^{\rm f}\big[e^{   -\beta K_F} e^{   -\beta K_{x+F'}} \big] } {  \mu_\beta^{\rm f}\big[ e^{-\beta K_F } \big]    \mu_\beta^{\rm f}\big[ e^{-\beta K_{F'} } \big]  } .
\end{equation}
Ergodicity can therefore be presented as an implication of the  statement that this ratio tends to 1.   
To see this convergence, observe that $\mu_\beta^{\rm f}$ is itself mixing for functions of the spin-space that are even. Indeed, consider $A$ and $B$ two sets of even cardinality and $x$ large enough that $A\cap(x+B)=\emptyset$. The coupling with the random-cluster model gives
$$\frac{\mu_\beta^{\rm f}[\sigma_A\sigma_{x+B}]}{\mu_\beta^{\rm f}[\sigma_A]\mu_\beta^{\rm f}[\sigma_B]}=\frac{\phi_{p,2}^0[\calF_{A\cup(x+B)}]}{\phi_{p,2}^0[\calF_A]\phi_{p,2}^0[\calF_B]}\longrightarrow 1,
$$
with $p=1-e^{-2\beta}$, where the last convergence is due to the mixing property of $\phi_{p,2}^{\rm f}$ and the fact that 
$$\phi_{p,2}^0[\calF_{A\cup(x+B)}\setminus \calF_{A}\cap\calF_{x+B}]\longrightarrow 0.$$
(This last fact is due to the fact that this event is included in the event that there are two disjoint clusters going from $A$ to distance $\|x\|/2$, which by uniqueness of the infinite cluster, has a probability going to zero as $x$ goes to infinity.) Since the random-variables $\sigma_A$ for $|A|$ even generate the $\sigma$-algebra of even functions of the spin-space, the result follows.
\end{proof} 

The ergodicity of ${\bf P}_\beta^+$ implies that an infinite cluster exists with probability either 0 or 1. We now prove that when it exists, it is unique almost surely.
\begin{lemma}\label{thm:percolation0}
For any $\beta>0$, there exists at most one infinite cluster ${\bf P}_\beta$-almost surely.
\end{lemma}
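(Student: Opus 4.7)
The plan is to adapt the Burton--Keane argument used for the random-cluster model in Theorem~\ref{thm:uniqueness}. From Lemma~\ref{def:current infinite}, ${\bf P}_\beta^+$ is translation-invariant and ergodic, so the number of infinite clusters is almost surely a constant $N \in \{0,1,2,\ldots\}\cup\{\infty\}$, and I want to exclude $2 \le N \le \infty$.

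The key preliminary step is the finite-energy property: there exists $c_\beta > 0$ such that for every $e \in \bbE$ and a.e.~configuration,
\[
{\bf P}_\beta^+[\omega_e = 1 \mid \omega_{\bbE\setminus\{e\}}] \in [c_\beta,\,1-c_\beta].
\]
I will verify this at the finite-volume level, uniformly in $n$, and pass to the limit. In finite volume, ${\bf P}_{\Lambda_n,\beta}^+$ is the push-forward under $(\n_1,\n_2) \mapsto \widehat{\n_1+\n_2}$ of the measure with weights $w_\beta(\n_1)w_\beta(\n_2)$, subject to $\partial\n_1\cap \mathrm{int}(V)=\emptyset$ and $\partial\n_2=\emptyset$. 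Conditionally on the restriction of $(\n_1,\n_2)$ to $\bbE\setminus\{e\}$, the two values $\n_1(e)$ and $\n_2(e)$ are independent, each distributed as a Poisson$(\beta)$ law conditioned to have a prescribed parity (determined by the parities of the endpoint-degrees in the rest of that current). Comparing the weights of the values $\{0,1\}$ with those of $\{2,3\}$ yields both insertion tolerance and deletion tolerance with an explicit constant depending only on $\beta$, uniformly in the configuration and in $n$.

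Once finite energy is in hand, the rest is a direct translation of the proof of Theorem~\ref{thm:uniqueness}. First, I will rule out $2\le N<\infty$: by ergodicity both $\{N<\infty\}$ and $\{N\le 1\}$ have probability in $\{0,1\}$, and if $\P[N<\infty]=1$ one can choose $n$ large enough that all infinite clusters intersect $\Lambda_n$ with probability at least $\tfrac12$; forcing every edge of $E_n$ open via finite energy produces the event $\{N\le 1\}$ with positive probability, hence a.s., so $N\le 1$. Second, to rule out $N=\infty$, I will reproduce the trifurcation construction: assuming $\P[\calE_\infty]>0$, pick $n$ so that three distinct infinite clusters meet $\partial\Lambda_n$ at pairwise-distant points with positive probability, then use finite energy on the edges of $E_n$ to build a trifurcation at $0$, giving $\P[\calT_0]>0$. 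The combinatorial Burton--Keane bound on trifurcations in a forest still applies, giving $\P[\calT_0]\le |\partial\Lambda_n|/|\Lambda_n|\to 0$, a contradiction.

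The main obstacle I anticipate is the finite-energy verification, because ${\bf P}_\beta^+$ is defined only indirectly through the trace of a pair of constrained currents rather than directly as a percolation measure; in particular a single edge $e$ must absorb the parity constraints inherited from the rest of the currents. Once one observes that the source constraints at the endpoints of $e$ only fix the parity of $\n_i(e)$ (and never forbid both values $\omega_e=0$ and $\omega_e=1$), the explicit Poisson-type weights $\beta^k/k!$ give the uniform bounds needed, and the Burton--Keane machinery then runs without change.
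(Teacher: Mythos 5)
Your reduction to a Burton--Keane argument is the right overall strategy and matches the paper's, and the first half of your argument (excluding $2\le N<\infty$ by forcing all edges of $E_n$ open) is sound, since it uses only insertion tolerance. The gap is in your finite-energy claim. Conditionally on the restriction of the pair of currents to $\bbE\setminus\{e\}$, the value $\n_i(e)$ is indeed distributed as a Poisson$(\beta)$ variable conditioned on a prescribed parity; but when that parity is odd, $\n_i(e)\ge 1$ is forced, hence $\omega_e=1$ is forced and $\omega_e=0$ is \emph{forbidden}. So deletion tolerance fails outright at the level of currents, and passing to the coarser conditioning on the trace $\omega_{|\bbE\setminus\{e\}}$ would require a uniform lower bound on the conditional probability (given the trace) that both parity constraints at $e$ are even --- precisely the non-trivial point your "comparing weights of $\{0,1\}$ with $\{2,3\}$" step glosses over. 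The paper establishes only the insertion bound \eqref{eq:insertion tolerance} and explicitly warns that ${\bf P}_\beta^+[\omega_{xy}=1\,|\,\calA]$ may a priori be arbitrarily close to $1$; no uniform deletion bound is claimed or used.

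This matters because the trifurcation construction you import from Theorem~\ref{thm:uniqueness} requires \emph{closing} all edges of $E_n$ off the three chosen paths, which needs exactly the deletion tolerance you have not established. The paper's fix is to replace trifurcations by \emph{coarse} trifurcations: $x$ is a coarse trifurcation if all edges of the translate of $E_N$ around $x$ are open (insertions only) and the configuration outside that box contains at least three infinite clusters meeting the translate of $\Lambda_N$. Insertion tolerance alone produces these with positive probability when infinitely many infinite clusters exist, and the deterministic Burton--Keane count is adjusted: since up to $|\Lambda_N|$ coarse trifurcations can meet a given one, the bound on their number in $\Lambda_n$ becomes $|\Lambda_N|\cdot|\partial\Lambda_n|$, which still vanishes after dividing by $|\Lambda_n|$. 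You should either adopt this modification or supply an actual proof of uniform deletion tolerance, which is not a routine computation.
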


\begin{proof} The proof of this theorem follows from the Burton-Keane argument presented in the proof of Theorem~\ref{thm:uniqueness}. The only difference lies in the fact that we do not have the finite energy property \eqref{eq:finite energy} anymore. Nevertheless, we have the following insertion tolerance claim, which we leave as an exercise (see Exercise~\ref{exo:insertion tolerance}): for any event $\calA$ depending on edges different from $xy$,
\begin{equation}\label{eq:insertion tolerance}\tag{IT}
{\bf P}_\beta^+[\omega_{xy}=1|\calA]\ge c_{\rm IT}.
\end{equation}
Note that the probability on the left may a priori be arbitrarily close to 1, but the previous lower bound is sufficient for our purpose. 

Recall the notation from the proof of Theorem~\ref{thm:uniqueness}. First, one may check that the proof that ${\bf P}_\beta^+[\calE_{<\infty}\setminus\calE_{\le 1}]=0$ is the same. To exclude the possibility of an infinite number of infinite clusters, one cannot really work with trifurcations anymore since constructing them would require the finite energy rather than the insertion tolerance. Nevertheless, one can work with a notion of coarse trifurcation, where $0$ is a {\em coarse trifurcation} if edges in $E_N$ are open, and $\omega\setminus E_N$ contains at least three infinite clusters intersecting $\Lambda_N$. Similarly, one defines the fact that $x$ is a coarse trifurcation. Note that if there is an infinite number of infinite clusters, then the construction of Theorem~\ref{thm:uniqueness} and \eqref{eq:insertion tolerance} imply that coarse trifurcations occur with positive probability. The end of the proof works the same, except that up to $|\Lambda_N|$ coarse trifurcations can intersect a fixed coarse trifurcation, so that the deterministic bound for the number of trifurcations is now $|\Lambda_N|\cdot|\partial\Lambda_n|$.\end{proof} 
 
\paragraph{Step 3: conclusion of the proof.} The proof now follows readily. We work by contraposition and assume that (LRO$_{\beta}$) does not hold. Then, taking the limit of \eqref{eq:o1} (as $G\nearrow\bbZ^d$) together with ergodicity and the uniqueness of the infinite cluster (when it exists) implies that there is no infinite cluster for ${\bf P}_\beta^+$ almost surely. Now, the limit as $G$ tends to $\bbZ^d$ of $\calA(xy)$ is included in the event that there exists an infinite cluster. Therefore, by taking the limit as $G$ tends to $\bbZ^d$ in \eqref{eq:o2}, we obtain that $\mu^{\rm f}_\beta[\sigma_x\sigma_y]=\mu^+_\beta[\sigma_x\sigma_y]$ for any $xy\in\bbE$. 

To conclude, note that 
this translates into the fact that 
$\phi^0_{p,2}[x\leftrightarrow y]=\phi^1_{p,2}[x\leftrightarrow y]$. Yet, a simple computation\footnote{Or more elegantly the use of Exercise~\ref{eq:reverse ES}, which states that $\phi^0_{p,2}[\omega_{xy}]=p\cdot\mu^{\rm f}_{\beta}[\sigma_x=\sigma_y]$, which combined with $\mu^{\rm f}_{\beta}[\sigma_x=\sigma_y]=2\mu^{\rm f}_{\beta}[\sigma_x\sigma_y]-1=2\phi^0_{p,2}[x\leftrightarrow y]-1$ gives the requested equality.} gives that 
$$\phi^0_{p,2}[\omega_{xy}]=\tfrac{p}2(1+\phi^0_{p,2}[x\longleftrightarrow y]).$$
and similarly for $\phi^1_{p,2}$ so that $\phi^0_{p,2}[\omega_{xy}]=\phi^1_{p,2}[\omega_{xy}]$. We already argued in the proof of Theorem~\ref{uniqueness Dq} that this implies $\phi^0_{p,2}=\phi^1_{p,2}$. In particular, for any $x,y\in\bbZ^d$, 
$$\mu^+_\beta[\sigma_x\sigma_y]=\phi^1_{p,2}[x\longleftrightarrow y]=\phi^0_{p,2}[x\longleftrightarrow y]=\mu^{\rm f}_\beta[\sigma_x\sigma_y]$$
tends to 0 as $\|x-y\|$ tends to infinity (by the infrared bound). By uniqueness of the infinite connected component, we deduce that $\phi^1_{p,2}[0\longleftrightarrow\infty]=0$, which is the claim.
\bexo

\begin{exercise}\label{exo:insertion tolerance}
Prove \eqref{eq:insertion tolerance} using an argument similar to the proof of \eqref{eq:o2}.
\end{exercise}

\begin{exercise}
Consider the Ising model with a magnetic field $h$, i.e.~the model with Hamiltonian
$$H_{G,h}(\sigma)=H_G(\sigma)-\sum_{x\in V} h\sigma_x.$$
We denote the infinite-volume measure by $\mu^{\rm f}_{\beta,h}$.
\medbreak\noindent
1. Interpret the correlations of the model in terms of random currents on the graph $G^h$ with vertex-set $V\cup\{\mathfrak g\}$ and edge-set given by $E\cup\{x\mathfrak g:x\in V\}$. What is the weight $w_{\beta,h}(\n)$ of a current?
\medbreak\noindent
2. Following a reasoning similar to the proof of \eqref{eq:o2}, show that $\mu_{\beta,h}^{\rm f}[\sigma_x\sigma_0]-\mu_{\beta,h}^{\rm f}[\sigma_x]\mu_{\beta,h}^{\rm f}[\sigma_0]$ can be reinterpreted as the probability under a system of duplicated currents that $x$ is connected to $0$ but not to $\mathfrak g$.
\medbreak\noindent
3. Using insertion tolerance, prove that this probability decays exponentially fast in $\|x\|$.
\end{exercise}
\eexo

\subsection{Polynomial decay at criticality for $d\ge3$}

It is natural to ask how fast the spin-spin correlations decay at criticality. We will prove the following

\begin{theorem}\label{thm:pol decay}
For $d\ge3$, there exists $c,C\in(0,\infty)$ such that for every $x\in\bbZ^d$,
\begin{equation}
\frac{c}{\|x\|^{d-1}}\le \mu_{\beta_c}^{\rm f}[\sigma_0\sigma_x]\le \frac{C}{\|x\|^{d-2}}.
\end{equation}
\end{theorem}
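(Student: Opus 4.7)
The upper bound follows from the infrared bound \eqref{eq:infrared bound beta_c}: $\mu_{\beta_c}^{\rm f}[\sigma_0\sigma_x]\le (C/\beta_c)G(0,x)$, where $G$ is the Green function of the simple random walk on $\bbZ^d$. For $d\ge 3$, the walk is transient and a classical computation gives $G(0,x)\asymp\|x\|^{-(d-2)}$, yielding the required estimate.

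For the matching lower bound I plan to combine the random-cluster representation $\mu^{\rm f}_{\beta_c}[\sigma_0\sigma_x]=\phi^0_{p_c,2}[0\leftrightarrow x]$, the Messager--Miracle inequalities \eqref{eq:MM}, and the Simon inequality for the Ising model: for every finite $S\ni 0$ and $y\notin S$,
\begin{equation*}
\mu_\beta^{\rm f}[\sigma_0\sigma_y]\;\le\;\sum_{x\in\partial S}\mu_{S,\beta}^{\rm f}[\sigma_0\sigma_x]\,\mu_\beta^{\rm f}[\sigma_x\sigma_y].
\end{equation*}
This inequality is not established in the text, but it is a classical consequence of the switching lemma (Lemma~\ref{lem:switching}): one expands $\mu^{\rm f}_\beta[\sigma_0\sigma_y]$ as a sum over currents with sources $\{0,y\}$ (using \eqref{eq:erg}), decomposes according to the first vertex $x\in\partial S$ met by the mandatory connection from $0$ to $y$ in a duplicated-current configuration with sources $\{0,y\}$ and $\emptyset$, and then applies switching to factor the contribution inside $S$. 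Setting $\psi_\beta(S):=\sum_{x\in\partial S}\mu^{\rm f}_{S,\beta}[\sigma_0\sigma_x]$ and iterating this inequality along shells of size $R$, in the spirit of Step~1 of the first proof of Theorem~\ref{thm:perco}, gives, whenever $\psi_\beta(\Lambda_R)<1$, the exponential bound $\mu^{\rm f}_\beta[\sigma_0\sigma_{kRe_1}]\le \psi_\beta(\Lambda_R)^k$.

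I claim $\psi_{\beta_c}(\Lambda_R)\ge 1$ for every $R\ge 1$. Indeed, $\beta\mapsto\psi_\beta(\Lambda_R)$ is continuous (even analytic), so were $\psi_{\beta_c}(\Lambda_R)<1$, the same would persist in an open neighborhood of $\beta_c$, producing exponential decay of $\mu^{\rm f}_\beta[\sigma_0\sigma_x]$ for some $\beta>\beta_c$. But for $\beta>\beta_c$, FKG, the uniqueness of the infinite cluster (Theorem~\ref{thm:uniqueness}) and \eqref{eq:oiu} give
\begin{equation*}
\mu_\beta^{\rm f}[\sigma_0\sigma_x]\;=\;\phi^0_{p,2}[0\leftrightarrow x]\;\ge\;\phi^0_{p,2}[0\leftrightarrow\infty,\,x\leftrightarrow\infty]\;\ge\;\phi^0_{p,2}[0\leftrightarrow\infty]^2\;>\;0
\end{equation*}
uniformly in $x$, a contradiction. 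Passing to infinite volume via the monotonicity $\mu^{\rm f}_{\Lambda_R,\beta_c}[\sigma_0\sigma_x]\le\mu^{\rm f}_{\beta_c}[\sigma_0\sigma_x]$ (which follows from $\phi^0_{\Lambda_R,p_c,2}\nearrow\phi^0_{p_c,2}$ on increasing events), the bound $\psi_{\beta_c}(\Lambda_R)\ge 1$ upgrades to
\begin{equation*}
\sum_{x\in\partial\Lambda_R}\mu_{\beta_c}^{\rm f}[\sigma_0\sigma_x]\;\ge\;1\qquad\text{for every }R\ge 1.
\end{equation*}

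Now the first half of \eqref{eq:MM} bounds every summand above by the axial correlation $\mu_{\beta_c}^{\rm f}[\sigma_0\sigma_{Re_1}]$, and since $|\partial\Lambda_R|\le CR^{d-1}$, this produces $\mu_{\beta_c}^{\rm f}[\sigma_0\sigma_{Re_1}]\ge cR^{-(d-1)}$. The second half of \eqref{eq:MM} then transfers the estimate from the axial direction to any $x$: setting $n:=\|x\|_\infty\le\|x\|$,
\begin{equation*}
\mu_{\beta_c}^{\rm f}[\sigma_0\sigma_x]\;\ge\;\mu_{\beta_c}^{\rm f}[\sigma_0\sigma_{dne_1}]\;\ge\;c(dn)^{-(d-1)}\;\ge\;c\,d^{-(d-1)}\|x\|^{-(d-1)},
\end{equation*}
which is the desired lower bound. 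The main obstacle is the Simon inequality itself, whose derivation via random currents requires one more careful application of switching than is done in Section~4.2 of the text; the rest of the argument is a straightforward continuity-plus-Messager--Miracle combination built on tools already available in the notes.
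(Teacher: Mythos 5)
Your upper bound is exactly the paper's (infrared bound plus transience), and your lower bound follows the same architecture as the paper's proof: a Simon-type inequality forces $\sum_{x\in\partial\Lambda_R}\mu^{\rm f}_{\beta_c}[\sigma_0\sigma_x]\ge 1$ for all $R$ (else exponential decay would leak above $\beta_c$, contradicting percolation of $\phi^0_{p,2}$ there via \eqref{eq:oiu} and uniqueness of the infinite cluster), and then \eqref{eq:MM} converts this into the pointwise bound. Your bookkeeping at the end ($|\partial\Lambda_R|\le CR^{d-1}$, then transferring from the axial direction) is correct and in fact matches the exponent $d-1$ in the statement more cleanly than the paper's own display.

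The one step where you genuinely diverge is also where the gap sits. The inequality you invoke, with the \emph{finite-volume} factor $\mu^{\rm f}_{S,\beta}[\sigma_0\sigma_x]$, is Lieb's inequality \eqref{eq:Lieb}, not Simon's \eqref{eq:Simon}; the switching argument you sketch (decompose on a mandatory boundary crossing, switch once) yields only Simon's version, with $\mu^{\rm f}_{G,\beta}$ in both factors — upgrading the first factor to $\mu^{\rm f}_{S,\beta}$ is precisely the hard part of Lieb's inequality, which the notes explicitly decline to prove. This matters because your continuity argument leans entirely on the finite-volume nature of $\psi_\beta(\Lambda_R)$: with only Simon's inequality, the relevant quantity $\varphi_\beta(\Lambda_R)=\sum_{x\in\partial\Lambda_R}\mu^{\rm f}_\beta[\sigma_0\sigma_x]$ is an infinite-volume free-boundary correlation, which is a supremum of continuous functions and hence only left-continuous in $\beta$ a priori; the persistence of $\varphi_\beta(\Lambda_R)<1$ to the right of $\beta_c$ is exactly what is not automatic. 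The paper closes this by bounding $\varphi_\beta(\Lambda_R)$ by the plus-boundary-condition sum, using that $\beta\mapsto\mu^+_\beta[\sigma_0\sigma_x]$ is right-continuous (an infimum of continuous increasing functions) and that $\mu^+_{\beta_c}=\mu^{\rm f}_{\beta_c}$, which is the content of Theorem~\ref{thm:continuous}. So you have two consistent options: either prove Lieb's inequality in full (your route then genuinely bypasses Theorem~\ref{thm:continuous} at this step, a real simplification of the logical dependencies), or keep Simon's inequality and add the right-continuity detour through $\mu^+_{\beta}$. As written, the proposal mixes the two and the continuity claim is unjustified.
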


\begin{proof}
The upper bound is provided by the infrared bound \eqref{eq:infrared bound}. For the lower bound, we invoke Simon's inequality (see Exercise~\ref{exo:simon}), stating that for every $n\ge1$, 
$$\mu_{\beta}^{\rm f}[\sigma_0\sigma_x]\le \sum_{y\in \partial\Lambda_n}\mu_{\beta}^{\rm f}[\sigma_0\sigma_y]\,\mu_{\beta}^{\rm f}[\sigma_y\sigma_x].$$
Assume that 
$\varphi_\beta(\Lambda_n):= \sum_{y\in \partial\Lambda_n}\mu_{\beta}^{\rm f}[\sigma_0\sigma_y]<1$ for some $n$, and observe that by a reasoning similar to the first step of Section~\ref{sec:2.2}, we find that if $x\in \Lambda_{kn}$,
$$\mu_{\beta}^{\rm f}[\sigma_0\sigma_x]\le \varphi_\beta(\Lambda_n)^{k}.$$
Now, $\beta\mapsto\mu_{\beta}^{\rm +}[\sigma_0\sigma_x]$ is continuous from the right since it is the infimum of the continuous increasing functions $\beta\mapsto \mu_{G,\beta}^{\rm +}[\sigma_0\sigma_x]$. We deduce that 
$$\lim_{\beta\searrow\beta_c} \varphi_\beta(\Lambda_n)\le \lim_{\beta\searrow\beta_c} \sum_{y\in \partial\Lambda_n}\mu_{\beta}^+[\sigma_0\sigma_y]=\sum_{y\in \partial\Lambda_n}\mu_{\beta_c}^+[\sigma_0\sigma_y]=\varphi_{\beta_c}(\Lambda_n),$$
where in the last equality we used that $\mu_{\beta_c}^{\rm f}=\mu_{\beta_c}^{\rm +}$. 

Therefore, if $\varphi_{\beta_c}(\Lambda_n)<1$, then $\varphi_\beta(\Lambda_n)<1$ for some $\beta>\beta_c$. By the reasoning above, this would imply that correlations decay exponentially fast for $\beta>\beta_c$, which is absurd. In conclusion, 
$\varphi_{\beta_c}(\Lambda_n)\ge 1$ for every $n\ge1$.

The Messager-Miracle inequality \eqref{eq:messager-miracle} used twice (more precisely \eqref{eq:MM}) implies that for any $y\in\partial\Lambda_n$,
\begin{equation}\label{eq:papa}\mu_{\beta_c}^{\rm f}[\sigma_0\sigma_{ne_1}]\ge \mu_{\beta_c}^{\rm f}[\sigma_0\sigma_{y}]\ge \mu_{\beta_c}^{\rm f}[\sigma_0\sigma_{dne_1}]\end{equation}
where $e_1=(1,0,\dots,0)$. 
The left inequality together with $\varphi_{\beta_c}(\Lambda_n)\ge 1$ imply that 
$$\mu_{\beta_c}^{\rm f}[\sigma_0\sigma_{ne_1}]\ge \frac1{|\Lambda_n|}$$ 
for every $n$. The proof follows readily from the right inequality of \eqref{eq:papa}.
\end{proof}
\bexo
\begin{exercise}[Simon's inequality] \label{exo:simon}Using the switching lemma, prove Simon's inequality: for any set $S$ disconnecting $x$ from $y$ (in the sense that any path from $x$ to $y$ intersects $S$), 
\begin{equation}\label{eq:Simon}
\mu_{G,\beta}^{\rm f}[\sigma_x\sigma_z]\le \sum_{y\in S}\mu_{G,\beta}^{\rm f}[\sigma_x\sigma_y]\,\mu_{G,\beta}^{\rm f}[\sigma_y\sigma_z].\tag{Simon}
\end{equation}
\end{exercise}
A slightly stronger inequality, called Lieb's inequality, can also be obtained using random currents (the proof is more difficult). The improvement lies in the fact that $\mu_{G,\beta}^{\rm f}[\sigma_x\sigma_y]$ can be replaced by $\mu_{S,\beta}^{\rm f}[\sigma_x\sigma_y]$:
\begin{equation}\label{eq:Lieb}
\mu_{G,\beta}^{\rm f}[\sigma_x\sigma_z]\le \sum_{y\in S}\mu_{S,\beta}^{\rm f}[\sigma_x\sigma_y]\,\mu_{G,\beta}^{\rm f}[\sigma_y\sigma_z].\tag{Lieb}
\end{equation}
\eexo
In fact, one can prove much more in dimension $d\ge5$, and therefore the previous theorem is mostly interesting in three dimension. 
\begin{theorem}[Aizenman, Fernandez \cite{AizFer88}]\label{thm:pol decay exact}
For any $d\ge5$, there exist constants $c_1,c_2\in(0,\infty)$ such that for any $x\in\bbZ^d$,
$$\frac{c_1}{\|x\|^{d-2}}\le \mu_{\beta_c}^{\rm f}[\sigma_0\sigma_x]\le\frac{c_2}{\|x\|^{d-2}}.$$
\end{theorem}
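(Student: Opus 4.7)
The plan is as follows. The upper bound $G(x) := \mu_{\beta_c}^{\rm f}[\sigma_0\sigma_x] \le c_2\|x\|^{-(d-2)}$ follows from the infrared bound (IR) at $\beta_c$ together with the standard $\|x\|^{-(d-2)}$ decay of the lattice Green function of simple random walk on $\bbZ^d$ (this requires $d\ge 3$, and the averaging built into (IR) can be removed pointwise via Messager--Miracle, in the spirit of the proof of Theorem~\ref{thm:pol decay}). The substantial content of the theorem is therefore the matching lower bound $G(x) \ge c_1\|x\|^{-(d-2)}$.

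The first step I would carry out is to verify the \emph{bubble condition} at criticality,
\[
B(\beta_c) \;:=\; \sum_{x\in\bbZ^d} G(x)^2 \;<\; \infty .
\]
This follows immediately from the infrared bound, since $G(x)^2 \le C^2\|x\|^{-2(d-2)}$ is summable on $\bbZ^d$ precisely when $2(d-2)>d$, i.e.\ when $d\ge 5$. The bubble condition is the classical ``mean-field'' criterion above which one expects the Ising two-point function to have Gaussian-like behaviour.

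The second step is the key one. Using the random current representation and the switching lemma (Lemma~\ref{lem:switching}), I would derive an Ornstein--Zernike type convolution identity for $G$. Heuristically, a current $\n$ with $\partial\n=\{0,x\}$ decomposes into its ``backbone'' (a simple path from $0$ to $x$) and the remaining sourceless current; switching the sourceless part against a second independent sourceless current yields a schematic identity
\[
G(x) \;=\; \beta_c\sum_{y\sim 0} G(x-y) \;+\; R(x),
\]
where $R(x)\ge 0$ and the corrections admit pointwise bounds of the form $R \le C\,(G\!*\!G\!*\!G)(x)$ (or similar convolutions of three two-point functions) whose $\ell^1$-norms are controlled by $B(\beta_c)^{3/2}$ via Cauchy--Schwarz. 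Passing to Fourier variables,
\[
\hat G(k) \;=\; \frac{\hat R(k)}{1-\beta_c \hat J(k)},
\]
where $\hat J(k)=\sum_{y\sim 0} e^{ik\cdot y}$. At criticality one has $\beta_c \hat J(0)=1$ in the mean-field regime and $1-\beta_c\hat J(k)\asymp \|k\|^2$ as $k\to 0$; the bubble condition ensures $\hat R$ is bounded, continuous, and strictly positive at the origin. Consequently $\hat G(k) \asymp \|k\|^{-2}$ as $k\to 0$, and a Tauberian/Fourier inversion then yields $G(x)\asymp \|x\|^{-(d-2)}$, giving the required lower bound.

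The main obstacle is the second step: establishing the convolution identity with a \emph{bubble-controlled} remainder. This requires a careful combinatorial analysis using the switching lemma repeatedly to decouple the backbone of a sourced current from the surrounding sourceless cluster, coupled with nonnegative diagrammatic bounds (in the spirit of Aizenman's random-current diagrammatics) expressing the corrections as sums of convolutions of two-point functions. Once that expansion and its Fourier counterpart are in hand, the translation from ``$\hat R(0)>0$ and $1-\beta_c\hat J(k)\asymp\|k\|^2$'' to the pointwise lower bound $G(x)\ge c_1\|x\|^{-(d-2)}$ is essentially a standard Fourier/Tauberian exercise, using reflection positivity (which makes $\hat G$ real and nonnegative) to rule out cancellations.
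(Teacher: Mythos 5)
First, a remark on scope: the lecture notes do not prove Theorem~\ref{thm:pol decay exact} at all --- it is stated as a black box with a citation to Aizenman--Fernandez --- so there is no proof in the paper to compare yours against. Your first two steps are fine: the upper bound does follow from the infrared bound together with Messager--Miracle, and the bubble condition $B(\beta_c)=\sum_x G(x)^2<\infty$ for $d\ge5$ is an immediate consequence of it. The problem lies entirely in the step you yourself flag as ``the main obstacle,'' and it is not a deferrable technicality: it is the whole theorem, and the scheme you propose for it is inconsistent as written.

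Concretely, the identity $G=\beta_c(J*G)+R$ with $R\ge0$ and $\hat R$ bounded cannot hold. Summing over $x$ gives $\chi(\beta_c)=2d\beta_c\,\chi(\beta_c)+\|R\|_{\ell^1}$, and since $\beta_c\ge\tanh\beta_c\ge \tfrac1{2d-1}>\tfrac1{2d}$ the factor $1-\beta_c\hat J(0)=1-2d\beta_c$ is strictly negative; this is incompatible with $\chi(\beta_c)=+\infty$ and $\hat G\ge0$. The corrections cannot sit in the numerator: they must be resummed into the denominator, $\hat G(k)=\hat h(k)/(1-\hat\Pi_\beta(k))$ with $1-\hat\Pi_{\beta_c}(0)=0$, and establishing that $\hat\Pi$ exists, is twice differentiable, and has bubble-controlled errors is precisely the lace expansion (done for the Ising model by Sakai). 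Relatedly, the claim that the remainder has $\ell^1$-norm controlled by $B(\beta_c)^{3/2}$ fails for a free triple convolution, since $\|G*G*G\|_{\ell^1}=\chi(\beta_c)^3=\infty$; the diagrams that are genuinely bubble-summable have pinned vertices, and identifying them is the combinatorial heart of the argument. Finally, even granting a convergent lace expansion, that route works only for $d$ sufficiently large or for spread-out models with $d>4$, so it cannot deliver the statement for all nearest-neighbour $d\ge5$; the Aizenman--Fernandez argument proceeds instead through random-current differential and diagrammatic inequalities combined with reflection positivity. As it stands, your proposal establishes the upper bound and the bubble condition but leaves the lower bound unproved.
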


\section{Continuity/Discontinuity of the phase transition for the planar random-cluster model}\label{sec:5}

We now turn to the case of the random-cluster model in two dimensions. We will discuss the following result.
\begin{theorem}\label{thm:continuous RCM}
Consider the random-cluster model with cluster-weight $q\ge1$ on $\bbZ^2$. Then $\phi^1_{p_c,q}[0\leftrightarrow\infty]=0$ if and only if $q\le 4$.
\end{theorem}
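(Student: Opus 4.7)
The theorem is an \emph{if and only if}, so the plan is to handle the two implications separately, using the duality machinery of Section~\ref{sec:2.3} as a common starting point. Recall that at the self-dual point $p_c=\sqrt q/(1+\sqrt q)$, the infinite-volume dual of $\phi^1_{p_c,q}$ on $\bbZ^2$ is $\phi^0_{p_c,q}$, and as in the proof of Theorem~\ref{thm:p_c FK} one gets
\begin{equation*}
\phi^1_{p_c,q}[\calH_n]\ge \tfrac 12\ge \phi^0_{p_c,q}[\calH_n] \qquad\forall n\ge 1,
\end{equation*}
while $\phi^0_{p_c,q}[0\leftrightarrow\infty]=0$ unconditionally by Remark~\ref{cor:lower bound q}. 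The key dichotomy underlying everything is that
$$\phi^1_{p_c,q}[0\leftrightarrow\infty]=0\quad\Longleftrightarrow\quad \phi^0_{p_c,q}=\phi^1_{p_c,q}.$$
The ``$\Leftarrow$'' direction is immediate from Remark~\ref{cor:lower bound q}; the converse follows from Exercise~\ref{exo:12} together with the observation that, in the absence of an infinite cluster under $\phi^1_{p_c,q}$, one can build macroscopic dual disconnecting events that, by Exercise~\ref{exo:unique Gibbs}, force the boundary conditions on any finite window to become effectively free in the limit.

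For the continuity direction ($1\le q\le 4$), my plan is to establish the RSW-type box-crossing property: for every aspect ratio $\rho>0$ there exists $c_\rho>0$ with
\begin{equation*}
c_\rho \le \phi^0_{p_c,q}\big[\text{horizontal crossing of }[0,\rho n]\times[0,n]\big]\le 1-c_\rho \qquad\forall n\ge 1.
\end{equation*}
Once this is known, a standard FKG-and-gluing construction produces nested open circuits around the origin at every dyadic scale with uniformly positive probability. By Exercise~\ref{exo:unique Gibbs}, conditioning on such a circuit in the free measure makes the exterior boundary conditions look wired, which forces $\phi^0_{p_c,q}=\phi^1_{p_c,q}$; by the dichotomy above, this yields $\phi^1_{p_c,q}[0\leftrightarrow\infty]=0$. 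To produce the RSW bound itself, I would introduce Smirnov's parafermionic observable, a complex-valued function on the medial lattice carrying a ``spin'' $\sigma$ satisfying $\sin(\sigma\pi/2)=\sqrt q/2$. This spin is real precisely when $q\le 4$, and in that case the observable satisfies a discrete contour identity around every medial face. Applied to suitable Dobrushin domains, this identity yields $L^2$-type lower bounds on certain partition function ratios that are incompatible with stretched-exponential decay of $\phi^0_{p_c,q}[\calH_n]$, an alternative that can be shown to be the only other possibility by a purely probabilistic dichotomy (FKG, duality, and a coarse-graining argument).

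For the discontinuity direction ($q>4$), the target is to prove exponential decay of connectivities under the free measure at criticality: there exists $c_q>0$ such that
\begin{equation*}
\phi^0_{p_c,q}[0\leftrightarrow \partial\Lambda_n]\le \exp(-c_q n)\qquad\forall n\ge 1.
\end{equation*}
Once this is granted, $\phi^0_{p_c,q}[\calH_n]\to 0$ while $\phi^1_{p_c,q}[\calH_n]\ge 1/2$, so $\phi^0_{p_c,q}\ne \phi^1_{p_c,q}$, and by the dichotomy $\phi^1_{p_c,q}[0\leftrightarrow\infty]>0$. To obtain the exponential decay I would use the transfer-matrix formalism: encode $\phi^0_{[0,M]\times[0,N], p_c,q}$ as a large power of a transfer matrix $V_N$ acting on loop configurations on a segment of length $N$, then map $V_N$ via a Fourier-like transformation to a row-to-row transfer matrix of an integrable six-vertex model at its \emph{combinatorial point} corresponding to $q$. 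The horizontal correlation length is $1/\log(\lambda_1/\lambda_2)$ where $\lambda_1\ge\lambda_2$ are the two largest eigenvalues of $V_N$; a diagonalization of $V_N$ via the Bethe ansatz reduces the problem to an explicit system of Bethe equations, whose analysis shows that the ratio $\lambda_1/\lambda_2$ stays bounded away from $1$ uniformly in $N$ precisely when $q>4$, with the gap closing exactly at $q=4$.

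The principal obstacle differs on the two sides. On the continuity side, the delicate step is translating the abstract parafermionic contour identity into quantitative crossing inequalities, since the identity is a relation rather than an inequality and one must absorb the boundary contributions in Dobrushin domains. On the discontinuity side, the core difficulty is making the Bethe ansatz fully rigorous: proving completeness of the proposed eigenvectors, controlling the solutions of the Bethe equations as $N\to\infty$, and extracting a positive mass gap at $q>4$. Of the two, I expect the Bethe-ansatz computation to be the more technical and the least self-contained of the arguments developed in these notes.
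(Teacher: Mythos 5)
Your plan reproduces the paper's argument in all essentials: the self-duality and the equivalence of critical properties (the RSW/renormalization dichotomy of Theorem~\ref{thm:main}, with the pivot $\phi^1_{p_c,q}[0\leftrightarrow\infty]=0\Leftrightarrow\phi^0_{p_c,q}=\phi^1_{p_c,q}$), the parafermionic observable with real spin $\sigma$ to rule out fast decay when $q\le 4$, and the six-vertex/Bethe-ansatz computation of the correlation length to get exponential decay under $\phi^0_{p_c,q}$ when $q>4$. This is the same decomposition and the same key lemmas as in Sections~\ref{sec:5}--\ref{sec:2.3} of the notes, so there is nothing further to compare.
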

As an immediate corollary, we obtain the following result.
\begin{corollary}
The phase transition of the Potts model is continuous for $q\in\{2,3,4\}$ and discontinuous for $q\ge5$.
\end{corollary}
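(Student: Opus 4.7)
My plan is to separate the two directions of the equivalence, since they require genuinely different tools: the continuity side ($q\le 4$) rests on RSW-type crossing estimates, while the discontinuity side ($q>4$) rests on an analysis of the associated six-vertex model via the Bethe ansatz.

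For the $q\le 4$ direction, the strategy is to establish a box-crossing property at $p_c$ that is uniform in the boundary conditions: there exist constants $c(\rho)>0$ such that for every $n$ and every boundary conditions $\xi$,
\begin{equation*}
c(\rho)\le \phi^{\xi}_{[0,\rho n]\times[0,n],p_c,q}[\text{horizontal crossing}]\le 1-c(\rho).
\end{equation*}
The lower bound in the square case $\rho=1$ with free boundary conditions follows from the self-duality relation $\phi^1_{p_c,q}[\calH_n]+\phi^0_{p_c,q}[\calH_n]=1$ established in Section~\ref{sec:2.3}, together with the observation that at $p_{\rm sd}=p_c$ both terms are bounded below by $1/2$ up to duality. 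The delicate point is pushing this to arbitrary aspect ratios and arbitrary boundary conditions; this requires an input beyond what is presented so far, namely a \emph{dichotomy theorem} stating that either crossing probabilities at $p_c$ are bounded away from $0$ and $1$ uniformly (the ``RSW regime''), or else $\phi^0_{p_c,q}[0\leftrightarrow\partial\Lambda_n]$ decays polynomially fast with an arbitrarily large exponent. The second alternative is ruled out because $\varphi_{p_c}(\Lambda_n)\ge 1$ (the random-cluster analogue of the remark after Lemma~\ref{lem:meanField}, proved via Theorem~\ref{thm:RCM2}) forces the two-point function not to decay too quickly. The dichotomy itself is obtained through the parafermionic observable (which satisfies a discrete Cauchy--Riemann relation precisely in the regime $q\in[1,4]$, corresponding to the real spin $\sigma=\tfrac{1}{\pi}\arccos(\tfrac{\sqrt q}{2})\in[0,1]$). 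Once the uniform box-crossing property is available, the FKG inequality plus duality produces dual open circuits in annuli $\Lambda_{2^{k+1}}\setminus\Lambda_{2^k}$ with probability bounded below at every scale, and a Borel--Cantelli argument gives infinitely many such circuits almost surely under $\phi^1_{p_c,q}$, forcing $\phi^1_{p_c,q}[0\leftrightarrow\infty]=0$.

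For the $q>4$ direction, I would invoke the equivalence between the critical random-cluster loop model on the medial lattice and the six-vertex model at weights $a=b=1$ and $c=\sqrt q$. The parameter $\Delta=(a^2+b^2-c^2)/(2ab)=1-q/2$ satisfies $\Delta<-1$ precisely when $q>4$; this is the antiferroelectric ``massive'' regime of the six-vertex model. The Bethe ansatz yields an explicit formula for the free energy and, more importantly, shows that the correlation length $\xi(p_c)$ associated to the transfer matrix is finite and behaves like $\exp(\pi^2/(2\sqrt{q-4}))$ as $q\downarrow 4$. Combined with the planar duality $\phi^0_{p_c,q}\leftrightarrow\phi^1_{p_c,q}$, exponential decay of the two-point function under free boundary conditions at $p_c$ translates into the presence of macroscopic dual-closed circuits under wired boundary conditions, which via standard planarity (a Zhang-style argument using uniqueness of the infinite cluster) forces $\phi^1_{p_c,q}[0\leftrightarrow\infty]>0$.

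The hard part is unambiguously the $q>4$ direction: constructing the loop/six-vertex correspondence requires care with boundary conditions (one naturally works on the torus, cf.\ Exercise~\ref{exo:torus}), and the Bethe ansatz computation of $\xi(p_c)$ is a substantial piece of integrable systems analysis that does not reduce to the percolation-theoretic inputs of the previous sections. A pragmatic alternative, valid only for $q\gg 1$, is a direct Pirogov--Sinai/Peierls argument: writing $Z^0_{\Lambda_n,p_c,q}$ as a sum over contours (the dual edges of the wired phase and the primal edges of the free phase), one checks that at $p=p_{\rm sd}$ the two phases have equal bulk free energy, while for $q$ large each contour of length $\ell$ carries a weight at most $q^{-c\ell}$, so that finite contours are suppressed and coexistence at $p_c$ follows; this establishes discontinuity for $q\ge q_0$ but not the sharp threshold $q>4$, which truly requires the six-vertex input.
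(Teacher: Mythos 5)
You have the right global architecture --- reduce the corollary to the statement $\phi^1_{p_c,q}[0\leftrightarrow\infty]=0\iff q\le4$ via the coupling $m^*(\beta_c,q)=\phi^1_{p_c,q}[0\leftrightarrow\infty]$, handle $q\le4$ through the dichotomy of Theorem~\ref{thm:main}, and handle $q>4$ through the six-vertex/Bethe-ansatz computation --- but the $q\le4$ direction as written has a genuine gap. You claim that the ``bad'' alternative of the dichotomy is ruled out because ``$\varphi_{p_c}(\Lambda_n)\ge1$, proved via Theorem~\ref{thm:RCM2}, forces the two-point function not to decay too quickly.'' This cannot be the mechanism: sharpness (Theorem~\ref{thm:RCM2}) holds for \emph{every} $q\ge1$, including $q>4$, where $\phi^0_{p_c,q}[0\leftrightarrow\partial\Lambda_n]$ does decay exponentially; so no consequence of sharpness alone can rule out fast decay at $p_c$ under free boundary conditions --- it would prove continuity for all $q$ and contradict the other half of your own argument. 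Moreover, the derivation of ``$\varphi_{p_c}(\Lambda_n)\ge1\Rightarrow\sum_x\bbP_{p_c}[0\leftrightarrow x]=\infty$'' in the remark after Lemma~\ref{lem:meanField} relies on the independence-based decoupling of Step~1, which fails for $q\ne1$ (for the Ising model one substitutes Simon's inequality via the switching lemma; no such tool is available for general $q$). Relatedly, self-duality only yields $\phi^1_{p_c,q}[\calH_n]\ge\tfrac12\ge\phi^0_{p_c,q}[\calH_n]$, so it does \emph{not} give the free-boundary square-crossing lower bound your RSW step starts from; that lower bound is part of what must be proved.

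You have also swapped the roles of the two main tools. The dichotomy (Theorem~\ref{thm:main}) is proved for all $q\ge1$ by an RSW-type renormalization of crossing and circuit probabilities and uses no parafermionic observable; the parafermionic observable is what \emph{decides} which side of the dichotomy one is on. Summing the vanishing discrete contour integrals over a degenerate Dobrushin domain and exploiting the deterministic windings on the boundary yields {\bf P3} for $q\le2$, and, with the slit-domain/universal-cover refinement, {\bf P4a} for all $q\le4$ (Theorem~\ref{thm:decide}); this is precisely where the restriction $q\le4$ (real spin $\sigma$) enters, and it is the step your proposal is missing. Your $q>4$ direction, the final circuit/Borel--Cantelli deductions, and the Peierls-type remark for $q\gg1$ are all consistent with the paper's route (Theorem~\ref{thm:RCM}).
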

\medbreak
The section is organized as follows. We first study crossing probabilities for planar random-cluster models by building a Russo-Seymour-Welsh type theory for these models. This part enables us to discriminate between two types of behavior: 
\begin{itemize}
\item the continuous one in which crossing probabilities do not go to zero, even when boundary conditions are free (which correspond to the worse ones for increasing events). In this case, the infinite-volume measures with free and wired boundary conditions are equal and correlations decay polynomially fast.
\item the discontinuous one  in which crossing probabilities with free boundary conditions go to zero exponentially fast. In this case, the infinite-volume measure with free boundary conditions looks subcritical in the sense that the probability that 0 is connected to distance $n$ is decaying exponentially fast, while the infinite-volume measure with wired boundary conditions contains an infinite cluster almost surely. 
\end{itemize}
We then prove that for $q\le 4$, the probability of being connected to distance $n$ for the free boundary conditions goes to zero at most polynomially fast, thus proving that we are in the continuous case. In order to do that, we introduce parafermionic observables.
Finally, we discuss the $q>4$ case, in which we sketch the proof that the probability of being connected to distance $n$ decays exponentially fast, thus proving that we are in the discontinuous phase.

\subsection{Crossing probabilities in planar random-cluster models}

We saw that the probability of crossing squares was equal to 1/2 for Bernoulli percolation, and that it was either bounded from above or below by $1/2$ for random-cluster models depending on the boundary conditions. This raises the question of probabilities of crossing more complicated shapes, such as rectangle with aspect ratio $\rho\ne 1$. While this could look like a technical question, we will see that studying crossing probabilities is instrumental in the study of critical random cluster models.

We begin with some general notation. For a rectangle $R:=[a,b]\times[c,d]$ (when $a$, $b$, $c$ or $d$ are not integers, an implicit rounding operation is performed), introduce the event
$\calH(R)$ that $R$ is crossed horizontally, i.e.~that the left side $\{a\}\times[c,d]$ is connected by a path in $\omega\cap R$ to the right side $\{b\}\times[c,d]$. Similarly, define $\calV(R)$ be the event that $R$ is crossed vertically, i.e.~that the bottom side $[a,b]\times\{c\}$ is connected by a path in $\omega\cap R$ to the top side $[a,b]\times\{d\}$.  When $R=[0,n]\times[0,k]$, we rather write $\calV(n,k)$ and $\calH(n,k)$.

\bexo
\begin{exercise}
Consider Bernoulli percolation (of parameter $p$) on a  planar transitive locally finite infinite graph with $\pi/2$ symmetry.
\medbreak\noindent1. Using the rectangles  
  $R_1=[0,n]\times[0,2n]$, $R_2=[0,n]\times[n,3n]$,
  $R_3=[0,n]\times[2n,4n]$,
  $R_4=[0,2n]\times[n,2n]$ and $R_5=[0,2n]\times[2n,3n]$,
show that
\[
\mathbb{P}_p[\calH(n,4n)] \leq 5 \mathbb{P}[\calH(n,2n)] \, .
\]
\medbreak\noindent2.  Deduce that $u_{2n}\le 25u_n^2$ where $u_n=\mathbb{P}_p[\calH(n,2n)]$. Show that $(u_n)$ decays exponentially fast as soon as there exists $n$ such that $u_n<\tfrac1{25}$.
\medbreak\noindent3.  Deduce that $u_n\ge\tfrac1{25}$ for every $n$ or {\rm (EXP$_p$)}. What did we prove at $p_c$?
\end{exercise}

\eexo

\subsubsection{The RSW theory for infinite-volume measures}

Recall from \eqref{eq:RSW square q} that we know that 
$$\phi^1_{p_c,q}[\calH(n,n)]\ge \phi^1_{p_c,q}[\calH(n+1,n)]\ge\tfrac12.$$
It is natural to wish to improve this result by studying crossing probabilities for wired boundary conditions for rectangles of fixed aspect ratio remain bounded away from 0 when $n$ tends to infinity. This is the object of the following theorem.
\begin{theorem}[Beffara, DC \cite{BefDum12}]\label{thm:box}
Let $\rho>0$, there exists $c=c(\rho)>0$ such that for every $n\ge1$,
$$\phi^1_{p_c,q}[\calH(\rho n,n)]\ge c.$$
\end{theorem}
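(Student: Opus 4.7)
The plan is to bootstrap from the square-crossing estimate established inside the proof of Theorem~\ref{thm:p_c FK}. By \eqref{eq:RSW square q} and the inclusion $\calH(n+1,n)\subset \calH(n,n)$, one has $\phi^1_{p_c,q}[\calH(n,n)]\ge\tfrac12$. Since $\calH(\rho n,n)\supset \calH(\rho' n,n)$ for $\rho<\rho'$, it suffices to prove the statement for arbitrarily large $\rho$, and the real task reduces to the Russo-Seymour-Welsh step: a bound $\phi^1_{p_c,q}[\calH(n,n)]\ge c$ should imply $\phi^1_{p_c,q}[\calH(2n,n)]\ge c'$ for some $c'=c'(c,q)>0$, from which the claim for every $\rho$ follows by iteration with \eqref{eq:FKG}.

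For the RSW step, I would condition on the lowest left-right crossing $\Gamma$ of $[0,n]^2$. By \eqref{eq:domain Markov}, the law of $\omega$ above $\Gamma$ is a random-cluster measure on the region bounded below by $\Gamma$, with the edges of $\Gamma$ (and the relevant part of $\partial[0,n]^2$) identified into one wired boundary component. Using \eqref{eq:FKG} and the square-root trick (Exercise~\ref{exo:square root trick}), one first forces the right endpoint of $\Gamma$ to lie in the lower half of the right side with probability bounded below. Then by vertical reflection across the midline $\{n\}\times\mathbb R$ combined with FKG, one glues $\Gamma$ to a reflected crossing of $[n,2n]\times[0,n]$ via a short vertical connection (available from the square estimate after rotation), producing a horizontal crossing of $[0,2n]\times[0,n]$.

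Once a positive lower bound on crossings of aspect ratio $2$ is known, the iteration is routine: take $k$ overlapping copies of $[0,2n]\times[0,n]$, shifted by $n$ each time, and glue consecutive horizontal crossings using vertical crossings of the overlap squares $[jn,(j+1)n]\times[0,n]$ (which are horizontal crossings of unit squares up to rotation). Applying \eqref{eq:FKG} to the $2k-1$ events gives a uniform lower bound on $\phi^1_{p_c,q}[\calH(kn,n)]$, and by monotonicity of events this yields the desired $c(\rho)$ for every $\rho>0$.

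The main obstacle, and where the proof genuinely departs from the Bernoulli case, is the handling of boundary conditions. For $q=1$, conditioning on the lowest crossing yields independence above and below; for $q>1$ the conditional law above $\Gamma$ inherits nontrivial boundary conditions along $\Gamma$. The saving grace is that these induced conditions are wired along $\Gamma$ (since $\Gamma$ forms a single cluster once we track connections), so \eqref{eq:comparison} implies they can only help for increasing events such as the existence of further connections. A secondary delicate point is to ensure that the vertical reflection step produces a valid lower bound, which is transparent in infinite volume thanks to the translation invariance and ergodicity of $\phi^1_{p_c,q}$ (Lemma~\ref{ergodicity}) and to the monotonicity between boundary conditions \eqref{eq:comparison} used to dominate the reflected measure by the one we are analyzing.
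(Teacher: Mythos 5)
Your overall architecture (reduce to aspect ratio $2$, then iterate with \eqref{eq:FKG}) matches the paper, and your first moves --- conditioning on an extremal crossing $\Gamma$, using \eqref{eq:domain Markov} to restart the measure beside it, and using \eqref{eq:comparison} to replace the induced boundary conditions by ``wired on $\Gamma$, free elsewhere'' --- are also correct. The genuine gap is exactly at the point you label as delicate and then dismiss. After the comparison step, you must bound from below, under these mixed boundary conditions, the probability that $\Gamma$ is connected to a crossing of the neighbouring rectangle. The classical Bernoulli gluing does this by a reflection argument: a vertical crossing of the symmetric region must hit $\Gamma\cup\sigma\Gamma$, and by symmetry of the measure it hits $\Gamma$ first with probability at least one half. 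For $q>1$ that symmetry is destroyed twice over: the domain lying above $\Gamma$ is not invariant under the reflection $\sigma$, and the mixed boundary conditions (wired on $\Gamma$ only) are not either. Translation invariance and ergodicity of $\phi^1_{p_c,q}$ (Lemma~\ref{ergodicity}) concern the unconditioned infinite-volume measure and say nothing about the conditional law given $\Gamma=\gamma$; and \eqref{eq:comparison} cannot substitute for the symmetry, because ``the crossing hits $\gamma$ before $\sigma\gamma$'' is not an increasing event. This is precisely the obstruction that makes RSW nontrivial for dependent models, and your sketch does not resolve it.

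The paper's proof supplies the missing ingredient via self-duality. It produces two crossings $\Gamma$ and $\Gamma'$ of $[0,2n]\times[-n,n]$ and $[-2n,0]\times[-n,n]$ (each with probability at least $\alpha^3/4$ by symmetrization with respect to the $x$-axis and \eqref{eq:FKG}), takes $\Gamma$ bottom-most and $\Gamma'$ top-most, and then forms the domain $G(\Gamma,\Gamma')$ as the component of the origin in the complement of $\Gamma\cup\Gamma'\cup\sigma\Gamma\cup\sigma\Gamma'$; this domain is symmetric \emph{by construction}. In $G$, with boundary conditions wired on $\gamma$ and on $\gamma'$ and free elsewhere, the absence of a primal connection $\gamma\stackrel{G}\longleftrightarrow\gamma'$ forces a dual connection which, after reflecting and translating and using $p_c=p_{\rm sd}$, becomes a primal connection under boundary conditions whose Radon--Nikodym derivative with respect to the original ones is at most $q$. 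This yields $\phi^{\rm mix}_{G,p_c,q}[\gamma\stackrel{G}\longleftrightarrow\gamma']\ge 1/(1+q)$ uniformly in $\gamma,\gamma'$, which is the content of Proposition~\ref{prop:crossing}. Without this duality input (or a replacement such as Tassion's argument sketched in the exercises, which however only gives a $\limsup$ statement), your reflection-and-glue step does not close.
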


For Bernoulli percolation, a uniform upper bound follows easily from the uniform lower bound and duality since the complement of the event that a rectangle is crossed vertically is the event that the dual rectangle is crossed horizontally in the dual configuration. This is not the case for general random-cluster models since the dual measure is the measure with free boundary conditions. In fact, we will see in the next sections that a uniform upper bound is not necessarily true: crossing probabilities could go to 1 for wired boundary conditions, and to 0 for free ones. It was therefore crucial to state this theorem for ``favorable'' boundary conditions at infinity.

Also, as soon as a uniform lower bound (in $n$) for $\rho=2$ is proved, then one can easily combine crossings in different rectangles to obtain a uniform lower bound for any $\rho>1$. Indeed, define (for integers $i\ge0$) the rectangles $R_i:=[in,(i+2)n]\times[0,n]$ and the squares $S_i:=R_i\cap R_{i+1}$. Then, 
\begin{align*}
\phi_{p_c,q}^1[\calH(\rho n,n)]\ge  \phi_{p_c,q}^1\Big[\bigcap_{i\le \rho}(\calH(R_i)\cap\calV(S_i))\Big]\stackrel{\rm (FKG)}\ge c(2)^{2\lfloor \rho\rfloor}.
\end{align*}
One may even prove lower bounds for crossing probabilities in arbitrary topological rectangles (see Exercise~\ref{exo:top} below).
\bexo
\begin{exercise}\label{exo:top}
Consider a simply connected domain with a smooth boundary $\Omega$ with four distinct points $a$, $b$, $c$ and $d$ on the boundary. Let $(\Omega_\delta,a_\delta,b_\delta,c_\delta,d_\delta)$ be the finite graph with four marked points on the boundary defined as follows: $\Omega_\delta$ is equal to $\Omega\cap\delta\bbZ^2$ (we assume here that it is connected and of connected complement, so that the boundary is a simple path) and $a_\delta$, $b_\delta$, $c_\delta$, $d_\delta$ be the four points of $\partial\Omega_\delta$ closest to $a$, $b$, $c$ and $d$.

Prove that there exists $c=c(q,\Omega,a,b,c,d)>0$ such that for any $\delta>0$, 
$$\phi^1_{p_c,q}[(a_\delta b_\delta) \stackrel{\Omega_\delta}\longleftrightarrow(c_\delta d_\delta)]\ge c,$$
where $(a_\delta b_\delta)$ and $(c_\delta d_\delta)$ are the portions of $\partial\Omega_\delta$ from $a_\delta$ to  $b_\delta$, and from $c_\delta$ to $d_\delta$, when going counterclockwise around $\partial\Omega_\delta$.\end{exercise}

\eexo
On the other hand, it is a priori not completely clear how to obtain a lower bound for $\rho=2$ (or any $\rho>1$) from a lower bound for $\rho=1$. In fact, the main difficulty of Theorem~\ref{thm:box} lies in passing from crossing squares with probabilities bounded uniformly from below to crossing rectangles in the hard direction with probabilities bounded uniformly from below. In other words, the main step is the following proposition.

\begin{proposition}\label{prop:crossing}
For every $n\ge1$,
$\phi_{p_c,q}^1[\calH(2n,n)]\ge \frac{1}{16(1+q^2)}\phi_{p_c,q}^1[\calH(n,n)]^6.$
\end{proposition}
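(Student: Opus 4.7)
The proposition is the crucial step in a Russo-Seymour-Welsh type argument for the random-cluster model: converting crossings of squares into crossings of long rectangles. The plan is to build a horizontal crossing of $[0,2n]\times[0,n]$ by combining several crossings of $n\times n$ squares (translated/reflected copies of $[0,n]^2$) via the FKG inequality \eqref{eq:FKG}, the symmetries of $\phi^1_{p_c,q}$, and the comparison between boundary conditions.

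\textbf{Step 1 (Symmetrisation via reflections).} Set $\alpha:=\phi^1_{p_c,q}[\calH(n,n)]$. The measure $\phi^1_{p_c,q}$ is invariant under reflection across the horizontal midline $y=n/2$ of $[0,n]^2$. Split $\calH(n,n)$ according to whether the crossing reaches the upper or lower half of the right side $\{n\}\times[0,n]$. By symmetry both halves have the same probability, and their union is $\calH(n,n)$, so each has probability at least $\alpha/2$. An analogous argument applies to the left side, giving four ``half-side'' crossing events in $[0,n]^2$, each of probability at least $\alpha/2$.

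\textbf{Step 2 (Assembling a long crossing).} Use translates and reflections of these half-side events to produce two events living respectively in $[0,n]\times[0,n]$ and $[n,2n]\times[0,n]$, both forcing a crossing that hits the segment $\{n\}\times[n/2,n]$. These two half-length crossings need not be connected to each other, so intersect them with additional crossing events (for instance a vertical crossing of a thin strip surrounding $x=n$, together with a horizontal crossing of an overlapping square) that force the two paths to meet. By \eqref{eq:FKG}, the probability of the intersection of these finitely many increasing events is at least the product of their probabilities. A careful bookkeeping of the number of events used (four events of probability $\ge\alpha/2$ and two full square crossings of probability $\ge\alpha$) yields $(\alpha/2)^4\cdot\alpha^2 = \alpha^6/16$.

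\textbf{Step 3 (Handling boundary conditions).} The constructed events most naturally live in the finite subgraph $G=[0,2n]\times[0,n]$, but we need a lower bound under $\phi^1_{p_c,q}$ on the whole lattice. Use the comparison between boundary conditions of Exercise~\ref{exo:unique Gibbs} (item 4), which compares $\phi^1_{p_c,q}$ on the strip to finite-volume measures with arbitrary boundary conditions, up to conditioning on a circuit event in the dual. The associated loss is bounded by the smallest conditional probability in \eqref{eq:arg}: each step of a bounded chain of such conditionings introduces a factor of order $p_c/(p_c+q(1-p_c))\cdot(1-p_c)$, and a short computation using $p_c=\sqrt{q}/(1+\sqrt{q})$ shows that the resulting constant is at least $1/(1+q^2)$.

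\textbf{Main obstacle.} Contrary to Bernoulli percolation, self-duality does not produce a free uniform upper bound on crossing probabilities for the random-cluster model, so every geometric argument must go through FKG combined with the (only) available reflection symmetries. The hardest part is finding a finite list of increasing events whose intersection deterministically forces the long crossing while keeping the total exponent of $\alpha$ as small as $6$. The gluing must ensure that paths lying in adjacent squares really belong to the same open cluster, which is nontrivial because no single short edge can a priori make the connection happen — only the conjunction of a vertical ``connector'' crossing together with the endpoints constrained by Step~1 guarantees this.
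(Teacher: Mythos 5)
There is a genuine gap, and it sits exactly at the heart of the proposition: your Step 2. Gluing square crossings by \eqref{eq:FKG} together with a ``vertical connector'' does not force the left and right half-crossings to lie in the same cluster. A path crossing $[0,n]^2$ from the left side to $\{n\}\times[n/2,n]$ stops at the line $x=n$; a vertical crossing of any strip or square that protrudes to the right of $x=n$ may stay entirely to the right of that line and hence never meet it, while a vertical crossing of a strip contained in $[0,n]\times[0,n]$ meets the left path but not the right one. This is the classical obstruction which makes RSW (passing from aspect ratio $1$ to aspect ratio $>1$) a theorem rather than an exercise: there is no finite family of square-crossing events whose intersection deterministically contains a hard-direction crossing of a $2n\times n$ rectangle. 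Your ``careful bookkeeping'' yielding $\alpha^6/16$ is therefore asserted for a construction that does not exist. The paper circumvents this by conditioning on the \emph{bottom-most} crossing $\Gamma$ of the right half and the \emph{top-most} crossing $\Gamma'$ of the left half, building the random domain $G(\Gamma,\Gamma')$ between them and its reflection, and proving by a duality-plus-reflection argument (estimate \eqref{eq:hahahaha}) that, conditionally on any realizations $\gamma,\gamma'$, the two paths connect inside $G$ with probability at least $\tfrac1{1+q}$ under the mixed boundary conditions; the comparison between boundary conditions then transfers this to the unconditioned measure.

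Relatedly, your Step 3 attributes the $q$-dependent constant to a finite-energy/boundary-condition correction. That is not where it comes from: the factor $\tfrac1{1+q}$ arises as the Radon--Nikodym bound $k_{\rm mix}(\omega)-k_{\rm mix'}(\omega)\in\{0,1\}$ between the two mixed boundary conditions on $G(\Gamma,\Gamma')$ in the connection estimate above, and a finite-energy argument cannot replace it because $\Gamma$ and $\Gamma'$ may be macroscopically far apart, so no bounded number of edge insertions joins them with a cost independent of $n$. Your Step 1 (the $\alpha/2$ bound for half-side crossings via reflection symmetry) does agree with the paper's first step.
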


 A statement claiming that crossing a rectangle in the hard direction can be expressed in terms of the probability of crossing squares is called a Russo-Seymour-Welsh (RSW) type theorem. 
For Bernoulli percolation on $\bbZ^2$, this RSW result was first proved in \cite{Rus78,SeyWel78}.
Since then, many proofs have been produced (for Bernoulli), among which \cite{BolRio06c,BolRio06,BolRio10,Tas14b,Tas14}. We refer to \cite{DumTas15c} for a review of recent progress in this field. Here, we provide a proof for random-cluster models.

\begin{proof}
We treat the case of $n$ even, the case $n$ odd can be done similarly. Let us introduce the two rectangles $$R:=[-2n,2n]\times[-n,n]\quad\quad S:=[0,2n]\times[-n,n]\quad\quad S':=[-2n,0]\times[-n,n].$$
Also introduce the notation $\alpha:=\phi_{p_c,q}^1[\calH(S)]$. Also, define the sets 
\begin{eqnarray*}A^+:=\{-2n\}\times[0,n]\ \ \quad& B^+:=\{0\}\times[0,n]\ \quad& C^+:=\{2n\}\times[0,n]\\
A^-:=\{-2n\}\times[-n,0] \quad& B^-:=\{0\}\times[-n,0] \quad& C^-:=\{2n\}\times[-n,0].\end{eqnarray*}
By symmetry with respect to the $x$-axis, the probability that there is a path in $\omega\cap S$ from $B$ to $C^+$ is larger than or equal to $\alpha/2$. Similarly, the probability that there is a path in $\omega\cap S$ from $B^-$ to $C$ is larger than or equal to $\alpha/2$. Since the probability of $\calV(S)$ is also $\alpha$. The combination of these three events implies the event $\calE$ that there exists a path in $\omega\cap S$ from $B^-$ to $C^+$. Thus, the FKG inequality gives
\begin{equation*}\phi_{p_c,q}^1[\calE]\ge \tfrac{\alpha^3}{4}.\end{equation*}
Let $\calE'$ be the event that there exists a path in $\omega\cap S'$ from $A^-$ to $B^+$. 
By symmetry with respect to the origin, we have 
\begin{equation*}\phi_{p_c,q}^1[\calE']\ge \tfrac{\alpha^3}{4}.\end{equation*}
On the event $\calE\cap \calE'$, consider the paths of edges $\Gamma$ and $\Gamma'$ defined by:
\begin{itemize}[noitemsep,nolistsep]
\item $\Gamma$ is the bottom-most open crossing  of $S$ from $B^-$ to $C^+$, 
\item $\Gamma'$ is the top-most open crossing  of $S'$ from $A^-$ to $B^+$,
\end{itemize}
Construct the graph $G=G(\Gamma,\Gamma')$ with edge-set composed of edges with at least one endpoint in the cluster of the origin in $\bbR^2\setminus(\Gamma\cup\Gamma'\cup\sigma\Gamma\cup\sigma\Gamma')$ (here the paths are considered as subsets of $\bbR^2$), where $\sigma\Gamma$ and $\sigma\Gamma'$ are the reflections of $\Gamma$ and $\Gamma'$ with respect to the $y$-axis; see Fig.~\ref{fig:RSW}. 

Let us assume for a moment that we have the following bound: for any two possible realizations $\gamma$ and $\gamma'$ of $\Gamma$ and $\Gamma'$, 
\begin{equation}\label{eq:hahahaha}
\phi_{G,p_c,q}^{\rm mix}[\gamma\stackrel{G}\longleftrightarrow\gamma']\ge \tfrac{1}{1+q},
\end{equation}
where the mix boundary conditions correspond to wired on $\gamma$ and $\gamma'$, and free elsewhere (i.e.~the partition is given by $P_1=\gamma$, $P_2=\gamma'$ and singletons).
Then, 
\begin{align}
\label{eq:hohohoho}\phi_{p_c,q}^1[\calH(4n,2n)]&\stackrel{\phantom{\eqref{eq:hahahaha}}}=\phi_{p_c,q}^1[\calH(R)]\\
\nonumber&\stackrel{\phantom{\eqref{eq:hahahaha}}}\ge\phi_{p_c,q}^1[\{\Gamma\stackrel{G}\longleftrightarrow\Gamma'\}\cap \calE\cap \calE']\\
\nonumber &\stackrel{\phantom{\eqref{eq:hahahaha}}}=\sum_{\gamma,\gamma'}\phi_{p_c,q}^1[\{\gamma\stackrel{G}\longleftrightarrow\gamma'\}\cap \{\Gamma=\gamma,\Gamma'=\gamma'\}]\\
\nonumber&\stackrel{\phantom{\eqref{eq:hahahaha}}}\ge\sum_{\gamma,\gamma'}\phi_{G,p_c,q}^{\rm mix}[\gamma\stackrel{G}\longleftrightarrow\gamma']\cdot\phi_{p_c,q}^1[\Gamma=\gamma,\Gamma'=\gamma']\\
\nonumber&\stackrel{\eqref{eq:hahahaha}}\ge\tfrac1{1+q}\sum_{\gamma,\gamma'}\phi_{p_c,q}^1[\Gamma=\gamma,\Gamma'=\gamma']\\
\nonumber&\stackrel{\phantom{\eqref{eq:hahahaha}}}=\tfrac1{1+q}\,\phi_{p_c,q}^1[\calE\cap \calE']\stackrel{\rm (FKG)}\ge \tfrac{\alpha^6}{16(1+q)},
\end{align}
where in the fourth line we used the fact that $\Gamma=\gamma$ and $\Gamma'=\gamma'$ are measurable events of edges not in $G$, and that the boundary conditions induced on $\partial G$ always dominate the mixed boundary conditions. In the last line, we used that the events $\{\Gamma=\gamma,\Gamma'=\gamma'\}$ partition $\calE\cap \calE'$, and the lower bounds on the probability of the events $\calE$ and $\calE'$ proved above.

We now turn to the proof of \eqref{eq:hahahaha}. We wish to use a symmetry argument (similar to the proof that crossing a square has probability larger or equal to 1/2). We believe the argument to be more transparent on Fig.~\ref{fig:RSW} and we refer to its caption.

Fix $G=(V,E)$. Since the mix boundary conditions are planar boundary conditions, it will be simpler to consider a configuration $\xi\in \{0,1\}^{\bbE \setminus E}$ inducing them. We choose the following one: $\xi_e=1$ for all edges $e\in\gamma\cup\gamma'$
and $\xi_e=0$ for all other edges. Set $\omega^\xi$ to be the configuration coinciding with $\omega$ on $E$, and with $\xi$ on $\bbE\setminus E$.
%

Consider $\omega'$ to be the translation by $(1/2,1/2)$ and then reflection with respect to the $y$ axis of $(\omega^\xi)^*$.  By duality, the law of $\omega'$ on $G$ is dominated by the ${\rm mix'}$ boundary conditions defined to be wired on $\gamma\cup\gamma'$, and free elsewhere (i.e.~$P_1=\gamma\cup\gamma'$ and then singletons\footnote{Note that they are not equal to the mix boundary conditions since $\gamma$ and $\gamma'$ are wired together.}).
The absence of path in $\omega$ from $\gamma$ to $\gamma'$  is included in the event that there is a path in $\omega'_{|E}$ from $\gamma$ to $\gamma'$, so that
$$1-\phi_{G,p_c,q}^{\rm mix}[\gamma\stackrel{G}\longleftrightarrow\gamma']\le \phi_{G,p_c,q}^{\rm mix'}[\gamma\stackrel{G}\longleftrightarrow\gamma']\le q\,\phi_{G,p_c,q}^{\rm mix}[\gamma\stackrel{G}\longleftrightarrow\gamma'],$$
where in the second inequality we used that the Radon-Nikodym derivative is smaller or equal to $q$ since $k_{\rm mix}(\omega)-k_{\rm mix'}(\omega)\in\{0,1\}$. The inequality \eqref{eq:hahahaha} follows readily. This concludes the proof.
\end{proof}

\begin{figure}
\begin{center}
\includegraphics[width=1.00\textwidth]{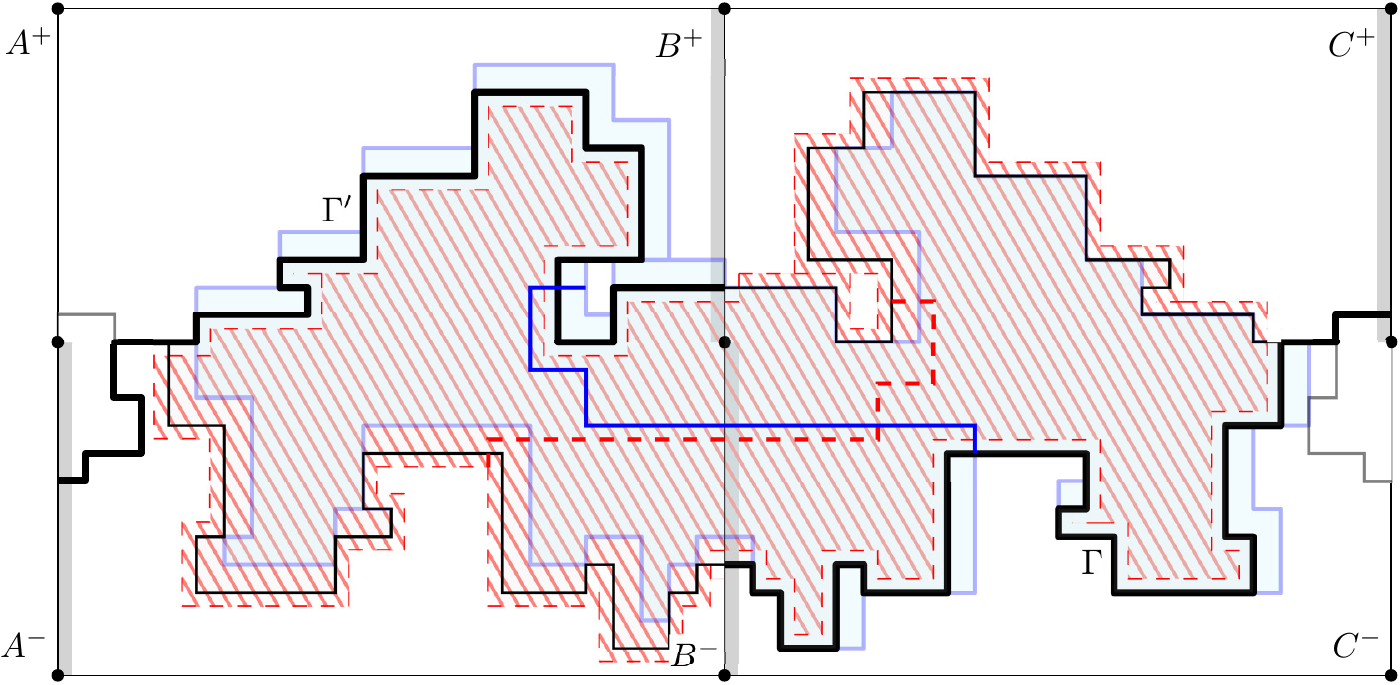}
\end{center}
\caption{The sets $A^\pm$, $B^\pm$ and $C^\pm$. We depicted $\Gamma$, $\Gamma'$ and their symmetric with respect to the $y$-axis. In gray, the set $G$. Hatched in red, the dual graph of $G$ together with a path in $\omega^*$ preventing the existence of a path from $\Gamma$ to $\Gamma'$ in $G$. In blue, the translation by $(1/2,1/2)$ of the symmetric of $G$ with respect to the $y$-axis, as well as the image by the same transformation of the dashed path. This path of $\omega'$ crossed $G$ from $\Gamma$ to $\Gamma'$. }\label{fig:RSW}
\end{figure}

Let us conclude this section by recalling that crossing probabilities in rectangles are expected to converge to explicit functions of $\rho$ as $n$ tends to infinity. More generally, crossing probabilities in topological rectangles should be conformally invariant; see  \cite{Smi01} for the case of site percolation (see also \cite{BefDum13,Wer09} for reviews) and \cite{CheSmi12,BenDumHon14,Izy15} for the case of the Ising model. 
\bexo
Here, we present a beautiful argument due to Vincent Tassion proving some weak form of crossing property for general FKG measures (with sufficient symmetry). We refer to \cite{Tas14b} for more details.
\begin{exercise}[Weak RSW for FKG measures]
Consider a measure $\mu$ on $\{0,1\}^\bbE$ which is invariant under the graph isomorphisms of $\bbZ^2$ onto itself. We further assume that $\mu$ satisfies the FKG inequality. We assume that 
$
\inf_n \mu[\calH(n,n)] > 0 \, .
$
The goal of this exercise is prove that
\begin{equation}\label{weakRSW}
\limsup_{n} \mu[\calH(3n,n)] > 0 \, .
\end{equation}
\noindent
1. Let $\calE_n$ be the event that the left side of $[-n,n]^2$ is connected to the top-right corner $(n,n)$. Use the FKG inequality to prove that $\limsup_n\mu[\calE_n] > 0$ implies \eqref{weakRSW}.
\medbreak\noindent
2. Assume the limit superior above is zero. Now, for any $-n \leq \alpha < \beta \leq n$, define the event $\mathcal{F}_n(\alpha,\beta)$ to be the existence of a crossing from the left side of $[-n,n]^2$ to the segment $\{n \} \times [\alpha,\beta]$. We consider the function
    \[
    h_n(\alpha) = \mu[\mathcal{F}_n(0,\alpha)] - \mu[\mathcal{F}_n(\alpha,n)] \, .
    \]
    Show that $h_n$ is an increasing function, and that there exists $c_0>0$ such that $h_n(n) > c_0$ for all $n$.
\medbreak\noindent
3. Assume that $h_n(n/2) < c_0/2$. Use {\rm (FKG)} to prove that \eqref{weakRSW}.

\medbreak\noindent
4. Assume that $h_n(n/2) > c_0/2$, and let $\alpha_n = \inf\{ \alpha : h(\alpha) > c_0/2\}$. Define the event $\mathcal{X}_n(\alpha)$ by the existence of a cluster in $[-n,n]^2$ connecting the four segments $\{-n\} \times [-n, -\alpha]$, $\{-n\} \times [\alpha, n]$, $\{n\} \times [-n, -\alpha]$, and $\{n\} \times [\alpha, n]$. Prove that there exists a constant $c_1>0$ independent of $n$ such that
$
    \mu[\mathcal{X}_n(\alpha)] \geq c_1 \, .
$
\medbreak\noindent
5. Prove that, for infinitely many $n$'s, $\alpha_n < 2 \alpha_{2n/3}$.
\medbreak\noindent
6. Prove that, whenever $\alpha_n < 2 \alpha_{2n/3}$, there exists a constant $c_2$ such that
$
\mu[\mathcal{H}(8/3n, 2n)] > c_2.$ 
 Conclude.

\end{exercise}

\eexo

\subsubsection{A dichotomy for random-cluster models}

Physicists work with several definitions of continuous phase transitions. For instance, a continuous phase transition may refer to the divergence of the correlation length, the continuity of the order parameter (here the spontaneous magnetization or the density of the infinite cluster), the uniqueness of the Gibbs states at criticality, the divergence of the susceptibility, the scale invariance at criticality, etc. From a mathematical point of view, these properties are not clearly equivalent (there are examples of models for which they are not), and they therefore refer to a priori different notions of continuous phase transition. 

In the following result, we use the study of crossing probabilities to prove that all these properties are equivalent for the planar random-cluster model.

 \begin{theorem}[DC, Sidoravicius, Tassion \cite{DumSidTas13}]
\label{thm:main}
Let $q\ge 1$, the following assertions are equivalent at criticality:
 \begin{enumerate}
 \item[{\rm \bf P1}] (Absence of infinite cluster) $\phi_{p_c,q}^1[0\longleftrightarrow \infty]=0$.
 \item[{\rm \bf P2}] (Uniqueness of the infinite-volume measure) $\phi_{p_c,q}^0=\phi_{p_c,q}^1$.
     \item[{\rm \bf P3}] (Infinite susceptibility) $\displaystyle \sum_{x\in \mathbb Z^2} \phi_{p_c,q}^0[0\longleftrightarrow x] =\infty.$
      \item[{\rm \bf P4a}] (Slow decay with free boundary conditions) $\displaystyle \lim_{n\to \infty} \tfrac 1 {n^{1/3}} \log \phi_{p_c,q}^0[0\longleftrightarrow \partial\Lambda_n] =0.$
\item[{\rm \bf P4b}] (Sub-exponential decay for free boundary conditions) 
 $\displaystyle \lim_{n\to \infty} \tfrac 1 n \log \phi_{p_c,q}^0[0\longleftrightarrow \partial\Lambda_n] =0.$
  \item[{\rm \bf P5}] (Uniform crossing probabilities) There exists $c=c(\rho)>0$ such that for all $n\geq 1$ and all boundary conditions $\xi$, if $R$ denotes the rectangle $[-n,(\rho+1)n]\times[-n,2n]$, then
   \begin{equation}c\le \phi_{R,p_c,q}^\xi\left[\calH(\rho n,n)\right]\le 1-c.\label{eq:oko}\end{equation}
  \end{enumerate}
\end{theorem}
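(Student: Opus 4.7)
The plan is to prove the equivalences by first establishing the ``easy'' direction P5 $\Rightarrow$ (P1, P2, P3, P4a, P4b), and then showing that a failure of P5 produces exponential decay of the free-boundary crossing probability $u_n := \phi^0_{p_c,q}[\calH(n,n)]$, which in turn contradicts each of P1--P4. The central technical ingredient is therefore a \emph{dichotomy lemma}: either $\inf_n u_n > 0$, or $u_n$ decays exponentially fast. Combined with the self-dual identity $u_n + \phi^1_{p_c,q}[\calV^*(n,n)] = 1$ (which forces wired square crossings to have probability at least $\tfrac12$), this rigidity is what drives the whole equivalence.

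For the easy direction, assuming P5 I would deduce P2 via the ``conditioning on circuits'' technique of Exercise~\ref{exo:unique Gibbs}: under uniform crossing estimates in annuli, one can surround any finite region by an open dual circuit (disconnecting it from the boundary of a large box) with probability bounded away from zero, and this screens the boundary conditions. This gives $\phi^0_{p_c,q} = \phi^1_{p_c,q}$, and then P1 follows from the fact that under $\phi^0_{p_c,q}$ there is no infinite cluster (Remark~\ref{cor:lower bound q}). Property P3 is obtained by chaining crossings in dyadic annuli via FKG to get a polynomial lower bound $\phi^0_{p_c,q}[0 \longleftrightarrow \partial \Lambda_n] \geq c\, n^{-\alpha}$, and then summing over $\partial \Lambda_n$ yields divergent susceptibility. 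Finally P4a and P4b are trivial consequences of such a polynomial lower bound.

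The hard direction rests on the dichotomy lemma. Suppose $u_{n_k} \to 0$ along a subsequence. I would prove a renormalization inequality of the form $u_{2n} \leq C\, u_n^{1+\delta}$ by a gluing argument in the spirit of Proposition~\ref{prop:crossing}: the failure of a free horizontal crossing of the rectangle $[0,2n] \times [0,n]$ forces several ``quasi-independent'' crossing failures at scale $n$, each controlled by $u_n$, while the auxiliary wired crossings needed to combine them are essentially free thanks to FKG and the automatic bound $\phi^1 \geq \tfrac12$ for square crossings. Iterating this inequality delivers exponential decay of $u_n$. From exponential decay of $u_n$ I then derive all the negations: (i) $\phi^0_{p_c,q}[0 \longleftrightarrow \partial \Lambda_n]$ decays exponentially, giving $\neg$P4b (hence $\neg$P4a and $\neg$P3); (ii) by self-duality, wired crossings of $[0,n]\times[0,n-1]$ tend to $1$ exponentially fast, so a Zhang-type square-root trick combined with uniqueness of the infinite cluster (Theorem~\ref{thm:uniqueness}) forces $\phi^1_{p_c,q}[0 \longleftrightarrow \infty] > 0$, contradicting P1; (iii) the gap between exponential decay of free two-point functions and the polynomial lower bound on wired two-point functions (derived from Theorem~\ref{thm:box}) forces $\phi^0_{p_c,q} \neq \phi^1_{p_c,q}$, contradicting P2.

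The main obstacle is the renormalization inequality itself. Unlike Bernoulli percolation, where self-duality immediately pins $u_n$ to $1/2$, here the free and wired measures differ and the delicate question is how fast boundary effects propagate inward. Establishing $u_{2n} \leq C u_n^{1+\delta}$ requires a genuinely two-dimensional geometric construction, exploiting both self-duality at $p_c = \sqrt{q}/(1+\sqrt{q})$ and the one-sided RSW of Theorem~\ref{thm:box} (which supplies uniform control only on the wired side). Making sure that $\delta > 0$ can be taken independent of the scale, and that the constants do not blow up as $q \to 4^-$, is where the subtlety lies; this is the heart of the argument of \cite{DumSidTas13} and the step I expect to occupy most of the proof.
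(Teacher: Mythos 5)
Your overall architecture is the same as the paper's: easy implications out of \textbf{P5}, plus a renormalization dichotomy for a free-boundary crossing-type quantity whose fast decay negates the remaining properties. The paper runs the cycle \textbf{P5}$\Rightarrow$\textbf{P1}$\Rightarrow$\textbf{P2}$\Rightarrow$\textbf{P3}$\Rightarrow$\textbf{P4a}$\Rightarrow$\textbf{P4b} and closes it with \textbf{P4a}$\Rightarrow$\textbf{P5} (deferring \textbf{P4b}$\Rightarrow$\textbf{P5} to \cite{DumSidTas13}), taking as renormalized quantity the circuit probability $u_n=\phi^0_{\Lambda_{8n}}[\calA_n]$ and proving $u_{7n}\le C u_n^2$ (Proposition~\ref{lem:induction1}); your square-crossing variant is the same idea. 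However, there is a genuine quantitative gap in your iteration step. From $u_{2n}\le C u_n^{1+\delta}$ one gets $Cu_{2^kn}\le (Cu_n)^{(1+\delta)^k}$, and since $(1+\delta)^k=(2^k)^{\log_2(1+\delta)}$ this yields only \emph{stretched}-exponential decay $\exp(-cN^{\log_2(1+\delta)})$, not exponential decay, unless $\delta\ge 1$. So the claim that iteration ``delivers exponential decay of $u_n$'' fails for generic $\delta>0$: you cannot negate \textbf{P4b} this way, and you negate \textbf{P4a} only when $\log_2(1+\delta)\ge \tfrac13$. This is exactly why \textbf{P4a} carries the normalization $n^{1/3}$: the paper multiplies the scale by $7$ while squaring the probability, giving stretch exponent $\log 2/\log 7>\tfrac13$, and the genuinely exponential statement needed for \textbf{P4b}$\Rightarrow$\textbf{P5} requires the harder argument of the reference. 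To make your doubling inequality suffice you would need $\delta=1$, i.e.\ $u_{2n}\le Cu_n^2$, which is strictly harder than the paper's version.

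Two smaller but real problems. First, your gluing is stated backwards: ``the failure of a free horizontal crossing \dots forces several quasi-independent crossing failures at scale $n$'' bounds $1-u_{2n}$ by powers of $1-u_n$, which pushes crossing probabilities towards $1$ --- the wrong direction. What is needed (and what Proposition~\ref{lem:induction1} does) is that a \emph{success} at the large scale, after screening the favourable boundary conditions it induces by dual crossings (the events $\calB_n,\calC_n,\calD_n,\calE_n$ of the paper), forces two well-separated successes at scale $n$ in regions whose induced boundary conditions are dominated by free ones; this screening is the heart of the matter, as you correctly anticipate but do not supply. Second, your derivation of \textbf{P3} from a dyadic-chaining bound $\phi^0_{p_c,q}[0\leftrightarrow\partial\Lambda_n]\ge cn^{-\alpha}$ does not work for unspecified $\alpha$: the sum $\sum_n n^{-\alpha}$ diverges only for $\alpha\le 1$, whereas chaining produces $\alpha=\log(1/c)/\log 2$, typically large. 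The correct route is \eqref{eq:sub pol}: a crossing of an $n\times n$ square forces one of $n$ boundary points to be connected to distance $n$, so $\phi^0_{p_c,q}[0\leftrightarrow\partial\Lambda_n]\ge \tfrac1{2n}$, and the susceptibility diverges.
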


The previous theorem does not show that these conditions are all satisfied, only that they are equivalent. In fact, whether the conditions are satisfied or not depend on the value of $q$, as we will see in the next two sections. 

While Properties {\bf P1--P4b} are quite straightforward to interpret, {\bf P5} is maybe more mysterious. One may wonder why having bounds that are uniform in boundary conditions is so relevant. The answer will become clear in the next sections: uniformity in boundary conditions is crucial to handle quantitatively dependencies between events in different parts of the graph. Note that the lower bound in {\bf P5} is a priori much stronger than the result of Theorem~\ref{thm:box} since the study of the previous section provided no information for free boundary conditions, even for crossing squares. 

Let us conclude this discussion by noticing that property {\bf P5} is not equivalent to the stronger statement {\bf P5'} where boundary conditions are put on the boundary of the rectangle $R':=[0,\rho n]\times[0,n]$ instead of $R$. In fact, the probability of crossing $R'$ with free boundary conditions on $\partial R'$ tends to 0 for the random-cluster model with cluster-weight $q=4$, while {\bf P5} is still true there. One may show that {\bf P5'} is true for $q<4$, but the proof is more complicated (see \cite{DumHonNol11} and \cite{DumSidTas13} for proofs for $q=2$ and $q\in[1,4)$ respectively). 

Last but not least, observe that the upper bound in \eqref{eq:oko} follows from the lower bound by duality. 

The proof of Theorem~\ref{thm:main} can be divided in several steps. First, one can see that several implications are essentially trivial. 
 \begin{proposition}
We have that {\bf P5}$\Rightarrow${\bf P1}$\Rightarrow${\bf P2}$\Rightarrow${\bf P3}$\Rightarrow${\bf P4a}$\Rightarrow${\bf P4b}.
\end{proposition}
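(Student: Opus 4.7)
The plan is to establish the five implications in order, each being a short step that builds on material developed earlier in the notes.

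\textbf{P5$\Rightarrow$P1.} I argue by contraposition. If $\phi^1_{p_c,q}[0\leftrightarrow\infty]>0$, then $\phi^1_{p_c,q}$ is ergodic (Lemma~\ref{ergodicity}), has finite energy~\eqref{eq:finite energy}, and admits a unique infinite cluster (Theorem~\ref{thm:uniqueness}); these are exactly the ingredients used in the proof of Proposition~\ref{prop:lower}, which carries over verbatim to yield $\phi^1_{p_c,q}[\calH(\rho n,n)]\to 1$. By the comparison between boundary conditions~\eqref{eq:comparison}, the probability $\phi^1_{R,p_c,q}[\calH(\rho n,n)]$ appearing in~\eqref{eq:oko} with wired $\xi$ is at least as big, so the upper bound of~P5 is violated for large~$n$.

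\textbf{P1$\Rightarrow$P2.} Fix a finite subgraph $G'=(V',E')$ of $\mathbb{Z}^2$ and let $\calC_n$ denote the event that an open dual circuit in $E_n^{*}\setminus(E')^{*}$ surrounds $G'$. By planar duality $\calC_n^c\subset\{G'\leftrightarrow\partial\Lambda_n\}$, whose $\phi^1_{p_c,q}$-probability tends to $0$ by~P1, and the same holds under $\phi^0_{p_c,q}\le\phi^1_{p_c,q}$. Conditionally on the outermost such dual circuit, the domain Markov property imposes free boundary conditions inside, so the restrictions of both $\phi^0_{p_c,q}$ and $\phi^1_{p_c,q}$ to events measurable on $G'$ converge to $\phi^0_{G',p_c,q}$ as $n\to\infty$. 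The two infinite-volume measures therefore coincide on every cylinder event.

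\textbf{P2$\Rightarrow$P3 and P3$\Rightarrow$P4a.} I handle both contrapositively, relying on a Simon--Lieb-type inequality for random-cluster measures of the form $\phi[0\leftrightarrow z]\le\sum_{y\in\partial\Lambda_n}\phi[0\leftrightarrow y]\,\phi[y\leftrightarrow z]$, which can be obtained from~\eqref{eq:domain Markov} and~\eqref{eq:FKG} by decomposing an open path according to its first intersection with $\partial\Lambda_n$. For P2$\Rightarrow$P3, if $\chi:=\sum_x\phi^0_{p_c,q}[0\leftrightarrow x]<\infty$ one chooses $n_0$ with $\sum_{y\in\partial\Lambda_{n_0}}\phi^0_{p_c,q}[0\leftrightarrow y]<1/2$; iterating Simon gives exponential decay of the two-point function at $p_c$ under $\phi^0_{p_c,q}$, which by~P2 equals $\phi^1_{p_c,q}$, and right-continuity of $p\mapsto\phi^1_{p,q}[0\leftrightarrow\partial\Lambda_n]$ pushes the decay to some $p>p_c$, contradicting the mean-field lower bound $\phi^1_{p,q}[0\leftrightarrow\infty]\ge c(p-p_c)$ of Theorem~\ref{thm:RCM2}. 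For P3$\Rightarrow$P4a, failure of P4a combined with the monotonicity of $n\mapsto\phi^0_{p_c,q}[0\leftrightarrow\partial\Lambda_n]$ yields some $n_0$ at which $|\partial\Lambda_{n_0}|\cdot\phi^0_{p_c,q}[0\leftrightarrow\partial\Lambda_{n_0}]<1/2$; the same Simon iteration then forces exponential decay of $\phi^0_{p_c,q}[0\leftrightarrow\partial\Lambda_{kn_0}]$ in~$k$, giving $\chi<\infty$ and contradicting~P3.

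\textbf{P4a$\Rightarrow$P4b.} This is immediate from the identity $\tfrac1n\log\phi^0_{p_c,q}[0\leftrightarrow\partial\Lambda_n]=n^{-2/3}\cdot\tfrac1{n^{1/3}}\log\phi^0_{p_c,q}[0\leftrightarrow\partial\Lambda_n]$: the second factor tends to~$0$ by~P4a and the first tends to~$0$ deterministically, so the product tends to~$0$. The step I expect to require the most care is the Simon--Lieb inequality for $q\ge1$: no switching trick of the kind used in the Ising case (Exercise~\ref{exo:simon}) is available, and one must rely either on more delicate FKG/DMP arguments, or alternatively rerun the OSSS machinery of Theorem~\ref{thm:RCM2} directly at $p=p_c$, using the finiteness of $\chi$ to control the quantity $S_n(p_c)$ appearing in that proof.
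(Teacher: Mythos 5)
Your implications P5$\Rightarrow$P1, P1$\Rightarrow$P2 and P4a$\Rightarrow$P4b are essentially fine. For P5$\Rightarrow$P1 you argue by contraposition through Proposition~\ref{prop:lower} (whose proof indeed transfers to $q\ge1$ thanks to FKG, ergodicity and uniqueness of the infinite cluster), whereas the notes use the upper bound of \eqref{eq:oko} directly to multiply annulus non-crossing probabilities over the scales $4^k$ and obtain the quantitative bound $\phi^1_{\Lambda_n,p_c,q}[0\leftrightarrow\partial\Lambda_n]\le n^{-\alpha}$; both routes are valid, but the direct one is what later yields \eqref{eq:polynomial}. For P1$\Rightarrow$P2 your dual-circuit argument is exactly the intended one (Exercise~\ref{exo:12}); only note that conditionally on the outermost dual circuit the inner measure is the free measure on the \emph{random} enclosed domain, so the restrictions to $G'$ are sandwiched between $\phi^0_{G',p_c,q}$ and $\phi^0_{p_c,q}$ rather than converging to $\phi^0_{G',p_c,q}$.

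The genuine gap lies in P2$\Rightarrow$P3 and P3$\Rightarrow$P4a, both of which you make rest on a Simon--Lieb inequality $\phi[0\leftrightarrow z]\le\sum_{y\in\partial\Lambda_n}\phi[0\leftrightarrow y]\,\phi[y\leftrightarrow z]$ for the random-cluster model, claimed to follow ``from \eqref{eq:domain Markov} and \eqref{eq:FKG} by decomposing an open path according to its first intersection with $\partial\Lambda_n$''. This does not work as stated: for $q\ge1$ the FKG inequality gives $\phi[0\leftrightarrow y,\,y\leftrightarrow z]\ge\phi[0\leftrightarrow y]\,\phi[y\leftrightarrow z]$, i.e.\ the \emph{wrong} direction, and the path decomposition of Section~\ref{sec:2.2} uses independence in an essential way. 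A correct decomposition for $q\ge1$ (conditioning on the cluster of $0$ in $\Lambda_n$ and invoking \eqref{eq:domain Markov} together with \eqref{eq:comparison}) produces a wired-boundary two-point function in the second factor; this is precisely the kind of statement the notes avoid by proving sharpness via OSSS, so it is not available off the shelf, and even granting it you would still need a right-continuity argument in $p$ before contradicting Theorem~\ref{thm:RCM2}. All of this machinery is unnecessary here: the notes prove P2$\Rightarrow$P3 in two lines from self-duality, namely $n\,\phi^0_{p_c,q}[0\leftrightarrow\partial\Lambda_n]\ge\phi^0_{p_c,q}[\calH_n]=\phi^1_{p_c,q}[\calH_n]\ge\tfrac12$ by P2 and \eqref{eq:RSW square q}, whence $\sum_{x}\phi^0_{p_c,q}[0\leftrightarrow x]\ge\sum_n\tfrac1{2n}=\infty$; and they treat P3$\Rightarrow$P4a as immediate (divergence of the susceptibility being incompatible with stretched-exponential decay of the one-arm probability), with no iteration needed.
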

The last implication {\bf P4b}$\Rightarrow${\bf P5} is the most difficult and is postponed to the next section. In fact we will only prove {\bf P4a}$\Rightarrow${\bf P5} since this will be sufficient for the applications we have in mind. We refer to \cite{DumSidTas13} for the proof of {\bf P4b}$\Rightarrow${\bf P5}.

\begin{proof}
The implications {\bf P3}$\Rightarrow${\bf P4a}$\Rightarrow${\bf P4b} are completely obvious, and {\bf P1}$\Rightarrow${\bf P2} is the object of Exercise~\ref{exo:12}. 
For {\bf P5}$\Rightarrow${\bf P1}, introduce the event $\calA:=\calV([-3n,3n]\times[2n,3n])$. If $\partial\Lambda_n$ is connected to $\partial\Lambda_{4n}$, then one of the four rotated versions of the event $\calA$ must also occur (where the angles of the rotation are $\tfrac\pi2k$ with $0\le k\le 3$). Therefore,$$\phi_{\Lambda_{4n}\setminus\Lambda_n,p_c,q}^1\left[\partial\Lambda_n\longleftrightarrow \partial\Lambda_{4n}\right]\stackrel{\eqref{eq:FKG}}\le 1-\phi_{\Lambda_{4n}\setminus\Lambda_n,p_c,q}^1[\calA^c]^4\stackrel{{\bf P5}}\le1- c,$$
where $c:=c(6)^4$ (we also used the comparison between boundary conditions in the second inequality). 
By successive applications of the domain Markov property and the comparison between boundary conditions (Exercise~\ref{exo:0}), we deduce the existence of $\alpha>0$ such that 
\begin{equation}\label{eq:sup pol}\phi_{\Lambda_{n},p_c,q}^1\left[0\longleftrightarrow\partial\Lambda_n\right]\le \prod_{4^k\le n}\phi_{\Lambda_{4^k}\setminus\Lambda_{4^{k-1}},p_c,q}^1\left[\partial\Lambda_{4^{k-1}}\longleftrightarrow \partial\Lambda_{4^k}\right]\le (1-c)^{\lfloor\log_4 n\rfloor}\le n^{-\alpha}\end{equation}
which gives {\bf P1} by passing to the limit.
For
{\bf P2}$\Rightarrow${\bf P3}, recall the definition of $\calH_n$ so that
\begin{equation}\label{eq:sub pol}n\,\phi_{p_c,q}^0[0\longleftrightarrow \partial\Lambda_n]\ge \phi^0_{p_c,q}[\calH_n]\stackrel{\bf P2}=\phi^1_{p_c,q}[\calH_n]\stackrel{\eqref{eq:RSW square q}}=1/2.\end{equation}
We deduce that 
$$\sum_{x\in\bbZ^2}\phi_{p_c,q}^0[0\longleftrightarrow x]=\sum_{n=0}^\infty \sum_{x\in\partial\Lambda_n}\phi_{p_c,q}^0[0\longleftrightarrow x]\ge \sum_{n\ge1}\phi_{p_c,q}^0[0\longleftrightarrow \partial\Lambda_n]\ge \sum_{n\ge1}\tfrac1{2n}=+\infty.$$
\end{proof}

Note that \eqref{eq:sup pol} and \eqref{eq:sub pol} show that under {\bf P5}, for all $n\ge1$,\begin{equation}\label{eq:polynomial}\tag{PD}
\frac1{2n}\le \phi^0_{p_c,q}[0\longleftrightarrow \partial\Lambda_n]\le \frac{1}{n^{\alpha}}.
\end{equation}
This is one among a long list of properties implied by {\bf P5}. Let us mention a few others: mixing properties (Exercise~\ref{exo:pol mixing}, the existence of sub-sequential scaling limits for interfaces, the value for certain critical exponents called {\em universal} critical exponents (it has nothing to do with the universality for the model itself), the fractal nature of large clusters (with some explicit bounds on the Hausdorff dimension). It is also an important step towards the understanding of conformal invariance of the model,
 scaling relations between several critical exponents, etc.

\bexo
In the next two exercises, we assume {\bf P5}.

\begin{exercise}\label{exo:quasi}
1. Prove that there exists $c>0$ such that 
$\displaystyle\phi_{p_c,q}^0[0\longleftrightarrow\partial\Lambda_n]\le c\phi_{p_c,q}^0[0\longleftrightarrow\partial\Lambda_{2n}].$\medbreak\noindent
2. Prove that there exist $c_1,c_2>0$ such that for any $x\in\partial\Lambda_n$,
$$c_1\phi_{p_c,q}^0[0\longleftrightarrow\partial\Lambda_n]^2\le\phi_{p_c,q}^0[0\longleftrightarrow x]\le c_2\phi_{p_c,q}^0[0\longleftrightarrow\partial\Lambda_n]^2.$$
\end{exercise}

\begin{exercise}[Polynomial mixing]\label{exo:pol mixing}
1. Show that there exists a constant $c>0$ such that for any $n\ge 2k$ and any event $\calA$ depending on edges in $\Lambda_k$ only,
$\phi^\xi_{\Lambda_{k},p_c,q}[\Lambda_k\not\longleftrightarrow\partial\Lambda_n|\calA]\ge 1-(\tfrac kn)^c.$
\medbreak\noindent
2. Construct a coupling between $\omega\sim \phi^\xi_{\Lambda_{2n},p_c,q}$ and $\tilde\omega\sim \phi^1_{\Lambda_{2n},p_c,q}$ in such a way that $\omega$ and $\tilde\omega$ coincide on $\Lambda_k$ when $\Lambda_k$ is not connected to $\partial\Lambda_n$ in $\tilde\omega$. {\em Hint.} Construct the coupling step by step using  an exploration of the cluster connected to the boundary. Deduce that 
$$\phi^\xi_{\Lambda_{k},p_c,q}[\calA]\ge \big(1-(\tfrac kn)^c\big)\phi^1_{\Lambda_{k},p_c,q}[\calA].$$
\medbreak\noindent
3. Construct a coupling between $\omega\sim \phi^\xi_{\Lambda_{2n},p_c,q}$ and $\tilde\omega\sim \phi^1_{\Lambda_{2n},p_c,q}$ in such a way that $\omega$ and $\tilde\omega$ coincide on $\Lambda_k$ when there exists an open circuit in $\omega$ surrounding $\Lambda_k$. Deduce that 
$$\phi^1_{\Lambda_{k},p_c,q}[\calA]\ge \big(1-(\tfrac kn)^c\big)\phi^\xi_{\Lambda_{k},p_c,q}[\calA].$$
\medbreak\noindent
4.  Deduce that for any event $\calB$ depending on edges outside $\Lambda_n$ only,  
 $$\Big|\phi_{p_c,q}^1[\calA\cap \calB]-\phi_{p_c,q}^1[\calA]\phi_{p_c,q}^1[\calB]\Big|\le 2\left(\tfrac kn\right)^c \phi_{p_c,q}^1[\calA]\phi_{p_c,q}^1[\calB].$$
\end{exercise}
\eexo

\subsubsection{Proof of {\bf P4a}$\Rightarrow${\bf P5} of Theorem~\ref{thm:main}}

We drop the dependency on $q\ge1$ and $p_c$ in the subscripts of the measures.  In the next proofs, we omit certain details of reasonings concerning comparison with respect to boundary conditions. We already encountered such arguments several times (for instance in Exercise~\ref{exo:unique Gibbs} and in the proof of Proposition~\ref{prop:crossing}). We encourage the reader to try to fill up the details of each one of these omissions (Exercise~\ref{exo:11}). 

In order to prove {\bf P4a}$\Rightarrow${\bf P5}, we developed a geometric renormalization for crossing probabilities: crossing probabilities at scale $2n$ are expressed in terms of crossing probabilities at scale $n$. The renormalization scheme is built in such a way that as soon as the crossing probability passes below a certain threshold, they start decaying stretched exponentially fast. As a consequence, either crossing probabilities remain bounded away from 0, or they decay to 0 stretched exponentially fast. 
Let $\calA_n$ be the event that there exists a circuit\footnote{i.e.~a path of edges starting and ending at the same point.} of open edges in $\Lambda_{2n}\setminus \Lambda_n$ surrounding the origin and set 
$$u_n:=\phi_{\Lambda_{8n}}^0[\mathcal A _n].$$ 
The proof articulates around Proposition~\ref{lem:induction1} below, which relates $u_n$ with $u_{7n}$.

\begin{proposition}\label{lem:induction1}
  There exists a
  constant $C<\infty$ such that  
  $ u_{7n}\leq C\, u_n^2$ for all $n\geq 1$.
\end{proposition}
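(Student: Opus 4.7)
The plan is to establish the renormalization inequality $u_{7n}\le C u_n^2$ by exhibiting two edge-disjoint long-crossing events implied by $\calA_{7n}$, controlling each by a constant times $u_n$, and decoupling them via the domain Markov property.

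\textbf{Step 1: Topological inclusion.} I would first show that $\calA_{7n}\subseteq \calV_L\cap \calV_R$, where $L:=[-14n,-7n]\times[-14n,14n]$ and $R:=[7n,14n]\times[-14n,14n]$ are the left and right strips of the annulus $\Lambda_{14n}\setminus\Lambda_{7n}$, and $\calV_L,\calV_R$ denote the events of vertical open crossings of these strips. The reasoning is planar-topological: an open circuit around the origin inside the annulus, viewed together with the outer square $\partial\Lambda_{14n}$, separates the inner square $\partial\Lambda_{7n}$ from infinity, and this separation restricted to the strip $L$ is equivalent to the existence of an open top-to-bottom crossing of $L$ (and similarly for $R$).

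\textbf{Step 2: Reducing strip crossings to scale-$n$ events.} Next, I would argue that the probability of $\calV_L$ under $\phi^0_{\Lambda_{56n}}$ is at most $Cu_n$ (and similarly for $\calV_R$). Since $L$ has aspect ratio $4:1$, iterating Proposition~\ref{prop:crossing} reduces the probability of $\calV_L$ to the crossing probability of a single square at scale $n$ inside $L$. By FKG and the RSW-type estimates, a scale-$n$ square crossing combined with three further rectangle crossings (each of uniformly positive probability) produces a full scale-$n$ circuit, so the square-crossing probability is itself bounded by a constant multiple of $u_n$.

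\textbf{Step 3: Decoupling.} Since $\calV_L$ and $\calV_R$ are increasing events living on disjoint edge-sets, FKG provides the wrong-direction inequality. I would condition on the configuration in $\Lambda_{56n}\setminus(L\cup R)$: by the domain Markov property and the vertex-disjointness of $L$ and $R$, the conditional measure on $L\cup R$ factorizes as a product of random-cluster measures on $L$ and on $R$ with jointly random induced boundary conditions $\xi_L,\xi_R$. Averaging Step~2's bound against the joint distribution of $(\xi_L,\xi_R)$ then gives
\[
\phi^0_{\Lambda_{56n}}[\calV_L\cap\calV_R]\le (Cu_n)^2,
\]
which concludes the proof.

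The hard part is Step~2: the bound $\phi^{\xi}_L[\calV_L]\le Cu_n$ cannot hold for arbitrary $\xi$, since fully wired boundary conditions on $\partial L$ make the crossing nearly certain. The resolution is to show that under the free exterior boundary conditions on $\Lambda_{56n}$, the typical induced boundary conditions on $\partial L$ remain close to free, thanks to the large spatial buffer between $\partial L$ and $\partial\Lambda_{56n}$. The scale factor $7$ and the outer box at scale $56n$ are chosen precisely so that this uniformity holds with a constant independent of $n$, which is then essential to iterate the inequality through a stretched-exponential bound on $u_{7^k n}$ compatible with hypothesis \textbf{P4a}.
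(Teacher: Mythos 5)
Your overall strategy (reduce $\calA_{7n}$ to two scale-$n$ events in well-separated regions, bound each by $Cu_n$, and decouple) is the same as the paper's, but the two steps that carry all the difficulty are asserted rather than proved, and as stated they do not go through. In Step 2, the tools you invoke point in the wrong direction: Proposition~\ref{prop:crossing} and the FKG gluing give \emph{lower} bounds on long crossings in terms of square crossings under \emph{wired} boundary conditions, whereas you need an \emph{upper} bound on $\phi^0_{\Lambda_{56n}}[\calV_L]$ in terms of $u_n=\phi^0_{\Lambda_{8n}}[\calA_n]$, a circuit probability with \emph{free} boundary conditions at distance $8n$. Your gluing argument "square crossing $\le C\cdot$ circuit" needs the three auxiliary rectangle crossings to have probability bounded below under the ambient (free) measure, which is exactly the dichotomy the whole argument is trying to resolve (in the discontinuous regime these probabilities tend to $0$). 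Your proposed fix, that "the typical induced boundary conditions on $\partial L$ remain close to free" because of the buffer, is the crux of the proposition, not a remark: with $q>1$ there is no a priori decay of boundary influence, and the paper makes this rigorous only by \emph{conditioning} on explicit dual crossings ($\calB_n,\calC_n,\calD_n,\calE_n$), each shown to occur with uniformly positive conditional probability via the mixed-boundary RSW estimate (Lemma~\ref{lem:push} applied to the dual model); only inside such a dual shield does (CBC) yield domination by $\phi^0_{\Lambda_{8n}}$ and hence the bound by $u_n$.

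Step 3 contains a second genuine error: for $q\ne 1$ the conditional measure on $L\cup R$ given the configuration outside does \emph{not} factorize into a product of measures on $L$ and on $R$. The domain Markov property gives a single random-cluster measure on the disconnected graph with boundary conditions $\psi^\xi$ that may wire vertices of $\partial L$ to vertices of $\partial R$ through the exterior, and the cluster weight $q^{k}$ then couples the two regions. (FKG in fact makes the two increasing events positively correlated, i.e.\ the inequality you need cannot follow from independence.) The paper circumvents this by peeling the two events off \emph{successively}: it first bounds $\phi^0_{\Lambda_N}[\calA_n^+\mid \calA_n^-\cap\calB_n\cap\calC_n\cap\calD_n\cap\calE_n]\le u_n$ using the dual circuit isolating the box around $z_+$, and then bounds $\phi^0_{\Lambda_N}[\calA_n^-\mid\calB_n\cap\calC_n\cap\calD_n\cap\calE_n]\le u_n$. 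Note also that the paper's first step is probabilistic rather than deterministic: instead of your inclusion $\calA_{7n}\subseteq\calV_L\cap\calV_R$, it shows $\phi^0_{\Lambda_N}[\calA_n^+\cap\calA_n^-]\ge c_1u_{7n}$ by observing that conditionally on $\calA_{7n}$ the interior dominates the wired measure, where Theorem~\ref{thm:box} applies. Your deterministic inclusion is a legitimate alternative starting point, but it does not remove the need for the dual-shielding construction, which is the actual content of the proposition and is absent from your argument.
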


This statement allows to prove recursively that 
$Cu_{7^kn}\le (Cu_n)^{2^k}$. In particular, if there exists $n$ such that $Cu_n<1$, then $u_{7^kn}$ decays stretched exponentially fast. 
Therefore, the proof of {\bf P4a}$\Rightarrow${\bf P5} follows trivially from the previous proposition and the following fairly elementary facts:
\begin{itemize} 
\item $(u_n)$ bounded away from 0 implies {\bf P5} (Exercise~\ref{exo:RSW}).
\item Stretched exponential decay of $u_{7^kn}$ implies stretched exponential decay of $\phi^0[0\leftrightarrow \partial\Lambda_n]$ (Exercise~\ref{exo:from An to point}). Note that this last fact is very intuitive since in order to have a circuit in $\Lambda_{2n}\setminus \Lambda_n$ surrounding the origin, one must have fairly big clusters.
\end{itemize}
\bexo
\begin{exercise}\label{exo:RSW}
1. Fix $\ep>0$ and $\rho>0$. Combine circuits in annuli to prove the existence of $c=c(\rho,\ep)>0$ such that for all $n$, if $R=[-\ep n,(\rho+\ep)n]\times[-\ep n,(1+\ep)n]$ then
$
\phi^0_{R}[\calH(\rho n,n)]\ge c.
$\medbreak\noindent
2. Deduce that $
\phi^\xi_{R}[\calH(\rho n,n)]\ge c.
$ for every boundary conditions $\xi$.
\medbreak\noindent
3. 
Use duality to prove that for any $\xi$ and $n$, $\phi^\xi_{R}[\calH(\rho n,n)] \le 1-c'$ for some $c'=c'(\rho,\ep)>0$.
\end{exercise}

\begin{exercise}\label{exo:from An to point} In this exercise, we assume that $\limsup\tfrac1{n^\alpha}\log \phi^0[0\longleftrightarrow\partial\Lambda_n]=0$ for some constant $\alpha>0$.\medbreak\noindent
1. Prove that 
$\displaystyle\phi^0[0\longleftrightarrow\partial\Lambda_n]\le \sum_{k\ge n}\sum_{x\notin \Lambda_k}\phi^0_{\Lambda_k} [0\longleftrightarrow x]$. {\em Hint.} Use the farthest point on the cluster of $0$ and an argument similar to Exercise~\ref{exo:12}.
\medbreak\noindent
2. Deduce that $\displaystyle\limsup_{x\rightarrow\infty}\tfrac1{k^\alpha}\log \phi^0_{\Lambda_k} [0\longleftrightarrow x]=0$, where $k$ is defined in such a way that $x\in\partial\Lambda_k$.
\medbreak\noindent
3. Prove that $\displaystyle\limsup_{x\rightarrow\infty}\tfrac1{k^\alpha}\log \phi^0_{\Lambda_{3k}} [0\longleftrightarrow 2ke_1]=0$, where $e_1=(1,0,\dots,0)$.
\medbreak\noindent
4. Prove that for every $n$, $\displaystyle\limsup_{k\rightarrow\infty} \tfrac1{k^\alpha}\log u_{7^kn}=0$.
\end{exercise}
\eexo
To prove Proposition~\ref{lem:induction1}, first consider 
the strip $\bbS=\bbZ\times[-n,2n]$, and the  random-cluster measure $\phi^{1/0}_{\bbS}$ with free boundary conditions on $\bbZ\times
\{-n\}$ and wired everywhere else. We refer to Exercise~\ref{exo:crossing in strip} for the (slightly technical) proof of this lemma.
\begin{lemma}\label{lem:stripRSW}
  For all $\rho>0$, there exists a constant
  $c>0$ such that for all $n\geq 1$,
  \begin{equation}
    \label{eq:1}
    \phi^{1/0}_{\bbS}[\calH(\rho n,n)] \ge c.  
\end{equation}
\end{lemma}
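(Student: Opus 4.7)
The strategy is to reduce the bound in the strip $\bbS$ to the bound in Theorem~\ref{thm:box} by exploiting (i) self-duality of the critical model at $p_c=p_{\rm sd}$, (ii) the reflection symmetry of $\bbS$ across the horizontal line $y=n/2$, and (iii) an iteration scheme patterned on Proposition~\ref{prop:crossing}. Throughout I use the fact, established in Section~\ref{sec:2.3}, that for planar boundary conditions the dual of $\phi^\xi_{G,p,q}$ is $\phi^{\xi^\ast}_{G^\ast,p^\ast,q}$ and that $p_c=p^\ast_c$.

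\paragraph{Step 1 (square case via self-duality).} Approximating $\bbS$ by the finite boxes $[-N,N]\times[-n,2n]$ with the appropriate planar boundary conditions and passing to the limit $N\to\infty$, the dual of $\phi^{1/0}_\bbS$ on the dual strip $\bbS^\ast$ equals $\phi^{0/1}_{\bbS^\ast}$ (free on the dual top, wired on the dual bottom), since wired and free boundary conditions swap under duality. Composing with the reflection $\sigma$ across $y=n/2$, which maps $\bbS$ to itself and sends $(1/0)$ to $(0/1)$, one obtains that $\sigma_\ast\phi^{0/1}_{\bbS^\ast}=\phi^{1/0}_\bbS$ (modulo the half-step shift inherent to duality). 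Applying this symmetry to the self-symmetric square $[0,n]\times[0,n]$ yields the identity
\begin{equation*}
\phi^{1/0}_\bbS[\calH(n,n)]\,+\,\phi^{1/0}_\bbS[\calV(n,n)]\ =\ 1,
\end{equation*}
so at least one of these two probabilities is at least $\tfrac12$.

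\paragraph{Step 2 (upgrade to arbitrary aspect ratios).} The proof of Proposition~\ref{prop:crossing} uses only three ingredients: horizontal translation invariance, the FKG inequality, and the reflection across the $y$-axis combined with duality at $p_c$. All three hold for $\phi^{1/0}_\bbS$ (the lateral wired boundary conditions at $\pm\infty$ and the free bottom are invariant under horizontal translations and under the left-right reflection). The argument therefore transfers verbatim to give
\begin{equation*}
\phi^{1/0}_\bbS[\calH(2n,n)]\ \geq\ \tfrac{1}{16(1+q)}\,\phi^{1/0}_\bbS[\calH(n,n)]^6,
\end{equation*}
and the analogous inequality with $\calH$ replaced by $\calV$ (here one further uses the reflection around $y=n/2$ from Step~1 to preserve the measure). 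Iterating whichever of $\calH$ or $\calV$ is known to be at least $\tfrac12$ (from Step~1), followed by gluing $\lceil\rho\rceil$ copies via FKG in the standard way, yields a uniform lower bound $c(\rho)>0$ for crossings in the favorable direction.

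\paragraph{Step 3 (exchanging $\calV$ into $\calH$).} If the favorable direction is already $\calH$, we are done. Otherwise, iterated vertical crossings of squares $[kn,(k+1)n]\times[0,n]$ for $k=0,\dots,\lceil\rho\rceil$ have probability bounded below by Step~2; to combine them into a horizontal crossing of $[0,\rho n]\times[0,n]$ I use the wired top of the strip as a ``connector''. Concretely, Theorem~\ref{thm:box} applied in the full plane to the wired measure $\phi^1_{p_c,q}$ (together with CBC and the DMP relating $\phi^1_{p_c,q}$ to $\phi^{1/0}_\bbS$ on regions adjacent to the wired top of $\bbS$) gives horizontal crossings of rectangles $[kn,(k+1)n]\times[n,2n]$ with positive probability under $\phi^{1/0}_\bbS$. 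Gluing these top crossings to the tops of the vertical crossings via FKG produces the desired horizontal crossing.

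\paragraph{Main obstacle.} The delicate point is Step~1: one must verify that the limit $N\to\infty$ of the duality on the finite boxes identifies the dual of $\phi^{1/0}_\bbS$ with $\phi^{0/1}_{\bbS^\ast}$, and that the half-step shift induced by duality plus the reflection $\sigma$ genuinely fixes the square $[0,n]\times[0,n]$ up to a negligible lattice translation. A secondary subtlety is the asymmetric case in Step~3: without an a priori horizontal bound, chaining vertical crossings is nontrivial, and the use of the wired top of $\bbS$ as a connector (via Theorem~\ref{thm:box} transferred from $\phi^1_{p_c,q}$ to $\phi^{1/0}_\bbS$ through the DMP) is the crucial geometric input that closes the argument.
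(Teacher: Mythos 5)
Your Step 1 is sound, and it is in fact the starting point of the paper's own argument (question 2 of Exercise~\ref{exo:crossing in strip}): duality composed with the reflection across $y=n/2$ maps $\phi^{1/0}_{\bbS}$ to itself while exchanging primal horizontal crossings with dual vertical ones, so one of the two crossing probabilities is at least $\tfrac12$. The trouble begins in Step 2. The proof of Proposition~\ref{prop:crossing} does \emph{not} transfer verbatim: besides the vertical reflection used in the duality step, it relies crucially on the reflection across the horizontal axis of the rectangle (to get $\phi[B\longleftrightarrow C^+]\ge\alpha/2$) and on the point symmetry about the origin (to bound $\phi[\calE']$). Neither of these preserves $\phi^{1/0}_{\bbS}$: any horizontal reflection or $\pi$-rotation exchanges the free bottom with the wired top and therefore maps $\phi^{1/0}_{\bbS}$ to the different measure $\phi^{0/1}$, and no CBC comparison restores the inequality in the required direction (neither set of boundary conditions is coarser than the other). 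Your parenthetical claim that ``the reflection around $y=n/2$ \ldots preserve[s] the measure'' is precisely the false step.

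Step 3 contains the decisive gap. To lower-bound $\phi^{1/0}_{\bbS}[\calH([kn,(k+1)n]\times[n,2n])]$ you invoke Theorem~\ref{thm:box} together with CBC, but CBC goes the wrong way: the free bottom of $\bbS$ makes $\phi^{1/0}_{\bbS}$ \emph{dominated by} wired-boundary measures, so Theorem~\ref{thm:box} can only give an upper bound on such probabilities. The honest lower bound, obtained by conditioning on the configuration below height $n$ and taking the worst case, is the measure on $\bbZ\times[n,2n]$ with free bottom and wired top --- a strip measure of exactly the type you are trying to control, so the argument is circular. (No proof valid for all $q\ge1$ can route through free-boundary crossing lower bounds: those are the content of {\bf P5}, unavailable at this stage and false for $q>4$, while the lemma must hold for every $q\ge1$.) Moreover, even granting the connector, a path passing through $[n,2n]$ or through the wired arc at $y=2n$ leaves $[0,\rho n]\times[0,n]$ and hence does not realize $\calH(\rho n,n)$ as defined. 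The paper avoids all of this by working on the free bottom boundary: it partitions it into segments $\ell_i$, establishes through a dichotomy (connection of a segment to the top versus to the side) and symmetrization/duality in tailored sub-domains that consecutive segments are connected with uniformly positive probability, and then chains these connections by FKG; see Exercise~\ref{exo:crossing in strip}.
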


Even though we do not provide a proof of this statement, the intuition is fairly convincing: the boundary conditions are still somehow ``balanced'' between primal and dual configurations, and it is therefore not so surprising that crossing probabilities are bounded away from above.

In the next lemma, we consider horizontal crossings in rectangular shaped domains with free boundary conditions on the bottom and wired elsewhere.

\begin{lemma}\label{lem:push} 
  For all $\rho>0$ and $\ell\ge 2$, there exists $c=c(\rho,\ell)>0$ such that for all $n>0$,
  \begin{equation}
    \label{eq:7} 
    \phi_D^{1/0}[\calH\left(\rho n,n\right)]\ge c
  \end{equation}
with $D=[0,\rho n]\times [-n,\ell n]$, and $\phi_D^{1/0}$ is the random-cluster measure with free boundary conditions on the bottom side, and wired on the three other sides.
\end{lemma}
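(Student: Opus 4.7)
The plan is to deduce Lemma~\ref{lem:push} from Lemma~\ref{lem:stripRSW} by first generalizing the strip crossing estimate to strips of height $(\ell+1)n$ and then pushing it from the infinite strip down to the finite rectangle $D$ through the comparison between boundary conditions.

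\textbf{Step 1 (generalized strip estimate).} I would first argue that, denoting $\bbS_\ell := \bbZ \times [-n, \ell n]$ and $\phi^{1/0}_{\bbS_\ell}$ the infinite-volume random-cluster measure with free boundary conditions on $\bbZ \times \{-n\}$ and wired on $\bbZ \times \{\ell n\}$, one has
\begin{equation*}
\phi^{1/0}_{\bbS_\ell}[\calH(\rho n, n)] \ge c_1(\rho, \ell) > 0
\end{equation*}
uniformly in $n \ge 1$. For $\ell = 2$ this is precisely Lemma~\ref{lem:stripRSW}. For general $\ell \ge 2$ the proof is the same in spirit: the key ingredient is the approximate self-duality of the strip (vertical reflection composed with planar duality preserves ``free bottom, wired top'' up to an $O(1)$ lattice shift, independently of the strip's height), combined with FKG and horizontal combinations of crossings in the style of RSW.

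\textbf{Step 2 (comparison to $D$).} Since $D \subset \bbS_\ell$ share the same top and bottom, the domain Markov property~\eqref{eq:domain Markov}, applied in a finite approximation of $\bbS_\ell$ and passed to the limit, yields
\begin{equation*}
\phi^{1/0}_{\bbS_\ell}[\calH(\rho n, n)] = \mathbb{E}_{\phi^{1/0}_{\bbS_\ell}}\!\big[\phi^{\xi(\eta)}_D[\calH(\rho n, n)]\big],
\end{equation*}
where $\xi(\eta)$ is the boundary condition induced on $\partial D$ by the configuration $\eta$ on the edges of $\bbS_\ell$ outside $D$ together with the strip's own boundary conditions. The $1/0$ boundary condition of $\phi^{1/0}_D$ bundles the entire top, left and right of $D$ into a single wired class while keeping the bottom as singletons: this is the coarsest partition that leaves the bottom free and contains the wired top of the strip in one class. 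Hence, as soon as $\xi(\eta)$ does not merge a bottom vertex of $D$ with a non-bottom one, the partition of $\phi^{1/0}_D$ is coarser than $\xi(\eta)$, and the comparison between boundary conditions~\eqref{eq:comparison} gives $\phi^{\xi(\eta)}_D[\calH(\rho n, n)] \le \phi^{1/0}_D[\calH(\rho n, n)]$. Averaging then yields $\phi^{1/0}_D[\calH(\rho n, n)] \ge \phi^{1/0}_{\bbS_\ell}[\calH(\rho n, n)] \ge c_1(\rho, \ell) =: c$.

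\textbf{Main obstacle.} The delicate point of the comparison is precisely the possibility that a path in $\eta$ travels from a bottom vertex of $D$ around the outside of $D$ through the wired top of $\bbS_\ell$, creating a class in $\xi(\eta)$ spanning the bottom and the top+left+right of $D$. Such a class is not refined by the partition of $\phi^{1/0}_D$, so the coarser-than inequality fails on configurations of this type. I would handle them by conditioning on the complementary event, namely the absence of any $\eta$-path from the bottom of $D$ to the wired top of $\bbS_\ell$ through $\bbS_\ell \setminus D$, and check that this event has probability bounded away from zero uniformly in $n$; on this event the partition comparison goes through, while the intersection $\calH(\rho n, n)$ with the ``no such path'' event still inherits from Step~1 a uniform lower bound under $\phi^{1/0}_{\bbS_\ell}$. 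Verifying this last uniform lower bound is the technical heart of the argument and relies on the same FKG/RSW mechanics that underlie Lemma~\ref{lem:stripRSW}.
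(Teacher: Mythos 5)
There is a genuine gap, and it sits in your Step 1. The ``generalized strip estimate'' for $\bbZ\times[-n,\ell n]$ with free bottom and wired top is not an easy variant of Lemma~\ref{lem:stripRSW}. First, it cannot be deduced from the $\ell=2$ case by monotonicity: pushing the wired top further away from the rectangle $[0,\rho n]\times[0,n]$ can only \emph{decrease} the probability of the increasing event $\calH(\rho n,n)$ (condition on all edges above height $2n$ being open and use \eqref{eq:comparison}), so Lemma~\ref{lem:stripRSW} gives an upper bound for the taller strip, not a lower bound. Second, the self-duality mechanism you invoke really does depend on the height: for $\ell=2$ the reflection about the line $y=n/2$ exchanges the free and wired boundary arcs \emph{and} fixes the crossing rectangle $[0,\rho n]\times[0,n]$, which is what makes the duality argument close; for $\ell>2$ the reflection exchanging the two boundary arcs sends $[0,\rho n]\times[0,n]$ to a rectangle adjacent to the \emph{wired} side, where crossings are easier, and the argument no longer closes. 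So the ``technical heart'' you defer is in fact the whole content of the lemma, and as stated your proof is circular: the uniform estimate in a strip of height $(\ell+1)n$ near the free boundary is essentially equivalent to Lemma~\ref{lem:push} itself.

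The paper avoids this by inducting on $\ell$ in the finite domain. The base case $\ell=2$ is Lemma~\ref{lem:stripRSW} plus \eqref{eq:comparison} on the two vertical sides. For the step $\ell\to\ell+1$, one applies the inductive hypothesis to the translate $[0,\rho n]\times[0,(\ell+1)n]$ of $D_\ell$ sitting inside $D_{\ell+1}$ (its bottom boundary conditions dominate free), obtaining a crossing of $R=[0,\rho n]\times[n,2n]$ with probability at least $c(\rho,\ell)$; conditionally on $\calH(R)$, the lowest such crossing acts as a wired ceiling for $R'=[0,\rho n]\times[0,n]$, so the measure below it dominates the $\ell=2$ measure and one concludes by $\phi_{D}^{1/0}[\calH(\rho n,n)\cap\calH(R)]\ge c(\rho,2)c(\rho,\ell)$. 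Incidentally, the ``main obstacle'' in your Step 2 is largely illusory in this geometry: since $D$ spans the full height of the strip, an interior bottom vertex $(k,-n)$ with $0<k<\rho n$ has no edge leaving $D$ inside the strip, so the induced boundary condition cannot merge it with the sides or the top; only the two bottom corners require a convention, and assigning them to the wired arc settles the comparison without any conditioning.
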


\begin{proof}
For $\ell=2$, Lemma~\ref{lem:stripRSW} and the comparison between boundary conditions (used on the sides) implies the result readily.
 Now, assume that the result holds for $\ell$ and let us prove it for $\ell+1$. The comparison between boundary conditions in $[0,\rho n]\times[0,(\ell+1)n]$ implies that 
 $$
\phi_D^{1/0}[\calH(R)]\ge c(\rho,\ell),
$$
where $R=[0,\rho n]\times[n,2n]$. 
The comparison between boundary conditions implies that conditioned on $\calH(R)$, the measure restricted to edges in $R':=[0,\rho n]\times[0,n]$ dominates the restriction (to $R'$) of the measure on $D':=\bbS\cap D$ with free boundary conditions on the bottom of $D'$, and wired on the other sides. We deduce that 
$$  \phi_D^{1/0}[\calH\left(\rho n,n\right)\cap\calH(R)]\ge c(\rho,2)\phi_D^{1/0}[\calH(R)]\ge c(\rho,2)c(\rho,\ell).$$ 
\end{proof}

\begin{proof}[Proposition~\ref{lem:induction1}]
Fix $n\ge1$ and set $N:=56n$. Below, the constants $c_i$ are independent of $n$. 
 Define $\calA_n^\pm$ to be the translates of the event $\calA_n$ by $z_\pm:=(\pm 5n,0)$. 
  
Conditioned on $\mathcal A_{7n}$, the restriction of the measure to $\Lambda_{7n}$ dominates the restriction of the measure with wired boundary conditions at infinity. Using this in the second inequality, we find
  \begin{equation}
    \label{eq:10}
    \phi^0_{\Lambda_{N}}[\mathcal A_n^+ \cap  \mathcal A_n^-] \geq \phi^0_{\Lambda_{N}}[\mathcal A_n^+ \cap  \mathcal A_n^-\cap \calA_{7n}]\ge \phi^1[\mathcal A_n^+ \cap  \mathcal A_n^-]\,u_{7n} \ge c_1 u_{7n},
  \end{equation}
  where in the last inequality we combined crossings in rectangles of aspect ratio 4 to create circuits, and then used Theorem~\ref{thm:box} to bound the probability from below (which is justified since the boundary conditions are wired at infinity).
  
  Let $\calB_n$ be the event that $R:=[-N,N]\times[-2n,2n]$ is not connected to $R':=[-N,N]\times[-3n,3n]$.
Under $\phi^0_{N}[\,\cdot\,|\mathcal A_n^+
  \cap \mathcal A_n^-]$, the boundary conditions outside of $R$ are 
  dominated by wired boundary conditions on $R$ and free
  boundary conditions on the boundary of $\Lambda_N$. As a consequence,
  Lemma~\ref{lem:push} applied to the dual measure in the two rectangles $[-N,N]\times[2n,N]$ and $[-N,N]\times[-N,-2n]$ implies that 
    \begin{equation}
    \label{eq:12}
    \phi_{\Lambda_{N}}^0[\calB_n| \mathcal
      A_n^+ \cap  \mathcal A_n^-]\ge c_2.
  \end{equation}
Altogether, \eqref{eq:10} and \eqref{eq:12} lead to the
estimate
  \begin{equation}
    \label{eq:13}
    \phi_{\Lambda_{N}}^0[ \mathcal
      A_n^+ \cap  \mathcal A_n^-\cap\mathcal B_n]\geq c_3\,u_{7n}.  \end{equation}
  \begin{figure}[htbp]
    \centering
    \includegraphics[width=1.00\textwidth]{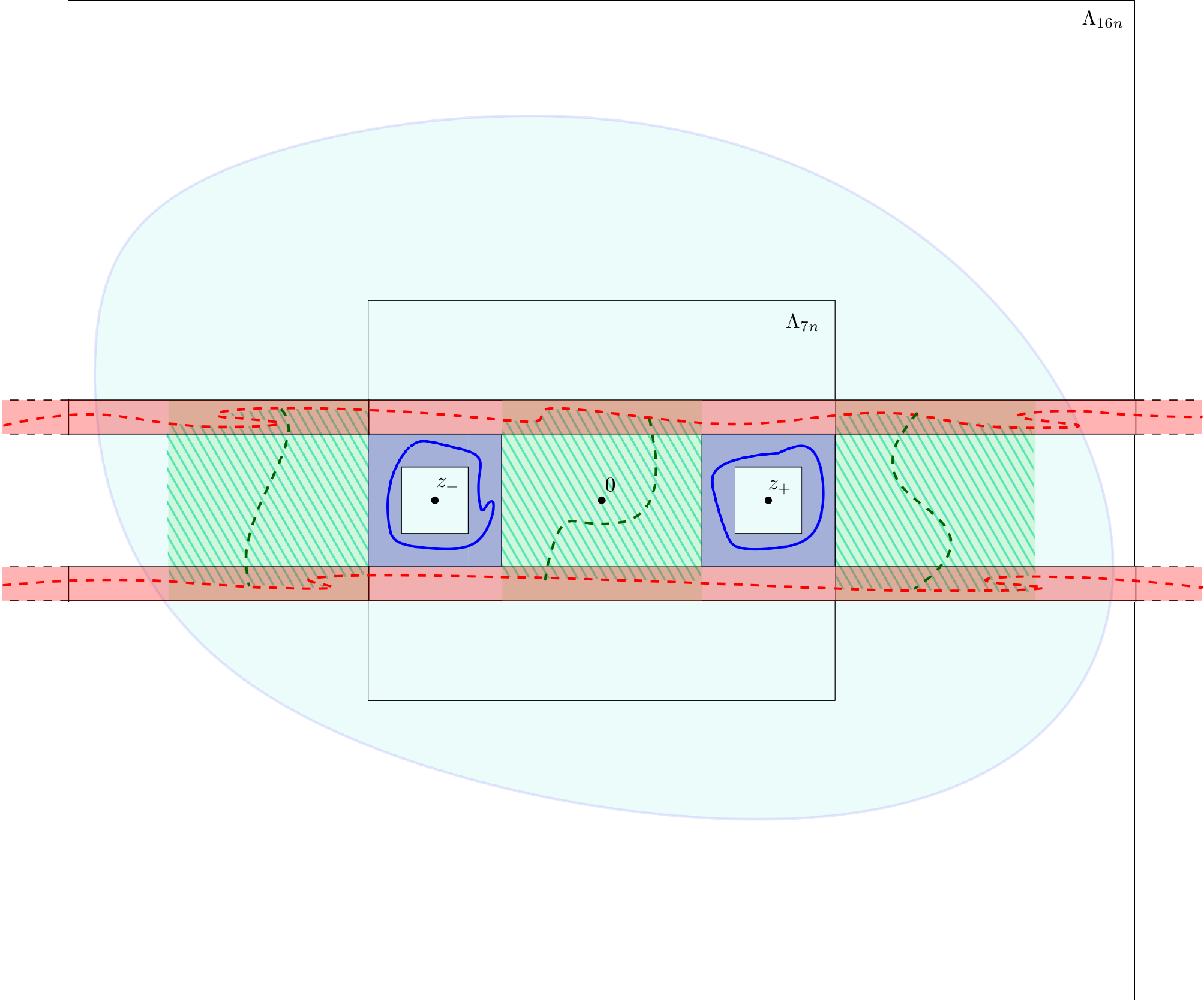} 

    \caption{The different events involved in the construction. In light blue, a circuit in $\omega$ implying the occurrence of the event $\calA_{7n}$. Inside the circuit, the measure dominates a random-cluster measure with wired boundary conditions at infinity. Therefore, conditionally on $\calA_{7n}$, one can construct the two blue circuits (corresponding to $\calA^{\pm}_n$) with positive probability. For the rest of the construction, the event $\calA_{7n}$ is not taken into account anymore. 
The dashed paths in the red areas correspond to two paths in $\omega^*$, which imply the occurrence of the event $\calB_n$. In the top red rectangle (which goes further left and right, but could not be drawn on the picture), conditionally on $\calA^+_n\cap\calA_n^-$, the boundary conditions are dominated by wired boundary conditions on the bottom and free on the boundary of $\partial\Lambda_N$, hence one can apply Lemma~\ref{lem:push}. The same reasoning is valid for the bottom red rectangle.
The dashed green paths correspond to events in $\omega^*$ implying the occurrence of $\calC_n$, $\calD_n$ and $\calE_n$. The dashed green areas correspond to the intersection of $\mathsf C$ with $[-3n,3n]^2$, $[-13n,-7n]\times[-3n,3n]$ and $[7n,13n]\times[-3n,3n]$. In these areas, the boundary conditions are dominated by the wired boundary conditions on left and right, and free on top and bottom.
}
    \label{fig:eventsBH}
  \end{figure}
Define $\mathsf C$ to be the set of points in $R'$ which are not connected to the top or the bottom sides of $R'$. Let $\calC_n$ be the event that the left and right sides of $S:=[-3n,3n]^2$ are not connected together in $\mathsf C$. Conditionally on $A_n^+ \cap  \mathcal A_n^-\cap\mathcal B_n\cap\{\mathsf C=C\}$, the boundary conditions in $C$ are dominated by the boundary conditions of the restriction (to $C$) of  the measure in $S$ with free on the top and bottom sides of $S$, and wired on the left and right. A duality argument in $S$ implies that the probability to have a top to bottom dual crossing is bounded from below by $\tfrac 1{1+q}$. This implies that in $C$, the probability of a dual crossing from top to bottom is a fortiori bounded from below by $\tfrac1{1+q}$. Averaging on the possible values of $C$, this implies  
\begin{equation}\label{eq:aye}
    \phi_{\Lambda_{N}}^0[\calC_n\:\big|\:\calA_n^+\cap\calA_n^-\cap\mathcal
      B_n]\geq \tfrac1{1+q}.
  \end{equation}
A similar reasoning gives that if $\calD_n$ and $\calE_n$ denote respectively the events that the left and right sides of $[-13n,7n]\times[-3n,3n]$ and $[7n,13n]\times[-3n,3n]$ are not connected in $\mathsf S$, we have       \begin{equation*}
    \phi_{\Lambda_{N}}^0[\calC_n\cap\calD_n\cap\calE_n\:\big|\:\calA_n^+\cap\calA_n^-\cap\mathcal
      B_n]\geq \tfrac1{(1+q)^3}
  \end{equation*}
   which, together with \eqref{eq:13}, leads to 
  \begin{equation}
    \label{eq:15}
    \phi_{\Lambda_{N}}^0[\calA_n^+\cap\calA_n^-\cap\mathcal B_n \cap  \calC_n\cap\calD_n\cap\calE_n]\geq c_4  \, u_{7n}.
  \end{equation}
 
  Now, on $\calA_n^-\cap\mathcal B_n \cap  \calC_n\cap\calD_n\cap\calE_n$, there is a dual circuit in the box $\Lambda$ of size $8n$ around $z_+$ surrounding the box $\Lambda'$ of size $2n$ around $z_+$ and therefore the comparison between boundary conditions implies that conditioned on this event, the boundary conditions in $\Lambda'$ are dominated by the free boundary conditions on $\partial\Lambda$. As a consequence, $$ \phi_{\Lambda_{N}}^0[\calA_n^+|\calA_n^-\cap\mathcal B_n \cap  \calC_n\cap\calD_n\cap\calE_n]\leq   \phi_{\Lambda_{8n}}^0[\mathcal A _{n}]=u_n.$$
 Similarly, 
 $  \phi_{\Lambda_{N}}^0[\calA_n^-|\mathcal B_n \cap  \calC_n\cap\calD_n\cap\calE_n]\leq u_n$. Plugging these two estimates in  \eqref{eq:15} gives 
 $u_n^2\ge c_4  u_{7n},$
 which concludes the proof.\end{proof}
 
 \bexo
 \begin{exercise}\label{exo:11}
 Fill up all the details of the different comparison between boundary conditions used in the last two proofs.
 \end{exercise}
 
\begin{exercise}\label{exo:crossing in strip} Below, we use the notation $A\stackrel{S}\longleftrightarrow B$ to denote the existence of a path from $A$ to $B$ staying in $S$. We assume that $\phi^{1/0}_{\bbS}[\calH(\rho n,n)] \le \tfrac12$. Set 
$$\bbS':=\bbZ\times[0,n]\quad R:=[0,9\lambda]\times[0,n]\quad R':=[4\lambda,9\lambda]\times[0,n].$$ Set $\lambda=n/11$ and $\ell_i=[i\lambda,(i+1)\lambda]\times\{0\}$.\medbreak\noindent
1. Show that if $\phi^{1/0}_{\bbS}[\ell_i\stackrel{\bbS'}\longleftrightarrow\ell_{i+2}]\ge c$, then $\phi^{1/0}_{\bbS}[\calH(\rho n,n)] \ge c^{11\rho}$.
\medbreak\noindent
2. Show that $\phi^{1/0}_{\bbS}[\calV(\rho n,n)]\ge \tfrac12$.
\medbreak\noindent
3. Deduce that one of the following two conditions occur:
\begin{itemize}[noitemsep,nolistsep]
\item[C1] $\phi^{1/0}_\bbS[\ell_4\stackrel{R}\longleftrightarrow \bbZ\times\{n\}]\ge \tfrac{1}{44\rho}$.
\item[C2] $\phi^{1/0}_\bbS[\ell_4\stackrel{R}\longleftrightarrow \{0\}\times\bbZ]\ge \tfrac{1}{88\rho}$.
\end{itemize}
\medbreak\noindent
4. Assume that C1 holds true. Show that 
$\phi^{1/0}_\bbS[\ell_2\stackrel{\bbS'}\longleftrightarrow \ell_4]\ge \tfrac1{1+q}(\tfrac{1}{36\rho})^2$. {\em Hint.} Use the same reasoning as for the proof of \eqref{eq:aye}. \medbreak\noindent
5. Assume that C2 holds true. Show that 
$$\phi^{1/0}_\bbS[\{\ell_4\stackrel{R}\longleftrightarrow \{7\lambda\}\times\bbZ\}\cap\{\ell_6\stackrel{R'}\longleftrightarrow\{4\lambda\}\times\bbZ\}]\ge (\tfrac{1}{88\rho})^2.$$
* Construct a symmetric domain to prove that 
$$\phi^{1/0}_\bbS[\ell_4\stackrel{\bbS'}\longleftrightarrow \ell_6]\ge \tfrac1{1+q}(\tfrac{1}{88\rho})^2.$$
6. Conclude.
\end{exercise}
\eexo

\subsection{Proving continuity for $q\le 4$: the parafermionic observables}
In this section, we prove that for  $q\in[1,4]$, {\bf P1--5} are satisfied by proving that {\bf P4a} is satisfied. In order to do so, we introduce the so-called parafermionic observables. The next section is intended to offer an elementary application of the parafermionic observable by studying a slightly different problem, namely the question of computing the connective constant of the hexagonal lattice. We will then go back to the random-cluster model later on.

\subsubsection{Computing the connective constant of the hexagonal lattice}

Let ${\mathbb H}=(\bbV,\bbE)$ be the hexagonal lattice (for now, we assume that 0 is a vertex of $\bbH$ and we assume that the edge on the right of 0 is horizontal). Points in the plane are considered as complex numbers. A {\em walk} $\gamma$ {\em of length $n$} is a path $\gamma:\{0,\dots,n\}\mapsto\bbV$ such that $\gamma_0=0$ and $\gamma_i\gamma_{i+1}\in \bbE$ for any $i<n$. The walk is {\em self-avoiding} if $\gamma_i=\gamma_j$ implies $i=j$.  Let $c_n$ be the number of self-avoiding walks of length $n$. 

A self-avoiding walk of length $n+m$ can be uniquely cut into a self-avoiding walk of length $n$ and a translation of a self-avoiding walk of length $m$. Hence,
\begin{equation*}
	c_{n+m}\leq c_nc_m,
\end{equation*}
from which it follows (by Fekete's lemma on sub-multiplicative sequences of real numbers) that there exists $\mu_c\in[1,+\infty)$, called the {\em connective constant}, such that
$$
	\mu_c:=\lim_{n\rightarrow \infty}c_n^{\,1/n}.
$$
On the hexagonal lattice, Nienhuis  \cite{Nie82,Nie84} used the Coulomb gas formalism to conjecture non-rigorously what $\mu_c$ should be. In this section, we present a mathematical proof of this prediction.

\begin{theorem}[DC, Smirnov \cite{DumSmi12}]\label{theorem}
We have $\mu_c=\sqrt{2+\sqrt{2}}.$
\end{theorem}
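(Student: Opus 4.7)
The plan is to introduce the parafermionic observable of Duminil-Copin--Smirnov on a mid-edge of the hexagonal lattice and derive a local ``divergence-free'' relation at each vertex, valid precisely at $x_c:=1/\sqrt{2+\sqrt 2}$ and spin $\sigma:=5/8$. The observable is defined on the set of mid-edges (midpoints of edges of $\bbH$). Fix a reference mid-edge $a$ on the boundary of a finite simply-connected domain $\Omega\subset \bbH$, and for any mid-edge $z$ set
\[
F(z)\ :=\ \sum_{\gamma:a\to z} x^{|\gamma|}\, e^{-i\sigma W(\gamma,a,z)},
\]
where the sum runs over self-avoiding walks from $a$ to $z$ in $\Omega$, $|\gamma|$ is the number of edges, and $W(\gamma,a,z)$ is the total turning (winding) of $\gamma$ measured in multiples of $\pi/3$. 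The key observation is that for each vertex $v$ of $\Omega$ with incident mid-edges $p,q,r$,
\[
(p-v)F(p)+(q-v)F(q)+(r-v)F(r)=0,
\]
provided one takes $\sigma=5/8$ and $x=x_c$. This is checked by grouping self-avoiding walks into triples: a walk ending at one of $p,q,r$ can be matched with its continuations to the other two mid-edges around $v$; a short computation of the phases $e^{-i\sigma W}$ (each step rotates the winding by a multiple of $\pi/3$) produces a trigonometric identity whose solutions give exactly $\sigma=5/8$ and, after normalization, $x^{-1}=2\cos(\pi/8)=\sqrt{2+\sqrt 2}$.

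Next, apply the local relation on a trapezoidal domain $S_{T,L}$ (a horizontal strip of height $T$ and width $2L+1$ in lattice units, with $a$ sitting on the bottom side). Summing the vertex identity over all vertices of $S_{T,L}$ is a discrete version of the Cauchy--Riemann/Green identity: the interior contributions telescope, leaving only a boundary sum. Parametrizing this boundary sum by the side on which the walk ends and tracking the winding phase (which on each straight portion of the boundary is deterministic), one obtains an identity of the form
\[
1\ =\ \cos\!\bigl(\tfrac{3\pi}{8}\bigr)A_{T,L}(x_c)\ +\ B_{T,L}(x_c)\ +\ 2\cos\!\bigl(\tfrac{\pi}{4}\bigr)E_{T,L}(x_c),
\]
where $A_{T,L}$, $B_{T,L}$ and $E_{T,L}$ are the generating functions (at $x=x_c$) of self-avoiding walks in $S_{T,L}$ starting at $a$ and ending on, respectively, the left/right slanted sides, the top side, and the bottom side away from $a$.

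To finish, use this identity in two complementary ways. For the lower bound $\mu_c\ge\sqrt{2+\sqrt 2}$: assume for contradiction that $\sum_n c_n x_c^n<\infty$. Then $B_{T,L}(x_c)\to 0$ as $L\to\infty$ (the walks reaching the top must cross the whole strip), and the remaining generating functions $A_T(x_c)=\lim_L A_{T,L}(x_c)$, $E_T(x_c)=\lim_L E_{T,L}(x_c)$ are finite and decreasing in $T$; passing $T\to\infty$ in the identity leads to a contradiction, so $\sum_n c_n x_c^n=\infty$ and therefore $\mu_c\ge \sqrt{2+\sqrt 2}$. For the upper bound $\mu_c\le \sqrt{2+\sqrt 2}$: compare the quantity $B_T(x):=\lim_L B_{T,L}(x)$, which counts walks from the bottom to the top of the strip of height $T$, to walks in the half-plane via a bridge-type decomposition (Hammersley--Welsh); the identity at $x=x_c$ gives a polynomial lower bound $B_T(x_c)\ge c/T$, which, by a concatenation argument on bridges, forces the generating function to diverge for every $x>x_c$ on the full lattice, yielding $\mu_c\le 1/x_c$.

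The main obstacle is the derivation of the vertex relation and the careful bookkeeping of winding phases on the boundary: the identification $\sigma=5/8$ and $x_c=1/\sqrt{2+\sqrt 2}$ is not a free parameter but falls out of the compatibility of the phases with the geometry of the hexagonal lattice, and the telescoping on $S_{T,L}$ must correctly account for walks that begin or end on the boundary. Once this algebraic identity is in place, the convergence/divergence dichotomy it implies is essentially analytic and follows from monotone limits in $L$ and $T$.
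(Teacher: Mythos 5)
Your overall strategy is exactly the paper's: the parafermionic observable with $\sigma=5/8$, the vertex relation, the resulting boundary identity on a truncated strip, and Hammersley--Welsh. The local relation and the telescoping are correctly identified. However, the way you extract the two bounds on $\mu_c$ from the identity contains two genuine errors.

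First, your ``upper bound'' argument is logically inverted. You claim that $B_T(x_c)\ge c/T$ forces the bridge generating function to diverge for $x>x_c$ and that this yields $\mu_c\le 1/x_c$. Divergence of $\sum_n b_n x^n$ at and above $x_c$ shows the radius of convergence is at most $x_c$, i.e.\ $\mu_c\ge 1/x_c$ --- a \emph{lower} bound. The actual upper bound comes from the opposite estimate: since all three terms in the identity are nonnegative and sum to $1$, one gets $B_{T,L}\le 1$ for all $T,L$, hence $B_T\le 1$; since a bridge of width $T$ has length at least $T$, $\sum_n b_n y^n\le \sum_T B_T (y/x_c)^T<\infty$ for every $y<x_c$, and Hammersley--Welsh then gives $\mu_c\le 1/x_c$. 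Your proposal never uses $B_T\le 1$, so the upper bound is not actually proved.

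Second, your lower bound is missing the key recursive inequality. If $\sum_n c_n x_c^n<\infty$, then indeed $E_T=0$ for all $T$ and the identity degenerates to $1=\cos(3\pi/8)A_T+B_T$; but also $B_T\to 0$ and $A_T\to 1/\cos(3\pi/8)$, which is perfectly consistent --- ``passing $T\to\infty$ in the identity'' produces no contradiction by itself. The contradiction requires the extra input $A_T-A_{T-1}\le \tfrac1x B_T^{\,2}$ (cut a walk at its first visit to the new rightmost column into two bridges), which combined with the degenerate identity at $T-1$ and $T$ yields $B_T\ge c/T$ by induction, hence $\sum_T B_T=\infty$, contradicting convergence. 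This step is absent from your proposal and cannot be skipped. Two smaller points: the proof of the vertex relation also requires pairing the walks that visit all three mid-edges around $v$ (a path plus a loop traversed in the two directions), not only the triplet grouping; and in your boundary identity the coefficients are attached to the wrong arcs --- $\cos(3\pi/8)$ belongs to the arc containing $a$ (winding $\pm\pi$) and $\cos(\pi/4)$ (without the factor $2$) to the slanted arcs (winding $\pm 2\pi/3$), with coefficient $1$ on the arc of bridges.
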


Before diving into the argument, let us recall the following classical fact. We choose to leave the proof of this statement as an exercise (Exercise~\ref{exo:bridges}) since the argument is instructive. A {\em self-avoiding bridge} is a self-avoiding walk $\gamma:\{0,\dots,n\}\mapsto  \bbH$ satisfying that $0<{\rm Re}(\gamma_i)\le {\rm Re}(\gamma_n)$ for every $1\le i\le n$. Let $b_n$ be the number of bridges of length $n$.

\begin{proposition}[Hammersley-Welsh \cite{HamWel62}]\label{prop:HW}
We have that 
$\displaystyle\lim_{n\rightarrow\infty}b_n^{1/n}=\mu_c.$
\end{proposition}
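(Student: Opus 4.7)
The easy half is immediate: every bridge is a self-avoiding walk, so $b_n \leq c_n$ and therefore $\limsup_n b_n^{1/n}\leq\mu_c$. The real work is the matching lower bound, and the plan is to establish a Hammersley--Welsh inequality of the form $c_n \leq e^{K\sqrt{n}}\,b_n$ for some constant $K>0$ and all $n$ large, from which $\liminf_n b_n^{1/n}\geq\mu_c$ follows instantly.

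The core of the argument is a deterministic decomposition of a SAW into bridges. Given $\gamma$ of length $n$, I would record the successive strict extrema of $\operatorname{Re}(\gamma_i)$: start at time $0$, go to the last time $t_1$ where $\operatorname{Re}(\gamma)$ hits its overall maximum $M_1$; then from $t_1$ go to the last time $t_2$ where $\operatorname{Re}(\gamma)$ hits its minimum on $[t_1,n]$, call it $m_1$; then the next maximum, etc., until reaching time $n$. The successive horizontal spans $|M_1-0|,\,|M_1-m_1|,\,|M_2-m_1|,\ldots$ are \emph{strictly decreasing} by construction, and the pieces of $\gamma$ between consecutive extrema, after the obvious horizontal reflections, are half-plane bridges. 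Pairing consecutive half-bridges yields honest bridges, and the lengths $n_1,\ldots,n_k$ of the pieces sum to $n$ while the spans form a partition of some integer $\leq n$ into distinct parts.

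For the counting, I would invoke the Hardy--Ramanujan estimate that the number of partitions of an integer $\leq n$ into distinct parts is $\exp(O(\sqrt n))$. Denoting by $b_{m,A}$ the number of bridges of length $m$ and span $A$, concatenation gives the super-multiplicativity $b_{n_1+n_2,\,A_1+A_2}\geq b_{n_1,A_1}\,b_{n_2,A_2}$; summing over the finitely many compatible span vectors in a given partition, one gets $\sum b_{n_1,A_1}\cdots b_{n_k,A_k}\leq b_{n}$ up to a polynomial overhead per part, and a further overall factor $2^k \leq \exp(O(\sqrt n))$ (since $k=O(\sqrt n)$) absorbs the reflections used to turn half-bridges into bridges. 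Combining, one obtains $c_n\leq e^{K\sqrt n}\,b_n$, as required.

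The main obstacle will be the lattice bookkeeping on $\mathbb H$: the hexagonal lattice is bipartite with only three edge directions, so I must be careful about which sublattice the endpoints of each sub-walk lie in and whether the horizontal reflection of a half-bridge is actually a legal SAW rooted at $0$ (it may require a translation by a vector compatible with $\bbH$). Controlling these parity issues while losing at most a factor $\exp(O(\sqrt n))$ — and in particular ensuring that the spans really do strictly decrease after the reflections — is where I expect the technical heart of the argument to lie; the asymptotic conclusion is then routine from $\lim c_n^{1/n}=\mu_c$ established by sub-multiplicativity.
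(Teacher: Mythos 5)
This is the classical Hammersley--Welsh unfolding argument, and it is essentially the route the paper takes: the proof is relegated to Exercise~\ref{exo:bridges}, whose outline matches yours (Fekete for bridges, decomposition at successive extrema with strictly decreasing spans, and sub-exponential growth of the number of partitions). One imprecision you should fix: the alternating-extrema decomposition as you describe it is only valid for \emph{half-space} walks. For a general $\gamma$ the initial piece $\gamma[0,t_1]$ may visit both sides of ${\rm Re}=0$, in which case its span is not $|M_1-0|$ and it is not a half-bridge after reflection. The standard remedy, and the one in the exercise, is to first cut $\gamma$ at its point of maximal first coordinate into two half-space walks (giving $c_n\le\sum_k h_{k+1}h_{n+1-k}$) and only then unfold each piece. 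Note also that the paper's variant sidesteps the explicit reflections (which, as you rightly observe, are delicate on the hexagonal lattice) by instead iterating the inequality $h_n\le\sum_k b_k h_{n-k}$ and bounding the result by $P_n\mu^n$, where $\mu=\lim b_n^{1/n}$ is supplied by super-multiplicativity; both variants then conclude identically from the sub-exponential growth of the partition count.
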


\begin{mdframed}[backgroundcolor=lightgray!20]\scriptsize
\begin{exercise}\label{exo:bridges}
\medbreak\noindent
1. Prove that $b_n^{1/n}$ converges to a value $\mu$ and that $b_n\le \mu^n$ for all $n$.
\medbreak\noindent
2. Let $h_n$ be the number of (half-space) self-avoiding walks with ${\rm Re}(\gamma_i)>0$ for all $i\ge1$. Prove that 
$$c_n\le \sum_{k=0}^n h_{k+1}h_{n+1-k}.$$
{\em Hint.} Cut the walk at a point of maximal first coordinate and add horizontal edges. Deduce that $\displaystyle\lim_{n\rightarrow\infty}h_n^{1/n}=\mu_c$.
\medbreak\noindent
3. By decomposing with respect to the last point with maximal first coordinate, show that 
$$h_n\le \sum_{k=0}^n b_k h_{n-k}.$$
4. Let $p_n$ be the number of partitions of $n$ into integers, i.e.~the number of $h_1\ge h_2\ge \dots\ge h_\ell$ such that $h_1+\dots+h_\ell=n$. Let $P_n=\sum_{k=0}^np_k$. By iterating the decomposition above, and observing that the width of the different half-space walks is decreasing, deduce that 
$$h_n\le P_n\mu^n.$$
5. Prove that the generation function $P$of the number $p_n$ of partitions of an integer satisfies
$$P(t)=\sum_{n=0}^\infty P_nt^n=\prod_{n=1}^\infty \frac1{1-t^n}.$$\medbreak\noindent
6. Deduce that $\mu=\mu_c$.
{\rm Remark:} One may also invoke a result of Hardy-Ramanujan stating that $p_n\le \exp(O(\sqrt n))$ to make the previous result quantitative. 
\end{exercise}

\eexo

\begin{figure}[ht]
    \begin{center}
      \includegraphics[width=0.30\hsize]{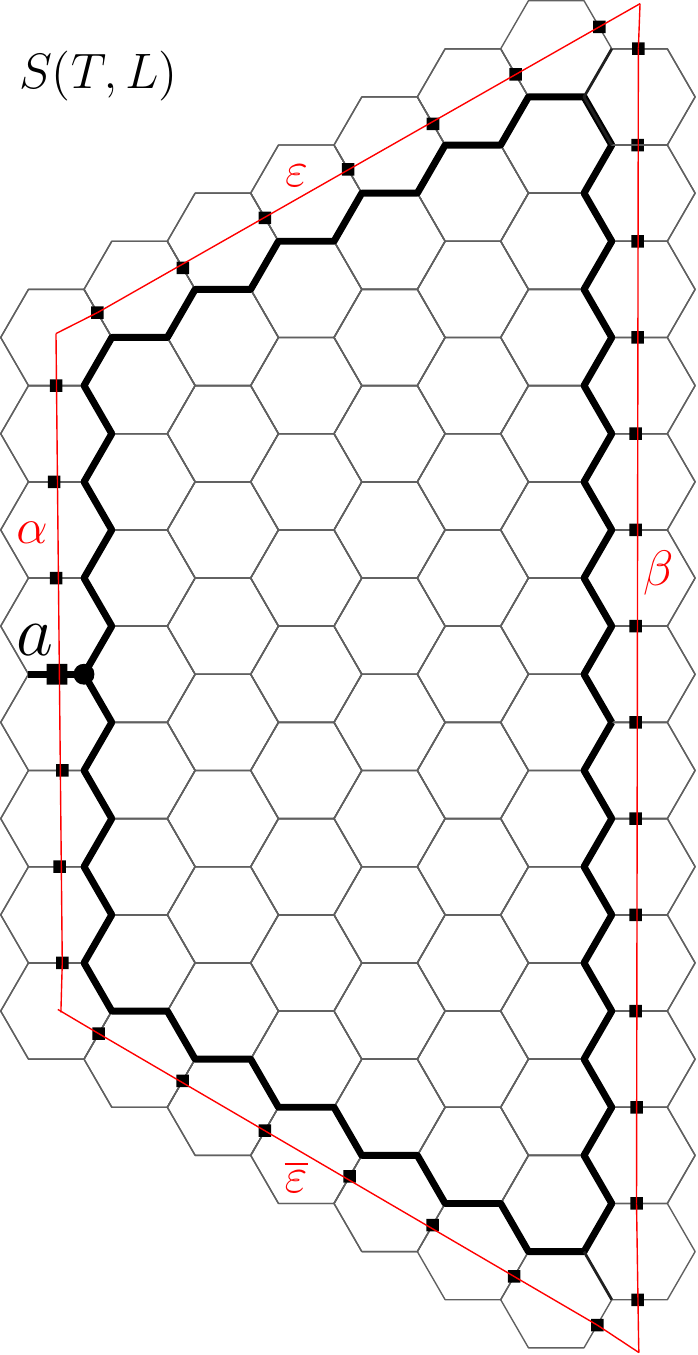}
     \end{center}
    \caption{The graph $S(T,L)$ and its boundary parts $\alpha$, $\beta$, $\ep$ and $\bar{\ep}$.}
    \label{fig:domain} 
  \end{figure}

Assume that the lattice has mesh-size 1 and is shifted by $(-\tfrac12,0)$ so that the origin is now a {\em mid-edge}, i.e.~the middle of an edge, which we call $a$. We also assume that this edge is horizontal (as in Fig.~\ref{fig:domain}). We now consider that self-avoiding walks are in fact starting at $a$ and ending at mid-edges. Their length, denoted by $|\gamma|$, is still the number of vertices on it. We consider a truncated vertical strip $S(T,L)$ of width $T$ cut at height $L$ at an angle of $\pi/3$ (see Fig.~\ref{fig:domain}), i.e.
\begin{align*}
S(T,L):=\{z\in\bbC:0\leq {\rm Re}(z)\leq \tfrac{3}2T \text{ and }\sqrt 3 |{\rm Im}(z)|\le 3L+{\rm Re}(z)\}.
\end{align*}
Denote by $\alpha$ the left boundary of $S(T,L)$ and by $\beta$ the right one. Symbols $\ep$ and $\bar{\ep}$ denote the top and bottom boundaries of $S(T,L)$. For $x>0$, introduce the following quantities:
\begin{align*}
	A_{T,L}&:=\sum_{\substack{\gamma\subset S(T,L)\\
	\gamma_n\in \alpha\setminus \{a\}}}x^{|\gamma|}\qquad\qquad B_{T,L}:=\sum_{\substack{\gamma\subset S(T,L)\\\gamma_n\in\beta}}x^{|\gamma|}\qquad\qquad
	E_{T,L}:=\sum_{\substack{\gamma\subset S(T,L)\\\gamma_n\in \ep\cup\bar{\ep}}}x^{|\gamma|}.
\end{align*}
We will prove the following lemma.
\begin{lemma}\label{lem:fundamental}
	If $x:=1/\sqrt{2+\sqrt 2}$, then for any $T,L\ge0$,
	\begin{equation}\label{equation box}
		1=\cos \left(\tfrac {3\pi}{8}\right)A_{T,L}+B_{T,L}+\cos \left(\tfrac{\pi}{4}\right)E_{T,L}.
	\end{equation}
\end{lemma}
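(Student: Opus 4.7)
The plan is to prove \eqref{equation box} by introducing Smirnov's parafermionic observable on $S(T,L)$ and reading \eqref{equation box} off from a discrete contour identity. For each mid-edge $z$ of $\mathbb H$ inside $S(T,L)$, I define the observable
$$F(z) := \sum_{\gamma} x^{|\gamma|}\, e^{-i\sigma W(\gamma)},$$
where the sum runs over self-avoiding walks $\gamma \subset S(T,L)$ from $a$ to $z$, $\sigma := 5/8$ is the parafermionic spin, and $W(\gamma)$ is the winding of the tangent direction along $\gamma$ (each left/right turn contributing $\pm\pi/3$). Only the trivial walk contributes to $F(a)$, giving $F(a) = 1$.

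The crucial algebraic step is a local vertex identity: for every interior vertex $v$ of $S(T,L)$ with three adjacent mid-edges $p, q, r$,
$$(p-v)\, F(p) + (q-v)\, F(q) + (r-v)\, F(r) = 0. \qquad (\ast)$$
To prove $(\ast)$ I pair the SAWs contributing to the three terms: given any walk in one of $F(p), F(q), F(r)$ whose last visited vertex is $v$, modifying it by one step at $v$ produces a triple of walks ending at $p$, $q$, $r$ respectively, with lengths differing by at most $1$ and windings differing by $\pm\pi/3$. Factoring out the common weight, the cancellation in $(\ast)$ reduces to an algebraic identity involving $x$, $e^{\pm i\sigma\pi/3}$, and the three unit vectors $(p-v),(q-v),(r-v)$, which sit at mutual angles $2\pi/3$. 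For $\sigma = 5/8$ this constraint has the unique positive real solution $x = 1/\sqrt{2+\sqrt 2}$, which is exactly how both the critical value of $x$ and the value of the spin arise.

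Summing $(\ast)$ over every interior vertex of $S(T,L)$, each mid-edge $z$ in the interior is adjacent to exactly two vertices $v_1, v_2$, and since $z$ is the midpoint of $v_1 v_2$ we have $(z - v_1) + (z - v_2) = 0$; all interior contributions therefore telescope. What is left is the boundary identity
$$\sum_{z \in \partial S(T,L)} (z - v(z))\, F(z) = 0,\qquad (\ast\ast)$$
where $v(z)$ denotes the unique vertex of $S(T,L)$ adjacent to the boundary mid-edge $z$. To read \eqref{equation box} off from $(\ast\ast)$, I use that $S(T,L)$ is convex and simply connected to identify the possible windings of SAWs $\gamma \subset S(T,L)$ from $a$ to a boundary mid-edge $z$: on $\beta$ one has $W(\gamma) = 0$; on $\alpha \setminus\{a\}$ one has $W(\gamma) = \pm \pi$; on $\ep \cup \bar\ep$ one has $W(\gamma) = \pm 2\pi/3$, with signs depending on whether the walk approaches from above or below and paired by the up-down reflection symmetry of $S(T,L)$. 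After combining each $z$ with its reflection across the real axis, the phases $e^{\pm i\sigma W(\gamma)}$ and the complex geometric factor $(z-v(z))$ combine into real coefficients. Normalizing so that the contribution of $F(a)=1$ becomes the constant $1$, an explicit trigonometric simplification with $\sigma = 5/8$ yields $\cos(3\pi/8)$ in front of $A_{T,L}$, $1$ in front of $B_{T,L}$, and $\cos(\pi/4)$ in front of $E_{T,L}$.

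The main difficulty lies in the local vertex identity $(\ast)$: the combinatorial pairing of SAWs near $v$ must be executed with care, checking that self-avoidance is preserved under the one-step modifications and handling the case where a walk ends at one of $p, q, r$ but does not itself visit $v$. The fact that this single local constraint forces simultaneously the spin $\sigma = 5/8$ and Nienhuis's critical value $x_c = 1/\sqrt{2+\sqrt 2}$ is the heart of the argument.
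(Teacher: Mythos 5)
Your proposal is correct and follows essentially the same route as the paper: the parafermionic observable $F(z)=\sum_\gamma e^{-i\sigma W_\gamma(a,z)}x^{|\gamma|}$ with $\sigma=5/8$, the local vertex identity $(p-v)F(p)+(q-v)F(q)+(r-v)F(r)=0$ proved by grouping walks into pairs (those traversing a loop through $v$, matched by reversing the loop) and triplets (those extendable by one step), the telescoping sum over vertices, and the boundary evaluation via the deterministic windings $0$, $\pm\pi$, $\pm2\pi/3$ on $\beta$, $\alpha\setminus\{a\}$, $\ep\cup\bar\ep$. The only point to tighten is the pairing in the local identity, where walks visiting all three mid-edges are matched in pairs rather than produced by one-step modifications, exactly as you anticipated in flagging that step as the delicate one.
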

Before proving this statement, let us show how it implies the claim. 
Observe that sequences $(A_{T,L})_{L>0}$ and $(B_{T,L})_{L>0}$ are increasing in $L$ and are bounded. They therefore converge. We immediately deduce that $(E_{T,L})_{L>0}$ also does. 
Let $A_T$, $B_T$ and $E_T$ be the corresponding limits.
  \paragraph{Upper bound on the connective constant} Observe that $B_T\le 1$ for any $T$ (since $B_{T,L}\le 1$) so that for any $y<x$,
$$\sum_{n=0}^{\infty}b_ny^n\le \sum_{T\ge0}B_T(\tfrac{y}{x})^T<\infty$$
(we use that a bridge of width $T$ has length at least $T$). Proposition~\ref{prop:HW} thus implies
\begin{equation}\label{eq:connective upper}
\mu_c=\lim_{n\rightarrow\infty}b_n^{1/n}\le \sqrt{2+\sqrt 2}.
\end{equation}
  \paragraph{Lower bound on the connective constant} Assume first that $E_T>0$ for some $T$. Then,\begin{equation*}
	\sum_{n=0}^nc_nx^n\geq \sum_{L=0}^\infty E_{T,L}=+\infty,
\end{equation*}
which implies $\mu_c\ge \sqrt{2+\sqrt 2}$.
 Assume on the contrary that $E_T=0$ for all $T$. Taking the limit in \eqref{equation box} implies
\begin{equation}\label{infinite strip}
	1=\cos \left(\tfrac {3\pi}{8}\right)A_T+B_T.
\end{equation}
Observe that self-avoiding walks entering into account for $A_{T}$ and not for $A_{T-1}$ have to visit a vertex $x\in \bbV$ on the right of the strip of width $T$, i.e.~satisfying ${\rm Re}(x)=\tfrac32T-\tfrac12$. Cutting such a walk at the first such point (and adding half-edges to the two halves), we obtain two bridges. We conclude that
\begin{equation}\label{rec relation}
	A_{T}-A_{T-1}\leq \tfrac1xB_T^{\ 2}.
\end{equation}
Combining \eqref{infinite strip} for $T-1$ and $T$ with \eqref{rec relation} gives
\begin{align*}
0=1-1
&=\cos \left(\tfrac {3\pi}{8}\right)(A_{T}-A_{T-1})+B_{T}-B_{T-1}\leq \cos \left(\tfrac {3\pi}{8}\right)\tfrac1xB_{T}^{\ 2}+B_{T}-B_{T-1},
\end{align*}
so
$$\cos \left(\tfrac {3\pi}{8}\right)\tfrac1xB_{T}^{\ 2}+B_{T}\geq B_{T-1}.$$
By induction, it is easy to check that 
$$B_T\geq \frac {\min [B_1,x/\cos \left(\tfrac {3\pi}{8}\right)]}{T}$$
for every $T\geq1$. This implies that $\mu_c\ge\sqrt{2+\sqrt 2}$ in this case as well since
$$\sum_{n=0}^\infty b_nx^n=\sum_{T=0}^\infty B_T=+\infty.$$

At the light of the previous discussion, we shall now prove Lemma~\ref{lem:fundamental}. Fix $T$ and $L$. Introduce the \emph{parafermionic observable}\footnote{Let us mention that there are other instances of parafermionic observables for the self-avoiding walk, see \cite{BeaBouDum14,Gla13}. We do not discuss this further here since our goal is to quickly move back to the random-cluster model.} defined as follows: for a mid-edge $z$ in $S(T,L)$, set
\begin{equation*}
	F(z):=\sum_{\substack{\gamma\subset S(T,L)\\ \gamma\text{ ends at }z}}{\rm e}^{-{\rm i}\sigma W_\gamma(a,z)} x^{|\gamma|},
\end{equation*}
where $\sigma:=\tfrac58$ and $W_\gamma(u,v)$ is equal to $\tfrac\pi3$ times the number of left turns minus the number of right turns  made by the walk $\gamma$ when going from $u$ to $v$.

\begin{lemma}\label{lem:relation}
For any $v\in\bbV\cap S(T,L)$, 
\begin{equation}\label{relation around vertex}
		(p-v)F(p)+(q-v)F(q)+(r-v)F(r)=0,
	\end{equation}
	where $p,q,r$ are the mid-edges of the three edges incident to $v$.
\end{lemma}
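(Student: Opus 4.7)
The plan is to establish \eqref{relation around vertex} by a local bijective argument at the vertex $v$: partition the walks contributing to $F(p)$, $F(q)$, $F(r)$ into equivalence classes determined by the behaviour of each walk in a neighbourhood of $v$, and verify that each class contributes $0$ to the weighted sum $(p-v)F(p)+(q-v)F(q)+(r-v)F(r)$.

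Each walk $\gamma$ ending at one of the mid-edges $p,q,r$ falls into one of three types: (A) $\gamma$ does not visit $v$; (C) the last vertex of $\gamma$ is $v$; or (B) $\gamma$ visits $v$ at an interior step. Walks of types (A) and (C) will be grouped into triples: for each type-(A) walk $\gamma_0$ ending at some $z\in\{p,q,r\}$, its two one-step extensions $\gamma_0\cdot v\cdot z'$ for $z'\in\{p,q,r\}\setminus\{z\}$ are type-(C) walks ending at the remaining two mid-edges, and conversely every type-(C) walk arises uniquely by truncating its final vertex. Walks of type (B) will be paired by a local involution that modifies the walk near $v$ to exchange the two edges used there: decomposing $\gamma=\alpha\oplus v\oplus\beta$ at its passage through $v$, the involution sends $\gamma$ to $\alpha\oplus v\oplus\beta^{\mathrm{rev}}$, which preserves the length and the prefix $\alpha$ but changes the endpoint from one mid-edge of $\{p,q,r\}$ to another.

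For a type-(A)+(C) triple generated by $\gamma_0$ ending at $q$, the contribution to \eqref{relation around vertex} is computed by factoring out $e^{-i\sigma W_{\gamma_0}}x^{|\gamma_0|}$; the step through $v$ adds a turn of $+\pi/3$ for the extension to $p$ and $-\pi/3$ for the extension to $r$, giving
\begin{equation*}
e^{-i\sigma W_{\gamma_0}}x^{|\gamma_0|}\Bigl[(q-v)+x(p-v)e^{-i\sigma\pi/3}+x(r-v)e^{+i\sigma\pi/3}\Bigr].
\end{equation*}
Substituting $(p-v,q-v,r-v)=\tfrac12(1,e^{i2\pi/3},e^{-i2\pi/3})$ and using $e^{-i\sigma\pi/3}+e^{-i(2\pi/3-\sigma\pi/3)}=2e^{-i\pi/3}\cos(\pi(1-\sigma)/3)$, the bracket reduces to $\tfrac12 e^{-i\pi/3}\bigl(-1+2x\cos(\pi(1-\sigma)/3)\bigr)$, which vanishes for $\sigma=5/8$ exactly when $x=1/(2\cos(\pi/8))=1/\sqrt{2+\sqrt2}$ (using the identity $\cos(\pi/8)=\sqrt{2+\sqrt2}/2$). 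This single calculation simultaneously dictates the choices of $\sigma$ and $x$; an analogous direct calculation then shows that each type-(B) pair cancels under the same choice.

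The hard part will be the bookkeeping for type-(B) walks: verifying that the proposed involution is indeed well-defined and an involution in the strict sense, and tracking how the total winding transforms under $\beta\mapsto\beta^{\mathrm{rev}}$ (interior turns along $\beta$ flip sign while the two endpoint vertices of $\beta$ produce controlled boundary contributions determined by the local geometry at $v$, combining with the change of turn at $v$ to give the phase required for cancellation). Once the vanishing is established on each class of the partition, summing over all classes yields \eqref{relation around vertex}.
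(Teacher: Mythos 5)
Your decomposition coincides with the paper's: your types (A) and (C) together are exactly the paper's triplets (a walk reaching one of $p,q,r$ without touching $v$, grouped with its two one-step prolongations through $v$), your type-(B) walks are exactly the walks visiting all three mid-edges, and your involution $\beta\mapsto\beta^{\mathrm{rev}}$ is the paper's reversal of the self-avoiding loop based at $v$. Your triplet computation is correct: the bracket $\tfrac12 e^{-{\rm i}\pi/3}\bigl(-1+2x\cos(\pi(1-\sigma)/3)\bigr)$ agrees with the paper's $1+xj\bar\lambda+x\bar j\lambda$ with $j={\rm e}^{2{\rm i}\pi/3}$, $\lambda={\rm e}^{-{\rm i}\sigma\pi/3}$.

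There is, however, a genuine gap, and it is located exactly where you wave your hands. The type-(B) cancellation is not routine bookkeeping that works "under the same choice"; it is the step that actually determines $\sigma$, and your claim that the triplet calculation "simultaneously dictates the choices of $\sigma$ and $x$" is false: the identity $-1+2x\cos(\pi(1-\sigma)/3)=0$ is solvable in $x$ for \emph{every} $\sigma$, so it only fixes $x$ as a function of $\sigma$. What pins down $\sigma=5/8$ is the pair cancellation, which you must carry out. Concretely, if $\gamma_1=\alpha\oplus v\oplus\beta$ first reaches $v$ through the mid-edge $p$ and ends at $q$, while $\gamma_2=\alpha\oplus v\oplus\beta^{\mathrm{rev}}$ ends at $r$ (mid-edges indexed counterclockwise), then $|\gamma_1|=|\gamma_2|$, $(q-v)={\rm e}^{2{\rm i}\pi/3}(p-v)$, $(r-v)={\rm e}^{-2{\rm i}\pi/3}(p-v)$, and since the loop traced by $\beta$ is simple its total winding is $\pm2\pi$, so that $W_{\gamma_1}(p,q)=-4\pi/3$ and $W_{\gamma_2}(p,r)=+4\pi/3$ (the closing turn at $v$, worth $\mp2\pi/3$, is not traversed). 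The pair therefore contributes $(p-v)\,{\rm e}^{-{\rm i}\sigma W_{\gamma_1}(a,p)}x^{|\gamma_1|}\cdot 2\cos\bigl(\tfrac{2\pi}3+\tfrac{4\sigma\pi}3\bigr)$, which vanishes if and only if $\sigma\in-\tfrac18+\tfrac34\bbZ$; only after fixing $\sigma=5/8$ does your triplet identity yield $x=1/\sqrt{2+\sqrt2}$. Without this computation nothing in your argument forces the value of $\sigma$, so you should promote it from deferred bookkeeping to the second half of the proof.
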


\begin{proof}In this proof, we further assume that the mid-edges $p$, $q$ and $r$ are oriented counterclockwise around $v$. Note that $(p-v)F(p)+(q-v)F(q)+(r-v)F(r)$ is a sum of ``contributions''
$$c(\gamma)=(z-v){\rm e}^{-{\rm i}\sigma W_{\gamma}(a,z)} x^{|\gamma|}$$
 over all possible walks $\gamma$ finishing at $z\in\{p,q,r\}$. The set of such walks can be partitioned into pairs and triplets of walks in the following way, see Fig \ref{fig:pairs}:
\medbreak\noindent
{\em Walks visiting the three mid-edges $p$, $q$ and $r$ can be grouped in pairs:}  If a walk $\gamma_1$ visits all three mid-edges, it means that the edges belonging to $\gamma_1$ form a self-avoiding path up to $v$ plus (up to a half-edge) a self-avoiding loop from $v$ to $v$. One can associate to $\gamma_1$ the walk passing through the same edges, but exploring the loop from $v$ to $v$ in the other direction. 
\medbreak\noindent
{\em Walks not visiting the three mid-edges $p$, $q$ and $r$ can be grouped in triplets:}
 If a walk $\gamma_1$ visits only one mid-edge, it can be grouped with two walks $\gamma_2$ and $\gamma_3$ that visit exactly two mid-edges by prolonging the walk one step further (there are two possible choices). The reverse is true: a walk visiting exactly two mid-edges belongs to the group of a walk visiting only one mid-edge (this walk is obtained by erasing the last step). 
 \medbreak
If the sum of contributions for each pair and each triplet described above vanishes, then the total sum is zero. 
We now intend to show that this is the case. 

Let $\gamma_1$ and $\gamma_2$ be two walks that are grouped as in the first case. Without loss of generality, we assume that $\gamma_1$ ends at $q$ and $\gamma_2$ ends at $r$. Since $\gamma_1$ and $\gamma_2$ coincide up to the mid-edge $p$ (they are matched together), we deduce that 
$|\gamma_1|=|\gamma_2|$
and 
\begin{align*}W_{\gamma_1}(a,q)&=W_{\gamma_1}(a,p)+W_{\gamma_1}(p,q)=W_{\gamma_1}(a,p)-\tfrac{4\pi}{3},\\
	W_{\gamma_2}(a,r)&=W_{\gamma_2}(a,p)+W_{\gamma_2}(p,r)=W_{\gamma_1}(a,p)+\tfrac{4\pi}{3}.\end{align*}
In order to evaluate the winding of $\gamma_1$ between $p$ and $q$, we used the fact that $a$ is on the boundary of $S(T,L)$ so that the walk does necessarily four more turns on the right than turns on the left between $p$ and $q$. Altogether,
\begin{align*}c(\gamma_1)+c(\gamma_2)&=(q-v){\rm e}^{-{\rm i}\sigma W_{\gamma_1}(a,q)} x^{|\gamma_1|}+(r-v){\rm e}^{-{\rm i}\sigma W_{\gamma_2}(a,r)} x^{|\gamma_2|}\\
&=(p-v){\rm e}^{-{\rm i}\sigma W_{\gamma_1}(a,p)}x^{|\gamma_1|}\left(j\bar{\lambda}^4+\bar{j}\lambda^4\right)=0
\end{align*}
where $j={\rm e}^{{\rm i}2\pi/3}$ and $\lambda=\exp (-{\rm i}5\pi/24)$ (here we use the crucial choice of $\sigma=\tfrac58$). 

Let $\gamma_1,\gamma_2,\gamma_3$ be three walks matched as in the second case. Without loss of generality, we assume that $\gamma_1$ ends at $p$ and that $\gamma_2$ and $\gamma_3$ extend $\gamma_1$ to $q$ and $r$ respectively. As before, we easily find that
$|\gamma_2|=|\gamma_3|=|\gamma_1|+1$
and 
\begin{align*}W_{\gamma_2}(a,q)&=W_{\gamma_2}(a,p)+W_{\gamma_2}(p,q)=W_{\gamma_1}(a,p)-\tfrac{\pi}{3},\\
	W_{\gamma_3}(a,r)&=W_{\gamma_3}(a,p)+W_{\gamma_3}(p,r)=W_{\gamma_1}(a,p)+\tfrac{\pi}{3}.\end{align*}Following the same steps as above, we obtain
\begin{align*}c(\gamma_1)+c(\gamma_2)+c(\gamma_3)&=(p-v)e^{-{\rm i}\sigma W_{\gamma_1}(a,p)}x^{|\gamma_1|}\left(1+xj\bar{\lambda}+x\bar{j}\lambda\right)=0.
\end{align*}
Here is the \emph{only} place where we use the crucial fact that $x^{-1}=\sqrt{2+\sqrt2}=2\cos \frac{\pi}{8}$. 
The claim follows readily by summing over all pairs and triplets.
\end{proof}

\bexo
\begin{exercise}[Parafermionic observable for the loop $O(n)$-model]
Consider the loop $O(n)$ model defined as follows. Let $E(\Omega)$ be the set of even subgraphs of $\Omega\subset\bbH$ (equivalently, these are the families of non-intersecting loops). Also, let $E_{a,z}(\Omega)$ be the family of loops, plus one self-avoiding walk $\gamma(\omega)$ going from $a$ to $z$ not intersecting any of the loops.
Define the {\em parafermionic observable} $$ F(z)=\sum_{\omega\in E_{a,z}(\Omega)}{\rm e}^{-{\rm i}\sigma W_\gamma(a,z)}x^{|\omega|}n^{\ell(\omega)},$$
where $|\omega|$ is the total length of the loops and the self-avoiding walk, and $\ell(\omega)$ is the number of loops. Note that this model generalizes both the self-avoiding walk ($n=0$) and the Ising model on the hexagonal lattice ($n=1$) via the high-temperature expansion.
\medbreak\noindent
Show that for $n\in[0,2]$, there exist two values of $\sigma$, and for each one a single value of $x$ such that $F$ satisfies \eqref{relation around vertex}. {\em The smallest of the two values of $x$ is conjectured by Nienhuis to be the critical point of the model.} 
\end{exercise}

\eexo
\begin{figure}[ht]
    \begin{center}
      \includegraphics[width=0.90\hsize]{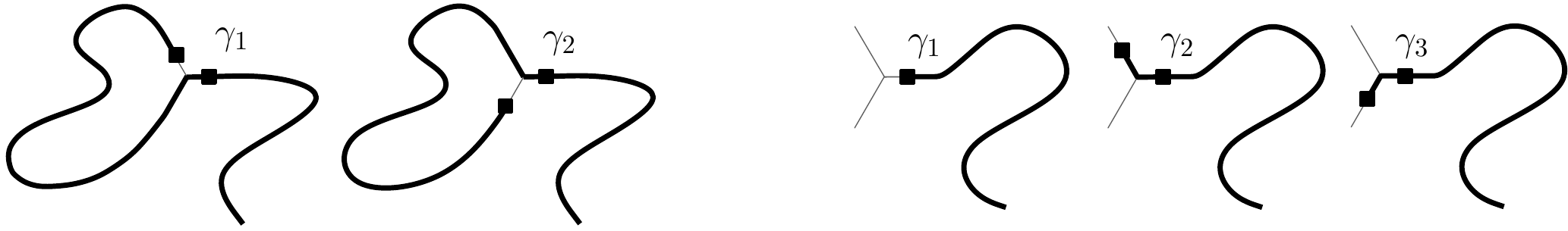}
     \end{center}
    \caption{\textbf{Left: }a pair of walks visiting the three mid-edges and matched together. \textbf{Right:} a triplet of walks, one visiting one mid-edge, the other two visiting two mid-edges, which are matched together.}
    \label{fig:pairs} 
  \end{figure} 

\begin{proof}[Lemma~\ref{lem:fundamental}]
Sum the relation \eqref{relation around vertex} over all $v\in \bbV\cap S(T,L)$. Values at interior mid-edges cancel and we end up with
\begin{equation}\label{sum}
	0=-\sum_{z\in \alpha}F(z)+\sum_{z\in \beta}F(z)+j\sum_{z\in \ep}F(z)+\bar{j}\sum_{z\in \bar{\ep}}F(z),
\end{equation}
where $j={\rm e}^{2{\rm i}\pi/3}$.
Using the symmetry of the domain with respect to the $x$ axis, we deduce that $F(\bar{z})=\bar{F}(z)$. Observe that the winding of any self-avoiding walk from $a$ to the bottom part of $\alpha$ is $-\pi$ while the winding to the top part is $\pi$. We conclude
\begin{align*}
	\sum_{z\in \alpha}F(z)&=F(a)+\sum_{z\in \alpha\setminus \{a\}}F(z)=1+\frac{{\rm e}^{-{\rm i}5\pi/8}+{\rm e}^{{\rm i}5\pi/8}}{2}A_{T,L}=1-\cos \left(\tfrac{3\pi}{8}\right)A_{T,L}.
\end{align*}
Above, we have used the fact that the only walk from $a$ to $a$ is of length $0$. Similarly, the winding from $a$ to any half-edge in $\beta$ (resp. $\ep$ and $\bar{\ep}$) is 0 (resp. $\frac {2\pi}3$ and $-\frac{2\pi}3$), therefore 
$$\sum_{z\in \beta}F(z)=B_{T,L}\quad \text{and}\quad j\sum_{z\in \ep}F(z)+\bar{j}\sum_{z\in \bar{\ep}}F(z)=\cos  \left(\tfrac\pi 4\right)E_{T,L}.$$
The lemma follows readily by plugging these three formul\ae\ in \eqref{sum}.
\end{proof}

The proof of Lemma~\ref{lem:fundamental} can be understood in the following way. Coefficients in \eqref{relation around vertex} are three cubic roots of unity multiplied by $p-v$, so that the left-hand side can be seen as a discrete integral along an elementary contour on the dual lattice in the following sense. For a closed path $c=(z_i)_{i\le n}$ of vertices in the triangular lattice $\bbT$ dual to $\bbH$, define the discrete integral of a function $F$ on mid-edges by
	\begin{equation}\oint_{c}F(z)dz:=\sum_{i=0}^{n-1}F\left(\tfrac{z_i+z_{i+1}}{2}\right)(z_{i+1}-z_i).\end{equation}
	Equation~\eqref{relation around vertex} at $v\in\bbV$ implies that the discrete contour integral going around the face of $\bbT$ corresponding to $v$ is zero. Decomposing a closed path into a sum of elementary triangles gives  that the discrete integral along any closed path vanishes.
		
	The fact that the integral of the parafermionic observable along closed path vanishes is a glimpse of conformal invariance of the model in the sense that the observable satisfies a weak notion of discrete holomorphicity. Nevertheless, these relations do not uniquely determine $F$.
	 Indeed, the number of mid-edges (and therefore of unknown variables) exceeds the number of linear relations \eqref{relation around vertex} (which corresponds to the number of vertices).
	 	 Nonetheless, one can combine the fact that the discrete integral along the exterior boundary of $S(T,L)$ vanishes with the fact that the winding of self-avoiding walks ending at boundary mid-edges is deterministic and explicit. This extra information is sufficient to derive some non-trivial information on the model. In the next section, we will use a similar idea in the case of random-cluster models.

\begin{figure}
\begin{center}
\includegraphics[width=0.80\textwidth]{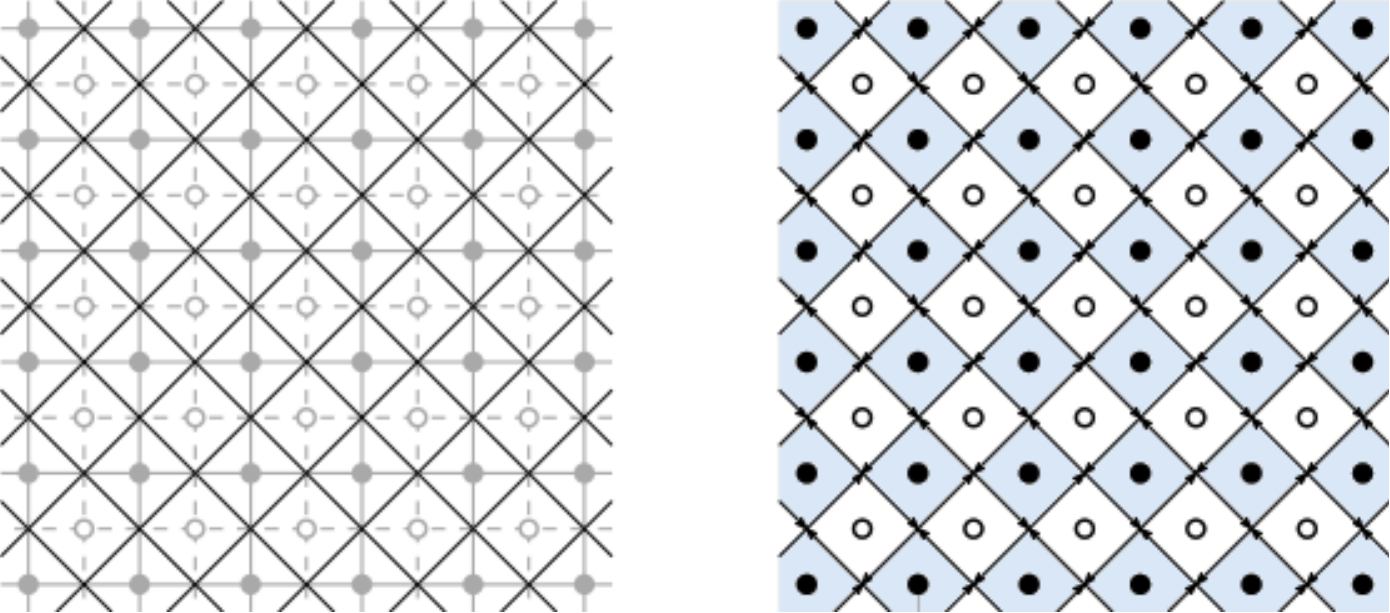}
\caption{On the left, the lattice $\bbZ^2$, its dual lattice $(\bbZ^2)^*$ and medial lattice $(\bbZ^2)^\diamond$. On the right, a natural orientation on the medial lattice.}\label{fig:lattices}\end{center}
\end{figure}

\begin{figure}
\begin{center}
\includegraphics[width=0.9\textwidth]{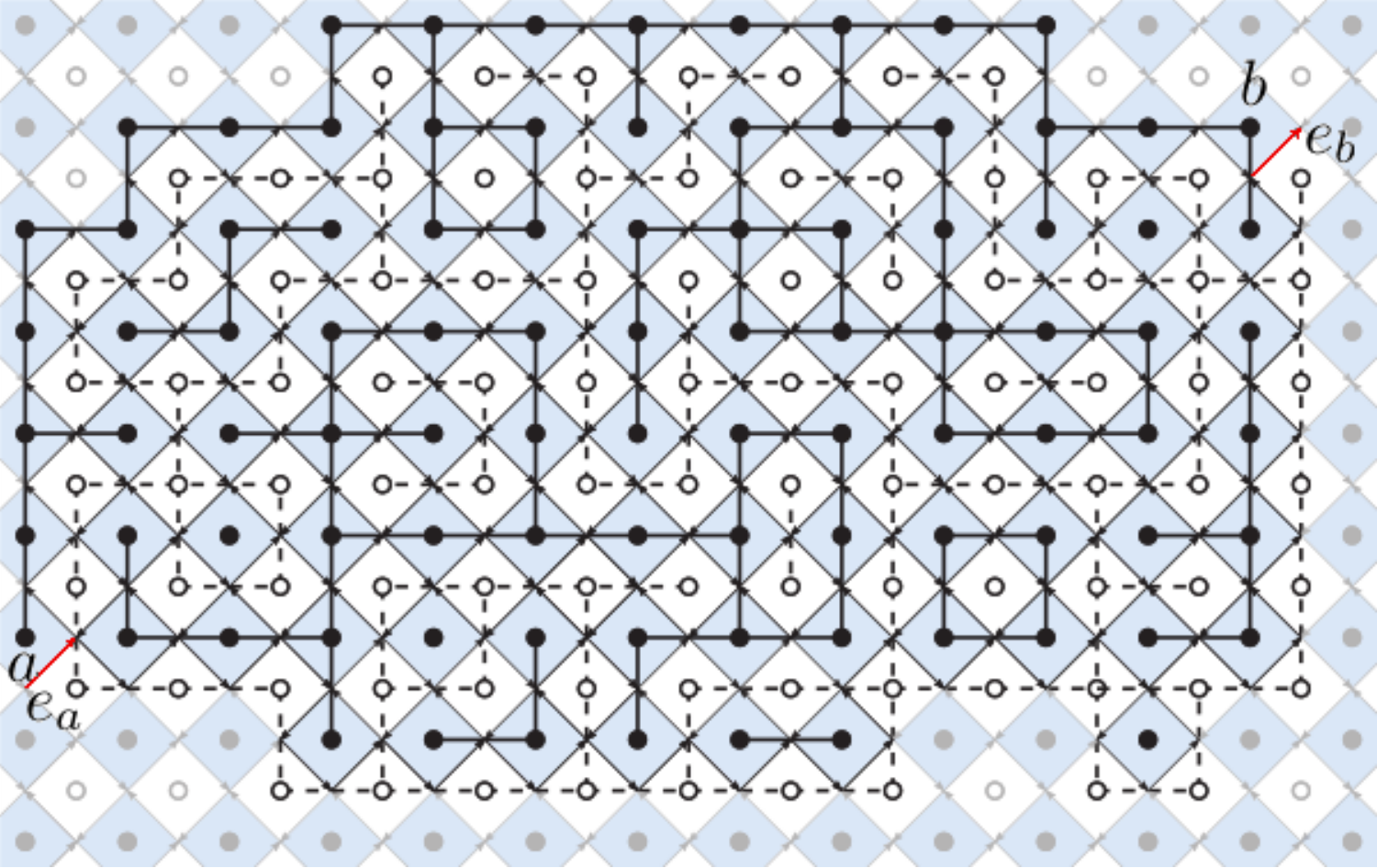}
\caption{\label{fig:primal domain} The configuration $\omega$ (in bold lines) with its dual configuration $\omega^*$ (in dashed lines). Notice that the edges of $\omega$ are open on $(ba)$, and that those of $\omega^*$ are open on $(ab)^*$.}
\end{center}
\begin{center}
\includegraphics[width=0.9\textwidth]{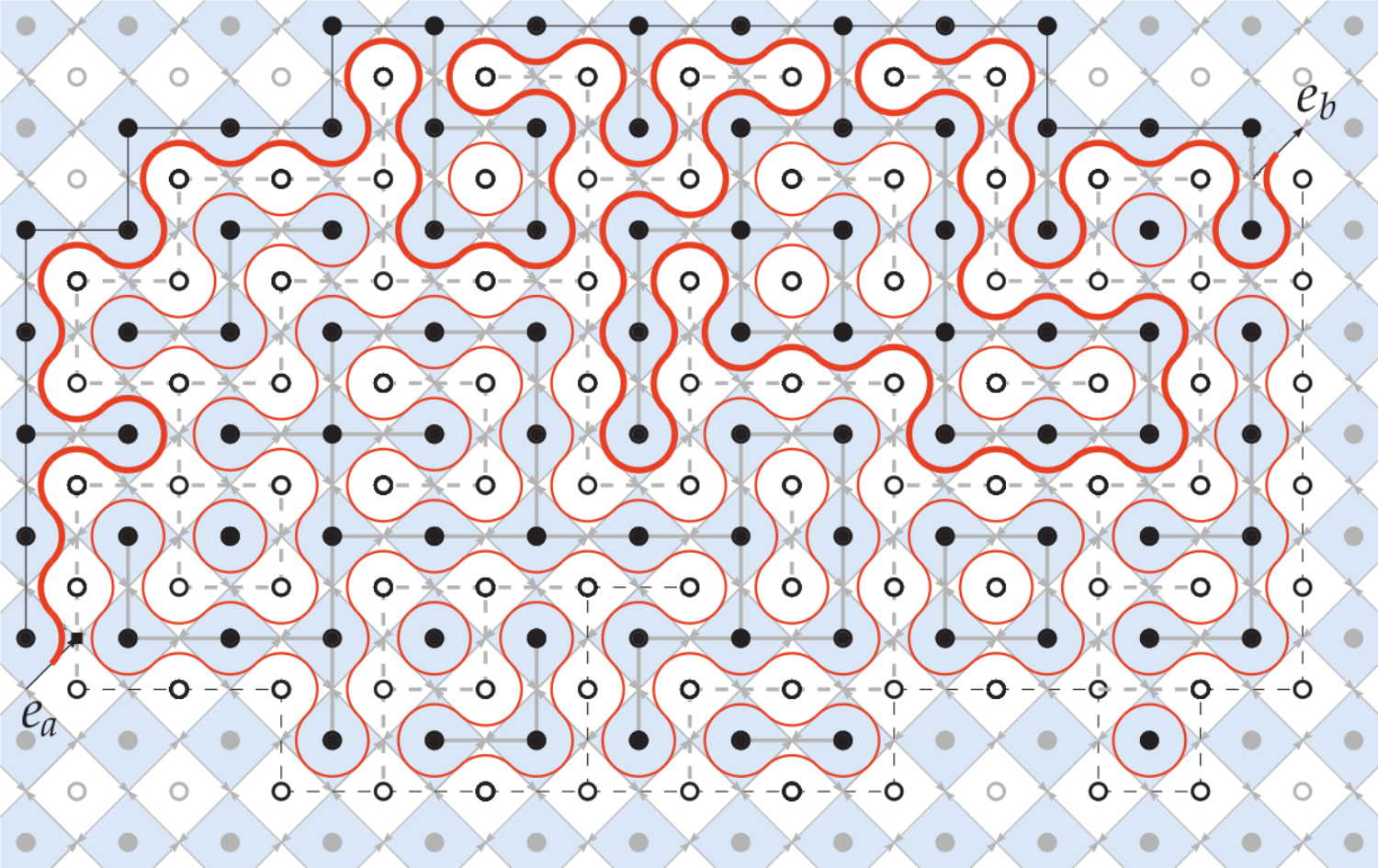} 

\caption{\label{fig:3} The loop configuration $\overline\omega$ associated to the primal and dual configurations $\omega$ and $\omega^*$ in the previous picture. The exploration path is drawn in bold. It starts at $e_a$ and finishes at $e_b$. }
\end{center}
\end{figure}

\subsubsection{The loop representation and the parafermionic observable}

In order to define parafermionic observables for random-cluster models, we first discuss the loop representation of the  model.
\medbreak
{\em In the definitions below, we recommend to look at Figures~\ref{fig:lattices}, \ref{fig:primal domain} and \ref{fig:3}.}
\medbreak

Let $\Omega$ be a connected graph with connected complement in $\bbZ^2$, and $a$ and $b$ two vertices on its boundary. The triplet $(\Omega,a,b)$ is called a Dobrushin domain. The set $\partial\Omega$ is divided into two boundary arcs denoted by $(ab)$ and $(ba)$: the first one goes from $a$ to $b$ when going counterclockwise around $\partial\Omega$, while the second goes from $b$ to $a$. The {\em Dobrushin boundary conditions} are
defined to be free on $(ab)$ and wired on $(ba)$. In other words, the partition is composed of $(ba)$ together with singletons. Note that the state of edges on $(ba)$ is now irrelevant since the vertices of $(ba)$ are wired together anyway. We will therefore consider that edges on $(ba)$ are not in $\Omega$ (this will be relevant when defining $\Omega^*$). Also, the Dobrushin boundary conditions are planar, and it is therefore convenient to choose a configuration $\xi$ inducing them. We set $\xi_e=0$ for all $e\in \bbE\setminus E$ except for edges on $(ba)$, for which $\xi_e=1$. 
Below, the measure on $(\Omega,a,b)$ with Dobrushin boundary conditions is denoted by $\phi^{a,b}_{\Omega,p,q}$.  

Let $\Omega^*$ be the dual of the graph $\Omega$ (recall that edges in $(ba)$ are not part of $\Omega$ anymore).  We draw the dual configuration $\omega^*$ with the additional condition that edges between vertices of $\partial\Omega^*$ that are bordering $(ab)$ are open in $\omega^*$ (we call the set of such edges $(ab)^*$). This is coherent with the duality relation since the dual boundary conditions of the Dobrushin ones are induced by the configuration $\xi^*$ equal to 1 on $(ab)^*$, and 0 elsewhere. Keep in mind that from this point of view, primal and dual models play symmetric roles with respect to Dobrushin boundary conditions.

 We now explain how to construct the loop configuration, which is defined on another graph, called the {\em medial graph}. This graph is defined as follows. Let $(\bbZ^2)^\diamond$ be the {\em medial} lattice defined as follows. The set of vertices is given by the midpoints of edges of $\bbZ^2$. The edges are the pairs of nearest vertices (i.e.~vertices at distance $\sqrt 2/2$ of each others). It is a rotated and rescaled version of $\bbZ^2$, see Fig.~\ref{fig:lattices}.  For future reference, note that the edges of the medial lattice can be oriented in a counter-clockwise way around faces that are centered on a vertex of $\bbZ^2$ (the dark faces on Fig.~\ref{fig:lattices}). Let $\Omega^\diamond$ be the subgraph of $(\bbZ^2)^\diamond$ made of vertices corresponding to an edge of $\Omega$ or $\Omega^*$. Let $e_a$ and $e_b$ be the two medial edges entering and exiting $\Omega^\diamond$ between the arc $(ba)$ and $(ab)^*$ (see Fig.~\ref{fig:primal domain}).

Draw self-avoiding loops on $\Omega^\diamond$ as follows: a loop arriving at a vertex of the medial lattice 
always makes a $\pm \pi/2$ turn at vertices so as not to cross the
edges of $\omega$ or $\omega^*$, see Fig.~\ref{fig:3}. The loop configuration is defined in an unequivocal way since:
\begin{itemize}
\item there is either an edge of $\omega$ or an edge of $\omega^*$ crossing non-boundary vertices in $\Omega^\diamond$, and therefore there is exactly one coherent way for the loop to turn at non-boundary vertices.
\item the edges of $\omega$ in $(ba)$ and the edges of $\omega^*$ in $(ab)^*$ are such that the loops at boundary vertices turn in order to remain in $\Omega^\diamond$.\end{itemize}
From now on, the loop configuration associated to $\omega$ is denoted by $\overline \omega$. 
Beware that the denomination is slightly misleading: $\overline\omega$ is made of loops together with a self-avoiding path going from $e_a$ to $e_b$, see Figures~\ref{fig:3}. This curve is called the 
\emph{exploration path} and is denoted by $\gamma=\gamma(\omega)$.  

We allow ourselves a slight abuse of notation: below, $\phi_{\Omega,p,q}^{a,b}$ denotes the measure on percolation configurations as well as its push-forward by the map $\omega\mapsto \overline\omega$. Therefore, the measure $\phi^{a,b}_{\Omega,p,q}$ will sometimes refer to a measure on loop configurations.

\begin{proposition}\label{prop:loop}Let $\Omega$ be a connected finite subgraph of $\bbZ^2$  connected complement in $\bbZ^2$. Let $p\in[0,1]$ and $q>0$. For any configuration $\omega$,
$$\phi_{\Omega,p,q}^{a,b}[\overline\omega]~=~\frac{x^{o(\omega)}\sqrt q^{\ell(\overline\omega)}}{\overline Z_{\Omega,p,q}},
$$
where $x:=\frac{p}{\sqrt q (1-p)}$, $\ell(\overline\omega)$ is the number of loops\footnote{The exploration path $\gamma$ is considered as a loop and counts as 1 in $\ell(\overline\omega)$.} in $\overline\omega$ and $\overline Z_{\Omega,p,q}$ is a normalizing constant.\end{proposition}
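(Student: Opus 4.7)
The plan is to start directly from the definition
$$\phi^{a,b}_{\Omega,p,q}[\omega] \;=\; \frac{p^{o(\omega)}(1-p)^{c(\omega)}\,q^{k(\omega^\xi)}}{Z^{a,b}_{\Omega,p,q}},$$
where $\xi$ is the boundary configuration inducing the Dobrushin boundary conditions (open on $(ba)$, closed elsewhere outside $\Omega$), and to re-express this weight in terms of the two quantities $o(\omega)$ and $\ell(\overline\omega)$. The map $\omega\mapsto\overline\omega$ is a bijection onto admissible loop configurations (given $\overline\omega$, an edge of $\Omega$ is open in $\omega$ iff it is \emph{not} crossed by a loop), so rewriting the unnormalized weight is enough. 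Writing $(1-p)^{c(\omega)}=(1-p)^{|E|-o(\omega)}$ and absorbing the $\omega$-independent factor $(1-p)^{|E|}$ into the normalization, the weight becomes proportional to $\bigl(\tfrac{p}{1-p}\bigr)^{o(\omega)}q^{k(\omega^\xi)}$.

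The core of the proof is then the combinatorial identity
\begin{equation}\label{eq:planKey}
2\,k(\omega^\xi) \;=\; \ell(\overline\omega) - o(\omega) + C(\Omega),
\end{equation}
where $C(\Omega)$ depends only on $\Omega,a,b$ (not on $\omega$). Once \eqref{eq:planKey} is proved, one immediately obtains
$$\Big(\tfrac{p}{1-p}\Big)^{o(\omega)} q^{k(\omega^\xi)} \;=\; q^{C(\Omega)/2}\Big(\tfrac{p}{(1-p)\sqrt q}\Big)^{o(\omega)}\sqrt{q}^{\,\ell(\overline\omega)} \;=\; q^{C(\Omega)/2}\,x^{o(\omega)}\sqrt{q}^{\,\ell(\overline\omega)},$$
and absorbing the constant $q^{C(\Omega)/2}$ into $\overline Z_{\Omega,p,q}$ yields the announced formula.

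To establish \eqref{eq:planKey} I would split the argument into two Euler-formula computations. First, compactify the plane to a sphere by identifying the point at infinity and close the exploration path into a genuine loop by adjoining an external arc from $e_b$ back to $e_a$; the family $\overline\omega$ then consists of pairwise disjoint simple closed curves on the sphere, cutting it into $\ell(\overline\omega)+1$ regions which alternate between ``primal'' ones (bijective with clusters of $\omega^\xi$) and ``dual'' ones (bijective with clusters of $(\omega^*)^{\xi^*}$, where $\xi^*$ wires $(ab)^*$), whence
$$\ell(\overline\omega) \;=\; k(\omega^\xi) + k\bigl((\omega^*)^{\xi^*}\bigr) - 1.$$
Second, applying Euler's formula to the planar graph obtained by contracting the wired arc $(ba)$ to a single vertex gives, after identifying bounded faces with dual clusters,
$$k\bigl((\omega^*)^{\xi^*}\bigr) \;=\; k(\omega^\xi) + o(\omega) - |V| + |(ba)| + 1.$$
Summing these two relations produces \eqref{eq:planKey} with $C(\Omega) = |V|-|(ba)|+1$.

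The main obstacle will be the bookkeeping in the two Euler-formula steps: one must carefully track how the primal wired arc $(ba)$ and the dual wired arc $(ab)^*$ interact with the exploration path, and verify that the alternation between primal and dual regions on the sphere is consistent with the Dobrushin wiring on both sides. Once these conventions are set up correctly (a sanity check on the all-open or all-closed configuration suffices to fix $C(\Omega)$), the remaining manipulations are purely algebraic.
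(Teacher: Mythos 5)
Your proposal is correct, and the key identity \eqref{eq:planKey} is exactly the relation the paper uses ($\ell(\overline\omega)=2k(\omega^\xi)+o(\omega)-v$ with $v$ the number of vertices after contracting $(ba)$), but you reach it by a genuinely different route. The paper proves it by induction on the number of open edges: starting from the all-closed configuration (where $\ell=k=v$) and observing that opening an edge either merges two clusters (both $\ell$ and $k$ drop by $1$) or closes a cycle ($\ell$ goes up by $1$, $k$ unchanged), so the quantity $2k+o-\ell$ is invariant. Your argument is global rather than inductive: you combine the topological fact that $\ell$ disjoint loops on the sphere cut it into $\ell+1$ regions alternating between primal and dual clusters, giving $\ell=k(\omega^\xi)+k((\omega^*)^{\xi^*})-1$, with the Euler-formula relation between primal and dual cluster counts. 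This buys you a conceptual link to the duality structure (it is essentially the same Euler-formula computation as in the paper's duality proposition), at the price of the boundary bookkeeping you yourself flag as the main obstacle. Indeed there is a small inconsistency there: summing your two displayed relations as written gives $C(\Omega)=|V|-|(ba)|$, not $|V|-|(ba)|+1$; the correct second relation is $k((\omega^*)^{\xi^*})=k(\omega^\xi)+o(\omega)-|V|+|(ba)|$, which then yields $C(\Omega)=|V|-|(ba)|+1=v$ as you state. This slip is harmless for the proposition, since any $\omega$-independent constant is absorbed into $\overline Z_{\Omega,p,q}$, and your proposed sanity check on the all-closed configuration would catch it; the paper's induction avoids the issue entirely because the base case fixes the constant automatically.
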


In particular, $x=1$ when $p=p_c(q)$ and the probability of a loop configuration is expressed in terms of the number of loops only. 

\begin{proof}
Let $v$ be the number of vertices of the graph $\Omega$ where $(ba)$ has been contracted to a point. An induction on the number of open edges shows that
\begin{equation}\label{eq:ab}\ell(\overline\omega)=2k(\omega)+o(\omega)-v.\end{equation}
Indeed, if there is no open edge, then $\ell(\overline \omega)=k(\omega)=v$ since there is a loop around each one of the vertices of $\Omega\setminus(ba)$, and one exploration path. Now, adding an edge can either:
\begin{itemize}[noitemsep,nolistsep]
\item join two clusters of $\omega$, thus decreasing both the numbers of loops and clusters by 1,
\item close a cycle in $\omega$, thus increasing the number of loops by 1 and not changing the number of clusters.
\end{itemize}
Equation \eqref{eq:ab} implies that
\begin{align*}p^{o(\omega)}(1-p)^{c(\omega)}q^{k(\omega)}&=p^{o(\omega)}(1-p)^{|E|-o(\omega)}q^{k(\omega)}\\
&=(1-p)^{|E|}\sqrt q^{v}\big(\tfrac{p}{(1-p)\sqrt q}\big)^{o(\omega)}\sqrt q^{2k(\omega)+o(\omega)-v}\\
&=(1-p)^{|E|}\sqrt q^{v}x^{o(\omega)}\sqrt q^{\ell(\overline\omega)}.
\end{align*}
The proof follows readily.
\end{proof}

We are now ready to define the parafermionic observable.
Recall that $\gamma=\gamma(\omega)$ is the exploration path in the loop configuration $\overline\omega$. The \emph{winding} 
$\text{W}_{\gamma}(e,e')$ of the exploration path $\gamma$ between two medial-edges $e$ and 
$e'$ of the medial graph is equal to $\pi/2$ times the number of left turns minus the number of right turns  done by the curve between $e$ and $e'$. When $e$ or $e'$ are not on $\gamma$, we set the winding to be equal to 0. 
\begin{definition}\label{def:parafermionic observable}
Consider a Dobrushin domain $(\Omega,a,b)$. 
 The {\em parafermionic observable} $F=F(\Omega,p,q,a,b)$  is defined for any (medial) edge $e$ of $\Omega^\diamond$ by
\begin{equation*}
  F(e) ~:=~\phi^{a,b}_{\Omega,p,q}[{\rm e}^{{\rm i}\sigma 
  \text{W}_{\gamma}(e,e_b)} \mathbbm 1_{e\in \gamma}],\end{equation*}
where $\sigma$ is a solution of the equation
\begin{equation}\label{eq:hahaha}\displaystyle  \sin (\sigma \pi/2) = \sqrt{q}/2.\end{equation}
\end{definition}

Note that $\sigma$ belongs to $\bbR$ for $q\le 4$ and to $1+i\bbR$ for $q>4$. This suggests that the critical behavior of random-cluster model is different for $q>4$ and $q\le 4$. For $q\in[0,4]$, $\sigma$ has the physical interpretation of a spin, which is fractional in general, hence the name parafermionic\footnote{Fermions have half-integer spins while bosons have integer spins, there are no particles with fractional spin, but the use of such fractional spins at a theoretical level has been very fruitful in physics.}. For $q>4$, $\sigma$ is not real anymore and does not have any physical interpretation. 

These observables first appeared in the context of the Ising model (there they are called order-disorder operators) and dimer models. They were later on extended to the random-cluster model and the loop $O(n)$-model by Smirnov \cite{Smi06} (see \cite{DumSmi12a} for more details). Since then, these observables have been at the heart of the study of these models. They also appeared in a slightly different form in several physics papers going back to the early eighties \cite{FraKad80,BerLeC91}. They have been the focus of much attention in recent years: physicists exhibited such observables in a large class of models of two-dimensional statistical physics \cite{IkhCar09,RajCar07,RivCar06,Car09,IkhWesWhe13}.

\subsubsection{Contour integrals of the parafermionic observable}\label{sec:contours}

The parafermionic observable satisfies a very special property at criticality. 

\begin{theorem}[Vanishing contour integrals]\label{thm:contours}
Fix  $q>0$ and $p=p_c$, For any Dobrushin domain $(\Omega,a,b)$ and any vertex of $\Omega^\diamond$ with four incident edges in $\Omega^\diamond$,
\begin{equation}F(e_1)-F(e_3)={\rm i}F(e_2)-{\rm i}F(e_4),\label{rel_vertex}\end{equation}
where $e_1$, $e_2$, $e_3$ and $e_4$ are the four edges incident to this vertex, indexed in counterclockwise order.\end{theorem}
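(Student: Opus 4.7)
The plan is to recast the stated identity as a discrete Cauchy--Riemann equation, namely the vanishing of a discrete contour integral of $F$ around the elementary medial face surrounding $v$. Since the four medial edges $e_1,e_2,e_3,e_4$ are located at positions $v+\tfrac{\sqrt 2}{2}e^{i\theta_j}$ with angles $\theta_j$ differing by successive multiples of $\pi/2$, the vectors $e_j-v$ are, up to a common complex prefactor, the four fourth-roots of unity. Thus equation \eqref{rel_vertex} is equivalent (up to the orientation convention for the labeling of the $e_j$) to
\begin{equation*}
\sum_{j=1}^4 (e_j-v)\,F(e_j)\;=\;0,
\end{equation*}
and it suffices to establish this discrete holomorphicity relation at every interior medial vertex $v$.

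Using the loop representation of Proposition~\ref{prop:loop}, I would first expand each $F(e_j)$ as a sum over configurations $\omega$ in which the exploration path $\gamma(\omega)$ passes through $e_j$, weighted by $e^{i\sigma W_{\gamma(\omega)}(e_j,e_b)}\,x^{o(\omega)}\sqrt{q}^{\,\ell(\overline\omega)}/\overline Z_{\Omega,p,q}$. The crucial move is then to introduce the involution $\omega\leftrightarrow\omega'$ on $\{0,1\}^E$ that flips the state of the unique primal edge $e\subset \bbZ^2$ whose midpoint is $v$. This involution swaps the two possible local pairings of the four medial edges at $v$: in one state of $e$ the two arcs through $v$ connect $(e_1,e_2)$ and $(e_3,e_4)$, while in the other they connect $(e_1,e_4)$ and $(e_2,e_3)$ (the assignment depends on whether $e$ is horizontal or vertical). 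In either state the exploration path uses exactly two (or none) of the edges $e_1,\dots,e_4$, so pairs $\{\omega,\omega'\}$ organize the entire sum.

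The final step is a local check that, at $p=p_c$, each such pair contributes $0$ to $\sum_j(e_j-v)F(e_j)$. Comparing $\omega$ with $\omega'$, one has $o(\omega')=o(\omega)\pm 1$, $\ell(\overline{\omega'})=\ell(\overline\omega)\pm 1$, and the winding $W_\gamma(e_j,e_b)$ at the reconfigured medial edge differs from the original by $\pm\pi/2$, since the local reroute of $\gamma$ at $v$ adds or removes a single left/right turn. At criticality $x=1$, so the edge-weight factor cancels automatically between the two paired configurations. A finite case analysis by the topology of the two arcs at $v$ (whether they belong to the same loop/exploration path or to different ones, and the relative orientation of $\gamma$) reduces the pair contribution in each case to a scalar multiple of
\begin{equation*}
e^{i\sigma\pi/2}\;-\;e^{-i\sigma\pi/2}\;-\;i\sqrt q \;=\; 2i\sin(\sigma\pi/2)-i\sqrt q,
\end{equation*}
which vanishes by the defining equation \eqref{eq:hahaha}. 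Summing over all pairs $\{\omega,\omega'\}$ yields the identity.

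The main obstacle will be the bookkeeping in this case analysis: one must carefully track the sign of the winding change and of $\Delta\ell(\overline\omega)$ in each local topological configuration at $v^\diamond$, and check that the fourth-root-of-unity prefactors $(e_j-v)$ in front of the two surviving terms $F(e_i)$ and $F(e_{i'})$ align correctly in every case so that the algebraic identity $2\sin(\sigma\pi/2)=\sqrt q$ is exactly the cancellation required. Unlike the hexagonal self-avoiding walk argument of Lemma~\ref{lem:relation}, two critical inputs are used simultaneously: the critical edge weight $x=1$ (equivalent to $p=p_c$), which cancels edge-weight factors between paired configurations, and the relation \eqref{eq:hahaha} defining $\sigma$, which cancels the winding/loop factors. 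Together they pin down the parafermionic observable as discretely holomorphic precisely at the critical point and with the specific fractional spin $\sigma$.
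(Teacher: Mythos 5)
Your proposal is correct and follows essentially the same route as the paper: the identity is read as a vanishing elementary contour integral, configurations are paired by the involution flipping the primal/dual edge through $v$, criticality ($x=1$) reduces the weight ratio to a power of $\sqrt q$ from the loop-count change, and the local winding bookkeeping cancels via $e^{{\rm i}\sigma\pi/2}-e^{-{\rm i}\sigma\pi/2}={\rm i}\sqrt q$. Only note that your claim that $\gamma$ uses ``exactly two (or none)'' of the four edges is slightly off — all four can lie on the exploration path (this is the paper's Case 3, and it also arises for the partner $\omega'$ in the two-edge case) — but your subsequent case analysis by arc topology already covers this.
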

As in the case of the self-avoiding walk, interpret \eqref{rel_vertex} as follows: the integral of $F$ along a small square around a face is equal to 0. One may also sum this relation on every vertex to obtain that discrete contour integrals vanish.

 \begin{figure}[ht]
    \begin{center}
      \includegraphics[width=0.70\textwidth]{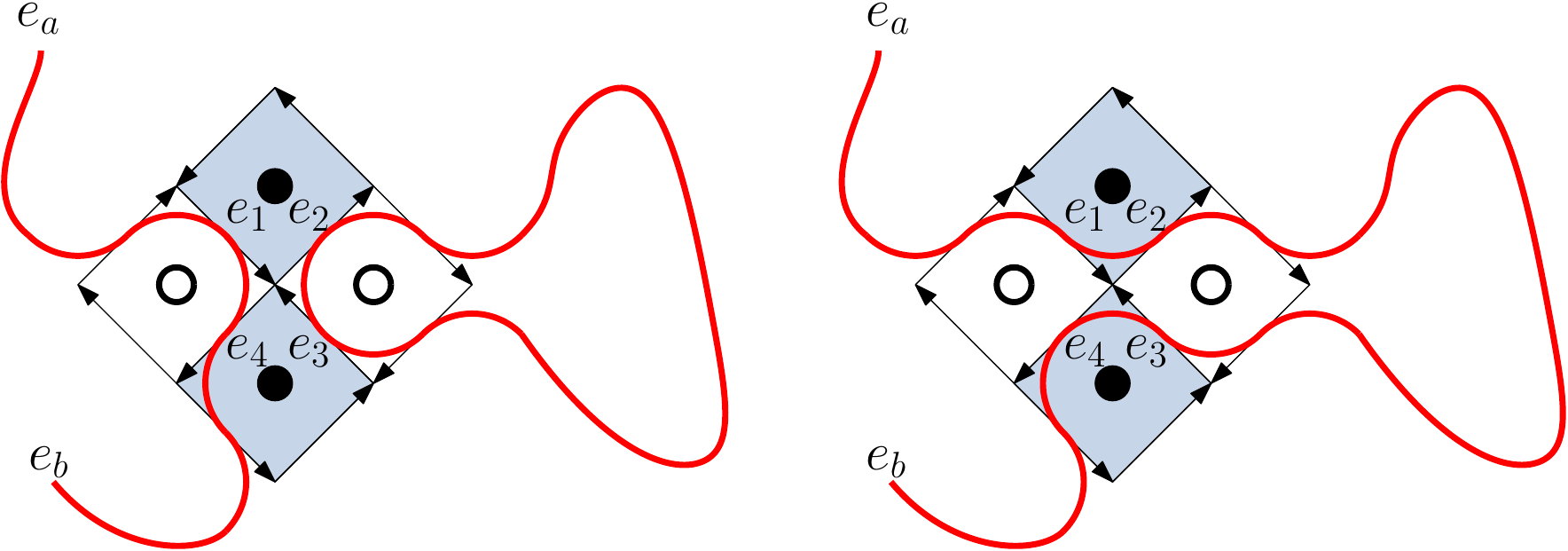}
    \end{center}
    \caption{    \label{fig:configuration} {\bf Left.} The neighborhood of $v$ for two associated configurations $\omega$ and $\omega'$.}

  \end{figure}

\begin{proof}We follow a strategy close to the proof of Lemma~\ref{lem:fundamental} and pair configurations in such a way that sums of contributions cancel.

  Let $e$ be an edge of $\Omega^\diamond$ and let
  \begin{eqnarray*}\mathsf X_e(\omega) &:=& 
 {\rm e}^{{\rm 
  i}\sigma {\rm W}_{\gamma(\omega)}(e,e_b)} \mathbbm 1_{e\in \gamma(\omega)}  \phi_{\Omega,p_c,q}^{a,b}[\omega] 
  \end{eqnarray*}
 be the contribution 
  of the configuration $\omega$ to $F(e)$.  Let 
  $\omega'$ be the configuration obtained from $\omega$ by switching the state  open or 
  closed of the edge in $\omega$ passing through $v$. 
Since $\omega\mapsto\omega'$ is an involution, the following 
  relation holds: $$F(e)~=~\sum_{\omega} \mathsf X_e(\omega)~=~\tfrac{1}{2} 
  \sum_{\omega} \left[ \mathsf X_e(\omega)+\mathsf X_e(\omega') \right]\!.$$ To 
  prove \eqref{rel_vertex}, it is thus sufficient to show that for any configuration $\omega$, \begin{equation}
    \label{cd}
    \mathsf X_{e_1}(\omega) + \mathsf X_{e_1}(\omega') - \mathsf X_{e_3}(\omega) - \mathsf X_{e_3}(\omega') ~=~{\rm i} [
   \mathsf X_{e_2}(\omega)+ \mathsf X_{e_2}(\omega')- 
    \mathsf X_{e_4}(\omega) -
   \mathsf X_{e_4}(\omega')].
  \end{equation}
   
  \noindent There are three possible cases:
\paragraph{Case 1.}No edge incident to $v$ belongs to $\gamma(\omega)$. Then, none of these edges is incident to $\gamma(\omega')$ either. For any $e$ incident to $v$, the contribution to \eqref{cd} is equal to 0 so that \eqref{cd} trivially holds.  
  
\paragraph{Case 2.} Two edges incident to $v$ belong to $\gamma(\omega)$, see Fig.~\ref{fig:configuration}. Since $\gamma(\omega)$ and the medial lattice possess a natural orientation, $\gamma(\omega)$ enters through either $e_1$ or $e_3$ and leaves 
  through $e_2$ or $e_4$. Assume that $\gamma(\omega)$ 
  enters through the edge $e_1$ and leaves through the edge 
  $e_4$. It is then
  possible to compute the contributions for $\omega$ and $\omega'$ of all the edges incident to $v$ in terms of $\mathsf X=\mathsf X_{e_1}(\omega)$. Indeed,
since $\omega'$ has one less loop, we find
    \begin{align*}\phi_{\Omega,p_c,q}^{a,b}[\omega']&=\tfrac{1}{\sqrt q}\phi_{\Omega,p_c,q}^{a,b}[\omega].\end{align*}
 Furthermore, windings of $\gamma(\omega)$ and $\gamma(\omega')$ at $e_2$, $e_3$ and $e_4$ can be expressed using the winding at 
      $e_1$ (for instance, $W_{\gamma(\omega)}(e_2,e_b)=W_{\gamma(\omega)}(e_1,e_b)-\pi/2$
  -- the other cases are treated similarly). 
  The contributions are given in the following table.
  \begin{center}\begin{tabular}{|c|c|c|c|c|}
    \hline
    configuration  &  $e_1$ &  $e_2$ &  $e_3$ & $e_4$\\
     \hline
    $\omega$ & $\mathsf X$ & 0 & 0 & ${\rm e}^{{\rm 
    i}\sigma\pi/2}\mathsf X$ \\
    \hline
    $\omega'$ & $\frac{\mathsf X}{\sqrt q}$ & ${\rm e}^{{\rm i}\sigma\pi}\frac{\mathsf X}{\sqrt q}$ & ${\rm 
    e}^{-{\rm i}\sigma\pi/2}\frac{\mathsf X}{\sqrt q}$ & ${\rm e}^{{\rm i}\sigma\pi/2}\frac{\mathsf X}{\sqrt q}$\\
    \hline
  \end{tabular}\end{center}
  Using the identity ${\rm e}^{{\rm i}\sigma\pi/2}-{\rm e}^{-{\rm 
  i}\sigma\pi/2}={\rm i}\sqrt{q}$, we deduce \eqref{cd} by summing (with the right weight) the contributions 
  of all the edges incident to $v$.
  
\paragraph{Case 3.} The four edges incident to $v$ belong to $\gamma(\omega)$. Then only two of these edges belong to $\gamma(\omega')$ and the computation is similar to Case 2 by exchanging the weights of $\omega'$ and $\omega$.
\medbreak
 In conclusion, \eqref{cd} is always satisfied and the claim is proved. 
\end{proof}

\subsubsection{Continuous phase transition for random-cluster models with $q\in[1,4]$}

This section is devoted to the proof of the following result.
\begin{theorem}[DC  \cite{Dum12}]\label{thm:decide}
For $q\in[1,4]$, the property {\bf P4a} is satisfied.
\end{theorem}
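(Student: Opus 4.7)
The key tool is the parafermionic observable of Definition~\ref{def:parafermionic observable} together with its vanishing contour integrals (Theorem~\ref{thm:contours}). The crucial point is that for $q\in[1,4]$ the spin $\sigma$ defined by \eqref{eq:hahaha} is real and lies in $[0,1]$, so the phases $e^{{\rm i}\sigma W}$ are complex numbers of modulus $1$ and $|F(e)|\le \phi^{a,b}[e\in\gamma]$. The goal is to extract from the vanishing-integral identity a \emph{polynomial} lower bound on $\phi^0_{p_c,q}[0\leftrightarrow\partial\Lambda_n]$, which is of course much stronger than the sub-stretched-exponential statement {\bf P4a} being claimed.

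The first step is to turn the local identity \eqref{rel_vertex} into a global boundary identity. Fix a Dobrushin domain $(\Omega,a,b)$. Summing \eqref{rel_vertex} over every vertex of $\Omega^\diamond$ that has four incident edges in $\Omega^\diamond$ causes each interior medial edge to enter the sum with opposite coefficients at its two endpoints and therefore to cancel. What remains is a signed sum over boundary medial edges
$$\sum_{e\subset\partial\Omega^\diamond}c_e\,F(e)\;=\;0,\qquad c_e\in\{\pm1,\pm{\rm i}\},$$
where the $c_e$'s encode the outward orientation of the boundary medial edge.

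The second step is to compute the boundary contributions explicitly. For any medial edge $e$ lying on either the free arc $(ab)^\diamond$ or the wired arc $(ba)^\diamond$, the loop conventions of Section~5.2 force the exploration path to traverse $e$ in a prescribed direction, so the winding $W_\gamma(e,e_b)$ is a deterministic function $W(e)$ of the position of $e$ along $\partial\Omega$. Consequently $F(e)=e^{{\rm i}\sigma W(e)}\phi^{a,b}_{\Omega,p_c,q}[e\in\gamma]$ on the whole boundary, and the identity above becomes an explicit linear relation, with unit-modulus coefficients, between the probabilities that $\gamma$ visits each boundary edge. Specializing to a Dobrushin domain $\Omega\subset\Lambda_n$ whose wired arc $(ba)$ consists of essentially one medial edge (so that the wired-arc sum is dominated by the contribution $F(e_a)=1$), and taking moduli, one obtains a relation of the form
$$1\;\le\;\Bigl|\sum_{e\in(ab)^\diamond}e^{{\rm i}\sigma W(e)}\phi^{a,b}[e\in\gamma]\Bigr|\;\le\;\sum_{e\in(ab)^\diamond}\phi^{a,b}[e\in\gamma].$$
Since the free arc has length $O(n)$, this forces the average visit probability along the free arc to be at least of order $1/n$. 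Combined with {\rm(FKG)} and a standard square-root-trick-type argument, this yields a uniform polynomial lower bound $\phi^{a,b}_{\Lambda_n,p_c,q}[\gamma\text{ reaches the far side of }\partial\Lambda_n]\ge c/n^A$. Finally, comparison between boundary conditions (Exercise~\ref{exo:0}) transfers this bound to $\phi^0_{p_c,q}[\text{left}\leftrightarrow\text{right in }\Lambda_n]\ge c/n^A$, and hence to $\phi^0_{p_c,q}[0\leftrightarrow\partial\Lambda_n]\ge c/n^{A+1}$ by summing over the possible endpoints on $\partial\Lambda_n$. Any such polynomial lower bound is obviously incompatible with stretched-exponential decay at exponent $1/3$, proving {\bf P4a}.

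The main obstacle is that for generic $q\in(1,4)$ the spin $\sigma$ is irrational, so the phases $e^{{\rm i}\sigma W(e)}$ on a given straight side of $\partial\Omega$ do not combine into neat partial sums, and one cannot expect clean cancellations between opposite sides. The honest way to control this is either to choose $\Omega$ to be a trapezoidal domain whose opening angles are tuned to $\sigma$ (so that phases on opposite sides line up, up to an explicit rotation), or to accept a lossy Cauchy--Schwarz bound $|\sum c_e F(e)|^2\le|\partial\Omega|\sum\phi[e\in\gamma]^2$ at the price of a worse polynomial exponent. The latter suffices for the qualitative statement {\bf P4a} targeted here. The remaining ingredients (deterministic computation of the boundary windings, duality arguments, and the standard comparisons between boundary conditions) are routine.
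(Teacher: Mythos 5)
Your overall strategy --- sum the local relations \eqref{rel_vertex} over a degenerate Dobrushin domain whose wired arc is a single point, use the deterministic boundary winding to write $F(e)={\rm e}^{{\rm i}\sigma W(e)}\phi[e\in\gamma]$ on the boundary, and compare a lower bound on the marked-point contribution with an upper bound on the rest --- is exactly the paper's strategy, and it does work for $q\in[1,2]$. But the proposal has a genuine gap for $q\in(2,4]$, and you have misdiagnosed where the difficulty lies. The obstruction is not that $\sigma$ is irrational and the phases ``do not combine into neat partial sums''; it is a sign problem localized at the marked point. The free arc contains boundary edges adjacent to the origin whose windings are $-\pi,0,\pi,2\pi$, and their total contribution is $\tfrac{\sin(2\pi\sigma)}{\sin(\pi\sigma/2)}{\rm e}^{{\rm i}\sigma\pi/2}\sum_{x}\phi[0\leftrightarrow x]$. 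For $q\le2$ the coefficient $\sin(2\pi\sigma)$ is nonnegative, so these terms reinforce $F(e_a)+F(e_b)=2\cos(\tfrac{\pi\sigma}{2}){\rm e}^{{\rm i}\sigma\pi/2}$ and the modulus of the boundary sum is bounded below by $2\cos(\tfrac{\pi\sigma}{2})>0$. For $q>2$ the coefficient is negative, so these terms can cancel the $e_a,e_b$ contribution and your claimed inequality $1\le|\sum_{e}{\rm e}^{{\rm i}\sigma W(e)}\phi[e\in\gamma]|$ has no justification (note also that the wired-arc contribution has modulus $2\cos(\tfrac{\pi\sigma}{2})$, not $1$, and this even vanishes at $q=4$). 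Your proposed rescue by Cauchy--Schwarz, $|\sum c_eF(e)|^2\le|\partial\Omega|\sum\phi[e\in\gamma]^2$, points in the wrong direction: it weakens the upper bound on the right-hand side, whereas the missing step is the \emph{lower} bound on the left-hand side, which Cauchy--Schwarz cannot supply.

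Your other suggested fix --- modifying the domain so that the ``opening angle'' at the marked point changes the local windings --- is the correct idea and is what the paper does: a slit domain with opening $2\pi$ handles $q\le3$, but for $q\in(3,4]$ the required opening exceeds $2\pi$ and one must work on the universal cover $\bbU$ of the plane minus a face. This breaks the final step of your plan: $\bbU$ is not a subgraph of $\bbZ^2$, so the comparison between boundary conditions (Exercise~\ref{exo:0}) no longer transfers the estimate back to $\phi^0_{p_c,q}$, and a separate (nontrivial) argument is needed. This is precisely why the theorem asserts only {\bf P4a} rather than the divergence of the susceptibility {\bf P3}, which is what your argument would otherwise yield. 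As written, the proposal reproduces the paper's proof for $q\in[1,2]$ and is incomplete for $q\in(2,4]$.
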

As a consequence, we deduce from Theorem~\ref{thm:main} that the properties {\bf P1}--{\bf P5} also are\footnote{We did not prove that {\bf P4b} implies {\bf P5}, but since {\bf P4a} implies {\bf P4b} and {\bf P5}, this follows readily.}. This gives ``one half'' of Theorem~\ref{thm:continuous RCM}. 

We first focus on the case $q\le 2$. The proof follows an argument similar to the computation for self-avoiding walks: we will use that the discrete contour integral along the boundary of a domain vanishes together with the fact that windings are deterministic on the boundary.

\begin{proof}[Theorem~\ref{thm:decide} in the case $q\in{[1,2]}$] In this proof, the first and second coordinates of a vertex $x\in\bbZ^2$ are denoted by $x_1$ and $x_2$. Also, define $\widetilde \Lambda_n:=\{x\in\bbZ^2:|x_1|+|x_2|\le n\}$. 

Fix $n$ odd. Consider a degenerated case of Dobrushin domain in which $$\Omega:=\{x\in \widetilde\Lambda_n\text{ such that }x_1+x_2\le 0\}$$
and $(ba)=\{0\}$ as well as $(ab)=\partial\Omega$. In this case, the parafermionic observable $F$ still makes sense: $e_a$ and $e_b$ are the edges of $\Omega^\diamond$ north-west and south-east of $0$, and $\gamma(\omega)$ is the loop going around $0$ (and therefore through $e_a$ and $e_b$). Note that, by definition, the Dobrushin boundary conditions are coinciding with the free boundary conditions in this context since the arc $(ba)$ is restricted to a point. 

Summing \eqref{rel_vertex} on every vertex $v\in\Omega^\diamond$, we obtain that 
\begin{equation*}
\sum_{e\in\alpha}F(e)=\sum_{e\in\beta}F(e)+{\rm i}\sum_{e\in \ep}F(e)-{\rm i}\sum_{e\in \bar{\ep}}F(e),
\end{equation*}
where $\alpha$, $\ep$, $\beta$ and $\overline{\ep}$ are respectively the sets of medial edges intersecting the north-east, north-west, south-west and south-east boundaries of $\Omega^\diamond$. This immediately leads to
\begin{equation}\label{eq:popo}
\Big|\sum_{e\in\alpha}F(e)\big|\le \sum_{e\notin \alpha}|F(e)|,
\end{equation}
where the sum on the right is on edges of $\Omega^\diamond$ intersecting the boundary only. Any such edge $e$ is bordering a vertex $x\in\partial\Omega$. Also, $\gamma(\omega)$ goes through $e$ if and only if $x$ and 0 are connected by a path of edges in $\omega$. We deduce that 
\begin{equation}\label{eq:uu}|F(e)|=\phi^0_{\Omega,p_c,q}[0\longleftrightarrow x]\stackrel{{\rm (CBC)}}\le \phi^0_{p_c,q}[0\longleftrightarrow x].
\end{equation}
Since there are exactly two medial edges bordering a prescribed vertex, and that each such vertex $x$ is in $\partial\widetilde\Lambda_n$, \eqref{eq:popo} becomes
\begin{equation}\label{eq:ii}
\Big|\sum_{e\in\alpha}F(e)\big|\le 2\sum_{x\in\partial\widetilde\Lambda_n}\phi^0_{p_c,q}[0\longleftrightarrow x].
\end{equation}
Let us now focus on the term on the left. First, note that since $\gamma(\omega)$ deterministically goes through $e_a$ and $e_b$, we get
\begin{equation}\label{eq:okij}F(e_a)+F(e_b)=1+{\rm e}^{{\rm i}\pi \sigma}=2\cos(\tfrac\pi2\sigma){\rm e}^{{\rm i}\sigma\pi/2}.\end{equation}
Second, pick an edge $e\in \alpha\setminus\{e_a,e_b\}$. Since the winding of the loop is deterministic, we may improve the equality in \eqref{eq:uu} into
\begin{equation}F(e)={\rm e}^{{\rm i}\sigma W(e)}\phi^0_{\Omega,p_c,q}[0\longleftrightarrow x],\label{eq:hu}\end{equation}
where $x$ is the vertex of $\partial\Omega$ bordered by $e$, and $W(e)\in\{-\pi,0,\pi,2\pi\}$ depending on which side of 0 the edge $e$ is, and whether it is pointing inside or outside of $\Omega^\diamond$.

Define $$S:=\{x\in\partial\Omega\setminus \partial\widetilde\Lambda_n:x_1>0\}.$$ By gathering the contributions of edges bordering a vertex $x\in S$ and its symmetric $-x$, and using the symmetry of $\Omega$ with respect to the line $x_1=x_2$, we deduce from \eqref{eq:okij} and the previous displayed equation that
\begin{align*}
\sum_{e\in\alpha\setminus\{e_a,e_b\}}F(e)
&=\sum_{x\in S}({\rm e}^{2{\rm i}\pi\sigma}+{\rm e}^{{\rm i}\pi\sigma}+1+{\rm e}^{-{\rm i}\pi\sigma})\phi_{\Omega,p_c,q}^0[0\longleftrightarrow x]\\
&=\tfrac{\sin(\sigma2\pi)}{\sin(\sigma\pi/2)}{\rm e}^{{\rm i}\sigma\pi/2}\sum_{x\in S}\phi_{\Omega,p_c,q}^0[0\longleftrightarrow x].
\end{align*}
For $q\in[1,2]$, $\cos(\sigma\pi/2)>0$ and $\frac{\sin(2\pi\sigma)}{\sin(\tfrac\pi2\sigma)}\ge0$. We deduce that
\begin{equation*}
\Big|\sum_{e\in\alpha}F(e)\big|\ge 2\cos(\tfrac\pi2\sigma)>0.
\end{equation*}
Plugging this lower bound in \eqref{eq:ii} and then summing over odd $n$ gives
$$\sum_{x\in\bbZ^2}\phi^0_{p_c,q}[0\longleftrightarrow x]=\infty,$$
which is {\bf P3}. Since {\bf P3} implies {\bf P4a}, the proof follows.\end{proof}

Observe that for $q>2$, the value of $\sigma$ is such that $\sin(2\pi\sigma)$ becomes negative so that we may not conclude directly anymore. One may wonder whether this is just a technical problem, or whether something deeper is hidden behind this. 
It is natural to predict that the following quantity decays like a power law: 
$$\phi^0_{\Omega,p_c,q}[0\longleftrightarrow \partial \Lambda_{n/2}]=n^{-\alpha(q,\pi)+o(1)},$$
where $\alpha(q,\pi)$ is a constant depending on $q$ only ($\pi$ refers to the ``angle of the opening'' of $\Omega$ at 0), and $o(1)$ denotes a quantity tending to 0 as $n$ tends to infinity.
Moreover, one may argue using {\bf P5} (which we believe is true) that the event that $x\longleftrightarrow 0$ in $\Omega$ has a probability close to the probability that 0 and $x$ are connected to distance $n/2$ in $\Omega$ (see also Exercise~\ref{exo:quasi}). For $x$ not too close to the corners, the boundary of $\Omega$ looks like a straight line and it is therefore natural to predict that
$$\phi^0_{\Omega,p_c,q}[0\longleftrightarrow x]=n^{-2\alpha(q,\pi)+o(1)}.$$
Summing over all $x$ (the vertices near the corner do not contribute substantially) we should find 
\begin{equation}\label{eq:convergence}\sum_{\|x\|_1=n}\phi^0_{\Omega,p_c,q}[0\longleftrightarrow x]=  n^{1-2\alpha(q,\pi)+o(1)}.\end{equation}
Now, it is conjectured in physics that 
$$\alpha(q,\pi)=1-2\frac{\arccos(\sqrt q/2)}{\pi}.$$ Therefore, for $q\in(2,4]$, the quantity on the left-hand side of \eqref{eq:convergence} is converging to 0 as $n\rightarrow\infty$ and the strategy consisting in proving that it remains bounded away from 0 is hopeless for $q>2$.

Nevertheless, we did not have to consider a flat boundary near 0 in the first place. For instance, one may consider $\Omega'$ obtained by taking the set of $x=(x_1,x_2)$ with $\|x\|_1\le n$ and $(x_1,x_2)\ne(n,0)$ with $n\ge0$. Then, one expects that
$$\phi^0_{\Omega',p_c,q}[0\longleftrightarrow \partial\Lambda_{n/2}]=n^{-\alpha(q,2\pi)+o(1)},$$
where $\alpha(q,2\pi)$ is a value which is a priori smaller than $\alpha(q,\pi)$ since $\bbS$ is larger ($2\pi$ refers this time to the ``opening angle'' of $\Omega'$ at 0). Therefore, if one applies the same reasoning as above, we may prove that
$$\sum_{\|x\|_1=n}\phi^0_{\Omega',p_c,q}[0\longleftrightarrow x]= n^{1-\alpha(q,\pi)-\alpha(q,2\pi)+o(1)}.$$
In fact, we know how to predict $\alpha(q,2\pi)$: the map $z\mapsto z^2$ maps $\bbR_+^*\times\bbR$ to $\bbR^2\setminus -\bbR_+$, conformal invariance (see Section~\ref{sec:6} for more details) predicts that $\alpha(q,2\pi)=\alpha(q,\pi)/2$. As a consequence, 
$$\sum_{\|x\|_1=n}\phi^0_{\Omega',p_c,q}[0\longleftrightarrow x]= n^{1-\frac32\alpha(q,\pi)+o(1)},$$
so that this quantity can indeed be larger or equal to 1 provided that $q\le 3$. 

The previous discussion remained at the level of predictions. It relies on conformal invariance, which is extremely hard to get, and definitely much more advanced that what we are seeking for. A very good news is that the strategy of the previous proof can indeed be applied to $\Omega'$ instead of $\Omega$ to give that for $q\le 3$, there exists $c=c(q)>0$ such that for any $n\ge1$,
\begin{equation*}\sum_{\|x\|_1=n}\phi^0_{\Omega',p_c,q}[0\longleftrightarrow x]\ge c.\end{equation*}
Since $\Omega'$ is a subset of $\bbZ^2$, the comparison between boundary conditions implies that for any $q\le 3$.
\begin{equation*}\sum_{x\in\bbZ^2}\phi^0_{p_c,q}[0\longleftrightarrow x]=\infty,\end{equation*}
thus extending the result to every $q\le3$. We leave the details to Exercise~\ref{exo:qle3}.
\bexo
\begin{exercise}\label{exo:qle3}
Fill up the details of the $q\le 3$ case by considering $\Omega'$ instead of $\Omega$.
\end{exercise}
\eexo
This reasoning does not directly extend to $q>3$ since $\frac32\alpha(q,\pi)>1$ in this case. Nevertheless, one could consider a graph generalizing $\Omega$ and $\Omega'$ with a ``larger opening than $2\pi$'' at 0. In fact, one may even consider a graph
with ``infinite opening'' at 0 by considering subgraphs of the 
universal cover $\bbU$ of the plane minus a face of $\bbZ^2$, see Fig.~\ref{fig:U}. This is what was done in \cite{Dum12}. The drawback of taking this set $\bbU$ is that it is not a subset of $\bbZ^2$ anymore. Thus, one has to translate the information obtained for the random-cluster model on $\bbU$ into information for the random-cluster model on $\bbZ^2$, which is a priori difficult since there is no easy comparison between the two graphs (for instance the comparison between boundary conditions is not sufficient). This is the reason why in general one obtain {\bf P4a} instead of {\bf P3}.

\begin{figure}
\begin{center}
\includegraphics[width=1\textwidth]{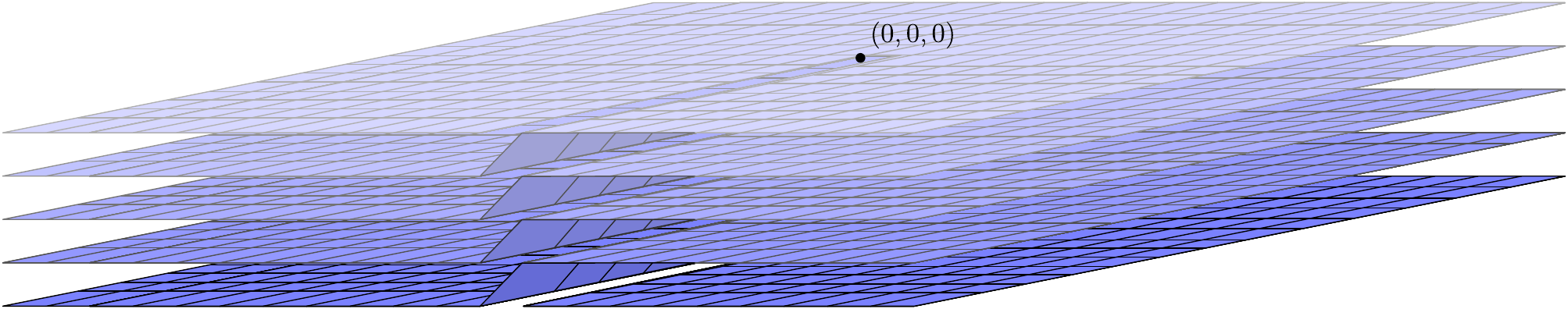}
\caption{\label{fig:U}The graph $\mathbb U$.}
\end{center}
\end{figure}

\subsubsection{Discontinuous phase transition for the random-cluster model with $q>4$}

The goal of this section is to briefly discuss the following theorem. This completes the results of the previous sections and determines the continuous/discontinuous nature of the phase transition for every $q\ge1$. Below, we keep the notation $\widetilde\Lambda_n$ for the box of size $n$ for the graph distance.
\begin{theorem}[DC, Gagnebin, Harel, Manolescu, Tassion \cite{DumGanHar16}]\label{thm:RCM}
	For $q>4$, the properties {\bf P1--5} are not satisfied. In particular
	\begin{equation}\label{eq:aaf}
	\lim_{n\rightarrow\infty} -\tfrac1n\log\phi_{p_c,q}^0[0\longleftrightarrow\partial\widetilde\Lambda_n]=
	\lambda + 2 \sum_{k=1 }^\infty \tfrac{(-1)^k}k \tanh(k\lambda)>0,
	\end{equation}
	where $\lambda>0$ satisfies $\cosh(\lambda)=\frac{\sqrt q}2$.
\end{theorem}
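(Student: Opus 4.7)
The plan is to exploit the parafermionic observable introduced in Definition~\ref{def:parafermionic observable}, but now with the crucial feature that for $q>4$ the spin $\sigma$ solving $\sin(\sigma\pi/2)=\sqrt q/2$ has nontrivial imaginary part. Writing $\sigma=1+{\rm i}\tau$ with $\tau>0$, the relation becomes $\cosh(\pi\tau/2)=\sqrt q/2$, which already suggests that a parameter of the form $\lambda=\pi\tau/2$ (or a close relative) governs the asymptotics. The factor ${\rm e}^{{\rm i}\sigma W_\gamma}$ now contains a genuine exponential weight in the winding of the exploration path, so that the vanishing-contour-integral identity of Theorem~\ref{thm:contours} becomes an exactly solvable linear recursion when one works on a geometry with a natural transfer direction. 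This is what makes the computation of the decay rate tractable.

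The concrete steps I would follow are: (i) Lift the analysis from $\bbZ^2$ to a cylinder $C_N=(\bbZ/N\bbZ)\times\bbZ$ of width $N$, for which the loop representation from Proposition~\ref{prop:loop} remains meaningful up to the choice of how non-retractible loops are counted (cf.\ Exercise~\ref{exo:torus}); (ii) rewrite the critical loop model at $x=1$ as a six-vertex model on the medial lattice by the standard arrow/loop correspondence, where the six-vertex weights depend on $\sqrt q$ through $\Delta=-\sqrt q/2$, so that $q>4$ corresponds to the ferroelectric regime $\Delta<-1$; (iii) diagonalize the row-to-row transfer matrix of the six-vertex model by Bethe ansatz---this is exactly where the regime $\Delta<-1$ gives a massive spectrum, with a spectral gap that can be computed explicitly, and yields the series $\lambda+2\sum_{k\ge 1}\frac{(-1)^k}{k}\tanh(k\lambda)$ as the asymptotic rate of a suitable partition function ratio; (iv) use the parafermionic observable on the cylinder, combined with Theorem~\ref{thm:contours}, to express $\phi^0_{C_N,p_c,q}[0\leftrightarrow\text{height }n]$ in terms of these transfer-matrix eigenvalues and obtain the rate in $n$ uniformly in $N$; (v) pass to $N\to\infty$ using the comparison between boundary conditions and the fact that $\phi^0_{p_c,q}$ is recovered as a monotone limit, which upgrades the cylinder estimate to the planar estimate \eqref{eq:aaf}. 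The lower bound (that the rate is not smaller than the claimed value) is the easy half, coming from $\phi^0_{p_c,q}[0\leftrightarrow\partial\widetilde\Lambda_n]\le\sum_{x\in\partial\widetilde\Lambda_n}\phi^0_{p_c,q}[0\leftrightarrow x]$ together with the parafermionic identity along straight boundaries.

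Since an exponential decay rate at criticality rules out property~\textbf{P4b}, Theorem~\ref{thm:main} automatically gives that \textbf{P1}--\textbf{P5} all fail; in particular $\phi^1_{p_c,q}[0\leftrightarrow\infty]>0$, which is the discontinuity. Thus once the rate in \eqref{eq:aaf} is established, the qualitative half of the statement follows formally.

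The principal obstacle is step (iii): the Bethe ansatz for the ferroelectric six-vertex model is delicate because the relevant largest eigenvector of the transfer matrix lies in a sector with a macroscopic number of down arrows whose density must be found by solving a transcendental Bethe equation, and one must justify both the completeness of the Bethe states used and the rigorous identification of the sub-leading eigenvalue with the mass $\lambda+2\sum_{k\ge1}\frac{(-1)^k}{k}\tanh(k\lambda)$. A secondary difficulty, but a real one, is the translation between the natural boundary conditions of the six-vertex transfer-matrix analysis (free or periodic arrow states at the top/bottom) and the free random-cluster boundary conditions appearing in \eqref{eq:aaf}; this is where the parafermionic observable on the cylinder, with its explicit boundary windings, is indispensable to pin down the right linear combination of Bethe eigenvectors and thereby extract the sharp constant rather than merely an exponential upper bound.
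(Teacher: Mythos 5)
Your overall skeleton---map the critical loop model to a six-vertex model on the medial lattice, observe that $q>4$ corresponds to the antiferroelectric/massive regime, and extract the rate $\lambda+2\sum_{k\ge1}\frac{(-1)^k}{k}\tanh(k\lambda)$ from a Bethe-ansatz computation of the transfer-matrix spectrum---is indeed the route taken in the paper (which, like you, black-boxes the Bethe ansatz itself). But the bridge you propose between the random-cluster connectivities and the transfer matrix does not work. You want the parafermionic observable together with Theorem~\ref{thm:contours} to ``become an exactly solvable linear recursion'' on a cylinder. For $q\ne2$ the vanishing contour integrals constitute only half of a discrete Cauchy--Riemann system: the number of unknowns exceeds the number of relations, the observable is not determined by them, and there is no known closure of these relations into a transfer recursion even on a cylinder. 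The paper explicitly emphasizes this underdetermination, and its proof makes no use of the observable at all in this step. Instead it works on a \emph{torus} $\bbT(M,N)$ and sets up a purely combinatorial correspondence: orient the loops of $\overline\omega$, convert the global winding weight $e^{\pm\mu}$ per retractible loop into a product of local vertex weights over the eight oriented vertex types, and sum over orientations to land exactly on the six-vertex weights with $c=\sqrt{2+\sqrt q}$. This yields the identity $\phi^{\rm per}_{\bbT,p_c,q}[\calA]=q\,\frac{\widetilde Z_{6V}}{Z_{6V}}\,\phi^{\rm per}_{\bbT,p_c,q}\big[(4/q)^{k_{\rm nc}(\omega)}\big]$, where $\calA$ is the event that both $\omega$ and $\omega^*$ have a winding cluster and $\widetilde Z_{6V}$ is the partition function restricted to the sector with $N-1$ up-arrows per row.

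The second gap is in how you close the argument. Even granting the transfer-matrix input, what it controls directly is the probability of the topological event $\calA$ on the torus, not $\phi^0_{p_c,q}[0\leftrightarrow\partial\widetilde\Lambda_n]$; your step (v) of ``passing to $N\to\infty$ by monotonicity'' has nothing to latch onto because no planar connection probability has yet been bounded. The paper resolves this by contraposition: if {\bf P5} held, then combining crossings (plus finite energy) would force $\phi^{\rm per}_{\bbT,p_c,q}[\calA]\ge c_{\rm FE}^{N}c_0^{2M/N}$, contradicting the exponential-in-$M$ upper bound from the six-vertex sector estimate; hence {\bf P1}--{\bf P5} fail via Theorem~\ref{thm:main}. (A variant of the same contradiction, Exercise~\ref{exo:expo}, rules out {\bf P4b} directly.) The sharp constant in \eqref{eq:aaf} requires a further matching lower bound on the connection probability that neither your union-bound remark nor the lecture notes supply; the notes state the exact rate but only prove the qualitative discontinuity.
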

Note that in particular, one may get the asymptotic in \eqref{eq:aaf} as $q\searrow 4$: it behaves asymptotically as $8\exp\left(-\pi^2/\sqrt{q-4}\right)$. Physically, that means that the correlation length of the models explodes very quickly (much faster than any polynomial) as $q$ approaches 4.

Before sketching the ideas involved in the proof of this statement, let us make a small detour and prove that {\bf P1--5} cannot be satisfied for $q\gg1$ (see \cite{Dum17} for details).

  \begin{figure}
  \begin{center}
 \includegraphics[width=0.65\textwidth]{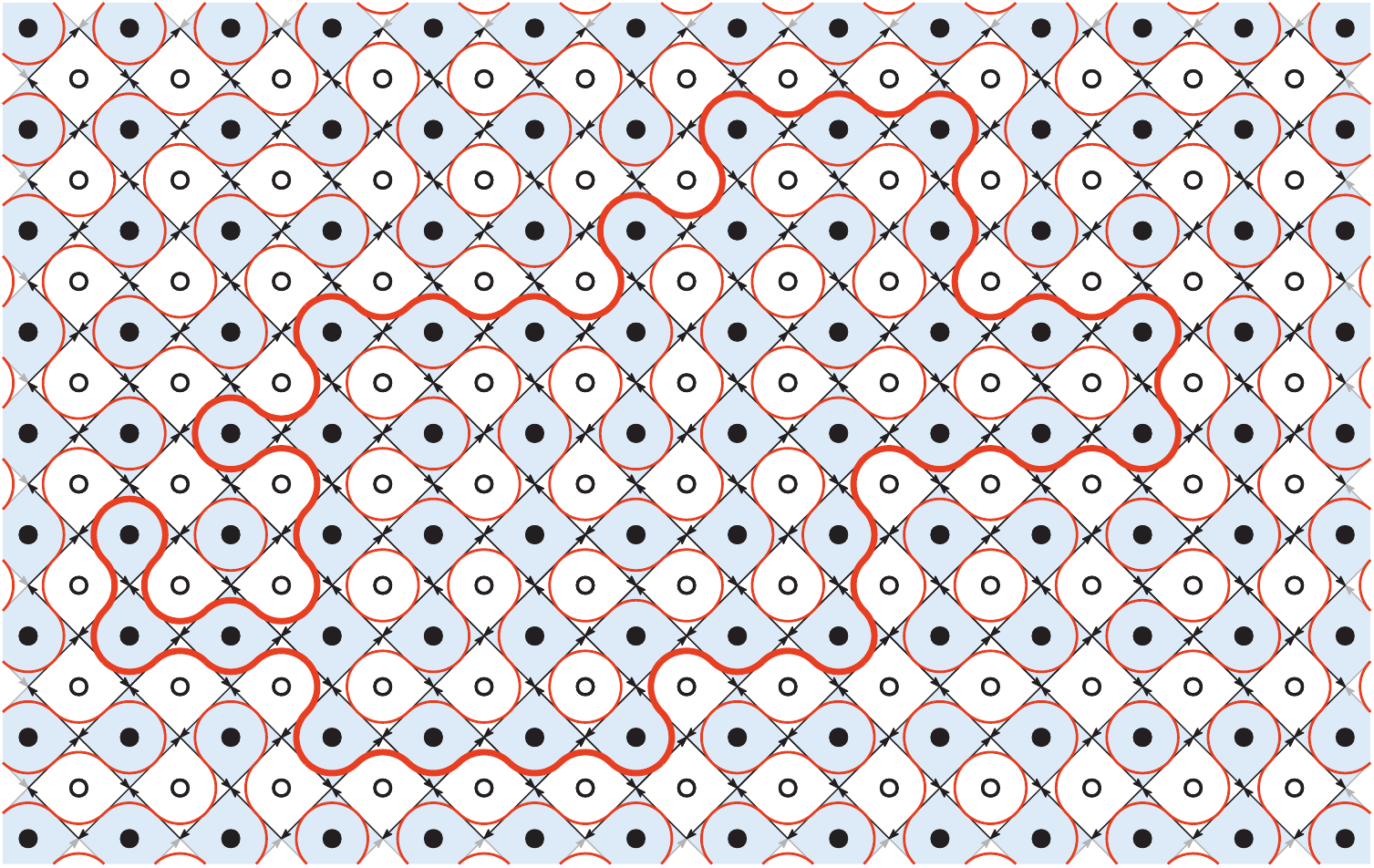}
 \caption{\label{fig:4}Consider a loop configuration $\overline\omega$ containing the loop $L$ (in bold).}
 \end{center}
 \end{figure}
 \begin{figure}
\center{  \includegraphics[width=0.64\textwidth]{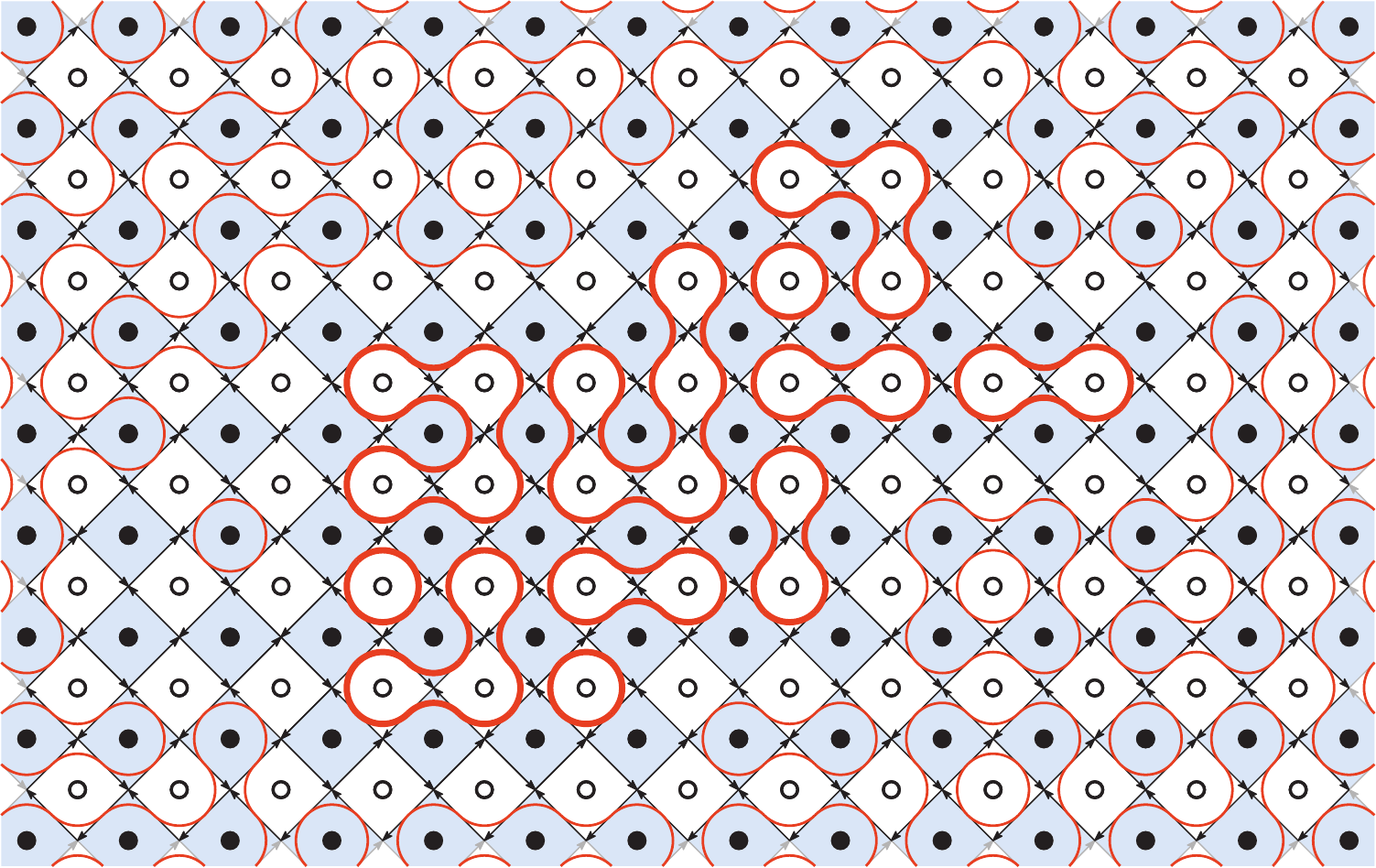}}
 \caption{(Step 1) Remove the loop $L$ from $\overline\omega$. The loops inside $L$ are depicted in bold.}
   \center{
  \includegraphics[width=0.64\textwidth]{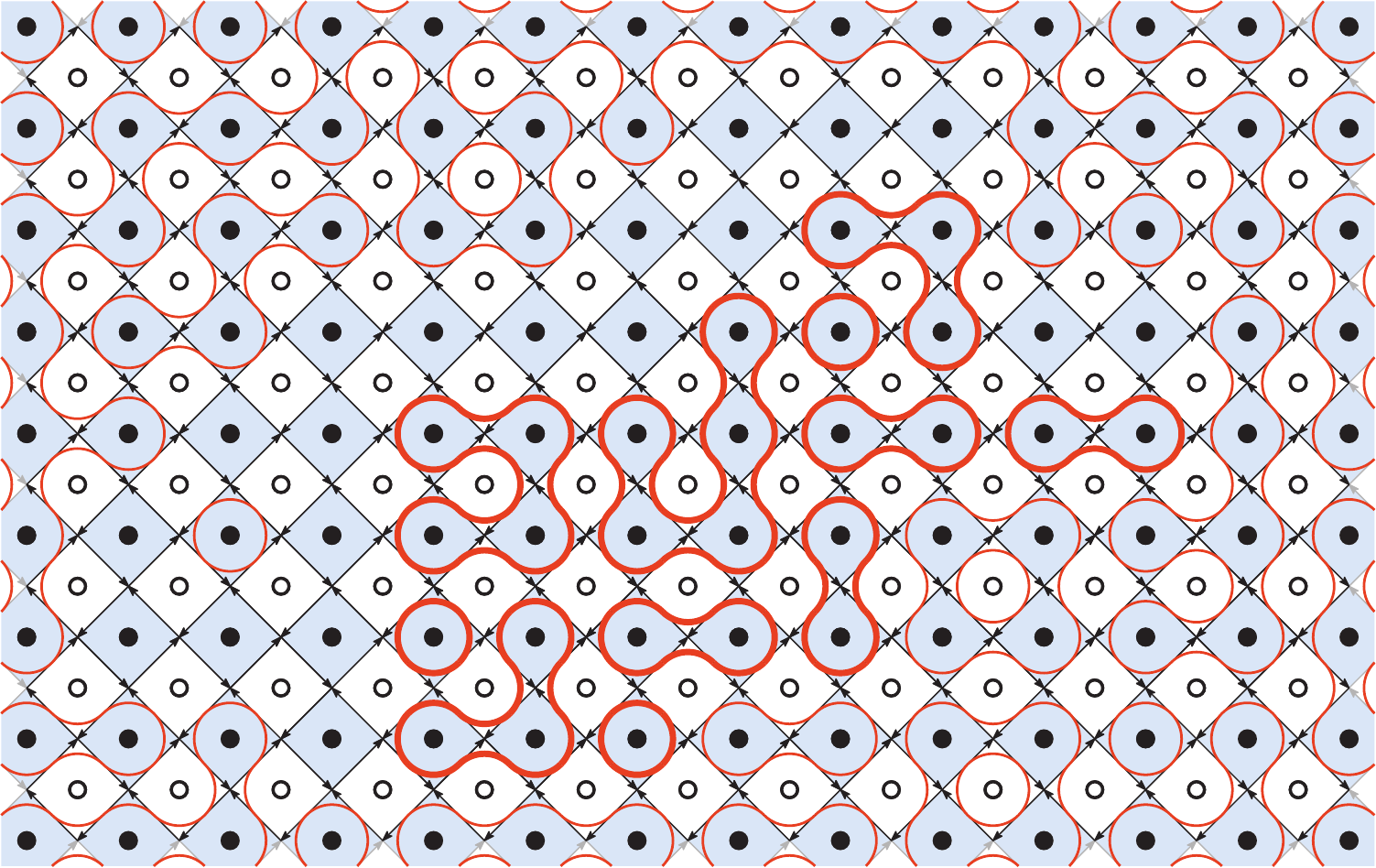}}
 \caption{(Step 2) Translate the loops inside $L$ in the south-east direction.}

   \begin{center}
   \includegraphics[width=0.64\textwidth]{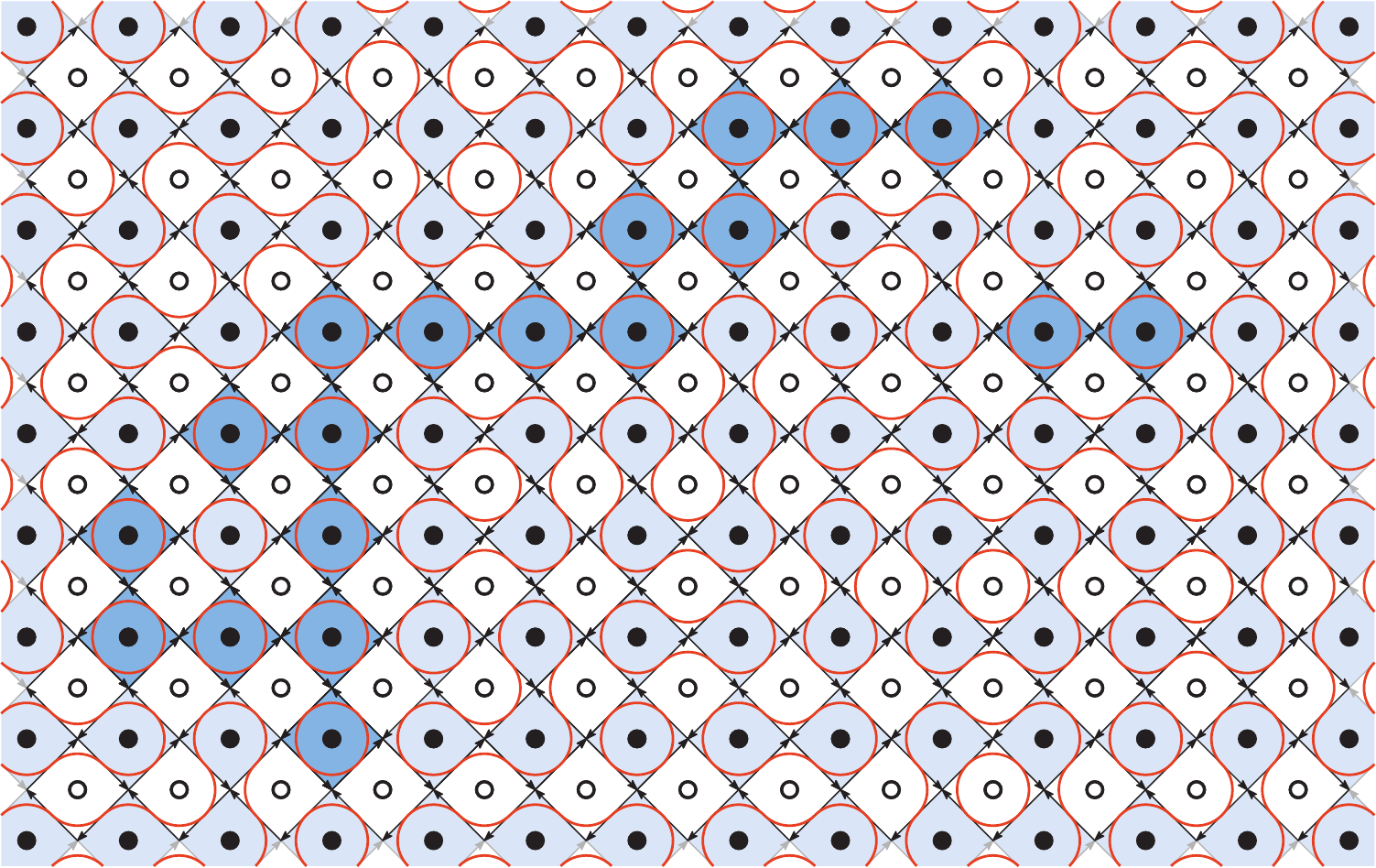}
 \caption{\label{fig:6}(Step 3) Fill the ``holes'' (depicted in darker gray) with loops of length four.}
 \end{center} \end{figure}

\begin{proof}[discontinuity for $q>256$.]
Consider a loop $L$ of the medial lattice $(\bbZ^2)^\diamond$ surrounding the origin. We assume that $L$ is oriented counter-clockwise. Let $n$ be the number of edges of $(\bbZ^2)^\diamond$ on $L$ and consider a graph $\Omega$ containing the full loop. Let $\calE_L$ be the event that the loop $L$ is a loop of the configuration $\overline\omega$. 

Our goal is to bound $\phi^{0}_{\Omega,p_c,q}[\calE_L]$. In order to do so, we construct a one-to-one ``repair map'' $f_L$ from $\calE_L$ to the set of loop configurations on $\Omega$ such that the image $f_L(\overline\omega)$ has much larger probability than the probability of $\overline\omega$. This will imply a bound on the probability of $\calE_L$ (see below).

Let $\overline \omega$ be a loop configuration in $\calE_L$. A loop of $\overline \omega$ is said to be {\em inside} (resp. {\em outside}) $L$ if it is included in the bounded connected component of 0 in $\bbR^2\setminus L$. Perform the following three successive modifications on $\overline\omega$  (See Figure~\ref{fig:6} for an illustration.)  to obtain a configuration $f_L(\overline\omega)$:\medbreak\noindent
{\bf Step 1.} Remove the loop $L$ from $\overline \omega$. 
\\{\bf Step 2.} Translate the loops of $\overline\omega$ which are inside $L$ by the vector $\frac{1-i}2$. 
\\{\bf Step 3.} Complete the configuration thus obtained by putting loops of length four around black faces of $\Omega^\diamond$ bordered by an edge which is not covered by any loop after Step 2.  
 \bigbreak\noindent
The configuration $f_L(\overline\omega)$ is a loop configuration on $\Omega^\diamond$ (Exercise~\ref{exo:loop}). 
Furthermore, Step 1 of the construction removes a loop from $\overline \omega$, but Step 3 adds one loop per edge of $L$ pointing south-west. Since the number of edges added in the last step is four times this number, and that the final configuration has as many edges as the first one, we deduce that this number is equal to $n/4$. Thus, we have
 \begin{align*}\phi^0_{\Omega,p_c,q}[\overline\omega]=\sqrt q^{\ell(\overline\omega)-\ell(f_L(\overline\omega))}\phi^0_{\Omega,p_c,q}[f_L(\overline\omega)]&=\sqrt q^{1-n/4}\phi^0_{\Omega,p_c,q}[f_L(\overline\omega)].\end{align*}
Using the previous equality in the second line and the fact that $f_L$ is one-to-one in the third (this uses the fact that $L$ is fixed at the beginning of the proof), we deduce that
\begin{align*}\phi^0_{\Omega,p_c,q}[\calE_L]&=\sum_{\overline\omega\in \calE_L}\phi^0_{\Omega,p_c,q}[\overline\omega]\\
&= q^{1/2-n/8}\,\sum_{\overline\omega\in \calE_L}\phi^0_{\Omega,p_c,q}[f_L(\overline\omega)]\\
&= q^{1/2-n/8}\,\phi^0_{\Omega,p_c,q}[f_L(\calE_L)]\le q^{1/2-n/8}.\end{align*}

Let us now prove that connectivity properties decay exponentially fast provided that $q>256$. Consider two vertices 0 and $x$ and a graph $\Omega$ containing both $0$ and $x$. If $0$ and $x$ are connected to each others in $\omega$, then there must exist a loop in $\overline \omega$ surrounding $0$ and $x$ which is oriented counter-clockwise (simply take the exterior-most such loop). Since any such loop contains at least $\|x\|$ edges, we deduce that 
\begin{align*}\phi^0_{\Omega,p_c,q}[0\longleftrightarrow x]&\le\sum_{\substack{L\text{ surrounding } \\ 
0\text{ and }x}}\phi^0_{\Omega,p_c,q}[\calE_L]\\
&\le\sum_{n\ge \|x\|}\sum_{\substack{L\text{ of length $n$}\\ 
\text{surrounding }0}}q^{1/2-n/8}\\
&\le \sum_{n\ge \|x\|} n2^n\cdot q^{1/2-n/8}.\end{align*}
In the last line we used that the number of loops surrounding 0 with $n$ edges on $\Omega^\diamond$ is smaller than $n2^n$. Letting $\Omega$ tend to the full lattice $\bbZ^2$, we deduce that
\begin{align*}\phi^{0}_{p_c,q}[0\longleftrightarrow x]&\le \sum_{n\ge \|x\|} n2^n\cdot q^{1/2-n/8}\le \exp(-c\|x\|).\end{align*}The existence of $c>0$ follows from the assumption $2q^{-1/8}<1$.\end{proof}

\bexo

\begin{exercise}\label{exo:loop}
Prove that the repair map $f_L$  actually yields a loop configuration.
\end{exercise}

\eexo

\paragraph{Mapping to the six-vertex model and sketch of the proof for $q>4$} We do not discuss the exact computation of the correlation length. The proof is based on a relation between the random-cluster model on a graph $\Omega$ and the six-vertex model on its medial graph $\Omega^\diamond$.

The six-vertex model was initially proposed by Pauling in 1931 for the study of the thermodynamic properties of ice.
While we are mainly interested in it for its connection to the random-cluster model, 
the six-vertex model is a major object of study on its own right.
We do not attempt to give here an overview of the model and 
we rather refer to \cite{Resh10} and Chapter~8 of \cite{Bax89} (and references therein) for a bibliography on the subject.

The mapping between the random-cluster model and the six-vertex model being very sensitive to boundary conditions, we will work on a torus. As in the previous section, the first and second coordinates of $x\in\bbZ^2$ are denoted by $x_1$ and $x_2$. For $M$ and $N$, consider the subgraph $\bbT=\bbT(M,N)$ of the square lattice induced by the set of vertices
$$\{x\in\bbZ^2: 0\le x_1+x_2\le M\text{ and } |x_1-x_2|\le N\}.$$ 
Introduce the periodic boundary conditions {\rm per} in which  $x$ and $y$ on $\partial\bbT$ are identified together iff 
$x_1+x_2=y_1+y_2$ or $x_1-x_2=y_1-y_2$. Together with these boundary conditions, $\bbT$ may be seen as a torus.

An {\em arrow configuration} $\vec{\omega}$ on $\bbT^\diamond$ (the medial graph is defined in an obvious fashion here) is a map attributing to each edge $xy\in E$ one of the two oriented edges $(x,y)$ and $(y,x)$. We say that an arrow configuration satisfies the {\em ice rule} if each vertex of $\mathbb T^\diamond$ is incident to two edges pointing towards it (and therefore to two edges pointing outwards from it). The ice rule leaves six possible configurations at each vertex, depicted in Fig.~\ref{fig:60}, whence the name of the model. 
Each arrow configuration $\vec\omega$ receives a weight
\begin{align}\label{eq:w_6V}
	w_{\rm 6V}(\vec{\omega}) := 
	\begin{cases}
	a^{n_1 + n_2} \cdot b^{n_3 + n_4} \cdot c^{n_5 + n_6}&\text{ if }\vec{\omega}\text{ satisfies the ice rule,}
	\\ \qquad\qquad0&\text{ otherwise},
	\end{cases}
\end{align}
where $a,b,c$ are three positive numbers, and $n_i$ denotes the number of vertices with configuration $i\in\{1,\dots,6\}$ in $\vec\omega$. In what follows, we focus on the case $a=b=1$ and $c>2$, and will therefore only consider such weights from now on. 
\bigbreak
\begin{figure}[htb]
	\begin{center}
		\includegraphics[width=0.7\textwidth]{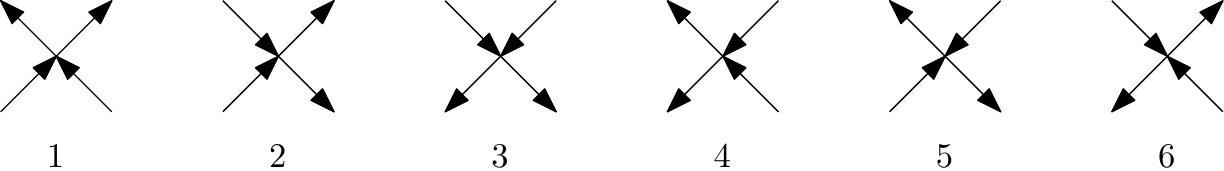}
	\end{center}
	
	\caption{\label{fig:60}The $6$ possibilities for vertices in the six-vertex model.
	Each possibility comes with a weight $a$, $b$ or $c$.}
\end{figure}

In our context, the interest of the six-vertex model stems from its solvability using the transfer-matrix formalism. More precisely, the partition function of a toroidal six-vertex model may be expressed as the trace of the $M$-th power of a matrix $V$ called the {\em transfer matrix}, whose leading eigenvalues can be computed using the so-called {\em Bethe-Ansatz}. This part does not invoke probability at all, and relies heavily on exact computations. For more details on the subject, we refer the curious reader to \cite{DumGanHar16b,DumGanHar16}. Here, we will only use the following consequence of the study.

For a six-vertex configuration $\vec\omega$ on $\mathbb T^\diamond$, write $|\vec\omega|$ for the number of north-east arrows intersecting the line $x_1+x_2=0$
(this number is the same for all lines $x_1+x_2=k$ with $-M\le k\le M$). The total number of arrows in each line is $2N$. It can be shown that typical configurations have $N$ such arrows. In fact, one may prove a more refined statement.
Set 
\begin{align*}
	Z_{6V}(N,M) = \sum_{\vec\omega} w_{6V}(\vec\omega)\qquad\text{ and }\qquad\widetilde Z_{6V}(N,M) = \sum_{\vec\omega:\, |\vec\omega| = N-1} w_{6V}(\vec\omega).
\end{align*}

\begin{theorem}\label{thm:6V}
	For $c>2$ and $r > 0$ integer, fix $\lambda>0$ satisfying $e^\lambda+e^{-\lambda}=c^2$. Then,
	\begin{align}
	\lim_{N\rightarrow\infty}\lim_{M\rightarrow\infty}-\tfrac1{M}\log\Big(\frac{\widetilde Z_{6V}(N,M)}{Z_{6V}(N,M)}\Big)&=\lambda + 2 \sum_{k=1 }^\infty \tfrac{(-1)^k}k \tanh (k\lambda)>0\label{eq:aggg}.
	\end{align}
\end{theorem}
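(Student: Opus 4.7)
The plan is to use the transfer matrix formalism. A horizontal slice of $\bbT^\diamond$ consists of $2N$ arrows (say, on the set of diagonals running north-east/south-west), and because the ice rule preserves the number of north-east arrows when moving from one row to the next, the transfer matrix $V$ acting on $(\bbC^2)^{\otimes 2N}$ decomposes as a direct sum $V = \bigoplus_{n=0}^{2N} V_n$, where $V_n$ acts on the subspace spanned by arrow configurations with exactly $n$ north-east arrows. With the right choice of diagonal slices one then has
\begin{align*}
Z_{6V}(N,M) = \sum_{n=0}^{2N} \operatorname{Tr}(V_n^M), \qquad \widetilde Z_{6V}(N,M) = \operatorname{Tr}(V_{N-1}^M).
\end{align*}

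The first step is to take $M\to\infty$. Letting $\Lambda_n = \Lambda_n(N)$ denote the largest (in modulus) eigenvalue of $V_n$, the Perron--Frobenius theorem applied to an appropriately positive modification of $V_n$ gives $M^{-1}\log\operatorname{Tr}(V_n^M)\to \log|\Lambda_n|$, and the inner limit becomes
\begin{align*}
\lim_{M\to\infty}-\tfrac{1}{M}\log\frac{\widetilde Z_{6V}(N,M)}{Z_{6V}(N,M)}=\log\Lambda^{\max}(N)-\log|\Lambda_{N-1}(N)|,
\end{align*}
where $\Lambda^{\max}(N)=\max_n |\Lambda_n(N)|$. So the task reduces to computing a spectral gap between sectors in the large-$N$ limit.

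Next, each $V_n$ is diagonalized by the Bethe Ansatz: its eigenvalues are expressed as an explicit function of $n$ ``Bethe roots'' $z_1,\dots,z_n$ satisfying a system of Bethe equations. In the antiferroelectric regime $c>2$ (equivalently $\Delta:=1-c^2/2<-1$), the natural parameter is $\lambda>0$ with $\cosh\lambda=c^2/2$, and one verifies via a convexity/monotonicity analysis of the Bethe ``pseudo-energy'' that the overall ground state lies in the half-filled sector $n=N$, i.e.~$\Lambda^{\max}(N)=\Lambda_N(N)$. Identifying the maximal eigenvalue of $V_{N-1}$ as the ground state of the $n=N-1$ sector reduces the problem to comparing two specific ground-state Bethe configurations.

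Finally, one takes $N\to\infty$. After taking logarithms, the Bethe equations (at their ground state solutions) become, in the limit, a linear integral equation for a density $\rho$ of Bethe roots, and the log-eigenvalue is an explicit linear functional of $\rho$. In the massive regime $c>2$, this integral equation is translation invariant on a full period, so it is diagonalized by Fourier series, whose coefficients involve $\operatorname{sech}(k\lambda)$. Writing $\rho_N$ and $\rho_{N-1}$ for the ground-state densities in the sectors $n=N$ and $n=N-1$, a careful bookkeeping of the $O(1/N)$ corrections shows that the gap $\log\Lambda_N(N)-\log\Lambda_{N-1}(N)$ converges to a residue-type sum which reorganizes as $\lambda+2\sum_{k\ge1}\tfrac{(-1)^k}{k}\tanh(k\lambda)$. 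Positivity of this series can be seen by pairing terms $k=2j-1$ and $k=2j$ and using $\tanh$ is increasing, or directly from the fact that the infinite sum is $O(e^{-\lambda})$ as $\lambda\to\infty$ and $O(\lambda^2)$ as $\lambda\to 0$, while the leading $\lambda$ term is strictly positive throughout.

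The main obstacle is this last step: rigorously executing the thermodynamic limit of the Bethe Ansatz. One must justify completeness of the Bethe states (so that the variational principle really does select the state we computed), prove that the ground-state roots are real and asymptotically distributed according to the solution of the integral equation with quantitative control, and handle finite-size corrections carefully enough to extract the subleading contribution responsible for the $\tanh(k\lambda)$ series. These analyses are classical for the XXZ spin chain in the massive phase in the physics literature, but assembling them into a mathematically complete argument is the technical heart of the proof.
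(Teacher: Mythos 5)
First, a point of reference: the paper does not actually prove Theorem~\ref{thm:6V} — it imports it from \cite{DumGanHar16b,DumGanHar16}, explicitly noting that ``this part does not invoke probability at all, and relies heavily on exact computations.'' Your outline is faithful to the strategy of those references: decomposition of the transfer matrix into sectors $V_n$ by conservation of the number of up arrows, Perron--Frobenius to reduce the inner $M\to\infty$ limit to $\log\Lambda^{\max}(N)-\log|\Lambda_{N-1}(N)|$, Bethe Ansatz diagonalization of the sectors $n=N$ and $n=N-1$, condensation of the Bethe roots onto a density solving a linear integral equation diagonalized by Fourier series (whence the $\mathrm{sech}(k\lambda)$ coefficients and ultimately the $\tanh(k\lambda)$ series). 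So the route is the right one.

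As a proof, however, what you have written is a road map, and the steps you defer to ``classical analyses in the physics literature'' are precisely the content of the result. Concretely: (i) you must show that the Bethe Ansatz produces the Perron--Frobenius eigenvalue of $V_n$ — this does not require completeness of the Bethe states (contrary to what your last paragraph suggests), but it does require exhibiting a real solution of the Bethe equations in the relevant sector and proving that the associated Bethe vector is \emph{non-zero} and overlaps the Perron--Frobenius eigenvector; existence and non-degeneracy of the ground-state roots in the massive regime is a genuine theorem, not a formality. (ii) The claim $\Lambda^{\max}(N)=\Lambda_N(N)$ (maximum over sectors attained at half-filling) is asserted via a ``convexity/monotonicity analysis'' that is not carried out. (iii) The quantity you must compute, $\log\Lambda_N(N)-\log\Lambda_{N-1}(N)$, is an order-one difference between two quantities each of order $N$; extracting it forces you to control both ground states to precision $o(1)$, i.e.\ to quantify how removing a single Bethe root perturbs the entire root configuration through the Bethe equations. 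This finite-size analysis is the technical heart of \cite{DumGanHar16b} and nothing in your proposal substitutes for it. So the proposal correctly identifies the approach of the source the paper relies on, but it does not constitute a proof of \eqref{eq:aggg}; the positivity argument at the end is the only step that is actually complete.
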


Our goal now is to explain how one deduces discontinuity of the phase transition for random-cluster models from this theorem. In order to do so, we relate the random-cluster model to the six-vertex model.
We denote the random-cluster measure on $\bbT$ by $\phi_{\bbT,p_c,q}^{\rm per}$ (there is no boundary conditions since $\bbT$ has no boundary). Let $k_{\rm nc}(\omega)$ be the number of non-retractible clusters of $\omega$, and $\calA$ the event that both $\omega$ and $\omega^*$ contain exactly one cluster winding around the torus in the south-west  north-east direction.

\begin{proposition}\label{prop:6V}
	Let $q > 4$ and set $c = \sqrt{2 + \sqrt q}$.  For $N,M$ even, 
		\begin{align*}
		\phi_{\bbT,p_c,q}^{\rm per}[\calA] &= q\,\frac{\widetilde Z_{6V}(N,M)}{Z_{6V}(N,M)}\,\phi_{\bbT,p_c,q}^{\rm per}\Big[\big(\tfrac4{q} \big)^{k_{\rm nc}(\omega)}\Big] .	\end{align*}
\end{proposition}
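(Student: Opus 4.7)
The strategy is to go through the loop representation of the random-cluster model on the torus, enrich it with independent orientations of every loop, and identify the resulting oriented loop ensemble with the six-vertex model on $\bbT^\diamond$ under the ice rule. Since $\bbT$ is a torus, the loop ensemble $\overline\omega$ of Section~5.2 is well defined (no Dobrushin arc is needed because there is no boundary), so $\phi^{\rm per}_{\bbT,p_c,q}[\omega]\propto \sqrt q^{\ell(\overline\omega)}q^{2s(\omega)}$ where $\ell(\overline\omega)$ is the total number of medial loops and the $q^{2s(\omega)}$ correction arises from the Euler relation for the torus (cf.~Exercise~\ref{exo:torus}). Split $\ell(\overline\omega)=\ell_{\rm ret}+\ell_{\rm nr}$ into retractible and non-retractible loops.

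First I would \emph{orient each loop independently in one of two ways}, obtaining $2^{\ell(\overline\omega)}$ oriented configurations associated to each $\omega$. Assign to each vertex of $\bbT^\diamond$ a multiplicative local weight $e^{+\eta/4}$ for every $+\pi/2$ turn and $e^{-\eta/4}$ for every $-\pi/2$ turn made by each oriented loop passing through it, with $\eta>0$ chosen so that $2\cosh\eta=\sqrt q$ (possible because $q>4$; this is exactly the $\lambda$ of Theorem~\ref{thm:RCM}). The total turning angle of a retractible loop is $\pm 2\pi$, so summing over its two orientations reproduces the factor $e^{\eta}+e^{-\eta}=\sqrt q$; by contrast, a non-retractible loop has total turning $0$, so the sum of its two orientations gives $1+1=2$. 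Consequently, the marginal weight on $\omega$ obtained from the oriented ensemble is
\[
\sqrt q^{\,\ell_{\rm ret}(\overline\omega)}\,2^{\ell_{\rm nr}(\overline\omega)}
=\sqrt q^{\,\ell(\overline\omega)}\Big(\tfrac{2}{\sqrt q}\Big)^{\ell_{\rm nr}(\overline\omega)}.
\]
A combinatorial count on the torus (using Exercise~\ref{exo:torus} and the observation that each non-retractible cluster of $\omega$ or $\omega^*$ is bordered by non-retractible medial loops) yields $\ell_{\rm nr}(\overline\omega)=2k_{\rm nc}(\omega)$ on the event $\calA$ (and similarly for the denominator, after accounting for $s(\omega)$). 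This is the origin of the $(4/q)^{k_{\rm nc}(\omega)}$ correction.

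Next I would translate the oriented loop ensemble into the six-vertex model. At each medial vertex, the two ``strands'' of loops together with their orientations determine one of the $6$ ice-rule configurations of Fig.~\ref{fig:60}. A direct local check shows that the product of the two local winding factors $e^{\pm\eta/4}$ coming from the two strands equals $1$ when both strands go ``straight through'' (types $1$--$4$), and equals $c=\sqrt{2+\sqrt q}$ when both strands turn (types $5$--$6$); the identity $e^{\eta/2}+e^{-\eta/2}=\sqrt{2+\sqrt q}$ used here is exactly the content of the choice $2\cosh\eta=\sqrt q$. Thus the oriented-loop weight matches $w_{\rm 6V}$ with $a=b=1$, $c=\sqrt{2+\sqrt q}$, vertex by vertex.

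Finally, I would match the event $\calA$ to the constraint $|\vec\omega|=N-1$ defining $\widetilde Z_{6V}$. The number of NE-pointing arrows across a line $x_1+x_2=k$ is a conserved ``flux'' equal to $N$ minus the signed number of non-retractible medial loops crossing that line. On $\calA$, the unique non-retractible primal cluster, together with its dual counterpart, forces this flux to drop by exactly one relative to the vacuum value $N$, giving $|\vec\omega|=N-1$; off $\calA$ (and outside a negligible event), $|\vec\omega|=N$. Assembling these pieces,
\[
\phi^{\rm per}_{\bbT,p_c,q}[\calA]=\frac{\displaystyle\sum_{\omega\in\calA}\sqrt q^{\ell(\overline\omega)} q^{2s(\omega)}}{\displaystyle\sum_{\omega}\sqrt q^{\ell(\overline\omega)} q^{2s(\omega)}}
=q\cdot\frac{\widetilde Z_{6V}(N,M)}{Z_{6V}(N,M)}\cdot \phi^{\rm per}_{\bbT,p_c,q}\!\left[\Big(\tfrac{4}{q}\Big)^{k_{\rm nc}(\omega)}\right],
\]
where the prefactor $q$ absorbs the mismatch between the partition-function normalizations coming from the two non-retractible loops bordering the unique non-retractible cluster on $\calA$. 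The main obstacle will be the third step: carrying out the bookkeeping that turns the conservation of medial-loop flux into the exact relation $|\vec\omega|=N-1\Leftrightarrow \calA$ (up to negligible events), and simultaneously keeping track of the Euler correction $q^{2s(\omega)}$ so that the overall multiplicative constant is precisely $q$.
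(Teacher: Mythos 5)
Your plan follows essentially the same route as the paper's proof: pass to the toroidal loop representation via Euler's formula, orient the loops with a winding weight $e^{\pm\eta/4}$ per quarter-turn so that summing over orientations yields $\sqrt q=2\cosh\eta$ per retractible loop and $2$ per non-retractible loop, localize the winding at medial vertices to recover the six-vertex weights $a=b=1$, $c=e^{\eta/2}+e^{-\eta/2}=\sqrt{2+\sqrt q}$, and identify $\calA$ with the constraint $|\vec\omega|=N-1$. Two small precisions for the write-up: the Euler correction is $\sqrt q^{\,2s(\omega)}=q^{s(\omega)}$, not $q^{2s(\omega)}$; and on $\calA$ the flux is not deterministically $N-1$ --- only one of the four equal-weight orientations of the two non-retractible loops gives $|\vec\omega|=N-1$ (the others give $N$ or $N+1$), and this factor of $4$ is exactly what cancels against $(4/q)^{k_{\rm nc}(\omega)}=4/q$ to produce the prefactor $q$.
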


\begin{figure}
	\begin{center}
	\hspace*{-1cm}
		\includegraphics[width=0.58\textwidth,angle=90]{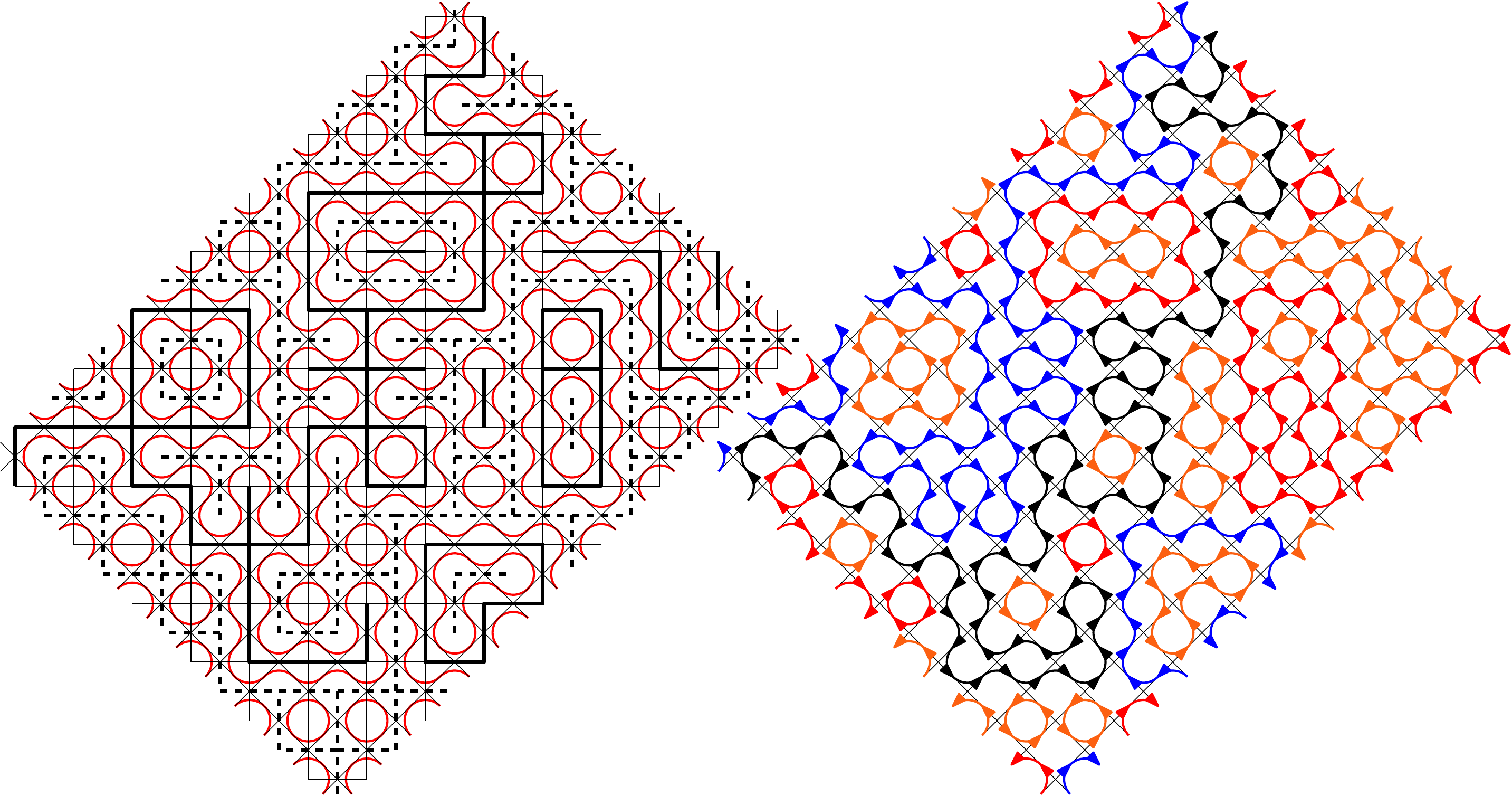}
		\hspace*{-1cm}
		\includegraphics[width=0.58\textwidth,angle=90]{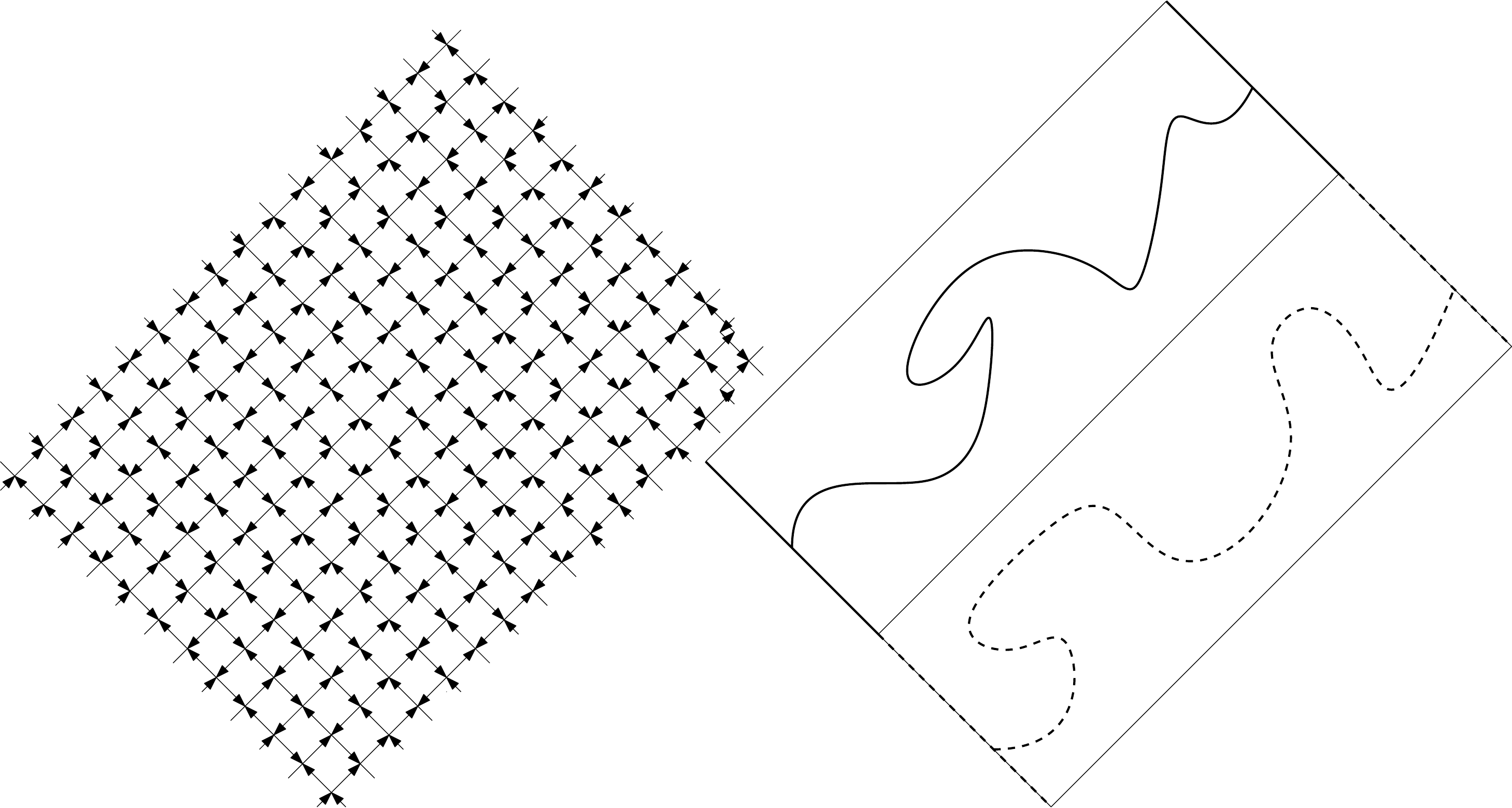}

	\end{center}
	\caption{The different steps in the correspondence between the random-cluster model and the six-vertex model on a torus. 
	{\bf Top-left.} A random-cluster configuration and its dual, as well as the corresponding loop configuration. {\bf Top-right.}
	An orientation of the loop configuration (retractible loops oriented counter-clockwise in red, clockwise in orange, in blue and black, the two non-retractible loops). {\bf Bottom-left.} The resulting six-vertex configuration. 
	Note that in the first picture, there exist both a primal and dual component winding vertically around the torus; 
	this leads to two loops that wind vertically (see second picture); 
	if these loops are oriented in the same direction (as in the third picture) 
	then the number of up arrows on every row of the six-vertex configuration is equal to $N \pm 1$. {\bf Bottom-right.} The intersection of the events $\calE$, $\calE'$, $\calF$ and $\calF'$ implies the event $\calA$.}
	\label{fig:correspondence}
\end{figure}

\begin{proof}
Define $w_{\rm RC}(\omega)=p^{o(\omega)}(1-p)^{c(\omega)}q^{k(\omega)}$. As in Proposition~\ref{prop:loop}, we may use Euler's formula (see Exercise~\ref{exo:torus}) on the torus to show that 
\begin{equation}\label{eq:air}\sqrt{q}^{ \ell(\overline\omega)  + 2s(\omega)}=c_0\,w_{\rm RC}(\omega),\end{equation}
where $s(\omega)$ is the indicator function of the event that all clusters of $\omega^*$ are retractible, and $c_0>0$ is independent of the configuration. 
	
Write $\omega^{\ol}$ for oriented loop configurations, i.e.~configurations of loops to which we associated an orientation. Let
$\ell_-(\omega^\ol)$ and $\ell_+(\omega^\ol)$ for the number of retractible loops of $\omega^\ol$ 
which are oriented clockwise and counter-clockwise, respectively.
Introduce $e^\mu+e^{-\mu}=\sqrt q$ and write, for an oriented loop configuration $\omega^\ol$, 
$$ w_{\ol} (\omega^{\ol}) = e^{\mu\ell_+(\omega^\ol)} \, e^{-\mu\ell_-(\omega^\ol)}.$$
	Fix $\omega$ a random-cluster configuration and consider its associated loop configuration $\overline\omega$. 
	In summing the $2^{\ell(\omega)}$ oriented loop configurations $\omega^\ol$ obtained from $\overline\omega$ by orienting loops, we find
		\begin{align}\label{eq:airair}
		\sum_{\omega^\ol} w_{\ol} (\omega^{\ol})  
		= \big(1+1\big)^{\ell_0(\overline\omega)}\big(e^\mu + e^{-\mu}\big)^{\ell(\overline\omega)-\ell_0(\overline\omega)}
		= c_0\big(\tfrac4q\big)^{k_{\rm nc}(\omega)}q^{-s(\omega)} w_{\rm RC}(\omega),
	\end{align}
where $\ell_0(\overline\omega)$ is the number of non-retractive loops of $\overline\omega$. In the last equality, we used \eqref{eq:air} and the fact when $s(\omega)=0$, any non-retractible cluster corresponds to two non-retractible loops. We also used that when $s(\omega)=0$, there is no non-retractible loop.

Notice now that an oriented loop configuration gives rise to $8$ different configurations at each vertex. 
These are depicted in Fig.~\ref{fig:oriented_loop_vertices}. 
For an oriented loop configuration $\omega^\ol$, write $n_{i}(\omega^\ol)$ for the number of vertices of type $i$ in $\omega^\ol$, 
with $i = 1,\, 2,\, 3,\, 4,\,  5A,\, 5B,\, 6A,\, 6B$.

\begin{figure}[htb]
	\begin{center}
		\includegraphics[width=0.9\textwidth]{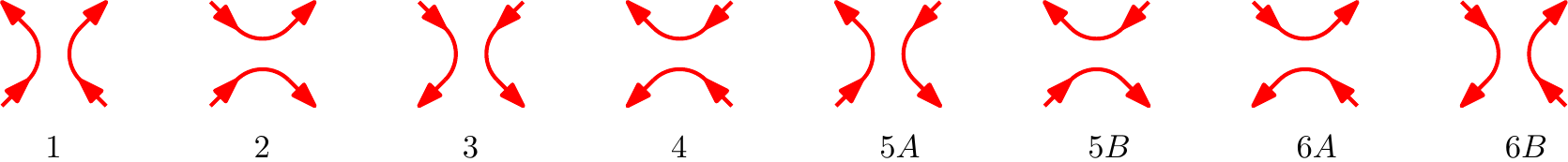}
	\end{center}
	\caption{The $8$ different types of vertices encountered in an oriented loop configuration.}
	\label{fig:oriented_loop_vertices}
\end{figure}

	The retractible loops of $\omega^\ol$ which are oriented clockwise have total winding $-2\pi$, 
	while those oriented counter-clockwise have winding $2\pi$. 
	Loops which are not retractible have total winding $0$. 
	Write $W(\ell)$ for the winding of a loop $\ell \in \omega^\ol$. Then
	\begin{align*}
		w_{\ol}(\omega^\ol) =  \exp\Big(\frac{\mu}{2\pi}\sum_{\ell \in \omega^\ol}W(\ell)\Big),
	\end{align*}
	where the sum is over all loops $\ell$ of $\omega^\ol$.
	The winding of each loop may be computed by summing the winding of every turn along the loop. 
	The compounded winding of the two pieces of paths appearing in the different configurations in Fig.~\ref{fig:oriented_loop_vertices} are 
	\begin{itemize}[noitemsep,nolistsep]
		\item vertices of type $1,\dots,4$: total winding $0$;
		\item vertices of type $5A$ and $6A$: total winding $\pi$;
		\item vertices of type $5B$ and $6B$: total winding $-\pi$.
	\end{itemize} 
	The total winding of all loops may therefore be expressed as 
	\begin{align*}
		\sum_{\ell \in \omega^\ol}W(\ell) =\pi\ \big[ n_{5A}(\omega^\ol) + n_{6A}(\omega^\ol) - n_{5B}(\omega^\ol) - n_{6B}(\omega^\ol)\big].
	\end{align*}
We therefore deduce that for any oriented loop configuration $\omega^\ol$, 
	\begin{align}\label{eq:airairair}
		w_{\ol}(\omega^\ol) = e^{\frac\mu2[n_{5A}(\omega^\ol) + n_{6A}(\omega^\ol)]}\ e^{-\frac\mu2[n_{5B}(\omega^\ol) + n_{6B}(\omega^\ol)]}.
	\end{align}

For the final step of the correspondence, notice that each diagram in Fig.~\ref{fig:oriented_loop_vertices} 
corresponds to a six-vertex local configuration (as those depicted in Fig.~\ref{fig:60}). 
Indeed, configurations $5A$ and $5B$ correspond to configuration $5$ in Fig.~\ref{fig:60} 
and configurations $6A$ and $6B$ correspond to configuration $6$ in Fig.~\ref{fig:60}.
The first four configurations of Fig.~\ref{fig:oriented_loop_vertices} 
correspond to the first four in Fig.~\ref{fig:60}, respectively.

Thus, to each oriented loop configuration $\omega^\ol$ is associated a six vertex configuration $\vec{\omega}$. 
Note that the map associating $\vec{\omega}$ to $\omega^\ol$ is not injective since
there are $2^{n_5(\vec\omega) + n_6(\vec\omega)}$ oriented loop configurations corresponding to each $\vec{\omega}$. In fact, for a six-vertex configuration $\vec{\omega}$, if $N_{5,6}(\vec{\omega})$ is the set of vertices of type $5$ and $6$ in $\vec\omega$, then the choice of $c=\sqrt{2 + \sqrt{q}}=e^{\frac\mu2} + e^{-\frac\mu2}$ gives that
	\begin{align}\label{eq:rew}
		w_{\rm 6V}(\vec{\omega}) 
		=\prod_{u \in N_{5,6}(\vec\omega)} \big(e^{\frac\mu2} + e^{-\frac\mu2}\big)
		=\sum_{\varepsilon \in \{\pm 1\}^{N_{5,6}(\vec\omega)}}\, \prod_{u \in N_{5,6}(\vec\omega)} e^{\frac\mu2\varepsilon(u)}
		\stackrel{\eqref{eq:airairair}}= \sum_{\omega^\ol} w_{\ol}(\omega^\ol).
	\end{align}
We are now in a position to prove the statement of the proposition. First,	\begin{align*}
		c_0\sum_{\omega} \big(\tfrac4q\big)^{k_{\rm nc}(\omega)}q^{-s(\omega)} w_{\rm RC}(\omega)\stackrel{\eqref{eq:airair}}=  \sum_{\omega^\ol} w_{\ol}(\omega^\ol)
		\stackrel{\eqref{eq:rew}}= \sum_{\vec\omega} w_{6V}(\vec\omega) = Z_{6V}(N,M).
	\end{align*}
Second, using that $s(\omega)=0$ and $k_{\rm nc}(\omega)=1$ on the event $\calA$,  we find	\begin{align*}
		c_0\sum_{\omega\in\calA} w_{\rm RC}(\omega)		&= c_0\frac {q}4\sum_{\omega\in\calA} w_{\rm RC}(\omega)\big(\tfrac4q\big)^{k_{\rm nc}(\omega)}q^{-s(\omega)}
\\
		&= q \sum_{|\vec\omega| =N - 1}w_{\rm 6V} (\vec\omega) 
		= q \ \widetilde Z_{\rm 6V}(N,M).
	\end{align*}
In the second step, we used that there are four ways of orienting the two loops bordering the unique non-retractible cluster of $\omega\in\calA$, and that one of them leads to $|\omega^\ol| =N - 1$. Dividing by the partition function of the random-cluster model and then taking the ratio of the two last displayed equations leads to the result.\end{proof}
 
Theorem~\ref{thm:RCM} now follows pretty easily. Indeed, one may show that for all $\delta>0$, there exists $N$ large enough that for all $M$,
\begin{equation}\label{eq:kjh}\phi_{\bbT,p_c,q}^{\rm per}\Big[\big(\tfrac4{q} \big)^{k_{\rm nc}(\omega)}\Big] \le \exp(\delta M).\end{equation}
 This corresponds to proving that there is not a density of non-retractible clusters winding around the torus. This fact follows easily from the fact that $\phi^0_{p_c,q}$ does not contain any infinite cluster (Exercise~\ref{exo:popo}). 

Thus, \eqref{eq:kjh}, Proposition~\ref{prop:6V} and Theorem~\ref{thm:6V} give the existence of $c_0>0$ such that for all fixed $N$ large enough and $M\ge M_0(N)$,
\begin{equation}\phi_{\bbT,p_c,q}^{\rm per}[\calA] \le \exp(-c_0M).\label{eq:uhgih}\end{equation}

Now, consider the ``rotated rectangles'' $R=\{x\in\bbT:x_1\le x_2\}$ and $R'=\{x\in\bbT:x_1>x_2\}$. Assume that {\bf P5} is  satisfied, one obtains easily by combining crossings that 
\begin{equation}\label{eq:plo}\phi_{R,p_c,q}^0[\calF]\ge c_0^{M/N}\qquad\text{and}\qquad\phi_{R',p_c,q}^1[\calF']\ge c_0^{M/N},\end{equation}
where $\calF$ is the event that there exists a path in $\omega\cap R$ going from the line $x_1+x_2=0$ to the line $x_1+x_2=M$, and $\calF'$ is the event that there exists a path in $\omega^*\cap (R')^*$ from the line $x_1+x_2=0$ to the line $x_1+x_2=M$.
Now, let $\calE$  be the event that all the edges in $R$ with one endpoint in $x_1+x_2=0$ are open, and $\calE'$ be the event that all the edges in $R'$ with one endpoint in $x_1+x_2=0$ are closed. Note that on $\calE\cap\calF\cap\calE'\cap\calF'$, there exists exactly one cluster in $\omega$ and one cluster in $\omega^*$ winding around the torus; see Fig.~\ref{fig:correspondence}.

The comparison between boundary conditions implies that conditionally on $\calE\cap\calV(R)$, the boundary conditions in $R'$ are dominated by wired boundary conditions. We obtain\begin{align*}\phi_{\bbT,p_c,q}^{\rm per}[\calA]&\stackrel{\phantom{\rm (FKG)}}\ge \phi_{\bbT,p_c,q}^{\rm per}[\calE\cap\calF\cap\calE'\cap\calF']\\
&\stackrel{\phantom{\rm (FKG)}}\ge \phi_{R,p_c,q}^0[\calE\cap\calF] \phi_{R',p_c,q}^1[\calE'\cap\calF']\\
&\stackrel{\rm (FKG)}\ge \phi_{R,p_c,q}^0[\calE]\phi_{R,p_c,q}^0[\calF] \phi_{R',p_c,q}^1[\calE']\phi_{R',p_c,q}^1[\calF']\\
&\stackrel{\phantom{\rm (FKG)}}\ge c_{\rm FE}^N\,c_0^{2M/N},
\end{align*}
where in the last line, we used \eqref{eq:finite energy} and \eqref{eq:plo}. By picking $N$ large enough and then letting $M$ go to infinity, we obtain a contradiction with \eqref{eq:uhgih}, so that {\bf P5} cannot be satisfied and the phase transition is discontinuous.

\begin{remark}In fact, one may even prove directly that {\bf P4b} does not hold (this is of value for these lectures since we did not formally prove that {\bf P4b} was equivalent to {\bf P5}). We refer to Exercise~\ref{exo:expo} for details.
\end{remark}

\bexo
\begin{exercise}\label{exo:popo}We wish to prove that for all $\delta>0$, for $N$ and $M$ large enough,
$$\phi_{\bbT,p_c,q}^{\rm per}\Big[\big(\tfrac4{q} \big)^{k_{\rm nc}(\omega)}\Big] \le \exp(\delta M).$$
1. 
Show that there exists $c_0>0$ depending on $q$ only such that for all $M$ and $N$, if $n=\delta M-N$, then\begin{equation}\label{eq:kkl}
\phi_{\mathbb T,p_c,q}(k_{\rm nc}(\omega)\ge \delta M)\le 
c_0^{M+N}\phi_{\bbT,p_c,q}^0[\exists n\text{ disjoint clusters crossing $\mathbb T$ from north-west to south-east}].
\end{equation}
\medbreak\noindent 2. Consider the event $\calE(x_1,\dots,x_n)$ that the points $x_1,\dots,x_n$ on the north-west side of $\bbT$ are connected to the bottom-east side by open paths, and $x_1,\dots,x_n$ are all in different clusters. 
Conditioning inductively on clusters crossing $\bbT$ from north-west to south-east, show that 
$$\phi_{\bbT,p_c,q}^0[\calE(x_1,\dots,x_n)]\le \phi^0_{p_c,q}[0\longleftrightarrow\partial\Lambda_N]^n.$$
3. Conclude. 
\end{exercise}

\begin{exercise}\label{exo:expo}
We wish to prove that {\bf P4b} cannot hold if \eqref{eq:uhg} is true.
\medbreak\noindent
1. Show that if {\bf P4b} does not hold, then for every $\delta>0$ there exists an infinite number of $n$ such that 
$$\phi_{\bbT,p_c,q}^0[(0,0)\longleftrightarrow (n,n)]\ge  \exp(-\delta n).$$
{\em Hint.} One may follow the same strategy as in Exercise~\ref{exo:from An to point}.
\medbreak\noindent
2. Deduce that for $N$ large enough, $\phi_{\bbT,p_c,q}^0[\calF]\ge c \exp(-\delta M)$ for some constant $c>0$ depending on $N$ only.
\medbreak\noindent
3. Conclude as in the proof that {\bf P5} does not hold.
\end{exercise}

\eexo

\section{Conformal invariance of the Ising model on $\bbZ^2$}\label{sec:6}

 \begin{mdframed}[backgroundcolor=green!00]
We will also adopt an important convention in this section. We now focus on  the random-cluster model with cluster-weight $q=2$. Also, we define $\bbL$ to be the rotation by $\pi/4$ of the graph $\sqrt 2\bbZ^2$. Generically, $(\Omega,a,b)$ will be a Dobrushin subdomain of $\bbL$ with the additional assumption that $e_b\in\bbR_+$ (where $e_b$ is seen as a complex number). Note that in this case $e_b$ is simply equal to 1.
\end{mdframed}

 For a discrete Dobrushin domain $(\Omega,a,b)$, denote $e\ni v$ if $v$ is one of the endpoints of $e$, and set $\partial\Omega^\diamond$ for the set of vertices of $\Omega^\diamond$ incident to exactly two edges of $\Omega^\diamond$. Define the {\em vertex fermionic observable} on vertices of $\Omega^\diamond$ by the formula
$$f(v):=\begin{cases}\medskip\displaystyle\tfrac12\sum_{e\ni v}F(e)&\text{ if }v\in\Omega^\diamond\setminus\partial\Omega^\diamond,\\
\displaystyle\tfrac2{2+\sqrt2}\sum_{e\ni v}F(e)&\text{ if }v\in\partial\Omega^\diamond,\end{cases}$$
where $F$ is the (edge) fermionic observable on $(\Omega,a,b)$ defined in Definition~\ref{def:parafermionic observable}. 
\medbreak
We are interested in the geometry at large scale of the critical Ising model on $\bbL$ (in particular the asymptotics of the vertex fermionic observable). 
A Dobrushin domain $(\Omega_\delta,a_\delta,b_\delta)$ will be a Dobrushin domain defined as a subgraph of the $\delta\bbL$, still 
with the convention that seen as a complex number, $e_b\in\bbR_+$. In particular, the length of the edges of $\Omega^\diamond$ is $\delta$.
 We extend the notions of Dobrushin domain, edge and vertex fermionic observables to this context.

We will focus on discrete Dobrushin domains $(\Omega_\delta,a_\delta,b_\delta)$ approximating in a better and better way a simply connected domain ${\bf \Omega}\subset \bbC$ with two points ${\bf a}$ and ${\bf b}$ on the boundary. We choose the notion of Carath\'eodory convergence for these approximations, i.e.~that $\psi_\delta \longrightarrow \psi$ on any compact subset $K\subset \bbR\times(0,\infty)$,
where $\psi$ is the unique conformal map from the upper half-plane $\bbR\times(0,\infty)$ to ${\bf \Omega}$ sending $0$ to ${\bf a}$, $\infty$ to ${\bf b}$, and with derivative at infinity equal to 1, and $\psi_\delta$ is the unique conformal map from $\bbH$ to $\Omega_\delta^\diamond$  sending $0$ to $a_\delta^\diamond$, $\infty$ to $b_\delta^\diamond$ and with derivative at infinity equal to 1. Here, we consider $\Omega_\delta^\diamond$ as a simply connected domain of $\bbC$ by taking the union of its faces\footnote{If it has ``pinched'' points, we add a tiny ball of size $\ep\ll\delta$. The very precise definition is not relevant here since the  definition is a complicated way of phrasing an intuitive notion of convergence.}.
\bigbreak
The first result of this section deals with the limit of the parafermionic observable (which we call {\em fermionic} observable in this case).

\begin{theorem}[Smirnov \cite{Smi10}]\label{thm:observable}
Fix $q=2$ and $p=p_c$. Let $(\Omega_\delta,a_\delta,b_\delta)$ be Dobrushin domains approximating a simply connected domain ${\bf \Omega}$ with two marked points ${\bf a}$ and ${\bf b}$ on its boundary. If $f_\delta$ denotes the vertex fermionic observable on $(\Omega_\delta,a_\delta,b_\delta)$, then
$$
\lim_{\delta\rightarrow0}\tfrac1{\sqrt{2\delta}}f_{\delta}=\sqrt{\phi'},$$ where $\phi$ is a conformal map from ${\bf \Omega}$ to the strip $\mathbb R\times(0,1)$ mapping ${\bf a}$ to $-\infty$ and ${\bf b}$ to $\infty$. 
\end{theorem}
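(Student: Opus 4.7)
The plan follows Smirnov's strategy: leverage the vanishing contour identity of Theorem~\ref{thm:contours} to extract a discrete analogue of the Cauchy--Riemann equations for $F_\delta$, construct an (almost-)discrete-harmonic real primitive of $f_\delta^2$, identify its boundary values via the deterministic winding of $\gamma$ on $\partial\Omega_\delta$, and use RSW-type a priori estimates to pass to the continuum limit.\medskip

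\emph{S-holomorphicity and the primitive $H_\delta$.} For $q=2$, the spin is $\sigma=\tfrac12$. Since the exploration path $\gamma$ may traverse any given medial edge $e$ in only two orientations, $e^{{\rm i}\sigma W_\gamma(e,e_b)}\mathbbm1_{e\in\gamma}$ takes values on two perpendicular lines depending only on $e$. Combined with Theorem~\ref{thm:contours}, this upgrades the vanishing of elementary contour integrals into Smirnov's \emph{s-holomorphicity}: for any two medial edges $e,e'$ incident to a common medial vertex $v$, the orthogonal projections of $F_\delta(e)$ and $F_\delta(e')$ onto a specific line depending only on $v,e,e'$ coincide. S-holomorphicity in turn guarantees that $f_\delta^2$ admits a real-valued discrete primitive $H_\delta$, defined simultaneously on the vertices of $\Omega_\delta$ and of $\Omega_\delta^*$ by the increment rule
\[
H_\delta(v)-H_\delta(u)=\Im\!\left(F_\delta(e_{uv})^2\,(v-u)\right)\qquad\text{for neighbours }u\sim v;
\]
consistency around closed loops is exactly s-holomorphicity. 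A direct computation shows that $H_\delta$ is discrete subharmonic on one sublattice and superharmonic on the other, with discrete Laplacian of order $\delta$, so any controlled subsequential limit is continuum-harmonic on ${\bf\Omega}$.\medskip

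\emph{Boundary conditions, compactness, and identification of the limit.} Using Definition~\ref{def:parafermionic observable} and the deterministic winding of $\gamma$ along $\partial\Omega_\delta$, one shows that the increments of $H_\delta$ telescope to constants separately on the two boundary arcs; after the natural normalization, $H_\delta=0$ on the free arc $(a_\delta b_\delta)$ and $H_\delta=2\delta\,(1+o(1))$ on the wired arc $(b_\delta a_\delta)$. To pass to the continuum limit I would invoke the RSW estimates and polynomial mixing available at $q=2$, $p=p_c$ (Theorem~\ref{thm:main}, property \textbf{P5}, and Exercise~\ref{exo:pol mixing}) to obtain $|F_\delta(e)|^2=O(\delta)$ uniformly in $e$, and hence H\"older equicontinuity of $H_\delta/(2\delta)$ on compact subsets of $\overline{{\bf\Omega}}\setminus\{{\bf a},{\bf b}\}$. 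Any subsequential limit $H$ is bounded harmonic on ${\bf\Omega}$, continuous up to $\partial{\bf\Omega}\setminus\{{\bf a},{\bf b}\}$, with values $0$ on $({\bf a}{\bf b})$ and $1$ on $({\bf b}{\bf a})$; uniqueness identifies $H=\Im\phi$ where $\phi$ conformally maps ${\bf\Omega}$ onto $\bbR\times(0,1)$ sending ${\bf a}\mapsto-\infty$ and ${\bf b}\mapsto\infty$. The full sequence $H_\delta/(2\delta)$ therefore converges to $\Im\phi$; s-holomorphicity combined with harmonic conjugacy then forces $f_\delta^2/(2\delta)\to\phi'$ uniformly on compact subsets of ${\bf\Omega}$, and extracting the square root with the branch fixed by the sign of $F_\delta(e_{b_\delta})$ yields $f_\delta/\sqrt{2\delta}\to\sqrt{\phi'}$.\medskip

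\emph{Main obstacle.} The hardest step will be securing uniform-in-$\delta$ boundary regularity for $H_\delta$, in particular near the two marked points ${\bf a},{\bf b}$ where $\phi'$ is singular and along rough portions of $\partial\Omega_\delta$. Smirnov handles this with discrete barrier functions built from harmonic-measure estimates together with the discrete maximum principle, combined with RSW bounds to control $F_\delta$ close to the boundary. A related subtlety is upgrading the convergence of the primitive $H_\delta$ (which a priori only sees $|F_\delta|^2$) to the convergence of the full complex-valued observable $f_\delta$; this closes the loop via a discrete Cauchy-type formula and is what ultimately produces the square-root statement with its correct branch.
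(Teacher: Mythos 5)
Your plan is Smirnov's, and it is the same overall architecture as the paper's proof: $s$-holomorphicity from Theorem~\ref{thm:contours} plus the fact that $F(e)\in\sqrt{\overline e}\,\bbR$, construction of the real primitive $H_\delta$ of $f_\delta^2$, identification of its boundary values from the deterministic winding, convergence of $H_\delta$ to $\Im(\phi)$, and then recovery of $f_\delta/\sqrt{2\delta}\to\sqrt{\phi'}$. However, one step as you state it does not go through. You claim that $H_\delta$ has ``discrete Laplacian of order $\delta$, so any controlled subsequential limit is continuum-harmonic.'' The discrete Laplacian of $H_\delta$ at a primal face is \emph{exactly} $|A-C|^2\ge 0$ (a difference of values of $f_\delta$), and a priori this is only $O(\delta)$ per vertex; summed over the $O(\delta^{-2})$ vertices of a compact region this gives total Laplacian mass $O(\delta^{-1})$, which does not force harmonicity of the limit. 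What actually makes the argument close is the signed structure you mention but do not exploit: $H^\bullet_\delta$ is exactly subharmonic on $\Omega_\delta$ and $H^\circ_\delta$ exactly superharmonic on $\Omega_\delta^*$, they satisfy $H^\bullet_\delta(x)\ge H^\circ_\delta(y)$ for adjacent faces, and each is sandwiched against a discrete harmonic-measure function (${\bf Hm}^\bullet_\delta$, resp.\ ${\bf Hm}^\circ_\delta$) whose boundary data are $0$/$1$ on the two arcs and which converges to $\Im(\phi)$. It is this two-sided squeeze, not smallness of the Laplacian, that pins the limit of $H_\delta$; without it your argument has a genuine gap. Note also that the paper sets the free-arc boundary condition only on the dual part $(ab)^*$ and handles the primal side of $(ab)$ via a ghost vertex and a modified Laplacian, precisely to avoid the delicate boundary regularity you flag as the main obstacle.

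A second, smaller divergence: for precompactness you propose to get $|F_\delta(e)|^2=O(\delta)$ uniformly from RSW and mixing. That bound is essentially a boundary/interface arm-probability estimate and is itself nontrivial; the paper avoids it entirely by deducing an $L^2$ bound $\delta\sum_{x\in K}|f_\delta(x)|^2\le C$ from the boundedness and sub-/super-harmonicity of $H_\delta$ (via the Green's-function estimate of Exercise~\ref{exo:compactness subharmonic}), which together with discrete holomorphicity already yields precompactness and holomorphy of subsequential limits. Your route can be made to work but imports heavier machinery than needed; the paper's $L^2$ argument is both more elementary and what you should use if you want to stay self-contained.
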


Above, the convergence of functions is the uniform convergence on every compact subset of ${\bf \Omega}$. Since functions $f_\delta$ are defined on the graph $\Omega_\delta^\diamond$ only, we perform an implicit extension of the function to the whole graph, for instance by setting $f_\delta(y)=f_\delta(x)$ for the whole face above $x\in\Omega^\diamond$.
Note that the constraint that $e_b=\delta$ is not really relevant. We could relax this constraint by simply renormalizing $f_\delta$ by $1/\sqrt{2e_b}$ where $e_b$ is seen as a complex number. One word of caution here, $\delta$ is not the meshsize of the original lattice on which the random-cluster model is defined, but the meshsize of the medial lattice.
Also notice that the map $\phi$ is not unique a priori  since one could add any real constant to $\phi$, but this modification does not change its derivative.

The second result we will prove deals with the limit of the exploration path (we postpone the discussion to Section~\ref{sec:6.3}).

\begin{theorem}[Chelkak, DC, Hongler, Kemppainen, Smirnov \cite{CheDumHon14}]\label{thm:interface} 
Fix $q=2$ and $p=p_c$. Let $(\Omega_\delta,a_\delta,b_\delta)$ be Dobrushin domains approximating a simply connected domain ${\bf \Omega}$ with two marked points ${\bf a}$ and ${\bf b}$ on its boundary.  
The exploration path $\gamma_{(\Omega_\delta,a_\delta,b_\delta)}$ in $(\Omega_\delta,a_\delta,b_\delta)$  converges weakly to the Schramm-Loewner Evolution with parameter $\kappa=16/3$ as $\delta$ tends to $0$.
\end{theorem}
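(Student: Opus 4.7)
The strategy is the now-classical two-step program pioneered by Smirnov and completed in the cited reference: first establish that the family $(\gamma_{(\Omega_\delta,a_\delta,b_\delta)})_{\delta>0}$, viewed as random curves up to reparametrization, is \emph{tight} for the topology of uniform convergence modulo reparametrization; then \emph{identify} every subsequential limit as $\mathrm{SLE}_{16/3}$ by extracting the Loewner driving process from a discrete martingale built out of the fermionic observable of Theorem~\ref{thm:observable}.

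For tightness, I would invoke the Kemppainen--Smirnov criterion, which reduces the tightness of a family of interfaces in Dobrushin domains to a single geometric condition: the probability that the exploration path makes an ``unforced'' crossing of a thin conformal annulus, conditionally on its past and on arbitrary boundary data on the unexplored part, is bounded by a universal constant strictly less than $1$. Equivalently, one needs a uniform dual crossing estimate across annuli, valid under \emph{any} boundary conditions. This is exactly the content of property \textbf{P5} of Theorem~\ref{thm:main}, and it holds at $q=2$ because the preceding sections established the equivalence \textbf{P4a}$\Leftrightarrow$\textbf{P5} at $p_c$ and because \textbf{P4a} holds for every $q\in[1,4]$ by Theorem~\ref{thm:decide}. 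Thus tightness is obtained essentially for free from what has been proved.

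To identify the limit, I would work in the upper half-plane $\bbH$ via conformal maps, fix a subsequential limit $\gamma$, and parametrize it by half-plane capacity so that $\gamma$ is encoded by a Loewner driving function $W_t$ via the flow $(g_t)$ solving $\partial_t g_t(z)=2/(g_t(z)-W_t)$. The core object is the discrete martingale
\[
M_t^\delta(z_0) \;:=\; \tfrac{1}{\sqrt{2\delta}}\,f_\delta^{(t)}(z_0),
\]
where $f_\delta^{(t)}$ is the vertex fermionic observable on the slit Dobrushin domain $(\Omega_\delta\setminus\gamma_\delta[0,t],\gamma_\delta(t),b_\delta)$ and $z_0$ is a fixed interior point away from the tip. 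The fact that $M_t^\delta(z_0)$ is a (discrete-time) martingale in $t$ is a direct consequence of the domain Markov property of the random-cluster model combined with the definition of $F_\delta$ as a conditional expectation over the exploration path. By Theorem~\ref{thm:observable}, as $\delta\to 0$ and along the subsequence realizing the limit, $M_t^\delta(z_0)$ converges to $\sqrt{\phi_t'(z_0)}$, where $\phi_t$ is the conformal map from $\bf\Omega\setminus\gamma[0,t]$ to the horizontal strip $\bbR\times(0,1)$ sending $\gamma(t)\mapsto-\infty$ and $\bf b\mapsto+\infty$.

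It then remains to turn the continuum martingale $t\mapsto\sqrt{\phi_t'(z_0)}$ into a stochastic differential equation for $W_t$. Conjugating by the half-plane uniformizing map, $\phi_t$ becomes an explicit function of $g_t(z_0)-W_t$ (essentially $\log$ of a square-root composition); expanding $\sqrt{\phi_t'(z_0)}$ using Loewner's equation and applying It\^o's formula to the semimartingale $W_t$ produces an expression of the form $A_t\,dt+B_t\,dW_t+\tfrac12 C_t\,d\langle W\rangle_t$. The martingale property forces the finite-variation part to vanish identically in $z_0$; reading off the coefficients at two different values of $z_0$ (or, equivalently, at the first two orders of the expansion as $z_0\to\infty$) yields simultaneously that $\langle W\rangle_t=\kappa t$ and that the drift is zero, with $\kappa$ pinned to $16/3$ by the specific spin $\sigma=\tfrac12$ of the Ising fermionic observable. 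L\'evy's characterization then gives $W_t=\sqrt{16/3}\,B_t$, and uniqueness of the Loewner-SLE correspondence identifies $\gamma$ as $\mathrm{SLE}_{16/3}$.

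The main obstacle is not algebraic but analytic: one must ensure that the convergence in Theorem~\ref{thm:observable} is uniform enough, \emph{up to the moving tip}, for the discrete martingale $M_t^\delta(z_0)$ to converge to the continuum martingale along the stopping times used in the Loewner encoding, and that the corresponding families of semimartingales (driving processes) are themselves tight so that It\^o calculus may be applied to any subsequential limit. Handling the regularity of $f_\delta^{(t)}$ near the boundary of the slit domain — where the observable has singular behaviour at $\gamma_\delta(t)$ — together with the control required to avoid pathological behaviour of $\gamma$ near $\partial\bf\Omega$ (for which one again uses the uniform RSW estimates of Section~\ref{sec:5}) is the technical heart of \cite{CheDumHon14}.
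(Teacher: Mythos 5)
Your proposal is correct and follows essentially the same route as the paper: tightness of the interfaces via the uniform crossing estimates of \textbf{P5} (the paper invokes an Aizenman--Burchard/Kemppainen--Smirnov type argument from the same input), followed by identification of any subsequential limit through the martingale property of the fermionic observable in slit domains, the expansion of $\sqrt{\pi z}\,M_t(z)$ in powers of $1/z$ to show that $W_t$ and $W_t^2-\tfrac{16}{3}t$ are martingales, and L\'evy's characterization. The only cosmetic difference is that the paper matches coefficients directly in the $z\to\infty$ expansion rather than phrasing the computation as an It\^o drift cancellation, but these are the same calculation.
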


Above, the topology of the weak convergence is given by the metric $d$ on the set $X$ of continuous parametrized curves defined for $\gamma_1:I\rightarrow \mathbb C$ and $\gamma_2:J\rightarrow \mathbb C$ by
$$d(\gamma_1,\gamma_2)=\min_{\substack{\varphi_1:[0,1]\rightarrow I\\ \varphi_2:[0,1]\rightarrow J}}\ \sup_{t\in [0,1]}\ |\gamma_1(\varphi_1(t))-\gamma_2(\varphi_2(t))|,$$
where the minimization is over {\em increasing bijective functions} $\varphi_1$ and $\varphi_2$.
\bigbreak
A fermionic observable for the Ising model itself (and not of its random-cluster representation) was proved to be conformally invariant in \cite{CheSmi12}. Since then, many other quantities of the model were proved to be conformally invariant\footnote{Let us mention crossing probabilities \cite{BenDumHon14,Izy15}, interfaces with different boundary conditions \cite{CheDumHon14,HonKyt13}, full family of interfaces \cite{BenHon16,KemSmi16}, the energy fields \cite{HonSmi13,Hon10}.
The observable has also been used off criticality, see \cite{BefDum12a,DumGarPet14}.
}. Let us focus on one important case, namely the spin-spin correlations. 

\begin{theorem}[Chelkak, Hongler, Izyurov \cite{CheHonIzy15}]\label{thm:spin}
Let $\Omega_\delta$ be domains approximating a simply connected domain ${\bf \Omega}$. Consider also $a^1_\delta,\dots,a^k_\delta$ in $\Omega_\delta$ converging to points ${\bf a^1},\dots,{\bf a^k}$ in ${\bf \Omega}$.  Then,
$$\lim_{\delta\rightarrow0}\delta^{-n/8}\mu_{\Omega_\delta,\beta_c}^{\rm f}\big[\sigma_{a_\delta^1}\cdots\,\sigma_{a_\delta^k}\big]=\langle\sigma_{{\bf a^1}}\cdots\,\sigma_{{\bf a^k}}\rangle_{\bf \Omega},$$
where  $ \langle\sigma_{{\bf a^1}}\cdots\,\sigma_{{\bf a^k}}\rangle_{\bf \Omega}$ satisfies
$$\langle\sigma_{{\bf a^1}}\cdots\,\sigma_{{\bf a^k}}\rangle_{\bf \Omega}=|\phi'({\bf a^1})|^{1/8}\cdots\,|\phi'({\bf a^k})|^{1/8}\langle\sigma_{\phi({\bf a^1})}\cdots\,\sigma_{\phi({\bf a^k})}\rangle_{\phi({\bf \Omega})}$$
for any conformal map $\phi$ on ${\bf \Omega}$.
\end{theorem}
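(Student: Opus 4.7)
The plan is to adapt the parafermionic observable strategy of Theorem~\ref{thm:observable} to a spinor observable defined in the presence of insertion points. The starting point is the observation that for the Ising model, inserting a spin $\sigma_{a}$ into a correlator is dual, via the Kramers--Wannier correspondence (Exercise~\ref{exo:KW}), to introducing a \emph{disorder} at the face $a^*$, which forces a monodromy of $-1$ around $a^*$ for the associated fermion. Concretely, for a Dobrushin-type domain with $k$ marked interior points $a_\delta^1,\ldots,a_\delta^k$, I would introduce a discrete holomorphic spinor observable $F_\delta$ living on the double cover $[\Omega_\delta^\diamond]^{(2)}$ of $\Omega_\delta^\diamond$ branched at $a_\delta^1,\ldots,a_\delta^k$, of the form
\[
F_\delta(z) \;=\; \sum_{\eta}\, \mathrm{e}^{-\tfrac{i}{2} W_{\eta}(z,z_0)}\,\tanh(\beta_c)^{o(\eta)}\,(-1)^{\text{loops around insertions}},
\]
the sum running over high-temperature configurations with sources at $\{z, a_\delta^1,\ldots,a_\delta^k\}$. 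The change-of-sheet of $F_\delta$ around each $a_\delta^i$ is $-1$, which encodes precisely the disorder insertion. As in Section~\ref{sec:contours}, $F_\delta$ satisfies the vertex-wise relation \eqref{rel_vertex} (up to a global sign living on the double cover), and on $\partial\Omega_\delta$ the winding is deterministic, so $F_\delta$ has free boundary conditions of s-holomorphic type.

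Next, I would establish the two key identities that tie $F_\delta$ back to the target correlator. First, the ratio of correlators under moving one insertion from $a$ to a neighbour $a'$ is (to leading order) a discrete contour integral of $F_\delta$ against $\mathrm{d} z$ between $a$ and $a'$; equivalently, $\partial_{a^j} \log \mu_{\Omega_\delta,\beta_c}^{\rm f}[\sigma_{a_\delta^1}\cdots\sigma_{a_\delta^k}]$ is, up to an explicit prefactor, read off the regular part of $F_\delta$ at $a_\delta^j$. Second, near each $a_\delta^j$ the observable has the universal singular profile $F_\delta(z)\sim \mathrm{const}/\sqrt{z-a_\delta^j}$, whose coefficient is the discrete analog of the coefficient extracted in the continuum. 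The boundary value problem satisfied by $F_\delta$ — s-holomorphicity in the sense of Chelkak--Smirnov, Riemann-type boundary conditions on $\partial\Omega_\delta$, prescribed branching and prescribed singular parts at the $a_\delta^j$ — admits, in the continuum limit, a unique solution $f_{\mathbf{\Omega};\mathbf{a}^1,\ldots,\mathbf{a}^k}$ which is an explicit holomorphic spinor on the branched cover of $\mathbf{\Omega}$ and which is conformally covariant of weight $1/2$ under biholomorphisms.

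With these ingredients in place, I would pass to the scaling limit in two steps, following the framework built around Theorem~\ref{thm:observable}. The regularity and compactness of s-holomorphic functions give precompactness of $\delta^{-1/2} F_\delta$ on compact subsets of $\mathbf{\Omega}\setminus\{\mathbf{a}^1,\ldots,\mathbf{a}^k\}$; the identification of any subsequential limit with $f_{\mathbf{\Omega};\mathbf{a}^1,\ldots,\mathbf{a}^k}$ follows from the uniqueness of the continuum boundary value problem. Reading off the subleading coefficient at each $\mathbf{a}^j$ gives, in the limit,
\[
\partial_{\mathbf{a}^j} \log \langle \sigma_{\mathbf{a}^1}\cdots \sigma_{\mathbf{a}^k}\rangle_{\mathbf{\Omega}} \;=\; \mathcal{L}_j\bigl(f_{\mathbf{\Omega};\mathbf{a}^1,\ldots,\mathbf{a}^k}\bigr),
\]
for an explicit linear functional $\mathcal{L}_j$. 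Integrating this system of ODE's against the explicit model case (e.g.\ the one-point function in the upper half-plane, which is classical) determines $\langle \sigma_{\mathbf{a}^1}\cdots \sigma_{\mathbf{a}^k}\rangle_{\mathbf{\Omega}}$ uniquely, and conformal covariance of weight $1/8$ is inherited from the fact that $f$ has conformal weight $1/2$ and the ODE transforms by the correct Jacobian. The normalization $\delta^{-k/8}$ at the discrete level comes from matching the short-distance behaviour at each marked point against the known one-point asymptotic $\mu_{\beta_c}^{\rm f}[\sigma_0\sigma_{ne_1}]\asymp n^{-1/4}$ of Theorem~\ref{thm:pol decay} (sharpened to a constant via the same spinor machinery on an annulus).

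The main obstacle will be step two: proving that $F_\delta$ — which lives on a branched cover, has prescribed local singularities, and must be compared near the branch points on all relevant scales between $\delta$ and $\mathrm{dist}(a^j,\partial\Omega)$ — genuinely converges to its continuum counterpart. Unlike Theorem~\ref{thm:observable}, where one only controls an observable with one source on the boundary, here the singular part lives in the bulk and on a non-trivial cover, so the a priori estimates (discrete Beurling, boundary modification tricks, s-holomorphic Harnack) must be pushed to handle the multi-valuedness and the local behaviour $1/\sqrt{z-a^j}$. Once this convergence is secured together with the matching short-distance coefficient, the theorem follows by integrating the logarithmic derivative and invoking conformal covariance of the continuum spinor.
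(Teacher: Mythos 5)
The lecture notes do not actually prove this theorem: they state it and refer to \cite{CheHonIzy15}, remarking only that the proof uses $s$-holomorphicity ``but is substantially harder'' than that of Theorem~\ref{thm:observable}. Your outline correctly reproduces the strategy of that reference: discrete $s$-holomorphic spinor observables on a double cover branched at the insertion points (the order--disorder/Kadanoff--Ceva formalism), a Riemann-type boundary value problem with prescribed singular behaviour $\sim (z-a^j)^{-1/2}$, identification of the discrete logarithmic derivative of the correlator with a coefficient in the expansion of the observable near $a^j$, convergence to the continuum spinor, integration of the resulting system against an explicit reference case, and normalization of the $\delta^{-k/8}$ factor via the sharp two-point asymptotics. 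So the approach is the right one.

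That said, what you have written is a plan rather than a proof. The analytic core --- uniform a priori estimates and convergence of the spinor near the branch points on all scales between $\delta$ and the macroscopic distance, plus the extraction of the subleading (regular-part) coefficient there --- is exactly the ``substantially harder'' part, and you name it as the main obstacle without resolving it. Three smaller points. First, the reference you give for the two-point asymptotics, Theorem~\ref{thm:pol decay}, is the $d\ge3$ polynomial bound; the relevant planar input is \eqref{eq:spin-spin}, and even that only gives the exponent up to $o(1)$, so fixing the multiplicative constant in the normalization genuinely requires the sharp full-plane asymptotics $\mu_{\beta_c}[\sigma_0\sigma_x]\sim C\|x\|^{-1/4}$ (this is what your ``annulus'' remark must supply, and it is a nontrivial separate step in \cite{CheHonIzy15}). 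Second, the sign factor $(-1)^{\text{loops around insertions}}$ in your candidate definition is not well defined as written; in the reference the sign ambiguity of the high-temperature expansion with several bulk sources is precisely what forces the observable onto the double cover, and getting this bookkeeping right is part of the work. Third, note that the exponent in the statement should read $\delta^{-k/8}$ (the number of insertions), consistent with your normalization discussion.
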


Note that this theorem shows that the critical exponent of the spin-spin correlations is $1/8$, i.e.~that 
\begin{equation}\mu_{\beta_c}[\sigma_0\sigma_x]=\|x\|^{-1/4+o(1)}.\label{eq:spin-spin}\end{equation}
In fact, this result is simpler to obtain and goes back to the middle of the 20th century (see \cite{McCWu83} and references therein).

The general form of $\langle--\rangle_{\bf \Omega}$ was predicted by means of Conformal Field Theory in \cite{BurGui87}. The method of \cite{CheHonIzy15} gives another formula (which is slightly less explicit). 
The proof relies on similar ideas as the proof of Theorem~\ref{thm:observable} (namely $s$-holomorphicity), but is substantially harder. We do not include it here and refer to \cite{CheHonIzy15} for details. 

In the next two sections, we prove Theorems~\ref{thm:observable} and \ref{thm:interface}. %

\subsection{Conformal invariance of the fermionic observable}

In this section, we prove Theorem~\ref{thm:observable}. We do so in two steps. We first prove that the vertex fermionic observable satisfies a certain boundary value problem on $\Omega^\diamond$. Then, we show that this boundary value problem has a unique solution converging to $\sqrt{\phi'}$ when taking Dobrushin domains $(\Omega_\delta,a_\delta,b_\delta)$ converging in the Carath\'eodory sense to $({\bf \Omega},{\bf a},{\bf b})$. 

\subsubsection{$s$-holomorphic functions and connection to a boundary value problem}

We will use a very specific property of $q=2$, which is that $\sigma=\tfrac12$ in this case. This special value of $\sigma$ enables us to prove the following:

\begin{lemma}\label{lem:argument}
Fix a Dobrushin domain $(\Omega,a,b)$. For any edge $e$ of $\Omega^\diamond$, the edge fermionic observable $F(e)$ belongs to $\sqrt{\overline e}\,\bbR$.\end{lemma}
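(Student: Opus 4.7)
The plan is based on the specific value $\sigma = \tfrac12$, which for $q=2$ is the unique solution in $(0,1)$ of $\sin(\sigma\pi/2)=\sqrt{q}/2=\sqrt{2}/2$. With this value, the phase factor $e^{i\sigma W_\gamma(e,e_b)}$ is a square root of $e^{iW_\gamma(e,e_b)}$, and the latter can be read off from the geometric directions of $e$ and $e_b$.

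First I would fix a configuration $\omega$ such that $e \in \gamma = \gamma(\omega)$ and analyse the signed total turning of $\gamma$ between $e$ and $e_b$. Since $\gamma$ is a simple path on the medial lattice making only $\pm\pi/2$ turns and traversing each visited edge in a definite direction, the signed winding accumulated between any two oriented edges of $\gamma$ must be congruent modulo $2\pi$ to the difference of their arguments viewed as complex vectors. Writing $\arg(e)$ for the argument of $e$ oriented in the direction in which $\gamma$ crosses it, and recalling that by convention $e_b \in \mathbb{R}_+$, this yields
\begin{equation*}
W_\gamma(e,e_b) \equiv \arg(e_b)-\arg(e) \equiv -\arg(e) \pmod{2\pi}.
\end{equation*}
Applying the factor $\sigma=\tfrac12$, I conclude
\begin{equation*}
e^{i\sigma W_\gamma(e,e_b)} \;=\; e^{\tfrac{i}{2}W_\gamma(e,e_b)} \;\in\; \{+e^{-i\arg(e)/2},\,-e^{-i\arg(e)/2}\},
\end{equation*}
which is a real scalar multiple of $\sqrt{\overline{e}}$. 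Summing over all configurations in the definition
\begin{equation*}
F(e) \;=\; \sum_{\omega:\,e\in\gamma(\omega)} \phi_{\Omega,p_c,2}^{a,b}[\omega]\,e^{i\sigma W_{\gamma(\omega)}(e,e_b)},
\end{equation*}
each term is a real (possibly negative) multiple of $\sqrt{\overline{e}}$ with nonnegative weight, so the total $F(e)$ lies on the line $\sqrt{\overline{e}}\,\mathbb{R}$.

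The step that I expect to require the most care is the assertion that $\arg(e)$ is well-defined as a property of the medial edge alone — i.e.\ that different configurations $\omega$ with $e \in \gamma(\omega)$ all traverse $e$ in the same direction. The two opposite orientations of $e$ would produce contributions lying on the perpendicular line $i\sqrt{\overline{e}}\,\mathbb{R}$, so the statement genuinely relies on this consistency. The point is that the medial lattice carries a canonical orientation (edges circulating counter-clockwise around faces centred on vertices of $\mathbb{Z}^2$), and the rule defining $\gamma$ (turning $\pm\pi/2$ so as not to cross open primal or dual edges) forces $\gamma$ to traverse each medial edge in this prescribed direction, independently of the underlying configuration. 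Once this orientation convention is set in stone, the computation above goes through verbatim and the lemma follows.
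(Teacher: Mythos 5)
Your proof is correct and follows essentially the same route as the paper: the winding $W_\gamma(e,e_b)$ is determined modulo $2\pi$ by the fixed orientations of $e$ and $e_b$ (the latter being $1$ by convention), so with $\sigma=\tfrac12$ the phase $e^{i\sigma W}$ is $\pm\sqrt{\overline e}$ and the sum of non-negative weights times these phases lies in $\sqrt{\overline e}\,\bbR$. Your explicit remark that the canonical counter-clockwise orientation of the medial lattice fixes the direction in which every $\gamma(\omega)$ traverses $e$ is exactly the point the paper relies on implicitly from its construction of the loop representation.
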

Note that the definition of the square root is irrelevant since we are only interested in its value up to a $\pm1$ multiplicative factor.
\begin{proof}
The winding $W_{\gamma(\omega)}(e,e_b)$ at an edge $e$ can only take its value in the set 
  $W+2\pi\mathbb{Z}$ where $W$ is the winding at $e$ of an arbitrary 
 oriented path going from $e$ to $e_b$. Therefore, the winding 
  weight involved in the definition of $F(e)$ is always equal to 
  ${\rm e}^{{\rm i}W/2}$ or $-{\rm e}^{{\rm i}W/2}$, ergo $F(e)\in{\rm e}^{{\rm i}W/2}\bbR$, which is the claim by the definition of the square root and the fact that $e_b=1$.   \end{proof}
  
Together with the relations \eqref{rel_vertex}, the previous lemma has an important implication: while there were half the number of relations necessary to determine $F$ in the general $q>0$ case, we now know sufficiently many additional relations to hope to be able to compute $F$. We will harvest this new fact by introducing the notion of 
{\em $s$-holomorphic} functions, which was developed in \cite{CheSmi11,CheSmi12,Smi10}.  
For any edge $e$ (recall that $e$ is oriented and can therefore be seen as a complex number), define $${\rm P}_{e}[x]=\tfrac12(x+\overline{e}\,\overline {x}),$$
which is nothing but the projection of $x$ on the line $\sqrt {\overline e}\,\bbR$. 

\begin{definition}[Smirnov]
A function $f:\Omega^\diamond\rightarrow \mathbb C$ is {\em $s$-holomorphic} if for any edge $e=uv$ of $\Omega^\diamond$, we have
$${\rm P}_{e}[f(u)]={\rm P}_{e}[f(v)].$$
\end{definition}

The notion of $s$-holomorphicity is related to the classical notion of discrete holomorphic functions. On $\Omega^\diamond$, $f$ is discrete holomorphic if if satisfies the {\em discrete Cauchy-Riemann equations}
\begin{equation}f\left(v_1\right)-{\rm i}f(v_2)-f\left(v_3\right)+{\rm i}f\left(v_4\right)=0\label{eq:kru}\end{equation}
for every $x\in\Omega\cup\Omega^*$, where the $v_i$ are the four vertices around $x$ indexed in counterclockwise order. Discrete holomorphic functions $f$ distinctively appeared for the first time in the papers \cite{Isa41,Isa52} of Isaacs. Note that a $s$-holomorphic function is discrete holomorphic, since 
the definition of $s$-holomorphicity gives that for every $e=uv$,
\begin{equation}e[f(u)-f(v)]=\overline{f(v)}-\overline{f(u)},\label{eq:kkru}\end{equation}
and that summing this relation for the four edges around $x$ gives \eqref{eq:kru}.

The reason why $s$-holomorphic functions are easier to handle that discrete holomorphic function will become clear in the next section. 
In this section, we stick to the proof that the vertex fermionic observable is $s$-holomorphic, and that it satisfies some specific boundary conditions.

For a Dobrushin domain $(\Omega,a,b)$, let  $b^\diamond$ be the vertex of $\Omega^\diamond$ at the beginning of the oriented edge $e_b$. Also, let  $\nu_v=e+e'$ with $e$ and $e'$ the two edges of $\Omega^\diamond$ incident to $v$. The vector $\nu_v$ can be interpreted as a discrete version of the tangent vector along the boundary, when going from $a$ to $b$. 
 \begin{theorem} Let $(\Omega,a,b)$ be a Dobrushin domain.  The vertex fermionic observable  $f$ is $s$-holomorphic and satisfies ${\rm P}_{e_b}[f(b^\diamond)]=1$ and $\nu_vf(v)^2\in \bbR_+$
 for any $v\in \partial\Omega^\diamond$. 
 \end{theorem}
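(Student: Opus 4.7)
The plan is to derive all three properties from two inputs already established in the text: Lemma~\ref{lem:argument}, which gives $F(e)\in\sqrt{\bar e}\,\bbR$ for every oriented medial edge $e$, and Theorem~\ref{thm:contours}, the vanishing-contour identity $F(e_1)-F(e_3)={\rm i}F(e_2)-{\rm i}F(e_4)$ at every interior vertex (with $e_1,\dots,e_4$ the four incident edges in counterclockwise order, oriented consistently with the medial lattice). The underlying geometric fact is that perpendicular and opposite medial edges translate, via Lemma~\ref{lem:argument}, into square roots rotated by $e^{\pm{\rm i}\pi/4}$, respectively ${\rm i}$, relative to $\sqrt{\bar e}\,\bbR$, so the action of the projection ${\rm P}_e$ on the $F$-values at neighboring edges is entirely encoded by simple phase factors.

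For the $s$-holomorphicity of $f$ at an interior edge $e=uv$ of $\Omega^\diamond$, I would expand $f(u)=\tfrac12\sum_{e'\ni u}F(e')$, use the vanishing-contour relation at $u$ to eliminate the edge opposite $e$, and obtain $f(u)=F(e)+\tfrac{1-{\rm i}}{2}F(a_+)+\tfrac{1+{\rm i}}{2}F(a_-)$, where $a_\pm$ are the two edges at $u$ perpendicular to $e$. Lemma~\ref{lem:argument} combined with the natural orientation of the medial lattice (at each vertex, the four edges alternate between being oriented into and out of the vertex) then forces ${\rm P}_e[\tfrac{1-{\rm i}}{2}F(a_+)]={\rm P}_e[\tfrac{1+{\rm i}}{2}F(a_-)]=0$ after a direct computation of the relevant phases; hence ${\rm P}_e[f(u)]=F(e)$. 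The symmetric computation at $v$ gives ${\rm P}_e[f(v)]=F(e)$, yielding the equality ${\rm P}_e[f(u)]={\rm P}_e[f(v)]$. When $u$ or $v$ lies on $\partial\Omega^\diamond$, one incident edge is missing and Theorem~\ref{thm:contours} cannot be directly invoked; the normalization factor $\tfrac{2}{2+\sqrt 2}$ in the definition of $f$ on $\partial\Omega^\diamond$ is the unique value which makes the identity ${\rm P}_e[f(u)]=F(e)$ continue to hold, and verifying this compatibility is the computational core and main obstacle of the proof.

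For the two boundary conditions, the key observation is that whenever the exploration path $\gamma$ reaches a boundary vertex $v\in\partial\Omega^\diamond$ through one of its incident edges $e,e'\in\Omega^\diamond$, its winding modulo $2\pi$ relative to $e_b$ is deterministic --- it is entirely dictated by the portion of $\gamma$ running along the boundary arcs $(ab)^*$ and $(ba)$. Consequently $F(e)$ and $F(e')$ are each a real multiple of a specifically chosen branch of $\sqrt{\bar e}$ and $\sqrt{\bar{e'}}$, and a short calculation yields $f(v)=\lambda\sqrt{\bar{\nu_v}}$ for some $\lambda\in\bbR$ (with $\nu_v=e+e'$); then $\nu_v f(v)^2=\lambda^2|\nu_v|^2\in\bbR_+$. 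For the special case $v=b^\diamond$, the exploration path always exits through $e_b$ with zero winding, so $F(e_b)=\phi^{a,b}_{\Omega,p_c,2}[\gamma\ni e_b]=1$; since $e_b=1\in\bbR_+$ the projection ${\rm P}_{e_b}$ is the real-part map, and the same winding analysis applied to the second edge at $b^\diamond$, combined with the normalization $\tfrac{2}{2+\sqrt 2}$, produces exactly $\Re\bigl(f(b^\diamond)\bigr)=1$. The main technical obstacle throughout is the careful bookkeeping of the square-root branches induced by the windings, in particular the geometric distinction between the wired arc $(ba)$ (along which $\gamma$ runs alongside primal open edges) and the free arc $(ab)$ (along which it runs alongside dual open edges).
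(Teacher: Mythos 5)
Your proposal is correct and follows essentially the same route as the paper: both rest on combining the vanishing-contour relation of Theorem~\ref{thm:contours} with Lemma~\ref{lem:argument} to establish the key identity ${\rm P}_e[f(v)]=F(e)$ at interior vertices (your elimination of the opposite edge followed by projection is algebraically the same computation as the paper's orthogonal decomposition $f(v)=F(e_1)+F(e_3)$ with $F(e_1)\perp F(e_3)$), and both handle boundary vertices via the deterministic winding, which yields $\tfrac{2+\sqrt2}{2}f(v)=(\sqrt{\bar e}+\sqrt{\bar e'})\,\phi^{a,b}_{\Omega,p_c,2}[e\in\gamma]$ and hence the normalization check you correctly flag as the remaining computation. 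The three stated properties then follow exactly as you describe.
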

  \begin{proof} 
 The key to the proof is the following claim: for any $e\ni v$,
\begin{equation}\label{eq:kkkk}{\rm P}_{e}[f(v)]=F(e).\end{equation}
To prove this claim, consider $v$ with four medial edges $e_1$, $e_2$, $e_3$ and $e_4$ incident to it (we index them in counterclockwise order). Note that \eqref{rel_vertex} reads
$$e_1F(e_1)+e_3F(e_3)=e_2F(e_2)+e_4F(e_4).$$
Furthermore,  Lemma~\ref{lem:argument} gives that
\begin{equation}\label{eq:uhg}\overline{F(e)}=eF(e).\end{equation} Plugging this in the previous equality and using the conjugation, we find
$$F(e_1)+F(e_2)=F(e_3)+F(e_4)~\Big(=\tfrac 12\sum_{e\ni v} F(e)\Big).$$
The term under parentheses is nothing else but $f(v)$. Using Lemma~\ref{lem:argument} again, we see that $F(e_1)$ and $F(e_3)$ are two orthogonal vectors belonging to $\sqrt{\overline {e}_1}\bbR$ and $\sqrt{\overline {e}_3}\bbR$ respectively whose sum is $f(v)$, so that the claim follows readily for $e_1$ and $e_3$. One proves the claim for $e_2$ and $e_4$ in a similar way.  
\medbreak
Let us now treat the case of $v\in\partial\Omega^\diamond$  (the normalization $2/(2+\sqrt2)$ will play a role here). Let $e$ and $e'$ be the two edges of $\Omega^\diamond$ incident to $v$. Recalling that the winding on the boundary is deterministic, and that $e\in \gamma$ if and only if $e'\in\gamma$, gives\begin{equation}\label{eq:equation boundary}\sqrt {e'}F(e')=\phi_{\Omega,p_c,2}^{a,b}[e'\in\gamma]=\phi_{\Omega,p_c,2}^{a,b}[e\in\gamma]=\sqrt eF(e).\end{equation}
(Here, we choose the square root so that $\sqrt{e'}=\pm{\rm e}^{{\rm i}\pi/4}\sqrt e$.) This gives
\begin{equation}\tfrac{2+\sqrt2}2f(v)=F(e)+F(e')=(\sqrt{\overline e}+\sqrt{\overline e'})\,\phi_{\Omega,p_c,2}^{a,b}[e\in\gamma]
.\label{eq:ohohoh}\end{equation}
We deduce that $f(v)\in\sqrt{\overline{e}+\overline{e}'}\bbR$. Since $e=\pm{\rm i}e'$, a quick study of the complex arguments of $f(v)$, $F(e)$ and $F(e')$ immediately gives that ${\rm P}_{e}[f(v)]=F(e)$ and ${\rm P}_{e'}[f(v)]=F(e')$.
%
\bigbreak
Now that \eqref{eq:kkkk} is proved, we can conclude. First, observe that the $s$-holomorphicity is trivial, since for any edge $e=uv$, the claim shows that
${\rm P}_e[f(u)]=F(e)={\rm P}_e[f(v)].$
Second, 
${\rm P}_{e_b}[f(b)]=F(e_b)=1$. The last property follows from $f(v)\in\sqrt{\overline{e}+\overline{e}'}\bbR$. 
\end{proof}

Theorem~\ref{thm:observable} therefore follows from the following result, which is a general statement on $s$-holomorphic functions.

 \begin{theorem}\label{thm:BVP} For a family of Dobrushin domains $(\Omega_\delta,a_\delta,b_\delta)$ approximating a simply connected domain ${\bf \Omega}$ with two points ${\bf a}$ and ${\bf b}$ on its boundary, let $f_\delta$ be a $s$-holomorphic function satisfying ${\rm P}_{e_b}[f_\delta(b)]=1$ and $\nu_vf_\delta(v)^2\in\bbR_+$
 for any $v\in \partial\Omega^\diamond_\delta$. Then,
 $$\lim_{\delta\rightarrow0}\tfrac1{\sqrt {2\delta}}f_\delta=\sqrt{\phi'},$$
 where $\phi$ is a conformal map from ${\bf \Omega}$ to the strip $\mathbb R\times(0,1)$ mapping ${\bf a}$ to $-\infty$ and ${\bf b}$ to $\infty$. \end{theorem}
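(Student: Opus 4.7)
My plan is to follow Smirnov's strategy and reduce Theorem~\ref{thm:BVP} to a discrete Dirichlet problem for a scalar function $H_\delta$ which plays the role of $\mathrm{Im}\!\int\! f_\delta^2\,dz$. Concretely, I would first show that for any $s$-holomorphic function $f_\delta$ on $\Omega_\delta^\diamond$, one can define unambiguously a function $H_\delta\colon\Omega_\delta\cup\Omega_\delta^*\to\mathbb{R}$ by
\[
H_\delta(u)-H_\delta(v) \;=\; \delta\,\bigl|\mathrm{P}_e[f_\delta(x)]\bigr|^2
\]
for every primal or dual edge $uv$ crossed by the medial edge $e=xy$ (with an appropriate sign depending on whether $u$ is primal or dual). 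The $s$-holomorphicity of $f_\delta$ — equivalently, the identity $\mathrm{P}_e[f_\delta(x)]=\mathrm{P}_e[f_\delta(y)]$ for every $e=xy$ — is exactly the condition needed for the cocycle identity $\sum_{\text{around a medial vertex}} = 0$ to hold, hence $H_\delta$ is well defined (up to an additive constant on each of $\Omega_\delta$ and $\Omega_\delta^*$).

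Next I would translate the boundary conditions. The condition $\nu_v f_\delta(v)^2\in\mathbb{R}_+$ for $v\in\partial\Omega_\delta^\diamond$ forces (after a short trigonometric check using the orientation of the boundary) the increment of $H_\delta$ along consecutive boundary edges to vanish on each boundary arc; thus $H_\delta$ is constant on $(a_\delta b_\delta)$ and constant on $(b_\delta a_\delta)^*$. The normalization $\mathrm{P}_{e_b}[f_\delta(b^\diamond)]=1$ pins the jump at $b_\delta$ between the primal and dual constants, and one can choose the additive freedom so that $H_\delta\equiv 0$ on $(b_\delta a_\delta)^*$ and $H_\delta\equiv 1$ on $(a_\delta b_\delta)$. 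A direct computation from $s$-holomorphicity (the same one that yields the cocycle property) shows that $H_\delta$ is discrete subharmonic on $\Omega_\delta$ and superharmonic on $\Omega_\delta^*$. This sub/super-harmonicity together with the Dirichlet boundary data implies a uniform bound $0\leq H_\delta\leq 1+o(1)$ and, by standard discrete Beurling/Hölder estimates, equicontinuity of $(H_\delta)$ on every compact subset of ${\bf\Omega}$.

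By Arzelà-Ascoli and the Carath\'eodory convergence of the domains, $(H_\delta)$ is precompact; any subsequential limit $H$ is a bounded harmonic function on ${\bf\Omega}$ with boundary values $0$ on $({\bf ba})$ and $1$ on $({\bf ab})$. Such $H$ is unique and equals $\mathrm{Im}\,\phi$, where $\phi$ is the conformal map to $\mathbb{R}\times(0,1)$ of the statement. So $H_\delta\to\mathrm{Im}\,\phi$ along the full sequence. Finally, one must upgrade this to convergence of $f_\delta/\sqrt{2\delta}$ to $\sqrt{\phi'}$. For this, note that by construction, discrete partial differences of $H_\delta$ give the real and imaginary parts of $f_\delta^2$ (up to the normalization $\delta$), so $f_\delta^2/(2\delta)$ converges to $2i\,\bar\partial H + 2\partial H$-type combinations that assemble into $\phi'$; the limit has no zeros inside ${\bf\Omega}$ (since $\phi$ is conformal), so $f_\delta/\sqrt{2\delta}$ converges locally uniformly to a continuous square root of $\phi'$, and the sign is fixed by the boundary normalization at $b$. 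The main obstacle is the last step: one needs Hölder/equicontinuity bounds not only for $H_\delta$ but for $f_\delta/\sqrt{\delta}$ itself, in order to legitimately pass from convergence of discrete gradients of $H_\delta$ to locally uniform convergence of $f_\delta/\sqrt{2\delta}$ to a continuous holomorphic limit. This is where the fine regularity theory of $s$-holomorphic functions (Chelkak-Smirnov discrete analyticity estimates) is indispensable.
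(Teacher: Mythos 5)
Your overall strategy is the one the paper follows (Smirnov's reduction to the function $H_\delta$ playing the role of $\mathrm{Im}\int f_\delta^2$), but there is a genuine gap at the step where you identify the boundary values of $H_\delta$. The boundary condition $\nu_v f_\delta(v)^2\in\bbR_+$ only pins $H_\delta$ on \emph{one} primal arc and \emph{one} dual arc: with the paper's conventions, $H_\delta\equiv 1$ on $(ba)$ and $H_\delta\equiv 0$ on $(ab)^*$. The value of the primal restriction $H^\bullet_\delta$ on the arc $(ab)$, and of the dual restriction $H^\circ_\delta$ on $(ba)^*$, is \emph{not} a priori known; the paper explicitly flags that showing $H^\bullet_\delta\approx 0$ on $(ab)$ "is true but not so simple to prove." Your claim that any subsequential limit is the bounded harmonic function with data $0$ and $1$ on the two arcs therefore presupposes exactly the hard part, and your a priori bound $0\le H_\delta\le 1+o(1)$ is likewise not immediate: $H^\bullet_\delta$ is only subharmonic, so the maximum principle bounds it by its boundary values including those on the arc where they are unknown. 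The paper circumvents this by attaching ghost vertices $\mathfrak g,\mathfrak g^*$ carrying the values $0$ and $1$, defining a modified Laplacian $\widehat\Delta$ for which $H^\bullet_\delta$ is still subharmonic and $H^\circ_\delta$ superharmonic (Lemma~\ref{lem:boundary conditions2}), and then sandwiching ${\bf Hm}^\circ_\delta\le H^\circ_\delta\le H^\bullet_\delta\le {\bf Hm}^\bullet_\delta$ between two harmonic measures that both converge to $\Im(\phi)$. This sandwich also makes your appeal to equicontinuity of $H_\delta$ and Arzel\`a--Ascoli unnecessary (and equicontinuity of a merely sub/super-harmonic pair would itself need justification).

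On the last step, your instinct that some regularity input is needed to pass from $H_\delta\to\Im(\phi)$ to locally uniform convergence of $f_\delta/\sqrt{2\delta}$ is right, but you overestimate what is required: the paper does not invoke the full Chelkak--Smirnov regularity theory. It derives the $L^2$ bound $\delta\sum_{x\in K}|f_\delta(x)|^2\le C$ directly from the identity $\tfrac{\sqrt2}{2}|f_\delta(v)|^2=(H^\bullet(x')-H^\bullet(x))+(H^\circ(y')-H^\circ(y))$ together with the fact that a bounded subharmonic function has summable increments on compacts; combined with discrete holomorphicity this yields precompactness and holomorphy of subsequential limits, after which the identification $f=\sqrt{\phi'}$ proceeds as you describe. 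If you repair the boundary-value step (either by the ghost-vertex construction or by a separate argument that $H^\bullet_\delta$ is small on $(ab)$), your proof goes through.
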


 We now turn to the proof of this statement, which will not involve the random-cluster anymore.
 
 \begin{remark}
Let us discuss the general $q\ne 2$ case. Equation~\eqref{eq:kru} looks similar to \eqref{rel_vertex}. Therefore, one may think of the (edge) parafermionic observable as a function defined on vertices of the medial graph $\Omega^{\diamond\diamond}$ of $\Omega^\diamond$ satisfying half of the discrete Cauchy-Riemann equations -- namely those around faces of $\Omega^{\diamond\diamond}$ corresponding to vertices of $(\Omega^\diamond)^*$ (for the other faces, we do not know how to get the corresponding relations, which probably are not even true at the discrete level for $q\ne 2$). Such an interpretation is nonetheless slightly misleading, since the edge parafermionic observable does not really converge to a function in the scaling limit. Indeed, in the case of the fermionic observable ($q=2$), the edge fermionic observable is the projection of the vertex fermionic observable, and therefore converges to different limits depending on the orientation of the edge of $\Omega^\diamond$ associated to the corresponding vertex of $\Omega^{\diamond\diamond}$.  \end{remark}

\subsubsection{Proof of Theorem~\ref{thm:BVP}}


The idea of the proof of Theorem~\ref{thm:BVP} will be to prove that solutions of this discrete Boundary value problem (with Riemann-Hilbert type boundary conditions on the boundary, i.e.~conditions on the function being parallel to a certain power of the tangent vector) must converge to the solution of their analog in the continuum.  Unfortunately, treating this discrete boundary value problem directly is a mess, and we prefer to transport our problem as follows. 
The function $\Im(\phi)$ is the unique harmonic function in ${\bf \Omega}$ equal to 1 on the arc $({\bf ab})$, and 0 on the arc $({\bf ba})$. Therefore, one may try to prove that a  discrete version $H_\delta$ of the imaginary part of the primitive of $\tfrac1{2\delta}f_\delta^2$ satisfies some approximate Dirichlet boundary value problem in the discrete, and that therefore this function must converge to $\Im(\phi)$ as $\delta$ tends to 0. This has much more chances to work, since Dirichlet boundary value problems are easier to handle.
\bigbreak
For now, let us start by studying $s$-holomorphic functions on a domain $\Omega^\diamond$ with $e_b=1$. For any such $s$-holomorphic function $f$, we associate the function $F=F_f$  defined on edges $e=uv$ of $\Omega^\diamond$ by 
\begin{equation}\label{eq:ihihu}F(e):={\rm P}_e[f(v)]={\rm P}_e[f(u)].\end{equation}  
We also introduce the (unique) function $H=H_f:\Omega\cup \Omega^*\rightarrow \mathbb C$ such that 
 $H(b)=1$ 
and
\begin{equation}\label{eq:au}H(x)-H(y)=|F(e)|^2\end{equation}
for every $x\in\Omega$ and $y\in \Omega^*$, where $e$ is the medial edge bordering both $x$ and $y$.
To justify the existence of such a function, construct $H(x)$ by summing increments along an arbitrary path from $b$ to $x$. The fact that this function satisfies \eqref{eq:au} for all neighboring $x$ and $y$ comes from the fact that the definition does not depend on the choice of the path. 

This last fact can be justified as follows: the domain is the union of all the faces of the medial lattice within it. As a consequence, the property that the definition does not depend on the choice of the path is equivalent to the property that for any vertex $v\in\Omega^\diamond\setminus\partial\Omega^\diamond$, if $e_1,\dots,e_4$ denote the four medial edges with end-point $v$ indexed in counter-clockwise order, then the paths going through $e_1$ and $e_2$, and the one going through $e_4$ and $e_3$ contribute the same (see Fig.~\ref{fig:configuration3}), i.e.
\begin{equation*}\label{square}|F(e_1)|^2 -|F(e_2)|^2=|F(e_4)|^2-|F(e_3)|^2,\end{equation*}
Since $F(e_1)$ and $F(e_3)$ are orthogonal (idem for $F(e_2)$ and $F(e_4)$), the previous equality follows from 
\begin{equation}\label{square2}|F(e_1)|^2 +|F(e_3)|^2=|f(v)|^2=|F(e_2)|^2+|F(e_4)|^2.\end{equation}

\begin{figure}
\begin{center}
\includegraphics[width=0.50\textwidth]{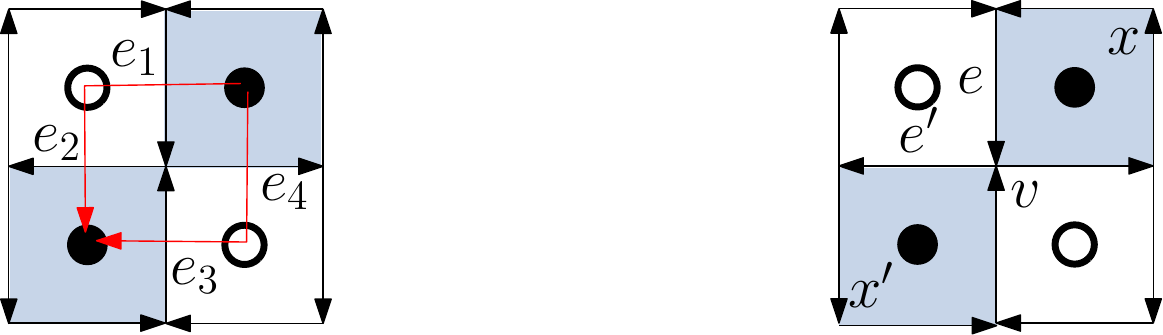}
\caption{\label{fig:configuration3} On the left the two paths going through $e_1$ and $e_2$, and $e_4$ and $e_3$. On the right, the notation for the proof of \eqref{eq:image}.}\end{center}
\end{figure}
The existence of $H$ is the main reason why it is more convenient to work with $s$-holomorphic functions rather than the less constraining notion of discrete holomorphicity. Also, we hope that the brief discussion on boundary value problems above provides sufficient motivation for the introduction of $H$: as shown in the following theorem, the function $H$ should be interpreted as the discrete analogue of  $\Im \left(\int^z \tfrac12f^2\right)$, which satisfies some nice property of sub and super harmonicity. 

Below, the discrete Laplacian of $H$ is defined by the formula
$$\Delta H(x):=\sum_{y}[H(y)-H(x)],$$ where the sum is over neighbors of $x$ in $\Omega$ (or $\Omega^*$ if $x\in\Omega^*$).
\begin{theorem}\label{definition H}
If $x,x'\in\Omega\cup\Omega^*$ correspond to two opposite faces of $\Omega^\diamond$ bordered by $v\in\Omega^\diamond$, 
\begin{equation}\label{eq:image}H(x)-H(x')~=~\tfrac12{\rm Im} \left[f(v)^2\cdot(x-x')\right].
\end{equation}
Furthermore, $\Delta H(x)\ge0$ for every $x\in\Omega\setminus\partial\Omega$ and $\Delta H(y)\le 0$ for every $y\in\Omega^*\setminus\partial\Omega^*$.
\end{theorem}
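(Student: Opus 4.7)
The plan is to prove both statements from local algebraic identities at each medial vertex, using Lemma~\ref{lem:argument} (which under the normalization $|e|=1$ gives $\overline{F(e)}=eF(e)$, equivalently $|F(e)|^2=eF(e)^2$) together with the orthogonal decomposition $f(v)=F(e)+F(-e)={\rm P}_e[f(v)]+{\rm P}_{-e}[f(v)]$ valid for every directed medial edge $e$ at an interior vertex $v$, with the two terms lying in the perpendicular real lines $\sqrt{\overline{e}}\,\mathbb R$ and $i\sqrt{\overline{e}}\,\mathbb R$.

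For identity \eqref{eq:image}, fix opposite faces $x,x'$ around $v$, let $y$ be an intermediate face of the opposite type, and let $e,e'$ be the medial edges at $v$ separating $x$ from $y$ and $y$ from $x'$. Applying \eqref{eq:au} along the short path $x\to y\to x'$ yields, with a sign depending on whether $x$ is primal or dual,
\[
H(x)-H(x')=\pm\bigl(|F(e)|^2-|F(e')|^2\bigr)=\pm\bigl(eF(e)^2-e'F(e')^2\bigr).
\]
On the other hand, expanding $f(v)^2=(F(e)+F(-e))^2=(F(e')+F(-e'))^2$ and extracting $\Im[f(v)^2(x-x')]$, one observes that $eF(\pm e)^2$ and $e'F(\pm e')^2$ are real, while the mixed products $eF(e)F(-e)$, $e'F(e')F(-e')$ are purely imaginary (both facts following from $|e|=|e'|=1$ and the orthogonal pairing of the projections). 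A direct coordinate computation, using that $x-x'$ is up to an explicit nonzero scalar the specific complex combination of $e,e'$ that extracts the right imaginary contribution, matches the two expressions. The well-definedness of $H$ (independence of the intermediate face $y$) is automatic from $|F(e)|^2+|F(-e)|^2=|f(v)|^2$ regardless of which orthogonal pair one chooses.

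For (ii), fix a primal interior $x$, let $v_1,\dots,v_4$ be the corners of the medial face corresponding to $x$ (counter-clockwise), $\varepsilon_i=v_{i+1}-v_i$ the four medial edges on $\partial x$, and $x_i$ the primal neighbour of $x$ opposite across $v_i$. Summing \eqref{eq:image} over these neighbours gives
\[
\Delta H(x)=\sum_{i=1}^{4}\bigl(H(x_i)-H(x)\bigr)=\tfrac12\,\Im\Bigl[\sum_{i=1}^{4}f(v_i)^2\,(x_i-x)\Bigr],
\]
combined with the explicit geometric identity $x_i-x=\varepsilon_{i-1}-\varepsilon_i$ (the two sides of $\partial x$ meeting at $v_i$, with appropriate signs). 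The $s$-holomorphicity on each $\varepsilon_i\subset\partial x$ enforces the common value $F(\varepsilon_i)={\rm P}_{\varepsilon_i}[f(v_i)]={\rm P}_{\varepsilon_i}[f(v_{i+1})]$, yielding four linear constraints relating the pairs $(\alpha_i,\beta_i)$ that describe $f(v_i)=(\alpha_i+i\beta_i)\sqrt{\overline{\varepsilon_i}}$ at consecutive corners. Substituting these expressions into the displayed sum, a direct algebraic manipulation exploits the consistency relations to make the cross terms telescope around $\partial x$ and reorganizes the remainder of $\Im[\,\cdot\,]$ into a manifestly non-negative quadratic form in the $(\alpha_i,\beta_i)$, giving $\Delta H(x)\geq 0$. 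The same calculation at a dual vertex $y\in\Omega^*$ flips the overall sign coming from \eqref{eq:au} (primal and dual roles are swapped), producing $\Delta H(y)\leq 0$.

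The real content of the proof, and the step where care is essential, lies in the non-obvious cancellations in (ii): \emph{a priori} the quantity $\Im[\sum_i f(v_i)^2(x_i-x)]$ is only a quadratic form in the local data of $f$ with no reason to have a definite sign, and it is the combined effect of the four $s$-holomorphicity relations on $\partial x$ that forces the cross terms to telescope and the remaining contributions to be a manifestly non-negative sum of squares. This is the hallmark ``miracle'' of $s$-holomorphicity that is absent from mere discrete holomorphicity \eqref{eq:kru}, and it is precisely this sign-definiteness that lets $H$ serve as a discrete harmonic approximation in the boundary-value argument of Theorem~\ref{thm:BVP}.
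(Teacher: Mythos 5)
Your treatment of identity \eqref{eq:image} is correct and is essentially the paper's argument: both reduce $H(x)-H(x')$ to $|F(e)|^2-|F(e')|^2$ for the two medial edges at $v$ separating the pair, and both expand these squared moduli in terms of $f(v)$ using the projection structure (the paper writes $|F(e)|^2=|{\rm P}_e[f(v)]|^2=\tfrac14[ef(v)^2+\overline{ef(v)^2}+2|f(v)|^2]$, so the $|f(v)|^2$ terms cancel in the difference and one is left with $\tfrac12\Re[f(v)^2(e-e')]=\tfrac12\Im[f(v)^2(x-x')]$; your route via $|F(e)|^2=eF(e)^2$ is the same computation in different clothing).

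For the second assertion, however, there is a genuine gap. Having written $\Delta H(x)=\tfrac12\,\Im\bigl[\sum_{i}f(v_i)^2(x_i-x)\bigr]$, you assert that the four $s$-holomorphicity constraints make ``the cross terms telescope'' and turn the remainder into ``a manifestly non-negative quadratic form''. That assertion \emph{is} the theorem: a priori this imaginary part has no sign, and nothing in your write-up exhibits the non-negative form or explains why the constraints force one. The paper closes this step with a short explicit factorization that you should reproduce. Writing $A,B,C,D$ for the values of $f$ at the medial vertices NE, NW, SW, SE of $x$, it uses only three relations: $s$-holomorphicity across the north edge ($A-B=\overline A-\overline B$), across the south edge ($C-D=\overline D-\overline C$), and the discrete Cauchy--Riemann relation $A-C={\rm i}(D-B)$ around $x$ (itself a consequence of $s$-holomorphicity at all four edges, via \eqref{eq:kkru}). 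A three-line substitution then gives
\begin{equation*}
A^2+{\rm i}B^2-C^2-{\rm i}D^2=(A-C)(A+C-B-D)=(A-C)(\overline A-\overline B+\overline D-\overline C)=(1+{\rm i})|A-C|^2,
\end{equation*}
and multiplying by $\tfrac{1+{\rm i}}{2}=\tfrac12(x'-x)$ and taking imaginary parts yields $\Delta H(x)=|A-C|^2\ge 0$. Note that the outcome is a \emph{single} perfect square, not a sum of squares obtained by telescoping, and that the mechanism is this specific factorization rather than a cancellation of cross terms around $\partial x$. Finally, your explanation of the sign reversal on $\Omega^*$ is off: formula \eqref{eq:image} holds with the same sign for primal and for dual pairs, so the flip does not come from \eqref{eq:au}; it comes from redoing the computation above around a white face, where the orientation of the medial edges (and hence the analogues of the two boundary relations and of the multiplier $\tfrac12(x'-x)$) is reversed, producing $\Delta H(y)=-|A-C|^2\le 0$.
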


\begin{proof}
[\eqref{eq:image}.] Assume that $x$ and $x'$ belong to $\Omega$ (the case of $x$ and $x'$ belonging to $\Omega^*$ is the same). Let $e$ and $e'$ two edges of $\Omega^\diamond$ incident to $v$ bordering the same white face. We further assume that $e$ and $e'$ are respectively bordering the faces of $x$ and $x'$; see Fig.~\ref{fig:configuration3}. 
The $s$-holomorphicity implies that 
\begin{align*}|F(e)|^2=\tfrac{1}{4}[ef(v)^2+\overline{ef(v)}^2+2|f(v)|^2].
\end{align*}
Using a similar relation for $|F(e')|^2$, we obtain
\begin{align*}H(x)-H(x')&=|F(e)|^2-|F(e')|^2\\
&=\tfrac{1}{4}[(e-e')f(v)^2+\overline{(e-e')}\overline{f(v)}^2]=\tfrac12\Re[f(v)^2(e-e')].\end{align*}
The proof follows by observing that $e-e'={\rm i}(x-x')$.
\paragraph{Proof of sub-harmonicity.} Fix $x\in\Omega$. Let $A$, $B$, $C$ and $D$ be the values of $f$ on the vertices of $\Omega^\diamond$ north-east, north-west, south-west and south-east of $x$. Recall that

(a) $A-B=\overline A-\overline B$ \qquad by $s$-holomorphicity at the medial edge north of $x$ (equal to ${\rm i}$),

(b) $C-D=\overline D-\overline C$ \quad by $s$-holomorphicity at the medial edge south of $x$ (equal to $1$),

(c) $A-C={\rm i}(D-B)$\quad  by discrete holomorphicity \eqref{eq:kru} around $x$.\\
Then,
\begin{align}A^2+{\rm i}B^2-C^2-{\rm i}D^2&\stackrel{\phantom{(a,b)}}= (A-C)(A+C)+{\rm i}(B-D)(B+D)\nonumber\\
&\stackrel{\phantom{,}(c)\phantom{b}}=(A-C)(A+C-B-D)\nonumber\\
&\stackrel{(a,b)}=(A-C)(\overline A-\overline B+\overline D-\overline C)\nonumber\\
&\stackrel{\phantom{,}(c)\phantom{b}}=(1+{\rm i})|A-C|^2.\label{eq:ppp}\end{align}
Taking the imaginary part of the quantity obtained by multiplying the previous expression by $\frac{1+{\rm i}}2$ (which is equal to $\tfrac12(x'-x)$ , where $x'$ is the vertex of $\Omega$ north-east of $x$), \eqref{eq:image} gives $$\Delta H(x)=|A-C|^2\ge0.$$ Similarly, one may check that $\Delta H(x)=-|A-C|^2\le 0$ for $x\in\Omega^*$.
\end{proof}
Until now, we treated general $s$-holomorphic functions, but from this point we focus on the implications of boundary conditions. Let us start by the following easy lemma.
\begin{lemma}\label{lem:boundary conditions}Consider a $s$-holomorphic function $f$ satisfying 
$F(e_b)=1$ and $\nu_vf(v)^2\in\bbR_+$ for all $v\in\partial\Omega^\diamond$. Then, the function $H$ is equal to 1 on $(ba)$ and 0 on $(ab)^*$. \end{lemma}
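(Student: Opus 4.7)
\medbreak
\noindent\textbf{Proof plan.} The plan is to show in two independent steps that (i) $H$ is constant along each of the two boundary arcs $(ba)$ and $(ab)^*$, and (ii) these constants equal $1$ and $0$ respectively, by using the normalization $H(b)=1$ together with the single explicit value $F(e_b)=1$ coming from the hypothesis ${\rm P}_{e_b}[f(b^\diamond)]=F(e_b)$.

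\medbreak
For the constancy on $(ba)$, I consider two consecutive primal vertices $p,p'\in (ba)$, and let $v$ be the boundary medial vertex lying at the midpoint of the primal edge $pp'$. By the geometry of $\Omega^\diamond$, the two \emph{opposite primal} faces of $\Omega^\diamond$ incident to $v$ are precisely $p$ and $p'$, while the two medial edges $e,e'$ of $\Omega^\diamond$ incident to $v$ both lie along the boundary. In particular, an elementary check shows that the discrete tangent vector $\nu_v=e+e'$ is parallel to $p-p'$, so we may write $p-p'=c\,\nu_v$ with $c\in\mathbb{R}$. Combining this with the hypothesis $\nu_v f(v)^2\in\mathbb{R}_+$ yields
\[
f(v)^2\,(p-p')\;=\;c\cdot\bigl[\nu_v f(v)^2\bigr]\;\in\;\mathbb{R},
\]
so that $\mathrm{Im}[f(v)^2(p-p')]=0$. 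Plugging this into the identity \eqref{eq:image} gives $H(p)=H(p')$. Applying the identical argument to two consecutive dual vertices $q,q'\in (ab)^*$ (with the boundary medial vertex at the midpoint of the dual edge $qq'$, whose two opposite dual faces are $q$ and $q'$) yields constancy of $H$ on $(ab)^*$.

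\medbreak
For the identification of the constants: the normalization $H(b)=1$ together with $b\in(ba)$ and the constancy just established immediately gives $H\equiv 1$ on $(ba)$. To handle $(ab)^*$, I use the medial edge $e_b$, which by construction is the edge of $\Omega^\diamond$ separating the primal vertex $b\in(ba)$ from some dual vertex $b^*\in(ab)^*$. The defining relation \eqref{eq:au} then gives
\[
H(b)-H(b^*)\;=\;|F(e_b)|^2\;=\;1,
\]
so $H(b^*)=0$. Combined with constancy of $H$ on $(ab)^*$ this forces $H\equiv 0$ on $(ab)^*$.

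\medbreak
The only real obstacle is the geometric assertion that $\nu_v$ is parallel to $p-p'$ on the primal boundary (and the analogous statement on the dual boundary). Once this is granted, the boundary hypothesis $\nu_v f(v)^2\in\mathbb{R}_+$ is exactly what is needed to kill the imaginary part in \eqref{eq:image}; everything else is bookkeeping. The geometric fact is a short direct check in the medial lattice $\mathbb{L}^\diamond$: with $v$ at the midpoint of the boundary primal edge $pp'$, the two $\Omega^\diamond$-edges at $v$ are the two medial edges joining $v$ to the mid-edges above $p$ and above $p'$ (pointing into the interior of $\Omega$), and as oriented edges in the canonical orientation of the medial lattice one is incoming at $v$ and the other outgoing; summing them produces a vector collinear with $p-p'$.
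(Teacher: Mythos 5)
Your proof is correct and follows essentially the same route as the paper's: constancy of $H$ along each arc via \eqref{eq:image} combined with the fact that $x-x'=\pm\nu_v$ (up to a real scalar) on the boundary, then fixing the two constants using $H(b)=1$ and $H(b)-H(b^*)=|F(e_b)|^2=1$. The only difference is that you spell out the geometric collinearity of $\nu_v$ with $x-x'$, which the paper states without comment.
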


\begin{proof}
Equation \eqref{eq:image} and the condition $(x-x')f(v)^2=\pm\nu_vf(v)^2\in\bbR$ give that $H$ is constant on $(ba)$ and $(ab)^*$ respectively. The fact that $H=1$ on $(ba)$ thus follows from the definition   $H(b)=1$. The claim that $H=0$ on $(ab)^*$ follows from the fact that for $w\in (ab)^*$ neighboring $b$, 
$$
H(w)\stackrel{\eqref{eq:au}}=H(b)-|F(e_b)|^2=1-1=0.
$$
\end{proof}
On the other part $(ab)$ of the boundary of $\Omega$, we would like to say that $H$ is roughly 0. This is true but not so simple to prove. In order to circumvent this difficulty, we choose another path: we add a ``layer'' or additional vertices, and fix the value of $H$ to be 0  on these new vertices (for simplicity, we consider all these vertices as one single ghost vertex $\mathfrak g$). With this definition, $H$ is not quite super-harmonic on $\Omega\cup\{\mathfrak g\}$ but it almost is: one can define a modified Laplacian on the boundary for which $H$ is super-harmonic. This procedure is explained formally below (we do a similar construction for $\Omega^*$). 

Introduce two additional ghost vertices $\mathfrak g$ and $\mathfrak g^*$ to $\Omega$ and $\Omega^*$ respectively. Define  the continuous-time random walk $X^x$
starting at $x$ and jumping with rate $1$ on edges of $\Omega$ and rate $\tfrac{2}{1+\sqrt 2} N_x$ to $\mathfrak g$, where $N_x$ is the number of vertices of $\partial\Omega^\diamond$ bordering $x$. Note that $X^x$ jumps to $\mathfrak g$ with positive rate only when it is on the boundary of $\Omega$. Also, from now on the Laplacian $\widehat\Delta$ on $\Omega$ denotes the generator of the random walk, which is defined by 
$$\widehat\Delta H(x):=\Delta H(x)+\tfrac2{1+\sqrt 2} N_x[H(\mathfrak g)-H(x)].$$
Similarly, we denote by $X^y$ the continuous-time
random walk starting at $y$ and jumping with rate $1$ on edges of $\Omega^*$ and with rate $ \tfrac2{1+\sqrt 2} N_y$ to $\mathfrak g^*$. We extends $H$ to $\mathfrak g$ and $\mathfrak g^*$ by setting $H(\mathfrak g)=0$ and $H(\mathfrak g^*)=1$.

\begin{lemma}\label{lem:boundary conditions2}Consider a $s$-holomorphic function $f$ satisfying 
$F(e_b)=1$ and $\nu_vf(v)^2\in\bbR_+$ for all $v\in\partial\Omega^\diamond$. Then, $\widehat\Delta H\ge0$ on $\Omega\setminus (ba)$ and $\widehat\Delta H \le0$ on $\Omega^*\setminus (ab)^*$.
\end{lemma}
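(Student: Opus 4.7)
The result splits naturally into two regimes, according to whether the vertex lies in the interior or on the boundary of $\Omega$ (respectively $\Omega^*$). For an interior primal vertex $x \in \Omega \setminus \partial\Omega$, no medial vertex of $\partial\Omega^\diamond$ borders $x$, so $N_x = 0$ and $\widehat\Delta H(x) = \Delta H(x)$, which is non-negative by the subharmonicity established in Theorem~\ref{definition H}. Symmetrically, $\widehat\Delta H(y) = \Delta H(y) \le 0$ at interior dual vertices $y \in \Omega^* \setminus \partial\Omega^*$. The substantive content of the lemma therefore lies in the analysis at the boundary vertices.

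Consider $x \in \partial\Omega \setminus (ba)$. The plan is to revisit the subharmonicity computation from the proof of Theorem~\ref{definition H}, now taking into account that some of the four medial vertices around $x$ belong to $\partial\Omega^\diamond$ (equivalently, some of the four primal neighbors of $x$ are missing from $\Omega$). At such a boundary medial vertex $v$, the value $f(v)$ is constrained to lie on the real line $\sqrt{\overline{\nu_v}}\,\bbR$ by the hypothesis $\nu_v f(v)^2 \in \bbR_+$. This boundary constraint replaces the $s$-holomorphicity relations that were used at interior medial vertices to derive the key identity in the proof of Theorem~\ref{definition H}. The aim is to write $\Delta H(x)$ as the sum of a non-negative quadratic term, analogous to $|A-C|^2$, plus a boundary correction; using \eqref{eq:au} together with $f(v) = \alpha_v\sqrt{\overline{\nu_v}}$ for some $\alpha_v \in \bbR$, the latter can be expressed directly as a non-negative multiple of $H(x)$ itself.

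The drift term $\tfrac{2}{1+\sqrt 2} N_x [H(\mathfrak g) - H(x)] = -\tfrac{2}{1+\sqrt 2} N_x H(x)$ present in $\widehat\Delta$ is designed precisely to absorb this boundary correction, leaving a non-negative remainder and hence $\widehat\Delta H(x) \ge 0$. The specific constant $\tfrac{2}{1+\sqrt 2}$ is forced by this cancellation, and it parallels the normalization $\tfrac{2}{2+\sqrt 2}$ appearing in the definition of $f$ on $\partial\Omega^\diamond$ (cf.\ the identity \eqref{eq:ohohoh}). The argument for boundary dual vertices $y \in \partial\Omega^* \setminus (ab)^*$ is entirely analogous, with all signs reversed and with the drift directed toward $\mathfrak g^*$ (where $H(\mathfrak g^*)=1$), yielding $\widehat\Delta H(y) \le 0$.

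The principal difficulty is the boundary bookkeeping: one must enumerate the possible local configurations of a boundary vertex (distinguished by the number and arrangement of its neighbors missing from $\Omega$), verify in each case that the algebraic manipulation produces a non-negative quadratic remainder together with a multiple of $H(x)$ matching the drift term, and check that the single constant $\tfrac{2}{1+\sqrt 2}$ handles all such configurations uniformly. Once this calibration is verified, the subharmonicity of $H$ on $\Omega\cup\{\mathfrak g\}$ (and super-harmonicity on $\Omega^*\cup\{\mathfrak g^*\}$) follows immediately.
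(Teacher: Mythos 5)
Your strategy coincides with the paper's: treat interior vertices via Theorem~\ref{definition H}, and at a boundary vertex $x\in(ab)$ rerun the algebraic identity \eqref{eq:ppp}, replacing, for each medial vertex $v\in\partial\Omega^\diamond$ bordering $x$, the missing increment $H(x')-H(x)$ by the drift term $\tfrac2{1+\sqrt 2}[H(\mathfrak g)-H(x)]$, so that $\widehat\Delta H(x)$ still collapses to the non-negative square $|A-C|^2$. The problem is that you only announce this calibration; you do not perform it, and the calibration \emph{is} the content of the lemma. What has to be verified is the identity
\begin{equation*}
{\rm Im}\big[f(v)^2(v-x)\big]=\tfrac2{1+\sqrt 2}\big[H(\mathfrak g)-H(x)\big]
\end{equation*}
for each such $v$. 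This rests on three ingredients absent from your sketch: (i) $v-x=-\tfrac{\rm i}2\nu_v$ together with $\nu_vf(v)^2\in\bbR_+$ and $|\nu_v|=\sqrt2$ gives ${\rm Im}[f(v)^2(v-x)]=-\tfrac{\sqrt2}2|f(v)|^2$; (ii) the boundary normalization of $f$ (the computation behind \eqref{eq:ohohoh}, using $|1+{\rm e}^{{\rm i}\pi/4}|^2=2+\sqrt2$) gives $|f(v)|^2=\tfrac{2\sqrt2}{1+\sqrt2}|F(e)|^2$; and (iii) $|F(e)|^2=H(x)-H(y)=H(x)$, because the dual neighbour $y$ of $x$ across $e$ lies on $(ab)^*$, where $H$ vanishes by Lemma~\ref{lem:boundary conditions}. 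Point (iii) is the step you miss entirely: citing \eqref{eq:au} alone only yields $|F(e)|^2=H(x)-H(y)$, and the boundary correction becomes a multiple of $H(x)$ — hence absorbable by a drift toward the ghost vertex where $H(\mathfrak g)=0$ — only after invoking the already-established boundary values of $H$ on the dual arc. Without (i)--(iii) multiplied out, there is no verification that the correction equals $-\tfrac2{1+\sqrt2}H(x)$ rather than some other multiple, which is precisely what "the constant is forced by the cancellation" sweeps under the rug.

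A secondary point: the case enumeration you flag as the principal difficulty is not needed. The identity \eqref{eq:ppp} is a pure algebraic consequence of $s$-holomorphicity and holds at every vertex regardless of which neighbours are missing; one substitutes the replacement term above independently for each of the $N_x$ boundary medial vertices, so a single computation handles all local configurations uniformly.
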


\begin{proof}
Let us prove that $\widehat\Delta H(x)\ge0$ for $x\in\Omega\setminus (ba)$ (the proof for $x\in\Omega^*\setminus(ab)^*$ follows the same lines).  If $x\notin\partial\Omega$, one has $\widehat\Delta=\Delta$ and the result follows from  Theorem~\ref{definition H}. We therefore focus our attention on $x\in(ab)$. We use the same computation as in \eqref{eq:ppp}, except that for $v\in\partial\Omega^\diamond$, we replace the expression
$${\rm Im} [f(v)^2\cdot(v-x)]=\tfrac12{\rm Im} [f(v)^2\cdot(x'-x)]=H(x')-H(x)$$ given by \eqref{eq:image} by the expression
\begin{equation}\label{eq:yhg}{\rm Im}[f(v)^2(v-x)]=\tfrac2{1+\sqrt 2}[H(\mathfrak g)-H(x)].\end{equation}
In order to prove \eqref{eq:yhg}, 
 use that $v-x=-\frac{{\rm i}}2\nu_v$ (since $x\in(ab)$) and $\nu_vf(v)^2\in\bbR_+$ to get
 $${\rm Im}[f(v)^2(v-x)]=-\tfrac{\sqrt 2}2|f(v)|^2.$$
Using the same reasoning as for \eqref{eq:ohohoh} and the fact that $\nu_v$ has length $\sqrt 2$, we find that 
$$|f(v)|^2=\tfrac{4|1+{\rm e}^{{\rm i}\pi/4}|^2}{(2+\sqrt 2)^2}|F(e)|^2=\tfrac{2\sqrt 2}{1+\sqrt 2}|F(e)|^2.$$
Therefore, \eqref{eq:yhg} follows from the two previous equalities together with $H(\mathfrak g)=0$ and $H(x)=|F(e)|^2$ (which is true since there is $y\in(ab)^*$ neighboring $x$, which satisfies $H(y)=0$). 
\end{proof}

We are now in a position to prove Theorem~\ref{thm:BVP}.
\begin{proof}[Theorem~\ref{thm:BVP}] 
For $f_\delta$, let $H_\delta$ constructed via the relation \eqref{eq:au} and the condition $H_\delta(b_\delta)=1$. Note that all the previous properties of $H$ extend to $H_\delta$ (with trivial modification of the definition of $\Delta$ and $\widetilde\Delta$), except \eqref{eq:image}, which becomes
\begin{equation}
\label{eq:image2} H_\delta(x')-H_\delta(x)=\tfrac1{2\delta}{\rm Im}[f_\delta(v)^2(x'-x)]
\end{equation}
since the edge $x'-x$ does not have length $\sqrt 2$ anymore but $\sqrt 2\delta$ instead.
\bigbreak
 We start by proving that $(H_\delta)$ converges\footnote{Recall that here and below, we consider the convergence on every compact subset of ${\bf \Omega}$.
}. 
We set $H_\delta^\bullet$ and $H_\delta^\circ$ for the restrictions of $H_\delta$ to $\Omega_\delta$ and $\Omega_\delta^*$. 
Define
$${\bf Hm}^\bullet_\delta(x):=\bbP[X^x\text{ hits }(b_\delta a_\delta)\text{ before }\mathfrak g]\text{ and }
{\bf Hm}^\circ_\delta(y):=\bbP[X^y\text{ hits }(a_\delta b_\delta)^*\text{ before }\mathfrak g^*].$$ The function ${\bf Hm}^\bullet_\delta$ is the harmonic solution on $\Omega_\delta$ of the discrete Dirichlet problem with boundary conditions $1$ on $(b_\delta a_\delta)$ and 0 on $\mathfrak g$. Since the random-walk jumps on $\mathfrak g$ only when it is on $(a_\delta b_\delta)$, one may show that it converges to the harmonic solution of the Dirichlet problem with boundary conditions $1$ on $({\bf ba})$ and 0 on $({\bf ab})$ -- i.e.~to $\Im(\phi)$ -- as $\delta$ tends to 0  (see Exercise~\ref{exo:convergence Dirichlet} for details). Since $H^\bullet_\delta$ is sub-harmonic by Lemmata~\ref{lem:boundary conditions} and \ref{lem:boundary conditions2}, one has $H^\bullet_\delta\le {\bf Hm}^\bullet_\delta$ and therefore
$$\limsup_{\delta\rightarrow 0}H^\bullet_\delta\le \Im(\phi).$$
Similarly, ${\bf Hm}^\circ_\delta$ tends to $\Im(\phi)$. Since $H^\circ_\delta$ is super-harmonic, $H_\delta^\circ\ge  {\bf Hm}^\circ_\delta$ and
$$\liminf_{\delta\rightarrow 0}H^\circ_\delta\ge  \Im(\phi).$$
Since $H^\bullet(x)\ge H^\circ(y)$ for $y$ neighboring $x$, we deduce that $H_\delta$ converges to $\Im(\phi)$.
\bigbreak
Let us now prove that $(f_\delta)$ converges. Consider a holomorphic sub-sequential limit $f$ (if it exists) of $f_\delta/\sqrt{2\delta}$. Also set $F$ to be a primitive of $f^2$. By  \eqref{eq:image2}, 
$H_\delta$
is equal to the imaginary part of the primitive of $\tfrac1{2\delta}f_\delta^2$, so that by passing to the limit and using the first part of the proof, ${\rm Im}(F)={\rm Im}(\phi)+C$. Since $f$ is holomorphic, we know that $F$ also is, so that it must be equal to $\phi$ up to an additive (real valued) constant. By differentiating and taking the square root, we deduce that $f=\sqrt{\phi'}$. To conclude, it only remains to prove that $(f_\delta)$ is pre-compact and that any sub-sequential limit is holomorphic, which is done in the next lemma.
\end{proof}
\begin{lemma}
The family of functions $(\tfrac1{\sqrt {2\delta}}f_\delta)$ is pre-compact for the uniform convergence on every compact. Furthermore, any sub-sequential limit is holomorphic on ${\bf \Omega}$.
\end{lemma}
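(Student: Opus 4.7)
The plan is to establish the two claims---uniform-on-compacts precompactness of $(\tfrac{1}{\sqrt{2\delta}} f_\delta)$ and holomorphicity of subsequential limits---in three steps: a pointwise uniform bound, equicontinuity via discrete Cauchy estimates, and passage to the limit in the discrete Cauchy--Riemann equation. Throughout, I fix a compact $K \subset {\bf \Omega}$ and a compact $K' \subset {\bf \Omega}$ containing a neighborhood of $K$.

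The previous step of the proof gives $H_\delta^\bullet, H_\delta^\circ \to \Im(\phi)$ uniformly on $K'$. I would first upgrade this to a sharp discrete Lipschitz bound within each sublattice: writing $H_\delta^\bullet = \mathrm{Hm}_\delta^\bullet - (\mathrm{Hm}_\delta^\bullet - H_\delta^\bullet)$, the first term is genuinely discrete harmonic and uniformly bounded on $K'$, hence satisfies $|\nabla_\delta \mathrm{Hm}_\delta^\bullet|=O(1)$ on $K$ by standard gradient estimates for discrete harmonic functions on $\delta\bbZ^2$ (via Green's function representation); the second term is non-negative, superharmonic on $K'$, and converges uniformly to $0$ there, so its discrete gradient on $K$ is $o(1)$ by a discrete Harnack inequality for non-negative superharmonic functions. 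Combining, $|H_\delta^\bullet(x)-H_\delta^\bullet(x')|=O(\delta)$ for neighboring $x,x'\in\Omega_\delta\cap K$, and the analogous bound holds on $\Omega_\delta^{*}\cap K$.

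For $v\in\Omega_\delta^\diamond\cap K$, let $x,x'\in\Omega_\delta$ and $y,y'\in\Omega_\delta^{*}$ denote the two pairs of opposite faces bordering $v$; the vectors $x'-x$ and $y'-y$ are orthogonal and of length $\sqrt{2}\,\delta$. Applying~\eqref{eq:image2} to each pair together with the Lipschitz bound just obtained gives
\begin{equation*}
\Im\bigl[f_\delta(v)^2 (x'-x)\bigr]=O(\delta^2), \qquad \Im\bigl[f_\delta(v)^2 (y'-y)\bigr]=O(\delta^2).
\end{equation*}
Since $y'-y=\pm i(x'-x)$, the second relation is equivalent to $\Re[f_\delta(v)^2 (x'-x)]=O(\delta^2)$, so $|f_\delta(v)^2 (x'-x)|=O(\delta^2)$ and $|f_\delta(v)|^2=O(\delta)$; hence $\bigl|\tfrac{1}{\sqrt{2\delta}}f_\delta(v)\bigr|=O(1)$ uniformly for $v\in K$. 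Next, since $f_\delta$ is $s$-holomorphic it satisfies the discrete Cauchy--Riemann equation~\eqref{eq:kru} (obtained by summing $e[f_\delta(u)-f_\delta(v)]=\overline{f_\delta(v)}-\overline{f_\delta(u)}$ around each face of $\Omega_\delta^\diamond$). Combining this with the uniform bound applied on $K'$ in place of $K$, standard discrete Cauchy integral estimates on annuli of width $\sim \mathrm{dist}(K,\partial K')$ yield an equicontinuity modulus of $(\tfrac{1}{\sqrt{2\delta}}f_\delta)$ on $K$ depending only on $K$, $K'$, and ${\bf \Omega}$. Arzel\`a--Ascoli then delivers the precompactness, and passing to the limit in~\eqref{eq:kru} (or equivalently in the discrete Cauchy integral formula along closed contours) shows that any subsequential limit $f$ satisfies the classical Cauchy--Riemann equations in the distributional sense; Weyl's lemma gives that $f$ is holomorphic on ${\bf \Omega}$.

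The main technical obstacle is the first step: the uniform convergence $H_\delta\to\Im(\phi)$ on $K'$ alone only yields $|H_\delta(x)-H_\delta(y)|=o(1)$ for neighbors, while the argument extracting $f_\delta(v)^2$ from local differences of $H_\delta$ requires the sharper $O(\delta)$ bound. Without it, one would only get $|f_\delta|^2=o(1)$ rather than $O(\delta)$, failing to control $f_\delta/\sqrt{\delta}$. Genuinely using the sub- and super-harmonicity of $H_\delta^\bullet$ and $H_\delta^\circ$---via comparison with the discrete-harmonic majorants $\mathrm{Hm}_\delta^\bullet,\mathrm{Hm}_\delta^\circ$ and the Harnack principle for the non-negative superharmonic differences---is precisely what converts pointwise convergence of $H_\delta$ into the required discrete gradient bound at the correct scale.
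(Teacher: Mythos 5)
Your overall architecture is the same as the paper's: recover $f_\delta(v)^2$ from the increments of $H_\delta^\bullet$ and $H_\delta^\circ$ across the two pairs of opposite faces at $v$ via \eqref{eq:image2}, control those increments using the sub/super-harmonicity of $H_\delta$, and then invoke a compactness criterion for discrete holomorphic functions. The difference is that you aim for a \emph{pointwise} bound $|f_\delta(v)|^2=O(\delta)$ on compacts, whereas the paper only establishes (and only needs) the \emph{averaged} bound $\delta\sum_{x\in\delta\bbL\cap{\bf K}}|f_\delta(x)|^2\le C$, i.e.\ square-integrability of $f_\delta/\sqrt{2\delta}$, which it feeds into the $\|\cdot\|_2$-precompactness criterion for discrete harmonic functions (Exercise~\ref{exo:compactness harmonic}, second part, via Cauchy--Schwarz and the Poisson-kernel bound of Exercise~\ref{exo:bounded}).

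The pointwise route as you set it up has a genuine gap. Writing $H^\bullet_\delta=\mathrm{Hm}^\bullet_\delta-u_\delta$ with $u_\delta:=\mathrm{Hm}^\bullet_\delta-H^\bullet_\delta\ge 0$ superharmonic and $\|u_\delta\|_{\infty,K'}\to0$, you claim that the neighbor-differences of $u_\delta$ on $K$ are $o(\delta)$ ``by a discrete Harnack inequality for non-negative superharmonic functions.'' No such estimate holds: uniform smallness of a non-negative superharmonic function does not control its discrete gradient at scale $\delta$. Decomposing $u_\delta$ locally into a harmonic part plus a Green potential $\sum_y G(\cdot,y)(-\widehat\Delta u_\delta)(y)$, the total Laplacian mass in a compact is only bounded by $C\|u_\delta\|_\infty$, and this mass may concentrate at a single vertex $y_0$; since $|G(y_0,y_0)-G(y_0',y_0)|$ is of order $1$ (not $\delta$), the neighbor-difference of $u_\delta$ at $y_0$ can be comparable to $\|u_\delta\|_\infty$, which is $o(1)$ but in general far larger than $O(\delta)$. (A concrete counterexample is $u_\delta=\varepsilon_\delta\,G_{\Omega_\delta}(\cdot,y_\delta)$ with $\varepsilon_\delta=(\log(1/\delta))^{-2}$.) What \emph{is} true, and is exactly Exercise~\ref{exo:compactness subharmonic}, is the summed bound $\delta\sum_{x\in\delta\bbL\cap{\bf K}}|H^\bullet_\delta(x)-H^\bullet_\delta(x')|\le C$, obtained by summing $|G(x,y)-G(x',y)|\le C\delta/(|x-y|\wedge d(x,\partial{\bf\Omega}))$ over $x$ first. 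Plugged into your identity for $|f_\delta(v)|^2$ (which is \eqref{eq:lol}), this gives only the $L^2$ bound \eqref{eq:lok}, not uniform boundedness; you then need the $L^2$ version of the compactness criterion rather than Arzel\`a--Ascoli applied to a uniformly bounded equicontinuous family. A genuine pointwise bound $|f_\delta|=O(\sqrt\delta)$ on compacts does hold, but proving it requires substantially more (the Chelkak--Smirnov a priori regularity theory exploiting that $H^\bullet_\delta$ and $H^\circ_\delta$ sandwich each other), not the Harnack-type statement you invoke.
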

In the next proof, we postpone three facts to exercises. We want to highlight the fact that we do not swift any difficulty under the carpet: these statements are very simple and educating to prove and we therefore prefer to leave them to the reader.
\begin{proof} Since the functions $f_\delta$ is discrete holomorphic,  the statement follows (see Exercise~\ref{exo:compactness harmonic} for details) from the fact that $(\tfrac1{\sqrt {2\delta}}f_\delta)$ is square integrable, i.e.~that for any compact subset ${\bf K}$ of ${\bf \Omega}$, there exists a constant $C=C({\bf K})>0$ such that for all $\delta$,
\begin{equation}\label{eq:lok}\delta\sum_{x\in \delta\bbL\cap {\bf K}} |f_\delta(x)|^2\le C.\end{equation}
In particular, \eqref{eq:image} implies that 
\begin{align}\nonumber \tfrac{\sqrt 2}2|f_\delta(v)|^2&=\tfrac12{\rm Im}[f_\delta(v)^2(x'-x)]+\tfrac12{\rm Re}[f_\delta(v)^2(x'-x)]\\
&=H^\bullet(x')-H^\bullet(x)+H^\circ(y')-H^\circ(y)
\label{eq:lol},\end{align}
where $x,x'\in\Omega_\delta$ and $y,y'\in\Omega^*_\delta$ are the four faces bordering $v$ indexed so that $x'-x={\rm i}(y'-y)$.
Since $H^\bullet_\delta$ is bounded and sub-harmonic, Exercise~\ref{exo:compactness subharmonic} implies that 
\begin{equation}\delta\sum_{x\in \delta\bbL\cap {\bf K}}|H^\bullet_\delta(x)-H^\bullet_\delta(x')|\le C,\label{eq:lop}\end{equation}where the sum is over edges $x'$ with $xx'$ an edge of $\delta\bbL$. Similarly, one obtains the same bound for $H^\circ_\delta$. This, together with \eqref{eq:lol}, implies \eqref{eq:lok}.\end{proof}

\bexo

\begin{exercise}[Dirichlet problem]\label{exo:convergence Dirichlet}
1. Prove that there exists $\alpha>0$ such that for any $0<r<\tfrac12$ and any curve $\gamma$ inside $\mathbb D:=\{z:|z|<1\}$ from $\{z:|z|=1\}$ to $\{z:|z|=r\}$, the probability that a random walk on $\mathbb D\cap\delta\bbL$ starting at 0 exits $\mathbb D\cap\delta\bbL$ without crossing $\gamma$ is smaller than $r^\alpha$ uniformly in $\delta>0$.
\medbreak\noindent
2. Deduce that ${\bf Hm}^\bullet_\delta$ tends to 0 on $(ab)$.
\medbreak\noindent
3. Using the convergence of the simple random-walk to Brownian motion, prove the convergence of ${\bf Hm}^\bullet_\delta$ to the solution of the Dirichlet problem with 0 boundary conditions on $(ab)$, and 1 on $(ba)$. 
\end{exercise}

\begin{exercise}[Regularity of discrete harmonic functions]\label{exo:bounded}
1.  Consider $\Lambda:=[-1,1]^2$. Show that there exists $C>0$ such that, for each $\delta>0$, one may couple two lazy random-walks $X$ and $Y$ starting from $0$ and its neighbor $x$ in $\Lambda\cap\delta\bbL$ in such a way that $\bbP[X_\tau\ne Y_\tau]\le C\delta$, where $\tau$ is the hitting time of the box of the boundary of $\Lambda$. 
\medbreak\noindent 2. Deduce that a bounded harmonic function $h$ on $\Lambda$ satisfies $|h(x)-h(y)|\le C\delta$.\medbreak\noindent
3. Let $H_{\Lambda}(x,y)$ be the probability that the random walk starting from $x$ exits $\Lambda$ by $y$. Show that $H_{\Lambda}(x,y)\le C'\delta$.
\end{exercise}

\begin{exercise}[Limit of discrete holomorphic functions]
Prove that a discrete holomorphic function $f$ on $\delta\bbZ^2$ is discrete harmonic for the leap-frog Laplacian, i.e.~that 
$\Delta f_\delta(x)=0$, where 
$$\Delta f_\delta(x)=\sum_{\ep,\ep'\in\{\pm\delta\}}(f_\delta(x+(\ep,\ep'))-f(x)).$$
Prove that a convergent family of discrete holomorphic functions $f_\delta$ on $\delta\bbZ^2$ converges to a holomorphic function $f$. {\em Hint.} Observe that all the discrete versions of the partial derivatives with respect to $x$ and $y$ converge using Exercise~\ref{exo:bounded}.
\end{exercise}

\begin{exercise}[Precompactness criteria for discrete harmonic functions]\label{exo:compactness harmonic}Below, $\|f\|_\infty=\sup\{|f(x):x\in{\bf\Omega}\cap\delta\bbZ^2\}$ and $\|f\|_2=\delta^2\sum_{x\in{\bf\Omega}\cap\delta\bbZ^2}f(x)$.\medbreak\noindent
1. Show that a family of $\|\cdot\|_\infty$-bounded harmonic functions $(f_\delta)$ on ${\bf\Omega}$ is precompact for the uniform convergence on compact subsets. {\em Hint.} Use the second question of Exercise~\ref{exo:bounded}.
\medbreak\noindent
2. Show that a family of $\|\cdot\|_2$-bounded harmonic functions $(f_\delta)$ on ${\bf\Omega}$ is precompact for the uniform convergence on compact subsets. {\em Hint.} Use the third question of Exercise~\ref{exo:bounded} and the Cauchy-Schwarz inequality. 
 \end{exercise}

\begin{exercise}[Regularity of sub-harmonic functions] \label{exo:compactness subharmonic}
Let $H$ be a sub-harmonic function on $\Omega_\delta:={\bf \Omega}\cap\delta\bbL$, with 0 boundary conditions on $\partial\Omega_\delta$. 
\medbreak\noindent
1. Show that  $H(x)=\sum_{y\in \Omega_\delta} G_{\Omega_\delta}(x,y)\Delta H(y)$, where $G_{\Omega_\delta}(x,y)$ is the expected time a random-walk starting at $x$ spends at $y$ before exiting $\Omega_\delta$.
\medbreak\noindent
2. Prove that $G_{\Omega_\delta}$ is harmonic in $x\ne y$. Deduce that for two neighbors $x$ and $x'$ on $\Omega_\delta$,
$$|G_{\Omega_\delta}(x,y)-G_{\Omega_\delta}(x',y)|\le \frac{C\delta}{|x-y|\wedge d(x,\partial{\bf \Omega})}.$$
\medbreak\noindent
3. Deduce that for any compact subset ${\bf K}$ of ${\bf \Omega}$, there exists $C({\bf K})>0$ such that for any $\delta$,

$$\delta \sum_{x\in {\bf K}\cap\delta\bbL} |H(x)-H(x')|\le C,$$
where $x'$ is an arbitrary choice of neighbor of $x$.
\medbreak\noindent
4. What can we say for bounded boundary conditions?
\medbreak\noindent
5. Deduce \eqref{eq:lop} for $H_\delta^\bullet$.
\end{exercise}
\eexo

\subsection{Conformal invariance of the exploration path}\label{sec:6.3}

 Conformal field theory leads to the prediction that the exploration path $\gamma_{(\Omega_\delta,a_\delta,b_\delta)}$ in the Dobrushin domains $(\Omega_\delta,a_\delta,b_\delta)$ mentioned before converges as
$\delta\rightarrow 0$ to a random, continuous, non-self-crossing curve $\gamma_{({\bf \Omega},{\bf a},{\bf b})}$
from ${\bf a}$ to ${\bf b}$ staying in $\overline {\bf \Omega}$, and which
is expected to be conformally invariant in the following sense.

\begin{definition}
  A family of random non-self-crossing continuous curves
  $\gamma_{({\bf \Omega},{\bf a},{\bf b})}$, going from ${\bf a}$ to ${\bf b}$ and contained in $\overline {\bf \Omega}$,  indexed by simply connected
  domains ${\bf \Omega}$ with two marked points ${\bf a}$ and ${\bf b}$ on the boundary is
  \emph{{conformally invariant}} if for any $({\bf \Omega},{\bf a},{\bf b})$ and any
  conformal map $\psi:{\bf \Omega}\rightarrow \mathbb C$, $$\psi
  (\gamma_{({\bf \Omega},{\bf a},{\bf b})})~\text{has the same law
    as}~\gamma_{(\psi({\bf \Omega}),\psi({\bf a}),\psi({\bf b}))}.$$
\end{definition}

In 1999, Schramm proposed a natural candidate for the possible conformally invariant families of continuous non-self-crossing curves. He noticed that interfaces of discrete models further satisfy the {\em domain Markov property}\index{domain Markov property} which, together with the assumption of conformal invariance, determines a one-parameter family of possible random curves. In \cite{Sch00}, he introduced the Stochastic Loewner
evolution ($\mathsf{SLE}$ for short) which is now known as the
{Schramm--Loewner evolution}. Our goal is not to present in details this well studied model, and we rather refer the reader to the following expositions and references therein \cite{Law05}. Here, we wish to prove Theorem~\ref{thm:interface} and therefore briefly remind the definition of $\mathsf{SLE}$s.

Set $\mathbb H$ to be the upper half-plane $\bbR\times(0,\infty)$. Fix a simply connected subdomain $H$ of $\bbH$  such that $\mathbb H\setminus H$ is compact.
Riemann's mapping theorem guarantees\footnote{The proof of the existence of this map is not
completely obvious and requires Schwarz's reflection principle.} the existence of a {\em unique} conformal
map $g_H$ from $H$ onto $\mathbb H$ such that $$g_H(z)~:=~z + \tfrac{C}{z} + O\left(
\tfrac1{z^2} \right).$$ The constant $C$ is called the {\em $h$-capacity} of $H$.

There is a natural way to parametrize certain continuous non-self-crossing curves
$\Gamma:\mathbb R_+\rightarrow \overline{\mathbb H}$ with
$\Gamma(0)=0$ and with $\Gamma(s)$ going to $\infty$ when $s\rightarrow
\infty$. For every $s$, let $H_s$ be the connected component of
$\mathbb H\setminus\Gamma[0,s]$ containing $\infty$, and denote its $h$-capacity by $C_s$. The
continuity of the curve guarantees that $C_s$ grows continuously, so that it
is possible to parametrize the curve via a time-change $s(t)$ in such a way
that $C_{s(t)}=2t$. This parametrization is called the \emph{$h$-capacity
parametrization}. Below, we will assume that the parametrization is the $h$-capacity, and reflect this by using
the letter $t$ for the time parameter.

Let $(W_t)_{t>0}$ be a continuous real-valued function\footnote{Again, one usually requires a few things about this function, but let us omit these technical conditions here.}. Fix $z\in\bbH$ and consider the map $t\mapsto g_t(z)$ satisfying the following differential equation up to its explosion time:
\begin{equation}\label{fg}\partial_t
g_t(z)~=~\frac{2}{g_t(z)-W_t}.\end{equation}
For every fix $t$, let $H_t$ be the set of $z$ for which the explosion time of the differential equation above is strictly larger than $t$. One may verify that $H_t$ is a simply connected open set and that $\overline\bbH\setminus H_t$ is compact. Furthermore, the map $z\mapsto g_t(z)$  is a conformal map from $H_t$ to $\bbH$.
 If there
exists a parametrized curve $(\Gamma_t)_{t>0}$ such that for any
$t>0$, $H_t$ is the connected component of $\mathbb H\setminus
\Gamma[0,t]$ containing $\infty$, the curve $(\Gamma_t)_{t>0}$ is called {\em (the curve generating) the Loewner chain with driving process} $(W_t)_{t>0}$. 

The Loewner chain in $({\bf \Omega},{\bf a},{\bf b})$ with driving function $(W_t)_{t>0}$ is simply the image of the Loewner chain in $(\bbH,0,\infty)$ by a conformal from $(\bbH,0,\infty)$ to $({\bf \Omega},{\bf a},{\bf b})$.
\begin{definition}
For $\kappa>0$ and $({\bf \Omega},{\bf a},{\bf b})$, $\mathsf{SLE}(\kappa)$ is the random Loewner evolution in
$({\bf \Omega},{\bf a},{\bf b})$ with driving process $\sqrt \kappa B_t$,
where $(B_t)$ is a standard Brownian motion. 
\end{definition}

The strategy of the proof of Theorem~\ref{thm:interface} is the following. The first step consists in proving that the family $(\gamma_{(\Omega_\delta,a_\delta,b_\delta)})$ is tight for the weak convergence and that any sub-sequential limit $\gamma$ is a curve generating a Loewner chain for a  continuous driving process $(W_t)$ satisfying some integrability conditions. The proof of this fact is technical and can be found in \cite{KemSmi12,CheDumHon14,DumSmi12a}. It is based on a Aizenman-Burchard type argument based on crossing estimates obtained in Property {\bf P5} of Theorem~\ref{thm:main} (see also \cite{CheDumHon13,DumHonNol11} for a stronger statement in the case of the Ising model).

The second step of the proof is based on the fermionic observable, which can be seen as a martingale for the exploration process. This fact implies that its limit is a martingale for $\gamma$. This martingale property, together with It\^o's formula, allows to prove that 
$W_t$ and $W_t^2-\kappa t$ are martingales (where $\kappa$ equals 16/3). L\'evy's theorem thus implies that $W_t=\sqrt\kappa B_t$. This identifies $\mathsf{SLE}(\kappa$) as being the only possible sub-sequential limit, which proves that $(\gamma_{(\Omega_\delta,a_\delta,b_\delta)})$ converges to $\mathsf{SLE}(\kappa$). We now provide more details for this second step.

Below, $\Omega\setminus\gamma[0,n]$ is the slit domain obtained from $\Omega$ by removing all the edges crossed by the exploration path up to time $n$. Also, $\gamma(n)$ denotes the vertex of $\Omega$ bordered by the last edge of $\gamma[0,n]$.
\begin{lemma}
Let $\delta>0$. The random variable $M_n(z):=f_{\Omega\setminus\gamma[0,n],\gamma(n),b}(z)$  is a martingale with respect to $(\mathcal F_n)$ where $\mathcal F_n$ is the $\sigma$-algebra generated by $\gamma[0,n]$. 
\end{lemma}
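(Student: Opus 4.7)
The plan is to recognize $M_n(z)$ as a conditional expectation of a single $\mathcal{F}_\infty$-measurable random variable with respect to $(\mathcal{F}_n)$, which is automatically a martingale by the tower property. Define
\[
X(z) := \tfrac{1}{2}\sum_{e \ni z} e^{\mathrm{i}\sigma W_\gamma(e,e_b)}\mathbbm{1}_{e \in \gamma}
\]
as a random variable under $\phi^{a,b}_{\Omega,p_c,2}$, so that its expectation equals $f_{\Omega,a,b}(z)$. The aim is to prove $M_n(z) = \mathbb{E}[X(z)\mid\mathcal{F}_n]$ (at least up to the hitting time at which $z$ is swallowed by the explored region).

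The key input is the domain Markov property adapted to Dobrushin boundary conditions: conditionally on $\gamma[0,n]$, the induced configuration on $\Omega\setminus\gamma[0,n]$ has law $\phi^{\gamma(n),b}_{\Omega\setminus\gamma[0,n],p_c,2}$, and the remainder of the exploration path coincides with the exploration path $\gamma'$ of this conditional random-cluster configuration. This follows from \eqref{eq:domain Markov} combined with the standard observation that the edges straddled by $\gamma[0,n]$ have deterministic (primal-open / dual-open) states, so the remaining randomness is exactly the random-cluster configuration on the slit domain. As a consequence, for any edge $e$ of $\Omega\setminus\gamma[0,n]$, the event $\{e\in\gamma\}$ coincides with $\{e\in\gamma'\}$, and crucially the sub-path from $e$ to $b$ is identical under both, so $W_\gamma(e,e_b) = W_{\gamma'}(e,e_b)$. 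Taking expectations gives
\[
F_{\Omega\setminus\gamma[0,n],\gamma(n),b}(e) = \mathbb{E}\!\left[e^{\mathrm{i}\sigma W_\gamma(e,e_b)}\mathbbm{1}_{e\in\gamma}\,\Big|\,\mathcal{F}_n\right].
\]
Summing this identity over the edges $e\ni z$ (when all four are in the slit) and using the definition of $f$ at an interior vertex yields $M_n(z)=\mathbb{E}[X(z)\mid\mathcal{F}_n]$, whence $\mathbb{E}[M_{n+1}(z)\mid\mathcal{F}_n]=M_n(z)$.

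The main subtlety — and probably the only one requiring real care — is the boundary case: once some of the edges $e\ni z$ have been swallowed by $\gamma[0,n]$, the vertex $z$ lies on $\partial(\Omega\setminus\gamma[0,n])^\diamond$ and $f$ at $z$ is defined with the boundary normalization $\frac{2}{2+\sqrt{2}}$ over only the two remaining edges. One must verify that this normalization is precisely what is needed so that the identity $M_n(z)=\mathbb{E}[X(z)\mid\mathcal{F}_n]$ persists; this uses the deterministic winding of $\gamma$ along the boundary of the slit domain (the same relation \eqref{eq:equation boundary} that produced the boundary condition $\nu_v f(v)^2\in\mathbb{R}_+$ in the first place), which forces the contributions of the two boundary half-edges to combine with the correct constant. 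The cleanest route is to stop the process at the first time $\tau(z)$ that $z$ is disconnected from $b$ in the slit domain and work with the stopped process $M_{n\wedge\tau(z)}(z)$; the identification as a conditional expectation is then valid on $\{n\le\tau(z)\}$, and the martingale property follows at once from the tower property.
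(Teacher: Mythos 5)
Your proof follows essentially the same route as the paper: by the domain Markov property, conditionally on $\gamma[0,n]$ the remaining configuration is a critical random-cluster model with Dobrushin boundary conditions on the slit domain, so each edge observable $M_n(e)=F_{\Omega\setminus\gamma[0,n],\gamma(n),b}(e)$ is the conditional expectation given $\mathcal F_n$ of the fixed random variable ${\rm e}^{{\rm i}\sigma W_\gamma(e,e_b)}\mathbbm{1}_{e\in\gamma}$, hence a closed martingale, and $M_n(z)$ is obtained by combining these. Your extra care about the boundary normalization $\tfrac{2}{2+\sqrt2}$ once edges around $z$ are swallowed — and the suggestion to work with the stopped process $M_{n\wedge\tau(z)}$ — addresses a point the paper's one-line proof glosses over, and is consistent with how the lemma is actually applied later (only the stopped martingale $M_{t\wedge\sigma}$ is used).
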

\begin{proof} 
The random variable $M_n(z)$ is a linear combination of the random variables $M_n(e):=F_{\Omega\setminus\gamma[0,n],\gamma(n),b}(e)$ for $e\ni z$ so that we only need to treat the later random variables.  The fact that conditionally on $\gamma[0,n]$, the law in $\Omega\setminus\gamma[0,n]$ is a random-cluster model with Dobrushin boundary conditions  implies that $M_n(e)$ is equal to ${\rm e}^{\frac12 {\rm i}W_{\gamma}(e,e_b)}\mathbbm{1}_{e\in \gamma}$ conditionally on $\mathcal F_n$, therefore it is automatically a closed martingale.
\end{proof}

\begin{proof}[Theorem~\ref{thm:interface}]
We treat the case of the upper half-plane ${\bf \Omega}=\bbH$ with ${\bf a}=0$ and ${\bf b}=\infty$. The general case follows by first applying a conformal map from $({\bf \Omega},{\bf a},{\bf b})$ to $(\bbH,0,\infty)$. Consider $\gamma$ a sub-sequential limit of $\gamma_{(\Omega_\delta,a_\delta,b_\delta)}$ and assume that its driving process is equal to $(W_t)$.  Define $g_t$ as above.
For $z\in \bbH$ and $\delta>0$, define $M_n^\delta(z)$ for $\gamma_{(\Omega_\delta,a_\delta,b_\delta)}$ as above too.

The stopping time theorem implies that $M^\delta_{\tau_t}(z)$ is a martingale with respect to $\mathcal F_{\tau_t}$, where $\tau_t$ is the first time at which $\gamma_{(\Omega_\delta,a_\delta,b_\delta)}$ has a $h$-capacity larger than $t$. Now, if $M^\delta_{\tau_t}(z)$ converges uniformly as $\delta$ tends to 0, then, the limit $M_t(z)$ is a martingale with respect to the $\sigma$-algebra $\mathcal G_t$ generated by the curve $\gamma$ up to the first time its $h$-capacity exceeds $t$. By definition of the parametrization, this time is $t$, and $\mathcal G_t$ is the $\sigma$-algebra generated by $\gamma[0,t]$. 

Since the conformal map from $\mathbb H\setminus \gamma[0,t]$ to $\mathbb R\times(0,1)$, normalized to send $\gamma_t$ to $-\infty$ and $\infty$ to $\infty$ is $\frac1\pi\ln (g_t-W_t)$, Theorem~\ref{thm:observable} gives that $M_t^\delta(z)$ converges to
\begin{eqnarray}\sqrt \pi M_t(z)=\sqrt{[\ln (g_t(z)-W_t)]'}=\Big(\frac {g'_t(z)}{g_t(z)-W_t}\Big)^{1/2},\end{eqnarray} 
which is therefore a martingale for the filtration $(\calG_t)$. Formally, in order to apply Theorem~\ref{thm:observable}, one needs $z$ and $\gamma[0,\tau_t]$ to be well apart. For this reason, we only obtain that $M_{t\wedge\sigma}^z$ is a martingale for $\mathcal G_{t\wedge\sigma}$, where $\sigma$ is the hitting time of the boundary of the ball of size $R<|z|$ by the curve $\gamma$. 

Recall that
$g_t(z)=z+\tfrac{2t}{z}+O\left(\tfrac1{z^2}\right)$ and $g_t'(z)~=~1-\tfrac{2t}{z^2}+O\left( \tfrac1{z^3}\right)$
so that for $t$,
\begin{align*}\sqrt {\pi z}\, M_{t}(z)&=\Big(\frac{1-\tfrac{2t}{z^2}+O\big(\tfrac{1}{z^3}\big)}{1-\tfrac{W_{t}}z+\tfrac{2t}{z^2}+O\big(\tfrac{1}{z^3}\big)}\Big)^{1/2}
=1
+\tfrac1{2z}W_{t}+\tfrac1{8z^2}(3W_{t}^2-16t)+O\left(\tfrac1{z^3}\right).\end{align*}
Taking the conditional expectation against $\calG_{s\wedge\sigma}$ (with $s\le t$) gives
\begin{align*}\sqrt {\pi z}\,\mathbb E [M_{t\wedge\sigma}(z)|\mathcal G_{s\wedge\sigma}]
&=1
+\tfrac1{2z}\mathbb E[W_{t\wedge\sigma}|\mathcal G_{s\wedge\sigma}]+\tfrac1{8z^2}\mathbb E[3W_{t\wedge\sigma}^2-16(t\wedge\sigma)|\mathcal G_{s\wedge\sigma}]+O\left(\tfrac1{z^3}\right).\end{align*}
Since $M_{t\wedge\sigma}(z)$ is a martingale, $\mathbb E [M_{t\wedge\sigma}(z)|\mathcal G_{s\wedge\sigma}]=M_{s\wedge\sigma}(z)$. Therefore, the terms in the previous asymptotic developments (in $1/z$) can be matched together by letting $z$ tend to infinity so that 
$$\mathbb E [W_{t\wedge\sigma}|\mathcal G_{s\wedge\sigma}]=W_{s\wedge\sigma}\quad\text{ and }\quad\mathbb E[W_{t\wedge\sigma}^2-\tfrac {16}3(t\wedge\sigma)|\mathcal G_{s\wedge\sigma}]=W_{s\wedge\sigma}^2-\tfrac {16}3(s\wedge\sigma).$$ 
One can now let $R$ and thus $\sigma$ go to infinity to obtain
$$\mathbb E [W_t|\mathcal G_s]=W_s\quad\text{ and }\quad\mathbb E[W_t^2-\tfrac {16}3t|\mathcal G_s]=W_s^2-\tfrac {16}3s.$$
(Note that some integrability condition on $W_t$ is necessary to justify passing to the limit here.)
The driving process $W_t$ being continuous, L\'evy's theorem implies that $W_t=\sqrt{16/3}B_t$ where $B_t$ is a standard Brownian motion. 
Since we considered an arbitrary sub-sequential limit, this directly proves that $(\gamma_{(\Omega_\delta,a_\delta,b_\delta)})$ converges weakly to $\mathsf{SLE}(16/3)$.
\end{proof}

Note that despite the fact that the fermionic observable may not seem like a very natural choice at first sight, it is in fact corresponding to a discretization of a very natural martingale of $\mathsf{SLE}(16/3)$.

\section{Where are we standing? And more conjectures...}

It is time to conclude these lectures. To summarize, we proved that the Potts model and its random-cluster representation undergo phase transitions between ordered and disordered phases. We also showed that the long-range order and the spontaneous magnetization phases of the Potts model coincide. Then, we proceeded to prove that the phase transition was sharp, meaning that correlations decay exponentially fast below the critical inverse-temperature. 

After this study of the phases $\beta<\beta_c$ and $\beta>\beta_c$, we moved to the study of the $\beta=\beta_c$ phase. We determined that the phase transition of the Potts model is continuous in any dimension if $q=2$ (i.e.~for the Ising model), and that it is continuous if $q\le 4$ and discontinuous for $q>4$ in two dimensions. This gives us the opportunity of mentioning the first major question left open by this manuscript:
\begin{conjecture}\label{conj:1}
Prove that the phase transition of the nearest-neighbor Potts model on $\bbZ^d$ (with $d\ge3$) is discontinuous for any $q\ge3$.
\end{conjecture}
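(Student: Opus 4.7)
The plan is to work through the random-cluster representation and establish that $\phi^1_{p_c,q}[0\longleftrightarrow\infty]>0$ for every integer $q\ge 3$ and every $d\ge 3$. By the coupling of Section~1.2 together with Theorem~\ref{uniqueness Dq} (which gives $\phi^0_{p_c,q}\ne\phi^1_{p_c,q}$ as soon as the wired measure percolates at $p_c$), this is equivalent to $(\mathrm{MAG}_{\beta_c})$ and hence to the discontinuity of the transition. I would therefore formulate everything on the random-cluster side, where the FKG inequality, comparison between boundary conditions, and the sharpness theorem of Section~2.4 are all available.

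First I would try to import the analytic input coming from reflection positivity. Fr\"ohlich--Simon--Spencer's infrared bound holds for the Potts model with $q\ge 2$ on $\bbZ^d$, and in dimensions $d\ge 3$ this yields $\mu^{\rm f}_{\beta_c}[\sigma_0\sigma_x]\le C G(x)/\beta_c$ as in~\eqref{eq:infrared bound beta_c}. The goal would be to feed this into a differential inequality of the type used in Section~2.2.1, in order to obtain a mean-field lower bound $\phi^1_{p,q}[0\leftrightarrow\infty]\ge c(p-p_c)^{\beta}$ with $\beta\in(0,1)$ strict; strict inequality here is exactly the signature of a discontinuous transition. For the Ising case this type of argument rests on the switching lemma and Simon/Lieb inequalities, so the first real step is to develop a surrogate for the random-current representation that works for $q\ge 3$. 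One natural candidate is the loop $O(q-1)$-type representation or the cycle/edge representations recently studied for Potts models; the plan is to prove, within one of these, an analogue of the switching identity (Lemma~\ref{lem:switching}) strong enough to give a \emph{tree-graph inequality} and thus bootstrap the infrared bound into a bubble-type condition $\sum_x\phi^0_{p_c,q}[0\leftrightarrow x]^2<\infty$ in $d\ge 3$.

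Next I would combine this analytic ingredient with a contour/Peierls-type argument at $\beta_c$. In the regime $q\gg 1$ the Pirogov--Sinai/Koteck\'y--Shlosman mechanism works because the entropy $\log q$ of the ordered phases dominates the surface energy; for general $q\ge 3$ a bare Peierls estimate fails, and the plan is to replace it by a multi-scale renormalization of the dichotomy of Theorem~\ref{thm:main} extended to $d\ge 3$. Concretely, I would try to prove the higher-dimensional analogue of Proposition~\ref{lem:induction1}: if the $\phi^0_{p_c,q}$-probability $u_n$ of a separating dual surface surrounding $\Lambda_n$ inside $\Lambda_{2n}$ satisfies $u_{Cn}\le K u_n^{1+\epsilon}$, then either $u_n$ remains bounded from below (giving the continuous scenario, which one must then rule out using the bubble condition) or $u_n\to 0$ stretched-exponentially, which together with duality of half-space estimates would force $\phi^1_{p_c,q}[0\leftrightarrow\infty]>0$. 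The surface-to-surface renormalization is where the absence of planar duality hurts most, and one would likely have to introduce a coarse-grained ``sheet'' variable and use reflection positivity to control its increments.

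The main obstacle is precisely that, in dimension $d\ge 3$, none of the three tools that made every previously solved case work is available simultaneously: (i) the exact six-vertex mapping driving the $q>4$ proof of Section~5.3 is intrinsically two-dimensional; (ii) the switching lemma responsible for the Ising continuity result of Section~4 has no known analogue for $q\ge 3$; (iii) the Peierls/chessboard estimate requires $q$ large. The hard part of the proposal is therefore the middle step: producing a non-perturbative monotonicity or correlation inequality (perhaps via a joint loop/current representation coupling Potts with an auxiliary Ising system, or via a convexity argument on the free energy $p\mapsto f(p,q)$ refining Exercise~\ref{exo:free energy}) that holds \emph{uniformly} in $q\ge 3$ and $d\ge 3$, and in particular survives at the physically delicate corner $(q,d)=(3,3)$ where the transition is only mildly first order. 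Without such a new inequality, the renormalization and reflection-positivity ingredients above cannot be glued together into a proof.
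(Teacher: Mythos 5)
The statement you are addressing is Conjecture~\ref{conj:1}, which the paper explicitly presents as an open problem: no proof is given there, and the only known cases are the perturbative ones ($q\ge q_c(d)\gg1$, $d\ge d_c(q)\gg1$, or sufficiently spread-out interactions). Your text is a research program rather than a proof, and you say so yourself in the last sentence; as such it cannot be checked against a proof in the paper, and it does not establish the statement.

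Beyond that, there is a directional error in the analytic half of your plan. The infrared bound, the switching-lemma/tree-graph machinery, and the bubble condition $\sum_x\phi^0_{p_c,q}[0\leftrightarrow x]^2<\infty$ are the tools used in Sections~4 and~7 to prove \emph{continuity} and mean-field critical behaviour: \eqref{eq:infrared bound beta_c} plus a $q\ge3$ analogue of Lemma~\ref{lem:switching} strong enough to give Theorem~\ref{thm:continuous} would yield $(\mathrm{MAG}_{\beta_c})\Rightarrow(\mathrm{LRO}_{\beta_c})$ and hence $m^*(\beta_c,q)=0$, i.e.\ the \emph{opposite} of the conjecture. Likewise, a mean-field lower bound $\phi^1_{p,q}[0\leftrightarrow\infty]\ge c(p-p_c)^{\beta}$ with $\beta\in(0,1)$ vanishes as $p\searrow p_c$ and says nothing about a jump at $p_c$; discontinuity is the statement $\phi^1_{p_c,q}[0\leftrightarrow\infty]>0$, which no power-law lower bound off criticality can certify. (Also, Theorem~\ref{uniqueness Dq} does not say what you attribute to it; the equivalence of $\phi^1_{p_c,q}[0\leftrightarrow\infty]>0$ with $(\mathrm{MAG}_{\beta_c})$ comes directly from the coupling.) Finally, the dichotomy of Theorem~\ref{thm:main} and Proposition~\ref{lem:induction1} rests entirely on planar duality and crossing events, so the proposed ``higher-dimensional analogue'' is itself an open problem of comparable difficulty. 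The honest conclusion is that the missing ingredient you identify in your last paragraph is the whole content of the conjecture, which remains open.
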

Let us mention that this conjecture is proved in special cases, namely 
\begin{itemize}[noitemsep]
\item if $d$ is fixed and $q\ge q_c(d)\gg1$ \cite{KotShl82}, 
\item if $q\ge3$ is fixed and $d\ge d_c(q)\gg1$ \cite{BisCha03},
\item if $q\ge3$ and $d\ge2$, but the range of the interactions is sufficiently spread-out \cite{BisChaCra06,GobMer07}.
\end{itemize}

When the phase transition is continuous, there should be some conformally invariant scaling limit. In two dimensions, this concerns any $q\le 4$, and not only the $q=2$ case mentioned previously in these lectures.
One may formulate the conformal invariance conjecture for random-cluster models with $q\le 4$ in the following way.
\begin{conjecture}[Schramm]\label{conj:interface} Fix $q\le 4$ and $p=p_c$. Let $(\Omega_\delta,a_\delta,b_\delta)$ be Dobrushin domains approximating a simply connected domain ${\bf \Omega}$ with two marked points ${\bf a}$ and ${\bf b}$ on its boundary.  
The exploration path $\gamma_{(\Omega_\delta,a_\delta,b_\delta)}$ in $(\Omega_\delta,a_\delta,b_\delta)$  converges weakly to $\mathsf{SLE}(\kappa)$  as $\delta$ tends to $0$, where 
$$\kappa=\tfrac{8}{\sigma+1}=\tfrac{4\pi}{\pi-\arccos(\sqrt q/2)}.$$\end{conjecture}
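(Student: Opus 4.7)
The plan is to mimic the strategy that worked for $q=2$ (Theorem~\ref{thm:interface}) while working around the fact that, for general $q\in[1,4]$, the parafermionic observable only satisfies ``half'' of the discrete Cauchy--Riemann relations, namely the vanishing of discrete contour integrals \eqref{rel_vertex} around faces dual to primal vertices. In broad outline the argument splits into three blocks: (i) precompactness of the curves $\gamma_{(\Omega_\delta,a_\delta,b_\delta)}$; (ii) construction of a discrete martingale that converges, in the scaling limit, to an explicit conformally covariant object carrying enough information to pin down the driving process; (iii) matching with the It\^o/L\'evy argument used in the Ising case.

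First I would establish tightness. Here the input is Property \textbf{P5} of Theorem~\ref{thm:main}, which is available for every $q\in[1,4]$ thanks to Theorem~\ref{thm:decide}. The uniform crossing estimates, combined with an Aizenman--Burchard style control of multi-arm events (using \eqref{eq:polynomial} and the mixing property of Exercise~\ref{exo:pol mixing}), imply that $(\gamma_{(\Omega_\delta,a_\delta,b_\delta)})$ is tight in the topology of curves modulo reparametrisation, that any subsequential limit is a simple (or at least non-self-crossing) curve in $\overline{\bf \Omega}$, and that it can be described as the trace of a Loewner chain driven by a continuous process $(W_t)$ with some polynomial integrability. This part of the program is essentially model-agnostic and goes through as in \cite{KemSmi12,CheDumHon14}.

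The hard block is identifying $W_t$. By analogy with the proof of Theorem~\ref{thm:interface}, one wants a discrete martingale $M_n^\delta(z)=F_{\Omega\setminus\gamma[0,n],\gamma(n),b}(z)$ built from the parafermionic observable of Definition~\ref{def:parafermionic observable}, and to show that, after dividing by the appropriate power of $\delta$,
\begin{equation*}
\sqrt{\pi z}\, M_t(z) \;=\; \bigl(g_t'(z)\bigr)^{\sigma}\bigl(g_t(z)-W_t\bigr)^{-2\sigma}
\end{equation*}
in the scaling limit, where $\sigma = \tfrac{2}{\pi}\arcsin(\sqrt q/2)$. Expanding at $z=\infty$ and matching coefficients would then force $W_t$ and $W_t^2 - \kappa t$ to be $(\mathcal G_t)$-martingales with $\kappa=8/(2\sigma+1)$, and L\'evy's characterisation would conclude. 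The martingale property of $M_n^\delta(z)$ at the discrete level comes for free from the domain Markov property, exactly as in the $q=2$ case, so the only missing input is the scaling limit of the observable in a slit domain. This is where the fundamental obstacle lies: for $q\ne 2$ one has the contour relation \eqref{rel_vertex} around each vertex of $\Omega^\diamond$, but there is no analogue of Lemma~\ref{lem:argument} giving the complementary relation around faces, so $F$ is massively underdetermined by the known discrete equations, and one cannot set up the $s$-holomorphic boundary value problem of Theorem~\ref{thm:BVP}.

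To attack this obstacle, the plan is to supplement the contour relations by extra geometric or combinatorial inputs. Three complementary routes look promising. First, use the loop representation of Proposition~\ref{prop:loop} to reinterpret the observable in terms of the loop $O(n)$ model with $n=\sqrt q$, and try to enrich $F$ with companion observables defined from the loops so that together they form an overdetermined system satisfying a discrete Beltrami-type equation with boundary data prescribed by winding. Second, exploit the scaling relations predicted by the crossing estimates of \textbf{P5}, combined with RSW-based \emph{a priori} H\"older regularity of $F/\delta^\sigma$, to prove that any subsequential scaling limit of the normalised observable is a holomorphic function in ${\bf \Omega}$ with the Riemann--Hilbert type boundary behaviour $F\,\nu^{\sigma}\in\mathbb R$ on $\partial\Omega$; uniqueness of such Riemann--Hilbert problems would then identify the limit as $(\phi')^{\sigma}$ up to a constant. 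Third, for the specific case of Dobrushin slit domains one can try to prove tightness and compactness of $F/\delta^\sigma$ by using the martingale property along the exploration, which would turn the problem into a fixed-point identification on the space of conformally covariant limits. I expect that combining the RSW regularity inputs of Section~\ref{sec:5} with a sharp analysis of the winding on the boundary is what will ultimately produce the missing ``second half'' of the Cauchy--Riemann equations in a weak, scaling-limit sense. Once that step is in place, the rest of the argument is a verbatim transcription of Section~\ref{sec:6.3}: write It\^o expansions of $M_t(z)$ at infinity, extract the equations $dW_t \sim \sqrt\kappa\, dB_t$, and conclude by L\'evy's theorem that the subsequential limit is $\mathsf{SLE}(\kappa)$ with $\kappa = 4\pi/(\pi-\arccos(\sqrt q/2))$.
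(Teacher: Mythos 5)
This statement is an open conjecture, not a theorem: the paper states it (together with Conjecture~\ref{conj:observable} on the convergence of the vertex parafermionic observable) precisely as a problem that remains unsolved for $q\ne 2$, and offers no proof. Your proposal is therefore not comparable to a proof in the paper; it is a research programme, and it contains a genuine gap that you yourself identify but do not close. The tightness/Loewner-chain block and the final It\^o--L\'evy identification are indeed expected to carry over from Section~\ref{sec:6.3} once one has the scaling limit of the observable in slit domains, but the entire weight of the argument rests on that limit, i.e.\ on Conjecture~\ref{conj:observable}. For $q\ne2$ the relation \eqref{rel_vertex} gives only half of the discrete Cauchy--Riemann equations, there is no analogue of Lemma~\ref{lem:argument} (the complex argument of $F(e)$ is not determined modulo $\pi$ when $\sigma\notin\tfrac12+\bbZ$), and consequently no $s$-holomorphicity, no function $H$, and no discrete boundary value problem. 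The three ``routes'' you sketch (companion observables from the loop representation, RSW-based a priori regularity plus a Riemann--Hilbert uniqueness statement with fractional exponent $\sigma$, a fixed-point argument along the exploration) are all unexecuted: none comes with a precompactness estimate for $F/\delta^{\sigma}$, a proof that subsequential limits are holomorphic, or a uniqueness theorem for the continuum boundary value problem. As written, the proposal assumes exactly the missing ingredient.

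Two smaller inaccuracies are worth flagging. Your formula $\kappa=8/(2\sigma+1)$ is inconsistent with the conjecture's $\kappa=8/(\sigma+1)$: for $q=2$ one has $\sigma=\tfrac12$ and the conjecture gives $\kappa=16/3$, whereas your formula gives $\kappa=4$. Likewise your ansatz $\bigl(g_t'(z)\bigr)^{\sigma}\bigl(g_t(z)-W_t\bigr)^{-2\sigma}$ does not reduce, at $\sigma=\tfrac12$, to the martingale $\bigl(g_t'(z)/(g_t(z)-W_t)\bigr)^{1/2}$ used in the proof of Theorem~\ref{thm:interface}; the exponent bookkeeping would need to be redone before any coefficient-matching at infinity could identify $\kappa$.
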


The values of $\kappa$ range from $\kappa=4$ for $q=4$ to $\kappa=8$ for $q=0$. Also note that $\kappa=6$ corresponds to $q=1$, as expected. Following the same strategy as in the previous section, the previous conjecture would follow from the convergence of vertex parafermionic observables (they are defined for general $q$ as the vertex fermionic observable).
\begin{conjecture}[Smirnov]\label{conj:observable}
Fix $q\le 4$ and $p=p_c$. Let $(\Omega_\delta,a_\delta,b_\delta)$ be Dobrushin domains approximating a simply connected domain ${\bf \Omega}$ with two marked points ${\bf a}$ and ${\bf b}$ on its boundary. If $f_\delta$ denotes the vertex parafermionic observable on $(\Omega_\delta,a_\delta,b_\delta)$ defined as the average of the edge fermionic observable on neighboring edges, then
$$
\lim_{\delta\rightarrow0}(2\delta)^{-\sigma}f_{\delta}=(\phi')^\sigma,$$ where $\phi$ is a conformal map from ${\bf \Omega}$ to the strip $\mathbb R\times(0,1)$ mapping ${\bf a}$ to $-\infty$ and ${\bf b}$ to $\infty$. 
\end{conjecture}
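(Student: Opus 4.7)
The plan is to mirror the three-step scheme that succeeded in the $q=2$ case (Theorem~\ref{thm:observable}): (i) show that the rescaled family $((2\delta)^{-\sigma} f_\delta)$ is precompact for uniform convergence on compact subsets of ${\bf \Omega}$; (ii) identify any subsequential limit as a holomorphic function solving an explicit Riemann-Hilbert boundary value problem; (iii) check that $(\phi')^\sigma$ is the unique solution with the prescribed normalization.

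Step (ii) is essentially free from the tools already at hand. Theorem~\ref{thm:contours} asserts that $\sum (z_{i+1}-z_i) F_\delta(\tfrac{z_i+z_{i+1}}{2}) = 0$ along any closed contour of the medial lattice. Granting the bounds of step (i), any subsequential limit $f$ inherits $\oint f\, dz = 0$ along every closed contour in ${\bf \Omega}$ and is therefore holomorphic by Morera's theorem. The boundary behaviour comes from the deterministic winding of the exploration path along $\partial\Omega_\delta$: exactly as in \eqref{eq:equation boundary}, a medial edge $e$ incident to a smooth boundary piece with unit tangent $\tau(z)$ satisfies $F_\delta(e) = \tau(z)^{-\sigma} \phi_{\Omega_\delta,p_c,q}^{a_\delta,b_\delta}[e \in \gamma]$ (up to the global phase fixed by $e_b \in \bbR_+$). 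Passing to the limit yields the Riemann-Hilbert condition $\tau(z)^{\sigma} f(z) \in \bbR_+$ on $\partial{\bf \Omega}\setminus\{{\bf a},{\bf b}\}$.

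Step (iii) is then classical. The function $(\phi')^\sigma$ is holomorphic in ${\bf \Omega}$ and, because $\phi$ maps $\partial{\bf \Omega}$ to the horizontal lines $\bbR\times\{0,1\}$, we have $\phi'(z)\tau(z)\in\bbR_+$ pointwise on the boundary, hence $\tau(z)^\sigma (\phi'(z))^\sigma \in \bbR_+$. Compatibility with the normalization $F(e_b)=1$ and the vanishing $F(e_a)=0$ fixes the singular behaviour at ${\bf a}$ and ${\bf b}$ (a blow-up of order $\sigma$ at ${\bf b}$ and an integrable decay at ${\bf a}$), and standard Riemann-Hilbert uniqueness in simply connected domains identifies the limit as $(\phi')^\sigma$. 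The multiplicative prefactor $(2\delta)^{-\sigma}$ is dictated by matching the discrete normalization $F(e_b)=1$ with the prescribed singularity of $(\phi')^\sigma$ at $\bf{b}$.

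The main obstacle, and the only genuinely open step, is (i). For $q=2$ the $s$-holomorphicity miraculously yields a scalar primitive $H_\delta$ which is simultaneously sub- and super-harmonic on $\Omega_\delta$ and $\Omega_\delta^*$, with explicit boundary values, and a comparison with the discrete Dirichlet problem gives uniform $L^\infty$ bounds, from which pointwise bounds on $f_\delta$ follow through \eqref{eq:image}. For $q\neq 2$ no such primitive is known: the exponent $\sigma$ is no longer a half-integer, so $F(e)$ is not confined to a single line $\sqrt{\overline e}\,\bbR$ and \emph{every} argument of the $q=2$ proof breaks down at its very first step. A natural substitute would be to use the pointwise estimate $|F_\delta(e)|\leq \phi^{a_\delta,b_\delta}_{\Omega_\delta,p_c,q}[e\in \gamma]$, control the right-hand side by the crossing estimates of property \textbf{P5} in Theorem~\ref{thm:main} together with the polynomial decay \eqref{eq:polynomial}, and then promote these probabilistic bounds to equicontinuity via the discrete Morera identity of Theorem~\ref{thm:contours}. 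Carrying out this promotion seems to require a genuinely new mechanism playing the role of $s$-holomorphicity for $\sigma\neq\tfrac12$; providing one is the heart of Conjecture~\ref{conj:observable}.
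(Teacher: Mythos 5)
The statement you are addressing is Conjecture~\ref{conj:observable}: the paper offers no proof of it, and explicitly presents it as open. Your submission is therefore not a proof to be checked against the paper's argument, but a strategy outline --- and, to your credit, you say so yourself: you identify precompactness of $((2\delta)^{-\sigma}f_\delta)$ as ``the only genuinely open step'' and acknowledge that no substitute for $s$-holomorphicity is known when $\sigma\neq\tfrac12$. That honest assessment is correct, and it means no theorem has been proved here.

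One correction to your accounting of which steps are ``free''. Step (ii) is not essentially free from the tools at hand. Theorem~\ref{thm:contours} gives vanishing discrete contour integrals for the \emph{edge} observable $F_\delta$, not for the vertex observable $f_\delta$, and as the paper's own remark after Theorem~\ref{thm:BVP} points out, the edge observable does not converge to a single function: its limit depends on the orientation of the underlying medial edge. For $q=2$ one passes from the edge to the vertex observable via the projection identity ${\rm P}_e[f(v)]=F(e)$, which is exactly the $s$-holomorphicity supplied by $\sigma=\tfrac12$; for $q\neq 2$ the relations \eqref{rel_vertex} constitute only half of the discrete Cauchy--Riemann equations (the paper notes the other half are ``probably not even true at the discrete level''), and no analogue of the projection identity is available. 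Consequently, even granting your step (i), you cannot write $\oint f\,dz=0$ for a subsequential limit of $f_\delta$ by simply passing Theorem~\ref{thm:contours} to the limit: the discrete sum involves $F_\delta$ evaluated on edges of both orientations, whose limits are a priori two different (and unknown) functions of the limit of $f_\delta$. So both the a priori bounds \emph{and} the identification of subsequential limits require a genuinely new mechanism; the conjecture remains open on two fronts, not one.
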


For the Ising model in higher dimension (the other Potts models are predicted to have a discontinuous phase transition by Conjecture~\ref{conj:1}), the model still undergoes a continuous phase transition and it therefore makes sense to study the critical phase in more details.

We mentioned in Theorem~\ref{thm:pol decay exact} that the critical exponent of the spin-spin correlations of the Ising model in dimension four and higher is the mean-field one, i.e. that 
$$\mu_{\beta_c}[\sigma_x\sigma_y] \approx \frac{1}{\|x-y\|^{d-2+\delta}}$$
with $\delta=0$. Also, note that in two dimensions this is not the case since by \eqref{eq:spin-spin}, $\delta=1/4$. In three dimensions, the best known result is Theorem~\ref{thm:pol decay}, which gets rephrased as $\delta\in[0,1]$. The following improvement would be of great value.
\begin{conjecture}
Consider the three dimensional Ising model. There exists $\ep>0$ and $c_0,c_1\in(0,\infty)$ such that for all $x,y\in\bbZ^3$,
$$\frac{c_0}{\|x-y\|^{2-\ep}}\le \mu_{\beta_c}[\sigma_x\sigma_y]\le \frac{c_1}{\|x-y\|^{1+\ep}}.$$
\end{conjecture}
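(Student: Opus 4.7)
The plan is to leverage the random current representation together with reflection positivity, aiming to quantitatively sharpen the two bounds already in our hands. Theorem~\ref{thm:pol decay} in $d=3$ reads $c/\|x\|^{2}\le \mu_{\beta_c}^{\rm f}[\sigma_0\sigma_x]\le C/\|x\|$, and the conjecture asks to detect the strictly positive anomalous dimension predicted by renormalization-group heuristics by shaving off a power $\ep>0$ on both sides. Throughout, write $\tau(x):=\mu_{\beta_c}^{\rm f}[\sigma_0\sigma_x]$.

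For the upper bound, the natural strategy is to iterate Lieb's inequality~\eqref{eq:Lieb} in a self-improving fashion at dyadic scales. Two nested applications of~\eqref{eq:Lieb} give
$$\tau(x)\le \sum_{y\in\partial\Lambda_n}\sum_{z\in\partial\Lambda_{2n}}\mu_{\Lambda_n,\beta_c}^{\rm f}[\sigma_0\sigma_y]\,\mu_{\Lambda_{2n},\beta_c}^{\rm f}[\sigma_y\sigma_z]\,\tau(z-x),$$
and one would try to derive a recursion $G_n\le K\,G_{n/2}^{1-\alpha}$ for $G_n:=\sup_{\|x\|\ge n}\|x\|^{1+\ep}\tau(x)$ with some gain $\alpha>0$. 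The gain should come from the borderline divergence in $d=3$ of the bubble diagram $\sum_y\tau(y)^2$: rewriting $\tau(x)^2$ via~\eqref{eq:aag} as a connection probability in the duplicated-current percolation model and applying uniqueness of the infinite cluster (Lemma~\ref{thm:percolation0}) together with finite-energy sprinkling of the type used in Section~\ref{sec:continuity Ising}, one hopes to extract a strict loss relative to the infrared bound~\eqref{eq:infrared bound beta_c} at each scale, which compounds into the desired polynomial improvement.

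For the lower bound, the approach is dual: start from $\varphi_{\beta_c}(\Lambda_n)\ge 1$ (established in the proof of Theorem~\ref{thm:pol decay}) together with Messager--Miracle monotonicity~\eqref{eq:MM}, and argue by contradiction. If $\tau$ decayed exactly like $\|x\|^{-2}$, then the triangle diagram $T(x):=\sum_{y,z}\tau(y)\tau(z-y)\tau(x-z)$ would be of a size incompatible with an iterated Simon--Lieb inequality applied on a family of nested spheres of geometric radii around the midpoint of $0x$. Translating this inconsistency via the switching lemma into identities for random currents with sources pivoted through an intermediate sphere, and combining with the lower bound on $\varphi_{\beta_c}(\Lambda_n)$, should deliver the strict improvement $\tau(x)\ge c/\|x\|^{2-\ep}$.

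The main obstacle, and the reason the conjecture is wide open, is that it is essentially equivalent to the statement that $d=3$ is strictly below the upper critical dimension of the Ising model, while every rigorous tool currently available---random currents, reflection positivity, Simon--Lieb and tree-graph inequalities---saturates at the mean-field exponents $d-2$ and $d-1$, which are sharp in high dimension by Theorem~\ref{thm:pol decay exact}. The only rigorous technology known to produce strict non-mean-field bounds is the renormalization group, implemented to date only in hierarchical models or in the weakly-coupled $\varphi^4$ regime near $d=4$, and never for the nearest-neighbor Ising model on $\bbZ^3$. A proof of the conjecture will therefore presumably require either a genuinely new strict correlation inequality sensitive to the three-dimensional geometry, or a rigorous renormalization-group analysis implemented directly at the critical point on $\bbZ^3$.
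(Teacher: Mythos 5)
The statement you are addressing is stated in the paper as an open conjecture; the paper offers no proof, and your text does not supply one either. What you have written is a research programme, not a proof: every step at which the strictly positive $\ep$ (or the gain $\alpha>0$ in your recursion $G_n\le K\,G_{n/2}^{1-\alpha}$) would have to appear is phrased as ``one hopes to extract'' or ``should deliver'', and that is precisely where the entire difficulty of the conjecture lives. Concretely, the iterated Simon--Lieb scheme cannot produce decay faster than what the quantity $\varphi_\beta(\Lambda_n)$ allows, and at $\beta_c$ one has $\varphi_{\beta_c}(\Lambda_n)\ge 1$ for every $n$ (this is exactly how the paper proves the \emph{lower} bound $c/\|x\|^{d-1}$ in Theorem~\ref{thm:pol decay}); the inequality therefore saturates and iteration yields no improvement on the exponent. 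The divergence of the bubble diagram in $d=3$ is indeed the heuristic signal that mean-field behaviour should fail, but no known argument converts this borderline divergence into a polynomial correction to the two-point function: the random-current, switching-lemma and reflection-positivity machinery all provably cap out at the exponents $d-2$ (infrared bound, upper) and $d-1$ (Simon inequality, lower), which is exactly the content of Theorem~\ref{thm:pol decay}. Your lower-bound argument by contradiction has the same structural problem: assuming $\tau(x)\asymp\|x\|^{-2}$ produces no contradiction with any inequality currently available, since $\|x\|^{-2}$ is consistent with both $\varphi_{\beta_c}(\Lambda_n)\ge1$ and the Messager--Miracle monotonicity.

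Your final paragraph correctly diagnoses all of this, which makes the document an honest and reasonable discussion of why the problem is open --- but it should not be presented as a proof proposal. If you want to salvage something rigorous from your text, the only provable statements in it are the two bounds of Theorem~\ref{thm:pol decay} itself, i.e.\ the case $\ep=0$ on the lower side and $\ep=1$ on the upper side, which the paper already establishes.
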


Another question of interest is the question of triviality/non-triviality of the scaling limit of the spin-field. In other words, the question is to measure whether the spin-spin correlations factorize like Gaussian field (i.e.~whether they satisfy the Wick's rule or not). One usually defines the {\em renormalized coupling constant}\,\footnote{Since Wick's rule is equivalent to the fact that $U_4(x_1,x_2,x_3,x_4)$ vanishes (see the definition in Exercise~\ref{exo:U4}), this quantity is a measure of how non-Gaussian the field $(\sigma_x:x\in\mathbb Z^d)$ is.} 
\begin{equation}
g(\beta):=\sum_{x_2,x_3,x_4\in\mathbb Z^d}\frac{U_4(0,x_2,x_3,x_4)}{\chi(\beta)^2\xi(\beta)^d},
\end{equation}
where $U_4(x_1,x_2,x_3,x_4)$ was defined in Exercise~\ref{exo:U4} and ($e_1$ is a unit vector in $\mathbb Z^d$)
\begin{equation}
\chi(\beta):=\sum_{x\in\mathbb Z^d}\mu_{\beta}^{\rm f}[\sigma_0\sigma_x]\quad\text{ and }\quad\xi(\beta):=\big(\lim_{n\rightarrow\infty}-\tfrac1n\log \mu_{\beta}^{\rm f}[\sigma_0\sigma_{ne_1}]\big)^{-1}.
\end{equation}

If $g(\beta)$ tends to 0 as $\beta\nearrow\beta_c$, the field is said to be {\em trivial}. Otherwise, it is said to be {\em non-trivial}. 
Aizenman \cite{Aiz82} and Fr\"ohlich \cite{Fro82} proved that the Ising model is trivial for $d\ge5$. In two dimensions, one can use Theorem~\ref{thm:spin} to prove that the Ising model is non-trivial (in fact one can prove this result in a simpler way, but let us avoid discussing this here). Interestingly enough, Aizenman's proof of triviality is one of the first use of the random current at its full power and it is therefore fair to say that proving this result was one of the motivation for the use of such currents. This leaves the following conjecture open.
\begin{conjecture}
Prove that the three-dimensional Ising model is non-trivial, and that the four-dimensional Ising model is. 
\end{conjecture}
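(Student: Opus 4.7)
The plan is to treat the two halves very differently: the four-dimensional triviality is within reach of the random-current technology developed in the excerpt, while three-dimensional non-triviality will demand genuinely new ideas.

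For the four-dimensional case, the starting point is the identity from Exercise~\ref{exo:U4},
\[
U_4(x_1,x_2,x_3,x_4) \;=\; -2\,\mu_{G,\beta}^{\rm f}[\sigma_1\sigma_2\sigma_3\sigma_4]\;{\bf P}_{G,\beta}^{\{x_1,x_2,x_3,x_4\}}\bigl[x_1,x_2,x_3,x_4 \text{ all connected}\bigr],
\]
which reduces the estimate on $g(\beta)$ to controlling the probability that the four sources lie in a single cluster of the duplicated-current configuration. First I would establish a tree-graph bound: on the event that all four points are connected, one can pick a ``hub'' vertex $v$ and four disjoint connection events $v\leftrightarrow x_i$; routing through the switching lemma and repeated application of the BK-type reasoning for random currents (which is available because currents are constrained only by parity, exactly the setup that makes the switching lemma work) yields
\[
{\bf P}_{G,\beta}^{\{x_1,x_2,x_3,x_4\}}[x_1,x_2,x_3,x_4 \text{ all connected}]\;\le\; C\sum_{v\in V}\prod_{i=1}^{4}\mu_{G,\beta}^{\rm f}[\sigma_v\sigma_{x_i}].
\]
Summing over $x_2,x_3,x_4$ and using translation invariance, this gives $|U_4|$ bounded by a constant times $\chi(\beta)^4/|V|$ times the so-called bubble diagram $B(\beta):=\sum_x\mu_\beta^{\rm f}[\sigma_0\sigma_x]^2$. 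Combining with the definition of $g(\beta)$ and Messager--Miracle/infrared comparisons, one obtains
\[
g(\beta)\;\le\; C\,\frac{B(\beta)}{\chi(\beta)^2\,\xi(\beta)^{d}}\cdot\chi(\beta)^2\;=\;C\,\frac{B(\beta)}{\xi(\beta)^{d}}.
\]
The second step is to show this ratio vanishes as $\beta\nearrow\beta_c$ in dimension four. Using the infrared bound \eqref{eq:infrared bound} to control $B(\beta)$ by the bubble diagram of the Green function, one has $B(\beta)\le C_1\log \xi(\beta)$ in $d=4$ (logarithmic divergence being the hallmark of the upper critical dimension), whereas $\xi(\beta)\to\infty$. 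This logarithmic versus polynomial competition is precisely what kills $g(\beta)$ in $d=4$. The main technical obstacle here is \emph{not} the bound itself but making sure that the random-current representation of $U_4$ gives a clean enough tree-graph inequality; in high dimension ($d\ge 5$) Aizenman's original argument already gives $B(\beta_c)<\infty$ and the conclusion is immediate, but in $d=4$ one needs to carry constants through carefully and exploit the fact that $\xi(\beta)$ diverges (which can be obtained from the sharpness results of Section~2).

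For the three-dimensional case, none of the above reasoning suffices: the bubble diagram $B(\beta_c)$ is expected to be finite but the ratio $B(\beta)/\xi(\beta)^3$ should remain bounded away from zero, so any \emph{upper} bound on $g$ is the wrong direction; one needs a genuine \emph{lower} bound. I would attempt the following strategy. Fix $\beta<\beta_c$ and pick four points $x_1,\dots,x_4$ at mutual distance of order $\xi(\beta)$. On the random-current event $\{x_1,x_2,x_3,x_4 \text{ all connected}\}$, the RSW-type lower bounds on crossing probabilities (which one would first have to establish in three dimensions for random currents; this is the genuinely new input) should force the probability of simultaneous connection to be bounded below by a constant independent of $\beta$. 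Combined with the identity for $U_4$, this would give
\[
|U_4(x_1,x_2,x_3,x_4)|\;\ge\; c\,\mu_\beta^{\rm f}[\sigma_1\sigma_2\sigma_3\sigma_4]\;\ge\; c'\,\chi(\beta)^2/\xi(\beta)^{2d},
\]
and summing would yield $g(\beta)\ge c''>0$. The hard part, and the main obstacle to the whole programme, is producing the required uniform lower bounds on connection probabilities in the duplicated-current model in three dimensions. In two dimensions this is done via planar duality and parafermionic observables; in three dimensions neither tool is available, and one would likely need to combine the OSSS-type machinery of Section~\ref{sec:OSSS} (adapted to random currents, which is itself open), reflection positivity, and some new three-dimensional input such as a rigorous implementation of conformal-bootstrap positivity constraints. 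I expect this last step to be the true bottleneck.
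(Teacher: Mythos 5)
This statement is a \emph{conjecture}: the paper offers no proof, and explicitly records (via the final exercise and the surrounding discussion) that triviality is known only for $d\ge 5$ \cite{Aiz82,Fro82} and non-triviality only for $d=2$. So what you have written can only be judged as a proposed attack on an open problem, and as such it contains a concrete gap in the four-dimensional half. The tree-graph inequality you invoke is exactly the one derived in the paper's last exercise, and it yields $0\le -g(\beta)\le C\,\chi(\beta)^2/\xi(\beta)^d$; the passage from this to $g(\beta)\le C\,B(\beta)/\xi(\beta)^d$ is not justified — the factor $\chi(\beta)^2$ in the numerator does not cancel against the $\chi(\beta)^2$ in the definition of $g$, because the tree-graph bound produces $\chi^4$, not $\chi^2 B$. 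With the infrared bound \eqref{eq:infrared bound} one gets $\chi\le C\xi^2\log\xi^2$, hence $-g(\beta)\le C\,\xi^{4-d}(\log\xi)^4$, which vanishes for $d\ge5$ but is \emph{divergent} (not even bounded) at $d=4$. This is precisely why $d=4$ is the critical dimension: the bubble diagram $B(\beta_c)$ diverges logarithmically there, the bubble condition fails, and the "logarithmic versus polynomial competition" you describe never actually appears in the inequality you have. The actual resolution of the four-dimensional case (Aizenman and Duminil-Copin, 2021, well after these notes) requires a genuinely new multiscale analysis of the probability that the clusters of two independent random currents intersect, not a refinement of constants in the tree-graph bound.

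For the three-dimensional half you correctly identify that an upper bound is the wrong direction and that a uniform \emph{lower} bound on four-point connection probabilities in the duplicated-current model would suffice, but every ingredient you list for producing it (an RSW theory for random currents in $d=3$, an OSSS adaptation, rigorous bootstrap positivity) is itself open, so this part of the proposal is a restatement of the difficulty rather than a route to a proof. In short: the $d\ge5$ mechanism is correctly recalled but does not extend to $d=4$ as written, and the $d=3$ part remains entirely conjectural.
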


Physics predictions go much further. One expects  conformal invariance in any dimension (in fact as soon as the phase transition is continuous). Conformal symmetry brings less information on the model in dimensions greater than 2, but recent developments in conformal bootstrap illustrate that still much can be said using these techniques, see \cite{Ric12}. It therefore motivates the question of proving conformal invariance in dimension three, which looks like a tremendously difficult problem.

\bexo

\begin{exercise}
[Triviality of Ising in dimension $d\ge5$]

Consider a graph $G$ and denote by ${\rm P}^A_G$ the measure on currents (here we mean one current, not two) on $G$ with set of sources equal to $A$. Set $\sigma_i$ for the spin at $x_i$. 
\medbreak\noindent
1. Show that 
$$U_4(x_1,\dots,x_4)=-2\mu_{G,\beta}^{\rm f}[\sigma_1\sigma_2]\mu_{G,\beta}^{\rm f}[\sigma_3\sigma_4] \cdot {\rm P}^{\{x_1,x_2\}}_G\otimes{\rm P}^{\{x_3,x_4\}}_G[x_1,x_2,x_3,x_4\textrm{ all connected}].$$
2. Prove that for any $y\in\bbZ^d$,
$\displaystyle{\rm P}^{\{x_1,x_2\}}_G\otimes{\rm P}^{\emptyset}_G[x_1\stackrel{\widehat{\n_1+\n_2}}\longleftrightarrow y]=\frac{\mu_{G,\beta}^{\rm f}[\sigma_1\sigma_y]\mu_{G,\beta}^{\rm f}[\sigma_y\sigma_2]}{\mu_{G,\beta}^{\rm f}[\sigma_1\sigma_2]}.$\medbreak\noindent
3. Use two new sourceless currents $\n_3$ and $\n_4$ and the union bound to prove that 
$$0\le -U_4(x_1,\dots,x_4)\le 2\sum_{y\in\bbZ^d}\mu_{G,\beta}^{\rm f}[\sigma_y\sigma_1]\mu_{G,\beta}^{\rm f}[\sigma_y\sigma_2]\mu_{G,\beta}^{\rm f}[\sigma_y\sigma_3]\mu_{G,\beta}^{\rm f}[\sigma_y\sigma_4].$$
4. Deduce that 
$0\le -g(\beta)\le \frac{\chi(\beta)^{2}}{\xi(\beta)^d}.$\medbreak\noindent
5. Show that for every $x\in\bbZ^d$,
$\mu_{G,\beta}^{\rm f}[\sigma_0\sigma_x]\le \exp(-\|x\|_\infty/\xi(\beta)).$\medbreak\noindent
6. Using \eqref{eq:infrared bound}, show that $\chi(\beta)\le C\xi(\beta)^2\log \xi(\beta)^2$ and conclude that $g(\beta)$ tends to 0 when $d\ge5$.
\end{exercise}

\eexo

\bibliographystyle{plain}
\bibliography{biblicomplete}

\end{document}